\DeclareMathOperator{\Span}{Span}
\DeclareMathOperator{\Diff}{Diff}
\DeclareMathOperator{\Lie}{Lie}
\DeclareMathOperator{\SL}{SL}
\DeclareMathOperator{\GL}{GL}
\DeclareMathOperator{\SO}{SO}
\DeclareMathOperator{\U}{U}
\DeclareMathOperator{\GCD}{GCD}
\DeclareMathOperator{\LCM}{LCM}
\DeclareMathOperator{\Isom}{Isom}
\DeclareMathOperator{\Crit}{Crit}
\DeclareMathOperator{\Sing}{Sing}
\DeclareMathOperator{\Hess}{Hess}
\DeclareMathOperator{\curv}{curv}
\DeclareMathOperator{\Aut}{Aut}
\DeclareMathOperator{\grad}{grad}
\DeclareMathOperator{\id}{id}
\DeclareMathOperator{\pr}{pr}
\DeclareMathOperator{\loc}{loc}
\DeclareMathOperator{\dR}{dR}
\DeclareMathOperator{\Pic}{Pic}
\newtheorem{defn}{Definition}[section]
\newtheorem{thm}[defn]{Theorem}
\newtheorem{lem}[defn]{Lemma}
\newtheorem{prop}[defn]{Proposition}
\newtheorem{cor}[defn]{Corollary}
\newtheorem{exam}[defn]{Example}
\newenvironment{psmallmatrix}{\left( \begin{smallmatrix}}{\end{smallmatrix} \right)}
\begin{document}
\title[Five dimensional $K$-contact manifolds of rank $2$]
{Five dimensional $K$-contact manifolds of rank $2$}
\author{Hiraku Nozawa}
\address{Department of Mathematics, Faculty of Science, Ochanomizu University, 2-1-1 Ohtsuka, Bunkyo-ku, Tokyo 112-8610, Japan}
\email{nozawa@ms.u-tokyo.ac.jp}

\begin{abstract}
A $K$-contact manifold is a smooth manifold $M$ with a contact form $\alpha$ whose Reeb flow preserves a Riemannian metric $g$ on $M$. If $M$ is closed and connected, then the closure of the Reeb flow in the isometry group of $(M,g)$ is a torus. The dimension of the torus is called the rank of the $K$-contact manifold $(M,\alpha)$. A $K$-contact manifold $(M,\alpha)$ of rank $n$ has an $\alpha$-preserving $T^n$-action.

We study the geometry of closed $5$-dimensional $K$-contact manifolds of rank $2$. We show the following three theorems: i) A closed $5$-dimensional $K$-contact manifold of rank $2$ is obtained from a lens space bundle over a closed surface by a finite sequence of contact blowing up and down. ii) the isomorphism classes of closed $5$-dimensional $K$-contact manifolds of rank $2$ are determined by graphs of isotropy data which represent the combinatorial data of the $T^2$-actions. iii) A closed $5$-dimensional $K$-contact manifold of rank $2$ has a compatible Sasakian metric. We also give a sufficient condition for a closed $5$-dimensional $K$-contact manifold of rank $2$ to be toric.
\end{abstract}
\maketitle

\tableofcontents

\section{Introduction}

We study $5$-dimensional $K$-contact manifolds with interest in the geometry of $5$-dimensional Sasakian manifolds. A $K$-contact manifold is an odd dimensional manifold $M$ with a contact form $\alpha$ whose Reeb flow preserves a Riemannian metric $g$ on $M$. Main examples are Sasakian manifolds, more specifically, contact toric manifolds and links of isolated singularities of weighted homogeneous polynomials.

We focus on $5$-dimensional $K$-contact manifolds of rank $2$. A closed $K$-contact manifold $(M,\alpha)$ has rank $n$ if and only if the dimension of closures of generic orbits of the Reeb flow on $(M,\alpha)$ is an $n$-dimensional torus. The rank of a $(2n-1)$-dimensional closed $K$-contact manifold is less than $n+1$. In the case of dimension $5$, the other cases of rank $2$ correspond to objects in other geometry as follows: In the case of rank $1$, $K$-contact manifolds are $S^1$-orbibundles over symplectic orbifolds. In the case of rank $3$, there exists a one to one correspondence between the underlying contact manifolds and cones in $\mathbb{R}^{3}$ by the results of Lerman \cite{Ler2} and Boyer-Galicki \cite{BoGa2}. 

We state our results. Our first result is as follows: 
\begin{thm}\label{KarshonClassification}
Two closed $5$-dimensional $K$-contact manifolds of rank $2$ are isomorphic if and only if their graphs of isotropy data are isomorphic.
\end{thm}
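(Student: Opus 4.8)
The plan is to follow Karshon's classification of Hamiltonian $S^1$-spaces, transported to the contact category (which is why the label is chosen as it is). The ``only if'' direction is essentially formal: an isomorphism of $K$-contact manifolds intertwines the two $T^2$-actions (after conjugating the tori inside the respective isometry groups), carries the Reeb field to the Reeb field, and hence matches the contact moment maps $\Phi$, the closed orbits carrying nontrivial isotropy, their isotropy representations, and the invariants attached to the regular reduced spaces. Every datum recorded in the graph is thereby preserved, so the graphs are isomorphic. The entire content of the theorem lies in the converse, and the rest of the plan addresses that.

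For the ``if'' direction I would first extract from $\alpha$ and the $T^2$-action a Morse--Bott function, namely the component $\mu = \langle \Phi, v\rangle$ of the contact moment map $\Phi \colon M \to \mathfrak{t}^*$ taken in a direction $v$ complementary to the (irrational) Reeb direction $R_0 \in \mathfrak{t}$. A short computation using $\mathcal{L}_{\eta_M}\alpha = 0$ gives $d\mu = -\iota_{v_M}\,d\alpha$, so the critical points are exactly the points where $v_M$ is proportional to the Reeb field, i.e. where the isotropy Lie algebra meets $v - cR_0$. Since there are no $T^2$-fixed points (the Reeb field never vanishes and lies in the action), the critical set consists precisely of the $T^2$-invariant submanifolds built from the closed Reeb orbits at which a subcircle acts with nontrivial isotropy; these are the vertices of the graph, and the value of $\mu$ on each is the position the graph records. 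I would then slice $M$ along the regular levels of $\mu$ into elementary pieces $M_{[a,b]} = \mu^{-1}([a,b])$ lying between consecutive critical values.

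The heart of the argument is to show that each such piece, together with its gluing to the next, is determined up to equivariant contactomorphism by the graph data. Over a regular interval the $T^2$-action is locally free, the reduced spaces are $2$-dimensional symplectic orbifolds whose genus and symplectic area the graph encodes, and an equivariant contact Moser (Gray-stability) argument should identify the piece with the mapping cylinder built from a single reduced level together with the transverse structure of the $\mu$-flow. Across a critical level the reduced orbifold changes by a blow-up/down-type surgery dictated by the isotropy weights at the corresponding closed orbit, and the equivariant contact slice theorem shows that the germ of $M$ along each special orbit is determined by exactly those weights. Assembling these identifications from one end of the $\mu$-axis to the other, isomorphic graphs yield compatible isomorphisms on the overlapping slices.

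The step I expect to be the main obstacle is the gluing, that is, controlling the ambiguity in patching consecutive elementary pieces. Two points must be settled: first, that the group of equivariant contactomorphisms of a regular level which fix the reduced orbifold is connected enough (a contact analogue of the connectedness of the relevant gauge group of a principal torus bundle), so that any two admissible gluings are equivariantly isotopic; and second, that the irrationality of $R_0$ — which makes the reduction quasi-regular rather than a genuine principal bundle — does not obstruct the Moser isotopy. I would handle both by passing to the symplectization $(M \times \mathbb{R}_{>0}, d(t\alpha))$ with its Hamiltonian $T^2$-action, where the flow of $v$ together with an averaged contact vector field produces the required isotopies, thereby reducing the whole matter to Karshon's uniqueness statement for the reduced symplectic data combined with the normal-form rigidity at the special orbits.
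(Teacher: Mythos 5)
Your overall strategy runs parallel to the paper's (Proposition \ref{Karshon1} builds a diffeomorphism matching Reeb fields and contact moment maps via the Morse theory of $\Phi$ and local normal forms; Proposition \ref{Karshon2} then upgrades it to a contactomorphism), but there is one genuinely missing idea in your final step. Your ``equivariant contact Moser (Gray-stability) argument'' tacitly assumes that the path of forms you interpolate along stays contact, and this is precisely the nontrivial content of Proposition \ref{Karshon2} (i): for $\beta_t=(1-t)\alpha_0+t\alpha_1$, transverse nondegeneracy of $d\beta_t$ at a point with locally free action reduces to the statement that $d\alpha_0$ and $d\alpha_1$ induce the \emph{same orientation} on the two directions of $T_xM/\mathbb{R}R_x$ annihilated by $d\Phi$ --- i.e.\ the signs of $d\alpha_0(v^3,v^4)$ and $d\alpha_1(v^3,v^4)$ agree. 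Matching Reeb fields and moment maps does not by itself fix this sign. The paper pins it down by a global cohomological fact: both basic classes $[d\alpha_0]_B$ and $[d\alpha_1]_B$ equal the Euler class of the isometric flow determined by $R$, so $[d\alpha_0\wedge d\alpha_0]_B=[d\alpha_1\wedge d\alpha_1]_B$ in $H^4_B(M/\mathcal{F})$; at points on singular orbits it reduces to Lemma 4.13 of Karshon on a transversal. Your symplectization remark does not supply a substitute --- in $(M\times\mathbb{R}_{>0},d(r\alpha))$ the same sign ambiguity reappears in the interpolated symplectic forms, and the ``averaged contact vector field'' isotopies presuppose the convexity you are trying to establish.

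Two further points where your plan leans on facts that must be proved rather than read off the graph. First, you say the graph encodes the symplectic areas of the reduced orbifolds; it does not --- in the contact setting all values of $\Phi_\alpha$ at critical components (hence the reduced areas) are \emph{forced} by the isotropy groups together with $\overline{R}$ (this is the content of Lemmas \ref{LengthOfClosedOrbits} and \ref{ValuesOfContactMomentMaps}), and this rigidity, special to the contact case, is exactly why the purely combinatorial graph suffices; you should derive it, not assume it. Second, the graph does not control the free gradient manifolds, so a slice-by-slice mapping-cylinder identification must normalize their positions; the paper does this by perturbing the compatible metric (Lemma \ref{Perturbation}) so every free gradient manifold runs into $B_{\min}$ or $B_{\max}$, then extends a single diffeomorphism from a neighborhood of $B_{\min}$ and the chains by conjugating with the gradient flows, and needs a genuine gluing only at the top level --- where the connectedness you ask for is indeed available, via the path-connectedness of the diffeomorphism group of $(S^1\times S^3)/\tau$ (Lemma B.2 of Karshon and the appendix of Ahara--Hattori). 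With the sign issue repaired as above and the metric normalization added, your slicing scheme would go through, but as written these two steps are gaps.
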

Graphs of isotropy data of closed $5$-dimensional $K$-contact manifolds of rank $2$ are graphs with the combinatorial data of the $T^2$-actions determined by the Reeb flows. See Section \ref{ContactMomentMapsAndGraphsOfIsotropyData} for the definition of graphs of isotropy data (Definition \ref{KarshonGraphs}). By Theorem \ref{KarshonClassification}, closed $5$-dimensional $K$-contact manifolds of rank $2$ are classified by combinatorial objects. Theorem \ref{KarshonClassification} has the following corollary:
\begin{cor}\label{ReebVectorFieldsDetermineKContactStructures}
Two closed $5$-dimensional $K$-contact manifolds $(M_{0},\alpha_{0})$ and $(M_{1},\alpha_{1})$ of rank $2$ are isomorphic if and only if there exists a diffeomorphism $f \colon M_{0} \longrightarrow M_{1}$ such that $f_{*}R_{0}=R_{1}$ where $R_{i}$ is the Reeb vector field of $\alpha_{i}$ for $i=0$ and $1$.
\end{cor}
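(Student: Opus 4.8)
The plan is to deduce the corollary from Theorem \ref{KarshonClassification} by showing that the graph of isotropy data of a closed $5$-dimensional $K$-contact manifold of rank $2$ depends only on the underlying manifold together with its Reeb vector field, and not on the remaining choices in the $K$-contact structure. The forward implication is immediate: an isomorphism of $K$-contact manifolds is a diffeomorphism $f$ with $f^{*}\alpha_{1}=\alpha_{0}$, and since the Reeb vector field is determined by the contact form through $\alpha(R)=1$ and $\iota_{R}d\alpha=0$, such an $f$ satisfies $f_{*}R_{0}=R_{1}$. So the content is the reverse implication.

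First I would record that the torus acting on $M_{i}$ is recovered from $R_{i}$ alone. Writing $\phi^{i}_{t}$ for the Reeb flow of $\alpha_{i}$, the closure of $\{\phi^{i}_{t}\}$ in $\Isom(M_{i},g_{i})$ is the defining torus $T^{2}_{i}$. Since $\Isom(M_{i},g_{i})$ is a compact, hence closed, subgroup of $\Diff(M_{i})$ containing the Reeb flow, the closure of $\{\phi^{i}_{t}\}$ computed in $\Diff(M_{i})$ coincides with $T^{2}_{i}$; in particular $T^{2}_{i}$ is intrinsic to the pair $(M_{i},R_{i})$ and does not depend on the invariant metric $g_{i}$.

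Next I would use the hypothesis $f_{*}R_{0}=R_{1}$. This makes $R_{0}$ and $R_{1}$ be $f$-related, so the flows are conjugate, $f\circ\phi^{0}_{t}=\phi^{1}_{t}\circ f$. Conjugation by $f$ is a topological group isomorphism $\Diff(M_{0})\to\Diff(M_{1})$, so it carries the closure of $\{\phi^{0}_{t}\}$ to the closure of $\{\phi^{1}_{t}\}$; by the previous step this yields a Lie group isomorphism $\bar{f}\colon T^{2}_{0}\to T^{2}_{1}$ with $f(t\cdot x)=\bar{f}(t)\cdot f(x)$, under which the Reeb directions correspond, $d\bar{f}(\xi_{R_{0}})=\xi_{R_{1}}$, where $\xi_{R_{i}}\in\Lie(T^{2}_{i})$ generates the Reeb flow. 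Thus $f$ is an isomorphism of $T^{2}$-spaces matching the Reeb directions.

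Finally I would check that such an $f$ induces an isomorphism of graphs of isotropy data, and then invoke Theorem \ref{KarshonClassification}. By Definition \ref{KarshonGraphs} the graph is assembled from the exceptional orbits of the $T^{2}$-action, the isotropy weights there, and the values of the contact moment map; the first two are plainly preserved by the equivariant diffeomorphism $f$, and the contact moment map labels are determined by the isotropy weights together with the normalization $\langle\Phi,\xi_{R}\rangle\equiv 1$ forced by $\alpha(R)=1$, which $f$ also respects. Hence the graphs of $(M_{0},\alpha_{0})$ and $(M_{1},\alpha_{1})$ are isomorphic and the corollary follows. The main obstacle is this last step: one must verify that the contact normalization leaves no continuous freedom in the moment map labels—unlike the symplectic Hamiltonian setting, where the vertex labels carry genuine area parameters—so that the entire graph, and not merely its underlying combinatorial skeleton, is recovered from $(M,R)$.
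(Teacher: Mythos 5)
Your proposal is correct and takes essentially the same route as the paper: both deduce the corollary from the graph classification (the paper cites Corollary \ref{Combinatorial}, the finer form of Theorem \ref{KarshonClassification}) by observing that a diffeomorphism with $f_{*}R_{0}=R_{1}$ conjugates the Reeb flows, hence the torus closures $G_{0}\longrightarrow G_{1}$, and therefore induces an isomorphism of graphs of isotropy data. Your extra verifications---that the torus is intrinsic to $(M,R)$ because $\Isom(M,g)$ is closed in $\Diff(M)$, and that the moment-map labels carry no continuous freedom beyond $\overline{R}$ and the isotropy data (cf.\ Lemma \ref{ValuesOfContactMomentMaps})---merely make explicit what the paper's one-paragraph proof leaves implicit.
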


Our main theorem concerns the classification of closed $5$-dimensional $K$-contact manifolds of rank $2$ up to contact blowing up and down. Let $(M,\alpha)$ be a closed $5$-dimensional $K$-contact manifold of rank $2$. We denote the maximal component and the minimal component of the contact moment map by $B_{\max}$ and $B_{\min}$ respectively. Note that $B_{\min}$ and $B_{\max}$ are either closed orbits of the Reeb flow of $\alpha$ or $3$-dimensional submanifolds of $M$ (See Lemma \ref{MorseTheory}). Our main theorem is the following:
\begin{thm}\label{BlowDownToLensSpaceBundles}
\begin{enumerate}
\item If $(\dim B_{\max},\dim B_{\min})=(3,3)$, then $(M,\alpha)$ is obtained from a lens space bundle over a closed surface by a finite sequence of contact blowing up.
\item If $(\dim B_{\max},\dim B_{\min})=(1,3)$ or $(\dim B_{\max},\dim_{\min})=(3,1)$, then $(M,\alpha)$ is obtained from a lens space bundle over $S^2$ by performing contact blowing down once after a finite sequence of contact blowing up.
\item If $(\dim B_{\max},\dim B_{\min})=(1,1)$, then $(M,\alpha)$ is obtained from a lens space bundle over $S^2$ by performing contact blowing down twice after a finite sequence of contact blowing up.
\end{enumerate}
\end{thm}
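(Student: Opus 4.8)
The plan is to run Morse--Bott theory for the contact moment map and to recover the global topology of $(M,\alpha)$ from the way its reduced spaces degenerate at the extremal levels. First I would use the $T^2$-equivariance of the contact moment map $\Phi \colon M \to \mathfrak{t}^{*}$, together with the triviality of the coadjoint action of a torus, to see that $\Phi$ is $T^2$-invariant and that its image lies in the affine line $\{\ell \in \mathfrak{t}^{*} : \langle \ell, R\rangle = 1\}$, where $R \in \mathfrak{t}$ is the Reeb vector; parametrising this line gives a $T^2$-invariant height function $h$ descending to $M/T^2$. By Lemma \ref{MorseTheory}, $h$ is Morse--Bott and its critical submanifolds are either isolated closed Reeb orbits, where the isotropy is a circle, or the $3$-dimensional extremal submanifolds $B_{\max}$ and $B_{\min}$. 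The decisive qualitative input is the resulting dichotomy at each end: $B_{\max}$ (respectively $B_{\min}$) has dimension $3$ exactly when the extremal reduced space is a closed surface, and dimension $1$ exactly when it collapses to a point.

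Second, I would analyse the structure between consecutive critical values. For a closed interval $[c_1,c_2]$ of regular values, $h^{-1}([c_1,c_2])$ is $T^2$-equivariantly a $T^2$-bundle over $\Sigma \times [c_1,c_2]$, where $\Sigma$ denotes the reduced surface; incorporating the two extremal submanifolds, when they are $3$-dimensional, closes each fibre up into a contact lens space, so that $(M,\alpha)$ is a lens space bundle over $\Sigma$ whenever $h$ has no interior critical orbits. The diffeomorphism type of $\Sigma$ does not change across a critical level carrying only an isolated Reeb orbit, and each such interior orbit contributes a single vertex to the graph of isotropy data of Definition \ref{KarshonGraphs}. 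At this point I would invoke Theorem \ref{KarshonClassification}: since $(M,\alpha)$ is determined up to isomorphism by its graph, it suffices to realise that graph from the graph of a standard lens space bundle by the graph moves that correspond to contact blowing up, which adjoins a vertex for an interior Reeb orbit, and contact blowing down, which contracts an extremal sphere to a closed Reeb orbit.

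Third, I would distinguish the three cases by the dichotomy. When $(\dim B_{\max},\dim B_{\min})=(3,3)$, the extremal reduced spaces are both equal to $\Sigma$, and each interior Reeb orbit arises from a contact blowing up; hence $(M,\alpha)$ is a finite sequence of contact blow-ups of the lens space bundle over $\Sigma$, giving part (i). When an extremum is a point, the reduced space collapses there, which forces $\Sigma$, the single reduced surface, to be $S^2$; the corresponding point extremum is then produced by contracting an extremal sphere of the lens space bundle over $S^2$ through one contact blowing down. Carrying this out after the blow-ups that create the interior orbits, at one end in case (ii) and at both ends in case (iii), proves the remaining two statements.

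The step I expect to present the main difficulty is the contact-geometric control of the blowing down used in parts (ii) and (iii). I must show that a point extremum forces genus $0$ rather than merely allowing it, pin down the equivariant normal data of the extremal sphere inside the lens space bundle over $S^2$, and check that contracting it yields precisely the closed Reeb orbit of $(M,\alpha)$ with the correct isotropy weights and a compatible contact form, so that the two graphs of isotropy data coincide. Once this local surgery is reconciled with the combinatorics, Theorem \ref{KarshonClassification} reduces the rest to the bookkeeping of vertices and edges.
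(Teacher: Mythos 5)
Your Morse--Bott setup and the fiber-sum picture in the $(3,3)$ case are consistent with the paper (Lemma \ref{MorseTheory}, Lemma \ref{fib2}), your genus-$0$ claim for a point extremum is correct, and handling $1$-dimensional extrema by a preliminary blow-up along the extremal closed orbit is exactly Lemma \ref{fib1} and Proposition \ref{Fibersum}. But there is a genuine gap at the heart of your argument: the claim that ``each interior Reeb orbit arises from a contact blowing up,'' i.e.\ that the graph of $(M,\alpha)$ can be reached from the graph of a lens space bundle by moves each of which is a blow-up along a closed Reeb orbit. This is false. Blowing down a gradient lens space $L$ to a closed Reeb orbit is possible only when the Euler class of the normal bundle of $L$ generates $H^2(L;\mathbb{Z})$ (Lemma \ref{TopologicalCharacterization}), and Proposition \ref{ConstructionOfToricManifolds} constructs, for every $k \geq 2$, closed $5$-dimensional contact toric manifolds of rank $2$ with $(\dim B_{\max},\dim B_{\min})=(3,3)$ containing a chain of length $k$ in which \emph{no} lens space can be blown down to a closed Reeb orbit. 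So your induction removing interior vertices one at a time cannot start, and the statement of Theorem \ref{BlowDownToLensSpaceBundles} would simply be wrong if ``contact blowing up'' were restricted to blow-ups along closed Reeb orbits. This is precisely the point where the $5$-dimensional theory diverges from the Ahara--Hattori/Karshon picture for $4$-dimensional Hamiltonian $S^1$-manifolds that you are implicitly transplanting.

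What the paper does instead is enlarge the set of allowed moves: contact blowing down along a $K$-contact lens space to \emph{another lens space} is always possible, and it changes the isotropy data of a chain without changing the underlying graph. The combinatorial engine is Lemma \ref{Toric1}, which uses the Dirichlet prime number theorem to produce a new face of a good cone satisfying the Delzant condition with prescribed neighbors and coprimality constraints; via Lemmas \ref{Toric2} and \ref{Toric4} this shows (Lemma \ref{EmbeddingChains}) that every germ of a chain can be shortened to a single gradient manifold by a finite sequence of such lens-space blow-downs and then blown down to a trivial germ. Only after this reduction does the fiber-sum decomposition of Lemma \ref{fib2} identify the result with a lens space bundle, and reversing the moves yields the theorem --- with the ``finite sequence of contact blowing up'' necessarily including blow-ups along lens spaces, not only along closed orbits. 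To repair your proposal you would need to (a) admit lens-space blow-ups/downs among your graph moves, and (b) supply the number-theoretic realization argument showing that these moves suffice to trivialize every chain; the appeal to Theorem \ref{KarshonClassification} alone cannot substitute for (b), since the hard part is realizability of the intermediate graphs, not uniqueness.
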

\noindent Contact blowing up is defined as special cases of contact cuts defined by Lerman \cite{Ler1}. See Section \ref{ContactBlowingUpAndDown} for the definition of contact blowing up. Contact blowing down is the inverse operation of contact blowing up. We have the following corollary:
\begin{cor}
Any closed $5$-dimensional $K$-contact manifold of rank $2$ is obtained from a lens space bundle over a closed surface by a finite sequence of contact blowing up and down.
\end{cor}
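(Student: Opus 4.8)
The plan is to deduce the corollary directly from Theorem \ref{BlowDownToLensSpaceBundles} by verifying that its three items together exhaust all possibilities. First I would invoke Lemma \ref{MorseTheory}, which asserts that each of $B_{\max}$ and $B_{\min}$ is either a closed orbit of the Reeb flow (hence $1$-dimensional) or a $3$-dimensional submanifold of $M$. Consequently the ordered pair $(\dim B_{\max}, \dim B_{\min})$ takes values only in $\{1,3\} \times \{1,3\}$, that is, it is one of $(3,3)$, $(1,3)$, $(3,1)$, or $(1,1)$. These are precisely the four configurations handled by items (1), (2), and (3) of Theorem \ref{BlowDownToLensSpaceBundles}, so no further case can occur.

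Next I would read off the conclusion in each case. In the case $(3,3)$, Theorem \ref{BlowDownToLensSpaceBundles}(1) produces $(M,\alpha)$ from a lens space bundle over a closed surface using only contact blowing up; since blowing up is one of the two operations permitted in the statement of the corollary, this is \emph{a fortiori} an instance of the desired conclusion. In the cases $(1,3)$ and $(3,1)$, Theorem \ref{BlowDownToLensSpaceBundles}(2) produces $(M,\alpha)$ from a lens space bundle over $S^2$ by a finite sequence of contact blowing up followed by one contact blowing down, and in the case $(1,1)$, Theorem \ref{BlowDownToLensSpaceBundles}(3) does the same with two contact blowing downs. Since $S^2$ is a closed surface, each of these is again a lens space bundle over a closed surface from which $(M,\alpha)$ is obtained by a finite sequence of contact blowing up and down.

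Combining the four cases, $(M,\alpha)$ is in every case obtained from a lens space bundle over a closed surface by a finite sequence of contact blowing up and down, which is exactly the assertion of the corollary. The only point requiring attention, and the nearest thing to an obstacle, is the bookkeeping that guarantees the case list furnished by Lemma \ref{MorseTheory} matches the hypotheses of the three items of Theorem \ref{BlowDownToLensSpaceBundles} with none omitted; once this exhaustiveness is confirmed, the corollary is immediate, and no geometric input beyond the main theorem is needed.
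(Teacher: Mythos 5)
Your proposal is correct and coincides with the paper's own (implicit) derivation: the corollary is stated as an immediate consequence of Theorem \ref{BlowDownToLensSpaceBundles}, with Lemma \ref{MorseTheory} guaranteeing that $(\dim B_{\max},\dim B_{\min})$ lies in $\{1,3\}\times\{1,3\}$, so the three items of the theorem exhaust all cases exactly as you argue. Your observations that blowing up alone is a special case of ``blowing up and down'' and that $S^2$ is a closed surface are the only bookkeeping needed, and they are handled correctly.
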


We state two other results on the relation between geometry of $5$-dimensional $K$-contact manifolds of rank $2$ and toric or Sasakian geometry. Let $G$ be the closure of the Reeb flow in the isometry group $\Isom(M,g)$ for a Riemannian metric $g$ invariant under the Reeb flow. We denote the action of $G$ on $M$ by $\rho$. The identity components of the isotropy groups of $\rho$ at the maximal component and the minimal component of the contact moment map on $(M,\alpha)$ for $\rho$ by $G_{\max}$ and $G_{\min}$:
\begin{thm}\label{ToricCondition}
Assume that 
\begin{enumerate}
\item every closed orbit of the Reeb flow of $\alpha$ is isolated and 
\item there exists an $S^1$-subgroup $G'$ of $G$ such that the orbits of the action of $G'$ is transverse to $\ker \alpha$ on $M$ and both of $G' \times G_{\max}$ and $G' \times G_{\min}$ are isomorphic to $G$,
\end{enumerate}
then there exists an $\alpha$-preserving $T^3$-action on $(M,\alpha)$.
\end{thm}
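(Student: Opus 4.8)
The plan is to reduce to a single case of Theorem \ref{BlowDownToLensSpaceBundles} using hypothesis (i), then to exhibit an explicit contact toric model with the same combinatorial data and invoke the classification Theorem \ref{KarshonClassification} to transport its $T^3$-action back to $(M,\alpha)$.

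First I would analyze the critical set of the relevant component of the contact moment map. Writing $\eta$ for a generator of $\Lie(G)$ complementary to the Reeb class $\xi$ and $\Phi^{\eta} = \iota_{\eta_M}\alpha$, the relation $\mathcal{L}_{\eta_M}\alpha = 0$ gives $d\Phi^{\eta} = -\iota_{\eta_M}d\alpha$, so the critical points of $\Phi^{\eta}$ are exactly the points where $\eta_M$ is proportional to the Reeb field $R$; these are precisely the points lying on $1$-dimensional $G$-orbits, and on such an orbit the Reeb flow is periodic, i.e. the orbit is a closed Reeb orbit. Thus every critical submanifold of $\Phi^{\eta}$, and in particular $B_{\max}$ and $B_{\min}$, is a union of closed Reeb orbits. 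By Lemma \ref{MorseTheory} each of $B_{\max}, B_{\min}$ is either a closed Reeb orbit or a $3$-dimensional submanifold; a $3$-dimensional extremal component would be foliated by a continuous family of closed Reeb orbits, none of which is isolated, contradicting hypothesis (i). Hence $\dim B_{\max} = \dim B_{\min} = 1$ and we are in case (3) of Theorem \ref{BlowDownToLensSpaceBundles}: $(M,\alpha)$ is obtained from a lens space bundle $N$ over $S^2$ by two contact blowing downs following a finite sequence of contact blowing ups.

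Next I would build a contact toric model. A lens space is a contact toric $3$-manifold, $S^2 = \mathbb{CP}^1$ carries its rotation $S^1$, and for the bundles occurring in Theorem \ref{BlowDownToLensSpaceBundles} these combine to an $\alpha$-preserving $T^3$-action on $N$ in which the given $G = T^2$ sits as a subtorus. Every contact blowing up is a contact cut along an orbit of a subcircle, so performing each cut along a subcircle of this $T^3$ keeps the $T^3$-action and the blow-ups can be done $T^3$-equivariantly. The two final blow-downs create $B_{\max}$ and $B_{\min}$ as isolated closed Reeb orbits, and here hypothesis (ii) enters: the splittings $G' \times G_{\max} \cong G \cong G' \times G_{\min}$ say that at each extremal orbit the isotropy circle is complementary to the transverse circle $G'$, which is exactly the local toric normal form at an ``apex'' orbit and guarantees that each of these two blow-downs is realized as a toric contact cut. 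This produces a contact toric $T^3$-manifold $(M',\alpha')$ whose graph of isotropy data agrees with that of $(M,\alpha)$. By Theorem \ref{KarshonClassification} there is an isomorphism $(M,\alpha) \cong (M',\alpha')$, and pulling back the $T^3$-action on $M'$ yields the desired $\alpha$-preserving $T^3$-action on $(M,\alpha)$.

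The main obstacle is the global compatibility in the previous paragraph: assembling the local toric normal forms at the interior closed Reeb orbits and at the two extrema into a single $T^3$ that contains the original $G$. Hypothesis (ii) settles the two extremal orbits, but one must still check that the isotropy circles of the finitely many interior closed Reeb orbits (isolated by (i)) are simultaneously subcircles of one torus and that the edge data of the graph of isotropy data is consistent with a genuine moment cone in $\mathbb{R}^{3}$; this is the step where the combinatorics of the graph must be shown to be realizable torically, after which the uniqueness in Theorem \ref{KarshonClassification} removes any remaining ambiguity. A slightly different route, which isolates the same difficulty, is to quotient by the transverse circle $G'$ to obtain a symplectic $4$-orbifold on which $G/G'$ acts Hamiltonianly with isolated fixed points (again by (i) and (ii)), to extend this Hamiltonian circle action to a toric $T^2$, and to lift the extra circle back through the $G'$-bundle $M \to M/G'$; the crux there is the extension of a Hamiltonian $S^1$-space with isolated fixed points to a toric one.
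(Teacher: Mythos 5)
Your first step is correct and agrees with the paper's remark after the theorem: hypothesis (i) together with Lemma \ref{MorseTheory} forces $\dim B_{\max}=\dim B_{\min}=1$. But from there your argument has a genuine gap: everything is deferred to the claim that the graph of isotropy data of $(M,\alpha)$ is realizable by a contact toric manifold, and you explicitly leave that unproved. That claim is the entire content of the theorem, and it cannot hold without a quantitative use of hypothesis (ii): in a $5$-dimensional contact toric manifold the nontrivial chains map into the boundary of the $2$-dimensional image of the moment map for the $T^3$-action, so there are at most two of them (Subsection \ref{ToricContactManifolds}); hence a graph with three or more nontrivial chains is never toric, and nothing in your construction rules this out. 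In your write-up hypothesis (ii) is used only as a local normal form condition at the two extremal orbits, but its actual role (Lemma \ref{TranslationOfCondition2}) is global and arithmetic: it supplies a primitive lattice vector $v$ in $\Omega \cap \Lie(G)_{\mathbb{Z}}$ forming $\mathbb{Z}$-bases with both $v_{\min}$ and $v_{\max}$, which gives $I(\sigma,B_{\min})=I(\rho,B_{\min})$ and $I(\sigma,B_{\max})=I(\rho,B_{\max})$ and hence the Euler number identity \eqref{SumOfEulerNumbers4} of Corollary \ref{RelationForEstimate}; the $\LCM$/positivity arithmetic in the proof of Proposition \ref{TwoChains} then excludes a third nontrivial chain. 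Without this step your ``contact toric model with the same graph'' may simply not exist, and once it does exist, quoting Theorem \ref{KarshonClassification} is legitimate but no longer the hard part.

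Two further points. First, the paper does not route through Theorem \ref{BlowDownToLensSpaceBundles} at all: it proves the theorem via Propositions \ref{TwoChains} and \ref{SufficientConditionToBeToric}, constructing the $T^3$-action directly on $M$ by building local $T^3$-actions near the (at most two) chains, verifying via the slope/width computation \eqref{ComputationOfPhi} that their moment images close up into the boundary of a single convex polygon, and then extending across the free part by Lemmas \ref{ModificationOfTorusActions} and \ref{FarFromChains}. Your plan instead needs every blow-up and blow-down in the sequence of Theorem \ref{BlowDownToLensSpaceBundles} to be equivariant for one fixed $T^3$ containing $G$; the local actions of Lemmas \ref{ToricActionNearSingularOrbits} and \ref{ToricActionNearGradientManifolds} need not assemble into a global torus, and Subsection \ref{ToricExamples} (Proposition \ref{ConstructionOfToricManifolds}) shows that the toric blow-down combinatorics is genuinely delicate, so this compatibility cannot be waved through. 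Second, your alternative route (quotient by $G'$ and extend the Hamiltonian $S^1$-action on the resulting symplectic $4$-orbifold to a toric $T^2$) hits exactly the same wall: by Karshon's work, which the paper's Proposition \ref{TwoChains} adapts, such an extension exists only under a bound on the isotropy spheres of the $S^1$-action, and that bound is again the ``at most two nontrivial chains'' statement you have not derived from (ii).
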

\begin{thm}\label{CompatibleMetric}
A closed $5$-dimensional $K$-contact manifold $(M,\alpha)$ of rank $2$ has a Riemannian metric $g$ such that $(g,\alpha)$ is a Sasakian metric on $M$.
\end{thm}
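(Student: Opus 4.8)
The plan is to reduce the statement to the existence of a compatible \emph{transverse} Kähler structure for the Reeb foliation $\mathcal{F}$ and then to propagate such a structure through the surgery description of Theorem~\ref{BlowDownToLensSpaceBundles}. A $K$-contact structure $(\alpha,g,\phi)$ is Sasakian exactly when the transverse almost complex structure $\phi$ on the normal bundle $\nu\mathcal{F}$ is integrable, as a transverse holomorphic structure tamed by the transverse symplectic form $d\alpha$. Since $\alpha$, and hence $R$ and $d\alpha$, are fixed, the task is to produce an integrable transverse complex structure that is basic and compatible with $d\alpha$; as the leaf closures of $\mathcal{F}$ are the $T^{2}$-orbits, a horizontal $T^{2}$-invariant structure is automatically basic. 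A useful observation is that the four-dimensional transverse geometry carries a Hamiltonian $S^{1}$-action coming from a complement of $R$ in $\mathfrak{t}^{2}$, with one-dimensional moment image; this is precisely Karshon's setting, so the Kähler models available to us are the invariant Kähler structures on blow-ups of ruled surfaces and of $\mathbb{C}P^{2}$. I would therefore isolate ``$(M,\alpha)$ admits a compatible Sasakian metric'' as a property $\mathcal{P}$ and show that $\mathcal{P}$ holds on the building blocks and is preserved under the operations of Theorem~\ref{BlowDownToLensSpaceBundles}.

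First I would establish $\mathcal{P}$ for the base case, a lens space bundle over a closed surface $\Sigma$. For a quasi-regular Reeb field this manifold is, by the Boothby--Wang correspondence, an $S^{1}$-orbibundle over a Kähler orbifold which is a ruled (Hirzebruch-type) orbifold over $\Sigma$; this base carries a $T^{2}$-invariant Kähler form in the prescribed basic class, and its transverse pullback is a transverse Kähler structure. To reach the given irrational Reeb field of rank $2$ I would then deform the Reeb field inside the Sasaki cone $\mathfrak{t}^{+}\subset\mathfrak{t}^{2}$; such deformations preserve the transverse holomorphic foliation, hence Sasakian-ness, and the resulting contact form is matched to $\alpha$ by fixing $R$ and the basic class of $d\alpha$.

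Next I would show that $\mathcal{P}$ is stable under contact blowing up and, by symmetry, under contact blowing down. The key point is that contact blowing up is a contact cut in the sense of Lerman, and such a cut of a Sasakian manifold along a $T^{2}$-invariant contact submanifold is, transversally, a symplectic cut. Carried out in the Kähler category as in Lerman's construction, this is a (possibly weighted) Kähler blow-up of the transverse geometry: the transverse complex structure is altered only near the exceptional locus and stays integrable and tamed by $d\alpha$, so the cut manifold inherits a compatible transverse Kähler, hence Sasakian, structure whose Reeb field has the same rank. Iterating along the finite sequence furnished by Theorem~\ref{BlowDownToLensSpaceBundles} and invoking the base case then yields the desired metric $g$ on $(M,\alpha)$.

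The main obstacle is the interaction of integrability with the irregularity of the Reeb foliation. Because the rank is $2$ the leaves of $\mathcal{F}$ do not close, so the transverse Kähler structure cannot be read off from an honest quotient manifold and must be controlled as a basic object throughout the construction. I would handle this by working $T^{2}$-equivariantly at every stage, so that all transverse tensors are automatically basic, and by transferring integrability from the orbifold quotients attached to rational elements of $\mathfrak{t}^{+}$ to the irregular structure via the Sasaki-cone deformation above. The remaining delicate point is bookkeeping: one must check that the transverse Kähler structure produced by the cut is compatible with the \emph{given} contact form $\alpha$, and not merely with some contact form inducing the same contact distribution. This is what forces me to keep both $R$ and the basic cohomology class of $d\alpha$ rigidly fixed under the cuts and under the Reeb-field deformations.
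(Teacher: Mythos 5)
Your reduction to a basic transverse K\"ahler structure and your use of quasi-regular quotients match the paper's framework, but there is a genuine gap where you claim that stability of the Sasakian property under contact blowing up yields stability under contact blowing down ``by symmetry.'' These operations are not symmetric in the complex category. Blowing up a transverse complex structure is a local, essentially canonical modification; blowing down requires proving that the contracted transverse space still carries an integrable complex structure compatible with $d\alpha'$ --- a contraction criterion for the exceptional divisor --- and this cannot be obtained by reversing the blow-up construction, because the Sasakian structure you are handed on the blown-up manifold $(\tilde{M},\tilde{\alpha})$ is an arbitrary one and need not arise as any K\"ahler blow-up of a structure downstairs. Since Theorem \ref{BlowDownToLensSpaceBundles} (ii), (iii) produce $(M,\alpha)$ from a lens space bundle by performing blowing \emph{down} after blowing up whenever $B_{\min}$ or $B_{\max}$ is one-dimensional, your induction cannot reach exactly those cases without this step. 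The paper devotes Lemma \ref{Blowdown} to it, and its proof is an orbifold Enriques--Castelnuovo argument: pass to a quasi-regular quotient $\tilde{M}/\sigma_{\tilde{\alpha}'}$, embed it projectively by Baily's theorem \cite{Bai}, use Kodaira--Baily vanishing to produce sections of $nE_{\tilde{M}}+i[L/\sigma_{\tilde{\alpha}'}]$, and show that the linear system of $E_{0}=nE_{\tilde{M}}+k[L/\sigma_{\tilde{\alpha}'}]$, which is trivial on the exceptional weighted projective space, contracts $L/\sigma_{\tilde{\alpha}'}$ to a point carrying an honest orbifold complex structure; the result is then matched back to $(M,\alpha)$ by an equivariant diffeomorphism together with the rigidity of Proposition \ref{Karshon2} and Lemma \ref{Karshon3}. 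None of this machinery is supplied, or replaced, in your proposal.

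A secondary issue: even in the $(3,3)$ case the paper does not propagate Sasakian-ness along a blow-up sequence from the base. It constructs the transverse complex structure directly on the given $(M,\alpha)$, embedding neighborhoods of the nontrivial chains into contact toric manifolds (Sasakian by the Boyer--Galicki Delzant construction, adjusted by Takahashi's theorem \cite{Tak}) and extending over the lens-space-bundle complement of Lemma \ref{fib2} as a product complex structure, after correcting the rotation number of the holomorphic trivialization near each chain. If you propagate along blow-ups instead, you must also justify your matching claim that fixing $R$ and the basic cohomology class of $d\alpha$ pins down the contact form: in the paper this requires Proposition \ref{Karshon2}, whose hypotheses are the same Reeb field \emph{and} the same contact moment map, together with the Euler-class argument of \cite{Sar}; as stated, your bookkeeping claim is an assertion rather than a proof, and your blow-up step would further need an argument that Lerman's contact cut performed in the K\"ahler category returns the same contact form, not merely the same contact distribution.
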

Note that the condition (i) in Theorem \ref{ToricCondition} is equivalent to assume that $\dim B_{\min}=1$ and $\dim B_{\max}=1$ by Lemma \ref{MorseTheory}. The condition (ii) in Theorem \ref{ToricCondition} can be translated into a condition on the image of the contact moment map (See Lemma \ref{TranslationOfCondition2}). See Section \ref{Sasakian} for the definition of Sasakian metrics (Definition \ref{DefinitionOfSasakianMetrics}).

We explain our method of the proof of our theorems and the relation of our results with study on $4$-dimensional symplectic manifolds. Our main tool is the Morse theory for the contact moment maps for $T^2$-actions associated with $K$-contact forms. The Morse theory for moment maps was effectively used to study $4$-dimensional symplectic manifolds with hamiltonian $S^1$-actions by Audin \cite{Aud}, Ahara-Hattori \cite{HaA} and Karshon \cite{Kar}. The $5$-dimensional $K$-contact manifolds of rank $2$ are classified by graphs by Theorem \ref{KarshonClassification} as $4$-dimensional symplectic manifolds with hamiltonian $S^1$-actions (See Theorem 4.1 of Karshon \cite{Kar}). But the combinatorics of the classification are different from the $4$-dimensional case. Ahara-Hattori and Karshon showed that any $4$-dimensional symplectic manifold with a hamiltonian $S^1$-action can be obtained from $\mathbb{C}P^{2}$, Hirzebruch surfaces or ruled surfaces by a sequence of $S^1$-equivariant blowing up at the fixed points of the $S^1$-action (See \cite{HaA} and \cite{Kar}). The $S^1$-equivariant blowing up for $4$-dimensional symplectic manifolds at fixed points of the hamiltonian $S^1$-action corresponds to contact blowing up along closed orbits of the Reeb flow for $5$-dimensional $K$-contact manifolds of rank $2$. But the classification of $5$-dimensional $K$-contact manifolds of rank $2$ up to a finite sequence of contact blowing up along closed orbits of the Reeb flow and its inverse operation is more complicated even in the case of contact toric manifolds as we will see in Subsection \ref{ToricExamples}. If we allow to perform contact blowing up along lens spaces and its inverse operation in addition, we have a classification theorem as Theorem \ref{BlowDownToLensSpaceBundles}.

Note that Theorems \ref{KarshonClassification}, \ref{BlowDownToLensSpaceBundles}, \ref{ToricCondition} and \ref{CompatibleMetric} have a certain correspondence to Karshon's results \cite{Kar} for $4$-dimensional symplectic manifolds with hamiltonian $S^1$-actions. But our proof of the latter three theorems without using the correspondence between manifolds and graphs or the Duistermaat-Heckman measure is different from Karshon's method. Our proof of Theorems \ref{BlowDownToLensSpaceBundles} and \ref{ToricCondition} is based on the Morse theory of contact moment maps and combinatorial computation on Euler numbers of locally free $S^1$-actions on $3$-dimensional orbifolds. We use the complex orbifold theory to show Theorem \ref{CompatibleMetric}.

This paper has eight sections: In Sections 2 and 3, we state the definitions, examples and properties of fundamental objects in $5$-dimensional $K$-contact geometry. We show Theorem \ref{KarshonClassification} in Subsection \ref{Subsection : KarshonClassification}. Section 4 is devoted to prove Theorem \ref{ToricCondition}. We define contact blowing up and down in Section 5 to use in Sections 6 and 7. Theorem \ref{BlowDownToLensSpaceBundles} is proved in Section 6. Section 7 is devoted to prove Theorem \ref{CompatibleMetric}. In Section 8, we summarize and prove local normal form theorems in $K$-contact geometry, which are our fundamental tools throughout this paper.

The author expresses his gratitude to Emmanuel Giroux, Klaus Niederkr\"{u}ger, Patrick Massot and H\'{e}l\`{e}ne Eynard-Bontemps for having valuable discussion on this research project at \'{E}cole Normale Sup\'{e}rieure de Lyon. He is grateful to Hiroki Kodama, Yoshifumi Matsuda, Masashi Takamura, Atsushi Yamashita, Inasa Nakamura, Chikara Haruta, Naoki Katou, Ryo Ando, Tomoyuki Ishida, Jun Ishikiriyama, Naohiko Kasuya and Toru Yoshiyasu for attending the long seminar on this paper. He expresses his gratitude to Hiroki Kodama for the discussion on contact toric manifolds. He expresses his gratitude to Yoshifumi Matsuda for his encouragement to complete this paper. Finally he expresses his gratitude to his adviser Takashi Tsuboi.

\section*{Notation}

The set of positive real numbers is denoted by $\mathbb{R}_{>0}$. The standard coordinate of $\mathbb{R}_{>0}$ is written as $r$. The unit circle $\{ \zeta \in \mathbb{C} | |\zeta|=1 \}$ in the complex line is denoted by $S^1$. The standard coordinate of $S^1$ is written as $\zeta$. The disk $\{ (z_1,z_2,\cdots,z_n) \in \mathbb{C}^{n} | \sum_{i=1}^{n}|z_i|^2 \leq \epsilon^{2} \}$ of radius $\epsilon$ in $\mathbb{C}^{n}$ is denoted by $D^{2n}_{\epsilon}$. The standard coordinate of $D^{2n}_{\epsilon}$ is written as $(z_1,z_2,\cdots,z_n)$. We write $D^{2n}$ for $D^{2n}_{1}$. The unit sphere $\{ (z_1,z_2) \in \mathbb{C}^{2} | |z_1|^2 + |z_2|^2 = 1 \}$ in $\mathbb{C}^{2}$ is denoted by $S^3$. The standard coordinate of $S^3$ is written as $(z_1,z_2)$. We regard finite cyclic groups as subgroups of $\{ \zeta \in \mathbb{C} | |\zeta|=1 \}$.

For a topological group $H$, an $H$-action $\tau$ on a set $A$ and a $\tau$-invariant subset $B$ of $A$, we denote the cardinality of the kernel of $H \longrightarrow \Aut(B)$ by $I(\tau,B)$.

\section{Basic definitions and examples}

We define $K$-contact manifolds and give several examples of $K$-contact manifolds. We see that there exist $5$-dimensional $K$-contact manifolds of rank $2$ which have no $K$-contact structure of rank $3$ using a result of Lerman \cite{Ler4}.

\subsection{$K$-contact manifolds and its rank}

\begin{defn}($K$-contact manifolds and $K$-contact submanifolds)
An odd dimensional smooth manifold $M$ with a contact form $\alpha$ is called $K$-contact if there exists a Riemannian metric $g$ on $M$ preserved by the Reeb flow. A smooth contact submanifold of a $K$-contact manifold $(M,\alpha)$ invariant under the Reeb flow of $\alpha$ is called a $K$-contact submanifold of $(M,\alpha)$.
\end{defn}

The Reeb flow on the manifold $M$ with contact form $\alpha$ is the flow generated by the Reeb vector field of $(M,\alpha)$.

We define the rank of $K$-contact manifolds. Let $(M,\alpha)$ be a connected closed $K$-contact manifold. We take a Riemannian metric $g$ invariant under the Reeb flow on $(M,\alpha)$. Then the closure $G$ of the Reeb flow in the isometry group $\Isom(M,g)$ of $(M,g)$ is a torus, because $G$ is commutative and compact by the compactness of $\Isom(M,g)$. 

\begin{defn}(Rank of $K$-contact manifolds) The dimension of the closure $G$ of the Reeb flow in $\Isom(M,g)$ is called the rank of $(M,\alpha)$.
\end{defn}

We prepare a terminology for the action of $G$ determined by the Reeb flow.

\begin{defn}(Torus actions associated with $K$-contact forms) The action of the closure $G$ of the Reeb flow is called the torus action associated with the $K$-contact form $\alpha$. 
\end{defn}

The $G$-action $\rho$ has the following properties:
\begin{enumerate}
\item $\rho$ preserves $\alpha$.
\item The orbit of $\rho$ coincide with the closures of the orbits of the Reeb flow. In particular, the singular $S^1$-orbits of $\rho$ coincide with the closed orbits of the Reeb flow.
\item A smooth Lie group action $\tau$ on $M$ commutes with $\rho$ if and only if $\tau$ commutes with the Reeb flow on $(M,\alpha)$.
\item A smooth Lie group action which preserves $\alpha$ commutes with the Reeb flow and $\rho$.
\end{enumerate}
Hence the closures of the orbits of the Reeb flow are generically of the same dimension as $G$.

Finally, we remark that there exists a restriction on the rank of $K$-contact manifolds. Fix a point $x$ on $M$. Let $C$ be the orbit of $x$ of $\rho$. We show
\begin{lem}\label{RestrictionOfRank}
$T_{x}C \cap (\ker \alpha)_{x}$ is a Lagrangian subspace of the symplectic vector space $((\ker \alpha)_{x}, d\alpha_{x})$. In particular, the dimension of $C$ is less than or equal to $n$ for a $(2n-1)$-dimensional $K$-contact manifold $(M,\alpha)$.
\end{lem}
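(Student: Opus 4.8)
The plan is to argue entirely through the fundamental vector fields of the associated torus action $\rho$. Writing $\mathfrak{g}$ for the Lie algebra of $G$ and $\xi_{M}$ for the vector field on $M$ induced by $\xi \in \mathfrak{g}$, we have $T_{x}C = \{ (\xi_{M})_{x} : \xi \in \mathfrak{g} \}$, and each $\xi_{M}$ satisfies $\mathcal{L}_{\xi_{M}}\alpha = 0$ because $\rho$ preserves $\alpha$. The core of the proof is to show that $T_{x}C$ is \emph{isotropic} for $d\alpha$; once this is in hand, the Lagrangian claim and the dimension bound follow by splitting off the Reeb direction.

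For the isotropy, I would fix $\xi, \eta \in \mathfrak{g}$ and use Cartan's formula. From $\mathcal{L}_{\xi_{M}}\alpha = \iota_{\xi_{M}}d\alpha + d(\iota_{\xi_{M}}\alpha) = 0$ we get $\iota_{\xi_{M}}d\alpha = -d(\alpha(\xi_{M}))$, and therefore $d\alpha(\xi_{M},\eta_{M}) = -\eta_{M}\bigl(\alpha(\xi_{M})\bigr) = -\mathcal{L}_{\eta_{M}}(\iota_{\xi_{M}}\alpha)$. Expanding by the identity $\mathcal{L}_{\eta_{M}}\iota_{\xi_{M}} = \iota_{\xi_{M}}\mathcal{L}_{\eta_{M}} + \iota_{[\eta_{M},\xi_{M}]}$ gives $\mathcal{L}_{\eta_{M}}(\iota_{\xi_{M}}\alpha) = \iota_{\xi_{M}}(\mathcal{L}_{\eta_{M}}\alpha) + \iota_{[\eta_{M},\xi_{M}]}\alpha$. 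The first term vanishes since $\rho$ preserves $\alpha$, and the second vanishes because $G$ is a torus, hence abelian, so $[\eta_{M},\xi_{M}] = 0$. Thus $d\alpha(\xi_{M},\eta_{M}) \equiv 0$, and evaluating at $x$ shows $T_{x}C$ is $d\alpha_{x}$-isotropic.

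It then remains to pass to the contact hyperplane. The Reeb field $R$ lies in the image of $\mathfrak{g}$, since $G$ is the closure of the Reeb flow, so $R_{x} \in T_{x}C$; as $\alpha(R) = 1$, the map $\alpha_{x}|_{T_{x}C}$ is onto $\mathbb{R}$ with kernel $V := T_{x}C \cap (\ker\alpha)_{x}$, whence $T_{x}C = V \oplus \mathbb{R}R_{x}$ and $\dim C = \dim V + 1$. Since $R_{x}$ spans the kernel of the presymplectic form $d\alpha_{x}$ on $T_{x}M$ and is transverse to $(\ker\alpha)_{x}$, the isotropy of $T_{x}C$ descends to isotropy of $V$ in the symplectic space $((\ker\alpha)_{x}, d\alpha_{x})$; hence $\dim V \leq \tfrac{1}{2}\dim(\ker\alpha)_{x} = n-1$ and $\dim C \leq n$. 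The step that needs care is precisely this passage through the Reeb direction: $d\alpha$ is only presymplectic on $TM$, so one must split off $\mathbb{R}R$ before invoking the symplectic dimension count, and $V$ is a genuine Lagrangian (maximal isotropic) subspace exactly when this bound is sharp, i.e. when $\dim C = n$. The isotropy computation itself, which is the substantive input for the bound, is routine Cartan calculus.
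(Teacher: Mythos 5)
Your proof is correct and follows essentially the same route as the paper: isotropy of $T_{x}C$ by Cartan calculus from $\mathcal{L}_{\xi_{M}}\alpha=0$ together with the commutativity of the torus, then the dimension bound by splitting off the Reeb direction. Your version is marginally more streamlined — the paper first shows $\alpha(X_{j})$ is constant on $C$ via the density of Reeb orbits before differentiating along $C$, whereas you get the same vanishing directly from abelianness via $\mathcal{L}_{\eta_{M}}\iota_{\xi_{M}}-\iota_{\xi_{M}}\mathcal{L}_{\eta_{M}}=\iota_{[\eta_{M},\xi_{M}]}$ — and you correctly flag that ``Lagrangian'' in the statement should be read as isotropic (maximal only when $\dim C=n$), a point the paper's own proof also implicitly treats this way.
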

\begin{proof}
We put $X_{0}=R$. Let $X_{1}$, $X_{2}$, $\cdots$, $X_{k}$ be the infinitesimal actions of $\rho$ such that $\{X_{0 x}, X_{1 x}, X_{2 x}, \cdots, X_{k x}\}$ is a basis of $T_{x}C$. To show that $T_{x}C \cap (\ker \alpha)_{x}$ is a Lagrangian subspace of $((\ker \alpha)_{x}, d\alpha_{x})$, it suffices to show $d\alpha_{x}(X_{i x},X_{j x})=0$ for every $i$ and $j$. By $L_{X_{0}}\alpha=0$ and $[X_{0},X_{j}]=0$, we have $X_{0}(\alpha(X_{j}))=(L_{X_{0}}\alpha)(X_{j}) + \alpha([X_{0},X_{j}])=0$ for every $j$. Then $\alpha(X_{j})$ is constant on $C$, since $C$ is the closure of the orbit of $x$ of the Reeb flow. Since $X_{i}$ is tangent to $C$, we have $X_{i}(\alpha(X_{j}))=0$ for every $i$ and $j$. Since $\rho$ preserves $\alpha$, we have $L_{X_{i}}\alpha=0$. Then $d\alpha_{x}(X_{i x},X_{j x})=-X_{i}(\alpha(X_{j}))(x)$. Hence for every $i$ and $j$, we have $d\alpha_{x}(X_{i x},X_{j x})=-X_{i}(\alpha(X_{j}))(x)=0$. Hence Lemma \ref{RestrictionOfRank} is proved.
\end{proof}
Hence if we have an $\alpha$-preserving effective $T^n$-action $\tau$ on a $(2n-1)$-dimensional closed $K$-contact manifold $(M,\alpha)$, $\rho$ is a torus subaction of $\tau$.

\begin{lem}\label{PertubationOfReebVectorFields}
Let $(M,\alpha)$ be a closed $K$-contact manifold. Assume that there exists an $\alpha$-preserving effective torus action $\tau$ on $M$ whose product with $\rho$ is an effective $T^{m}$-action $\tau'$ on $M$. Then $M$ has a $K$-contact form of rank $m$.
\end{lem}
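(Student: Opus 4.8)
The plan is to deform the Reeb vector field inside the Lie algebra of the $T^m$-action $\tau'$ so that its flow becomes dense in the whole torus, while keeping it the Reeb field of a suitably rescaled contact form. First I would record that, since $\tau$ preserves $\alpha$, it commutes with the Reeb flow and with $\rho$ by the last of the four listed properties of $\rho$; hence $\tau'$ is a well-defined $\alpha$-preserving effective $T^m$-action, and its infinitesimal action identifies the Lie algebra $\mathfrak{t}^m$ with a space of vector fields $X_\xi$ on $M$. Because $\rho$ is a subtorus of $\tau'$ generated by the Reeb flow, the Reeb vector field $R$ equals $X_{\xi_0}$ for some $\xi_0 \in \mathfrak{t}^m$, and $\alpha(X_{\xi_0})=\alpha(R)=1$.

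Next I would carry out the contact-form deformation. For $\xi \in \mathfrak{t}^m$ with $f_\xi := \alpha(X_\xi) > 0$ on $M$, set $\alpha_\xi = f_\xi^{-1}\alpha$. Since $\tau'$ preserves $\alpha$ we have $L_{X_\xi}\alpha = 0$, so $\iota_{X_\xi}d\alpha = -df_\xi$ and $X_\xi f_\xi = 0$; a direct computation then gives $\alpha_\xi(X_\xi)=1$ and $\iota_{X_\xi}d\alpha_\xi = 0$, so $X_\xi$ is the Reeb vector field of the contact form $\alpha_\xi$ (which is contact because $f_\xi>0$). Averaging an arbitrary Riemannian metric over the compact group $T^m$ produces a $\tau'$-invariant metric $g$; the flow of $X_\xi$ lies in $T^m$ and hence preserves $g$, so $(M,\alpha_\xi)$ is $K$-contact.

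It then remains to choose $\xi$ so that $(M,\alpha_\xi)$ has rank exactly $m$. The set $U = \{\xi \in \mathfrak{t}^m : f_\xi > 0 \text{ on } M\}$ is open and contains $\xi_0$, by compactness of $M$ and positivity of $f_{\xi_0}\equiv 1$. The set $D$ of $\xi$ whose one-parameter subgroup $\{\exp(t\xi)\}$ is dense in $T^m$ (equivalently, whose coordinates relative to the integer lattice are rationally independent) is dense in $\mathfrak{t}^m$. I would pick $\xi \in U \cap D$, which is nonempty. Then the closure of the flow of $X_\xi$ in $\Isom(M,g)$ equals the closure of $\{\exp(t\xi)\}$, namely the compact subgroup $T^m$ (which embeds in $\Isom(M,g)$ because $\tau'$ is effective). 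Hence the rank of $(M,\alpha_\xi)$ is $m$, completing the proof.

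The step I expect to require the most care is this final one: simultaneously enforcing positivity of $f_\xi$ (an open condition around $\xi_0$) and density of the generated one-parameter subgroup (a dense condition), and then confirming that the closure of this flow in the isometry group of the averaged metric is genuinely the full $m$-dimensional torus rather than a proper subtorus. This last point is exactly where effectiveness of $\tau'$ enters, and it is what guarantees that the dimension of the closure, i.e. the rank of $(M,\alpha_\xi)$, is precisely $m$.
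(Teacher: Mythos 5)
Your proposal is correct and follows essentially the same route as the paper's proof: both perturb the Reeb vector field inside $\Lie(T^m)$ to an infinitesimal action $X_\xi$ generating a dense one-parameter subgroup with $\alpha(X_\xi)>0$ (density of such $\xi$ plus openness of the positivity condition near $\xi_0$), and then rescale to $\alpha_\xi=\frac{1}{\alpha(X_\xi)}\alpha$ so that $X_\xi$ becomes the Reeb field of a $K$-contact form of rank $m$. The only difference is that you verify the steps the paper leaves implicit, namely the computation that $X_\xi$ is the Reeb vector field of $\alpha_\xi$ and that effectiveness of $\tau'$ forces the closure of the new Reeb flow in $\Isom(M,g)$ to be the full $m$-torus.
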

Note that since $\tau$ preserves $\alpha$, $\tau$ commutes with $\rho$. 
\begin{proof}
The Reeb vector field $R$ is an infinitesimal action of $\tau$. Let $\Omega$ be the set of infinitesimal actions $\overline{Y}$ of $\tau'$ such that the orbits of the flow generated by the vector field corresponding to $\overline{Y}$ are dense in the orbits of $\tau'$. Then $\Omega$ is dense in the set of infinitesimal actions of $\tau'$. By closedness of $M$, we can take an infinitesimal action $Y_{0}$ of $\tau'$ sufficiently close to $R$ so that $\alpha(Y_{0})$ has no zero on $M$. We put $\beta=\frac{1}{\alpha(X_{0})}\alpha$. Since $\ker \beta=\ker \alpha$, $\beta$ is a contact form. Since $\beta(Y_{0})=1$ and $L_{Y_{0}}\beta=0$, $Y_{0}$ is the Reeb vector field of $\beta$. Then the rank of $\beta$ is $m$.
\end{proof}

\subsection{Examples of $K$-contact manifolds}

\subsubsection{Circle bundles over symplectic manifolds} 
Let $(B,\omega)$ be a symplectic manifold. Assume that the cohomology class of $\omega$ is contained in the image of the canonical map $H^{2}(B ; \mathbb{Z}) \longrightarrow H^{2}(B ; \mathbb{R})$. Then there exists a principal $S^1$-bundle over $B$ whose Euler class is equal to $[\omega]$. There exists an $S^1$-connection form $\alpha$ such that $d\alpha=p^{*}\omega$ where $p$ is the projection from the total space $M$ of the principal $S^1$-bundle to $B$.  Then $(M,\alpha)$ is a $K$-contact manifold. The Reeb vector field $R$ of $(M,\alpha)$ generates the principal $S^1$-action on $M$. Hence the rank of $(M,\alpha)$ is $1$.

We see that $M$ has a $K$-contact form of rank $2$ if $(B,\omega)$ has a hamiltonian $S^1$-action $\sigma$. We denote a hamiltonian function of $\sigma$ by $h$ and the infinitesimal action of $\sigma$ by $X$. There uniquely exists a vector field $\tilde{X}$ on $M$ which satisfies $p_{*}\tilde{X}=X$ and $\alpha(\tilde{X})=0$. We put $Y= \tilde{X} - h R$. Then the flow generated by $Y$ preserves $\alpha$. Note that a function $h'=h+c$ is also a hamiltonian function of $\sigma$ for an arbitrary constant $c$. We can choose $c$ so that the orbits of the flow generated by $Y'= \tilde{X} - h' R $ are closed (See the third paragraph of the proof of Lemma \ref{LensSpacesAreRank2}). Hence $Y'$ generates an $S^1$-action $\tau$ on $M$ which preserves $\alpha$. Then we have $[Y',R]=0$. Hence $Y'$ and $R$ generate an effective $\alpha$-preserving $T^2$-action. By Lemma \ref{PertubationOfReebVectorFields}, $M$ has a $K$-contact form of rank $2$.

\subsubsection{Contact toric manifolds}\label{Subsubsection : ContactToricManifolds}
\begin{defn}(Contact toric manifolds) A $(2n-1)$-dimensional manifold with a contact structure $\xi$ and a $\xi$-preserving $T^n$-action is called a contact toric manifold. 
\end{defn}
If $n$ is greater than $2$, then there exists a one to one correspondence between equivariant isomorphism classes of contact toric manifolds and isomorphism classes of good cones in $\mathbb{R}^{n}$, which are polyhedra in $\mathbb{R}^{n}$ with certain combinatorial conditions (See Lerman \cite{Ler2} and Boyer-Galicki \cite{BoGa2}). Lerman \cite{Ler4} showed that the fundamental group of a contact toric manifold is finite and abelian. 

Let $\Omega$ be the subset of $\Lie(T^n)$ consisting of the elements whose infinitesimal actions are the Reeb vector fields of contact forms $\beta$ which satisfy $\ker \beta=\xi$. Then $\Omega$ is a connected cone in $\Lie(T^n)$. Since the flow generated by the infinitesimal action of an element of $\Omega$ preserves a $T^n$-invariant metric, the contact form whose Reeb vector field corresponds to an element of $\Omega$ is $K$-contact. $\Lie(T^n)$ has the lattice $\Lie(T^n)_{\mathbb{Z}}$ defined by the kernel of the exponential map $\Lie(T^n) \longrightarrow T^n$. Take a $\mathbb{Z}$-basis $\{\overline{X}_i\}_{i=1}^{n}$ of $\Lie(T^n)_{\mathbb{Z}}$. The rank of the $K$-contact form $\beta$ which corresponds to an element $X=\sum_{i=1}^{n}a_i \overline{X}_i$ of $\Omega$ is equal to the dimension of the vector space $\Span_{\mathbb{Q}}\{a_1,a_2,\cdots,a_n\}$ over $\mathbb{Q}$.

\subsubsection{Fiber join construction and contact fiber bundles}
The fiber join construction is due to Yamazaki \cite{Yam1}. The contact fiber bundle construction is due to Lerman \cite{Ler3}. The contact fiber bundle construction is a generalization of the fiber join construction.

Let $(N,\beta)$ be a $K$-contact manifold. Assume that $\beta$ is invariant under an action $\rho$ of a compact Lie group $G$. Let $B$ be a closed manifold and $\pi \colon E \longrightarrow B$ be a principal $G$-bundle over $B$. Assume that a $G$-connection form $\alpha$ satisfies the following condition: The composition of 
\begin{equation}
\xymatrix{ \ker \alpha \otimes \ker \alpha \ar[r]^<<<<<{\curv(\alpha)} & \Lie(G) \ar[r]^<<<<<{\Phi(x)} & \mathbb{R} }
\end{equation}
is nondegenerate at every point $x$ on $N$, where $\curv(\alpha)$ is the curvature form of $\alpha$ and $\Phi(x)$ is the value at the point $x$ of the contact moment map of $(N,\beta)$ for $\rho$ (See Definition \ref{ContactMomentMap}). Then we have a $K$-contact form on the total space of the bundle $E \times_{G} N$ associated with the principal $G$-bundle $\pi$ which is invariant under the induced action of $G$.

Let $S_{g}$ be an oriented closed surface of genus $g$. Let $L(p,q)$ be the lens space of type $(p,q)$. By the above construction, we obtain a $K$-contact form on $S_g \times L(p,q)$ of rank $2$ such that $\{x\} \times L(p,q)$ is a $K$-contact submanifold for each $x$ in $S_g$. If $g$ is positive, then the fundamental group of $S_g \times L(p,q)$ is not finite. These are the examples of the $K$-contact manifolds of rank $2$ which cannot have a $K$-contact structure of rank $3$, because $5$-dimensional contact toric manifolds have finite abelian fundamental groups by a result of Lerman \cite{Ler4}.

\subsubsection{Join construction}
The join construction is an orbifold version of the contact fiber bundle construction due to Wang-Ziller \cite{WaZi}. See also Boyer-Galicki-Ornea \cite{BoGaOr}.

Let $(M_1,\alpha_1)$ be a $3$-dimensional $K$-contact manifold of rank $1$. Let $(M_2,\alpha_2)$ be a $3$-dimensional $K$-contact manifold of rank $2$. If the diagonal action of $S^1$ on $M_1 \times  M_2$ is free, then we have a $K$-contact form on $M_1 \times _{S^1} M_2$ of rank $2$ induced from the $1$-form $\alpha_1 - \alpha_2$ on $M_1 \times  M_2$.

\subsubsection{Links of weighted homogeneous polynomials}
A polynomial $f$ in $\mathbb{C}[z_1,z_2,\cdots,z_n]$ is weighted homogeneous if $f$ satisfies
\begin{equation}\label{QuasiHomogeneous}
f(\lambda^{w_1}z_1,\lambda^{w_2}z_2,\cdots,\lambda^{w_n}z_n)=\lambda^{d}f(z_1,z_2,\cdots,z_n)
\end{equation}
for a vector $w=(w_1,w_2,\cdots,w_n)$ whose entries are positive integers, a positive integer $d$ and any $\lambda$ in $\mathbb{C}$. If the hypersurface in $\mathbb{C}^{n}$ defined by $f$ has an isolated singularity at $0$, then the link $L_{f}$ of $f$ at $0$ is defined by 
\begin{equation}
L_f=\{(z_1,z_2,\cdots,z_n) \in \mathbb{C}^{n} | f(z_1,z_2,\cdots,z_n)=0, \sum_{i=1}^{n} |z_i|^{2}=1 \}.
\end{equation}
$L_{f}$ is a smooth manifold with the $K$-contact form defined by the restriction of the contact form
\begin{equation}
\alpha_{w} = \sum_{i=1}^{n} \frac{\sqrt{-1}(z_{i}d\overline{z}_{i} - \overline{z}_{i}dz_{i})}{2w_i|z_i|^2}
\end{equation}
on the unit sphere $S^{2n-1}$ in $\mathbb{C}^{n}$. Note that the Reeb vector field of $\alpha_{w}$ generates the $S^1$-action defined by $\lambda \cdot (z_1,z_2,\cdots,z_n)= (\lambda^{w_1}z_1,\lambda^{w_2}z_2,\cdots,\lambda^{w_n}z_n)$ for $\lambda$ in $S^1$. The equation \eqref{QuasiHomogeneous} implies that the hypersurface defined by $f$ is invariant under the Reeb flow of $\alpha_{w}$. See \cite{BoGa3} and \cite{Tak}. 

We see that $L_f$ has a $K$-contact form of rank $2$ if 
\begin{equation}\label{QuasiHomogeneous2}
f(\lambda^{v_1}z_1,\lambda^{v_2}z_2,\cdots,\lambda^{v_n}z_n)=\lambda^{d'}f(z_1,z_2,\cdots,z_n)
\end{equation}
is satisfied for an element $v$ of $\mathbb{Z}^{n}$ which is not parallel to $w$ and an integer $d'$. Define an $S^1$-action $\sigma_{v}$ on $S^{2n-1}$ by $\lambda \cdot (z_1,z_2,\cdots,z_n)= (\lambda^{v_{1}}z_1,\lambda^{v_{2}}z_2,\cdots,\lambda^{v_{n}}z_n)$. Then $\sigma_{v}$ preserves $\alpha_{w}$. Moreover $L_f$ is preserved by $\sigma_{v}$. Hence $L_{f}$ has a $K$-contact form of rank $2$ by Lemma \ref{PertubationOfReebVectorFields}.

\section{Contact moment maps and graphs of isotropy data}\label{ContactMomentMapsAndGraphsOfIsotropyData}

We define the contact moment maps and show that the contact moment maps on $5$-dimensional $K$-contact manifolds of rank $2$ are Bott-Morse functions. We define graphs which represent combinatorial properties of the torus actions associated with $K$-contact forms. We will show Theorem \ref{KarshonClassification} which claims that graphs of isotropy data classify closed $5$-dimensional $K$-contact manifolds.

\subsection{Contact moment maps}

Let $(M,\alpha)$ be a closed connected $5$-dimensional $K$-contact manifold of rank $2$. We denote the Reeb vector field of $\alpha$ by $R$. Let $H$ be a Lie group and $\tau$ be an $\alpha$-preserving $H$-action on $M$. For each point $x$ on $M$, we define an element $\overline{\alpha}_x$ of the $\mathbb{R}$-dual space $\Lie(H)^{*}$ of $\Lie(H)$ by $\overline{\alpha}_x(\overline{Y})=\alpha_x(Y_x)$ for an element $\overline{Y}$ of $\Lie(H)$ where $Y$ is the infinitesimal action of $\overline{Y}$.

\begin{defn}\label{ContactMomentMap}(Contact moment maps) We define the contact moment map $\Phi_{\alpha}^{H}$ of $(M,\alpha)$ for $\tau$ by
\begin{equation}
\begin{array}{cccc}
\Phi_{\alpha}^{H} : & M & \longrightarrow & \Lie(H)^{*} \\
                & x & \longmapsto     & \overline{\alpha}_x.
\end{array}
\end{equation}
\end{defn}

Let $G$ be the closure of the Reeb flow in the isometry group $\Isom(M,g)$ for a Riemannian metric $g$ invariant under the Reeb flow. We denote the action of $G$ on $M$ by $\rho$. Let $\Phi_{\alpha}$ be the contact moment map for $\rho$. We fix a basis $\{\overline{R},\overline{X}\}$ of $\Lie(G)$ where $\overline{R}$ is the element of $\Lie(G)$ whose infinitesimal action is $R$. Let $\Phi_{\alpha}$ be the contact moment map for $\rho$. Then $\Phi_{\alpha}$ is presented as
\begin{equation}\label{CoordinatePresentation}
\Phi_{\alpha}(x)= \alpha_x(R_x) \overline{R}^{*} + \alpha_x(X_x) \overline{X}^{*}= \overline{R}^{*} + \alpha_x(X_x) \overline{X}^{*}
\end{equation}
\noindent where $\{\overline{R}^{*},\overline{X}^{*}\}$ is the basis of $\Lie(G)^{*}$ dual to $\{\overline{R},\overline{X}\}$ and $X$ is the infinitesimal action of $\overline{X}$. Hence the image of $\Phi_{\alpha}$ is contained in a $1$-dimensional affine space $\{v \in \Lie(G)^{*}| v(\overline{R})=1\}$ of $\Lie(G)^{*}$. We often fix $X$ as above and consider the function $\Phi=\alpha(X) \colon M \longrightarrow \mathbb{R}$. $\Phi$ is unique up to the multiplication by a real number. 
\begin{defn}(The maximal component and the minimal component of the contact moment map) We call the maximal component $B_{\max}$ and the minimal component $B_{\min}$ of $\alpha(X)$ the maximal component and minimal component of the contact moment map. 
\end{defn}
 Note that the maximal component and the minimal component can change if we change $X$.

$\Phi$ has the following fundamental properties.
\begin{lem}\label{MorseTheory}
\begin{enumerate}
\item Each level set of $\Phi$ is a union of orbits of $\rho$ and connected.
\item Let $\Sing \rho$ be the union of closed orbits of the Reeb flow on $(M,\alpha)$. Then we have $\Crit \Phi=\Sing \rho$.
\item Every connected component of $\Crit \Phi$ is an odd dimensional $K$-contact submanifold of $M$.
\item Except $B_{\max}$ and $B_{\min}$, every connected component of $\Crit \Phi$ is a closed orbit of the Reeb flow.
\end{enumerate}
\end{lem}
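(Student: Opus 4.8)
The plan is to base the entire argument on the moment-map identity for $\Phi=\alpha(X)$. Since $\rho$ preserves $\alpha$ and $G$ is a torus (so $X$ is $G$-invariant), the function $\Phi$ is $G$-invariant, which makes every level set a union of $\rho$-orbits; this gives the first half of (i). For the analytic content, Cartan's formula together with $L_X\alpha=0$ gives $d\Phi=d(\iota_X\alpha)=-\iota_X d\alpha$, and I would use this identity throughout.

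From it I read off (ii): a point $x$ is critical exactly when $d\alpha_x(X_x,\cdot)=0$ on all of $T_xM$. As $d\alpha_x$ is nondegenerate on $(\ker\alpha)_x$ and has kernel $\mathbb{R}R_x$ on $T_xM$, this happens if and only if $X_x\in\mathbb{R}R_x$, i.e. if and only if the $\rho$-orbit through $x$, spanned by $R_x$ and $X_x$, is one-dimensional. By the properties of $\rho$ listed before Lemma \ref{RestrictionOfRank}, these are precisely the closed Reeb orbits, so $\Crit\Phi=\Sing\rho$.

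For (iii), fix a connected component $C$ of $\Crit\Phi$. Since $d\Phi=0$ on $C$, the value $\Phi|_C=c$ is constant; writing $X_x=\lambda R_x$ on $C$ and using $\alpha(R)\equiv1$ gives $\lambda=\alpha(X)=c$, so the Killing field $X'=X-cR$, the infinitesimal action of $\overline{X'}=\overline X-c\overline R\in\Lie G\setminus\{0\}$, vanishes identically on $C$. Let $H\subset G$ be the closure of the one-parameter subgroup generated by $\overline{X'}$; then $H$ fixes $C$ pointwise, so $C\subset M^{H}$. If $H$ were all of $G$ then $C\subset M^{G}=\emptyset$, since the Reeb field has no zeros; hence $H\cong S^1$, and checking both inclusions shows $C$ is exactly a connected component of $M^{H}$. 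As the fixed-point set of an isometric circle action, $C$ is a totally geodesic closed submanifold, and $G$-invariance makes it Reeb-invariant. Finally $T_xC=\mathbb{R}R_x\oplus\big((\ker\alpha)_x\big)^{H}$, and the $H$-fixed subspace of the symplectic space $\big((\ker\alpha)_x,d\alpha_x\big)$ is symplectic; hence $\alpha|_C$ is contact with Reeb field $R|_C$, and the restricted metric is Reeb-invariant, so $C$ is an odd-dimensional $K$-contact submanifold. In particular $\dim C\in\{1,3\}$, since $C\subsetneq M$ (because $H$ acts nontrivially).

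For (iv) and the connectedness asserted in (i), I would first establish that $\Phi$ is Bott--Morse with only even indices and coindices. Near $C$ the transverse structure is the $H$-equivariant normal bundle, on which $H\cong S^1$ acts with nonzero weights, and the local normal form (Section 8) identifies the transverse Hessian of $\Phi$ with these weights; thus every index is even, so no critical submanifold has index or coindex $1$. Atiyah's connectedness lemma then yields that the level sets of $\Phi$ are connected, completing (i), and that $\Phi$ attains its maximum (resp. minimum) on a single connected critical submanifold, namely the unique local maximum $B_{\max}$ (resp. the unique local minimum $B_{\min}$). A component $C$ with $\dim C=3$ has normal rank $2$, so $H$ acts with a single weight and $C$ is necessarily a local extremum; by uniqueness it must be $B_{\max}$ or $B_{\min}$. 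Every remaining component therefore has dimension $1$, i.e. is a closed Reeb orbit, which is (iv). I expect the main obstacle to be the even-index Bott--Morse claim: it requires the $H$-equivariant normal form and the identification of the transverse Hessian with the symplectic weights, after which the connectedness lemma carries out the remaining combinatorial reasoning.
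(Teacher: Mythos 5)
Your proposal is correct, and it departs from the paper's proof in two substantive places. Parts (i)-(first half) and (ii) coincide with the paper, which uses exactly the computation $Y(\alpha(X))=0$ and $d(\alpha(X))(Y)=-d\alpha(X,Y)$ together with nondegeneracy of $d\alpha$ on $TM/\mathbb{R}R$. For (iii), however, the paper argues from the local normal form of Section 8: after a finite cyclic covering (Lemma \ref{FiniteCovering}) it writes $\alpha(X)=w_1|z_1|^2+w_2|z_2|^2+c$ near each closed orbit and reads off the critical components as $\{z_1=0\}$, $\{z_2=0\}$ or $\{z_1=z_2=0\}$; you instead note that the Killing field $X-cR$ vanishes on a critical component $C$, take the subtorus $H\cong S^1$ generated by its closure (ruling out $H=G$ since $R$ is nonvanishing), and identify $C$ with a component of the fixed-point set $M^H$ — totally geodesic, Reeb-invariant, with $T_xC=\mathbb{R}R_x\oplus((\ker\alpha)_x)^H$ and the $H$-fixed subspace symplectic. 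This intrinsic argument is clean and avoids the coverings for (iii) itself, though you still lean on the Section 8 normal form when establishing the even-index Bott--Morse property (which the paper does too, but defers to Lemma \ref{MorseBottTheory}). The larger divergence is connectedness of level sets in (i): the paper passes to the symplectic orbifold $M/\sigma$ for a transverse $S^1$-subaction, shows the induced $S^1$-action is hamiltonian, and cites Lerman--Tolman connectivity of moment-map fibers on symplectic orbifolds, whereas you run Bott--Morse theory directly on the compact manifold $M$ and invoke Atiyah's connectedness lemma (no index or coindex equal to $1$), which simultaneously gives uniqueness of the local maximum and minimum and hence (iv): a $3$-dimensional component has rank-$2$ normal bundle on which $H$ acts with a single nonzero weight, so its transverse Hessian is definite, making it a local extremum that uniqueness forces to be $B_{\max}$ or $B_{\min}$. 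Your route avoids orbifold machinery entirely at the price of front-loading the Morse--Bott analysis; the paper's route keeps this lemma short by quoting the orbifold connectivity result and reuses its explicit normal-form coordinates later. Note also that your version makes explicit a step the paper leaves terse: its assertion that the $3$-dimensional cases "occur only if $\Sigma$ is contained in $B_{\min}$ or $B_{\max}$" implicitly uses the connectivity of part (i) in just the way your appeal to Atiyah's lemma does.
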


\begin{proof}
(i) Since we have
\begin{equation}\label{MomentMap}
Y(\alpha(X))=L_{Y}\alpha(X)+\alpha([Y,X])=0
\end{equation}
for an infinitesimal action $Y$ of $\rho$, $\alpha(X)$ is constant on the orbits of $\rho$. Hence the former part follows. Let $\sigma$ be an $S^1$-subaction of $\rho$ generated by a vector field $Z$ whose orbits are transverse to $\ker \alpha$. We define a $1$-form $\beta$ by $\frac{1}{\alpha(Z)}\alpha$. Then by the argument in the proof of Lemma \ref{PertubationOfReebVectorFields}, $\beta$ is the contact form with Reeb vector field $Z$ defining the contact structure $\ker \alpha$. $d\beta$ induces a symplectic form $\omega$ on the orbifold $M/\sigma$. $\rho$ induces an $\omega$-preserving $S^1$-action $\tau$ on $M/\sigma$. By $L_{Z}\beta=0$, we have
\begin{equation}\label{Hamiltonian}
d(\beta(Z))(Y)=-d\beta(Z,Y)
\end{equation}
for a vector field on $M/\sigma$. \eqref{Hamiltonian} implies that $\alpha(Z)$ is a hamiltonian function for the $S^1$-action $\tau$ on $(M/\sigma,\omega)$. The latter part follows from the connectivity of the fiber of symplectic moment maps for hamiltonian actions on symplectic orbifolds. We refer \cite{LeTo}.

(ii) For a point $x$ on $M$ and a vector $Y_x$ in $T_{x}M$, we have 
\begin{equation}\label{Nondegeneracy}
d(\alpha(X))_x(Y_x)=-d\alpha_x(X_x,Y_x)
\end{equation}
by $L_{X}\alpha=0$. The left hand side of \eqref{Nondegeneracy} is $0$ for every $Y$ in $T_{x}M$ if and only if $x$ is a critical point of $\Phi$. Since $d\alpha_x$ is nondegenerate on $T_xM/\mathbb{R}R_x$, the right hand side of \eqref{Nondegeneracy} is $0$ for every $Y$ in $T_{x}M$ if and only if $X_x$ is parallel to $R_x$. Hence the proof is completed.

We show (iii) and (iv). If the isotropy group $G_{\Sigma}$ of $\rho$ at $\Sigma$ is connected, then we have an open tubular neighborhood of $\Sigma$ which is diffeomorphic to $S^1 \times D^{4}_{\epsilon}$ such that $\alpha(X)$ is written in the standard coordinate as
\begin{equation}\label{EquationOfPhi}
\alpha(X)= w_1|z_1|^2 + w_2|z_2|^2 + c
\end{equation}
for real numbers $w_1,w_2$ and $c$ where $\Sigma=\{(\zeta,z_{1},z_{2}) \in S^1 \times D^{4}_{\epsilon}| z_1=0, z_2=0\}$ by the argument in Subsection \ref{ConnectedIsotropyGroupCases}. Then the connected component of $\Crit \Phi$ is $\{z_1=0\}$, $\{z_2=0\}$ or $\{z_1=0, z_2=0\}$. The first two cases occur only if $\Sigma$ is contained in the minimal component $B_{\min}$ or in the maximal component $B_{\max}$ of $\Phi$. Hence (iii) and (iv) are proved in this case. If $G_{\Sigma}$ is not connected, we have a finite cyclic covering of open tubular neighborhood of $\Sigma$ which is diffeomorphic to $S^1 \times D^{4}_{\epsilon}$ such that the pullback of $\alpha(X)$ is written as \eqref{EquationOfPhi} by Lemma \ref{FiniteCovering}. The rest of the argument is the similar to the previous case.
\end{proof}

\subsection{Chains of gradient manifolds}

We apply the Bott-Morse theory to the contact moment maps for the torus action associated with $K$-contact forms of rank $2$.

Let $(M,\alpha)$ be a $5$-dimensional $K$-contact manifold of rank $2$. We denote the Reeb vector field of $\alpha$ by $R$.
\begin{defn}(Riemannian metrics compatible with $K$-contact forms) We say a Riemannian metric $g$ on $M$ invariant under the Reeb flow is compatible with $\alpha$ if the following conditions are satisfied:
\begin{enumerate}
\item The element $J$ in $\Aut(\ker \alpha)$ determined by the equation $g(Jv,w)=d\alpha(v,w)$ for every $v$ and $w$ in $C^{\infty}(\ker \alpha)$ satisfies $J^{2}=-\id|_{\ker \alpha}$.
\item $\ker \alpha$ is orthogonal to $R$ with respect to $g$.
\end{enumerate}
\end{defn}
We show that there exists a compatible metric $g$ on any $K$-contact manifold $(M,\alpha)$ as follows: Fix a metric $g'$ invariant under the Reeb flow. Let $G$ be the closure of the Reeb flow in $\Isom(M,g')$. Then $\ker \alpha$ is a $G$-equivariant vector bundle over $M$ with an invariant symplectic structure $d\alpha|_{\ker \alpha}$. Define $A$ in $\Aut(\ker \alpha)$ by $d\alpha(Y,Z)=g'(AY,Z)$ for $Y$ and $Z$ in $C^{\infty}(\ker \alpha)$. $-A^{2}$ is positive symmetric with respect to $g$. Hence $\sqrt{-A^{2}}$ is well-defined. We put $J=(\sqrt{-A^{2}})A$. Then $J$ is a $G$-invariant complex structure compatible with $d\alpha$. The metric $g_1$ on $\ker \alpha$ defined by $g_1(Y,Z)=d\alpha(Y,JZ)$ is a $G$-invariant metric compatible with $d\alpha$. We obtain a metric compatible with $\alpha$ by extending $g_{1}$ to $TM \otimes TM$ so that $R$ is orthogonal to $\ker \alpha$.

We fix a metric $g$ on $M$ compatible with $\alpha$. Let $G$ be the closure of the Reeb flow in $\Isom(M,g)$. Let $\rho$ be the action of $G$ on $M$. We fix an element $\overline{X}$ of $\Lie(G)$ which is not parallel to the element corresponding to $R$. We denote the function $\alpha(X)$ on $M$ by $\Phi$. Then we have
\begin{equation}
\begin{array}{ccc}\label{gradientflow}
\grad(\Phi)=-JX, & [X,JX]=0, & [R,JX]=0.
\end{array}
\end{equation}
Hence we have a $(T^2 \times \mathbb{R})$-action $\sigma$ on $M$ by the product of $\rho$ and the gradient flow of $\Phi$ on $M$.

\begin{defn}(Gradient manifolds and their limit sets) An orbit of $\sigma$ is called a gradient manifold. A gradient manifold $L$ is called free if $L$ contains a $T^2$-orbit of $\rho$ consisting of points with trivial isotropy group. The $\alpha$-limit set of a gradient manifold $L$ is the union of $\alpha$-limit set of the orbit of the gradient flow of $\Phi$ contained in $L$. The $\omega$-limit set of $L$ is defined similarly.
\end{defn}

Then we have the following lemma:
\begin{lem}\label{MorseBottTheory}
\begin{enumerate}
\item Let $L$ be a gradient manifold in $(M,\alpha)$. Then $(L,\alpha|_{L})$ is a $K$-contact manifold of rank $2$.
\item The isotropy groups of $\rho$ at any two points in a gradient manifold coincide.
\item The $\alpha$-limit set of a gradient manifold is a closed orbit of the Reeb flow. The $\omega$-limit set of a gradient manifold is a closed orbit of the Reeb flow.
\item Let $\Sigma$ be a closed orbit of the Reeb flow. If $\Sigma$ is not contained in $B_{\max}$ nor in $B_{\min}$, then $\Sigma$ is the $\alpha$-limit set and the $\omega$-limit set of two different gradient manifolds. If $\Sigma$ is contained in $B_{\max}$ or $B_{\min}$ of dimension $3$, then $\Sigma$ is the $\omega$-limit set or the $\alpha$-limit set of a gradient manifold. If $\Sigma$ is contained in $B_{\max}$ or $B_{\min}$ of dimension $1$, then $\Sigma$ is the $\omega$-limit set or the $\alpha$-limit set of uncountably many gradient manifolds. 
\item If a limit set $\Sigma$ of a gradient manifold $L$ is not contained in $B_{\max}$ or $B_{\min}$ of dimension $1$, then the closure of $L$ is a smooth submanifold of $M$ near $\Sigma$. A closed orbit $\Sigma$ of the Reeb flow in $B_{\max}$ or $B_{\min}$ of dimension $1$ is contained in the closure of two gradient manifolds whose closures are smooth submanifolds near $\Sigma$.
\end{enumerate}
\end{lem}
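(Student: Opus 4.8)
The plan is to dispatch the algebraic assertions (i) and (ii) directly from the commutation relations in \eqref{gradientflow}, and to handle the dynamical assertions (iii)--(v) by combining the Bott--Morse property of $\Phi$ from Lemma \ref{MorseTheory} with the equivariant local normal forms near closed Reeb orbits from Section 8. The two standing facts I would use repeatedly are that a gradient manifold $L$ is an orbit of $\sigma=\rho\times(\text{gradient flow})$, hence is invariant under the $T^2$-action $\rho$ and under the complete gradient flow $\phi_t$; and that, by Lemma \ref{MorseTheory}, $\Crit\Phi=\Sing\rho$, so that each component of the critical set---including $B_{\max}$ and $B_{\min}$, which are components of $\Crit\Phi$ and therefore lie in $\Sing\rho$---is a union of closed Reeb orbits.

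For (i), write $X=\Phi R+X'$ with $X'\in\ker\alpha$; then $X'$ vanishes exactly on $\Crit\Phi$, so on a gradient manifold $L$ disjoint from the critical set one has $X'\neq0$ and $\dim L=3$ with $T_pL=\Span\{R_p,X_p,(JX)_p\}$. Since $JX\in\ker\alpha$ and $d\alpha(X,JX)=d\alpha(X',JX)=g(JX,JX)=|X'|^2$, evaluating the $3$-form gives $(\alpha\wedge d\alpha)(R,X,JX)=|X'|^2\neq0$, so $\alpha|_L$ is a contact form. Because $\alpha(R)=1$ and $\iota_Rd\alpha=0$ hold on all of $M$, the field $R$ restricts to the Reeb field of $\alpha|_L$, so the Reeb flow of $\alpha|_L$ preserves $g|_L$; as $\rho$ restricts to a $T^2$-action on $L$ with $2$-dimensional generic orbits, the closure of this Reeb flow is $T^2$ and $(L,\alpha|_L)$ has rank $2$. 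Part (ii) is then immediate: the relations $[R,JX]=0$ and $[X,JX]=0$ say that $\phi_t$ commutes with $\rho$, whence $h\cdot\phi_t(p)=\phi_t(h\cdot p)$ shows $G_{\phi_t(p)}=G_p$, and since $G$ is abelian the isotropy is also constant along $\rho$-orbits; thus it is constant on all of $L$.

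For (iii), completeness of $\phi_t$ on the closed manifold $M$ together with the Bott--Morse property forces the $\alpha$- and $\omega$-limit of each gradient trajectory in $L$ to be a nonempty connected subset of $\Crit\Phi=\Sing\rho$. To identify it with a single closed Reeb orbit I would pass to the model $S^1\times D^4_\epsilon$ of Lemma \ref{MorseTheory} with $\Phi=w_1|z_1|^2+w_2|z_2|^2+c$: the sets flowing to the central orbit $\Sigma$ are the $\rho$-invariant coordinate slices $\{z_1=0\}$, $\{z_2=0\}$ or $\{z_1=z_2=0\}$, the gradient in the normal direction is radial, so each trajectory converges to a single point whose $\rho$-orbit lies in $\Sing\rho$, and the $\rho$-saturation---which is $L$---has this closed Reeb orbit as its limit set. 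The non-connected isotropy case reduces to this one via the finite covering of Lemma \ref{FiniteCovering}, exactly as in the proof of Lemma \ref{MorseTheory}.

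The counting in (iv) and the smoothness in (v) are then read off the same normal forms according to the signs of the weights. For an interior $\Sigma$ the weights have opposite signs, so $\{z_1=0\}$ and $\{z_2=0\}$ are the two distinct $3$-dimensional slices realizing $\Sigma$ as an $\omega$-limit and an $\alpha$-limit respectively, and both closures $S^1\times D^2$ are smooth. For $\Sigma$ in a $3$-dimensional $B_{\max}$ or $B_{\min}$ one weight vanishes, only one slice $\{z_2=0\}$ flows to $\Sigma$, giving a single gradient manifold with smooth closure. For $\Sigma$ an isolated extremum both weights have the same sign, the whole punctured neighborhood flows to $\Sigma$, so the gradient manifolds form the uncountable family parametrized by $S^3/T^2$, and among them only the two coordinate slices $\{z_1=0\}$ and $\{z_2=0\}$ have closures smooth at $\Sigma$ while the remaining ones are singular there. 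I expect the main obstacle to be exactly this local step: showing that each trajectory converges to a single point of its limiting orbit rather than accumulating on all of it, and recognizing the relevant slice closures as smooth submanifolds, both of which rely on the explicit equivariant normal forms of Section 8 and on choosing an invariant compatible metric adapted to them.
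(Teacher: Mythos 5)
Your treatment of (i) and (ii) is sound and essentially matches the paper: (i) amounts to observing that $TL\cap\ker\alpha$ is the symplectic plane spanned by $X'$ and $JX$ (your explicit computation $(\alpha\wedge d\alpha)(R,X,JX)=|X'|^2$ is a fine substitute), and (ii) is exactly the commutation of $\rho$ with the gradient flow coming from \eqref{gradientflow}.

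The gap is in (iii)--(v), and it is the one you yourself flag at the end: your argument reads the dynamics off the normal form $\Phi=w_1|z_1|^2+w_2|z_2|^2+c$ \emph{as if the gradient flow were radial in those coordinates}. But the normal form of Lemma~\ref{MorseTheory} (via Lemma~\ref{FiniteCovering} and \eqref{NormalFormOfPhi}) normalizes $\alpha$ and hence $\Phi$, not the compatible metric $g$, which was fixed arbitrarily at the start of the subsection; $\grad_g\Phi=-JX$ depends on $g$ through $J$, and for a general compatible metric the trajectories need not be radial, the sets flowing into $\Sigma$ need not be the coordinate slices $\{z_1=0\}$, $\{z_2=0\}$, and nothing in your argument shows a trajectory converges to a single point of $\Sigma$ rather than accumulating along it. This is precisely why the paper does not argue this way: it instead proves the identity \eqref{Hessian} showing that $\Hess_\Phi$ at a critical point is dual to the linearization of the time-one gradient map, then establishes a flow version of the stable manifold theorem (Theorem~\ref{StableManifoldTheorem}, deduced from Shub's discrete-time Theorem III.7 by the mean value estimate controlling $\phi_t$ for noninteger $t$). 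From this, (iii) and (iv) follow from Bott--Morse nondegeneracy and evenness of the Morse indices (the counts being governed by the dimensions of the stable and unstable manifolds: $3$-dimensional at an interior $\Sigma$ and at a closed orbit in a $3$-dimensional extremal component, $4$-dimensional at a $1$-dimensional extremum, whence uncountably many gradient manifolds), and the first half of (v) is immediate since $L$ is then the full stable or unstable manifold, smooth near $\Sigma$ by the theorem. For the second half of (v) the paper does not identify smooth gradient manifolds by inspecting slices at all: it invokes Lemma~\ref{TwoKContactSubmanifolds2}, which produces two $K$-contact submanifolds through $\Sigma$ from the $\rho$-invariant splitting of the symplectic normal bundle; your stronger claim that \emph{all other} gradient manifolds have singular closure at an isolated extremum is not asserted in the lemma, is again metric-dependent, and is not needed. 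To repair your proposal you would either have to restrict the lemma to metrics made Euclidean in the normal-form coordinates (which would break later applications where $g$ is modified, e.g.\ Lemma~\ref{Perturbation}) or supply the stable-manifold argument, at which point you have reproduced the paper's proof.
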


\begin{proof}
(i) Let $J$ be the complex structure on $\ker \alpha$ defined by $d\alpha$ and $g$. By the first equation of \eqref{gradientflow}, $TL \cap \ker \alpha$ is spanned by $X$ and $JX$. Hence $TL \cap \ker \alpha$ is a symplectic subspace of $(\ker \alpha, d\alpha)$. Since $L$ is a union of the orbits of Reeb flow, $(L,\alpha|_{L})$ is a $K$-contact manifold. Since $L$ contains the orbits of the Reeb flow, the rank of $(L,\alpha|_{L})$ is $2$.

(ii) follows from the fact that $\rho$ commutes with the gradient flow of $\Phi$.

(iii) and (iv) follow from the facts that $\Phi$ is a Bott-Morse function and the Morse index of each connected component of $\Crit \Phi$ is even. We will show that $\Phi$ is a Bott-Morse function. By Lemma \ref{MorseTheory}, each connected component of $\Crit \Phi$ is a smooth submanifold of $M$ which is a union of closed orbits of the Reeb flow of $\alpha$. Let $\Sigma$ be a closed orbit of the Reeb flow of $\alpha$. By Lemma \ref{FiniteCovering} and \eqref{NormalFormOfPhi}, we have a finite covering of an open tubular neighborhood of $\Sigma$ diffeomorphic to $S^1 \times D^{4}_{\epsilon}$ such that $\Phi$ is written as
\begin{equation}
\alpha(X)= w_1|z_1|^2 + w_2|z_2|^2 + c
\end{equation}
in the standard coordinate on $S^1 \times D^{4}_{\epsilon}$ for some real numbers $w_{1}$ and $w_{2}$ such that $\Sigma$ is defined by the equation $z_{1}=0$ and $z_{2}=0$. We can assume that $w_{1} \geq w_{2}$. If both of $w_{1}$ and $w_{2}$ are nonnegative, then $\Sigma$ is contained in the minimal component of $\Phi$ and the Morse index is $0$. If $w_{1}$ is positive and $w_{2}$ is negative, then $\Sigma$ is a connected component of $\Crit \Phi$ of Morse index $2$. If both of $w_{1}$ and $w_{2}$ are negative, then $\Sigma$ is contained in the maximum component of $\Phi$ and the Morse index is $4$. In any case, the Hessian $\Hess_{\Phi}$ of $\Phi$ is nondegenerate on the transverse direction of the connected component of $\Crit \Phi$. Hence $\Phi$ is a Bott-Morse function and the Morse index of each connected component of $\Crit \Phi$ is even. Let $\{\phi_{t}\}$ be the flow generated by $\grad \Phi$. Then $\Hess_{\Phi}$ at $x_{0}$ is dual to the element of $\Aut(T_{x_{0}}M)$ defining the differential of the action of $\{-(D\phi_{t})_{x_0}\}_{t \in \mathbb{R}}$ on $T_{x_{0}}M$. In fact, for vector fields $Y$ and $Z$ locally defined near $x_{0}$, we have
\begin{equation}\label{Hessian}
\Hess_{\Phi} (Y_{x_{0}},Z_{x_{0}}) = (Y Z \Phi)(x_{0}) = Y_{x_{0}} d\Phi(Z)(x_{0}) = Y_{x_{0}}(g(\grad \Phi,Z))(x_{0}) = g([Y,\grad \Phi],Z)(x_{0}).
\end{equation}
The last equality follows from the chain rule and the equality $(\grad \Phi)_{x_{0}}=0$. Since the differential of the action of $\{-(D\phi_{t})_{x_0}\}_{t \in \mathbb{R}}$ on $T_{x_{0}}M$ by mapping $Y_{x_{0}}$ to $[\grad \Phi,Y]_{x_{0}}$, \eqref{Hessian} implies that the Hessian of $\Phi$ at $x_{0}$ is dual to the element of $\Aut(T_{x_{0}}M)$ defining the differential of the action of $\{-(D\phi_{t})_{x_0}\}_{t \in \mathbb{R}}$ on $T_{x_{0}}M$. Hence the dimension of $W^{s}_{\loc 1}$ is equal to the dimension of the maximal positive definite subspace of $T_{x_{0}}M$ with respect to $\Hess_{\Phi}$.

Then (iii) and (iv) follow from the stable manifold theorem for flows:
\begin{thm}\label{StableManifoldTheorem}
Let $Z$ be a smooth vector field on a smooth manifold $M$. Let $\{\phi_{t}\}$ be the flow generated by $Z$. Suppose that $x_{0}$ is a fix point of $\{\phi_{t}\}$. Then we have a decomposition $T_{x_{0}}M=W_{s} \oplus W_{c} \oplus W_{u}$ where $W_{s}$, $W_{c}$ and $W_{u}$ are the union of generalized eigenspaces of the linear action of $\{(D\phi_{t})_{x_0}\}_{t \in \mathbb{R}}$ on $T_{x_{0}}M$ with respect to eigenvalues of the absolute value less than $1$, the absolute value $1$ and the absolute value greater than $1$ respectively. Then there exist an open neighborhood $U$ of $x_{0}$, a distance $d$ on $U$ and a smoothly embedded disk $W^{s}_{\loc}$ in $M$ such that
\begin{enumerate}
\item $W^{s}_{\loc}=\{x \in U | \phi_{t}(U) \in U$ for every $t$ in $\mathbb{R}_{>0}$ and $d(\phi_{t}(x),0)$ tends to zero exponentially $\}$,
\item $W^{s}_{\loc}$ tangent to $W_{s}$ at $0$ and
\item $\phi_{1}|_{W^{s}_{\loc}}$ is a contraction map on $(W^{s}_{\loc},d)$.
\end{enumerate}
\end{thm}
\begin{proof}
By Theorem III.7 (1) in \cite{Shu}, we have an open neighborhood $U$, a distance $d$ on $U$ and a smoothly embedded disk $W^{s}_{\loc 1}$ in $M$ such that
\begin{enumerate}
\item $W^{s}_{\loc 1}=\{x \in U | \phi_{1}^{n}(U) \in U$ for every $n \geq 0$ and $d(\phi_{1}^{n}(x),0)$ tends to zero exponentially $\}$,
\item $W^{s}_{\loc 1}$ tangent to $W_{s}$ at $0$ and
\item $\phi_{1}|_{W^{s}_{\loc 1}}$ is an contraction map on $(W^{s}_{\loc 1},d)$.
\end{enumerate}
Since $\phi_{1}|_{W^{s}_{\loc 1}}$ is an contraction map on $(W^{s}_{\loc 1},d)$, we can assume that $U$ is relatively compact changing $W^{s}_{\loc 1}$ smaller. We put $W^{s}_{\loc}=\{x \in U | \phi_{t}(U) \in U$ for every $t$ in $\mathbb{R}_{>0}$ and $d(\phi_{t}(x),0)$ tends to zero exponentially $\}$. We put $||v||=d(v,0)$ for $v$ in $W^{s}_{\loc 1}$. To show Theorem \ref{StableManifoldTheorem}, it suffices to show that $W^{s}_{\loc}=W^{s}_{\loc 1}$. $W^{s}_{\loc}$ is contained in $W^{s}_{\loc 1}$ by definition. Take a point $y$ on $W^{s}_{\loc 1}$. We fix a constant $c$ such that $d(D\phi_{t}(v),0) \leq c$ for every $t$ in $[0,1]$ and $v$ in $\overline{U}$. For $t$ in $\mathbb{R}_{>0}$, we denote the greatest integer which is less than $n$ by $E(n)$. By the mean value theorem, we have
\begin{equation}
||\phi_{t}(y)|| = ||\phi_{t - E(t)}(\phi_{E(t)}(y)) - \phi_{t - E(t)}(\phi_{E(t)}(0))|| = \sup_{v \in \overline{U}}||D\phi_{t}(v)|| ||\phi_{E(t)}(y) - \phi_{E(t)}(0)|| \leq c ||\phi_{E(t)}(y)||.
\end{equation}
Since $||\phi_{E(t)}(y)||$ tends to zero exponentially when $t$ tends to $\infty$, $||\phi_{t}(y)||$ tends to zero exponentially. Hence we have $W^{s}_{\loc}=W^{s}_{\loc 1}$. The proof of Theorem \ref{StableManifoldTheorem} is completed.
\end{proof}

We show (v). If a limit set $\Sigma$ of a gradient manifold $L$ is not contained in $B_{\max}$ or $B_{\min}$ of dimension $1$, then the stable manifolds and the unstable manifolds of $\Sigma$ are of dimension $3$. Hence $L$ is the stable manifold or the unstable manifold of $\Sigma$, which has a smooth closure near $\Sigma$ by Theorem \ref{StableManifoldTheorem}. The latter part of (v) follows from Lemma \ref{TwoKContactSubmanifolds2}.
\end{proof}

We define chains of gradient manifolds.
\begin{defn}(Chains and its nontriviality) A chain is a finite ordered set of gradient manifolds $\{ L_1,L_2,\cdots,L_k \}$ of $\Phi$ which satisfies the following conditions: 
\begin{enumerate}
\item The $\alpha$-limit set of $L_1$ is a closed orbit of the Reeb flow contained in $B_{\min}$.
\item The $\omega$-limit set of $L_k$ is a closed orbit of the Reeb flow contained in $B_{\max}$.
\item The $\alpha$-limit set of $L_{i}$ and the $\omega$-limit set of $L_{i+1}$  coincide with each other $(i=1,2,\cdots,k-1)$.
\end{enumerate}
A chain is nontrivial if it contains more than one gradient manifold or a gradient manifold consisting of points with nontrivial isotropy group of $\rho$.
\end{defn}

\subsection{$K$-contact submanifolds}

We have the following:
\begin{lem}\label{KContactSubmanifolds}
\begin{enumerate}
\item We put $N=\{x \in M-\Crit \Phi| G_{x} \neq \{e\}\}$. Each connected component of $N$ is a gradient manifold whose closure in $M$ is a smooth submanifold diffeomorphic to a lens space.
\item A connected $3$-dimensional closed $K$-contact submanifold of $(M,\alpha)$ consisting of points with nontrivial isotropy groups of $\rho$ is $B_{\min}$, $B_{\max}$ or the closure of a gradient manifold.
\item The isotropy group $G_{L}$ of $\rho$ at a connected component $L$ of $N$ is cyclic.
\end{enumerate}
\end{lem}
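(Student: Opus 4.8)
The plan is to reduce all three assertions to the equivariant local normal form near a closed Reeb orbit $\Sigma$ (Lemma \ref{FiniteCovering} together with \eqref{EquationOfPhi}): after passing to a finite cyclic cover, a tubular neighborhood of $\Sigma$ is $S^1 \times D^{4}_{\epsilon}$ with a linear $T^2$-action and $\Phi = w_1|z_1|^2 + w_2|z_2|^2 + c$. Throughout I use that $\Phi$ is Bott-Morse with even indices and that $\Crit\Phi = \Sing\rho$ (Lemma \ref{MorseTheory}), together with the structure of gradient manifolds from Lemma \ref{MorseBottTheory}.

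For (i): since $\rho$ is effective and a point $x \in M - \Crit\Phi$ has a $2$-dimensional orbit, its isotropy $G_x$ is finite, and $N$ is the locus where $G_x \neq \{e\}$. First I would show $N$ is saturated by gradient manifolds: the gradient manifold $L$ through $x \in N$ has constant isotropy $G_x \neq \{e\}$ along $L$ (Lemma \ref{MorseBottTheory}(ii)) and avoids $\Crit\Phi$, where $\grad\Phi$ vanishes, so $L \subseteq N$. In the normal form the points of $N$ near $\Sigma$ are exactly the punctured coordinate axes $\{z_2 = 0\}$ and $\{z_1 = 0\}$ on which one circle acts with nontrivial finite kernel, and each is a piece of a single gradient manifold; this exhibits $N$ as a $3$-dimensional submanifold in which the (connected) gradient manifolds are open and disjoint, hence are precisely its connected components. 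For such a component $L$, the closure $\overline{L} = L \cup (\alpha\text{-limit}) \cup (\omega\text{-limit})$ is a closed, smooth (Lemma \ref{MorseBottTheory}(v)), $\rho$-invariant $3$-submanifold that is $K$-contact of rank $2$ (Lemma \ref{MorseBottTheory}(i)); gluing the two normal-form solid-torus neighborhoods of its extremal Reeb orbits along the separating torus exhibits $\overline{L}$ as a lens space (cf. Lemma \ref{LensSpacesAreRank2}).

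For (iii), along $L$ the group $G_L$ fixes $L$ pointwise and acts on the rank-$2$ normal bundle $\nu(L) \subseteq \ker\alpha$, which carries the $G$-invariant complex structure $J$; thus $G_L \to \U(\nu(L)) \cong \U(1)$ is a homomorphism, and it is injective because an isometry in $G \subseteq \Isom(M,g)$ fixing $L$ and acting trivially on $\nu(L)$ is the identity, so $G_L$ is cyclic. For (ii), a connected $3$-dimensional $K$-contact submanifold $S$ with only nontrivial isotropy is $\rho$-invariant, since its Reeb flow, hence its closure $G$, preserves the closed set $S$. If $\Phi|_S$ is constant then $S$ is a $3$-dimensional component of $\Crit\Phi$, so $S = B_{\min}$ or $B_{\max}$ by Lemma \ref{MorseTheory}(iv); otherwise $S$ meets $N$, and near each of its Reeb orbits the normal form forces $S$ to be one of the coordinate solid tori $\{z_2=0\}$ or $\{z_1=0\}$, which is exactly the local shape of the closure of a gradient manifold, so patching identifies $S$ with $\overline{L}$ for a single gradient manifold $L$.

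The main obstacle is that the appealing shortcut for (ii) --- declaring $S - \Crit\Phi$ to be a union of ambient gradient manifolds --- is not automatic: for the fixed compatible metric, an arbitrary $K$-contact submanifold need not be tangent to $\grad\Phi = -J X^{\perp}$, where $X^{\perp}$ is the $\ker\alpha$-component of $X$, since its contact distribution $TS \cap \ker\alpha$ need not be $J$-invariant. The genuine content is therefore the local rigidity statement that a $\rho$-invariant $3$-dimensional $K$-contact submanifold through a closed Reeb orbit must be a coordinate submanifold of the normal form, carried out also in the disconnected-isotropy (finite cover) case, together with the global check that the resulting pieces assemble into a single gradient manifold rather than a larger $\rho$-invariant union. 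I expect this local-to-global identification, and the gluing to the lens-space model in (i), to be the technical heart of the argument.
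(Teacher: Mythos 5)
Your arguments for (i) and (iii) are correct and essentially the paper's own: constancy of isotropy along gradient manifolds (Lemma \ref{MorseBottTheory} (ii)) shows each component of $N$ is saturated by gradient manifolds, the normal form of Lemma \ref{GeneralTorusActions} forces both the component $L$ and the gradient manifold through a point of it to coincide with a punctured coordinate axis near a limit orbit (one small imprecision: an axis may carry \emph{trivial} isotropy, so $N$ near $\Sigma$ is contained in, rather than equal to, the two punctured axes --- harmless, since the axis containing your point does have nontrivial isotropy), and cyclicity of $G_{L}$ follows from its effective linear action on a normal fiber, i.e.\ an embedding into $\SO(2)$, which is exactly the paper's proof of (iii). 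The only cosmetic difference is that the paper identifies $\overline{L}$ with a lens space by citing Lerman's classification of $3$-dimensional contact toric manifolds, whereas you glue the two invariant solid-torus neighborhoods along the separating torus; both are fine.

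Where you diverge is (ii), and there your diagnosis of the ``technical heart'' is misplaced. You worry that one cannot declare $S-\Crit \Phi$ to be a union of gradient manifolds because $TS \cap \ker \alpha$ need not be $J$-invariant for the fixed compatible metric; but no tangency to $\grad \Phi$ has to be verified at all. Since every point of $S$ has nontrivial isotropy, the inclusion $S - \Crit \Phi \subseteq N$ is purely set-theoretic, and by part (i) --- already proved --- each component of $N$ \emph{is} a gradient manifold. Then $S - \Crit \Phi$ is a $3$-dimensional submanifold of the $3$-manifold $N$, hence open in each gradient manifold $L$ it meets, while $S \cap L$ is also closed in $L$ because $S$ is compact; connectedness of $L$ gives $L \subseteq S$, hence $\overline{L} \subseteq S$. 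If instead $S \subseteq \Crit \Phi$, Lemma \ref{MorseTheory} (iii), (iv) yields $S = B_{\min}$ or $B_{\max}$; otherwise $\overline{L}$ is by (i) a compact boundaryless $3$-manifold inside the connected $3$-manifold $S$, so invariance of domain forces $S = \overline{L}$. This is precisely the content of the paper's two-line proof of (ii) ``from (i) and Lemma \ref{MorseTheory}.'' Your proposed substitute --- a local rigidity analysis, via the normal form, of $\rho$-invariant $3$-dimensional $K$-contact submanifolds through closed Reeb orbits, including the finite-cover case --- would also work, but it is an unnecessary detour rather than a required repair.
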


\begin{proof}
(i) Let $L$ be a connected component of $N$. Take a point $x$ in $L$ and fix a metric $g$ on $M$. Let $L'$ be the gradient manifold containing $x$. Since two points of $L'$ have the same isotropy groups of $\rho$ by Lemma \ref{MorseBottTheory} (ii), $L'$ is contained in $L$. On an open tubular neighborhood of a limit set $\Sigma$ of $L'$, $\rho$ is written as 
\begin{equation}
(t_0,t_1) \cdot (\zeta,z_1,z_2) = (t_{0}^{n_0} \zeta, t_{0}^{n_1} t_{1}^{m_1} z_1, t_{0}^{n_2} t_{1}^{m_2} z_2)
\end{equation}
for some integers $n_0,n_1,n_2,m_1$ and $m_2$ by Lemma \ref{GeneralTorusActions} where $\Sigma$ is $\{(\zeta,z_1,z_2) \in S^1 \times D^4 | z_1=0, z_2 = 0 \}$. Both of $L'$ and $L$ should coincide with $\{(\zeta,z_1,z_2) \in S^1 \times D^4 | z_1 = 0, z_2 \neq 0 \}$ or $\{(\zeta,z_1,z_2) \in S^1 \times D^4 | z_1 \neq 0, z_2 = 0 \}$. Hence the closure of $L$ is a smooth submanifold. Since the closure of $L$ in $M$ is a $3$-dimensional $K$-contact manifold of rank $2$ by Lemma \ref{MorseBottTheory}, the closure of $L$ is diffeomorphic to a lens space by the classification theorem of contact toric manifolds by Lerman \cite{Ler2}.

(ii) A $K$-contact submanifold of $(M,\alpha)$ consisting of points with nontrivial isotropy groups is contained in the closure of $N$ or $\Crit \Phi$. Hence (ii) follows from (i) and Lemma \ref{MorseTheory}.

(iii) Since $G_{L}$ acts on a fiber of the normal bundle of $L$ in $M$ effectively and linearly, $G_{L}$ is isomorphic to a subgroup of $\SO(2)$. Hence $G_{L}$ is cyclic.
\end{proof}

\subsection{Combinatorial classification of closed $5$-dimensional $K$-contact manifolds of rank $2$}

We define graphs of isotropy data of closed $5$-dimensional $K$-contact manifolds of rank $2$ which represent combinatorial data of the $T^2$-actions associated with $K$-contact forms. We show that a closed $5$-dimensional $K$-contact manifold of rank $2$ can be recovered from the graph of isotropy data. Hence closed $5$-dimensional $K$-contact manifolds of rank $2$ are combinatorially classified. However, we do not know a good criterion to know whether a given graph can be realized by a closed $5$-dimensional $K$-contact manifolds of rank $2$ or not.

Karshon \cite{Kar} defined graphs for $4$-dimensional symplectic manifolds with  hamiltonian $S^1$-actions which have the combinatorial data of the $S^1$-actions and the hamiltonian functions for the $S^1$-actions. Karshon's uniqueness theorem claims that $4$-dimensional symplectic manifolds with hamiltonian $S^1$-actions can be recovered from the graph. Our results and arguments in this subsection are similar to those of Karshon in \cite{Kar}.

\subsubsection{Graphs of isotropy data}

Let $(M,\alpha)$ be a closed $5$-dimensional $K$-contact manifold of rank $2$. Let $G$ be the closure of the Reeb flow in $\Isom(M,g)$ for a metric $g$ on $M$ compatible with $\alpha$. Let $\rho$ be the action of $G$ on $M$. Let $\Sing \rho$ be the union of closed orbits of the Reeb flow of $\alpha$. The maximal and minimal components of the contact moment map for $\rho$ are denoted by $B_{\max}$ and $B_{\min}$ respectively. We fix an isomorphism $\psi$ from $G$ to $T^2$ and identify $\Lie(G)$ with $\Lie(T^2)$ by $\psi_{*}$. We define the graph of isotropy data of $(M,\alpha,\psi)$.
\begin{defn}\label{KarshonGraphs}(Graph of isotropy data of $(M,\alpha,\psi)$) The graph of isotropy data of $(M,\alpha,\psi)$ is the graph $\Gamma=(V,E)$ with data attached to the set $V$ of vertices and to the set $E$ of edges which we define as follows:
\begin{enumerate}
\item $V$ is defined to be $\pi_{0}(\Sing \rho)$.
\item $E$ is defined to be the set of $K$-contact lens spaces consisting of points with nontrivial isotropy groups of $\rho$. An edge $e$ connects two elements of $\pi_{0}(\Sing \rho)$ which contain the $\alpha$-limit set or the $\omega$-limit set of $e$.
\item A vertex $v$ has the datum of the isotropy group of $\rho$ at $v$.
\item An edge $e$ has the datum of the isotropy group of $\rho$ at $e$.
\item Let $B$ be $B_{\max}$ or $B_{\min}$ of dimension $3$. Then the corresponding vertex to $B$ is defined to be a fat vertex. We attach the following three data to a fat vertex:
\begin{enumerate}
\item The isotropy group of $\rho$ at $B$.
\item The Seifert invariant of the Seifert fibration with oriented base space on $B$ defined by the Reeb flow.
\item The Euler class of the symplectic normal bundle of $B$ in $M$.
\end{enumerate}
\item The graph $\Gamma$ has the datum of the element $\overline{R}$ in $\Lie(T^2)$ whose infinitesimal action is the Reeb vector field of $\alpha$.
\end{enumerate}
\end{defn}

\begin{figure}
\begin{equation}
\begin{picture}(46,81)(0,-10)
\put(20,90){\blacken\ellipse{5}{5}}
\put(40,70){\blacken\ellipse{5}{5}}
\put(50,30){\blacken\ellipse{5}{5}}
\put(0,40){\blacken\ellipse{5}{5}}
\put(20,0){\blacken\ellipse{20}{10}}
\path(0,40)(20,0)
\path(20,90)(40,70)(50,30)
\end{picture}
\end{equation}
\caption{A graph of isotropy data}
\label{graphofisotropydata}
\end{figure}
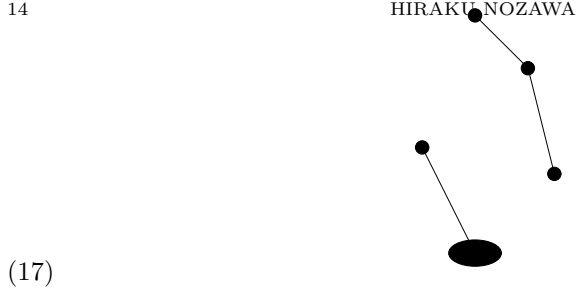

Note that the normal form imposes the following relation among the data by Lemma \ref{ClosedOrbitsInB}: Assume that $B_{\min}$ or $B_{\max}$ is of dimension $3$ and has an exceptional $S^1$-orbit $\Sigma$ of the Reeb flow. Let $L$ be the gradient manifold whose limit set contains $\Sigma$. Then the cardinality of the isotropy group of $\rho$ at $L$ is equal to the multiplicity of $\Sigma$ as an exceptional orbit of the Seifert fibration on $B_{\min}$.

Note that the graphs of isotropy data do not control the free gradient manifolds.

We define the isomorphism between graphs of isotropy data. Let $\Gamma_{0}=(V_{0},E_{0})$ and $\Gamma_{1}=(V_{1},E_{1})$ be two graphs of isotropy data.
\begin{defn}
A pair of maps $\phi_V \colon V_{0} \longrightarrow V_{1}$ and $\phi_E \colon E_{0} \longrightarrow E_{1}$ is called an isomorphism between graphs of isotropy data if the endpoints of $\phi_E(e)$ are $\phi_V(v)$ and $\phi_V(v')$ for every $e$ in $E_{0}$ where $v$ and $v'$ are endpoints of $e$, and the data attached to vertices and edges correspond to each other by an automorphism $A_{\phi}$ of $T^2$.
\end{defn}

\subsubsection{Recovering closed $5$-dimensional $K$-contact manifolds of rank $2$ from the graphs of isotropy data}\label{Subsection : KarshonClassification}  

Let $(M_i,\alpha_i)$ be a closed $5$-dimensional $K$-contact manifold of rank $2$ for $i=0$ and $1$. Let $R_i$ be the Reeb vector field of $\alpha_i$ for $i=0$ and $1$. Let $G_i$ be the closure of the Reeb flow in $\Isom(M_i,g_i)$ for a metric $g_i$ on $M_i$ compatible with $\alpha_i$ for $i=0$ and $1$. Let $\rho_i$ be the $G_i$-action on $M_i$ for $i=0$ and $1$. We fix an isomorphism $\psi_i$ from $G_i$ to $T^2$ for $i=0$ and $1$. We identify $G_i$ with $T^2$ by $\psi_i$ and $\Lie(G_i)$ with $\Lie(T^2)$ by $\psi_{i *}$. $\Phi_{\alpha_i} \colon M_i \longrightarrow \Lie(T^2)^{*}$ be the contact moment map for $\rho_{i}$ on $(M_i,\alpha_i)$ for $i=0$ and $1$. We denote the element of $\Lie(T^2)$ corresponding to the Reeb vector field of $\alpha_i$ by $\overline{R}_{i}$ for $i=0$ and $1$. Let $\Gamma_{i}$ be the graph of isotropy data of $(M_{i},\alpha_{i},\psi_{i})$ for $i=0$ and $1$. 

We have the following result similar to Proposition 4.3 of \cite{Kar}:
\begin{prop}\label{Karshon1}
If there exists an isomorphism $(\phi_V,\phi_E)$ between $\Gamma_{0}$ and $\Gamma_{1}$, then there exists a diffeomorphism $f \colon M_{0} \longrightarrow M_{1}$ such that
\begin{enumerate}
\item $f$ induces $(\phi_V,\phi_E)$ between $\Gamma_{0}$ and $\Gamma_{1}$,
\item $f_{*}R_{0}=R_{1}$ and 
\item $A_{\phi} \circ \Phi_{\alpha_1} \circ f = \Phi_{\alpha_0}$ where $A_{\phi}$ is the automorphism of $T^2$ associated with $(\phi_V,\phi_E)$.
\end{enumerate}
\end{prop}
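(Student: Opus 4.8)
The plan is to construct $f$ by ascending through the critical values of the contact moment map, patching standard models near the critical manifolds with the product structure furnished by the gradient flow on the regions between them; this follows the scheme of Karshon's Proposition 4.3 in \cite{Kar}, adapted to the $K$-contact setting. I work with the functions $\Phi_i = \alpha_i(X_i)$ and choose the generators $\overline{X}_i$ of the residual circles so that the automorphism $A_\phi$ carries the image of $\Phi_{\alpha_1}$ to that of $\Phi_{\alpha_0}$. By Lemma \ref{MorseTheory} the connected components of $\Crit \Phi_i$ are exactly the vertices and fat vertices of $\Gamma_i$, while the connecting $K$-contact lens spaces of Lemma \ref{KContactSubmanifolds} are the edges; the pair $(\phi_V,\phi_E)$ matches these. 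A key point is that the moment map heights are not free data but are forced by the graph: at a closed Reeb orbit $\Sigma$ the stabilizer of $\rho$ is a circle whose Lie algebra is a line $\ell_\Sigma \subset \Lie(T^2)$ read off from the recorded isotropy group, and the value $\overline{\alpha}_x = \Phi_{\alpha_i}(\Sigma)$ is the unique covector with $\overline{\alpha}_x(\overline{R}_i)=1$ and $\overline{\alpha}_x|_{\ell_\Sigma}=0$. Since $A_\phi$ carries $\overline{R}_0$ to $\overline{R}_1$ and $\ell_\Sigma$ to the corresponding line, it automatically matches critical values, so conditions (ii) and (iii) are consistent with the hypothesis $\Gamma_0 \cong \Gamma_1$.

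First I build equivariant models near each critical manifold. Near an ordinary vertex $\Sigma$ the normal forms of Lemmas \ref{FiniteCovering} and \ref{GeneralTorusActions} present a finite cover of a tubular neighborhood as $S^1 \times D^4_\epsilon$, on which $\rho$ acts with integer weights and $\Phi$ takes the form \eqref{NormalFormOfPhi}, namely $w_1|z_1|^2 + w_2|z_2|^2 + c$. The isotropy group at $\Sigma$ together with those on its incident edges determines the weights up to $A_\phi$, so the models over corresponding vertices of $\Gamma_0$ and $\Gamma_1$ are equivariantly diffeomorphic, giving a diffeomorphism of neighborhoods that intertwines $\rho_0$ with $\rho_1$ through $A_\phi$, the Reeb fields, and the moment maps. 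For a fat vertex, i.e. $B_{\max}$ or $B_{\min}$ of dimension $3$, the Seifert invariant records the Seifert fibration of the $3$-manifold and the Euler class records the framing of the symplectic normal bundle; together these produce the corresponding equivariant diffeomorphism of the $D^2$-bundle neighborhoods.

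Next I propagate these local diffeomorphisms across the regular regions. I fix compatible metrics $g_i$ so that $\grad \Phi_i$ is standard in the normal-form coordinates near each critical manifold. On a slab $\Phi_i^{-1}([c_k + \varepsilon, c_{k+1} - \varepsilon])$ the $(T^2 \times \mathbb{R})$-action $\sigma$ of the subsection on chains of gradient manifolds trivializes the slab as a regular level set times an interval, so any equivariant diffeomorphism of the lower level set intertwining the Reeb flows and the functions $\Phi_i$ extends canonically upward. I then argue by induction on $c_1 < \dots < c_N$: assuming $f$ is already defined on $\Phi_0^{-1}((-\infty,a])$ with properties (i)--(iii) for some $a$ just below $c_{k+1}$, I flow $f$ up to the bottom of the model neighborhood of $B_{k+1}$ and must reconcile it there with the equivariant model diffeomorphism. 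On the overlap both maps are $\rho$-equivariant diffeomorphisms of the same regular level set, a closed $4$-manifold on which $T^2$ acts with generic $2$-dimensional orbits, intertwining the Reeb flows and the moment maps and inducing the same isotropy data.

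The main obstacle is this reconciliation. I must show that two such $T^2$-equivariant diffeomorphisms of a regular level set, intertwining the Reeb flow and the moment map and inducing the same isotropy groups, Euler class, and Seifert invariant, differ by an equivariant isotopy supported away from $B_{k+1}$. The recorded Euler numbers of the locally free $S^1$-actions together with the Seifert data are exactly the complete invariants of the relevant equivariant structures, so the hypothesis $\Gamma_0 \cong \Gamma_1$ supplies the needed isotopy; composing it with the model diffeomorphism makes the latter agree with the flowed-up $f$ on the overlap and lets $f$ be extended past $c_{k+1}$ while keeping (i)--(iii). Starting the induction at the vertex or fat vertex realizing $B_{\min}$ and running up to $B_{\max}$ completes the construction. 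The free gradient manifolds, which $\Gamma_i$ does not record, are absorbed into the regular slabs and need no separate matching.
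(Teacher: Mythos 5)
Your overall scheme (local normal forms at the critical manifolds, gradient-flow propagation through the regular region) is the right family of ideas, and your observation that the moment-map values at vertices are forced by the isotropy lines and $\overline{R}$ is correct and is in fact how the paper pins down critical levels. But the proof breaks at the step you yourself flag as ``the main obstacle.'' Your induction requires, at \emph{every} interior critical level, that the flowed-up $f$ and the local model differ by a $T^2$-equivariant, level-preserving isotopy, and you justify this by asserting that the Euler numbers and Seifert data are ``complete invariants'' so that $\Gamma_0 \cong \Gamma_1$ ``supplies the needed isotopy.'' This is not true in the generality you need, for two reasons. First, a regular level set is a $4$-manifold whose quotient by $\rho$ is a $2$-orbifold which, when $B_{\min}$ is a $3$-dimensional Seifert fibration over a genus-$g$ surface, has genus $g$; its diffeomorphism group is then not path-connected (the mapping class group obstructs), so two equivariant diffeomorphisms inducing the same combinatorial data need not be equivariantly isotopic. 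The paper invokes path-connectedness of the diffeomorphism group of a quotient orbifold exactly once, at $B_{\max}$ of dimension $1$, where the level set is $S^1 \times S^3$ and the quotient is a sphere with at most two cone points (via Lemma B.2 of \cite{Kar} and the appendix of \cite{HaA}) --- the one situation where the claim holds. Second, and more fundamentally, the graphs of isotropy data do not record free gradient manifolds, yet an interior critical orbit can be the limit set of a \emph{free} gradient manifold; your inductive reconciliation at such an orbit has no graph data to match the incoming stable manifold of $\Sigma'_0$ with that of $\Sigma'_1$, and your closing remark that free gradient manifolds ``need no separate matching'' is precisely where this failure is hidden.

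The paper's proof is organized to avoid both problems rather than solve them level by level. It first modifies the metric (Lemma \ref{Perturbation}) so that every free gradient manifold has a limit set in $B_{\min} \cup B_{\max}$, detaching the unrecorded free manifolds from the interior combinatorics; it then constructs $f$ with $f^{*}\alpha_1 = \alpha_0$ simultaneously on a neighborhood of $B_{\min}$ \emph{and of all nontrivial chains at once} (Lemmas \ref{NormalFormOfChains} and \ref{Octopus}), so that no reconciliation at interior critical levels is ever performed; and it normalizes the metric (Lemmas \ref{InvariantSubset} and \ref{Scaling}, the latter making $g'(X,X)$ constant on level sets so the gradient flow carries level sets to level sets --- a point your ``canonical upward extension'' also silently requires) in order to extend $f$ to all of $M_0 - B_{\max 0}$ in a single stroke by the flow-conjugation formula $f = \phi^{\prime t}_{1} \circ f \circ \phi^{\prime -t}_{0}$. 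The only gluing then happens at $B_{\max}$, where the dimension-$3$ case is handled by the linearized $\mathbb{C}^{\times}$-action (so $f$ extends as a bundle map) and the dimension-$1$ case by the sphere-orbifold isotopy argument above. To repair your proof you would need either to prove your interior-level isotopy claim (false as stated) or to restructure along these lines: detach the free gradient manifolds first, match the chains globally, and defer all reconciliation to $B_{\max}$.
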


\begin{proof}
By the definition of isomorphisms between graphs of isotropy data, we can assume that $\overline{R}_{0}=\overline{R}_{1}$ in $\Lie(T^2)$ and that the corresponding vertices and edges of $\Gamma_{0}$ and $\Gamma_{1}$ corresponded by $(\phi_V,\phi_E)$ have the same data. Fix $\overline{X}$ in $\Lie(T^2)$ which is not parallel to $\overline{R}_{0}=\overline{R}_{1}$. We define a function $\Phi_i$ on $M_i$ by $\Phi_i(x)=\alpha_i(X_i)(x)$ for a point $x$ on $M_i$ where $X_i$ is the infinitesimal action of $\overline{X}$ on $M_{i}$ for $i=0$ and $1$. Let $B_{\min i}$ and $B_{\max i}$ be the minimal and the maximal component of $\Phi_{i}$ for $i=0$ and $1$. 

We fix a metric $g_{i}$ on $M_{i}$ compatible with $\alpha_{i}$ for $i=0$ and $1$. Applying Lemma \ref{Linearization} to $B_{\max 0}$, we take $g_{0}$ so that 
\begin{enumerate}
\item for every isolated closed orbit $\Sigma$ of the Reeb flow of $\alpha_{0}$, $g_{0}$ is Euclidean with respect to a coordinate on a finite covering of an open neighborhood of $\Sigma$ which represent $\alpha_{0}$ as \eqref{NormalFormOfAlpha} and
\item the $\mathbb{C}^{\times}$-action defined by the product of the gradient flow of $\Phi_{0}$ and the action of the isotropy group of $\rho_{0}$ at $B_{\max 0}$ is locally isomorphic to a linear $\mathbb{C}^{\times}$-action on the normal bundle of $B_{\max 0}$ near $B_{\max 0}$.
\end{enumerate}
Note that the isotropy group of $\rho_{0}$ at $B_{\max 0}$ principally acts on the normal circle bundle of $B_{\max 0}$ by the effectiveness of $\rho_{0}$. By Lemma \ref{Perturbation} shown below, we can modify $g_i$ on the complement of an open neighborhood of the union of closed orbits of the Reeb flow so that $g_{i}$ is still compatible with $\alpha_{i}$ and one of limit sets of a free gradient manifold is contained in $B_{\min i}$ or $B_{\max i}$ for $i=0$ and $1$. 

By the assumption, Lemmas \ref{NormalFormOfSingularOrbits2} and \ref{NormalFormOfB}, there exists a diffeomorphism $f_{\min}$ which maps an open neighborhood of $B_{\min 0}$ to an open neighborhood of $B_{\min 1}$ and satisfies $f_{\min}^{*} \alpha_{1}=\alpha_{0}$.

We define a subset $N_{i}$ of $M_{i}$ to be the union of smooth $K$-contact lens spaces in $M_{i}$ which are the closures of gradient manifolds whose $\omega$-limit sets are not contained in $B_{\max i}$. Let $C^{j}_{0}$ be the closures of connected components of $N_{0} - B_{\min 0}$ in $M_0$ for $j=1$, $2$, $\cdots$, $k$. Let $C^{j}_{1}$ be the union of the $K$-contact lens spaces in $(M_1,\alpha_1)$ corresponding to the $K$-contact lens spaces contained in $C^{j}_{0}$ by $(\phi_{V},\phi_{E})$ for $j=1$, $2$, $\cdots$, $k$. By the assumption and Lemma \ref{NormalFormOfChains}, there exists an isomorphism $f^{j}$ from an open neighborhood of $C^{j}_{0}$ to an open neighborhood of $C^{j}_{1}$ such that $f^{j}$ coincides with $f_{\min}$ near $B_{\min 0}$ for each $j$ if the bottom closed orbit of $C^{j}_{0}$ is contained in $B_{\min 0}$. Hence we have a local diffeomorphism $f$ which satisfies the following:
\begin{enumerate}
\item $f$ maps an open neighborhood $W_{0}$ of $N_{0} \cup B_{\min 0}$ to an open neighborhood $W_{1}$ of $N_{1} \cup B_{\min 1}$,
\item $f$ induces $(\phi_V,\phi_E)$ and
\item $f^{*} \alpha_{1}=\alpha_{0}$.
\end{enumerate}

We modify the metric $g_{1}$ on $M_{1}$ compatible with $\alpha_{1}$ so that $f^{*}g_{1}=g_{0}$. Since one of the limit sets of every free gradient manifold with respect to $g_{0}$ is contained in $B_{\min 0}$ or $B_{\max 0}$, one of the limit sets of every free gradient manifold with respect to $g_{1}$ is also contained in $B_{\min 1}$ or $B_{\max 1}$. 

Let $\Sigma_{i}^{j}$ be the bottom closed orbit of $C_{i}^{j}$. Since the assumption of Lemma \ref{Octopus} is satisfied, we can extend $f$ near the gradient manifolds whose closure is not a smooth submanifold of $M_{i}$ and which connect $\Sigma^{j}_{i}$ to $B_{\min i}$ by Lemma \ref{Octopus}, so that
\begin{enumerate}
\item $f$ maps an open neighborhood $V_{0}$ of the union of $B_{\min 0}$ and the gradient manifolds whose $\omega$-limit sets are not contained in $B_{\max 0}$ to an open neighborhood $V_{1}$ of the union of $B_{\min 1}$ and the gradient manifolds whose $\omega$-limit sets are not contained in $B_{\max 1}$,
\item $f$ induces $(\phi_V,\phi_E)$ and
\item $f^{*} \alpha_{1}=\alpha_{0}$.
\end{enumerate}
We modify $g_{1}$ so that $g_{1}$ is compatible with $\alpha_{1}$ and satisfies $f^{*}g_{1}=g_{0}$.

By Lemma \ref{InvariantSubset} shown below, we have an open subset $U_{0}$ of $V_{0}$ which is an open neighborhood of the union of $B_{\min 0}$ and gradient manifolds whose $\omega$-limit sets are not contained in $B_{\max 0}$ such that $U_{0}$ satisfies $\phi^{t}_{0}(U_{0}) \subset U_{0}$ for every negative $t$ where $\{\phi^{t}_{0}\}$ is the gradient flow of $\Phi_{0}$ on $M_{0}$. We put $U_{1}=f(U_{0})$. Then $U_{1}$ contains the union of $B_{\min 0}$ and gradient manifolds whose $\omega$-limit sets are not contained in $B_{\max}$. $U_{1}$ satisfies that $\phi^{t}_{1}(U_{1}) \subset U_{1}$ for every negative $t$ where $\{\phi^{t}_{1}\}$ is the gradient flow on $M_{1}$, since $f$ is an isometry and satisfies $f^{*} \alpha_{1}=\alpha_{0}$. 

By the assumption, Lemmas \ref{NormalFormOfSingularOrbits2} and \ref{NormalFormOfB}, there exists a diffeomorphism $f_{\max}$ which maps an open neighborhood  $U_{\max 0}$ of $B_{\max 0}$ to an open neighborhood  $U_{\max 1}$ of $B_{\max 1}$ and satisfies $f_{\max}^{*} \alpha_{1}=\alpha_{0}$.

By Lemma \ref{Scaling} shown below, there exists a metric $g'_{0}$ on $M_{0}$ compatible with $\alpha_{0}$ which satisfies the following conditions:
\begin{enumerate}
\item $g'_{0}(X_{0},X_{0})$ is constant on the intersection of $M_{0}-U_{0}$ and each level set of $\Phi_{0}$,
\item $g'_{0}|_{U'_{\max 0}}=g_{0}|_{U'_{\max 0}}$ for an open neighborhood $U'_{\max 0}$ of $B_{\max 0}$ in $U_{\max 0}$ and
\item there exist a smooth function $h_{0}$ such that $h_{0}J_{0}X_{0}=J'_{0}X_{0}$ where $J_{0}$ and $J'_{0}$ are the CR structures determined by $g_{0}$ and $g'_{0}$ with $\alpha_{0}$, respectively.
\end{enumerate}
We take a metric $g'_{1}$ on $M_{1}$ compatible with $\alpha_{1}$ such that $g'_{0}=f^{*}g'_{1}$, $g'_{0}=f_{\max}^{*} g'_{1}$ and $g'_{1}(X_{0},X_{0})$ is constant on each level set of $\Phi_{1}$. Note that $U_{0}$ is invariant under the gradient flow with respect to $g'_{0}$ in the negative direction, because the orbits of the gradient flow with respect to $g'_{0}$ are same as the orbits of the gradient flow with respect to $g_{0}$ by the condition (iii). $U_{1}$ is also invariant under the gradient flow of $\Phi_{1}$ with respect to $g'_{1}$ in the negative direction.

Let $J'_{i}$ be the complex structure defined by $g'_{i}$ and $d\alpha_{i}$. Then the gradient flow of $\Phi_{i}$ with respect to $g'_{i}$ is generated by $J'_{i}X_{0}$ by \eqref{gradientflow}. By the condition (i), $\grad \Phi_{i}(\Phi_{i})$ is constant on the intersection of $M_{0}-U_{0}$ and each level set of $\Phi_{i}$. In fact, we have
\begin{equation}
\grad \Phi_{i}(\Phi_{i})=d\Phi_{i}(\grad \Phi_{i})=g'_{i}(\grad \Phi_{i},\grad \Phi_{i})=g'_{i}(J'_{i}X_{0},J'_{i}X_{0})=g'_{i}(X_{0},X_{0})
\end{equation}
by \eqref{gradientflow}. Hence the intersection of $M_{0}-U_{0}$ and the level sets of $\Phi_{0}$ are mapped to the intersection of $M_{0}-U_{0}$ and the level sets of $\Phi_{0}$ by the gradient flow of $\Phi_{0}$.

We extend $f$ on $M_{0}-B_{\max 0}$ by $f(x)=\phi^{\prime t}_{1} \circ f \circ \phi^{\prime -t}_{0} (x)$ where $\phi^{\prime -t}_{i}$ is the gradient flow of $\Phi_{i}$ with respect to $g'_{i}$ for $i=0,1$ and $t$ is taken so that $\phi^{\prime -t}_{0} (x)$ is contained in $U_{0}$. Then $f$ is well-defined and we have
\begin{enumerate}
\item $f$ maps $M_{0}-B_{\max 0}$ to $M_{1}-B_{\max 1}$,
\item $f$ induces $(\phi_V,\phi_E)$,
\item $\Phi_1 \circ f=\Phi_0$ and 
\item $f$ is $T^2$-equivariant
\end{enumerate}
by the construction of $g'_{0}$ and $g'_{1}$.

We consider the case where $B_{\max 0}$ and $B_{\max 1}$ are of dimension $3$. We will show that $f$ extends to $M_{0}$ smoothly. By the condition (ii) in the second paragraph of the proof of Proposition \ref{Karshon1}, the $\mathbb{C}^{\times}$-action defined by the product of the gradient flow of $\Phi_{i}$ and the action of the isotropy group of $\rho_{i}$ at $B_{\max i}$ is locally isomorphic to a linear $\mathbb{C}^{\times}$-action on the normal bundle of $B_{\max i}$ near $B_{\max i}$. Since $f$ is equivariant with respect to the $\mathbb{C}^{\times}$-actions defined by the product of the gradient flow of $\Phi_{i}$ and the action of the isotropy group of $\rho_{i}$ at $B_{\max i}$, $f$ can be regarded as a bundle map between vector bundles from the normal bundle of $B_{\max 0}$ to the normal bundle of $B_{\max 1}$. Hence $f$ extends to $M_{0}$ smoothly.

We consider the case where $B_{\max 0}$ and $B_{\max 1}$ are of dimension $1$. Applying Lemma \ref{GeneralTorusActions} to open neighborhoods of $B_{\max 0}$ and $B_{\max 1}$, there exists an interval $[v,v']$ in $\Phi_{0}(U_{\max 0})$ such that $\Phi_{0}^{-1}(w)$ is diffeomorphic to $S^1 \times S^3$ with a locally free $T^2$-action $\tau$ which has at most two exceptional $T^2$-orbits for every $w$ in $[v,v']$. We construct a diffeomorphism $f$ from $M_{0}$ to $M_{1}$ by connecting $f_{\max}|_{\Phi_{0}^{-1}([v',\infty))}$ and $f|_{\Phi_{0}^{-1}((-\infty,v])}$ on $\Phi_{0}^{-1}([v,v'])$ by an isotopy of $T^2$-equivariant diffeomorphisms on $S^1 \times S^3$. We fix a $T^2$-equivariant diffeomorphism $\Phi_{i}^{-1}([v,v']) \longrightarrow S^{1} \times S^{3} \times [v,v']$ for $i=0$ and $1$. We put $f_{v'}=f_{\max}|_{\Phi_{0}^{-1}(v')}$ and $f_{v}=f|_{\Phi_{0}^{-1}(v)}$. We denote the diffeomorphisms on the orbifold $(S^1 \times S^3)/\tau$ induced from $f_{v'}$ and $f_{v}$ by $\overline{f}_{v}$ and $\overline{f}_{v'}$. By the argument in the appendix of \cite{HaA} and Lemma B.2 of \cite{Kar}, the diffeomorphism group of $(S^1 \times S^3)/\tau$ is path-connected. Hence we can isotope $\overline{f}_{v'}$ to $\overline{f}_{v}$ by an isotopy $\{\overline{f}^{t}\}$ such that $\overline{f}^{0}=\overline{f}_{v}$ and $\overline{f}^{1}=\overline{f}_{v'}$. Hence we can isotope $f_{v'}$ to $f_{v}$ by an isotopy of $T^2$-equivariant diffeomorphisms. 

We have a diffeomorphism $f \colon M_{0} \longrightarrow M_{1}$ which satisfies 
\begin{enumerate}
\item $f$ induces $(\phi_V,\phi_E)$,
\item $\Phi_1 \circ f=\Phi_0$ and 
\item $f$ is $T^2$-equivariant with respect to $\id_{T^2}$.
\end{enumerate}
The condition (iii) implies $f_{*}R_{0}=R_{1}$ by the assumption. Then the condition (ii) and $f_{*}R_{0}=R_{1}$ imply $\Phi_{\alpha_{1}} \circ f=\Phi_{\alpha_{0}}$.
\end{proof}

We introduce our notation for three lemmas below. Let $(M,\alpha)$ be a $5$-dimensional closed $K$-contact manifold of rank $2$. Let $g$ be a metric on $M$ compatible with $\alpha$. $G$ denotes the closure of the Reeb flow in the isometry group $\Isom(M,g)$. We denote the action of $G$ on $M$ by $\rho$. Let $X$ be an infinitesimal action of an element $\overline{X}$ of $\Lie(G)$ where $\overline{X}$ is not parallel to the element corresponding to the Reeb vector field of $\alpha$. We put $\Phi=\alpha(X)$. The maximal component and the minimal component of $\Phi$ are denoted by $B_{\max}$ and $B_{\min}$, respectively.

The following lemma is similar to Lemma 3.6 in \cite{Kar}:
\begin{lem}\label{Perturbation}
Let $L$ be a free gradient manifold of $\Phi$ with respect to $g$. Assume that both of the $\alpha$-limit set $\Sigma^{0}$ of $L$ and the $\omega$-limit set $\Sigma^{1}$ of $L$ are not contained in $B_{\min} \cup B_{\max}$. Take a $T^2$-orbit $C$ of $\rho$ in $L$ and an open neighborhood $U$ of $C$. There exists a metric $g'$ compatible with $\alpha$ such that $\Sigma^{0}$ and $B_{\max}$ are connected by a gradient manifold,  $\Sigma^{1}$ and $B_{\min}$ are connected by a gradient manifold and $g|_{M-U}=g'|_{M-U}$. 
\end{lem}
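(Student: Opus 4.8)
The plan is to leave $g$ unchanged outside $U$ and to perturb the compatible complex structure only near the given $T^2$-orbit $C$ on $L$, so as to break the non-generic coincidence of the stable and unstable manifolds along $L$ and thereby reroute the deflected trajectories up to $B_{\max}$ and down from $B_{\min}$. Two structural facts organize the argument. First, by Lemma \ref{MorseTheory} the critical set $\Crit\Phi=\Sing\rho$ and the values of $\Phi$ on it are independent of the metric; only the gradient trajectories depend on $g$. Second, for any $G$-invariant metric compatible with $\alpha$ the field $\grad\Phi=-JX$ commutes with $\rho$ by \eqref{gradientflow}, so the gradient manifolds are precisely the orbits of $\sigma$, and $\Phi$ increases strictly along every nonconstant gradient trajectory. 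Since $\Sigma^{0}$ and $\Sigma^{1}$ are interior closed orbits of Morse index $2$ (Lemma \ref{MorseBottTheory}), both $W^{u}(\Sigma^{0})$ and $W^{s}(\Sigma^{1})$ are $3$-dimensional, while the free gradient manifold $L$ is $3$-dimensional and open in each of them; hence $W^{u}(\Sigma^{0})$ and $W^{s}(\Sigma^{1})$ coincide along $L$. This is the codimension-two degeneracy I must remove.

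Before perturbing I would record the target dynamics. As $B_{\max}$ is the maximum of $\Phi$, its transverse Hessian is negative definite, so its stable manifold $W^{s}(B_{\max})$ has dimension $5$; moreover its complement is the union of the stable manifolds of the remaining critical components, each of dimension at most $3$, hence of measure zero, so $W^{s}(B_{\max})$ is open and dense in $M$. Symmetrically $W^{u}(B_{\min})$ is open and dense. Consequently, to connect $\Sigma^{0}$ to $B_{\max}$ it suffices to push a single trajectory issuing from $\Sigma^{0}$ into the open set $W^{s}(B_{\max})$, and to connect $B_{\min}$ to $\Sigma^{1}$ it suffices to push a trajectory ending at $\Sigma^{1}$ backward into $W^{u}(B_{\min})$.

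First I would set up the local model. Put $c_{0}=\Phi(C)$, which lies strictly between $\Phi(\Sigma^{0})$ and $\Phi(\Sigma^{1})$ since $C\in L$; choose a $T^2$-invariant tube $T^{2}\times D^{3}\subseteq U$ about $C=T^{2}\times\{0\}$, in which the gradient trajectory through $C$ is an axis of the slice $D^{3}$ (Lemma \ref{GeneralTorusActions}). There I replace $J$ by a nearby $T^2$-invariant compatible complex structure $J'$ on $\ker\alpha$; the fibre of admissible complex structures is contractible, so a small perturbation is again compatible and the associated $g'$ is compatible with $\alpha$ and equals $g$ off $U$. This adds a small, prescribed transverse component to $\grad\Phi=-J'X$ along $L$ near $C$, tilting nearby trajectories off $L$. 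A point $x\in L$ slightly below level $c_{0}$ still flows backward along the unaltered metric to $\Sigma^{0}$, while its forward $g'$-trajectory, deflected near level $c_{0}$, can be steered into $W^{s}(B_{\max})$; its $\sigma$-orbit is then a gradient manifold joining $\Sigma^{0}$ to $B_{\max}$. Symmetrically, a point $y\in L$ slightly above $c_{0}$ flows forward to $\Sigma^{1}$ and, after backward deflection into $W^{u}(B_{\min})$, yields a gradient manifold joining $B_{\min}$ to $\Sigma^{1}$. The single perturbation produces both connections, because $W^{u}(\Sigma^{0})$ below level $c_{0}$ and $W^{s}(\Sigma^{1})$ above level $c_{0}$ are untouched, and because $W^{s}(B_{\max})$, $W^{u}(B_{\min})$ are open the deflected trajectories come in whole $\sigma$-families.

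The hard part will be the global control: I must ensure that the deflected trajectory genuinely terminates at $B_{\max}$ (respectively originates at $B_{\min}$) instead of being captured by the stable manifold of some intermediate index-$2$ orbit. This is exactly what the density of $W^{s}(B_{\max})$ and $W^{u}(B_{\min})$ buys, but invoking it requires checking that the admissible tilts of $-J'X$ actually move the trajectory off the closed, nowhere-dense bad set while keeping $J'$ compatible and supported in $U$; this transversality computation, carried out in the slice $T^{2}\times D^{3}$ via the normal form for $\alpha$ and $\rho$ near $C$, is the technical core, with the stable manifold theorem (Theorem \ref{StableManifoldTheorem}) guaranteeing, as in Lemma \ref{MorseBottTheory}, that the resulting closures are smooth near their limit sets.
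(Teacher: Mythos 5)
Your setup matches the paper's: perturb the compatible complex structure in a small invariant tube around $C$ (though the relevant normal form here is Lemma \ref{NormalFormOfC}, not Lemma \ref{GeneralTorusActions}, which concerns neighborhoods of closed Reeb orbits rather than free $T^2$-orbits), deflect the flow off $L$, and obtain both connections from a single perturbation. But the step you yourself flag as ``the technical core'' --- guaranteeing that the deflected trajectories are not captured by an intermediate index-$2$ orbit --- is exactly where your plan has a genuine gap, and the density-plus-transversality route you sketch does not close it as stated. The sets $W^{s}(B_{\max})$ and $W^{u}(B_{\min})$ you propose to steer into are those of the \emph{perturbed} metric $g'$, which move as you vary $J'$; steering into an open dense set whose definition depends on the perturbation being chosen is circular, and no transversality computation is actually carried out. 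If instead you work with the original metric's stable and unstable manifolds (legitimate, since outside the support of the perturbation the new flow coincides with the old one), then openness and density of the good set is still not enough pointwise: you need the specific exit point of the deflected trajectory to avoid the bad set, and density only gives that for some generic perturbation you have not produced.

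The paper closes this gap with a structural observation that makes \emph{every} deflection work, with no genericity at all: the union $W$ of gradient orbits having a limit set not contained in $B_{\min}\cup B_{\max}$ is closed in $M$, so one may shrink the tube to an open neighborhood $V$ of $C$ in $U$ with $V\cap W=L\cap W$. It then takes the explicit perturbed field $Z=\grad\Phi+r\,\frac{\partial}{\partial x_{1}}$, where $r\geq 0$ is a $\rho$-invariant bump function supported in $V$ and positive on $C$, and $x_{1}$ is a coordinate transverse to $L=\{z_{1}=0\}$ in the normal form of Lemma \ref{NormalFormOfC}. A trajectory deflected off $L$ exits the support of $r$ at a point of $V-L$, hence outside $W$, so both of its limit sets lie in $B_{\min}\cup B_{\max}$ automatically, and monotonicity of $\Phi$ along the flow forces the forward limit into $B_{\max}$ (and, for the symmetric trajectory, the backward limit into $B_{\min}$). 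Finally, $Z$ is realized as the gradient field of a metric compatible with $\alpha$ by constructing a compatible complex structure $J'$ with $J'X=Z$ and $J'=J$ off $U$, which is possible because $d\alpha(Z,X)=d\alpha(JX,X)$ is nowhere vanishing on $V$; this is the reverse of your order (you perturb $J$ and read off the resulting tilt, the paper prescribes the tilt and reconstructs $J'$), but the two are equivalent. Your outline has the right shape; what is missing is precisely the replacement of your unexecuted transversality argument by the closedness of $W$ and the choice $V\cap W=L\cap W$.
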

\begin{proof}
Since the union $W$ of the orbits of the gradient flow whose limit sets are not contained in $B_{\min} \cup B_{\max}$ is closed in $M$, we can take an open neighborhood $V$ of $C$ in $U$ such that $V \cap W = L \cap W$. By Lemma \ref{NormalFormOfC}, we have a coordinate $T^2 \times [t^{0},t^{1}] \times \mathbb{R}^{2}$ on a tubular neighborhood in $V$ such that $\alpha=\frac{dx \cos t + dy \sin t}{a \cos t + b \sin t} + \frac{\sqrt{-1}}{2} (z_1 d\overline{z}_1 - \overline{z}_1dz_1)$ and $L$ is defined by the equation $z_1=0$ near $C$ where $(x,y)$ is the coordinate on $T^2$ induced from the Euclidean coordinate on $\mathbb{R}^{2}$, $t$ and $z_1$ are the standard coordinate on $[t^{0},t^{1}]$ and $\mathbb{R}^{2}$, respectively. Let $r$ be a nonnegative smooth function on $M$ invariant under $\rho$ whose support is contained in $V$ and whose restriction $r|_{C}$ to $C$ is positive. Then $r \frac{\partial}{\partial x_1}$ is a smooth vector field on $M$. We put $Z= \grad \Phi + r \frac{\partial}{\partial x_1}$. Since $d\alpha(Z,X)=d\alpha(\grad \Phi,X)=d\alpha(JX,X)$ is nowhere vanishing on $V$, we have a complex structure $J'$ which is compatible with $d\alpha$ and satisfies $J'X=Z$ and $J'|_{M-U}=J|_{M-U}$. The metric $g'$ defined by $J'$ and $\alpha$ satisfies the desired conditions.
\end{proof}

The following lemma is similar to Lemma 4.8 of \cite{Kar}:
\begin{lem}\label{InvariantSubset}
We assume that $g$ is Euclidean with respect to a coordinate on a finite covering of an open neighborhood of every isolated closed orbit of the Reeb flow of $\alpha_{0}$, where $\alpha_{0}$ is written as \eqref{NormalFormOfAlpha}. Let $V$ be an open neighborhood of the union of $B_{\min}$ and the gradient manifolds of $\Phi$ whose $\omega$-limit sets are not contained in $B_{\max}$. There exists an open subset $U$ of $V$ such that
\begin{enumerate}
\item $U$ is an open neighborhood of the union of $B_{\min}$ and gradient manifolds whose $\omega$-limit sets are not contained in $B_{\max}$ and
\item $U$ satisfies $\phi_{t}(U) \subset U$ for every negative $t$ where $\{\phi_{t}\}$ is the gradient flow of $\Phi$ on $M$.
\end{enumerate}

\end{lem}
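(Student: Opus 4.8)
The plan is to take for $U$ the interior of the set of points whose entire backward gradient trajectory stays in $V$, to reduce the one nontrivial point—that this interior is a neighbourhood of the relevant set—to a local analysis near the closed Reeb orbits, and to run that analysis with the explicit linear model furnished by the Euclidean hypothesis. Write $S$ for the union of $B_{\min}$ and the gradient manifolds whose $\omega$-limit set is not contained in $B_{\max}$, so that $V$ is an open neighbourhood of $S$ by hypothesis. Set
\[
A=\{x\in M : \phi_{t}(x)\in V \text{ for all } t\le 0\}, \qquad U=\operatorname{int}A .
\]
Since $\phi_{0}=\id$ we have $A\subseteq V$, hence $U\subseteq V$. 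For $x\in A$ and $t\le 0$ we have $\phi_{s}(\phi_{t}(x))=\phi_{s+t}(x)\in V$ for every $s\le0$ because $s+t\le0$; thus $\phi_{t}(A)\subseteq A$, and since each $\phi_{t}$ is a diffeomorphism, $\phi_{t}(U)=\phi_{t}(\operatorname{int}A)=\operatorname{int}(\phi_{t}A)\subseteq\operatorname{int}A=U$ for every $t\le0$, which is condition (ii). Each gradient manifold and $B_{\min}$ are invariant under $\{\phi_{t}\}$ and contained in $S\subseteq V$, so $S\subseteq A$; therefore condition (i) is equivalent to the single assertion that $A$ is a neighbourhood of every point of $S$, i.e. $S\subseteq\operatorname{int}A$.

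To prove $S\subseteq\operatorname{int}A$, recall from Lemma \ref{MorseBottTheory} that $\Phi$ is a Bott-Morse function whose critical set is the finite union $\Crit\Phi=\Sing\rho$ of closed Reeb orbits, and that along negative time $\Phi$ strictly decreases off $\Crit\Phi$; hence the backward trajectory of any $x\in S$ converges to a single connected component $\Sigma_{x}^{-}$ of $\Crit\Phi$, whose closure is $\{\phi_{t}(x):t\le0\}\cup\Sigma_{x}^{-}$. List the critical values $c_{\min}=a_{0}<a_{1}<\dots<a_{N}=\max\Phi$ and prove $S\subseteq\operatorname{int}A$ by induction on the index $k$ with $\Phi(\Sigma_{x}^{-})=a_{k}$. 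If $\Sigma_{x}^{-}\subseteq B_{\min}$, then the backward trajectory closure of $x$ is a compact subset of $V$ whose only limit set $B_{\min}$ lies in $S\subseteq V$; since $B_{\min}$ has Morse index $0$ it attracts the backward flow, its backward basin is open by Theorem \ref{StableManifoldTheorem} applied to $-\grad\Phi$, and a standard compactness argument (a compact backward-trajectory closure inside the open set $V$, together with backward attraction to $B_{\min}$) shows that the backward trajectories of all $y$ sufficiently near $x$ remain in $V$; hence $x\in\operatorname{int}A$, which is the base case.

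For the inductive step let $\Sigma:=\Sigma_{x}^{-}$ be an intermediate closed orbit with $\Phi(\Sigma)=a_{k}>c_{\min}$. By the Euclidean hypothesis and \eqref{EquationOfPhi} there is a finite cover of a tubular neighbourhood $S^{1}\times D^{4}_{\epsilon}$ of $\Sigma$ on which $g$ is Euclidean, $\Phi=w_{1}|z_{1}|^{2}+w_{2}|z_{2}|^{2}+a_{k}$ with $w_{1}>0>w_{2}$ (Morse index $2$), and the gradient flow is the explicit linear flow $z_{j}(t)=e^{2w_{j}t}z_{j}$. The backward trajectory of $x$ enters $\Sigma$ along the unstable locus $\{z_{2}=0\}$, a gradient manifold lying in $S$. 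A point $y$ near $x$ has, for negative time, $z_{1}(t)\to0$ while $|z_{2}(t)|$ grows, so its backward trajectory is pushed out of the tube near the stable locus $\{z_{1}=0\}$, whose gradient manifolds have $\omega$-limit set $\Sigma$, hence lie in $S\subseteq V$ and meet $\Crit\Phi$ only at values $<a_{k}$. Using the explicit linear flow one shows that, for $y$ close enough to $x$, the portion of its backward trajectory inside the tube shadows these $\{z_{1}=0\}$ gradient manifolds closely enough to stay in $V$, and that it leaves the tube near a point $w$ of such a gradient manifold with $\Phi(\Sigma_{w}^{-})<a_{k}$; by the inductive hypothesis $w\in\operatorname{int}A$, so the remainder of the backward trajectory of $y$ lies in $V$ as well. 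Splicing the two portions gives $y\in A$ for all $y$ near $x$, whence $x\in\operatorname{int}A$, and the induction closes.

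I expect the genuine difficulty to lie entirely in this inductive step: near an intermediate orbit the backward flow is expanding in the $z_{2}$-directions, so trajectories of points near $S$ swing close to the saddle, and the delicate point is to verify that they remain inside the given neighbourhood $V$ rather than drifting into the basin $\{x:\omega\text{-limit}(x)\subseteq B_{\max}\}=M\setminus\overline{S}$. The Euclidean normal form is exactly what makes this estimate explicit, by exhibiting the relevant backward trajectories as shadows of the down-flowing gradient manifolds that lie in $S$; the remaining steps are formal.
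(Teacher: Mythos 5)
Your proof is correct, and it reaches the conclusion by a genuinely different packaging of the same local mechanism. The paper builds $U$ explicitly and bottom-up: it orders the gradient manifolds $L^{1},\dots,L^{l}$ so that the $\alpha$-limit set of $L^{k}$ lies in the closure of $B_{\min}\cup\bigcup_{j<k}L^{j}$, starts from a sublevel set $U^{0}=\{\Phi<a\}$, and sets $U^{k}=U^{k-1}\cup(\text{a Euclidean tube around }\Sigma^{k})\cup\bigcup_{t<0}\phi_{t}(\text{a collar around }\{z_{1}=0,\ |z_{2}|=\epsilon\})$; backward invariance of $U^{k}$ is exactly the linear-flow inclusion ``backward orbit of the tube lies in the tube union the backward-saturated collar,'' openness holds by construction, and containment of $L^{k}$ follows since $\phi_{T}(\text{collar})\subset U^{k-1}$ for some $T<0$ by continuity along $L^{k}$. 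You instead take the canonical maximal candidate $U=\operatorname{int}A$ with $A=\{x:\phi_{t}(x)\in V \text{ for all } t\le 0\}$, so that condition (ii) and $U\subset V$ are formal, and concentrate all the work in $S\subset\operatorname{int}A$, proved by induction on critical levels with the same saddle estimate in the Euclidean model: your exit analysis near the stable locus $\{z_{1}=0\}$ is the mirror image of the paper's tube/collar inclusion (the paper's collar sits on the incoming manifold $L^{k}$ and flows into the previously built set, while your exit circle $K$ sits on the stable-locus manifolds and is absorbed by the inductive hypothesis). What your route buys is that invariance and any further saddle passages come for free --- once the exit point lies in $A$, the whole backward tail is in $V$ by definition of $A$ --- whereas the paper must assemble invariance by hand; what the paper's route buys is that the one quantitative point is automatic: in your inductive step, ``leaves the tube near a point $w$'' must mean that the exit point lands in a \emph{uniform} neighborhood of the compact exit circle $K\subset\operatorname{int}A$, which you obtain from compactness of $K$ together with choosing the tube radius $\delta$ small (backward contraction in $z_{1}$, valid since $w_{1}>0$ in \eqref{EquationOfPhi}, guarantees $|z_{1}|<\delta$ at exit), so this compactness step should be made explicit; with that, and the equally routine remark that the tube itself may be chosen inside $V$ since $\Sigma\subset S$, your argument is complete and matches the paper's in strength.
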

\begin{proof}
We construct $U$ inductively. We denote gradient manifolds whose $\omega$-limit sets are not contained in $B_{\max}$ by $L^{1}$, $L^{2}$, $\cdots$, $L^{l}$ so that the $\alpha$-limit set of $L^{k}$ is contained in the closure of $B_{\min} \cup \cup_{j=1}^{k-1}L^{j}$ for $k=1$, $2$, $\cdots$, $l$.

We define an open neighborhood $U^{0}$ of $B_{\min}$ by $U^{0}=\{ x \in M | \Phi(x) < a \}$ where $a$ is a real number sufficiently close to the minimum value of $\Phi$. Then $U^{0}$ satisfies $\phi_{t}(U^{0}) \subset U^{0}$ for every negative $t$.

Assume that we have an open subset $U^{k-1}$ of $V$ which contains $B_{\min} \cup \cup_{j=1}^{k-1}L^{j}$ and satisfies $\phi_{t}(U^{k-1}) \subset U^{k-1}$ for every negative $t$. Let $\Sigma^{k}$ be the $\omega$-limit set of $L^{k}$. Take a sufficiently small open tubular neighborhood $V^{k}$ of $\Sigma^{k}$. We have a finite covering $\pi \colon \tilde{V}^{k} \longrightarrow V^{k}$ and a coordinate $(\zeta,z_1,z_2)$ on $\tilde{V}^{k}$ such that $\pi^{*}g$ is Euclidean and $\alpha$ is written as in \eqref{NormalFormOfAlpha} by the assumption. We can assume that $\pi^{-1}(L^{k})$ is defined by $z_1=0$. We define $C=\pi(\{ z_1=0, |z_2|=\epsilon \})$ for a sufficiently small positive number $\epsilon$. Since the $\alpha$-limit set of $L^{k}$ is contained in $U^{k-1}$, there exists a negative number $T$ such that $\phi_{T}(C)$ is contained in $U^{k-1}$ where $\{\phi_{t}\}_{t \in \mathbb{R}}$ is the gradient flow of $\Phi$. Hence there exists a positive number $\delta$ such that  $0 < \epsilon - \delta$ and $\phi_{T}(\pi(\{ |z_1|< \delta, \epsilon - \delta < |z_2| < \epsilon + \delta \}))$ is contained in $U^{k-1}$. Since $\pi^{*}g$ is Euclidean and $\pi^{*}\alpha$ is written as in \eqref{NormalFormOfAlpha}, we have 
\begin{equation}
\begin{array}{l}
\cup_{t<0}\phi_{t}(\pi(\{ |z_1|< \delta, |z_2| < \epsilon + \delta \})) \\ \subset \pi(\{ |z_1|< \delta, \epsilon - |z_2| < \epsilon + \delta \}) \cup \cup_{t<0}\phi_{t}(\pi(\{ |z_1|< \delta, \epsilon - \delta < |z_2| < \epsilon + \delta \})).
\end{array}
\end{equation} 
We put
\begin{equation}
U^{k}=U^{k-1} \cup \pi(\{ |z_1|< \delta, \epsilon - |z_2| < \epsilon + \delta \}) \cup \cup_{t<0}\phi_{t}(\pi(\{ |z_1|< \delta, \epsilon - \delta < |z_2| < \epsilon + \delta \})).
\end{equation}
Then $U^{k}$ satisfies $\phi_{t}(U^{k}) \subset U^{k}$ for every negative $t$ and contains $B_{\min} \cup \cup_{j=1}^{k}L^{j}$. Hence $U^{k}$ satisfies the desired conditions.
\end{proof}

\begin{lem}\label{Scaling}
Let $U$ be an open neighborhood of the union of the closed orbits of the Reeb flow of $\alpha$. Assume that $g(X,X)$ is constant on each level set of $\Phi$ in an open neighborhood $U_{\max}$ of $B_{\max}$. There exists a metric $g'$ on $M$ compatible with $\alpha$ which satisfies the following conditions:
\begin{enumerate}
\item $g'(X,X)$ is constant on the intersection of $M - U$ and each level set of $\Phi$,
\item $g|_{U'_{\max}}=g'|_{U'_{\max}}$ for an open neighborhood $U'_{\max}$ of $B_{\max}$ in $U$ and
\item there exists a smooth function $h$ such that $hJX=J'X$ where $J$ and $J'$ are the CR structures determined by $g$ and $g'$ with $\alpha$ respectively.
\end{enumerate}
\end{lem}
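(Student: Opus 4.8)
The plan is to produce $g'$ by changing the compatible metric $g$ only through its complex structure $J$ on $\ker\alpha$, and only in the two real dimensions tangent to the gradient flow, rescaling $J$ there by a $\rho$-invariant positive function $h$. Writing $\hat{X}=X-\Phi R$ for the $\ker\alpha$-component of $X$, so that $JX=J\hat{X}$ and $\grad\Phi=-J\hat{X}$ by \eqref{gradientflow}, I observe that on $M-\Crit\Phi$ the plane $P=\Span(\hat{X},J\hat{X})$ is a smooth $J$-invariant symplectic subbundle of $(\ker\alpha,d\alpha)$, whose $g$-orthogonal complement $Q$ in $\ker\alpha$ is again $J$-invariant and symplectic. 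I would then define $J'$ by $J'\hat{X}=h\,J\hat{X}$ and $J'(J\hat{X})=-h^{-1}\hat{X}$ on $P$, and $J'=J$ on $Q$. A direct bilinear check gives $J'^{2}=-\id|_{\ker\alpha}$ and $d\alpha(J'\cdot,J'\cdot)=d\alpha$, and shows that $g'(v,w)=d\alpha(v,J'w)$ is positive definite on $\ker\alpha$ with $P\perp_{g'}Q$ (on $P$ it is $\mathrm{diag}(h\mu,h^{-1}\mu)$ in the basis $\hat{X},J\hat{X}$, where $\mu=g(\hat{X},\hat{X})>0$). Extending $g'$ to $TM$ exactly as $g$ in the $R$-direction produces a metric compatible with $\alpha$, and condition (iii) holds by construction, since $J'X=J'\hat{X}=h\,J\hat{X}=h\,JX$.

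Next I would choose $h$. Since $\mu=g(\hat{X},\hat{X})=g(\grad\Phi,\grad\Phi)$ and the modification leaves the $R$-direction untouched, $g'(X,X)=h\mu+\Phi^{2}g(R,R)$; with the usual normalization $g(R,\cdot)=\alpha$ this is $h\mu+\Phi^{2}$, so requiring $g'(X,X)$ to be constant on each level set of $\Phi$ in $M-U$ amounts to requiring $h\mu$ to be constant there. As the level sets of $\Phi$ are connected by Lemma \ref{MorseTheory}(i), this is the single condition $h\mu=c\circ\Phi$ on $M-U$ for a positive function $c$ of one variable. I would prescribe $h=c\circ\Phi/\mu$ on $M-U$, which is smooth and positive because $M-U$ avoids $\Crit\Phi=\Sing\rho$, where $\mu$ vanishes. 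The hypothesis that $g(X,X)$, hence $\mu$, is already constant on level sets in $U_{\max}$ lets me take $c$ to agree there with that value; then $h=c\circ\Phi/\mu\equiv 1$ throughout $U_{\max}$, so keeping $g'=g$ near $B_{\max}$ is automatically consistent with the prescription for (i). This yields condition (ii), and condition (i) follows from $g'(X,X)=c\circ\Phi+\Phi^{2}$.

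The step to handle carefully, and the main obstacle, is the global smoothness of $h$ and $J'$ across $\Crit\Phi$, where $P$ degenerates and $c\circ\Phi/\mu$ blows up; this is exactly why $U$ is permitted to be an arbitrary neighborhood of all the closed orbits of the Reeb flow. I would fix open sets $\Crit\Phi\subset U''\subset\overline{U''}\subset U$, set $h\equiv 1$ on $U''$ (so that $J'=J$ there and $J'$ extends smoothly over every closed orbit), keep $h=c\circ\Phi/\mu$ on $M-U$, and interpolate smoothly on $U-\overline{U''}$, where $\mu>0$; near $B_{\max}$ no interpolation is even needed since $h\equiv 1$ on $U_{\max}$, so the only transitions occur near $B_{\min}$ and the interior closed orbits. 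Choosing $h$ to be $\rho$-invariant, which is possible as $\Phi$, $\mu$ and $U$ are all $\rho$-invariant, makes $J'$ and hence $g'$ invariant under $G$, so $g'$ is again a metric compatible with $\alpha$ whose isometry group contains the Reeb flow with the same closure, and $U'_{\max}:=U''\cap U_{\max}$ is a neighborhood of $B_{\max}$ in $U$ on which $g'=g$. What remains are the routine bilinear identities for $J'$ indicated above.
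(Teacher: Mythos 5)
Your proposal is correct and follows the same outline as the paper's proof: split $\ker \alpha = E_1 \oplus E_2$ on a $\rho$-invariant neighborhood of $M-U$, where $E_1$ is the $J$-invariant symplectic plane spanned by the ($\ker\alpha$-components of) $X$ and $JX$, modify the metric only on $E_1$, and interpolate back to $g$ near $B_{\max}$ and across $\Crit \Phi$. The one substantive difference lies in how the plane is rescaled, and there your version is the more careful one. The paper literally sets $g''|_{E_1}=g(X,X)\,g$, a conformal scaling; but if $g''=\lambda g$ on $E_1$, the operator $J''$ defined by $g''(J''v,w)=d\alpha(v,w)$ satisfies $J''=\lambda^{-1}J$ and hence $J''^{2}=-\lambda^{-2}\id$, so strict compatibility in the sense of the paper's own definition fails as written (and $g''(X,X)=g(X,X)^{2}$ is not obviously constant on level sets either). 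Your determinant-preserving rescaling, $J'\hat{X}=h\,J\hat{X}$ and $J'(J\hat{X})=-h^{-1}\hat{X}$, giving $g'|_{P}=\mathrm{diag}(h\mu,h^{-1}\mu)$, keeps $J'^{2}=-\id$ and $d\alpha(J'\cdot,J'\cdot)=d\alpha$, and yields condition (iii) with the explicit function $h$; it is evidently what the paper intends, and your bilinear checks close the gap. Your treatment of the degeneration at $\Crit\Phi$ --- forcing $h\equiv 1$ on a neighborhood $U''$ of the critical set, which is costless since the hypothesis on $U_{\max}$ already forces $c\circ\Phi/\mu\equiv 1$ there --- also makes precise the interpolation step that the paper compresses into ``then we have a metric $g'$ such that\dots''. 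One small caveat: your identity $g'(X,X)=h\mu+\Phi^{2}$ assumes the normalization $g(R,R)=1$, which the paper's definition of compatibility does not literally impose; if $g(R,R)\neq 1$ the repair is immediate, since $g(X,X)=\mu+\Phi^{2}g(R,R)$, so one prescribes $h=(c\circ\Phi-\Phi^{2}g(R,R))/\mu$ on $M-U$ with $c$ taken large enough to keep $h>0$, and the hypothesis on $U_{\max}$ still forces $h\equiv 1$ there.
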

\begin{proof}
On an open neighborhood $A$ of $M-U$ invariant under $\rho$, $X$ and $JX$ are linearly independent. Hence we have a decomposition $\ker \alpha=E_1 \oplus E_2$ of $\ker \alpha$ into two $J$-invariant symplectic vector subbundles where $E_1=\mathbb{R}X \oplus \mathbb{R}JX$. We define a metric $g^{\prime \prime}$ on $\ker \alpha|_{A}$ by $g^{\prime \prime}|_{E_2}=g|_{E_2}$ on $M$, $g^{\prime \prime}|_{E_{1}}=g(X,X)g$ and setting $E_{1}$ is orthogonal to $E_2$ with respect to $g^{\prime \prime}$. $g^{\prime \prime}$ is compatible with $\alpha$ on $A$. Then we have a metric $g'$ such that  
\begin{enumerate}
\item $g^{\prime}|_{M-U}=g^{\prime \prime}|_{M-U}$, 
\item $g^{\prime}(X,X)$ is constant on the intersection of $M - U$ and each level set of $\Phi$ and
\item $g'|_{U'_{\max}}=g|_{U'_{\max}}$ for an open neighborhood $U'_{\max}$ of $B_{\max}$ in $U$.
\end{enumerate}
Hence $g'$ satisfies the desired conditions. 
\end{proof}

We have a result similar to Proposition 4.11 of \cite{Kar}:
\begin{prop}\label{Karshon2}
Let $M$ be a closed $5$-dimensional manifold. Let $R$ be a nowhere vanishing vector field on $M$. Assume that the flow generated by $R$ preserves a Riemannian metric $g$ on $M$ and that the closures of generic orbits of the flow generated by $R$ are of dimension $2$. Let $\alpha_{0}$ and $\alpha_{1}$ be two contact forms on $M$ with the same Reeb vector field $R$ and the same contact moment maps $\Phi_{\alpha} \colon M \longrightarrow \Lie(G)^{*}$, where $G$ is the closure of the flow generated by $R$ in $\Isom(M,g)$.
\begin{enumerate}
\item We put $\beta_t=(1-t)\alpha_{0}+t\alpha_{1}$ for $t$ in $[0,1]$. Then $d\beta_t$ is a symplectic form on $TM/\mathbb{R}R$ for every $t$ in $[0,1]$.
\item there exists a diffeomorphism $f$ of $M$ which satisfies $f^{*}\alpha_{1}=\alpha_{0}$.
\end{enumerate}
\end{prop}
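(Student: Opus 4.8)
The plan is to show that the hypotheses force $\{\beta_t\}$ to be a smooth path of contact forms all having Reeb vector field $R$ — which is precisely the assertion (i) — and then to obtain the diffeomorphism in (ii) by a Moser deformation along this path. Throughout write $\gamma=\alpha_1-\alpha_0$, and let $X$ be the infinitesimal action of an element $\overline X$ of $\Lie(G)$ not parallel to the element inducing $R$. From $\alpha_0(R)=\alpha_1(R)=1$ and $\Phi_{\alpha_0}=\Phi_{\alpha_1}$ I would first record $\iota_R\gamma=0$ and $\gamma(X)=0$. Since $R$ is the common Reeb field we have $L_R\alpha_i=0$, and since $G$ is the closure of the Reeb flow each $\alpha_i$, hence $\gamma$, is $G$-invariant; thus $L_X\gamma=0$ as well. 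Cartan's formula then gives $\iota_R d\gamma=0$ and $\iota_X d\gamma=0$.

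For (i), note that $\iota_R d\beta_t=(1-t)\iota_R d\alpha_0+t\,\iota_R d\alpha_1=0$ while $\beta_t(R)=1$, so $d\beta_t$ descends to the $4$-dimensional quotient $V=TM/\mathbb{R}R$ and $R$ is automatically its Reeb field once $d\beta_t$ is shown to be nondegenerate on $V$. On $V$ the form $d\gamma$ is annihilated by both $R$ and $X$; away from $\Crit\Phi=\Sing\rho$ the image of $X$ in $V$ is nonzero, so there $d\gamma$ has rank at most $2$ and $(d\gamma)^2=0$, whence $(d\gamma)^2\equiv0$ on $V$ by density and continuity. Expanding $d\beta_t=d\alpha_0+t\,d\gamma$ and using $(d\gamma)^2=0$ yields the clean identity
\[
(d\beta_t)^2=(1-t)\,(d\alpha_0)^2+t\,(d\alpha_1)^2 \qquad\text{on } V,
\]
so $(d\beta_t)^2$ is a convex combination of the two top forms $(d\alpha_0)^2$ and $(d\alpha_1)^2$.

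It remains to prove that these two forms induce the \emph{same} orientation of $V$, and this is the hard part: a purely pointwise argument is hopeless, since a rank-$2$ perturbation of a symplectic form can reverse its orientation, so the structural input from the common Reeb field and moment map must be used. The function $\alpha_1\wedge(d\alpha_1)^2/\bigl(\alpha_0\wedge(d\alpha_0)^2\bigr)$ is smooth and nowhere vanishing on the connected manifold $M$, hence of constant sign, so it suffices to check positivity at one point. I would do this near a closed orbit $\Sigma$ of the Reeb flow lying in $B_{\min}$ (Lemma~\ref{MorseTheory}): on a finite cover of a tubular neighborhood both $\alpha_0$ and $\alpha_1$ can be put into the standard normal form \eqref{NormalFormOfAlpha}, in which $\Phi=w_1|z_1|^2+w_2|z_2|^2+c$. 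Because $R$ and $\Phi$ are common to $\alpha_0$ and $\alpha_1$, the weights $w_1,w_2$ and the linearized data agree, so the two models induce the same symplectic orientation of $V$ near $\Sigma$; hence the ratio is positive there, and therefore everywhere. Consequently $(d\beta_t)^2$ is nowhere zero for $t\in[0,1]$, which proves (i).

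For (ii), part (i) makes $\{\beta_t\}$ a smooth path of contact forms with $\beta_t(R)=1$ and $\iota_R d\beta_t=0$, so $d\beta_t$ restricts to a symplectic form on $\ker\beta_t\cong V$. I would then run Moser's method, seeking an isotopy $\{f_t\}$ with $f_0=\id$ and $f_t^{*}\beta_t=\alpha_0$. Since $\gamma(R)=0$, there is a unique time-dependent vector field $W_t$ lying in $\ker\beta_t$ with $\iota_{W_t}d\beta_t=-\gamma$ (solvable because $d\beta_t|_{\ker\beta_t}$ is nondegenerate and both sides annihilate $R$). Then $\iota_{W_t}\beta_t=0$ gives $L_{W_t}\beta_t=\iota_{W_t}d\beta_t=-\gamma=-\dot\beta_t$, so that $\tfrac{d}{dt}f_t^{*}\beta_t=f_t^{*}\bigl(L_{W_t}\beta_t+\dot\beta_t\bigr)=0$. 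Compactness of $M$ makes the flow of $W_t$ complete on $[0,1]$, and $f=f_1$ satisfies $f^{*}\alpha_1=\alpha_0$; the defining equation for $W_t$ is $G$-invariant, so $f$ is in fact $T^2$-equivariant, although this is not needed for the statement.
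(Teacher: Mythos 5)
Your part (ii) is correct: the Moser-type deformation with $W_t\in\ker\beta_t$, $\iota_{W_t}d\beta_t=-\gamma$, is a legitimate (and slightly more direct) replacement for the paper's argument, which instead applies the equivariant Gray stability theorem to the path of contact structures $\ker\beta_t$ and then upgrades to forms via $f_{*}R=R$. In part (i) your algebraic skeleton is also correct and genuinely different from the paper's: from $\gamma(R)=\gamma(X)=0$ and $G$-invariance you rightly get $\iota_{R}d\gamma=\iota_{X}d\gamma=0$, hence $d\gamma$ has rank at most $2$ on $TM/\mathbb{R}R$ off $\Sing\rho$, hence $(d\gamma)^{2}\equiv 0$ by density, and the convexity identity $(d\beta_t)^{2}=(1-t)(d\alpha_0)^{2}+t(d\alpha_1)^{2}$; the paper argues pointwise with an adapted symplectic basis instead, and treats points on singular orbits separately via Lemma 4.13 of \cite{Kar}. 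Both proofs then reduce to the same crux — that $(d\alpha_0)^{2}$ and $(d\alpha_1)^{2}$ orient $TM/\mathbb{R}R$ the same way — and your reduction by connectedness to a single point is fine.

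The genuine gap is in your one-point check at $\Sigma\subset B_{\min}$. When both transverse weights at $\Sigma$ are nonzero your inference is sound: linearizing $d(\alpha_i(X))=-\iota_{X}d\alpha_i$ at $\Sigma$, the common action and the common $\Phi$ determine $d\alpha_i$ itself (not merely its orientation) on the weight-nonzero planes, and common $\Phi$ forces the signed weights in \eqref{NormalFormOfAlpha} to agree. But if $\dim B_{\min}=3$ one weight vanishes, and then $(R,\rho,\Phi)$ carry no information about the orientation of the plane tangent to $B_{\min}$ modulo $R$: in the model \eqref{NormalFormOfAlpha} with $m_2=0$ the map $(\zeta,z_1,z_2)\mapsto(\zeta,z_1,\overline{z}_2)$ commutes with $\rho$ and preserves $R$ and $\Phi$ while reversing exactly that orientation, so ``the linearized data agree'' does not yield the orientation claim there. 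You cannot escape by choosing a better $\Sigma$: there are rank-$2$ manifolds (the fiber joins $S_g\times L(p,q)$, with $(\dim B_{\min},\dim B_{\max})=(3,3)$ and no other closed orbits) in which every closed Reeb orbit is of this degenerate type, so some global input is unavoidable. That input is precisely what the paper invokes: $[d\alpha_0]_B=[d\alpha_1]_B$ is the Euler class of the isometric flow \cite{Sar}, whence $[d\alpha_0\wedge d\alpha_0]_B=[d\alpha_1\wedge d\alpha_1]_B$ in $H_B^{4}(M/\mathcal{F})$, fixing the orientation. Alternatively your scheme can be repaired locally along $B_{\min}$: both restrictions $\alpha_i|_{B_{\min}}$ are contact with Reeb field $R$, and $\alpha_1\wedge d\alpha_1-\alpha_0\wedge d\alpha_0=\alpha_0\wedge d\gamma+\gamma\wedge d\alpha_0+\gamma\wedge d\gamma$ is exact on $B_{\min}$ because the last two terms are $3$-forms annihilated by the nonvanishing $R$, hence zero pointwise, and $\alpha_0\wedge d\gamma=d(\gamma\wedge\alpha_0)$; so $\int_{B_{\min}}\alpha_1\wedge d\alpha_1=\int_{B_{\min}}\alpha_0\wedge d\alpha_0\neq 0$, forcing the same orientation on $TB_{\min}/\mathbb{R}R$, which together with your weight argument on the normal plane completes the check. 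As written, though, this step fails in exactly the case your choice of $\Sigma$ lands in.
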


\begin{proof}
The action of $G$ on $M$ is denoted by $\rho$. We show (i). It is suffices to show that $d\beta_t$ is nondegenerate on $TM/\mathbb{R}R$ for every $t$ in $[0,1]$. Let $x$ be a point on $M$.

We consider the case where the action $\rho$ is locally free near $x$. We fix an element $\overline{X}$ of $\Lie(G)$ which is not parallel to the element corresponding to $R$. We define the function $\Phi$ on $M$ by $\Phi(x)=\Phi_{\alpha}(x)(\overline{X})$. Let $X$ be the infinitesimal action of $\overline{X}$. Let $T_{x}M = \mathbb{R}R_{x} \oplus (T_{x}M/\mathbb{R}R_{x})$ be the orthogonal decomposition. We denote the orthogonal projection $T_{x}M \longrightarrow T_{x}M/\mathbb{R}R_{x}$ by $\pi$. Putting $v^{1}=\pi(X_{x})$, we have 
\begin{equation}\label{PairingWithv}
d\beta_{t}(v^{1},\pi(v))=\Phi_{*}v
\end{equation}
for $v$ in $T_{x}M$, because $\Phi_{*}v=d\Phi(v)=d\beta_{t}(\pi(X_{x}),\pi(v))=d\beta_{t}(v^{1},\pi(v))$. We take a symplectic basis $\{v^{1},v^{2},v^{3},v^{4}\}$ of $(T_{x}M/\mathbb{R}R_{x},(d\beta_{t})_{x})$ so that 
\begin{enumerate}
\item $\Phi_{*}v^{2}$ is nonzero and
\item $\Phi_{*}v^{j}=0$ for $j=3$, $4$.
\end{enumerate}
Let $\{v^{1 *},v^{2 *},v^{3 *},v^{4 *}\}$ be the basis of $(T_{x}M/\mathbb{R}R_{x})^{*}$ dual to $\{v^{1},v^{2},v^{3},v^{4}\}$. We put $(d\beta_{t})_{x}=\sum_{j,k} a_{t}^{jk} v^{j *} \wedge v^{k *}$. By \eqref{PairingWithv} and the condition (ii), we have $a_{t}^{1j}=0$ for $j=3$ and $4$. Hence we have $(d\beta_{t} \wedge d\beta_{t})_{x}=a_{t}^{12} a_{t}^{34} v^{1 *} \wedge v^{2 *} \wedge v^{3 *} \wedge v^{4 *}$. By \eqref{PairingWithv} and the condition (i), $a_{t}^{12}=d\beta_{t}(v^{1},v^{2})=\Phi_{*}v^{2}$ is nonzero. We show $a_{t}^{34}$ is nonzero. Since $d\beta_{t}(v^{3},v^{4})=(1-t)d\alpha_{0}(v^{3},v^{4})+td\alpha_{1}(v^{3},v^{4})$, it suffices to show that the signature of $d\alpha_{0}(v^{3},v^{4})$ and $d\alpha_{1}(v^{3},v^{4})$ are equal. That is, it suffices to show that the orientations of $T_{x}M/\mathbb{R}R_{x}$ determined by $d\alpha_{0} \wedge d\alpha_{0}$ and $d\alpha_{1} \wedge d\alpha_{1}$ are the same. Since the basic cohomology classes of $d\alpha_{0}$ and $d\alpha_{1}$ are the Euler classes of the isometric flow defined in \cite{Sar} which is determined by $R$, we have $[d\alpha_{0} \wedge d\alpha_{0}]_{B}=[d\alpha_{1} \wedge d\alpha_{1}]_{B}$ in $H_{B}^{4}(M/\mathcal{F})$ where $\mathcal{F}$ is the foliation defined by the orbits of the flow generated by $R$ and $H_{B}^{4}(M/\mathcal{F})$ is the basic cohomology of $(M,\mathcal{F})$ of degree $4$. Hence $a_{t}^{34}$ is nonzero and $(d\beta_{t})_{x}$ is symplectic for $t$ in $[0,1]$.

We consider the case where $x$ is a point on a singular $S^1$-orbit of $\rho$. Let $\sigma$ be the $S^1$-action on $M$ of the identity component of the isotropy group of $\rho$ at $x$. Fix a sufficiently small transversal $T$ to the Reeb flow which contains $x$ and is invariant under $\sigma$. Let $Y$ be the vector field generating $\sigma$. Then $(T,d\alpha_{t}|_{T})$ is a symplectic manifold with the hamiltonian $S^1$-action $\sigma$ and a hamiltonian function $\alpha_{t}(Y)$ for $t=0$ and $1$. In fact, we have $d(\alpha_{t}(Y))(Z)=d(\alpha_{t}(Y))(Z)-L_{Y}\alpha_{t}(Z)=-d\alpha_{t}(Y,Z)$ for a vector field $Z$ on $T$. Note that $\alpha_{0}(Y)=\alpha_{1}(Y)$. Hence $(T,d\alpha_{0}|_{T})$ and $(T,d\alpha_{1}|_{T})$ are symplectic manifolds with the same hamiltonian $S^1$-action $\sigma$ and the same hamiltonian function. Then $d\beta_{t}$ is symplectic at $x$ by Lemma 4.13 of \cite{Kar}.

We show (ii). Since $d\beta_t$ is a symplectic form on $T_{x}M/\mathbb{R}R_{x}$ for every $t$ in $[0,1]$ and every $x$ in $M$, $\ker \beta_{t}$ gives the isotopy of $\rho$-invariant contact structures from $\ker \alpha_{0}$ to $\ker \alpha_{1}$. Then we have a $\rho$-equivariant diffeomorphism $f$ on $M$ which satisfies $f_{*}(\ker \alpha_{0})=\ker \alpha_{1}$ by the equivariant version of Gray's stability theorem \cite{Gra}. Since $f$ is $\rho$-equivariant, $f_{*}R=R$. Hence we have $f^{*}\alpha_{1}=\alpha_{0}$.
\end{proof}

By Propositions \ref{Karshon1} and \ref{Karshon2}, we have Theorem \ref{KarshonClassification}. We state Theorem \ref{KarshonClassification} in a finer form:
\begin{cor}\label{Combinatorial}
If there exists an isomorphism $\phi$ between the graphs of isotropy data of $(M_{0},\alpha_{0},\psi_{0})$ and $(M_{1},\alpha_{1},\psi_{1})$, then there exists an isomorphism from $(M_{0},\alpha_{0})$ to $(M_{1},\alpha_{1})$ which induces $\phi$.
\end{cor}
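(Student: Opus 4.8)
The plan is to deduce Corollary \ref{Combinatorial} by combining Propositions \ref{Karshon1} and \ref{Karshon2}. Proposition \ref{Karshon1} produces a diffeomorphism realizing the combinatorial isomorphism $\phi$ that matches the Reeb fields and the contact moment maps, and Proposition \ref{Karshon2} then turns such a diffeomorphism into an honest $K$-contact isomorphism. The role of $\phi$ is carried along at every stage.

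First I would feed the isomorphism $\phi = (\phi_V, \phi_E)$ of graphs of isotropy data into Proposition \ref{Karshon1}, obtaining a diffeomorphism $f \colon M_0 \longrightarrow M_1$ which (i) induces $\phi$, in particular is $T^2$-equivariant with respect to the automorphism $A_\phi$, (ii) satisfies $f_* R_0 = R_1$, and (iii) satisfies $A_\phi \circ \Phi_{\alpha_1} \circ f = \Phi_{\alpha_0}$.

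Next I would pull $\alpha_1$ back to $M_0$ and compare the two contact forms $\alpha_0$ and $f^* \alpha_1$ on $M_0$, with the aim of applying Proposition \ref{Karshon2} to this pair. Since $f_* R_0 = R_1$, the computations $(f^*\alpha_1)(R_0) = \alpha_1(R_1) = 1$ and $\iota_{R_0} d(f^*\alpha_1) = f^*(\iota_{R_1} d\alpha_1) = 0$ show that $R_0$ is the Reeb vector field of $f^* \alpha_1$ as well. The $T^2$-equivariance of $f$ makes $f^*\alpha_1$ invariant under $\rho_0$, and the same equivariance differentiated along an infinitesimal action, combined with condition (iii), identifies the contact moment map of $f^*\alpha_1$ for $\rho_0$ with $\Phi_{\alpha_0}$. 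Hence $\alpha_0$ and $f^*\alpha_1$ are two contact forms on the closed $5$-manifold $M_0$ sharing the Reeb vector field $R_0$---whose generic orbit closures are $2$-dimensional because $(M_0, \alpha_0)$ has rank $2$---and sharing the contact moment map, so the hypotheses of Proposition \ref{Karshon2} are met with the metric $g_0$ compatible with $\alpha_0$.

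Finally I would apply Proposition \ref{Karshon2}(ii) to this pair to obtain a diffeomorphism $h$ of $M_0$ with $h^*(f^*\alpha_1) = \alpha_0$. From its construction via the equivariant Gray stability theorem, $h$ is $\rho_0$-equivariant and isotopic to the identity, so it fixes every connected component of $\Sing \rho_0$ and every $K$-contact lens space and therefore induces the identity on $\Gamma_0$. The composite $F = f \circ h$ then satisfies $F^* \alpha_1 = h^*(f^*\alpha_1) = \alpha_0$, so $F$ is an isomorphism of $K$-contact manifolds, and since $f$ induces $\phi$ while $h$ induces the identity, $F$ induces $\phi$, which is exactly the assertion of the corollary. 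I expect the only genuinely delicate step to be the bookkeeping in the third paragraph: verifying that $f^*\alpha_1$ shares both the Reeb field and, after correctly absorbing the automorphism $A_\phi$, the contact moment map of $\alpha_0$, so that Proposition \ref{Karshon2} applies verbatim; the remaining steps are a formal composition.
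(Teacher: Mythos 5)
Your proposal is correct and takes essentially the same route as the paper, which derives Corollary \ref{Combinatorial} directly by combining Propositions \ref{Karshon1} and \ref{Karshon2} exactly as you do. Your third-paragraph bookkeeping---checking that $f^{*}\alpha_{1}$ has Reeb vector field $R_{0}$ and, after absorbing $A_{\phi}$, the same contact moment map as $\alpha_{0}$, and that the $\rho_{0}$-equivariant diffeomorphism from Proposition \ref{Karshon2}(ii) induces the identity on $\Gamma_{0}$---supplies details the paper leaves implicit, and it is sound.
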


We show Corollary \ref{ReebVectorFieldsDetermineKContactStructures} from Corollary \ref{Combinatorial}:

\begin{proof}
The ``only if'' part is clear. We show the ``if'' part. Let $f$ be a diffeomorphism $f \colon M_{0} \longrightarrow M_{1}$ such that $f_{*}R_{0}=R_{1}$. Fix an isomorphism $\psi \colon G_{1} \longrightarrow T^2$. Since $f_{*}R_{0}=R_{1}$, $f$ induces an isomorphism from the graph of isotropy data of $(M_{0},\alpha_{0},\psi \circ f_{*})$ to the graph of isotropy data of $(M_{1},\alpha_{1},\psi)$ where $f_{*} \colon G_{0} \longrightarrow G_{1}$ is the map induced from $f$. Hence Corollary \ref{ReebVectorFieldsDetermineKContactStructures} follows from Corollary \ref{Combinatorial}.
\end{proof}

\subsubsection{The realization problem of the given graphs of isotropy data}
In the case where the maximal component and the minimal component of $(M,\alpha)$ of dimension $3$, each nontrivial chain can be realized as a nontrivial chain in a closed $5$-dimensional contact toric manifold of rank $2$ by Lemma \ref{EmbeddingChains}. Hence we can construct a $K$-contact manifold which has the given graph of isotropy data $\Gamma$ by the construction of the fiber sum (See Definition \ref{DefinitionOfFiberSum}) if $\Gamma$ satisfies the following conditions:
\begin{enumerate}
\item $\Gamma$ has two fat vertices,
\item the genus of the Seifert invariants attached to two fat vertices coincide and
\item each path in $\Gamma$ connecting two fat vertices can be realized as a nontrivial chain in a closed $5$-dimensional contact toric manifold.
\end{enumerate}

\subsection{Nontrivial chains in contact toric manifolds}\label{ToricContactManifolds}
We see the relation between the nontrivial chains in $5$-dimensional contact toric manifolds of rank $2$ and the corresponding good cones in $\mathbb{R}^{3}$. A good cone in $\mathbb{R}^{3}$ is the image of the moment map of the symplectization of a $5$-dimensional contact toric manifold for the $T^3$-action, which determines the equivariant isomorphism class of $5$-dimensional contact toric manifolds by a result of Lerman \cite{Ler2}. 

Let $(M,\alpha)$ be a closed $5$-dimensional $K$-contact manifold of rank $2$. We denote the closure of the Reeb flow in $\Isom(M,g)$ for a metric $g$ compatible to $\alpha$ by $G$. The action of $G$ on $M$ is denoted by $\rho$. Assume that $M$ has an effective $\alpha$-preserving $T^3$-action $\tau$. Then $\rho$ is a $T^2$-subaction of $\tau$ by the remark after Lemma \ref{RestrictionOfRank}. The Reeb vector field $R$ of $(M,\alpha)$ is the infinitesimal action of an element $\overline{R}$ of $\Lie(T^3)$. We denote the contact moment map for $\tau$ by
\begin{equation}
\begin{array}{cccc}
\tilde{\Phi}_{\alpha} : & M & \longrightarrow & \Lie(T^3)^{*} \\
                & x & \longmapsto     & \overline{\alpha}_x.
\end{array}
\end{equation}
Note that the image of $\tilde{\Phi}_{\alpha}$ is contained in the $2$-dimensional affine subspace $A=\{v \in \Lie(T^3)^{*}| v(\overline{R})=1\}$ of $\Lie(T^3)^{*}$. The symplectization $(M \times \mathbb{R}_{>0},d(r\alpha))$ is a symplectic toric manifold and the image of $\tilde{\Phi}_{\alpha}$ is the intersection of $A$ and the symplectic moment map image of $(M \times \mathbb{R}_{>0},d(r^{2}\alpha))$. Since the image of the symplectic moment map image of $(M \times \mathbb{R}_{>0},d(r\alpha))$ is a convex cone by \cite{GuSt}, $\tilde{\Phi}_{\alpha}$ is a convex polyhedron.

We denote the contact moment map for $\rho$ by $\Phi_{\alpha}$ and the restriction map $\Lie(T^3)^{*} \longrightarrow \Lie(G)^{*}$ by $\pi$. Then we have $\pi \circ \tilde{\Phi}_{\alpha}= \Phi_{\alpha}$. The union of nontrivial chains of $(M,\alpha)$ is contained in the inverse image of the boundary of the image of $\tilde{\Phi}_{\alpha}$. Hence the number of nontrivial chains is at most $2$. If the minimal component $B_{\min}$ and the maximal component $B_{\max}$ of $\Phi_{\alpha}$ are of dimension $3$, $B_{\min}$ and $B_{\max}$ have $K$-contact structures of rank $2$. Hence $B_{\min}$ and $B_{\max}$ are diffeomorphic to lens spaces by the classification theorem of $3$-dimensional contact toric manifolds by Lerman \cite{Ler2}.

We prepare a lemma for later use.
\begin{lem}\label{MomentMapsAreSubmersive}
Let $H$ be a Lie group. Let $(M,\alpha)$ be a $K$-contact manifold with an $\alpha$-preserving $H$-action $\tau$. Assume that $R$ is an infinitesimal action of $\tau$. Let $\tilde{\Phi}_{\alpha}$ be a contact moment map for $\tau$. Then $\tilde{\Phi}_{\alpha}$ is a submersion on the union of the orbits of $\tau$ whose isotropy groups are trivial.
\end{lem}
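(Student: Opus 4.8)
The plan is to compute the differential of $\tilde{\Phi}_{\alpha}$ explicitly and to show that, at a point $x$ with trivial isotropy group, its image is precisely the tangent space to the affine hyperplane in which the image of $\tilde{\Phi}_{\alpha}$ is confined. First I would fix $\overline{Y} \in \Lie(H)$ with infinitesimal action $Y$ and compute, for $v \in T_{x}M$, the pairing $d(\tilde{\Phi}_{\alpha})_{x}(v)(\overline{Y}) = d(\alpha(Y))_{x}(v)$. Since $\tau$ preserves $\alpha$, we have $L_{Y}\alpha = 0$, so Cartan's formula $L_{Y}\alpha = d(\iota_{Y}\alpha) + \iota_{Y}d\alpha$ together with $\iota_{Y}\alpha = \alpha(Y)$ gives $d(\alpha(Y)) = -\iota_{Y}d\alpha$. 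Hence the differential is governed by the bilinear expression
\begin{equation}
d(\tilde{\Phi}_{\alpha})_{x}(v)(\overline{Y}) = -d\alpha_{x}(Y_{x},v).
\end{equation}
This reduces the statement to a linear-algebra question about $d\alpha_{x}$ and the infinitesimal action map $\overline{Y} \mapsto Y_{x}$.

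Next I would identify the image of $(d\tilde{\Phi}_{\alpha})_{x} \colon T_{x}M \longrightarrow \Lie(H)^{*}$ by describing its annihilator inside $\Lie(H) = (\Lie(H)^{*})^{*}$. From the formula above, an element $\overline{Y}$ annihilates the image if and only if $d\alpha_{x}(Y_{x},v) = 0$ for every $v \in T_{x}M$, that is, if and only if $Y_{x}$ lies in the kernel of the $2$-form $d\alpha_{x}$ on $T_{x}M$. Since $\alpha$ is a contact form, this kernel is exactly the line $\mathbb{R}R_{x}$, because $\iota_{R}d\alpha = 0$ and $d\alpha$ is nondegenerate on $\ker\alpha_{x}$. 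Thus the annihilator of the image equals $\{\overline{Y} \in \Lie(H) : Y_{x} \in \mathbb{R}R_{x}\}$.

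Finally I would invoke the hypothesis that the isotropy group at $x$ is trivial, which makes the infinitesimal action map $\Lie(H) \longrightarrow T_{x}M$, $\overline{Y}\mapsto Y_{x}$, injective. Writing $\overline{R}$ for the element of $\Lie(H)$ whose infinitesimal action is $R$, injectivity forces $\{\overline{Y} : Y_{x} \in \mathbb{R}R_{x}\} = \mathbb{R}\overline{R}$, so the image of $(d\tilde{\Phi}_{\alpha})_{x}$ is the annihilator $\{\xi \in \Lie(H)^{*} : \xi(\overline{R}) = 0\}$. Because $\alpha(R) = 1$ confines $\tilde{\Phi}_{\alpha}(M)$ to the affine hyperplane $\{\xi : \xi(\overline{R}) = 1\}$, whose tangent space is precisely $\{\xi : \xi(\overline{R}) = 0\}$, this shows $(d\tilde{\Phi}_{\alpha})_{x}$ maps onto that tangent space, i.e.\ $\tilde{\Phi}_{\alpha}$ is a submersion at $x$. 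The only genuine subtlety, rather than a real obstacle, is this affine constraint: one must not claim surjectivity onto all of $\Lie(H)^{*}$, which is false since $R$ is an infinitesimal action, but only onto the hyperplane containing the image; the remainder is a routine Cartan-calculus computation entirely analogous to the symplectic fact that a moment map is a submersion where the action is locally free.
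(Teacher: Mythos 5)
Your proof is correct and rests on the same computation as the paper's: from $L_{Y}\alpha=0$ and Cartan's formula one gets $d(\alpha(Y))=-\iota_{Y}d\alpha$, and then nondegeneracy of $d\alpha$ modulo $\mathbb{R}R$ together with injectivity of $\overline{Y}\mapsto Y_{x}$ at a point with trivial isotropy does all the work. The packaging, however, differs in a way that is mildly to your advantage. The paper fixes a basis $\{\overline{R},\overline{X}_{1},\dots,\overline{X}_{k}\}$, writes $\tilde{\Phi}_{\alpha}=(1,\alpha(X_{1}),\dots,\alpha(X_{k}))$, and for each $j$ exhibits a single vector $Y_{j}$ with $d(\pr_{j}\circ\tilde{\Phi}_{\alpha})_{x}(Y_{j})=-d\alpha(X_{jx},Y_{j})\neq 0$; as literally written this only shows that each component separately has nonvanishing differential, and the joint linear independence of the covectors $\iota_{X_{jx}}d\alpha_{x}$ (which holds because $\iota_{\sum_{j}c_{j}X_{jx}}d\alpha_{x}=0$ forces $\sum_{j}c_{j}X_{jx}\in\mathbb{R}R_{x}$, hence all $c_{j}=0$ by the linear independence of $\{R_{x},X_{1x},\dots,X_{kx}\}$) is left implicit. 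Your double-annihilator computation supplies exactly this missing step: the annihilator of the image of $(d\tilde{\Phi}_{\alpha})_{x}$ is $\{\overline{Y}\mid Y_{x}\in\mathbb{R}R_{x}\}=\mathbb{R}\overline{R}$, so the image is the full hyperplane $\{\xi\in\Lie(H)^{*}\mid\xi(\overline{R})=0\}$. You also make explicit a point the paper leaves to interpretation, namely that ``submersion'' can only mean submersion onto the affine hyperplane $\{v\in\Lie(H)^{*}\mid v(\overline{R})=1\}$, since the $\overline{R}$-component of $\tilde{\Phi}_{\alpha}$ is identically $1$; this is indeed the sense in which the lemma is used later in the paper (e.g.\ to conclude that $d\Phi\wedge d\Psi$ is nowhere vanishing). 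In short: same underlying argument, but your coordinate-free version is the more complete write-up.
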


\begin{proof}
Let $\{\overline{X}_{j}\}_{j=1}^{k}$ be elements of $\Lie(H)$ such that $\{\overline{R}, \overline{X}_{1}, \cdots, \overline{X}_{k}\}$ is a basis of $\Lie(H)$. Let $X_{j}$ be the infinitesimal action of $\overline{X}_{j}$. Identifying $\Lie(H)^{*}$ with $\mathbb{R}^{k+1}$ by the basis $\{\overline{R}^{*}, \overline{X}_{1}^{*}, \cdots, \overline{X}_{k}^{*}\}$ dual to $\{\overline{R}, \overline{X}_{1}, \cdots, \overline{X}_{k}\}$, we can write $\tilde{\Phi}_{\alpha}$ as
\begin{equation}\label{CoordinatePresetationOfMomentMap}
\begin{array}{rl}
\tilde{\Phi}_{\alpha}(x) & =\alpha(R)(x) \overline{R}^{*} + \sum_{j=1}^{k} \alpha(X_{j})(x) \overline{X}^{*}_{j} \\
 & =(1, \alpha(X_{1})(x), \alpha(X_{2})(x), \cdots, \alpha(X_{k})(x)).
\end{array}
\end{equation}
Take a point $x$ on $M$ such that the isotropy group of $\tau$ at $x$ is trivial. Then $\{R_{x}, X_{1 x}, \cdots, X_{k x}\}$ are linearly independent. Since $d\alpha$ is nondegenerate on $\ker \alpha$, there exist a vector $Y_{j}$ in $T_{x}M$ such that $d\alpha(X_{j x},Y_{j})$ is nonzero. Let $\pr_{j}$ be the $j$-th projection defined on $\Lie(H)^{*}$ with respect to the basis $\{\overline{R}^{*}, \overline{X}_{1}^{*}, \cdots, \overline{X}_{k}^{*}\}$. By \eqref{CoordinatePresetationOfMomentMap}, we have $d(\pr_{j} \circ \tilde{\Phi}_{\alpha})_{x}(Y_{j}) = d(\alpha(X_{j})(Y_{j})(x)=-d\alpha(X_{j x},Y_{j})$ is nonzero for each $j$. Hence $\tilde{\Phi}_{\alpha}$ is a submersion at $x$.
\end{proof}

\begin{figure}
\begin{equation}
\begin{picture}(178,96)(0,30)
\path(65,20)(65,140)
\path(67.000,132.000)(65.000,140.000)(63.000,132.000)
\path(-15,40)(145,40)
\path(137.000,38.000)(145.000,40.000)(137.000,42.000)
\path(0,10)(130,70)
\path(124,64)(130,70)(122,69)

\path(65,40)(0,120)
\path(65,40)(80,130)
\path(65,40)(130,100)
\thicklines
\path(32.5,80)(97.5,70)
\path(97.5,70)(73.5,91)
\path(32.5,80)(73.5,91)
\end{picture}
\end{equation}
\caption{A good cone.}
\end{figure}
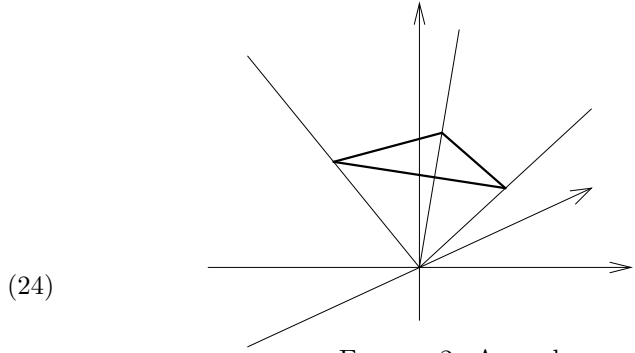

\section{A sufficient condition to be toric}\label{ToricSection}

We show Theorem \ref{ToricCondition} which gives a sufficient condition for a closed $5$-dimensional $K$-contact manifold of rank $2$ to be toric. Theorem \ref{ToricCondition} follows from Propositions \ref{TwoChains} and \ref{SufficientConditionToBeToric}.

Let $(M,\alpha)$ be a closed $5$-dimensional $K$-contact manifold of rank $2$. We denote the Reeb vector field of $\alpha$ by $R$. Let $G$ be the closure of the Reeb flow in $\Isom(M,g)$ for a metric $g$ compatible to $\alpha$. The action of $G$ on $M$ is denoted by $\rho$. We put $\Phi=\alpha(X)$ where $X$ is an infinitesimal action of $\rho$ which is not parallel to $R$. The maximal component and the minimal component of $\Phi$ are denoted by $B_{\max}$ and $B_{\min}$ respectively.

For a topological group $H$, an $H$-action $\tau$ on a set $A$ and a $\tau$-invariant subset $B$ of $A$, we denote the cardinality of the kernel of $H \longrightarrow \Aut(B)$ by $I(\tau,B)$.

\subsection{Estimate on the number of nontrivial chains}

We show
\begin{prop}\label{TwoChains}
If the conditions (i) and (ii) of Theorem \ref{ToricCondition} are satisfied, then the number of nontrivial chains in $(M,\alpha)$ is at most $2$.
\end{prop}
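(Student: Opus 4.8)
The plan is to reduce the counting of nontrivial chains to a combinatorial statement about a Morse function on a four-dimensional Hamiltonian $S^1$-orbifold. By hypothesis (ii) there is an $S^1$-subgroup $G'$ of $G$ whose orbits are transverse to $\ker\alpha$; writing $Z$ for its generator and $\beta=\frac{1}{\alpha(Z)}\alpha$ as in the proof of Lemma~\ref{MorseTheory}, the quotient $W=M/G'$ is a compact symplectic four-orbifold on which the residual circle $G/G'$ acts in a Hamiltonian fashion with moment map the descent $\overline{\Phi}$ of $\Phi=\alpha(X)$. Under hypothesis (i) every closed orbit of the Reeb flow is isolated, so by Lemma~\ref{MorseTheory} the function $\Phi$, hence $\overline{\Phi}$, has only isolated critical points and the fixed points of the residual action are isolated; hypothesis (ii), via $G'\times G_{\min}\cong G\cong G'\times G_{\max}$, moreover makes $W$ smooth at the images of $B_{\min}$ and $B_{\max}$, so these are genuine isolated extremal fixed points.

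Next I would read off the local branching of gradient manifolds from the normal form. At a closed orbit $\Sigma$ the function $\Phi$ has the form $w_1|z_1|^2+w_2|z_2|^2+c$ on a finite cover $S^1\times D^4_\epsilon$ (Lemma~\ref{MorseTheory} and its proof), so the only gradient manifolds having $\Sigma$ as a limit set with smooth closure are the two coordinate axes $\{z_1=0\}$ and $\{z_2=0\}$; call these the axes at $\Sigma$. When $\Sigma\subset B_{\min}$ both weights are positive and the two axes ascend from $\Sigma$; when $\Sigma\subset B_{\max}$ both are negative and the two axes descend into $\Sigma$; for an interior $\Sigma$ the weights have opposite signs, so there is exactly one ascending axis (with $\Sigma$ as $\alpha$-limit) and exactly one descending axis (with $\Sigma$ as $\omega$-limit), in accordance with Lemma~\ref{MorseBottTheory} (iv),(v). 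I then form the directed graph $D$ whose vertices are the closed Reeb orbits and whose edges are the axes, each oriented from its $\alpha$-limit to its $\omega$-limit. The computation above says that in $D$ the vertex $B_{\min}$ has in-degree $0$ and out-degree $2$, the vertex $B_{\max}$ has in-degree $2$ and out-degree $0$, and every interior vertex has in-degree and out-degree equal to $1$; since $\Phi$ is strictly increasing along gradient trajectories, $D$ is acyclic. An acyclic digraph with these degree constraints decomposes into exactly two directed paths from $B_{\min}$ to $B_{\max}$: following the unique outgoing axis at each interior vertex, each of the two axes leaving $B_{\min}$ extends to a single such path, and the two paths are disjoint because no interior vertex can receive two incoming axes.

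Finally I would match the two paths with the nontrivial chains. A chain of length at least two passes through an interior orbit, and to enter and leave such an orbit it must use its unique descending and ascending axis (Lemma~\ref{MorseBottTheory} (iv)); hence all of its gradient manifolds are axes, and it coincides with one of the two paths of $D$. A chain that instead contains a gradient manifold with nontrivial isotropy is likewise built from axes, since by Lemma~\ref{KContactSubmanifolds} (i) a gradient manifold with nontrivial isotropy has smooth closure, so it is an axis at each of its limit sets. Thus every nontrivial chain is one of the two directed paths in $D$, and distinct nontrivial chains must begin with distinct axes at $B_{\min}$, of which there are only two; this yields the bound of $2$.

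The step I expect to be the main obstacle is the assertion that an axis at an interior orbit which limits onto $B_{\min}$ or $B_{\max}$ is in fact one of the two axes there, i.e. that the gradient manifolds constituting a nontrivial chain have smooth closure all the way down to the extremal orbits. Near interior orbits this is Lemma~\ref{MorseBottTheory} (v), but at the isolated extrema $B_{\min},B_{\max}$ of dimension $1$ the same lemma only guarantees two distinguished gradient manifolds with smooth closure, and one must verify that the gradient manifolds produced by the chain are exactly these. This is where hypothesis (ii) enters: since $G'\times G_{\min}\cong G$, the subgroup $G'$ acts freely near $B_{\min}$, so $M\to W$ is a principal $S^1$-bundle there and the (orbifold) gradient spheres of $W$, which are automatically axes at both poles, pull back to gradient manifolds with smooth closure at $B_{\min}$; the same applies at $B_{\max}$. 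Carrying out this identification carefully, together with the bookkeeping that the isotropy spheres are among the axes, is the technical heart of the argument.
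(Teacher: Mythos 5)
Your reduction breaks at exactly the point you flag at the end, and the patch you sketch does not repair it. The definition of a chain only requires that the $\alpha$-limit set of $L_1$ be a closed orbit in $B_{\min}$; it does not require the closure of $L_1$ to be smooth at $B_{\min}$. When $B_{\min}$ is an isolated closed orbit, Lemma \ref{MorseBottTheory} (iv) makes it the $\alpha$-limit set of \emph{uncountably many} gradient manifolds, of which only two have smooth closure near $B_{\min}$ (Lemma \ref{MorseBottTheory} (v)). So your digraph $D$, whose edges are axes, simply does not contain all nontrivial chains: if three interior closed orbits $\Sigma,\Sigma',\Sigma''$ each have their (free) stable manifold descending to $B_{\min}$ and their unstable manifold ascending to $B_{\max}$, with closures non-smooth at the extrema, these are three distinct nontrivial chains of length two, and your degree count at $B_{\min}$ --- which counts only edges of $D$ --- says nothing about them. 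The interior in-degree/out-degree $1$ computation is fine, but the claim ``every nontrivial chain begins with one of the two axes at $B_{\min}$'' is unjustified, and the proposed fix via $W=M/G'$ is false: in a four-dimensional Hamiltonian $S^1$-space an isolated extremal fixed point can be joined to arbitrarily many interior fixed points by distinct free gradient spheres, whose closures at the extremum need not be the two invariant axes; this is precisely why Karshon's analogous statement (Proposition 5.13 of \cite{Kar}), and the paper's proof here, proceed by Euler-number (Duistermaat--Heckman type) arithmetic rather than by local combinatorics of the moment map graph.

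The missing global input is also visible in how you use hypothesis (ii): your argument consumes only the freeness of $G'$ near $B_{\min}$ and $B_{\max}$, whereas the paper needs its full unimodular strength --- that $\{v,v_{\min}\}$ and $\{v,v_{\max}\}$ are $\mathbb{Z}$-bases of $\Lie(G)_{\mathbb{Z}}$ (Lemma \ref{TranslationOfCondition2}) --- to force $I(\sigma,B)=I(\rho,B)$ and hence $e(\rho_2,H_{\min}/\rho_1)=-1/\LCM(m_{\min},n_{\min})$ and $e(\rho_2,H_{\max}/\rho_1)=1/\LCM(m_{\max},n_{\max})$ in Corollary \ref{RelationForEstimate}. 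The bound then comes from the identity \eqref{SumOfEulerNumbers4}: every nontrivial chain, whether or not its end gradient manifolds are smooth at the extrema, contributes a positive term $d^{j}/\LCM(k_{1}^{j},k_{l(j)}^{j})$ to the Euler-number difference across all level sets (Lemmas \ref{LocalDifference}--\ref{EulerNumberFormula}), and the case analysis shows three or more such terms cannot be accommodated by the left-hand side. Any correct proof must count chains built from free gradient manifolds invisible to the smooth-closure graph, and the level-set Euler-number bookkeeping is the mechanism that does this; a purely graph-theoretic decomposition into two paths cannot.
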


The condition (i) of Theorem \ref{ToricCondition} is satisfied if and only if $B_{\max}$ and $B_{\min}$ are isolated closed orbits of the Reeb flow by Lemma \ref{MorseTheory} (iv). The condition (ii) of Theorem \ref{ToricCondition} can be translated into a condition on the image of the contact moment map for $\rho$. We denote the kernel of $\exp \colon \Lie(G) \longrightarrow G$ by $\Lie(G)_{\mathbb{Z}}$. We put $\Omega =\{ v \in \Lie(G) | v \cdot x > 0$ for every $x$ in $\Phi_{\alpha}(M) \}$. Then $\Omega$ is an open convex cone in $\Lie(G)$ whose boundary has rational slopes. We denote the two primitive vectors in $\Lie(G)_{\mathbb{Z}}$ tangent to the boundary of $\Omega$ by $v_{\min}$ and $v_{\max}$.
\begin{lem}\label{TranslationOfCondition2}
The condition (ii) is satisfied if and only if there exists a vector $v$ in $\Omega \cap \Lie(G)_{\mathbb{Z}}$ such that both of $\{v,v_{\min}\}$ and $\{v,v_{\max}\}$ are $\mathbb{Z}$-bases of $\Lie(G)_{\mathbb{Z}}$.
\end{lem}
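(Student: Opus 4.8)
The plan is to translate each clause of condition (ii) into a statement about the cone $\Omega$ and the lattice $\Lie(G)_{\mathbb{Z}}$, using the coordinate presentation \eqref{CoordinatePresentation}, which exhibits $\Phi_{\alpha}(M)$ as the segment with endpoints $p_{\min}=\Phi_{\alpha}(B_{\min})$ and $p_{\max}=\Phi_{\alpha}(B_{\max})$ on the affine line $\{v(\overline{R})=1\}$.

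First I would set up the dictionary for transversality. Any $S^1$-subgroup $G'$ of $G$ is $\{\exp(tv)\mid t\in\mathbb{R}\}$ for a primitive $v$ in $\Lie(G)_{\mathbb{Z}}$, and its infinitesimal action $V$ satisfies $\alpha(V)(x)=\langle\Phi_{\alpha}(x),v\rangle$ for every $x$. Thus the orbits of $G'$ are transverse to $\ker\alpha$ exactly when $\langle\Phi_{\alpha}(x),v\rangle$ is nowhere zero; as $\Phi_{\alpha}(M)$ is connected this means $\langle\Phi_{\alpha}(\cdot),v\rangle$ has constant sign, i.e. $v\in\Omega$ or $-v\in\Omega$. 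Since neither the transversality nor the basis conditions below are affected by replacing $v$ by $-v$, transversality of $G'$ is equivalent to $v\in\Omega\cap\Lie(G)_{\mathbb{Z}}$.

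The geometric core is to identify the isotropy generators with the edge vectors of $\Omega$. Condition (ii) forces $G_{\max}$ and $G_{\min}$ to be one-dimensional, so by Lemma \ref{MorseTheory} the components $B_{\max}$ and $B_{\min}$ are isolated closed orbits of the Reeb flow and $G_{\max},G_{\min}$ are circle subgroups generated by primitive vectors $w_{\max},w_{\min}$ in $\Lie(G)_{\mathbb{Z}}$ (the normal form of Lemma \ref{GeneralTorusActions} near a closed orbit shows the identity component of the isotropy is such a subtorus). Because the infinitesimal action of $w_{\max}$ vanishes along the fixed orbit $B_{\max}$, the function $\alpha(W_{\max})$ vanishes on $B_{\max}$, which reads $\langle w_{\max},p_{\max}\rangle=0$. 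On the other hand the defining inequalities of $\Omega$ are affine in the moment value, so $\Omega=\{w\mid\langle w,p_{\min}\rangle>0,\ \langle w,p_{\max}\rangle>0\}$ and the boundary edge associated with $B_{\max}$ lies on the line $\{\langle\cdot,p_{\max}\rangle=0\}$, whose primitive integral generator is $v_{\max}$. Since $p_{\max}\neq0$ (indeed $\langle p_{\max},\overline{R}\rangle=1$), this line is one-dimensional, so $w_{\max}=\pm v_{\max}$; likewise $w_{\min}=\pm v_{\min}$.

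With the dictionary in hand the equivalence is immediate. The multiplication homomorphism $G'\times G_{\max}\to G$ of tori is an isomorphism if and only if $\{v,w_{\max}\}$ --- equivalently $\{v,v_{\max}\}$, since the signs are irrelevant --- is a $\mathbb{Z}$-basis of $\Lie(G)_{\mathbb{Z}}$, and similarly for $G_{\min}$ and $v_{\min}$. Hence condition (ii) holds iff there is a $v\in\Omega\cap\Lie(G)_{\mathbb{Z}}$ with both $\{v,v_{\max}\}$ and $\{v,v_{\min}\}$ being $\mathbb{Z}$-bases; note that any such $v$ is automatically primitive, so for the reverse implication I would simply take $G'$ to be the circle generated by $v$, read off transversality from $v\in\Omega$, and obtain the two torus isomorphisms from $w_{\max}=\pm v_{\max}$ and $w_{\min}=\pm v_{\min}$.

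The step I expect to be the main obstacle is the identification $\Lie(G_{\max})=\mathbb{R}v_{\max}$ together with the bookkeeping that condition (ii) can hold only when $B_{\max}$ and $B_{\min}$ are one-dimensional: one must invoke the local normal form near the closed orbit to see that the isotropy circle is generated by a genuine primitive lattice vector and that this vector is exactly the one cutting out the corresponding face of $\Omega$, and must rule out the case where $B_{\max}$ or $B_{\min}$ is three-dimensional, since then the relevant identity component is trivial and $G$ admits no decomposition as a product of a circle with it.
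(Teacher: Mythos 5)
Your route coincides with the paper's: transversality of the $G'$-orbits is translated into $\pm v\in\Omega$, the product decompositions into $\mathbb{Z}$-basis conditions in $\Lie(G)_{\mathbb{Z}}$, and $v_{\min},v_{\max}$ are identified with generators of $G_{\min},G_{\max}$ --- the paper merely asserts this last identification, and your derivation of it from $\langle w_{\max},p_{\max}\rangle=0$ is exactly the right mechanism. There is, however, a genuine error in your closing ``bookkeeping'': it is false that a three-dimensional $B_{\max}$ (or $B_{\min}$) has trivial isotropy identity component, and false that condition (ii) forces $\dim B_{\max}=\dim B_{\min}=1$ (that is the content of condition (i), which Lemma \ref{TranslationOfCondition2} does not assume). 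Indeed, by the computation in the proof of Lemma \ref{MorseTheory} (ii), at every point $x$ of $B_{\max}$ one has $X_{x}=\Phi(B_{\max})R_{x}$, so the infinitesimal action of $\overline{X}-\Phi(B_{\max})\overline{R}$ vanishes identically along $B_{\max}$; the Lie algebra of the isotropy group at each point of $B_{\max}$ is exactly the line $\mathbb{R}\bigl(\overline{X}-\Phi(B_{\max})\overline{R}\bigr)$, this line is rational because it is the Lie algebra of a closed subgroup of $T^{2}$, and $G_{\max}$ is the circle it generates --- not the trivial group. Equivalently, every $G$-orbit in a three-dimensional $B_{\max}$ is one-dimensional, so the Reeb flow there is periodic; this is precisely why Definition \ref{KarshonGraphs} attaches a Seifert invariant to fat vertices and why Lemma \ref{ToricActionNearB} assumes $(B,\alpha|_{B})$ has rank $1$.

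The consequence for your proof is concrete: in the ``if'' direction, a vector $v$ satisfying the two lattice conditions can perfectly well exist when $B_{\max}$ or $B_{\min}$ is a lens space, and under your reading you would then declare $G'\times G_{\max}\not\cong G$ and conclude that condition (ii) fails, i.e.\ the lemma would be false in that case. In fact condition (ii) holds there, because the pairing $\langle w_{\max},p_{\max}\rangle=0$ (together with $\langle p_{\max},\overline{R}\rangle=1$, so $p_{\max}\neq 0$) still forces $w_{\max}=\pm v_{\max}$; your own moment-map argument proves this in full generality, with no appeal to the normal form of Lemma \ref{GeneralTorusActions} and no case split on $\dim B$. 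The repair is simply to delete the dimension dichotomy: show, as above, that $G_{\max}$ and $G_{\min}$ are always circles whose Lie algebras are the two boundary lines of $\Omega$, after which both implications go through exactly as you wrote them, in agreement with the paper's proof.
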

\begin{proof}
For an $S^1$-subaction $\sigma$ of $\rho$, let $X_{\sigma}$ the vector field generating $\sigma$. $v_{\sigma}$ denotes the vector in $\Lie(G)_{\mathbb{Z}}$ whose infinitesimal action is $X_{\sigma}$. For a point $x$ on $M$, we have $\Phi_{\alpha}(x)(v_{\sigma})>0$ if and only if $\alpha(X_{\sigma}) > 0$ by definition. Hence $v_{\sigma}$ is contained in $\Omega$ if and only if the orbits of $\sigma$ are positively transverse to $\ker \alpha$ on $M$. 

Note that for two vectors $v_{1}$ and $v_{2}$ in $\Lie(G)_{\mathbb{Z}}$, the product of $\sigma_{v_1}$ and $\sigma_{v_2}$ is isomorphic to $\rho$ if and only if $\{v_1,v_2\}$ is a $\mathbb{Z}$-basis of $\Lie(G)_{\mathbb{Z}}$. Note also that the infinitesimal action of $v_{\min}$ and $v_{\max}$ generates the action of $G_{\min}$ and $G_{\max}$.

We show the ``only if'' part. Assume that there exists an $S^1$-subgroup $G'$ of $G$ such that both of $G' \times G_{\max}$ and $G' \times G_{\min}$ are isomorphic to $G$. Let $v_{0}$ be the vector in $\Lie(G)_{\mathbb{Z}}$ whose infinitesimal action generates the action of $G'$. Since the orbits of the action of $G'$ is transverse to $\ker \alpha$, either of $v_{0}$ or $-v_{0}$ is contained in $\Omega$ by the argument in the first paragraph. On the other hand, by the remarks in the second paragraph, $\{v_{0},v_{\min}\}$, $\{v_{0},v_{\max}\}$, $\{-v_{0},v_{\min}\}$ and $\{-v_{0},v_{\max}\}$ are $\mathbb{Z}$-bases of $\Lie(G)_{\mathbb{Z}}$. Hence the ``only if'' part is proved.

We show the ``if'' part. Assume that there exists a vector $v$ in $\Omega \cap \Lie(G)_{\mathbb{Z}}$ such that $\{v,v_{\min}\}$ and $\{v,v_{\max}\}$ are $\mathbb{Z}$-bases of $\Lie(G)_{\mathbb{Z}}$. We denote the $S^1$-action on $M$ generated by the infinitesimal action of $v$ by $\sigma$. Then the orbits of the action of $G'$ is transverse to $\ker \alpha$ by the argument of the first paragraph, since $v$ is contained in $\Omega$. Both of $\{v,v_{\max}\}$ and $\{v,v_{\min}\}$ are $\mathbb{Z}$-bases of $\Lie(G)_{\mathbb{Z}}$ by the remarks in the second paragraph. Hence the ``if'' part is proved.
\end{proof}

To show Proposition \ref{TwoChains}, we use the Euler number of locally free $S^1$-actions on $3$-dimensional orbifolds.

\begin{defn}\label{Definition : EulerNumber}(Euler number of locally free $S^1$-actions on $3$-dimensional orbifolds) Let $\tau$ be a transversely oriented locally free $S^1$-action on a compact $3$-dimensional orbifold $W$. Then we define
\begin{equation}\label{DefinitionOfEulerNumber}
e(\tau,W) = \frac{1}{|G'|} \cdot e(\tau/G',W/(\tau|_{G'}))
\end{equation}
\noindent where $G'$ is a discrete subgroup of $S^1$ such that the effective $S^1$-action $\tau/G'$ on $W/(\tau|_{G'})$ induced from $\tau$ is free and $e(\tau/G',W/(\tau|_{G'}))$ is the Euler number of the oriented circle bundle over the topological surface $W/G'$ defined by the free $S^{1}/G'$-action $\tau/G'$ on $W/G'$.
\end{defn}

Note that the base surface of the $S^1$-action $\tau/G'$ on $W/(\tau|_{G'})$ is the oriented topological surface $W/\tau$. 

For the definition of Euler number of locally free $S^1$-actions on $3$-dimensional manifolds, we refer \cite{Aud2}. The Euler number $e(\tau,W)$ is shown to be independent of the choice of $G'$ and well-defined for $W$ and $\tau$ as in the case of locally free $S^1$-actions on $3$-dimensional manifolds.

\begin{lem}\label{EulerNumberOfSeifertFibrationOnS30}
Let $\sigma$ be the $S^1$-action on $S^3$ defined by
\begin{equation}
t \cdot(z_1,z_2)=(t^{m_1} z_1,t^{m_2} z_2)
\end{equation}
for nonzero coprime integers $m_1$ and $m_2$. We orient $S^3$ as the boundary of the unit ball in $\mathbb{C}^{2}$ with the orientation determined by the standard complex structure. The transverse orientation of $\sigma$ is defined from the orientation on $S^3$ and the orientation of the orbits of $\sigma$. Then we have
\begin{equation}
e(\sigma,S^3)=-\frac{1}{m_1 m_2}.
\end{equation} 
\end{lem}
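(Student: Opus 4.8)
The plan is to compute the Euler number directly on the $3$-manifold $S^3$ by the Chern--Weil description of the Euler number of a locally free $S^1$-action on a closed oriented $3$-manifold. This description is the content of the invariant of \cite{Aud2}, which agrees with Definition \ref{Definition : EulerNumber} (see the remark after that definition on independence of the choice of $G'$ and its reduction to the case of $3$-manifolds). Concretely, writing $z_j=r_j e^{i\theta_j}$, the action $\sigma$ is generated by $X_\sigma=m_1\partial_{\theta_1}+m_2\partial_{\theta_2}$; it is locally free because $m_1,m_2$ are coprime, and its only non-free orbits are $\{z_2=0\}$ and $\{z_1=0\}$, with isotropy $\mathbb{Z}/|m_1|$ and $\mathbb{Z}/|m_2|$. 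For any $\sigma$-invariant $1$-form $\eta$ on $S^3$ with $\eta(X_\sigma)=1$ one has $e(\sigma,S^3)=-\frac{1}{4\pi^2}\int_{S^3}\eta\wedge d\eta$, the normalizing constant being fixed below.

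First I would exhibit an explicit connection form, namely $\eta=\frac{r_1^2}{m_1}\,d\theta_1+\frac{r_2^2}{m_2}\,d\theta_2$. Since $r_j^2\,d\theta_j=x_j\,dy_j-y_j\,dx_j$ (writing $z_j=x_j+iy_j$) extends smoothly across the circle $\{z_j=0\}$, $\eta$ is a smooth $\sigma$-invariant $1$-form on $S^3$, and $\eta(X_\sigma)=r_1^2+r_2^2=1$. Introducing the polar coordinate $r_1=\cos\psi$, $r_2=\sin\psi$ with $\psi\in[0,\tfrac{\pi}{2}]$, a short computation (or the observation that differentiating the defining relation $m_1 a+m_2 b=1$ of a general invariant connection $\eta=a(\psi)\,d\theta_1+b(\psi)\,d\theta_2$ gives $m_1 a'+m_2 b'=0$, so that $\eta\wedge d\eta=\frac{a'}{m_2}\,d\psi\wedge d\theta_1\wedge d\theta_2$) yields $\eta\wedge d\eta=-\frac{\sin 2\psi}{m_1 m_2}\,d\psi\wedge d\theta_1\wedge d\theta_2$. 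Integrating over $\theta_1,\theta_2\in[0,2\pi)$ and $\psi\in[0,\tfrac{\pi}{2}]$ gives $\int_{S^3}\eta\wedge d\eta=\frac{4\pi^2}{m_1 m_2}$ up to the orientation sign discussed below, whence $e(\sigma,S^3)=-\frac{1}{m_1 m_2}$.

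The main obstacle is bookkeeping the orientation conventions, since the sign of the Euler number is precisely what the lemma asserts. There are two sign inputs: the boundary orientation of $S^3=\partial B^4$ induced by the complex orientation of $\mathbb{C}^2$ (which makes $d\psi\wedge d\theta_1\wedge d\theta_2$ negatively oriented, the positive volume form being $-\tfrac12\sin 2\psi\,d\psi\wedge d\theta_1\wedge d\theta_2$), together with the transverse orientation of $\sigma$ determined by this orientation and the orbit orientation of $X_\sigma$; and the precise normalizing constant in the Chern--Weil formula. I would fix both at once by calibrating against the Hopf fibration $m_1=m_2=1$: the same computation gives $\int_{S^3}\eta\wedge d\eta=4\pi^2$, and since the stated convention forces the Hopf Euler number to equal $-1/(1\cdot 1)=-1$, the constant must be $e=-\frac{1}{4\pi^2}\int_{S^3}\eta\wedge d\eta$. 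With this pinned down, the general computation returns $-\frac{1}{m_1 m_2}$, and the cases of negative $m_1$ or $m_2$ are handled identically.

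As a cross-check tied directly to Definition \ref{Definition : EulerNumber}, I would take $G'=\mathbb{Z}/(m_1 m_2)\subset S^1$, which contains both isotropy groups $\mathbb{Z}/|m_1|$ and $\mathbb{Z}/|m_2|$, so the induced $S^1/G'$-action on the quotient $P=S^3/(\sigma|_{G'})$ is free and exhibits $P$ as an oriented circle bundle over $S^2=S^3/\sigma$; then $e(\sigma,S^3)=\frac{1}{m_1 m_2}\,e(P)$. Pulling the connection back along the degree-$(m_1 m_2)$ branched covering $S^3\to P$ (whose branch locus, the two exceptional orbits, has measure zero) converts the Chern--Weil integral on $P$ into the one computed above and yields $e(P)=-1$, consistent with the direct computation.
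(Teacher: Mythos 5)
Your proposal is correct, but it takes a genuinely different route from the paper's. The paper works purely equivariantly from Definition \ref{Definition : EulerNumber}: it takes $G'=\mathbb{Z}/m_1m_2\mathbb{Z}$ (the group generated by the two isotropy groups), shows via the explicit map $f(z_1,z_2)=(z_1^{m_2},z_2^{m_1})$ --- with a fiber-by-fiber counting argument --- that $S^3/(\sigma|_{G'})$ is again $S^3$ carrying the effective action $t\cdot(z_1,z_2)=(t^{\pm1}z_1,t^{\pm1}z_2)$, i.e.\ the Hopf or anti-Hopf action according to the sign of $m_1m_2$, and then concludes $e(\sigma,S^3)=\frac{1}{|m_1m_2|}\,e(\sigma_0,S^3)=-\frac{1}{m_1m_2}$ directly from \eqref{DefinitionOfEulerNumber}. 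You instead compute by Chern--Weil with the explicit invariant connection $\eta=\frac{r_1^2}{m_1}d\theta_1+\frac{r_2^2}{m_2}d\theta_2$; your evaluation of $\eta\wedge d\eta=-\frac{\sin 2\psi}{m_1m_2}\,d\psi\wedge d\theta_1\wedge d\theta_2$, the orientation bookkeeping (the boundary orientation does make $-\tfrac12\sin 2\psi\,d\psi\wedge d\theta_1\wedge d\theta_2$ positive), and the Hopf calibration all check out. What your approach buys: the sign emerges uniformly in the signs of $m_1,m_2$ with no Hopf/anti-Hopf case split, and connection-independence of $\int\eta\wedge d\eta$ is cheap (the difference of two normalized invariant connections is basic, and basic $3$-forms vanish on a $3$-manifold with $1$-dimensional orbits), so you are in effect proving the more general statement $e=-\frac{1}{4\pi^2}\int\eta\wedge d\eta$ for any such $\eta$. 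What it costs: you must reconcile the integral with the paper's quotient definition, and your final cross-check is exactly where this happens, so make the normalization explicit there --- it is the one place a factor can silently go wrong. If $q\colon S^3\to P=S^3/(\sigma|_{G'})$ is the quotient by $G'$ of order $N=|m_1m_2|$, the generator $\overline{X}$ of the effective action downstairs satisfies $q_*X_\sigma=N\overline{X}$, so the normalized connection upstairs is $\eta'=\frac{1}{N}q^*\overline{\eta}$, and hence $\int_{S^3}\eta'\wedge d\eta'=\frac{1}{N^2}\cdot N\int_P\overline{\eta}\wedge d\overline{\eta}=\frac{1}{N}\int_P\overline{\eta}\wedge d\overline{\eta}$; combined with the free case on $P$ this reproduces exactly the factor $\frac{1}{|G'|}$ in Definition \ref{Definition : EulerNumber}. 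Two trivial touch-ups: write $G'=\mathbb{Z}/|m_1m_2|\mathbb{Z}$ when $m_1m_2<0$, and note that $q$ is an honest covering away from the two exceptional orbits (a measure-zero set), which suffices for the integral identity.
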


\begin{proof}
Since $m_1$ and $m_2$ are coprime, the group generated by isotropy groups of $\tau$ at exceptional orbits is $\mathbb{Z}/m_1 m_2\mathbb{Z}$. We show the effective $S^1$-action on $S^3/(\sigma|_{\mathbb{Z}/m_1 m_2\mathbb{Z}})$ induced from $\sigma$ is isomorphic to the $S^1$-action $\sigma_{0}$ on $S^3$ defined by
\begin{equation}
t \cdot (z_1,z_2)=(t^{\frac{m_1 m_2}{|m_1 m_2|}} z_1,t^{\frac{m_1 m_2}{|m_1 m_2|}} z_2).
\end{equation}

We define a map $f \colon S^3 \longrightarrow S^3$ by $f(z_1,z_2)=(z_{1}^{m_2},z_{2}^{m_1})$. Then $f$ induces a map $\overline{f} \colon S^3/(\sigma|_{\mathbb{Z}/m_1 m_2\mathbb{Z}}) \longrightarrow S^3$. $\overline{f}$ is clearly surjective. We show $\overline{f}$ is injective. It suffices to show the fiber of $f$ of each point on $S^3$ coincides with an orbit of $\sigma|_{\mathbb{Z}/m_1 m_2\mathbb{Z}}$. Take a point $(w_{1},w_{2})$ on $S^3$. We consider the case where both of $w_1$ and $w_2$ are nonzero. Since $m_1$ and $m_2$ are coprime, the cardinality of $f^{-1}(w_{1},w_{2})$ is $|m_1m_2|$. Since the cardinality of the orbit of $\sigma$ of a point in $f^{-1}(w_{1},w_{2})$ is $|m_{1}m_{2}|$, $f^{-1}(w_{1},w_{2})$ is an orbit of $\sigma|_{\mathbb{Z}/m_1 m_2\mathbb{Z}}$. We consider the case where $w_{1}=0$. Then $w_{2}$ is nonzero, and the cardinality of $f^{-1}(w_{1},w_{2})$ is $|m_1|$. Since the cardinality of the orbit of $\sigma|_{\mathbb{Z}/m_1 m_2\mathbb{Z}}$ of a point in $f^{-1}(w_{1},w_{2})$ is $|m_{1}|$, $f^{-1}(w_{1},w_{2})$ is an orbit of $\sigma$. The proof in the case where $w_{2}=0$ is symmetric to the previous case. Hence $\overline{f}$ is a homeomorphism. 

$\sigma$ induces an $S^1$-action on $S^3/(\sigma|_{\mathbb{Z}/m_1 m_2\mathbb{Z}})=S^3$ defined by 
\begin{equation}
t \cdot (w_1,w_2)=(t^{m_1 m_2} w_1,t^{m_1 m_2} w_2).
\end{equation}
Hence the effective $S^1$-action on $S^3/(\sigma|_{\mathbb{Z}/m_1 m_2\mathbb{Z}})$ induced from $\sigma$ is $\sigma_{0}$. Since $e(\sigma_{0},S^3)=-\frac{m_1 m_2}{|m_1 m_2|}$ by the definition of the transverse orientation, we have
\begin{equation}
e(\sigma,S^3)=-\frac{1}{|m_1 m_2|}e(\sigma_{0},S^3)=-\frac{1}{m_1 m_2}
\end{equation}
by \eqref{DefinitionOfEulerNumber}.
\end{proof}

\begin{lem}\label{LocalComputation}
Assume that we have the effective $T^2$-action $\sigma$ on $S^1 \times S^3$ defined by
\begin{equation}\label{StandardT2Action}
(t_1,t_2)\cdot(\zeta,z_1,z_2)=(t_{1}^{a_0} t_{2}^{b_0} \zeta,t_{1}^{a_1} t_{2}^{b_1} z_1,t_{1}^{a_2} t_{2}^{b_2} z_2)
\end{equation}
\noindent in the standard coordinate where $a_{0}$ is positive. We orient $S^3$ as the boundary of the unit ball in $\mathbb{C}^{2}$ with the orientation determined by the standard complex structure. Let $\sigma_1$ and $\sigma_2$ be the $S^1$-actions obtained from $\tau$ by restricting to $S^1 \times \{1\}$ and $\{1\} \times S^1$ respectively. Then we have
\begin{equation}\label{Equation : Euler}
e(\sigma_2,(S^1 \times S^3)/\sigma_1)=-\frac{a_{0}}{(a_{0} b_{1} - a_{1} b_{0} )(a_{0} b_{2} - a_{2} b_{0} ) }.
\end{equation} 
\end{lem}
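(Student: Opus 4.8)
The plan is to identify the quotient $(S^1\times S^3)/\sigma_1$, together with the residual $\sigma_2$-action, as a weighted quotient of $(S^3,\text{standard }S^1\text{-action})$ and then read off the Euler number from Lemma~\ref{EulerNumberOfSeifertFibrationOnS30}. Throughout I write $\delta_1=a_0b_1-a_1b_0$ and $\delta_2=a_0b_2-a_2b_0$ for the determinants appearing in \eqref{Equation : Euler}. Since $a_0>0$, every isotropy group of $\sigma_1$ is contained in $\{t_1:t_1^{a_0}=1\}$, so $\sigma_1$ is transversely oriented and locally free and the left-hand side of \eqref{Equation : Euler} is defined as a rational number.

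First I would describe the quotient concretely via the slice $\{\zeta=1\}\cong S^3$. This slice meets each $\sigma_1$-orbit in exactly the orbit of $\Gamma:=\{t_1:t_1^{a_0}=1\}\cong\mathbb Z/a_0$, acting on $\{\zeta=1\}=S^3$ by $(z_1,z_2)\mapsto(t_1^{a_1}z_1,t_1^{a_2}z_2)$ (the coordinate $\zeta$ is fixed because $t_1^{a_0}=1$). Hence $(S^1\times S^3)/\sigma_1\cong S^3/\Gamma=:L$. Transporting $\sigma_2$ through this identification, and using an $a_0$-th root of $t_2^{-b_0}$ in $\sigma_1$ to return a point to the slice, the induced action becomes $t_2\cdot[z_1,z_2]=[\,t_2^{\delta_1/a_0}z_1,\ t_2^{\delta_2/a_0}z_2\,]$, which is a well-defined $S^1$-action on $L$ because the root ambiguity lies in $\Gamma$. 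Its oriented orbit foliation coincides with the image of that of the weighted $S^1$-action $\tilde\sigma$ on $S^3$ with \emph{primitive} weights $(\delta_1/d,\delta_2/d)$, where $d=\gcd(\delta_1,\delta_2)$; this $\tilde\sigma$ is free, so $e(\tilde\sigma,S^3)=-d^2/(\delta_1\delta_2)$ by Lemma~\ref{EulerNumberOfSeifertFibrationOnS30}.

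Next I would relate $e(\sigma_2,L)$ to $e(\tilde\sigma,S^3)$ by splitting $\Gamma$ relative to $\tilde\sigma$. Put $\Gamma_f=\Gamma\cap\tilde\sigma$, the part of $\Gamma$ acting along the fibres of $\tilde\sigma$, and let $\Gamma_b=\Gamma/\Gamma_f$ be the part acting on the base $S^3/\tilde\sigma\cong S^2$, so that $|\Gamma_f|\,|\Gamma_b|=a_0$. Quotienting by $\Gamma_f$ is a fibrewise operation, and the definition \eqref{DefinitionOfEulerNumber} gives $e(\tilde\sigma/\Gamma_f,\,S^3/\Gamma_f)=|\Gamma_f|\,e(\tilde\sigma,S^3)$; quotienting the result by $\Gamma_b$ is a degree-$|\Gamma_b|$ orbifold covering of base surfaces, under which the Euler number is multiplicative, so $e(\tilde\sigma/\Gamma_f,\,S^3/\Gamma_f)=|\Gamma_b|\,e(\sigma_2,L)$. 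Combining the two relations yields $e(\sigma_2,L)=\tfrac{|\Gamma_f|}{|\Gamma_b|}\,e(\tilde\sigma,S^3)=\tfrac{|\Gamma_f|^2}{a_0}\bigl(-\tfrac{d^2}{\delta_1\delta_2}\bigr)$.

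The computation then closes with a lattice identity. From $a_1\delta_2-a_2\delta_1=a_0\delta_3$, where $\delta_3=a_1b_2-a_2b_1$, effectiveness of the $T^2$-action (equivalently $\gcd(\delta_1,\delta_2,\delta_3)=1$) gives $\gcd(d,\delta_3)=1$ and hence $d\mid a_0$; tracking how $\Gamma$ meets $\tilde\sigma$ inside the diagonal torus then forces $|\Gamma_f|=a_0/d$ and $|\Gamma_b|=d$. Substituting gives $e(\sigma_2,L)=-\tfrac{(a_0/d)^2 d^2}{a_0\,\delta_1\delta_2}=-\tfrac{a_0}{\delta_1\delta_2}$, which is \eqref{Equation : Euler}. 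I expect the main obstacle to be exactly this bookkeeping: proving the index identity $|\Gamma_f|=a_0/d$ cleanly from effectiveness (a statement about the sublattices cut out by the weight matrix), and keeping the transverse orientations consistent through each quotient so that the final sign is $-$. The orientation of $S^3$ as the boundary of the unit ball and the hypothesis $a_0>0$ are precisely what pin down this sign, matching the convention fixed in Lemma~\ref{EulerNumberOfSeifertFibrationOnS30}.
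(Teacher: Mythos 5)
Your argument is correct, and it reaches the formula by a genuinely different mechanism than the paper, even though both routes terminate in Lemma~\ref{EulerNumberOfSeifertFibrationOnS30}. The paper unwraps the $S^1$-factor: it realizes $(S^1\times S^3,\sigma)$ as the quotient by a $\mathbb{Z}/a_0\mathbb{Z}$-action $\tau$ of a model in which the torus action splits ($\tilde\sigma_1$ a pure rotation of the $S^1$-factor, $\tilde\sigma_2$ with weights $(b_0,\delta_1,\delta_2)$), quotients first by the free action $\tilde\sigma_1$ to land on $(S^3,\text{weights }(\delta_1,\delta_2))$, and then performs a \emph{single fibrewise} $\mathbb{Z}/l\mathbb{Z}$-quotient inside the acting circle with $l=a_0/d$, justified by matching kernel cardinalities ($d$ upstairs versus $a_0$ for the non-effective action $\sigma_2^{a_0}$ downstairs, effectiveness of $\sigma$ giving trivial generic isotropy for $\sigma_2$); in this way only \eqref{DefinitionOfEulerNumber} is ever invoked. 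You instead take the slice $\{\zeta=1\}$ to get $L=S^3/\Gamma$ immediately and split the deck group as $\Gamma_f=\Gamma\cap\tilde\sigma$ (fibrewise part) and $\Gamma_b=\Gamma/\Gamma_f$ (base part); your $\Gamma_f$ is exactly the paper's $\mathbb{Z}/l\mathbb{Z}$, and your index computation is sound: the relation $a_1\delta_2-a_2\delta_1=a_0\delta_3$ together with effectiveness ($\GCD(\delta_1,\delta_2,\delta_3)=1$) does give $\GCD(d,\delta_3)=1$, $d\mid a_0$, and the membership criterion for $\Gamma\cap\tilde\sigma$ reduces to $d\mid k$, so $|\Gamma_f|=a_0/d$ as you claim. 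The trade-off is that your $\Gamma_b$-step uses multiplicativity of the Euler number under degree-$|\Gamma_b|$ orbifold coverings of the base, a standard fact about Seifert invariants but one \emph{not} established anywhere in the paper (which only has \eqref{DefinitionOfEulerNumber} and additivity under connected sum); you would need to prove it, including the check that $\Gamma_b$ meets generic fibres trivially, which your definition of $\Gamma_f$ does secure. What your route buys is transparency in the case $d>1$: the honest decomposition $L=(S^3/\Gamma_f)/\Gamma_b$ keeps the factors $a_0/d$ and $d$ visibly separate, whereas the paper's compressed ``equivariantly diffeomorphic'' comparison absorbs the $\Gamma_b$-part into kernel bookkeeping. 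Two small slips to fix: $\tilde\sigma$ with coprime weights $(\delta_1/d,\delta_2/d)$ is only \emph{locally} free, not free (harmless, since Lemma~\ref{EulerNumberOfSeifertFibrationOnS30} needs no freeness), and for the final identification you should note that $e$ depends only on the transversely oriented Seifert foliation, so matching the oriented orbit foliation of the induced $\sigma_2$-action with the image of that of $\tilde\sigma$ suffices, as you implicitly use.
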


\begin{proof}
Let $\tilde{\sigma}_{1}$ and $\tilde{\sigma}_{2}$ be $S^1$-actions on $S^1 \times S^3$ defined by
\begin{equation}
t_1 \cdot(\xi,w_1,w_2)=(t_{1} \xi, w_1, w_2)
\end{equation}
and
\begin{equation}
t_2 \cdot(\xi,w_1,w_2)=(t_{2}^{b_{0}} \xi, t_{2}^{a_{0}b_{1}-a_{1}b_{0}} w_1, t_{2}^{a_{0}b_{2}-a_{2}b_{0}} w_2)
\end{equation}
respectively. Let $\tau$ be the $\mathbb{Z}/a_{0}\mathbb{Z}$-action on $S^1 \times S^3$ defined by
\begin{equation}
s \cdot(\xi,w_1,w_2)=(s \xi, s^{-a_1} w_1, s^{-a_2} w_2)
\end{equation}
for $s$ in $\mathbb{Z}/a_{0}\mathbb{Z}$ where we identify $\mathbb{Z}/a_{0}\mathbb{Z}$ with a subgroup of the complex numbers with absolute values $1$. We identify $(S^1 \times S^3)/\tau$ with $S^1 \times S^3$ by the diffeomorphism $f$ defined by $f([(\xi,w_1,w_2)])=(\xi^{a_{0}},\xi^{a_{1}}w_{1},\xi^{a_{2}}w_{2})$. Then $\tilde{\sigma}_{1}$ induces $\sigma_{1}$ on $(S^1 \times S^3)/\tau$. $\tilde{\sigma}_{2}$ induces the $S^1$-action $\sigma_{2}^{a_{0}}$ defined by
\begin{equation}
t_2 \cdot (\zeta,z_1,z_2)=(t_{2}^{a_0 b_0} \zeta, t_{2}^{a_0 b_1} z_1, t_{2}^{a_0 b_2} z_2).
\end{equation}
Hence the effective $S^1$-action on $(S^1 \times S^3)/\tau$ induced from $\tilde{\sigma}_{2}$ is $\sigma_{2}$.

The cardinality of the isotropy group of the action on $(S^1 \times S^3)/\tilde{\sigma}_{1}$ induced from $\tilde{\sigma}_{2}$ is $\GCD(|a_{0}b_{1}-a_{1}b_{0}|,|a_{0}b_{2}-a_{2}b_{0}|)$. The cardinality of the isotropy group of the action on $(S^1 \times S^3)/\sigma_{1}$ induced from $\sigma_{2}^{a_{0}}$ is $a_{0}$, since the cardinality of the isotropy group of the action on $(S^1 \times S^3)/\sigma_{1}$ induced from $\sigma_{2}$ is $1$ by the effectiveness of the $T^2$-action $\sigma$. Hence $(S^1 \times S^3)/\sigma_{1}$ with $S^1$-action $\sigma_{2}^{a_{0}}$ is equivariantly diffeomorphic to the quotient of $(S^1 \times S^3)/\tilde{\sigma}_{1}$ with $S^1$-action $\tilde{\sigma}_{2}$ by the $\mathbb{Z}/l\mathbb{Z}$-subaction of $\tilde{\sigma}_{2}$ where $l=\frac{a_{0}}{\GCD(|a_{0}b_{1}-a_{1}b_{0}|, |a_{0}b_{2}-a_{2}b_{0}|)}$. By \eqref{DefinitionOfEulerNumber}, we have
\begin{equation}\label{tildesigmaandsigma}
e(\tilde{\sigma}_{2},(S^1 \times S^3)/\tilde{\sigma}_{1}) = \frac{1}{\frac{a_{0}}{\GCD(|a_{0}b_{1}-a_{1}b_{0}|, |a_{0}b_{2}-a_{2}b_{0}|)}} e(\sigma_{2},(S^1 \times S^3)/\sigma_{1}).
\end{equation}

Since $(S^1 \times S^3)/\tilde{\sigma}_{1}=S^3$ and the Euler number of the $S^1$-action on $S^3$ defined by 
\begin{equation}
t \cdot (z_1,z_2)=(s^{m_1} z_1, s^{m_2} z_2)
\end{equation}
is $-\frac{\GCD(|m_{1}|, |m_{2}|)}{m_1 m_2}$ by Lemma \ref{EulerNumberOfSeifertFibrationOnS30}, we have
\begin{equation}\label{Covering}
e(\tilde{\sigma}_{2},(S^1 \times S^3)/\tilde{\sigma}_{1})=-\frac{\GCD(|a_{0} b_{1}- a_{1} b_{0} |, | a_{0} b_{2} - a_{2} b_{0} |)}{( a_{0} b_{1} - a_{1} b_{0} )( a_{0} b_{2} - a_{2} b_{0})}.
\end{equation}
By \eqref{tildesigmaandsigma} and \eqref{Covering}, we have \eqref{Equation : Euler}.
\end{proof}

We fix $S^1$-subactions $\rho_1$ and $\rho_2$ of $\rho$ so that the product of $\rho_{1}$ and $\rho_{2}$ is $\rho$ and the orbits of $\rho_{1}$ is positively transverse to $\ker \alpha$. Let $H_{\max}$ and $H_{\min}$ be level sets of $\Phi=\alpha(X)$ sufficiently close to $B_{\max}$ and $B_{\min}$, respectively. 

$\Phi$ is constant on the each orbit of $\rho$ by Lemma \ref{MorseTheory} (i). We fix transverse orientations of locally free orbits of $\rho$ in each level set of $\Phi$ as follows: Take a $4$-form $\omega$ on $M$ which satisfies $d\Phi \wedge \omega = \alpha \wedge d\alpha \wedge d\alpha$ on $M - B_{\min} - B_{\max} - \Crit \Phi$. We take a $2$-form $\omega'$ on $M$ which satisfies $\omega' \wedge \alpha_{1} \wedge \alpha_{2} = \omega$ on $M - \Crit \Phi$ where $\alpha_{j}$ is a $1$-form on $M$ which satisfies $\alpha_{j}(Y_{j})=1$ for the infinitesimal action $Y_{j}$ of $\rho_{j}$. Then $\omega'$ determines an transverse orientation of $\rho$ on each level set of $\Phi$. Note that the transverse orientation changes if we use $-\Phi$ instead of $\Phi$. We regard $H_{\min}$ as a normal $S^3$-bundle of $\Sigma_{\min}$. Let $F$ be a fiber in $H_{\min}$. $F$ is transverse to the orbits of $\rho$. We determine the signature of $\Phi$ so that the orientation of $F$ determined by the above process is equal to the orientation determined as the boundary of the symplectic normal bundle of $B_{\min}$.

\begin{lem}\label{LevelSetNearB}
Assume that $B_{\max}$ and $B_{\min}$ are closed orbits of the Reeb flow. Let $L_{\max}^{1}$ and $L_{\max}^{2}$ be two gradient manifolds whose closures are smooth near $B_{\max}$. Let $L_{\min}^{1}$ and $L_{\min}^{2}$ be two gradient manifolds whose closures are smooth near $B_{\min}$. We put $a_{\min}=I(\rho_1,B_{\min})$, $a_{\max}=I(\rho_1,B_{\max})$, $m_{\max}=I(\rho,L_{\max}^{1})$, $n_{\max}=I(\rho,L_{\max}^{2})$, $m_{\min}=I(\rho,L_{\min}^{1})$ and $n_{\min}=I(\rho,L_{\min}^{2})$. Then we have
\begin{equation}\label{LevelSetNearBmin}
e(\rho_2,H_{\min}/\rho_1)=-\genfrac{}{}{}{0}{a_{\min}}{m_{\min}n_{\min}}.
\end{equation}
and
\begin{equation}\label{LevelSetNearBmax}
e(\rho_2,H_{\max}/\rho_1)=\genfrac{}{}{}{0}{a_{\max}}{m_{\max}n_{\max}} 
\end{equation}
\end{lem}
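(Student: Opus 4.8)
The plan is to localize both computations to a tubular neighborhood of the relevant closed orbit, reduce them to Lemma \ref{LocalComputation}, and then carry out the comparison of orientations, which is where the real work lies. First I would apply the normal form Lemma \ref{GeneralTorusActions} (together with Lemma \ref{FiniteCovering} when the isotropy is disconnected, the compatibility with finite coverings being built into Definition \ref{Definition : EulerNumber}) to an open tubular neighborhood of $B_{\min}$. This identifies the neighborhood with $S^1 \times D^4_{\epsilon}$ carrying the linear $T^2$-action \eqref{StandardT2Action}, with $B_{\min} = \{z_1 = z_2 = 0\}$ and $a_0 > 0$; the positivity of $a_0$ is forced by the choice of $\rho_1$ positively transverse to $\ker\alpha$, which matches the hypothesis of Lemma \ref{LocalComputation}. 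A level set $H_{\min}$ sufficiently close to $B_{\min}$ is then $\{|z_1|^2 + |z_2|^2 = \text{const}\} \cong S^1 \times S^3$, and $\rho_1$, $\rho_2$ restrict to $\sigma_1$, $\sigma_2$. Writing $\Delta_i = a_0 b_i - a_i b_0$, Lemma \ref{LocalComputation} gives $e(\rho_2, H_{\min}/\rho_1) = -a_0/(\Delta_1 \Delta_2)$ for the transverse orientation induced by the complex boundary orientation of $S^3$.

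Next I would read off the combinatorial invariants. Since $\rho_1$ acts on $B_{\min} = S^1 \times \{0\}$ by $\zeta \mapsto t_1^{a_0}\zeta$, the kernel of $\rho_1 \to \Aut(B_{\min})$ has order $a_0$, so $a_{\min} = a_0$. By $\grad \Phi = -JX$ in \eqref{gradientflow} and the normal form \eqref{EquationOfPhi}, the gradient flow is radial in each complex coordinate, so the two gradient manifolds $L_{\min}^1$, $L_{\min}^2$ smooth near $B_{\min}$ (Lemma \ref{MorseBottTheory} (v)) are the slices $\{z_2 = 0\}$ and $\{z_1 = 0\}$. The subgroup of $T^2$ fixing $\{z_2 = 0\}$ pointwise is the kernel of $(t_1,t_2) \mapsto (t_1^{a_0}t_2^{b_0},\, t_1^{a_1}t_2^{b_1})$, of order $|\Delta_1|$, so $m_{\min} = |\Delta_1|$ and likewise $n_{\min} = |\Delta_2|$. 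Thus $|e(\rho_2, H_{\min}/\rho_1)| = a_{\min}/(m_{\min}n_{\min})$, and it remains only to fix the sign.

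The sign is the main obstacle, and amounts to comparing the globally defined $\omega'$-transverse orientation used in the statement with the complex boundary orientation appearing in Lemma \ref{LocalComputation}. Expressing $\alpha$, $d\alpha$, and the normalization forms $\alpha_1$, $\alpha_2$ in the coordinates of \eqref{StandardT2Action} and evaluating $\omega'$ through $d\Phi \wedge \omega = \alpha \wedge d\alpha \wedge d\alpha$ and $\omega' \wedge \alpha_1 \wedge \alpha_2 = \omega$, I would show that on the base $H_{\min}/\rho$ the $\omega'$-orientation differs from the orientation implicit in Lemma \ref{LocalComputation} exactly by the factor $\mathrm{sign}(\Delta_1 \Delta_2)$. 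Hence $e(\rho_2, H_{\min}/\rho_1) = \mathrm{sign}(\Delta_1\Delta_2)\cdot\bigl(-a_0/(\Delta_1\Delta_2)\bigr) = -a_0/|\Delta_1\Delta_2| = -a_{\min}/(m_{\min}n_{\min})$, which is \eqref{LevelSetNearBmin}. Note that this makes the sign robust: I do not need to determine the individual signs of $\Delta_1$ and $\Delta_2$, since the orientation factor cancels their product.

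For $B_{\max}$ the identical local model, identification, and computation give $|e(\rho_2,H_{\max}/\rho_1)| = a_{\max}/(m_{\max}n_{\max})$. The only change is a single extra minus sign in the orientation comparison: the form $\omega$, and hence $\omega'$, is defined globally through $d\Phi$, and since $\Phi$ decreases as one moves away from $B_{\max}$ whereas it increases away from $B_{\min}$, the covector $d\Phi$ points oppositely relative to the outward normal of the fibre sphere at the two ends. This reverses the orientation comparison sign relative to the $B_{\min}$ case, so that $e(\rho_2, H_{\max}/\rho_1) = +a_{\max}/(m_{\max}n_{\max})$, which is \eqref{LevelSetNearBmax}. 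The delicate part throughout is precisely this orientation bookkeeping, relating the contact volume $\alpha\wedge d\alpha \wedge d\alpha$, the forms $\alpha_1,\alpha_2$, and $d\Phi$ to the complex orientation of the fibre $S^3$; once Lemma \ref{LocalComputation} is in hand the algebra of the weights is routine.
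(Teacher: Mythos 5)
Your reduction is the same as the paper's: normal form via Lemma \ref{GeneralTorusActions} and Lemma \ref{FiniteCovering}, application of Lemma \ref{LocalComputation} to $H_{\min}\cong S^1\times S^3$, and the identifications $a_{\min}=a_0$, $m_{\min}=|a_0b_1-a_1b_0|$, $n_{\min}=|a_0b_2-a_2b_0|$ are all correct and match the paper's proof. The genuine gap is your sign mechanism. You claim that the $\omega'$-transverse orientation differs from the orientation implicit in Lemma \ref{LocalComputation} by the factor $\mathrm{sign}(\Delta_1\Delta_2)$, where $\Delta_i=a_0b_i-a_ib_0$, so that the sign of the product cancels. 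This cannot be right: both transverse orientations are obtained from a $4$-dimensional orientation of the level set by dividing out the \emph{same} ordered orbit orientation given by $(Y_1,Y_2)$ (in the $\omega'$-picture this is literally the relation $\omega'\wedge\alpha_1\wedge\alpha_2=\omega$, and in the model of Lemma \ref{LocalComputation} the orbit orientation is again that of $(\sigma_1,\sigma_2)$). Hence the comparison of the two transverse orientations reduces to comparing the $\omega$-orientation of the level set (from $d\Phi\wedge\omega=\alpha\wedge d\alpha\wedge d\alpha$) with the product of the $S^1$-orientation and the complex boundary orientation of $S^3$, and this comparison is independent of the weights $(a_i,b_i)$. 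Indeed, the paper's normalization of the sign of $\Phi$ (stated just before the lemma: the orientation of the fiber $F$ is required to equal the boundary orientation of the symplectic normal bundle of $B_{\min}$) makes the two orientations agree at $H_{\min}$ outright, so Lemma \ref{LocalComputation} applies verbatim and gives $e(\rho_2,H_{\min}/\rho_1)=-a_0/(\Delta_1\Delta_2)$ with no correction factor.

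Consequently you still owe a proof that $\Delta_1\Delta_2>0$ near the minimum, and this is exactly the step the paper supplies via \eqref{Transformation}: since there is $A\in\GL(2;\mathbb{Z})$ with $A\begin{pmatrix} a_0 & a_1 & a_2 \\ b_0 & b_1 & b_2\end{pmatrix}=\begin{pmatrix} n_0 & n_1 & n_2 \\ 0 & m_1 & m_2 \end{pmatrix}$, the $2\times 2$ minors satisfy $n_0 m_i=(\det A)\,\Delta_i$, and because at a \emph{minimum} the normal form \eqref{NormalFormOfPhi} allows one to take $m_1,m_2>0$, the signs of $\Delta_1$ and $\Delta_2$ agree, whence $m_{\min}n_{\min}=\Delta_1\Delta_2$ without absolute values. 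Your claimed cancellation would in fact contradict the neighboring Lemma \ref{LocalDifference}: at an interior closed orbit the same local model has $m_1>0$, $m_2<0$, so $\Delta_1\Delta_2=-kk'<0$ and the contribution is $+a/(kk')$; your mechanism, applied there, would flip this to $-a/(kk')$ and destroy the monotonicity underlying Lemmas \ref{TotalDifference} and \ref{EulerNumberFormula}. In short, the sign of the Euler number genuinely tracks $\mathrm{sign}(\Delta_1\Delta_2)$, which is governed by the Morse index of the critical orbit through \eqref{Transformation}, not by an orientation cancellation. Your treatment of $B_{\max}$ (the single honest flip coming from $d\Phi$ pointing inward rather than outward along the fiber sphere) is correct and is the same as the paper's remark, but it rests on the same flawed first factor; the final formulas you state happen to be right only because $\Delta_1\Delta_2>0$ holds at both extrema for the reason above.
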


\begin{proof}
By Lemma \ref{GeneralTorusActions}, we have a coordinate on an open neighborhood $U$ of $B_{\min}$ such that $\rho$ is written as
\begin{equation}
(s_1,s_2) \cdot (\zeta,z_1,z_2)=(s_{1}^{n_0} \zeta,s_{1}^{n_1} s_{2}^{m_1} z_1,s_{1}^{n_2} s_{2}^{m_2} z_2).
\end{equation}
Since $\Phi$ is written as \eqref{NormalFormOfPhi} on a finite covering of $U$, we can put $H_{\min}=\{(\zeta,z_1,z_2) \in S^1 \times D^4 | |z_1|^{2}+|z_{2}|^{2}=\epsilon\}$ for a small positive number $\epsilon$ and assume that $m_1$ and $m_2$ are positive. We can write $\rho_{1}$ and $\rho_{2}$ respectively as
\begin{equation}
t_{1} \cdot (\zeta,z_1,z_2)=(t_{1}^{a_{0}} \zeta, t_{1}^{a_{1}} z_{1}, t_{1}^{a_{2}} z_{2})
\end{equation}
and
\begin{equation}
t_{2} \cdot (\zeta,z_1,z_2)=(t_{2}^{b_{0}} \zeta, t_{2}^{b_{1}} z_{1}, t_{2}^{b_{2}} z_{2})
\end{equation}
so that $a_{0}$ is positive. Then we have
\begin{equation}\label{amin}
a_{0}=a_{\min}
\end{equation}
by the definition of $a_{\min}$. There exists an element $A$ of $\GL(2;\mathbb{Z})$ such that 
\begin{equation}\label{Transformation}
A \begin{pmatrix} a_{0} & a_{1} & a_{2} \\ b_{0} & b_{1} & b_{2} \end{pmatrix} = \begin{pmatrix} n_{0} & n_{1} & n_{2} \\ 0 & m_{1} & m_{2} \end{pmatrix}.
\end{equation}
Hence the signatures of $a_{0} b_{1} - a_{1} b_{0}$ and $a_{0} b_{2} - a_{2} b_{0}$ are equal. Since $L_{\min}^{1}$ and $L_{\min}^{2}$ are defined by the equations $z_{2}=0$ and $z_{1}=0$ respectively, we have $I(\rho,L_{\min}^{1})=|a_{0} b_{1} - a_{1} b_{0}|$ and $I(\rho,L_{\min}^{2})=|a_{0} b_{2} - a_{2} b_{0}|$. Then we have
\begin{equation}\label{Denominator}
m_{\min}n_{\min} = (a_{0} b_{1} - a_{1} b_{0})(a_{0} b_{2} - a_{2} b_{0}).
\end{equation}

By Lemma \ref{LevelSetNearB}, we have
\begin{equation}\label{CalculationOfEulerNumberOfH}
e(\rho_2,H_{\min}/\rho_1)=-\frac{a_{0}}{(a_{0} b_{1} - a_{1} b_{0} )(a_{0} b_{2} - a_{2} b_{0} ) }.
\end{equation}
By \eqref{amin}, \eqref{Denominator} and \eqref{CalculationOfEulerNumberOfH}, we have \eqref{LevelSetNearBmin}. 

The proof of \eqref{LevelSetNearBmax} is the same except the point where the signature of the right hand side of \eqref{CalculationOfEulerNumberOfH} changes because the transverse orientation is different from the previous case.
\end{proof}

\begin{lem}\label{LocalDifference}
Take two regular values $b$ and $b'$ of $\Phi$ so that $[b,b']$ contains a unique critical value $c$ of $\Phi$. We assume that the critical set in $\Phi^{-1}(c)$ is a closed orbit $\Sigma$ of the Reeb flow. Let $L$ and $L'$ be the gradient manifolds whose limit sets contain $\Sigma$. We put $k=I(\rho,L)$, $k'=I(\rho,L')$ and $a=I(\rho_1,\Sigma)$. Then we have
\begin{equation}\label{Difference0}
e(\rho_2,\Phi^{-1}(b')/\rho_1)-e(\rho_2,\Phi^{-1}(b)/\rho_1)=\frac{a}{kk'}.
\end{equation}
\end{lem}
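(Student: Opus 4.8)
The plan is to show that the difference of Euler numbers is concentrated near the critical orbit $\Sigma$ and then to identify the local contribution with the Euler number of a weighted $(S^1\times S^3)/\rho_1$, which is already computed in Lemma~\ref{LocalComputation}. First I would fix coordinates as in the proof of Lemma~\ref{LevelSetNearB}: by Lemma~\ref{GeneralTorusActions} choose a tubular neighborhood $U$ of $\Sigma$ with coordinates $(\zeta,z_1,z_2)$ on (a finite covering of) $U$ in which $\rho_1$ and $\rho_2$ act by $t_1\cdot(\zeta,z_1,z_2)=(t_1^{a_0}\zeta,t_1^{a_1}z_1,t_1^{a_2}z_2)$ and $t_2\cdot(\zeta,z_1,z_2)=(t_2^{b_0}\zeta,t_2^{b_1}z_1,t_2^{b_2}z_2)$ with $a_0>0$, and in which $\Phi=w_1|z_1|^2+w_2|z_2|^2+c$. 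Since $\Sigma$ lies neither in $B_{\min}$ nor in $B_{\max}$, its Morse--Bott index is $2$ by Lemma~\ref{MorseBottTheory}, so $w_1>0>w_2$, and the two gradient manifolds meeting $\Sigma$ are $L'=\{z_1=0\}$ and $L=\{z_2=0\}$. As in Lemma~\ref{LevelSetNearB} one reads off $a=I(\rho_1,\Sigma)=a_0$, $k=I(\rho,L)=|a_0b_1-a_1b_0|$ and $k'=I(\rho,L')=|a_0b_2-a_2b_0|$, so the asserted right-hand side equals $a_0/\bigl(|a_0b_1-a_1b_0|\,|a_0b_2-a_2b_0|\bigr)$.

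Next I would localize the difference. Put $B=\Phi^{-1}(b)\cap U$ and $B'=\Phi^{-1}(b')\cap U$. Because $w_1>0>w_2$, on $B$ one has $z_2\neq0$ while $z_1$ sweeps out a disk, so $B$ is a solid piece with core the exceptional torus $L'\cap\Phi^{-1}(b)$; symmetrically $B'$ has core $L\cap\Phi^{-1}(b')$. Since $c$ is the only critical value in $[b,b']$ and the only critical set there is $\Sigma$, the gradient flow of $\Phi$ has no rest point on $\Phi^{-1}([b,b'])\setminus U$, so (after the usual reparametrization, using Theorem~\ref{StableManifoldTheorem} near $\partial U$) it gives a $\rho$-equivariant, fiber-preserving diffeomorphism of the outside pieces $\Phi^{-1}(b)\setminus\mathrm{int}\,B$ and $\Phi^{-1}(b')\setminus\mathrm{int}\,B'$. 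Passing to the $\rho_1$-quotients, $\Phi^{-1}(b)/\rho_1$ and $\Phi^{-1}(b')/\rho_1$ are obtained by gluing the same Seifert piece (the common image of the outside part under the flow identification) to $B/\rho_1$ and to $B'/\rho_1$ along a common boundary torus $T$. By additivity of the Euler number of locally free $S^1$-actions over a fibered torus (which follows from Definition~\ref{Definition : EulerNumber} and additivity of relative Euler numbers of circle bundles over surfaces with boundary), the contribution of the outside piece cancels, and
\begin{equation}
e(\rho_2,\Phi^{-1}(b')/\rho_1)-e(\rho_2,\Phi^{-1}(b)/\rho_1)=e\bigl(\rho_2,(B'/\rho_1)\cup_T\overline{(B/\rho_1)}\bigr),
\end{equation}
where the bar denotes orientation reversal.

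Finally I would identify the glued orbifold. On the covering, $B\cong S^1\times(S^1\times D^2)$ fills the $z_1$-disk direction and $B'\cong S^1\times(D^2\times S^1)$ fills the $z_2$-disk direction, so gluing them along $\partial U\cap\{|z_1|^2+|z_2|^2=\epsilon^2\}$ is precisely the genus-one Heegaard splitting of $S^3$; hence $(B'/\rho_1)\cup_T\overline{(B/\rho_1)}$ is $(S^1\times S^3)/\rho_1$ carrying the $\rho_2$-action of the stated weights. Lemma~\ref{LocalComputation} (together with Lemma~\ref{EulerNumberOfSeifertFibrationOnS30} and the $1/|\Gamma|$ normalization of Definition~\ref{Definition : EulerNumber} accounting for the covering) evaluates this Euler number as $a_0/\bigl(|a_0b_1-a_1b_0|\,|a_0b_2-a_2b_0|\bigr)=a/(kk')$ in absolute value, which is the content of \eqref{Difference0}.

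The step I expect to be genuinely delicate is fixing the overall sign. Lemma~\ref{LocalComputation} is stated for $S^3$ with its complex boundary orientation and a fixed transverse orientation of $\rho_2$, whereas the orientation of $(B'/\rho_1)\cup_T\overline{(B/\rho_1)}$ is dictated by the transverse orientation of $\Phi$ chosen before Lemma~\ref{LevelSetNearB}; moreover, at an index-$2$ orbit the signs of $a_0b_1-a_1b_0$ and $a_0b_2-a_2b_0$ are opposite, in contrast to the extremal case of Lemma~\ref{LevelSetNearB} where they agree. I would pin the sign down either by comparing these two orientations directly in the model, or, more cheaply, by consistency: summing \eqref{Difference0} over all interior critical orbits must carry $e(\rho_2,H_{\min}/\rho_1)=-a_{\min}/(m_{\min}n_{\min})$ into $e(\rho_2,H_{\max}/\rho_1)=+a_{\max}/(m_{\max}n_{\max})$ of Lemma~\ref{LevelSetNearB}, which forces each interior jump to be the positive quantity $+a/(kk')$.
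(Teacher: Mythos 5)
Your proposal is correct and follows essentially the same route as the paper: the paper also passes to the normal form of Lemma \ref{GeneralTorusActions} with $m_1>0>m_2$, identifies $a=a_0$, $k=|a_0b_1-a_1b_0|$, $k'=|a_0b_2-a_2b_0|$, localizes the change as a Dehn surgery replacing the solid torus around $(L'\cap\Phi^{-1}(b'))/\rho_1$ by the one around $(L\cap\Phi^{-1}(b))/\rho_1$ with $W\cong S\cup S'$ equivariantly, and invokes additivity plus Lemma \ref{LocalComputation}. The sign issue you flag is handled in the paper by the direct route you mention first: since the signatures of $a_0b_1-a_1b_0$ and $a_0b_2-a_2b_0$ are opposite at an interior orbit, one has $-kk'=(a_0b_1-a_1b_0)(a_0b_2-a_2b_0)$, so the signed formula of Lemma \ref{LocalComputation} yields $+a/(kk')$ without any global consistency argument.
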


\begin{proof}
By Lemma \ref{GeneralTorusActions}, we have a coordinate on an open neighborhood $U$ of $\Sigma$ such that $\rho$ is written as
\begin{equation}
(s_1,s_2) \cdot (\zeta,z_1,z_2)=(s_{1}^{n_0} \zeta,s_{1}^{n_1} s_{2}^{m_1} z_1,s_{1}^{n_2} s_{2}^{m_2} z_2).
\end{equation}
Since $\Phi$ is written as in \eqref{NormalFormOfPhi} on a finite covering of $U$ and $\Sigma$ is contained neither in $B_{\min}$ nor in $B_{\max}$, we can assume that $m_1$ is positive and $m_2$ is negative. We put $W=\{(\zeta,z_1,z_2) \in S^1 \times D^4 | |z_1|^{2}+|z_{2}|^{2}=\epsilon\}$ for a small positive number $\epsilon$. We can write $\rho_{1}$ and $\rho_{2}$ respectively as
\begin{equation}
t_{1} \cdot (\zeta,z_1,z_2)=(t_{1}^{a_{0}} \zeta, t_{1}^{a_{1}} z_{1}, t_{1}^{a_{2}} z_{2})
\end{equation}
and
\begin{equation}
t_{2} \cdot (\zeta,z_1,z_2)=(t_{2}^{b_{0}} \zeta, t_{2}^{b_{1}} z_{1}, t_{2}^{b_{2}} z_{2})
\end{equation}
so that $a_{0}$ is positive. Then we have
\begin{equation}\label{a}
a_{0}=a
\end{equation}
by the definition of $a$. Then the signatures of $a_{0} b_{1} - a_{1} b_{0} $ and $a_{0} b_{2} - a_{2} b_{0} $ are different by the equation \eqref{Transformation} and the fact that $m_1$ is positive and $m_2$ is negative. Hence we have 
\begin{equation}\label{Denominator2}
-k k'=(a_{0} b_{1} - a_{1} b_{0} ) (a_{0} b_{2} - a_{2} b_{0} ).
\end{equation}

By the assumption, the $S^1$-action $\rho_2$ on $\Phi^{-1}(b')/\rho_1$ is obtained from the $S^1$-action $\rho_2$ on $\Phi^{-1}(b)/\rho_1$ by the Dehn surgery which replaces a solid torus $S'$ with an exceptional orbit $(L' \cap \Phi^{-1}(b'))/\rho_1$ of $\rho_2$ to a solid torus $S$ with an exceptional orbit $(L \cap \Phi^{-1}(b))/\rho_1$ of $\rho_2$. Note that we can take $S$ and $S'$ so that $W$ is $S^1$-equivariantly diffeomorphic to $S \cup S'$. By the additivity of the Euler numbers with respect to the connected sum on the base spaces of the Seifert fibrations, the change of the Euler numbers under this operation in which we obtain $\Phi^{-1}(b)/\rho_1$ from $\Phi^{-1}(b')/\rho_1$ is equal to $e(\rho_{2},W/\rho_1)$. Hence we have \eqref{Difference0} by \eqref{a}, \eqref{Denominator2} and Lemma \ref{LocalComputation}.
\end{proof}

\begin{lem}\label{TotalDifference}
We denote the nontrivial chains in $(M,\alpha)$ by $C^1, C^2,\cdots,C^k$. Let $L_{1}^{j},L_{2}^{j},\cdots,L_{l(j)}^{j}$ be the gradient manifolds which form $C^j$ in the order of the subscripts. Let $\Sigma_{i i+1}^{j}$ be the closed orbit of the Reeb flow contained in both of the closures of $L_{i}^{j}$ and $L_{i+1}^{j}$ for $i=1,2,\cdots,l(j)-1$. We put $k_{i}^{j}=I(\rho,L_{i}^{j})$ for $i=1$, $2$, $\cdots$, $l(j)$, and $a_{i i+1}^{j}=I(\rho_1,\Sigma_{i i+1}^{j})$ for $i=1,2,\cdots,l(j)-1$. Then we have
\begin{equation}\label{SumOfEulerNumbers1}
e(\rho_2,H_{\max}/\rho_1) - e(\rho_2,H_{\min}/\rho_1)=\sum_{i,j} \frac{a_{i i+1}^{j}}{k_{i}^{j}k_{i+1}^{j}}.
\end{equation}
\end{lem}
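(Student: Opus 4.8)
The plan is to prove \eqref{SumOfEulerNumbers1} by telescoping the local difference formula of Lemma \ref{LocalDifference} along the Bott--Morse function $\Phi$, from the level $H_{\min}$ up to the level $H_{\max}$. The left hand side measures the total change of the Euler number $e(\rho_2,\Phi^{-1}(b)/\rho_1)$ as the regular value $b$ increases from $\Phi(H_{\min})$ to $\Phi(H_{\max})$, and Lemma \ref{LocalDifference} computes exactly how much this quantity jumps when $b$ crosses a single interior critical orbit. Since the critical set of $\Phi$ consists of the closed orbits of the Reeb flow and every critical value strictly between $\Phi(H_{\min})$ and $\Phi(H_{\max})$ corresponds to closed orbits lying neither in $B_{\min}$ nor in $B_{\max}$, summing these jumps will recover the right hand side once the interior orbits are matched with the chain adjacencies.

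First I would record that $e(\rho_2,\Phi^{-1}(b)/\rho_1)$ is constant on each interval of regular values of $\Phi$: the gradient flow of $\Phi$ commutes with $\rho$, hence with $\rho_1$ and $\rho_2$, so it provides equivariant diffeomorphisms between level sets over such an interval that preserve the transverse orientation, and therefore the Euler number. I would then choose regular values $\Phi(H_{\min})=b_0<b_1<\cdots<b_N=\Phi(H_{\max})$ so that each open interval $(b_{s-1},b_s)$ contains exactly one critical value $c_s$, and write the left hand side of \eqref{SumOfEulerNumbers1} as the telescoping sum $\sum_{s=1}^{N}\big(e(\rho_2,\Phi^{-1}(b_s)/\rho_1)-e(\rho_2,\Phi^{-1}(b_{s-1})/\rho_1)\big)$. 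The critical set in $\Phi^{-1}(c_s)$ may consist of several closed orbits $\Sigma_{s,1},\ldots,\Sigma_{s,p_s}$; crossing the level $c_s$ performs the corresponding Dehn surgeries in disjoint invariant tubular neighborhoods, so by the additivity of the Euler number under connected sum of the base surfaces --- the very mechanism used in the proof of Lemma \ref{LocalDifference} --- the $s$-th summand equals $\sum_{r=1}^{p_s}a_{s,r}/(k_{s,r}k'_{s,r})$, where $a_{s,r}=I(\rho_1,\Sigma_{s,r})$ and $k_{s,r},k'_{s,r}$ are the invariants $I(\rho,\cdot)$ of the two gradient manifolds having $\Sigma_{s,r}$ as a limit set.

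It then remains to reindex the resulting sum over all interior critical orbits as the sum over chain adjacencies appearing on the right hand side. By Lemma \ref{MorseBottTheory} (iv), an interior closed orbit $\Sigma$ (one not contained in $B_{\min}$ or $B_{\max}$) is the $\alpha$-limit set of exactly one gradient manifold and the $\omega$-limit set of exactly one gradient manifold; these two gradient manifolds are consecutive members of a single chain, and $\Sigma$ is precisely their common limit orbit. Following the gradient flow downward and upward from $\Sigma$ reaches $B_{\min}$ and $B_{\max}$ in finitely many steps, so $\Sigma$ determines a unique chain $C^{j}$ and a unique index $i$ with $\Sigma=\Sigma_{i\,i+1}^{j}$ and $\{k_{s,r},k'_{s,r}\}=\{k_{i}^{j},k_{i+1}^{j}\}$. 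This assignment is a bijection from the interior critical orbits onto the set of adjacencies $\{(j,i):1\le i\le l(j)-1\}$. A chain carrying at least one interior orbit has length at least two and is therefore nontrivial, while a trivial single-gradient-manifold chain contributes no adjacency; hence restricting the sum to the nontrivial chains $C^1,\ldots,C^k$ loses nothing, and under the bijection each term $a_{s,r}/(k_{s,r}k'_{s,r})$ becomes $a_{i\,i+1}^{j}/(k_{i}^{j}k_{i+1}^{j})$. This yields \eqref{SumOfEulerNumbers1}.

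The step I expect to be the main obstacle is the case of several critical orbits sharing a single level of $\Phi$, which cannot be separated by inserting regular values and so is not directly covered by Lemma \ref{LocalDifference} as stated; the resolution is to apply the additivity of the Euler number under disjoint surgeries rather than attempting to perturb $\Phi$, since $\Phi$ cannot be perturbed freely without disturbing the $\rho$-action and the components $B_{\min}$ and $B_{\max}$. A second point demanding care, though routine, is verifying that the correspondence between interior orbits and adjacencies is genuinely a bijection and that the numerical invariants $a$, $k$, $k'$ match on the two sides.
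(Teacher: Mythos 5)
Your proof is correct and follows essentially the same route as the paper's: both telescope $e(\rho_2,\Phi^{-1}(b)/\rho_1)$ across the critical values of $\Phi$, apply Lemma \ref{LocalDifference} at each interior closed orbit, and sum the contributions, reindexed over the nontrivial chains. Your explicit handling of several closed orbits sharing one critical value (via additivity of the Euler number under disjoint surgeries) and your verification that interior orbits correspond bijectively to chain adjacencies only make precise what the paper's brief proof leaves implicit.
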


\begin{proof}
We put $b_{\max}=\Phi(H_{\max})$ and $b_{\min}=\Phi(H_{\min})$. Then $b_{\min}$ and $b_{\max}$ are regular values of $\Phi$ sufficiently close to the minimum value and the maximum value of $\Phi$, respectively. When $b$ moves from $b_{\min}$ to $b_{\max}$, the change of $e(\rho_2,\Phi^{-1}(b)/\rho_1)$ occurs only when $b$ goes through a critical value of $\Phi$. By Lemma \ref{LocalDifference}, we can compute the contribution of each closed orbit of the Reeb flow. Since we obtain $e(\rho_2,H_{\max}/\rho_1)-e(\rho_2,H_{\min}/\rho_1)$ by summing up the contributions of closed orbits of the Reeb flow, we have the formula \eqref{SumOfEulerNumbers1}. 
\end{proof}

\begin{lem}\label{EulerNumberFormula}
Let $L_{1},L_{2},\cdots,L_{l}$ be the gradient manifolds which form a chain $C$ in the order of the subscripts. Let $\Sigma_{i i+1}$ be the closed orbit of the Reeb flow between $L_{i}$ and $L_{i+1}$ for $i=1,2,\cdots,l-1$. We put $k_i=I(\rho,L_i)$ for $i = 1,2,\cdots,l$, and $a_{i i+1}=I(\rho_1,\Sigma_{i i+1})$ for $i=1,2,\cdots,l-1$. Then we have
\begin{equation}\label{EulerNumberEquation}
\sum_{i=1}^{l-1} \frac{a_{i i+1}}{k_{i}k_{i+1}} = \frac{d}{\LCM(k_{1},k_{l})}
\end{equation}
for some positive integer $d$.
\end{lem}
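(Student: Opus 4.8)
The plan is to reduce the identity to two local divisibility relations read off from the normal forms at the closed orbits, and then to assemble them by an induction on the length $l$ of the chain. Throughout I abbreviate $t_i=\frac{a_{i i+1}}{k_ik_{i+1}}$, so that the left hand side is $S=\sum_{i=1}^{l-1}t_i$. Positivity of the eventual $d$ is immediate, since every $a_{i i+1}$ and every $k_i$ is a positive integer; the real content is that $\LCM(k_1,k_l)\cdot S$ is an integer, and for this it suffices to bound $v_p(S)\ge-\max(v_p(k_1),v_p(k_l))$ for every prime $p$.

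First I would record the local data. By Lemma \ref{GeneralTorusActions}, on a finite covering of a tubular neighbourhood of each $\Sigma_{i i+1}$ there is a coordinate $(\zeta,z_1,z_2)$ in which $\rho=\rho_1\times\rho_2$ acts with an integral exponent matrix $\left(\begin{smallmatrix} a_0 & a_1 & a_2\\ b_0 & b_1 & b_2\end{smallmatrix}\right)$, exactly as in the proofs of Lemmas \ref{LocalComputation} and \ref{LocalDifference}. Here $L_i$ and $L_{i+1}$ are $\{z_1=0\}$ and $\{z_2=0\}$, so $k_i=|a_0b_2-a_2b_0|$, $k_{i+1}=|a_0b_1-a_1b_0|$ and $a_{i i+1}=a_0$. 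Effectiveness of $\rho$ forces the three $2\times2$ minors to have greatest common divisor $1$; since two of them are $\pm k_i$ and $\pm k_{i+1}$, the same elimination used in the $l=2$ case yields the first relation $\GCD(k_i,k_{i+1})\mid a_{i i+1}$. In particular $v_p(t_i)\ge-\max(v_p(k_i),v_p(k_{i+1}))$, so the terms are only dangerous where an interior $k_j$ has large $p$-adic valuation.

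The key step is a cancellation at each interior gradient manifold. The manifold $L_j$ meets both $\Sigma_{j-1 j}$ and $\Sigma_{j j+1}$, and by Lemma \ref{MorseBottTheory}(ii) its isotropy group $G_{L_j}\cong\mathbb{Z}/k_j\mathbb{Z}$ (cyclic by Lemma \ref{KContactSubmanifolds}(iii)) is the same computed from either end; moreover $G_{L_j}$ acts on the normal line bundle of $L_j$ by a single faithful character, again independent of the end. Translating these two facts into the lattice $\Lie(G)_{\mathbb{Z}}$ — the annihilator of $G_{L_j}$ is the same index-$k_j$ sublattice $\Lambda_j$ computed at either orbit, and the two normal characters agree in $\mathbb{Z}^2/\Lambda_j$ — and using that the orbit directions are primitive while the normal character generates $\mathbb{Z}^2/\Lambda_j$, I can combine the two neighbouring terms into
\begin{equation}
t_{j-1}+t_j=\frac{a_{j-1 j}}{k_{j-1}k_j}+\frac{a_{j j+1}}{k_jk_{j+1}}=\frac{m_j}{k_{j-1}k_{j+1}},\qquad m_j:=\frac{a_{j-1 j}k_{j+1}+a_{j j+1}k_{j-1}}{k_j}\in\mathbb{Z}.
\end{equation}
Thus $k_j\mid a_{j-1 j}k_{j+1}+a_{j j+1}k_{j-1}$, and the interior isotropy order $k_j$ disappears from the denominator of the pair. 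Arranging the signs of the two minors and of the normal character so that this is an \emph{exact} cancellation, rather than a combination that retains a factor of $k_j$, is the delicate point: it is where the symplectic orientation of the normal bundle and the even Morse index at an interior orbit (Lemma \ref{MorseBottTheory}) have to be used.

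Finally I would assemble these by induction on $l$. For $l=2$ the claim is $\frac{a_{12}}{k_1k_2}=\frac{a_{12}/\GCD(k_1,k_2)}{\LCM(k_1,k_2)}$, which is the first relation. For $l\ge3$ I contract an interior $L_j$: the displayed identity replaces the two terms at $L_j$ by a single term with numerator $m_j$ between $L_{j-1}$ and $L_{j+1}$, producing the analogous sum for the shorter configuration $L_1,\dots,L_{j-1},L_{j+1},\dots,L_l$, which has the same endpoints $L_1,L_l$. The main obstacle is to see that this shorter configuration is again a chain in the sense needed for the induction, that is, that $m_j$ is the isotropy datum $I(\rho_1,\Sigma')$ of the closed orbit $\Sigma'$ obtained by merging $\Sigma_{j-1 j}$ and $\Sigma_{j j+1}$, so that the relation $\GCD(k_{j-1},k_{j+1})\mid m_j$ and the next interior cancellation survive; I would verify this directly from the normal forms of Section 8 by rematching coordinates across the contracted orbit. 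Granting it, the inductive hypothesis gives the value $\frac{d}{\LCM(k_1,k_l)}$ for the shorter chain, and since $k_1$ and $k_l$ are untouched the same value serves for the original chain. I expect the sign bookkeeping in the cancellation, and the propagation of the two divisibility relations through the contraction, to be the two places demanding genuine care.
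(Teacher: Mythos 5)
Your route is genuinely different from the paper's. The paper shares your first step --- each interior orbit contributes $\frac{a_{i i+1}}{k_ik_{i+1}}$ to the change of $e(\rho_2,\Phi^{-1}(x)/\rho_1)$, by Lemma \ref{LocalDifference} --- but then it goes soft and global instead of arithmetic: localizing to a $\rho$-invariant neighborhood $U$ of the chain, the passage from $(\Phi^{-1}(v_1)\cap U)/\rho_1$ to $(\Phi^{-1}(v_l)\cap U)/\rho_1$ is a single Dehn surgery trading an exceptional $\rho_2$-orbit of multiplicity $k_1$ for one of multiplicity $k_l$; by additivity of Euler numbers the whole sum equals $e(\sigma,S_1\cup S_l)$ for a Seifert fibration on a union of two solid tori with exceptional multiplicities $k_1,k_l$, and since the quotient by the $\mathbb{Z}/\LCM(k_1,k_l)\mathbb{Z}$-subaction is free, the denominator divides $\LCM(k_1,k_l)$ directly from the definition \eqref{DefinitionOfEulerNumber}, with positivity of $d$ read off from positivity of the sum. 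This bypasses all sign and lattice bookkeeping. Your local relations are nonetheless correct: with $M_{23}=a_1b_2-a_2b_1$ the identity $a_0M_{23}=a_1M_{13}-a_2M_{12}$ plus $\GCD$ of minors $=1$ gives $\GCD(k_i,k_{i+1})\mid a_{i i+1}$, and your interior cancellation is a true identity in the lattice of a local $T^3$-action (needed only near the interior orbits, where closures are smooth by Lemma \ref{MorseBottTheory}(v)): if $\nu_j$ is the primitive isotropy vector of $L_j$, $e$ the covector cutting out $\Lie(G)$, and $w_1$ generates $\rho_1$, then $e\cdot w_1=0$ yields $\det(\nu_{j-1},\nu_j,w_1)(e\cdot\nu_{j+1})+\det(\nu_j,\nu_{j+1},w_1)(e\cdot\nu_{j-1})=(e\cdot\nu_j)\det(w_1,\nu_{j-1},\nu_{j+1})$, and the sign matching you flag is settled by the monotone-slope/local-convexity argument of Proposition \ref{NearChains}.

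Two genuine gaps remain in your induction as framed. First, the contracted configuration is in general \emph{not} ``again a chain'': there is no closed orbit $\Sigma'$ whose isotropy data are $(\nu_{j-1},\nu_{j+1})$ --- this pair need not span a saturated sublattice nor satisfy the smoothness (Delzant-type) condition at an orbit, and no $K$-contact manifold realizing the merged configuration is available --- so ``rematching coordinates across the contracted orbit'' via the normal forms of Section 8 has nothing to attach to. The repair is to abandon geometric realization and run the recursion purely on lattice data, where the displayed identity telescopes to $\sum_{i=1}^{l-1}\frac{a_{i i+1}}{k_ik_{i+1}}=\pm\det(w_1,\nu_1,\nu_l)/\bigl((e\cdot\nu_1)(e\cdot\nu_l)\bigr)$. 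Second, and relatedly, this telescoped form only exhibits denominator $k_1k_l$, not $\LCM(k_1,k_l)$: your base case produces the $\GCD$-improvement only at a genuine closed orbit, which the endpoint pair $(\nu_1,\nu_l)$ is not, so the relation $\GCD(k_1,k_l)\mid\det(w_1,\nu_1,\nu_l)$ does not ``survive the contraction'' for free and must be checked separately. It is, fortunately, easy: in a $\mathbb{Z}$-basis $(w_1,w_2,u)$ of $\Lie(T^3)_{\mathbb{Z}}$ with $w_1,w_2$ spanning $\Lie(G)_{\mathbb{Z}}$, writing $\nu_i=\alpha_iw_1+\beta_iw_2+\gamma_iu$ gives $e\cdot\nu_i=\pm\gamma_i$ and $\det(w_1,\nu_1,\nu_l)=\pm(\beta_1\gamma_l-\beta_l\gamma_1)$, which is visibly divisible by $\GCD(\gamma_1,\gamma_l)$. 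With these two repairs your argument closes; the comparison is instructive because the paper's Seifert-fibration argument obtains the $\LCM$ bound with no computation at all, while your version, once fixed, actually identifies the numerator $d$ explicitly as $|\det(w_1,\nu_1,\nu_l)|/\GCD(k_1,k_l)$.
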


\begin{proof}
We choose regular values $v_{1},v_{2},\cdots,v_{l}$ of $\Phi$ so that $v_{i}$ is contained in the image of $L_{i}$. By Corollary \ref{LocalComputation}, the contribution of $\Sigma_{i i+1}$ to the change of $e(\rho_{2},\Phi^{-1}(x)/\rho_{1})$ is $\frac{a_{i i+1}}{k_i k_{i+1}}$. Hence the total contribution  of closed orbits $\{\Sigma_{i i+1}\}_{i=1}^{l-1}$ in $C$ to the change of $e(\rho_{2},\Phi^{-1}(x)/\rho_{1})$ is equal to $\sum_{i=1}^{l-1} \frac{a_{i i+1}}{k_i k_{i+1}}$.

Choose a $\rho$-invariant open neighborhood $U$ of the chain $C$ which does not intersect other nontrivial chains. $(\Phi^{-1}(v_l) \cap U)/\rho_1$ is obtained from $(\Phi^{-1}(v_1) \cap U)/\rho_1$ by Dehn surgery which replaces a solid torus $S_{1}$ with an exceptional orbit $(\Phi^{-1}(v_1) \cap C)/\rho_1$ of $\rho_2$ with multiplicity $k_1$ by a solid torus $S_{l}$ with an exceptional orbit $(\Phi^{-1}(v_l) \cap C)/\rho_1$ of $\rho_2$ with multiplicity $k_l$. By the additivity of Euler numbers with respect to the connected sum on the base spaces of Seifert fibrations, the change of the Euler numbers under this operation by which we obtain $(\Phi^{-1}(v_{l}) \cap U)/\rho_1$ from $(\Phi^{-1}(v_{1}) \cap U)/\rho_1$ is equal to $e(\sigma,S_{1} \cup S_{l})$ where $\sigma$ is an $S^1$-action on an $3$-dimensional orbifold $S_{1} \cup S_{l}$ which is the union of two solid tori and has two exceptional orbits of multiplicity $k_{1}$ and $k_{l}$. Let $H$ be the subgroup of $S^1$ with $\LCM(k_{1},k_{l})$ elements. Since $H$ contains the isotropy group of the $S^1$-action on $S_{1} \cup S_{l}$, the effective $S^1$-action induced from $\sigma$ on $(S_{1} \cup S_{l})/(\sigma|_H)$ is free. Hence the denominator of $e(\sigma,S_{1} \cup S_{l})$ divides $\LCM(k_{1},k_{l})$ by \eqref{DefinitionOfEulerNumber}. Then the denominator of the total contribution of closed orbits $\{\Sigma_{i i+1}\}_{i=1}^{l-1}$ in $C$ to the change of $e(\rho_{2},\Phi^{-1}(x)/\rho_{1})$ divides $\LCM(k_{1},k_{l})$.

By the conclusions of the two previous paragraphs, we have
\begin{equation}\label{Difference}
\sum_{i=1}^{l-1} \frac{a_{i i+1}}{k_i k_{i+1}} = \frac{d}{\LCM(k_{1},k_{l})}.
\end{equation}
for some integer $d$. The positivity of $d$ follows from the positivity of the right hand side of \eqref{Difference}.
\end{proof}

\begin{lem}\label{RelationOfEulerNumbers}
We denote the nontrivial chains in $(M,\alpha)$ by $C^{1}, C^{2},\cdots,C^{k}$. Let $L_{1}^{j},L_{2}^{j},\cdots,L_{l(j)}^{j}$ be the gradient manifolds which form $C^{j}$ in the order of the subscripts. We put $k_{i}^{j}=I(\rho,L_{i}^{j})$ for $i=1,2,\cdots,l$. Then we have
\begin{equation}\label{SumOfEulerNumbers2}
e(\rho_2,H_{\max}/\rho_1) - e(\rho_2,H_{\min}/\rho_1)=\sum_{j} \frac{d^{j}}{\LCM(k_{1}^{j},k_{l(j)}^{j})}
\end{equation}
for some positive integers $d^j$.
\end{lem}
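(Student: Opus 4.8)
The plan is to derive this statement immediately by combining the two preceding lemmas, Lemma \ref{TotalDifference} and Lemma \ref{EulerNumberFormula}; no genuinely new argument is required, since the substantive content was already carried out in proving those results.

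First I would invoke Lemma \ref{TotalDifference}, which expresses the Euler-number difference as the double sum
\begin{equation}
e(\rho_2,H_{\max}/\rho_1) - e(\rho_2,H_{\min}/\rho_1)=\sum_{i,j} \frac{a_{i i+1}^{j}}{k_{i}^{j}k_{i+1}^{j}}.
\end{equation}
Here $j$ ranges over the nontrivial chains $C^{1},\dots,C^{k}$ and, for each fixed $j$, the index $i$ ranges over the interior closed orbits $\Sigma_{i\,i+1}^{j}$ of the chain $C^{j}$, that is $i=1,2,\dots,l(j)-1$. The next step is simply to regroup the double sum by chains, writing it as $\sum_{j}\bigl(\sum_{i=1}^{l(j)-1} a_{i\,i+1}^{j}/(k_{i}^{j}k_{i+1}^{j})\bigr)$.

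For each fixed $j$ the inner sum is precisely the left-hand side of the identity in Lemma \ref{EulerNumberFormula} applied to the single chain $C^{j}$ (with $L_{i}=L_{i}^{j}$, $k_{i}=k_{i}^{j}$, $a_{i\,i+1}=a_{i\,i+1}^{j}$ and $l=l(j)$). That lemma then gives
\begin{equation}
\sum_{i=1}^{l(j)-1} \frac{a_{i i+1}^{j}}{k_{i}^{j}k_{i+1}^{j}} = \frac{d^{j}}{\LCM(k_{1}^{j},k_{l(j)}^{j})}
\end{equation}
for some positive integer $d^{j}$. Substituting this into the regrouped sum yields the claimed formula \eqref{SumOfEulerNumbers2}.

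There is no real obstacle to overcome at this stage: the two ingredients dovetail exactly, the only point to check being that the index ranges in the double sum of Lemma \ref{TotalDifference} partition correctly into the per-chain sums of Lemma \ref{EulerNumberFormula}. The genuine work—tracking how $e(\rho_{2},\Phi^{-1}(b)/\rho_{1})$ jumps as $b$ crosses each critical value via the local computation of Lemma \ref{LocalDifference}, and bounding the denominator of a chain's total contribution through the Dehn-surgery and Seifert-fibration additivity argument—has already been completed in the proofs of those two lemmas, so this lemma is essentially a bookkeeping corollary.
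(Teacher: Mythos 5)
Your proposal is correct and matches the paper's own proof essentially verbatim: the paper likewise invokes Lemma \ref{TotalDifference} for the double-sum expression of $e(\rho_2,H_{\max}/\rho_1)-e(\rho_2,H_{\min}/\rho_1)$ and then applies Lemma \ref{EulerNumberFormula} chain by chain before summing over $j$. Your observation that the index regrouping is the only point requiring verification is also accurate (and note that where the paper's proof writes $a_{i\,i+1}^{j}=I(\rho_2,\Sigma_{i\,i+1}^{j})$, this is a slip for $I(\rho_1,\Sigma_{i\,i+1}^{j})$ as in Lemmas \ref{TotalDifference} and \ref{EulerNumberFormula}, which is the convention you correctly use).
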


\begin{proof}
Let $\Sigma_{i i+1}^{j}$ be the closed orbit of the Reeb flow between $L_{i}^{j}$ and $L_{i+1}^{j}$. We put $a_{i i+1}^{j}=I(\rho_2,\Sigma_{i i+1}^{j})$ for $i=1,2,\cdots,l-1$. By Lemma \ref{TotalDifference}, we have 
\begin{equation}
e(\rho_2,H_{\max}/\rho_1) - e(\rho_2,H_{\min}/\rho_1)=\sum_{i,j} \frac{a_{i i+1}^{j}}{k_{i}^{j} k_{i+1}^{j}}.
\end{equation}
Applying Lemma \ref{EulerNumberFormula} to each chain $C^{j}$ and summing for $j=1,2,\cdots,k$, we obtain
\begin{equation}
\sum_{i,j} \frac{a_{i i+1}^{j}}{k_{i}^{j}k_{i+1}^{j}}=\sum_{j} \frac{d^{j}}{\LCM(k_{1}^{j}, k_{l(j)}^{j})}.
\end{equation}
Hence the equation \eqref{SumOfEulerNumbers2} is proved.
\end{proof}

\begin{cor}\label{RelationForEstimate}
Assume that the assumptions (i) and (ii) of Theorem \ref{ToricCondition} are satisfied. Then we have
\begin{equation}\label{SumOfEulerNumbers4} 
\frac{1}{\LCM(m_{\max},n_{\max})}+\frac{1}{\LCM(m_{\min},n_{\min})}=\sum_{j} \frac{d^{j}}{\LCM(k_{1}^{j},k_{l(j)}^{j})}
\end{equation}
for some positive integers $d^j$.
\end{cor}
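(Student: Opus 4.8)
The plan is to combine the two Euler-number computations already in hand, Lemma~\ref{LevelSetNearB} and Lemma~\ref{RelationOfEulerNumbers}, after making a judicious choice of the auxiliary circle actions $\rho_1$ and $\rho_2$ dictated by the hypotheses. First, since assumption (i) holds, Lemma~\ref{MorseTheory}~(iv) shows that $B_{\max}$ and $B_{\min}$ are isolated closed orbits of the Reeb flow, so the standing hypothesis of Lemma~\ref{LevelSetNearB} is met. Next, by assumption (ii) together with Lemma~\ref{TranslationOfCondition2}, there is a vector $v \in \Omega \cap \Lie(G)_{\mathbb{Z}}$ such that both $\{v,v_{\min}\}$ and $\{v,v_{\max}\}$ are $\mathbb{Z}$-bases of $\Lie(G)_{\mathbb{Z}}$. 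I would take $\rho_1$ to be the $S^1$-action $\sigma_v$ generated by $v$; its orbits are positively transverse to $\ker\alpha$ because $v \in \Omega$, so this is an admissible choice, and I would let $\rho_2$ be any complementary $S^1$-subaction so that $\rho_1 \times \rho_2 = \rho$.

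The heart of the argument is the claim that, for this choice of $\rho_1$, one has $a_{\min}=\GCD(m_{\min},n_{\min})$ and $a_{\max}=\GCD(m_{\max},n_{\max})$, equivalently
\[
\frac{a_{\min}}{m_{\min}n_{\min}}=\frac{1}{\LCM(m_{\min},n_{\min})}, \qquad \frac{a_{\max}}{m_{\max}n_{\max}}=\frac{1}{\LCM(m_{\max},n_{\max})}.
\]
I would prove this at $B_{\min}$, the case of $B_{\max}$ being identical, by returning to the local normal form used in the proof of Lemma~\ref{LevelSetNearB}, in which $\rho$ has weights $(n_0,n_1,n_2)$ and $(0,m_1,m_2)$ and $v_{\min}$ is the isotropy direction $(0,m_1,m_2)$ acting on the normal bundle with weights $m_1,m_2$. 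Effectiveness of $\rho$ forces $\GCD(m_1,m_2)=1$: otherwise a nontrivial finite subgroup of $G_{\min}$ would act trivially on a neighborhood of $B_{\min}$ and, being realized by isometries of the connected manifold $M$, trivially on all of $M$. Using the base change $A \in \GL(2;\mathbb{Z})$ of \eqref{Transformation} and the $\GL(2;\mathbb{Z})$-invariance of the $2\times 2$ minors, one gets $m_{\min}=|n_0|\,m_1$ and $n_{\min}=|n_0|\,m_2$, hence $\GCD(m_{\min},n_{\min})=|n_0|$. On the other hand $a_{\min}=a_0$, and the first column of \eqref{Transformation} gives $\GCD(a_0,b_0)=|n_0|$; the requirement that $\{v,v_{\min}\}$ be a $\mathbb{Z}$-basis is equivalent to a unimodularity condition on $A$ that forces $a_0 \mid b_0$, and therefore $a_0=\GCD(a_0,b_0)=|n_0|=\GCD(m_{\min},n_{\min})$.

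Granting the claim, Lemma~\ref{LevelSetNearB} yields
\[
e(\rho_2,H_{\min}/\rho_1)=-\frac{1}{\LCM(m_{\min},n_{\min})}, \qquad e(\rho_2,H_{\max}/\rho_1)=\frac{1}{\LCM(m_{\max},n_{\max})},
\]
and substituting these into Lemma~\ref{RelationOfEulerNumbers} gives \eqref{SumOfEulerNumbers4} with positive integers $d^j$. The step I expect to be the main obstacle is precisely the identity $a_{\min}=\GCD(m_{\min},n_{\min})$ (and its counterpart at $B_{\max}$): this is the only place where hypothesis (ii) enters essentially, and one must check that the $\mathbb{Z}$-basis condition on $\{v,v_{\min}\}$ is exactly what pins the isotropy weight $a_{\min}=a_0$ down to $|n_0|$, since a general transverse $\rho_1$ would only guarantee that $a_{\min}$ is a positive integer multiple of $\GCD(m_{\min},n_{\min})$.
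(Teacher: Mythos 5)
Your proposal is correct, and its skeleton coincides with the paper's proof: both fix $\rho_{1}$ to be the $S^1$-action generated by the vector $v$ supplied by assumption (ii) through Lemma \ref{TranslationOfCondition2}, reduce the corollary to the identities $a_{\min}=\GCD(m_{\min},n_{\min})$ and $a_{\max}=\GCD(m_{\max},n_{\max})$, and then substitute Lemma \ref{LevelSetNearB} into Lemma \ref{RelationOfEulerNumbers}. Where you genuinely differ is in how those identities are established. The paper argues invariantly: it first proves the intersection-count formula \eqref{Intersection}, $I(\sigma,\Sigma)=I(\rho,\Sigma)\,\bigl|\det\begin{psmallmatrix}\overline{X} & Y_{\Sigma}\end{psmallmatrix}\bigr|$, by counting the points of $G'\cap G_{\Sigma}$ inside $T^2$, so that the $\mathbb{Z}$-basis hypothesis gives $I(\sigma,B)=I(\rho,B)$, and it then quotes Lemma \ref{GCD} for $I(\rho,B)=\GCD(m,n)$ --- strictly speaking an extension of that lemma, whose statement excludes $B_{\min}$ and $B_{\max}$ although its proof applies verbatim when $B$ is one-dimensional. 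You instead compute entirely inside the normal form: the $\mathbb{Z}$-basis condition on $\{v,v_{\min}\}$ says exactly that the $(2,2)$-entry of the matrix $A$ in \eqref{Transformation} is $\pm 1$, so the vanishing of the first entry of the second row, $A_{21}a_{0}+A_{22}b_{0}=0$, forces $a_{0}\mid b_{0}$ and hence $a_{0}=\GCD(a_{0},b_{0})=|n_{0}|$ (the last equality from the first column of \eqref{Transformation}), while the $\GL(2;\mathbb{Z})$-invariance of the $2\times 2$ minors together with $\GCD(m_{1},m_{2})=1$ gives $\GCD(m_{\min},n_{\min})=|n_{0}|$; all of this checks out. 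The two computations are equivalent --- your $|n_{0}|$ is the quantity the paper denotes $I(\rho,B_{\min})$, and your divisibility $a_{0}\mid b_{0}$ is the coordinate form of $\det\begin{psmallmatrix}\overline{X} & v_{\min}\end{psmallmatrix}=\pm 1$ --- but your version buys a self-contained argument that bypasses both \eqref{Intersection} and the mild abuse in invoking Lemma \ref{GCD} at the extrema, and it makes transparent your correct closing remark that a general transverse $\rho_{1}$ only yields $\GCD(m_{\min},n_{\min})\mid a_{\min}$, so hypothesis (ii) is exactly what pins $a_{\min}$ down; the paper's version is shorter and coordinate-free.
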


\begin{proof}
We denote the lattice of $\Lie(G)$ which is the kernel of the exponential map $\Lie(G) \longrightarrow G$ by $\Lie(G)_{\mathbb{Z}}$. We define $\det \colon \wedge^{2} \Lie(G) \longrightarrow \mathbb{R}$ so that $\det({\wedge^{2} \Lie(G)_{\mathbb{Z}}})$ is contained in $\mathbb{Z}$. Let $\overline{X}$ be the primitive vector in $\Lie(G)_{\mathbb{Z}}$ which generates the $S^1$-subgroup $G'$ in the assumption (ii) of Theorem \ref{ToricCondition}. We denote the $S^1$-action of $G'$ on $M$ by $\sigma$. For a closed orbit $\Sigma$ of the Reeb flow, $Y_{\Sigma}$ denotes the primitive vector in $\Lie(G)_{\mathbb{Z}}$ whose infinitesimal action generates the action of the identity component of the isotropy group of $\rho$ at $\Sigma$. We have
\begin{equation}\label{Intersection}
I(\sigma,\Sigma)= I(\rho,\Sigma) \left| \det \begin{pmatrix} \overline{X} & Y_{\Sigma} \end{pmatrix} \right|.
\end{equation}
For, both sides are equal to the number of the intersection points of $G'$ and the isotropy group of $\rho$ at $\Sigma$ in $T^2$. Since
\begin{equation}
\det \begin{pmatrix} \overline{X} & Y_{B_{\max}} \end{pmatrix} =1, \det \begin{pmatrix} \overline{X} & Y_{B_{\min}} \end{pmatrix} =1
\end{equation}
by the assumption, we have
\begin{equation}\label{sigmaandrho}
I(\sigma,B_{\max})= I(\rho,B_{\max}), 
I(\sigma,B_{\min})= I(\rho,B_{\min})
\end{equation}
by \eqref{Intersection}. By Lemma \ref{GCD}, we have
\begin{equation}\label{GCD2}
I(\rho,B_{\min})=\GCD(m_{\min},n_{\min}),
I(\rho,B_{\max})=\GCD(m_{\max},n_{\max}).
\end{equation}
By Lemma \ref{LevelSetNearB}, \eqref{sigmaandrho} and \eqref{GCD2}, we have
\begin{equation}\label{LevelSetNearBmin2}
e(\rho_2,H_{\min}/\rho_1)=-\frac{I(\sigma,B_{\min})}{m_{\min}n_{\min}}=-\frac{I(\rho,B_{\min})}{m_{\min}n_{\min}}=-\frac{1}{\LCM(m_{\min},n_{\min})}
\end{equation}
and
\begin{equation}\label{LevelSetNearBmax2}
e(\rho_2,H_{\max}/\rho_1)=\frac{I(\sigma,B_{\max})}{m_{\max}n_{\max}}=\frac{I(\rho,B_{\max})}{m_{\max}n_{\max}}=\frac{1}{\LCM(m_{\max},n_{\max})}.
\end{equation}
Substituting \eqref{LevelSetNearBmin2} and \eqref{LevelSetNearBmax2} to the equation \eqref{SumOfEulerNumbers2} of Lemma \ref{RelationOfEulerNumbers}, we have \eqref{SumOfEulerNumbers4}.
\end{proof}

Note that \eqref{SumOfEulerNumbers4} in Corollary \ref{RelationForEstimate} is similar to the equation (5.12) in Lemma 5.11 of \cite{Kar}. We complete the proof of Proposition \ref{TwoChains} by using Corollary \ref{RelationForEstimate} and argument similar to Karshon's proof of Proposition 5.13 of \cite{Kar} as follows:

\begin{proof}
Assume that $(M,\alpha)$ has more than two nontrivial chains. Since $B_{\min}$ and $B_{\max}$ are contained in the closures of at most two gradient manifolds consisting of points with nontrivial isotropy group of $\rho$, one of the following occurs: 
\begin{enumerate}
\item there exists a nontrivial chain such that both of the top and bottom gradient manifolds are free or
\item there exist two nontrivial chains $C^{1}$ and $C^{2}$ such that
\begin{enumerate}
\item the top gradient manifold of $C^{1}$ and the bottom gradient manifold of $C^{2}$ consist of points with nontrivial isotropy group of $\rho$ and
\item the top gradient manifold of $C^{2}$ and the bottom gradient manifold of $C^{1}$ are free.
\end{enumerate}
\end{enumerate}
We show that both of (i) and (ii) cannot occur. 

First, we show that (ii) cannot occur. Let $m_{\max}$ be the cardinality of the isotropy group of $\rho$ at the top gradient manifold of $C^{1}$ and $n_{\min}$ be the cardinality of the isotropy group of $\rho$ at the bottom gradient manifold of $C^{2}$. If (ii) is true, then the left hand side of \eqref{SumOfEulerNumbers4} is less than or equal to $\frac{1}{m_{\max}} + \frac{1}{n_{\min}}$ and the right hand side of \eqref{SumOfEulerNumbers4} is greater than $\frac{1}{m_{\max}} + \frac{1}{n_{\min}}$. Hence it is contradiction.

Assume that (i) is true. By the equation \eqref{SumOfEulerNumbers4},
\begin{equation}\label{GreaterThanOne}
\frac{1}{\LCM(m_{\max},n_{\max})}+\frac{1}{\LCM(m_{\min},n_{\min})} > 1,
\end{equation}
since the right hand side of \eqref{SumOfEulerNumbers4} is greater than $1$. Then we can assume $m_{\max}=n_{\max}=1$, since if both of pairs $(m_{\max},n_{\max})$ and $(m_{\min},n_{\min})$ are not equal to $(1,1)$, the inequality \eqref{GreaterThanOne} is not satisfied. Then we have
\begin{equation}\label{Impossible1} 
1+\frac{1}{\LCM(m_{\min},n_{\min})} = \frac{d^1}{n_{\min}} + \frac{d^2}{m_{\min}} + d^3
\end{equation}
for some positive integers $d^1, d^2$ and $d^3$ by the equation \eqref{SumOfEulerNumbers4}. On the other hand, there exists no solution for \eqref{Impossible1}, since we have $1 \leq d^3$, $\frac{1}{\LCM(m_{\min},n_{\min})} \leq \frac{d^1}{n_{\min}}$ and $0 < \frac{d^2}{m_{\min}}$. It is contradiction. 

Hence the proof of Proposition \ref{TwoChains} is completed.
\end{proof}

\subsection{Two chains imply toric}

\begin{prop}\label{SufficientConditionToBeToric}
Assume that 
\begin{enumerate}
\item $B_{\max}$ and $B_{\min}$ are lens spaces or closed orbits of the Reeb flow,
\item the number of nontrivial chains in $(M,\alpha)$ is at most $2$.
\end{enumerate}
Then there exists an $\alpha$-preserving $T^3$-action on $M$.
\end{prop}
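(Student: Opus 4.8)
The plan is to produce the single extra circle directly on $M$ by rotating the reduced spaces of $\Phi$ and then gluing the local models across the critical levels. First I would fix an $S^1$-subaction $\rho_1$ of $\rho$ positively transverse to $\ker\alpha$ and a complementary $S^1$-subaction $\rho_2$, so that $\rho=\rho_1\times\rho_2$, and pass to the $4$-dimensional symplectic orbifold $W=M/\rho_1$; by the argument in the proof of Lemma \ref{MorseTheory}(i), $\bar\rho_2=\rho/\rho_1$ acts on $W$ in a Hamiltonian fashion with moment map induced by $\Phi$. By the remark after Lemma \ref{RestrictionOfRank}, any effective $\alpha$-preserving $T^3$-action on $M$ automatically contains $\rho$, so it suffices to construct one further $S^1$-action on $M$ that commutes with $\rho$, is independent of it, and preserves $\alpha$; on $W$ this is a Hamiltonian $S^1$-action commuting with $\bar\rho_2$ that makes $W$ toric.

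The key geometric input is that the doubly reduced spaces $\Sigma_c=\Phi^{-1}(c)/\rho$ are $2$-spheres. For a regular value $c$ the level $\Phi^{-1}(c)$ is connected by Lemma \ref{MorseTheory}(i) and carries a locally free $\rho$-action, so $\Sigma_c$ is a closed oriented surface whose genus is constant in $c$ and is read off from $B_{\min}$ (equivalently $B_{\max}$). Condition (i) is decisive here: when $B_{\min}$ or $B_{\max}$ is a lens space its base under the Seifert fibration given by the Reeb flow is $S^2$, and when it is a closed orbit the adjacent $\Sigma_c$ caps off to a disk; in either case $\Sigma_c\cong S^2$. By hypothesis (ii) the union of the nontrivial chains meets each $\Sigma_c$ in at most two points, and these are exactly the exceptional points of the Seifert fibration $\Phi^{-1}(c)/\rho_1\to\Sigma_c$.

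I would then build the extra circle fiberwise. On each $S^2=\Sigma_c$, with its at most two distinguished points carried by the nontrivial chains, I take the rotation fixing those points; since the exceptional locus is supported on the at most two chains, these rotations assemble into a single $S^1$-action on $M\setminus(B_{\min}\cup B_{\max})$ that commutes with $\rho$ and preserves $\ker\alpha$. After averaging over $\rho$ and applying the equivariant Gray-type normalization of Proposition \ref{Karshon2}, this action can be arranged to preserve $\alpha$ and to share its Reeb field and contact moment map, so that $\rho\times S^1$ is an effective $\alpha$-preserving $T^3$ on the open dense set. To finish I would extend over the critical levels using the local normal forms of Section 8 (Lemma \ref{GeneralTorusActions} together with \eqref{NormalFormOfPhi}): across an interior closed orbit of a chain and across $B_{\min}$ or $B_{\max}$, the rotation of the degenerating sphere matches the standard linear circle on the $D^4$-factor when the component is a closed orbit, and the fibre rotation of the normal circle bundle when it is a lens space, so the new $S^1$ extends smoothly over all of $M$.

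The main obstacle is the gluing in this last step. The new circle must be compatible with the Seifert invariants on both sides of every critical value, where the reduced surface bundle and its exceptional orbits jump, and it must extend over the degenerating spheres at $B_{\min}$ and $B_{\max}$; controlling this requires the same equivariant isotopy arguments used in the proof of Proposition \ref{Karshon1}, namely the path-connectedness of the group of $T^2$-equivariant diffeomorphisms of $S^1\times S^3$, now carried out for the enlarged $T^3$-symmetry. The second delicate point is verifying that the assembled action is genuinely effective, so that $M$ is contact toric rather than acted on by a proper subgroup, and that effectiveness survives the gluing; it is precisely here that hypothesis (ii), bounding the number of nontrivial chains by two, enters, since a third chain would force a third independent exceptional locus on $\Sigma_c$ that no circle rotation of $S^2$ can fix.
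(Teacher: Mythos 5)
Your skeleton actually parallels the paper's proof (local $T^3$-actions near the chains and the extrema, trivialization of the free part, then an invariant contact form plus equivariant Gray via Proposition \ref{Karshon2}), and your observation that hypothesis (i) forces the reduced spaces $\Sigma_c=\Phi^{-1}(c)/\rho$ to be spheres is correct. But the decisive step is asserted rather than proved: you claim the fiberwise rotations of $\Sigma_c$ fixing the (at most two) chain points ``assemble into a single $S^1$-action on $M\setminus(B_{\min}\cup B_{\max})$ that commutes with $\rho$.'' Near each chain such a rotation is forced to coincide with the canonical local $T^3$-action coming from the normal forms (Lemmas \ref{ToricActionNearSingularOrbits}, \ref{ToricActionNearGradientManifolds}, \ref{ToricActionNearChains}), and these local actions carry \emph{discrete} slope data, namely the primitive isotropy vectors $n^{j}_{i}\in\Lie(T^3)$ of the gradient lens spaces. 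Propagating the local action from $B_{\min}$ up the chain $C^1$ and up the chain $C^2$ gives two a priori different determinations of the extra circle at $B_{\max}$, and the whole content of the paper's Proposition \ref{NearChains} is that they agree: this is the width computation $\phi(x_{\max})=w_{\max}$ and the identity $n^{1}_{l(1)}=\hat{n}^{1}_{l(1)}$, which rest on the Euler-number identities of Lemmas \ref{EulerNumberOfLevelSetsNearB} and \ref{Formula}. That is where hypothesis (ii) enters quantitatively --- not through your counting heuristic that a rotation of $S^2$ fixes only two points. Your fallback, ``the same equivariant isotopy arguments used in the proof of Proposition \ref{Karshon1},'' cannot close this gap: path-connectedness of equivariant diffeomorphism groups lets you adjust gluing maps within a fixed equivariant type, but it cannot reconcile mismatched isotropy data, which is a discrete invariant unchanged by any isotopy. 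If the two determinations of the extra circle disagreed, no choice of fiberwise rotation would be smooth and compatible with the Seifert data at both marked points simultaneously.

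There are two further unaddressed obstructions. First, even granting consistency near the chains, a fiberwise circle action on $M/\rho$ does not automatically lift to a circle action on $M$ commuting with $\rho$: along each new circle orbit $\gamma$ downstairs, any lift defines a flat $T^2$-holonomy $m(\gamma)\in T^2$ which must vanish for the lift to close up. The paper kills this in Lemma \ref{ModificationOfTorusActions} using $\pi_2(T^2)=0$ and an explicit correction of the generating vector field ($Z\mapsto Z_{1}$); your proposal never mentions the issue. Second, ``averaging over $\rho$ and applying the equivariant Gray-type normalization of Proposition \ref{Karshon2}'' is not available as stated: one averages the contact form, not the action, and Proposition \ref{Karshon2} requires the two contact forms to have the same Reeb vector field \emph{and} the same contact moment map. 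The paper arranges this by constructing a $\tau$-invariant form $\beta$ with prescribed moment data on the trivialized piece (Lemma \ref{FarFromChains}), which in turn needs the extension Lemma \ref{ExtensionOfPsi} to keep $d\overline{\Phi}\wedge d\overline{\Psi}$ nowhere vanishing, hence $\beta$ contact. Without these three ingredients --- the slope-matching identity of Proposition \ref{NearChains}, the holonomy correction, and the moment-map extension --- the proposed construction does not go through.
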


\begin{prop}\label{NearChains}
Assume that the conditions (i) and (ii) in Proposition \ref{SufficientConditionToBeToric} are satisfied. Let $C^1$ and $C^2$ be two chains in $(M,\alpha)$. Assume that there is no nontrivial chain except $C^1$ and $C^2$. There exist an open neighborhood $V$ of the union of $C^1$, $C^2$, $B_{\min}$ and $B_{\max}$ and an $\alpha$-preserving $T^3$-action $\tau$ on $V$. Moreover, the image of the contact moment map for $\tau$ is an open neighborhood of the boundary of a convex polygon in a $2$-dimensional affine subspace of $\Lie(T^3)^{*}$.
\end{prop}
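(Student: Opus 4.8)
The plan is to read a convex polygon off the combinatorics of the two chains together with $B_{\min}$ and $B_{\max}$, to build the $T^3$-action near this boundary locus by gluing the local models furnished by the normal forms of Section 8, and finally to identify the contact moment image with a neighborhood of the boundary of that polygon. To assemble the polygon, I would attach to each gradient manifold $L$ occurring in $C^1$ or $C^2$ the primitive vector $Y_{L}$ in $\Lie(G)_{\mathbb{Z}}$ generating the identity component of its $\rho$-isotropy, read off from the weights of the linear model of Lemma \ref{GeneralTorusActions}, and to each intervening closed orbit the weight data of its normal form. Traversing a chain from $B_{\min}$ to $B_{\max}$, the function $\Phi=\alpha(X)$ increases monotonically, so the candidate edges project onto increasing subsegments of $[\Phi(B_{\min}),\Phi(B_{\max})]$; laying them end to end with inward normals $Y_{L}$ produces a polygonal path for each chain, and the two caps $B_{\min}$, $B_{\max}$ (an edge when $3$-dimensional, a vertex when a closed orbit) close them into a polygon $P$ in the affine slice $A=\{v\in\Lie(T^3)^{*}\mid v(\overline{R})=1\}$.

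The delicate point, which I expect to be the main obstacle, is that $P$ is genuinely a \emph{convex} polygon: the inward normals must turn in one consistent rotational sense along each chain, and the two chains must meet $B_{\min}$ and $B_{\max}$ correctly. I would deduce this from the normal forms. At an interior closed orbit the transformation \eqref{Transformation} brings the two adjacent weight vectors to a form in which exactly one of the determinants $a_{0}b_{1}-a_{1}b_{0}$ and $a_{0}b_{2}-a_{2}b_{0}$ is positive and the other negative, as in the sign analysis of Lemma \ref{LocalDifference}; this forces a genuine turn of the prescribed sign. At the caps the analogous computation of Lemma \ref{LevelSetNearB}, where the two determinants have equal sign, fixes the turning at $B_{\min}$ and $B_{\max}$. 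Consistency of all these turns, together with the monotonicity of $\Phi$, yields convexity and closure of $P$.

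Next I would build $\tau$. On each normal-form chart $S^1\times D^{4}$ about a closed orbit of a chain, $\alpha$ is in the standard shape \eqref{NormalFormOfAlpha} and the full coordinate torus acts preserving $\alpha$ and containing $\rho$ as a subtorus, giving a local $T^3$ extending $\rho$. Near a free gradient manifold $L$ the extra circle is the fibre rotation of the symplectic normal bundle of the $K$-contact submanifold $\overline{L}$ (Lemma \ref{MorseBottTheory}(i) and Lemma \ref{KContactSubmanifolds}), which preserves $\alpha$ and commutes with $\rho$. On overlaps these local $T^3$'s coincide, since each is the unique maximal torus containing $\rho$ in the given chart, and the transition maps supplied by Lemma \ref{NormalFormOfChains}, together with Lemma \ref{NormalFormOfB} at the caps, are $\rho$-equivariant and hence carry maximal torus to maximal torus. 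Patching over $C^1$, $C^2$, $B_{\min}$ and $B_{\max}$ therefore produces an $\alpha$-preserving $T^3$-action $\tau$ on an open neighborhood $V$ of their union.

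Finally I would compute the moment image. The contact moment map $\tilde{\Phi}_{\alpha}$ for $\tau$ takes values in $A$ and, by Lemma \ref{MomentMapsAreSubmersive}, is a submersion off the singular orbits; on each normal-form chart it reproduces the moment map of the corresponding linear model, whose image is a neighborhood of the matching portion of $\partial P$. Assembling over $V$ shows that $\tilde{\Phi}_{\alpha}(V)$ is an open neighborhood of $\partial P$. Since $V$ contains only the boundary locus, namely the chains and caps, and not the free interior orbits that fill the inside of $P$, the image is a neighborhood of $\partial P$ rather than of all of $P$, which is exactly the assertion of the proposition.
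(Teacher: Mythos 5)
The place where your argument breaks is the patching step. Along a \emph{single} chain your gluing works, and that is exactly what Lemma \ref{ToricActionNearChains} provides: a local $T^3$-extension of $\rho$ is unique only up to an automorphism of $T^3$ restricting to the identity on $G$, and over an interval one can conjugate chart by chart to make the actions match. But the configuration $B_{\min}\cup C^1\cup B_{\max}\cup C^2$ is a \emph{cycle}: extending the action from $B_{\min}$ along $C^1$ and, separately, along $C^2$ produces two $T^3$-actions near $B_{\max}$ that a priori agree only up to such an automorphism --- a monodromy around the loop. Your claim that ``on overlaps these local $T^3$'s coincide, since each is the unique maximal torus containing $\rho$'' does not dispose of this: uniqueness of the subgroup (even up to conjugation) does not fix the identification of the acting torus with $T^3$, and correspondingly the moment-image edge paths of the two chains need not close up into a single polygon. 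Killing this monodromy is the entire content of the paper's proof: it forms the cut-open model $W$, and shows $\tilde{\Phi}_{\alpha}(\Sigma_{\max})=\tilde{\Phi}_{\alpha}(\Sigma^{1})$ and $n^{1}_{l(1)}=\hat{n}^{1}_{l(1)}$ by proving the width identity $\phi(x_{\max})=w_{\max}$ for the level sets of the would-be polygon, via the slope computation of Lemma \ref{ChangeOfSlope} together with Lemmas \ref{VanishingDeterminant}, \ref{WidthOfB} and \ref{Formula}; only then does it conclude that the conjugating automorphism is the identity (it fixes the plane $\Lie(G)$ and the vector $n^{1}_{l(1)}=\hat{n}^{1}_{l(1)}$ outside it).

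A symptom of the gap is that your proposal never uses the hypotheses (i) and (ii) of Proposition \ref{SufficientConditionToBeToric} in any quantitative way. They enter the paper precisely through Lemma \ref{Formula}, whose proof rests on the Euler-number bookkeeping of Lemmas \ref{LevelSetNearB}, \ref{TotalDifference} and \ref{EulerNumberOfLevelSetsNearB}; without such a relation among the isotropy cardinalities $k^{j}_{i}$ and the normal vectors $n^{j}_{i}$ of the two chains, there is no reason the two polygonal paths should have matching total turning, so the polygon need not close and the monodromy need not vanish. Your convexity discussion via the signs of the determinants is consistent with the latter part of the paper's proof (monotonicity of the slopes of $\cup_{i}\tilde{\Phi}_{\alpha}(L^{j}_{i})$), but it only becomes available after the closing-up problem has been solved; as written, your construction of $P$ and of $\tau$ assumes the conclusion at the one point where the real work lies.
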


\begin{proof}
By the condition (i) of Proposition \ref{SufficientConditionToBeToric}, Lemmas \ref{ToricActionNearSingularOrbits} and \ref{ToricActionNearB}, we have 
\begin{enumerate}
\item an open neighborhood $U_{\min}$ of $B_{\min}$ and an $\alpha$-preserving $T^3$-action $\tau_{\min}$ on $U_{\min}$ and
\item an open neighborhood $U_{\max}$ of $B_{\max}$ and an $\alpha$-preserving $T^3$-action $\tau_{\max}$ on $U_{\max}$.
\end{enumerate}
By Lemma \ref{ToricActionNearChains}, we have an open neighborhood $U^{i}$ of the chain $C^{i}$ and an $\alpha$-preserving $T^3$-action $\tau^{i}$ on $U^{i}$ for $i=1$ and $2$ which satisfy the following:
\begin{enumerate}
\item $U^{1} \cap U^{2}$ is contained in $U_{\min} \cup U_{\max}$,
\item $\tau^{i}$ preserves $\alpha$, 
\item the restriction of $\tau^{i}$ to $U^{i} \cap U_{\min}$ coincides with $\rho_{\min}$ and
\item the restriction of $\tau^{i}$ to $U^{i} \cap U_{\max}$ is conjugate to $\rho_{\max}$.
\end{enumerate}

We denote the $T^3$-action on $U_{\min} \cup U^{1} \cup U^{2}$ obtained from $\tau_{\min}$, $\tau^{1}$ and $\tau^{2}$ by $\tau^{3}$. Define $W=(U_{\min} \sqcup U^{1} \sqcup U^{2} \sqcup U_{\max}) / \sim$ where $x \sim y$ if both of $x$ and $y$ are the same points of $U^{2} \cap U_{\max}$, $U^{2} \cap U_{\min}$ or $U^{1} \cap U_{\min}$. Conjugating $\tau_{\max}$ so that $\tau_{\max}$ and $\tau_{2}$ are equal on $U^{2} \cap U_{\max}$, we obtain a $T^3$-action $\tau^{4}$ on $W$ from $\tau^{3}$ and $\tau_{\max}$. We denote the contact moment map for $\tau^{4}$ by $\tilde{\Phi}_{\alpha} \colon W \longrightarrow \Lie(T^3)^{*}$. Let $\Sigma_{0}$ be the top closed orbit of the Reeb flow in $C^{1}$. Let $\Sigma_{\max}$ and $\Sigma^{1}$ be the connected components of the inverse image of $\Sigma_{0}$ by the canonical map $W \longrightarrow U_{\min} \cup U^{1} \cup U^{2} \cup U_{\max}$ so that $\Sigma_{\max}$ is contained in the subset $U_{\max}$ of $W$ and $\Sigma^{1}$ is contained in the subset $U^{1}$ of $W$, respectively. We will show the former part of Lemma \ref{NearChains} by showing that the automorphism of $T^3$ which gives the conjugation of $\tau^{4}|_{U_{\max}}$ and $\tau^{4}|_{U^1}$ is the identity.

We show $\tilde{\Phi}_{\alpha}(\Sigma_{\max})=\tilde{\Phi}_{\alpha}(\Sigma^{1})$. We fix our notation. We identify $\Lie(T^3)$ with $\mathbb{R}^{3}$ so that the kernel of $\exp \colon \Lie(T^3) \longrightarrow T^3$ is identified with $\mathbb{Z}^{3}$ and $\Lie(G)$ is identified with $\{ \begin{psmallmatrix} v_1 \\ 0 \\ v_3 \end{psmallmatrix} \in \mathbb{R}^{3} | v_1,v_3 \in \mathbb{R} \}$. Since $\overline{R}$ is contained in $\Lie(G)$, we can rotate $\Lie(G)$ by an orthogonal matrix so that
\begin{equation}
\overline{R}=\begin{psmallmatrix} 0 \\ 0 \\ r \end{psmallmatrix}.
\end{equation}
Define $\overline{X}_{0}=\begin{psmallmatrix} 1 \\ 0 \\ 0 \end{psmallmatrix}$. Let $X_{0}$ be the infinitesimal action of $\overline{X}_{0}$ and define a function $\Phi'$ on $M$ by 
\begin{equation}
\Phi'=\alpha(X_{0}).
\end{equation}
$\overline{X}_{0}$ defines a linear function $\pi$ on $\Lie(T^3)^{*}$ by the canonical coupling, which coincides with the first projection. Note that $\pi \circ \tilde{\Phi}_{\alpha} = \Phi'$. Let $x_{\max}$ and $x_{\min}$ be the maximum and the minimum values of $\pi$ on the image of $\tilde{\Phi}_{\alpha}$. Let $A$ be the affine subspace $A=\{ v \in \Lie(T^3)^{*} | v(\overline{R})=1 \}$. Then $\tilde{\Phi}_{\alpha}(\Sigma_{\max})$ and $\tilde{\Phi}_{\alpha}(\Sigma^{1})$ are contained in $\pi^{-1}(x_{\max}) \cap A$ by the construction. Hence to show $\tilde{\Phi}_{\alpha}(\Sigma_{\max})=\tilde{\Phi}_{\alpha}(\Sigma^{1})$, it suffices to show the equality of the second coordinates of $\tilde{\Phi}_{\alpha}(\Sigma_{\max})$ and $\tilde{\Phi}_{\alpha}(\Sigma^{1})$. 

We will prove the equality of the second coordinates of $\tilde{\Phi}_{\alpha}(\Sigma_{\max})$ and $\tilde{\Phi}_{\alpha}(\Sigma^{1})$ computing the width of the level set of $\pi$ in the polyhedron surrounded by the image of $\tilde{\Phi}_{\alpha}$. Note that the width of the level set of $\pi$ in the domain surrounded by the image of $\tilde{\Phi}_{\alpha}$ corresponds to the value of the density function of the Duistermaat-Heckman measure in the case of symplectic manifolds with hamiltonian $S^1$-action (See \cite{Kar}). 

We define the width $\phi(x)$ of the level set of $\pi$ at a value $x$ by $\phi(x)=\pr_{2}(\tilde{\Phi}_{\alpha}(C^{2}) \cap \pi^{-1}(x)) - \pr_{2}(\tilde{\Phi}_{\alpha}(C^{1}) \cap \pi^{-1}(x))$ where $\pr_{2}$ is the second projection on $\Lie(T^3)^{*}$. We define the width $w_{\max}$ and $w_{\min}$ of $\tilde{\Phi}_{\alpha}(B_{\min})$ and $\tilde{\Phi}_{\alpha}(B_{\min})$ by $\sup_{z \in \Omega} \pr_{2}(z) - \inf_{z \in \Omega} \pr_{2}(z)$ for $\Omega=B_{\min}$ and $B_{\max}$. If $B_{\min}$ or $B_{\max}$ is a closed orbit of the Reeb flow, then $w_{\min}$ or $w_{\max}$ is $0$ respectively. It suffices to show $\phi(x_{\max})=w_{\max}$ to show the coincidence of the second coordinates of $\tilde{\Phi}_{\alpha}(\Sigma_{\max})$ and $\tilde{\Phi}_{\alpha}(\Sigma^{1})$. 

Let $L^{j}_{1}, L^{j}_{2}, \cdots, L^{j}_{l(j)}$ be the gradient manifolds which form $C^{j}$ in the order of the subscripts for $j=1$ and $2$. Recall that the value of $\Phi'$ on the $\omega$-limit set of $L^{j}_{i+1}$ is greater than the value of $\Phi'$ on the $\omega$-limit set of $L^{j}_{i}$ for $j=1$ and $2$. $\tilde{\Phi}_{\alpha}(L^{j}_{i})$ is contained in the intersection line of $A$ and a plane $P^{j}_{i}$ in $\Lie(T^3)^{*}$. We denote the primitive vector in $\mathbb{Z}^{3}$ with positive third coordinate defining the plane $P^{j}_{i}$ by $n^{j}_{i}$. Define $k^{j}_{i}=I(\rho,L^{j}_{i})$. 

We will prove that the second component of $n^{j}_{i}$ is equal to $(-1)^{j}k^{j}_{i}$ where $n^{j}_{i}$ is an element of $\Lie(T^3)$ whose infinitesimal action of $n^{j}_{i}$ generates the isotropic action of $\tau^{4}$ at $L^{j}_{i}$. Since the normal vector of $\Lie(G)$ in $\Lie(T^3)$ is equal to $\begin{psmallmatrix} 0 \\ 1 \\ 0 \end{psmallmatrix}$, the absolute value of the second component of $n^{j}_{i}$ is equal to the number of intersection points of $G$ and the $S^1$-subgroup of $T^3$ corresponding to $n^{j}_{i}$. The number of intersection points of $G$ and the $S^1$-subgroup of $T^3$ corresponding to $n^{j}_{i}$ is equal to $k^{j}_{i}$. The signature of the second component of $n^{j}_{i}$ is equal to $(-1)^{j}$ by the following two reason: 
\begin{enumerate}
\item the angle between the planes defined by $n^{j}_{i}$ and $n^{j}_{i+1}$ is smaller than $\pi$ by the local convexity of the image of the symplectic moment map defined on the symplectization (See Theorem 4.7 of \cite{GuSt}) and
\item the value of $\Phi'$ on the $\omega$-limit set of $L^{j}_{i+1}$ is greater than the value of $\Phi'$ on the $\omega$-limit set of $L^{j}_{i}$.
\end{enumerate}
 Hence the second component of $n^{j}_{i}$ is equal to $(-1)^{j}k^{j}_{i}$.

We denote the closed orbit of the Reeb flow between $L^{j}_{i}$ and $L^{j}_{i+1}$ by $\Sigma^{j}_{i i+1}$. We denote the value of $\pi$ on $\tilde{\Phi}_{\alpha}(\Sigma^{j}_{i i+1})$ by $x^{j}_{i i+1}$. Consider the union of segments $\cup_{j=1}^{l(1)}\tilde{\Phi}_{\alpha}(L^{1}_{i})$. Let $s^{j}$ be the slope of $\tilde{\Phi}_{\alpha}(L^{j}_{1})$ for $j=1$ and $2$. By Lemma \ref{ChangeOfSlope} below, the change of the slope of $\cup_{j=1}^{l(1)}\tilde{\Phi}_{\alpha}(L^{1}_{i})$ when $x$ goes through $x^{1}_{i}$ is $- \frac{1}{r} \big( \frac{1}{k^{1}_{i}k^{1}_{i+1}} \det \begin{psmallmatrix} n^{1}_{i} & n^{1}_{i+1} & \overline{R} \end{psmallmatrix} \big)$. Hence the slope of $\tilde{\Phi}_{\alpha}(L^{1}_{h})$ is 
\begin{equation}\label{SlopeOfC1}
s^{1} - \frac{1}{r} \sum_{i=1}^{h-1} \frac{1}{k^{1}_{i}k^{1}_{i+1}} \det \begin{pmatrix} n^{1}_{i+1} & n^{1}_{i} & \overline{R} \end{pmatrix}.
\end{equation}
Similarly, the change of the slope of $\cup_{j=1}^{l(1)}\tilde{\Phi}_{\alpha}(L^{1}_{i})$ when $x$ goes through $x^{2}_{i}$ is $- \frac{1}{r} \big( \frac{1}{k^{2}_{i}k^{2}_{i+1}} \det \begin{psmallmatrix} n^{2}_{i} & n^{2}_{i+1} & \overline{R} \end{psmallmatrix} \big)$. Hence the slope of $\tilde{\Phi}_{\alpha}(L^{2}_{h})$ is 
\begin{equation}\label{SlopeOfC2}
s^{2} - \frac{1}{r} \sum_{i=1}^{h-1} \frac{1}{k^{2}_{i}k^{2}_{i+1}} \det \begin{pmatrix} n^{2}_{i+1} & n^{2}_{i} & \overline{R} \end{pmatrix}.
\end{equation}
By Lemma \ref{ChangeOfSlope}, we have
\begin{equation}\label{InitialSlope}
s^{1}-s^{2}=\frac{1}{r} \Big( \frac{1}{k^{1}_{1}k^{2}_{1}} \det \begin{psmallmatrix} n^{1}_{1} & n^{2}_{1} & \overline{R} \end{psmallmatrix} \Big).
\end{equation}
Let $\hat{L}^{1}_{l(1)}$ be the connected component of the inverse image of $L^{1}_{l(1)}$ by the canonical map $W \longrightarrow M$ contained in the subset $U_{\max}$ of $W$. We denote the normal vector of $\tilde{\Phi}_{\alpha}(\hat{L}^{1}_{l(1)})$ in $\Lie(T^3)^{*}$ by $\hat{n}^{1}_{l(1)}$. We put $x^{j}_{-1 \, 0}=x_{\min}$ and $x^{j}_{l(j) l(j)+1}=x_{\max}$ for $j=1$ and $2$. By \eqref{SlopeOfC1}, \eqref{SlopeOfC2} and \eqref{InitialSlope}, we can compute $r \phi(x_{\max})$ as follows:
\begin{equation}\label{ComputationOfPhi}
\begin{array}{rl}
  & r \phi(x_{\max}) \\
= & r w_{\min} + \sum_{h} (x^{1}_{h h+1} - x^{1}_{h-1 h}) \big( r s^{1} - \sum_{i=1}^{h-1} \frac{1}{k^{1}_{i+1}k^{1}_{i}} \det \begin{pmatrix} n^{1}_{i+1} & n^{1}_{i} & \overline{R} \end{pmatrix} \big) \\
 & - \sum_{h} (x^{2}_{h h+1} - x^{2}_{h-1 h}) \big(  r s^{2} - \sum_{i=1}^{h-1} \frac{1}{k^{2}_{i+1}k^{2}_{i}} \det \begin{pmatrix} n^{2}_{i+1} & n^{2}_{i} & \overline{R} \end{pmatrix} \big) \\
= &  r w_{\min} + (x_{\max} - x_{\min}) \frac{1}{k^{1}_{1}k^{2}_{1}} \det \begin{psmallmatrix} n^{1}_{1} & n^{2}_{1} & \overline{R} \end{psmallmatrix} \\
  & - \sum_{j,h} (x^{j}_{h h+1}-x^{j}_{h-1 h}) \Big( \sum_{i=1}^{h-1} \frac{(-1)^{j-1}}{k^{j}_{i+1}k^{j}_{i}} \det \begin{pmatrix} n^{j}_{i+1} & n^{j}_{i} & \overline{R} \end{pmatrix} \Big) \\
= &  r w_{\min} + (x_{\max} - x_{\min}) \frac{1}{k^{1}_{1}k^{2}_{1}} \det \begin{pmatrix} n^{1}_{1} & n^{2}_{1} & \overline{R} \end{pmatrix} \\
  & - \sum_{j,i} (x_{\max} - x^{j}_{i i+1}) \frac{(-1)^{j-1}}{k^{j}_{i+1}k^{j}_{i}} \det \begin{pmatrix} n^{j}_{i+1} & n^{j}_{i} & \overline{R} \end{pmatrix} \\
= &  r w_{\min} \\
  & + x_{\max} \Big( \frac{1}{k^{1}_{1}k^{2}_{1}} \det \begin{pmatrix} n^{1}_{1} & n^{2}_{1} & \overline{R} \end{pmatrix} - \sum_{j,i} \frac{(-1)^{j-1}}{k^{j}_{i+1}k^{j}_{i}} \det \begin{pmatrix} n^{j}_{i+1} & n^{j}_{i} & \overline{R} \end{pmatrix} \Big) \\
  & - \Big( \frac{x_{\min}}{k^{1}_{1}k^{2}_{1}} \det \begin{pmatrix} n^{1}_{1} & n^{2}_{1} & \overline{R} \end{pmatrix} - \sum_{j,i} \frac{(-1)^{j-1} x^{j}_{i i+1}}{k^{j}_{i+1}k^{j}_{i}} \det \begin{pmatrix} n^{j}_{i+1} & n^{j}_{i} & \overline{R} \end{pmatrix} \Big) \\
=  & \Big(  r w_{\min} - \frac{1}{k^{1}_{1}k^{2}_{1}} \det \begin{pmatrix} n^{1}_{1} & n^{2}_{1} & x_{\min} \overline{R} - \overline{X}_{0} \end{pmatrix} \Big) \\
  & - \Big(  r w_{\max} + \frac{1}{k^{2}_{l(2)}k^{1}_{l(1)}} \det \begin{pmatrix} n^{2}_{l(2)} & \hat{n}^{1}_{l(1)} & x_{\max} \overline{R} - \overline{X}_{0} \end{pmatrix} \Big) \\
  & + x_{\max} \Big( \frac{1}{k^{2}_{l(2)}k^{1}_{l(1)}} \det \begin{pmatrix} n^{2}_{l(2)} & \hat{n}^{1}_{l(1)} & \overline{R} \end{pmatrix} + \frac{1}{k^{1}_{1}k^{2}_{1}} \det \begin{pmatrix} n^{1}_{1} & n^{2}_{1} & \overline{R} \end{pmatrix} \\
 &  \hspace{150pt} - \sum_{j,i} \frac{(-1)^{j-1}}{k^{j}_{i+1}k^{j}_{i}} \det \begin{pmatrix} n^{j}_{i+1} & n^{j}_{i} & \overline{R} \end{pmatrix} \Big) \\
  & - \Big( \frac{1}{k^{2}_{l(2)}k^{1}_{l(1)}} \det \begin{pmatrix} n^{2}_{l(2)} & \hat{n}^{1}_{l(1)} & \overline{X}_{0} \end{pmatrix} +  \frac{1}{k^{1}_{1}k^{2}_{1}} \det \begin{pmatrix} n^{1}_{1} & n^{2}_{1} & \overline{X}_{0} \end{pmatrix} \\
 & \hspace{150pt} - \sum_{j,i} \frac{(-1)^{j-1} x^{j}_{i i+1}}{k^{j}_{i+1}k^{j}_{i}} \det \begin{pmatrix} n^{j}_{i+1} & n^{j}_{i} & \overline{R} \end{pmatrix} \Big) \\
  & +  r w_{\max}.
\end{array}
\end{equation}
We show that the first line of the rightmost hand side of \eqref{ComputationOfPhi} is zero. If $B_{\min}$ is of dimension $1$, then we have $w_{\min}=0$ by the definition. We have $\det \begin{pmatrix} n^{1}_{1} & n^{2}_{1} & x_{\min} \overline{R} - \overline{X}_{0} \end{pmatrix}=0$ by Lemma \ref{VanishingDeterminant} below. Hence the first line is $0$. If $B_{\min}$ is of dimension $3$, then the first line is zero by Lemma \ref{WidthOfB} below. Similarly the second line is zero by Lemmas \ref{VanishingDeterminant} and \ref{WidthOfB}. The third and the fourth lines are zero by Lemma \ref{Formula} below. The fifth and the sixth lines are zero by Lemma \ref{Formula}. Hence we have $\phi(x_{\max})=w_{\max}$. Then $\tilde{\Phi}_{\alpha}(\Sigma_{\max})=\tilde{\Phi}_{\alpha}(\Sigma^{1})$ is proved.

We show $n^{1}_{l(1)}=\hat{n}^{1}_{l(1)}$. Let $P$ and $\hat{P}$ be the planes in $\Lie(T^3)^{*}$ defined by the normal vectors $n^{1}_{l(1)}$ and $\hat{n}^{1}_{l(1)}$ respectively. To show $n^{1}_{l(1)}=\hat{n}^{1}_{l(1)}$, it suffices to show $P \cap A=\hat{P} \cap A$. By $\tilde{\Phi}_{\alpha}(\Sigma_{\max})=\tilde{\Phi}_{\alpha}(\Sigma^{1})$, the intersection of both of lines $P \cap A$ and $\hat{P} \cap A$ on $A$ go through the same point $\tilde{\Phi}_{\alpha}(\Sigma_{\max})=\tilde{\Phi}_{\alpha}(\Sigma^{1})$. Hence to show $P \cap A=\hat{P} \cap A$, it suffices to show that $P \cap A$ and $\hat{P} \cap A$ are parallel. Since the slope of $P \cap A$ is
\begin{equation}
s^{1} - \frac{1}{r} \sum_{i=1}^{l(1)-1} \frac{1}{k^{1}_{i+1}k^{1}_{i}} \det \begin{pmatrix} n^{1}_{i+1} & n^{1}_{i} & \overline{R} \end{pmatrix}
\end{equation}
and the slope of $\hat{P} \cap A$ is
\begin{equation}
(s^{2} - \frac{1}{r} \sum_{i=1}^{l(2)-1} \frac{1}{k^{2}_{i+1}k^{2}_{i}} \det \begin{pmatrix} n^{2}_{i+1} & n^{2}_{i} & \overline{R} \end{pmatrix}) - \frac{1}{r} \big(\frac{1}{ k^{2}_{l(2)} k^{1}_{l(1)}} \det \begin{psmallmatrix} n^{2}_{l(2)} & \hat{n}^{1}_{l(1)} & \overline{R} \end{psmallmatrix} \big),
\end{equation}
their difference is $0$ by \eqref{InitialSlope} and Lemma \ref{Formula}. Hence $n^{1}_{l(1)}=\hat{n}^{1}_{l(1)}$ is proved. 

Then the matrix which gives the conjugation of $\tau^{4}|_{U_{\max}}$ and $\tau^{4}|_{U_{1}}$ is the identity, because it fixes a plane $\Lie(G)$ and a vector $n^{1}_{l(1)}=\hat{n}^{1}_{l(1)}$ which is not contained in the plane $\Lie(G)$. The proof of the former part of Lemma \ref{NearChains} is completed.

The latter part of Proposition \ref{NearChains} follows from the fact that the slopes of $\cup_{j=1}^{l(1)}\tilde{\Phi}_{\alpha}(L^{1}_{i})$ and $\cup_{j=1}^{l(2)}\tilde{\Phi}_{\alpha}(L^{2}_{i})$ are monotone. Then $\cup_{j=1}^{l(1)}\tilde{\Phi}_{\alpha}(L^{1}_{i})$ and $\cup_{j=1}^{l(2)}\tilde{\Phi}_{\alpha}(L^{2}_{i})$ are graphs of convex functions with respect to the first coordinate in $A$. Hence the width of each level set is positive and $\cup_{j=1}^{l(1)}\tilde{\Phi}_{\alpha}(L^{1}_{i}) \cup \cup_{j=1}^{l(2)}\tilde{\Phi}_{\alpha}(L^{2}_{i}) \cup \tilde{\Phi}_{\alpha}(B_{\min}) \cup \tilde{\Phi}_{\alpha}(B_{\max})$ is the boundary of a convex polyhedron.
\end{proof}

\begin{lem}\label{ChangeOfSlope}
Let $n=\begin{psmallmatrix} n(1) \\ n(2) \\ n(3) \end{psmallmatrix}$ and $n'=\begin{psmallmatrix} n'(1) \\ n'(2) \\ n'(3) \end{psmallmatrix}$ be two covectors in $\mathbb{R}^{3 *}$. Let $A$ be the affine subspace $\{ \begin{psmallmatrix} v(1) \\ v(2) \\ r \end{psmallmatrix} \in \mathbb{R}^{3} | v(1),v(2) \in \mathbb{R} \}$ where $r$ is a nonzero real number. Let $l$ be the intersection line of $A$ and the plane defined by $n$. Let $l'$ be the intersection line of $A$ and the plane defined by $n'$. We define the slope of $l$ by $\frac{y}{x}$ if a vector $\begin{psmallmatrix} x \\ y \\ 0 \end{psmallmatrix}$ is parallel to $l$. Then the difference of the slopes of $l$ and $l'$ is $\frac{1}{r} \big(-\frac{1}{n(2) n'(2)} \det \begin{psmallmatrix} n & n' & \overline{R} \end{psmallmatrix} \big)$ where $\overline{R}=\begin{psmallmatrix} 0 \\ 0 \\ r \end{psmallmatrix}$.
\end{lem}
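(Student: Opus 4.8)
The plan is to compute the two slopes explicitly from the covectors $n$ and $n'$ and then recognize their difference as the asserted determinant expression; no deeper structure is involved, so the whole argument reduces to a short piece of linear algebra. First I would describe the line $l$ concretely. A point $\begin{psmallmatrix} v(1) \\ v(2) \\ r \end{psmallmatrix}$ of $A$ lies on the plane with normal covector $n$ exactly when $n(1)v(1) + n(2)v(2) + n(3)r$ equals the constant defining that plane. As $v(1)$ and $v(2)$ vary subject to this single affine relation, the direction of $l$ is any vector $\begin{psmallmatrix} x \\ y \\ 0 \end{psmallmatrix}$ with $n(1)x + n(2)y = 0$; choosing $(x,y) = (n(2), -n(1))$ gives slope $y/x = -n(1)/n(2)$ (this presupposes $n(2) \neq 0$, which is implicit in the statement since $n(2)$ appears in the denominator of the claimed formula). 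The identical computation for $n'$ gives the slope $-n'(1)/n'(2)$ of $l'$.

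Next I would subtract the two slopes:
\[
-\frac{n(1)}{n(2)} + \frac{n'(1)}{n'(2)} = \frac{n(2)n'(1) - n(1)n'(2)}{n(2)n'(2)}.
\]
The remaining task is to identify the numerator with a determinant. Expanding $\det \begin{psmallmatrix} n & n' & \overline{R} \end{psmallmatrix}$ along the third column is the efficient route: since $\overline{R} = \begin{psmallmatrix} 0 \\ 0 \\ r \end{psmallmatrix}$, only the entry $r$ in the bottom-right corner survives, and its cofactor is the $2 \times 2$ minor formed by the first two rows of $n$ and $n'$. This yields $\det \begin{psmallmatrix} n & n' & \overline{R} \end{psmallmatrix} = r\,(n(1)n'(2) - n'(1)n(2))$.

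Finally I would substitute this into the claimed formula: multiplying by $-\tfrac{1}{r\,n(2)n'(2)}$ cancels the factor $r$ and reverses the sign of the minor, producing exactly $\frac{n(2)n'(1) - n(1)n'(2)}{n(2)n'(2)}$, which is the difference of slopes computed above, so the two expressions agree and the lemma follows. I expect no genuine obstacle here; the only point requiring care is to keep the sign conventions consistent, namely to match the paper's definition of slope as $y/x$ with the orientation of the chosen direction vector $(n(2), -n(1))$, since the opposite choice of direction would silently introduce a global sign error in the comparison.
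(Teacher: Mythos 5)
Your proof is correct and follows essentially the same route as the paper: both compute the slope of each line as $-n(1)/n(2)$ (the paper produces the direction vector as the cross product $\overline{R}\times n$, you solve $n(1)x+n(2)y=0$ directly, which amounts to the same thing) and then match the difference against the cofactor expansion of $\det\begin{psmallmatrix} n & n' & \overline{R}\end{psmallmatrix}$ along the third column. Your remark on sign conventions is prudent but in fact harmless, since the slope $y/x$ is invariant under reversing the direction vector.
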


\begin{proof}
$\overline{R} \times n=\begin{psmallmatrix} -rn(2) \\ rn(1) \\ 0 \end{psmallmatrix}$ and $\overline{R} \times n'= \begin{psmallmatrix} -rn'(2) \\ rn'(1) \\ 0 \end{psmallmatrix}$ are parallel to $l$ and $l'$. Hence the difference of the slopes is
\begin{equation}
\frac{-rn(1)}{rn(2)}-\frac{-rn'(1)}{rn'(2)}=-\frac{n(1) n'(2) - n'(1) n(2)}{n(2) n'(2)}=\frac{1}{r} \Big(-\frac{1}{n(2) n'(2)} \det \begin{psmallmatrix} n & n' & \overline{R} \end{psmallmatrix} \Big).
\end{equation}
\end{proof}

\begin{lem}\label{VanishingDeterminant}
Let $(M,\alpha)$ be a $5$-dimensional $K$-contact orbifold. Let $\Sigma$ be a closed orbit of the Reeb flow. Assume that we have an $\alpha$-preserving $T^3$-action $\tau$ on an open neighborhood $W$ of $\Sigma$. We denote the contact moment map for $\tau$ by $\tilde{\Phi}_{\alpha}$ defined on $W$. We fix an element $\overline{Y}$ of $\Lie(T^3)$. Let $L_{1}$ and $L_{2}$ be two $K$-contact manifolds such that $L_{j} \cap W$ consists of points with nontrivial isotropy group of $\tau$ and contains $\Sigma$. Let $n_{j}$ be the primitive vector in $\Lie(T^3)$ whose infinitesimal action generates the identity component of the isotropy group of $\tau$ at $L_{j} \cap W$. Then we have 
\begin{equation}\label{Equation : VanishingDeterminant}
 \det \begin{pmatrix} n_{1} & n_{2} & \overline{Y} - \tilde{\Phi}_{\alpha}(\Sigma)(\overline{Y}) \overline{R} \end{pmatrix}=0
\end{equation}
where $\Phi_{\alpha}(\Sigma)(\overline{Y})$ is the coupling of $\overline{Y}$ with the value of $\tilde{\Phi}_{\alpha}$ at $\Sigma$.
\end{lem}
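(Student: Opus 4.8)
The plan is to show that the third column lies in the plane spanned by the first two, i.e. that $\overline{Y}-\tilde{\Phi}_{\alpha}(\Sigma)(\overline{Y})\,\overline{R}$ belongs to the isotropy subalgebra of $\tau$ at $\Sigma$, and that this subalgebra is exactly $\Span\{n_1,n_2\}$. Fix a point $x$ on $\Sigma$ and let $\mathfrak{h}\subset\Lie(T^3)$ be the Lie algebra of the isotropy group of $\tau$ at $x$. Since $\Sigma$ is a closed orbit of the Reeb flow and $R$ is an infinitesimal action of $\tau$, the $\tau$-orbit of $x$ is exactly $\Sigma$, which is $1$-dimensional; hence $\mathfrak{h}$ has dimension $2$. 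Moreover $\overline{R}\notin\mathfrak{h}$, because the infinitesimal action $R$ does not vanish along $\Sigma$, so we obtain a splitting $\Lie(T^3)=\mathfrak{h}\oplus\mathbb{R}\overline{R}$.

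First I would check that $\{n_1,n_2\}$ is a basis of $\mathfrak{h}$. Applying the normal form of Lemma~\ref{GeneralTorusActions} on a finite covering of $W$, the action $\tau$ is linear on the normal disk, and $L_1\cap W$ and $L_2\cap W$ are the two coordinate slices $\{z_2=0\}$ and $\{z_1=0\}$. The subgroup of $\tau$ fixing $L_j\cap W$ pointwise is cut out by the vanishing of the $\zeta$-weight together with the weight of the $z$-coordinate transverse to $L_j$; the first of these conditions is exactly the one defining $\mathfrak{h}$, so $n_1,n_2\in\mathfrak{h}$. If $n_1$ and $n_2$ were parallel, their common $S^1$-subgroup would act trivially on $\zeta$, $z_1$ and $z_2$ simultaneously, hence fix $W$ pointwise, contradicting the effectiveness of $\tau$. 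Therefore $n_1,n_2$ are linearly independent and span the $2$-dimensional space $\mathfrak{h}$.

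With this in hand the computation is immediate. Write $\overline{Y}=h+c\,\overline{R}$ with $h\in\mathfrak{h}$ and $c\in\mathbb{R}$, and let $H$ and $Y$ denote the infinitesimal actions of $h$ and $\overline{Y}$. Since $h\in\mathfrak{h}$, its infinitesimal action vanishes at $x$, so $Y_x=H_x+cR_x=cR_x$. By the definition of the contact moment map and $\alpha(R)=1$, and since $\tilde{\Phi}_{\alpha}$ is constant on $\Sigma$,
\begin{equation}
\tilde{\Phi}_{\alpha}(\Sigma)(\overline{Y})=\tilde{\Phi}_{\alpha}(x)(\overline{Y})=\alpha_x(Y_x)=c\,\alpha_x(R_x)=c.
\end{equation}
Hence $\overline{Y}-\tilde{\Phi}_{\alpha}(\Sigma)(\overline{Y})\,\overline{R}=h\in\Span\{n_1,n_2\}$, so the three columns $n_1$, $n_2$ and $\overline{Y}-\tilde{\Phi}_{\alpha}(\Sigma)(\overline{Y})\,\overline{R}$ are linearly dependent, which gives \eqref{Equation : VanishingDeterminant}.

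The only genuinely nontrivial point is the identification of $\Span\{n_1,n_2\}$ with the isotropy subalgebra $\mathfrak{h}$: once the linear independence of $n_1,n_2$ (via effectiveness of $\tau$ and the linear normal form) and the splitting $\Lie(T^3)=\mathfrak{h}\oplus\mathbb{R}\overline{R}$ are in place, the moment-map evaluation collapses to the one-line calculation above.
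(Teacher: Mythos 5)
Your proof is correct and is essentially the paper's argument read in the opposite direction: the paper shows that $\alpha(Y-\tilde{\Phi}_{\alpha}(\Sigma)(\overline{Y})R)$ vanishes on $\Sigma$, so that $Y-\tilde{\Phi}_{\alpha}(\Sigma)(\overline{Y})R$ is tangent to $\ker\alpha$ along the $1$-dimensional orbit $\Sigma$ and hence vanishes there, placing $\overline{Y}-\tilde{\Phi}_{\alpha}(\Sigma)(\overline{Y})\overline{R}$ in $\Span\{n_{1},n_{2}\}$, while you decompose $\overline{Y}=h+c\overline{R}$ with $h$ in the isotropy subalgebra $\mathfrak{h}$ and identify $c$ by the same moment-map evaluation $\alpha_{x}(Y_{x})=c$. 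Your added verification that $\{n_{1},n_{2}\}$ is a basis of $\mathfrak{h}$ (a point the paper merely asserts) is sound, though the logical order should be reversed in one spot: the opening claim that the $\tau$-orbit of $x$ equals $\Sigma$, and hence $\dim\mathfrak{h}=2$, is not justified by $R$ being an infinitesimal action (that only gives $\Sigma\subseteq T^{3}\cdot x$) but instead follows from the facts you establish afterwards, namely that $n_{1},n_{2}\in\mathfrak{h}$ are linearly independent and $\overline{R}\notin\mathfrak{h}$, which together force $\dim\mathfrak{h}=2$.
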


\begin{proof}
Since $\tilde{\Phi}_{\alpha}(\Sigma)(\overline{Y} - \tilde{\Phi}_{\alpha}(\Sigma)(\overline{Y})\overline{R})=0$, the function $\alpha(Y -\tilde{\Phi}_{\alpha}(\Sigma)(\overline{Y})R)$ on $M$ is $0$ on $\Sigma$. Hence $Y -\tilde{\Phi}_{\alpha}(\Sigma)(\overline{Y})R$ is tangent to $\ker \alpha$ on $\Sigma$. Then $Y -\tilde{\Phi}_{\alpha}(\Sigma)(\overline{Y})R$ vanishes on $\Sigma$. Since $\{n_{1},n_{2}\}$ is a basis of the subspace of $\Lie(T^3)$ consisting of the vectors whose infinitesimal actions vanish on $\Sigma$, we have \eqref{Equation : VanishingDeterminant}.
\end{proof}

\begin{lem}\label{AttachingCorners}
Let $B$ be a $K$-contact submanifold of $(M,\alpha)$ diffeomorphic to a lens space. Assume that we have an $\alpha$-preserving $T^3$-action $\tau$ on an open neighborhood $W$ of $B$. We denote the contact moment map for $\tau$ by $\tilde{\Phi}$. Let $L^{1}$ and $L^{2}$ be two $K$-contact manifolds which consist of points with nontrivial isotropy group of $\tau$ and intersect $B$. Let $n^{j}$ be the primitive vector in $\Lie(T^3)$ whose infinitesimal action generates the identity component of the isotropy group of $\tau$ at $L^{j}$ for $j=1$ and $2$. Let $n_{B}$ be the primitive vector in $\Lie(T^3)$ whose infinitesimal action generates the identity component of the isotropy group of $\tau$ at $B$. Put $A=\{ v \in \Lie(T^3)^{*} | v(\overline{R})=1 \}$. We define cones $\Delta'$ and $\overline{\Delta}$ in $\Lie(T^3)^{*}$ by
\begin{equation}
\Delta'=\{ v \in \Lie(T^3)^{*} | v(n^{1}) \geq 0, v(n^{2}) \geq 0, v(n_{B}) \leq 0  \}
\end{equation}
and 
\begin{equation}
\overline{\Delta}=\{ v \in \Lie(T^3)^{*} | v(n^{1}) \geq 0, v(n^{2}) \geq 0 \}.
\end{equation}
Then there exist a $5$-dimensional $K$-contact orbifold $(U,\beta)$ and a $\beta$-preserving $T^3$-action $\sigma$ on $U$ such that 
\begin{enumerate}
\item the image of the contact moment map $\Psi$ of $(U,\beta)$ for $\sigma$ contains $\Delta' \cap A$ and
\item $(U-\Psi^{-1}(\Delta'),\beta|_{U-\Psi^{-1}(\Delta')})$ is isomorphic to $(V-B,\alpha|_{V-B})$ as $K$-contact manifolds for an open neighborhood $V$ of $B$ in $M$.
\end{enumerate}
\end{lem}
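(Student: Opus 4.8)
The plan is to realize $(U,\beta,\sigma)$ as the local contact toric orbifold attached to the cone $\overline{\Delta}$ and then to identify the part of $U$ lying over $\{v(n_B)>0\}$ with $V-B$. First I would read off the combinatorial data of the corner from the $T^3$-action $\tau$ near $B$: the two primitive covectors $n^1,n^2$, the Reeb vector $\overline{R}$, and the lattice $\Lie(T^3)_{\mathbb{Z}}$. The edge $\{v\in\Lie(T^3)^{*}\mid v(n^1)=v(n^2)=0\}$ of $\overline{\Delta}$ should correspond to a single closed orbit $\Sigma$ of the Reeb flow, namely the point into which $B$ is to be blown down, and I would verify that $\overline{\Delta}$ is a good cone in the orbifold sense of Lerman \cite{Ler2} and Boyer-Galicki \cite{BoGa2}; the required lattice conditions on $n^1,n^2$ are exactly the smoothness of the lens spaces $L^1,L^2$.

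Next I would construct $U$ explicitly as the normal-form neighborhood of $\Sigma$: take $S^1\times D^{4}_{\epsilon}$, or a finite cyclic quotient of it accounting for the isotropy along the edge, and equip it with the contact form $\beta$ of \eqref{NormalFormOfAlpha} together with the linear $T^3$-action $\sigma$ whose weights on the two coordinate disks $\{z_1=0\}$ and $\{z_2=0\}$ realize $n^1$ and $n^2$ and whose Reeb direction is $\overline{R}$; this is precisely the model furnished by Lemma \ref{GeneralTorusActions}. A direct computation of the contact moment map $\Psi$ from \eqref{CoordinatePresetationOfMomentMap} then shows that its image is a neighborhood of the vertex of $\overline{\Delta}\cap A$, and after enlarging $\epsilon$ (equivalently rescaling the model) this image contains the whole triangle $\Delta'\cap A$, which is bounded because the vertex $n^1\cap n^2$ of $\Delta'$ lies strictly in $\{v(n_B)<0\}$. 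This gives (i), and locates $\Sigma$ inside $\Psi^{-1}(\Delta')$ so that $U-\Psi^{-1}(\Delta')=\{v(n_B)>0\}$ excludes $\Sigma$.

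For (ii) I would compare $U$ with $M$ over the half-space $\{v(n_B)>0\}$. Applying the normal form along $B$ (Lemmas \ref{NormalFormOfB} and \ref{ToricActionNearB}) presents a suitably thin $V$ as a contact cut of Lerman \cite{Ler1} along the circle generated by $n_B$, so that $V-B$ carries an $\alpha$-preserving $T^3$-action whose facets are again $n^1,n^2$ and whose contact moment image is a neighborhood of $\{v(n_B)=0\}$ contained in $\{v(n_B)>0\}$. Since $U$ over the same region shares the identical facet data, lattice and Reeb vector, the local uniqueness of contact toric orbifolds with prescribed moment data identifies the two contact distributions; I would then upgrade this to an isomorphism of $K$-contact structures carrying $\beta$ to $\alpha$ by the equivariant Gray stability argument already employed in Proposition \ref{Karshon2}, which applies because both forms have the same Reeb vector $\overline{R}$ and the same contact moment map.

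The main obstacle I anticipate is two-fold and concentrated in the last two steps. First, the orbifold bookkeeping: the edge of $\overline{\Delta}$ generally carries nontrivial cyclic isotropy, so $U$ is a genuine orbifold, and checking that $\overline{\Delta}$ is good (hence that $\Sigma$ is a well-defined closed orbit with finite isotropy) requires relating $n^1,n^2,n_B$ to the multiplicities of $L^1,L^2,B$. Second, the gluing over $\{v(n_B)>0\}$ must respect the full $K$-contact data rather than merely the contact distribution: reconciling the thin moment image of $V-B$ with the region cut out of $U$, and arranging that the identifying contactomorphism can be chosen $T^3$-equivariant and Reeb-preserving, is the delicate point, for which the normal forms near $B$ and the Moser-type argument of Proposition \ref{Karshon2} are the essential tools.
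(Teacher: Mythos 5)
Your overall architecture is sound and your treatment of (ii) converges on the same key tool as the paper, but your construction of $(U,\beta,\sigma)$ in (i) takes a genuinely different route. The paper does not build the model by hand: it applies the Delzant-type construction for $K$-contact orbifolds (Theorem 5.1 of \cite{BoGa}) directly to $\overline{\Delta} \cap A$, obtaining in one stroke a $K$-contact orbifold $(\overline{M},\overline{\alpha})$ with $T^3$-action whose contact moment image is exactly $\overline{\Delta} \cap A$ and which is smooth away from the preimage of the edge $Q^{1} \cap Q^{2}$; then $U$ is simply $\overline{\Phi}^{-1}(W)$ for a small neighborhood $W$ of $\Delta'$ in $\overline{\Delta}$. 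Your approach instead assembles $U$ from the normal form \eqref{NormalFormOfAlpha} on $S^1 \times D^{4}_{\epsilon}$ (or a cyclic quotient) with prescribed weights. This can be made to work — the standard model is in fact contact on all of $S^1 \times \mathbb{C}^{2}$, so enlarging $\epsilon$ to swallow the bounded triangle $\Delta' \cap A$ is legitimate — but note that you cannot literally invoke Lemma \ref{GeneralTorusActions}, since that lemma describes the equivariant normal bundle of an \emph{existing} closed orbit, whereas here $\Sigma$ is precisely what you are creating; you must construct the model abstractly with prescribed $T^3$-weights, Reeb element $\overline{R}$, and cyclic edge isotropy, and verify its moment image. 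This verification is exactly the content that the paper outsources to \cite{BoGa}, and it is the reason the paper's route is shorter: the orbifold bookkeeping you flag as your first obstacle disappears, since the Delzant construction needs no smooth Delzant condition on the pair $(n^{1},n^{2})$ and produces the orbifold automatically.

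For (ii), the paper applies Lerman's uniqueness (Lemma 4.9 and Proposition 5.2 of \cite{Ler2}) to $\overline{\Phi}^{-1}(W-\Delta')$ and $\tilde{\Phi}^{-1}(W)-B$, whose moment images are the same convex slab, and this already yields an isomorphism of $K$-contact manifolds: no contact cut presentation of $V$ is needed (the $T^3$-action near $B$ is a hypothesis of the lemma), and — more importantly — your final Gray-stability upgrade is superfluous. Since $\overline{R}$ is an infinitesimal action of the $T^3$-action, the normalization $\alpha(R)=1$ pins down the contact form among invariant forms with the given contact structure and moment map, so an equivariant isomorphism intertwining the moment maps automatically carries $\beta$ to $\alpha$. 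This matters because your proposed upgrade via Proposition \ref{Karshon2} is not directly applicable where you want it: that proposition is stated and proved for \emph{closed} manifolds (its part (i) uses the basic Euler class of the isometric flow on a closed manifold, and Gray stability needs completeness of the Moser flow), whereas your gluing takes place on the open collar over $\{0 < v(n_B) < \epsilon\}$. Dropping that step, and replacing the citation of Lemma \ref{GeneralTorusActions} with either an explicit abstract model or the citation of \cite{BoGa} as in the paper, your argument closes.
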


\begin{proof}
Let $Q^{j}$ be the subspace of $\Lie(T^{3})^{*}$ defined by $n_{j}$ for $j=0$ and $2$. Applying the Delzant construction for $K$-contact manifolds in Theorem 5.1 of \cite{BoGa} to $\overline{\Delta} \cap A$, we have a $5$-dimensional $K$-contact orbifold $(\overline{M},\overline{\alpha})$ of rank $2$ with an $\overline{\alpha}$-preserving $T^3$-action with the contact moment map $\overline{\Phi}$ such that 
\begin{enumerate}
\item the image of $\overline{\Phi}$ is $\overline{\Delta} \cap A$ and
\item $\overline{M}-\overline{\Phi}^{-1}(Q^{0} \cap Q^{2})$ is a smooth manifold.
\end{enumerate}
Take a small open neighborhood $W$ of $\Delta'$ in $\overline{\Delta}$. We show that $(\overline{\Phi}^{-1}(W),\overline{\alpha}|_{\Phi^{-1}(W)})$ satisfies the desired conditions on $(U,\beta)$. $(\overline{\Phi}^{-1}(W),\overline{\alpha}|_{\overline{\Phi}^{-1}(W)})$ satisfies the condition (i) by the construction.

By Lemma 4.9 and Proposition 5.2 of \cite{Ler2}, two $5$-dimensional $K$-contact toric manifolds are isomorphic if the images of the contact moment maps are the same convex subsets of $\Lie(T^{3})^{*}$. Since the images of the contact moment maps of $(\overline{\Phi}^{-1}(W - \Delta'),\overline{\alpha}|_{\overline{\Phi}^{-1}(W - \Delta')})$ and $(\tilde{\Phi}^{-1}(W) - B,\alpha|_{\Phi^{-1}(W) - B})$ are the same and convex, we have an isomorphism $f$ from $(\overline{\Phi}^{-1}(W - \Delta'),\overline{\alpha}|_{\overline{\Phi}^{-1}(W - \Delta')})$ to $(\tilde{\Phi}^{-1}(W) - B,\alpha|_{\Phi^{-1}(W) - B})$. Hence $(\overline{\Phi}^{-1}(W) - B,\overline{\alpha}|_{\Phi^{\prime -1}(W) - B})$ satisfies the condition (ii).
\end{proof}

\begin{lem}\label{WidthOfB}
We use the notation in the assumption of Lemma \ref{AttachingCorners}. We fix an element $\overline{Y}$ of $\Lie(G)$. The linear function on $\Lie(T^3)^{*}$ defined by the coupling with $\overline{Y}$ is denoted by $\pi$. Assume that $B$ is the minimal component of the function $\alpha(Y)$ where $Y$ is the infinitesimal action of $\overline{Y}$. We assume that $A$ is $\{ \begin{psmallmatrix} v(1) \\ v(2) \\ r \end{psmallmatrix} \in \mathbb{R}^{3} | v(1),v(2) \in \mathbb{R} \}$ where $r$ is a nonzero real number. We assume that the slope of $n^{1}$ is greater than the slope of $n^{2}$. We denote the width of $\Phi(B)$ as a level set of $\pi|_{A}$ by $w$. Then we have 
\begin{equation}\label{Equation : WidthOfB}
w=\frac{1}{r} \Big( -\frac{1}{n^{1}(2)n^{2}(2)} \det \begin{pmatrix} n^{1} & n^{2} & \Phi(B)(\overline{Y}) \overline{R} - \overline{Y} \end{pmatrix} \Big)
\end{equation}
where $v(2)$ denotes the second component of $v$ in $\mathbb{R}^{3}$ and $\Phi(B)(\overline{Y})$ is the coupling of $\overline{Y}$ with the value of $\Phi$ at $B$.
\end{lem}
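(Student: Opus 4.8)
The plan is to realize $\Phi(B)$ as a single edge of the moment polygon and to pin down its two endpoints from the isotropy data, after which \eqref{Equation : WidthOfB} becomes a determinant identity of the same shape as Lemma \ref{ChangeOfSlope}. Throughout I would work in the identification $\Lie(T^3)\cong\mathbb{R}^3\cong\Lie(T^3)^{*}$ fixed above, in which $\overline{R}=\begin{psmallmatrix}0\\0\\r\end{psmallmatrix}$, the plane $\Lie(G)$ is the set of vectors with vanishing second component, and $\overline{Y}\in\Lie(G)$ therefore has vanishing second component as well.

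First I would describe $\Phi(B)$. Using Lemma \ref{AttachingCorners} I may replace a neighborhood of $B$ by the genuine $K$-contact toric model, so that $\Phi$ is the contact moment map of a $T^3$-action and its image is a convex polygon in $A$. Since the infinitesimal action of $n_{B}$ vanishes on $B$, the set $\Phi(B)$ lies in $\{v\mid v(n_{B})=0\}$ and is the edge of the polygon cut out by $n_{B}$; and since $B$ is the minimal component of $\alpha(Y)=\pi\circ\Phi$, the function $\pi$ is constant on $B$, equal to $c=\Phi(B)(\overline{Y})$. Hence $\Phi(B)$ is a segment of the line $\{v\in A\mid v(\overline{Y})=c\}$, and its $\pr_{2}$-extent is precisely the quantity $w$.

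Next I would locate the two endpoints, which are the images of the closed Reeb orbits $\Sigma_{1}=B\cap L^{1}$ and $\Sigma_{2}=B\cap L^{2}$. As the infinitesimal action of $n^{j}$ vanishes on $L^{j}$, the segment $\Phi(L^{j})$ lies in $\{v\mid v(n^{j})=0\}$, so $\Phi(\Sigma_{j})$ is the unique $v\in A$ satisfying $v(\overline{Y})=c$ and $v(n^{j})=0$; its compatibility with the edge condition $v(n_{B})=0$ is exactly the content of Lemma \ref{VanishingDeterminant}, which places $\overline{Y}-c\overline{R}$ in $\Span\{n_{B},n^{j}\}$ at each corner. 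Here $n^{1}(2)$ and $n^{2}(2)$ are nonzero, since the $\tau$-isotropy circles at $L^{1}$ and $L^{2}$ act transversely to the $G$-orbits and hence $n^{1},n^{2}\notin\Lie(G)$, so the denominator in \eqref{Equation : WidthOfB} is legitimate.

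Finally I would compute $w$ by solving the three linear conditions for each $\Phi(\Sigma_{j})$, reading off the second coordinate, and subtracting. Clearing the common denominator $n^{1}(2)n^{2}(2)$ and recognizing the resulting numerator as the cofactor expansion along the third column of $\det\begin{psmallmatrix}n^{1}&n^{2}&c\overline{R}-\overline{Y}\end{psmallmatrix}$ should reproduce \eqref{Equation : WidthOfB}; structurally this is Lemma \ref{ChangeOfSlope} with $\overline{R}$ replaced by the vector $c\overline{R}-\overline{Y}$, which is exactly the direction datum $\overline{R}\times\overline{Y}$ of the level line $\{v\in A\mid v(\overline{Y})=c\}$ along which $\Phi(B)$ runs. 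The hypothesis that the slope of $n^{1}$ exceeds that of $n^{2}$ then selects the sign so that $w>0$. The main obstacle I anticipate is purely bookkeeping: fixing the overall sign and tracking the factor $1/r$ coming from the identification of the affine chart $A$ with the coordinate plane at height $r$, so that the normalization of the determinant in \eqref{Equation : WidthOfB} matches exactly.
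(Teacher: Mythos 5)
Your proposal is correct in substance, but it takes a genuinely different route from the paper's proof. The paper never computes the endpoints of $\Phi(B)$: it invokes Lemma \ref{AttachingCorners} to attach a corner, producing an extended $K$-contact orbifold $(U,\beta)$ with moment map $\Psi$ in which the two edge lines $l^{j}=Q^{j}\cap A$ genuinely meet at a vertex $v_{0}=\Psi(\Sigma)$ for a closed Reeb orbit $\Sigma$ of the extension; the width is then obtained by a similar-triangles argument, namely the slope difference of $l^{1}$ and $l^{2}$ from Lemma \ref{ChangeOfSlope} multiplied by the distance $\Phi(B)(\overline{Y})-\Psi(\Sigma)(\overline{Y})$ to the phantom vertex, and Lemma \ref{VanishingDeterminant} is applied \emph{at that vertex} to trade $\Psi(\Sigma)(\overline{Y})\det\begin{psmallmatrix} n^{1} & n^{2} & \overline{R} \end{psmallmatrix}$ for $\det\begin{psmallmatrix} n^{1} & n^{2} & \overline{Y} \end{psmallmatrix}$, which assembles the third column $\Phi(B)(\overline{Y})\overline{R}-\overline{Y}$. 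You instead intersect the level line $\{v\in A \mid v(\overline{Y})=c\}$, $c=\Phi(B)(\overline{Y})$, with the two isotropy planes $\{v\mid v(n^{j})=0\}$, solve the two linear systems for the endpoints $\tilde{\Phi}(\Sigma_{j})$, $\Sigma_{j}=B\cap L^{j}$, and subtract second coordinates; this is more elementary, dispenses with the corner-attaching construction entirely (your opening appeal to Lemma \ref{AttachingCorners} is in fact superfluous, since the $T^3$-action near $B$ with polygonal moment image is already part of the hypotheses inherited from that lemma's assumptions), demotes Lemma \ref{VanishingDeterminant} to a consistency check rather than a load-bearing step, and your justification that $n^{1}(2),n^{2}(2)\neq 0$ (the finiteness of the $\rho$-isotropy along $L^{j}$ forces $n^{j}\notin\Lie(G)$) is exactly the right argument for the denominator, which the paper leaves implicit. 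What the paper's route buys is reuse: the extension $(U,\beta,\Psi)$ and the vertex $\Sigma$ are needed again in Lemmas \ref{Formula} and \ref{EulerNumberOfLevelSetsNearB}, so the paper gets the width formula almost for free from machinery it must build anyway, whereas your route is self-contained linear algebra. One caveat, which you share exactly with the paper rather than introduce: carrying out your final computation, the solved first coordinate of the level line contributes a factor $1/\overline{Y}(1)$, so the stated identity \eqref{Equation : WidthOfB} holds on the nose only under the normalization $\overline{Y}(1)=1$; the paper's own proof makes the same silent identification when it substitutes differences of $\pi$-values for differences of first coordinates in \eqref{w1}, and both are harmless since the lemma is applied with $\overline{Y}=\overline{X}_{0}$.
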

Note that $\Phi$ is constant on $B$.
\begin{proof}
Take $(U,\beta)$ which satisfies the conditions (i) and (ii) in Lemma \ref{AttachingCorners}. We put $l^{j} = Q^{j} \cap A$ for $j=1$ and $2$. Let $Q^{j}$ be the subspace of $\Lie(T^{3})^{*}$ defined by $n_{j}$ for $j=1$ and $2$. We put $\{v_{0}\}=Q^{1} \cap Q^{2} \cap A$ and $\Sigma=\Psi^{-1}(v_{0})$. Then the difference of the slopes of $l^{1}$ and $l^{2}$ is $\frac{1}{r}\left(-\frac{1}{n^{1}(2)n^{2}(2)} \det \begin{pmatrix} n^{1} & n^{2} & \overline{R} \end{pmatrix}\right)$ by Lemma \ref{ChangeOfSlope}. Hence we have 
\begin{equation}\label{w1}
w = -\frac{1}{r}\big( \Phi(B)(\overline{Y})-\Psi(\Sigma)(\overline{Y}) \big) \frac{1}{n^{1}(2)n^{2}(2)} \det \begin{pmatrix} n^{1} & n^{2} & \overline{R} \end{pmatrix}.
\end{equation}
By Lemma \ref{VanishingDeterminant}, we have
\begin{equation}\label{w2}
\det \begin{pmatrix} n^{1} & n^{2} & \Psi(\Sigma)(\overline{Y}) \overline{R} \end{pmatrix} = \det \begin{pmatrix} n^{1} & n^{2} & \overline{Y} \end{pmatrix}.
\end{equation}
By \eqref{w1} and \eqref{w2}, we have \eqref{Equation : WidthOfB}.
\end{proof}

We prepare some notation for lens spaces which will be used in the following two lemmas. Let $\tau_{0}$ be an effective $S^1$-actions on $T^2 \times \{0\}$ defined by
\begin{equation}
t_{0} \cdot (x,y,0)= (x, t_{0}y, 0).
\end{equation}
Let $\tau_{1}$ be an $S^1$-actions on $T^2 \times \{1\}$ defined by
\begin{equation}
t_{1} \cdot (x,y,1)= (t_{1}^{p}x, t_{1}^{q}y, 1)
\end{equation}
for a pair of coprime integer $(p,q)$. We put $N=(T^2 \times [0,1])/ \sim$, where $(z_{1},u_{1}) \sim (z_{2},u_{2})$ if $u_{1}=u_{2}=j$ and $[(z_{1},u_{1})]=[(z_{2},u_{2})]$ in $(T^2 \times \{j\})/\tau_{j}$ for $j=0$ or $1$. For a pair of coprime integers $(p,q)$, let $\tau$ be a $\mathbb{Z}/p\mathbb{Z}$-action on $S^3$ defined by 
\begin{equation}
\xi \cdot (z_{1},z_{2}) = (\xi z_{1}, \xi^{-q} z_{2})
\end{equation}
for $\xi$ in $\mathbb{Z}/p\mathbb{Z}$ where $(\tilde{x},\tilde{y})$ is the standard coordinate on $S^3$ and we regard $\mathbb{Z}/p\mathbb{Z}$ as a subgroup of the group of complex numbers of absolute value $1$. We identify $L(p,q)=S^3/\tau$ with $N$ by the map induced from $f \colon S^3/\tau \longrightarrow T^2 \times [0,1]$ defined by $f(z_{1},z_{2}) =(\arg (z_{1})^{p}, \arg (z_{1})^{q} \arg z_{2}, |z_{1}|)$ on the complement of the singular orbits of $\tau$.

For a group $G'$, a subgroup $G''$ of $G'$ and a topological space $A$ with a $G''$-action, $G' \times_{G''} A$ denotes the quotient of $G' \times A$ by the $G''$-action defined by $g'' \cdot (g',x)=(g' g'', (g'')^{-1} \cdot x)$ for $g'$ in $G'$, $g''$ in $G''$ and $x$ in $A$.

\begin{lem}\label{EulerNumberOfLevelSetsNearB}
Assume that the minimal component $B$ of $\Phi$ is diffeomorphic to a lens space. Assume that we have an $\alpha$-preserving $T^3$-action $\tau$ near $B$. Let $H$ be a level set of $\Phi$ sufficiently close to $B$. Let $L_{1}$ and $L_{2}$ be two $K$-contact manifolds which intersect $B$ and consist of points with nontrivial isotropy group of $\tau$ near $B$. We put $k_{j}=I(\rho,L_{j})$ for $j=1$ and $2$. Let $n_{j}$ be the primitive vector in $\Lie(T^3)$ whose infinitesimal action generates the isotropy group of $\tau$ at $L_{j}$ for $j=1$ and $2$. Take $(U,\beta)$ which satisfies the conditions (i) and (ii) in Lemma \ref{AttachingCorners}. Let $Q^{j}$ be the subspace of $\Lie(T^{3})^{*}$ defined by $n_{j}$ for $j=1$ and $2$. We put $\{v_{0}\}=Q^{1} \cap Q^{2} \cap A$ and $\Sigma=\Psi^{-1}(v_{0})$. Let $\rho_{1}$ be an $S^1$-subaction of $\rho$ such that the restriction of the infinitesimal action of $\rho_{1}$ is a positive multiple of the Reeb vector field of $\beta$ on $\Sigma$. We fix an $S^1$-action $\rho_{2}$ of $\rho$ so that the product of $\rho_{1}$ and $\rho_{2}$ is equal to $\rho$. We fix a transverse orientation of orbits of $\rho$ in $H$ as Subsection 4.1 (See comments after Lemma \ref{LocalComputation}). Then we have
\begin{equation}\label{ComputationOfExtendedEulerNumber}
e(\rho_{2},H/\rho_{1}) = \frac{1}{k_{1}k_{2}} \det \begin{pmatrix} n_{1} & n_{2} & \overline{Y} \end{pmatrix} 
\end{equation}
where $\overline{Y}$ is the element of $\Lie(T^3)$ whose infinitesimal action generates $\rho_{1}$. If $B$ is the maximal component of $\Phi$ diffeomorphic to a lens space and $H$ is a level set of $\Phi$ sufficiently close to $B$, then we have
\begin{equation}\label{ComputationOfExtendedEulerNumber2}
e(\rho_{2},H/\rho_{1}) = - \frac{1}{k_{1}k_{2}} \det \begin{pmatrix} n_{1} & n_{2} & \overline{Y} \end{pmatrix}
\end{equation}
where $\overline{Y}$ is the element of $\Lie(T^3)$ whose infinitesimal action generates $\rho_{1}$.
\end{lem}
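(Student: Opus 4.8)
The plan is to transport the computation to the $K$-contact orbifold $(U,\beta)$ supplied by Lemma \ref{AttachingCorners}, in which the lens space $B$ has been replaced by the single isolated closed orbit $\Sigma=\Psi^{-1}(v_{0})$ sitting at the corner $v_{0}=Q^{1}\cap Q^{2}\cap A$, and then to reduce to the isolated-orbit formula already established in Lemma \ref{LevelSetNearB}. First I would note that, by condition (ii) of Lemma \ref{AttachingCorners}, $(U,\beta)$ is isomorphic to $(V-B,\alpha)$ as a $K$-contact manifold away from $\Psi^{-1}(\Delta')$; this isomorphism commutes with $\rho$ and carries the Reeb field to the Reeb field, so it preserves the transverse orientations fixed in Subsection 4.1 and identifies the level set $H$ near $B$ with a level set of the corresponding height function near $\Sigma$ in $U$. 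Hence $e(\rho_{2},H/\rho_{1})$ may be computed in $U$ on a level set close to $\Sigma$. By the slice theorem a neighborhood of $\Sigma$ is $T^{3}\times_{T^{2}_{\Sigma}}\mathbb{C}^{2}$, which is smooth, so Lemma \ref{LevelSetNearB} applies with $\Sigma$ in the role of $B_{\min}$: writing $a_{0}=I(\rho_{1},\Sigma)$, $k_{1}=I(\rho,L_{1})$ and $k_{2}=I(\rho,L_{2})$, it yields $e(\rho_{2},H/\rho_{1})=-a_{0}/(k_{1}k_{2})$.

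The core of the argument is then the lattice identity $\det\begin{pmatrix} n_{1} & n_{2} & \overline{Y}\end{pmatrix}=-a_{0}$. I would establish it as follows. The identity component of the isotropy group of $\tau$ at $\Sigma$ is the $2$-torus $T^{2}_{\Sigma}$ with Lie algebra $\langle n_{1},n_{2}\rangle_{\mathbb{R}}$, since $n_{1}$ and $n_{2}$ generate the isotropies along $L_{1}$ and $L_{2}$ and $\Sigma=\overline{L_{1}}\cap\overline{L_{2}}$. Putting the $T^{3}$-action into the normal form of Lemma \ref{GeneralTorusActions}, $n_{1}$ fixes $(\zeta,z_{1})$ and acts on the normal line $z_{2}$, while $n_{2}$ fixes $(\zeta,z_{2})$ and acts on $z_{1}$; effectiveness of $\tau$ forces each of these normal weights to be $\pm1$. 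Consequently $\{n_{1},n_{2}\}$ is a $\mathbb{Z}$-basis of the saturated isotropy lattice $\Lie(T^{3})_{\mathbb{Z}}\cap\langle n_{1},n_{2}\rangle_{\mathbb{R}}$ and extends to a $\mathbb{Z}$-basis $\{n_{1},n_{2},\eta\}$ of $\Lie(T^{3})_{\mathbb{Z}}$, where $\eta$ projects to a generator of $T^{3}/T^{2}_{\Sigma}$. In this basis the weight matrix of $\tau$ on $(\zeta,z_{1},z_{2})$ is unimodular; since $\overline{Y}$ generates $\rho_{1}$ and acts on the orbit direction $\Sigma$ with weight $a_{0}$, expanding $\det\begin{pmatrix} n_{1} & n_{2} & \overline{Y}\end{pmatrix}$ against this weight matrix gives $-a_{0}$ once the orientations are matched. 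Substituting into the formula of the previous paragraph produces \eqref{ComputationOfExtendedEulerNumber}.

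The maximal case \eqref{ComputationOfExtendedEulerNumber2} would then follow verbatim, the only difference being that the transverse orientation of the level sets near $B_{\max}$ is opposite to the one near $B_{\min}$ (compare the two cases of Lemma \ref{LevelSetNearB}), which reverses the sign of the Euler number and yields the extra minus sign.

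The step I expect to be the main obstacle is the sign and orientation bookkeeping. One must verify that the isomorphism of Lemma \ref{AttachingCorners} really carries the transverse orientation of $H/\rho_{1}$ used to define $e(\rho_{2},H/\rho_{1})$ in $M$ to the one used at $\Sigma$ in $U$, and that the signs of the two normal weights $\pm1$ together with the orientation of $\overline{Y}$ combine to give exactly $-a_{0}$ rather than $+a_{0}$; pinning down this global sign, consistently with the convention fixed after Lemma \ref{LocalComputation}, is the delicate part, whereas the lattice computation itself is routine.
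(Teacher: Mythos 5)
There is a genuine gap, and it sits exactly where you predicted the difficulty would be --- but it is worse than a sign issue. Your whole reduction rests on the claim that $(U,\beta)$ is smooth near $\Sigma$, with slice $T^{3}\times_{T^{2}_{\Sigma}}\mathbb{C}^{2}$, normal weights $\pm 1$, and $\{n_{1},n_{2}\}$ a $\mathbb{Z}$-basis of the saturated isotropy lattice. None of this holds in general: Lemma \ref{AttachingCorners} produces only a $K$-contact \emph{orbifold}, and $(U,\beta)$ is singular along $\Sigma$ precisely when the corner $v_{0}=Q^{1}\cap Q^{2}\cap A$ fails the Delzant condition, i.e.\ when $\langle n_{1},n_{2}\rangle_{\mathbb{Z}}$ has index $>1$ in its saturation (compare Lemma \ref{TopologicalCharacterization}: smoothness of the blow-down is equivalent to the Euler class of the normal bundle of $B$ generating $H^{2}(B;\mathbb{Z})$, which is a nontrivial hypothesis, not a consequence of effectiveness). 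In that general case the boundary of a tube around $\Sigma$ is $S^{1}\times L(p,q)$ rather than $S^{1}\times S^{3}$, so Lemma \ref{LevelSetNearB} (whose proof uses the manifold normal form of Lemma \ref{GeneralTorusActions} and \eqref{NormalFormOfPhi}) does not apply; the correct value is $e(\rho_{2},H/\rho_{1})=-pa_{0}/(k_{1}k_{2})$, not $-a_{0}/(k_{1}k_{2})$. Correspondingly, your lattice identity is also false in general: the paper proves $\det\begin{pmatrix} n_{1} & n_{2} & \overline{Y}\end{pmatrix}=-pa_{0}$, the extra factor $p$ being exactly the index defect your unimodular-extension argument assumes away. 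Your two errors are both off by the same factor $p$ and cancel in the final formula, which is why your conclusion agrees with \eqref{ComputationOfExtendedEulerNumber}; but a proof whose intermediate statements fail whenever $p>1$ (equivalently, whenever $B$ cannot be blown down to a smooth closed orbit) only covers the special case $p=1$.

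The paper's actual route avoids $(U,\beta)$'s singularity for the Euler-number computation: it identifies $H$ $T^{3}$-equivariantly with $S^{1}\times L(p,q)$ via the slice theorem for orbifolds and connectedness of $T^{3}_{\Sigma}$ (Lemma 3.13 of \cite{Ler2}), establishes $-pa_{0}=\det\begin{pmatrix} n_{1} & n_{2} & \overline{Y}\end{pmatrix}$ by analyzing the isotropy action $\tau_{B}$ of $\tau$ at $B$ --- using Thornton's theorem (Theorem \ref{LensSpaces}) to pin down $B\cong L(ep,q)$, Lemma \ref{GoodCone} to get $ep=-\det\begin{pmatrix} n_{1} & n_{2} & n_{B}\end{pmatrix}$, and a sign argument comparing $\rho_{1}$ and $\tau_{B}$ along $\Sigma$ to conclude $c_{0}=-e$ --- and then runs the covering-trick computation of Lemma \ref{LocalComputation} with Lemma \ref{EulerNumberOfSeifertFibrationOnLensSpaces} in place of Lemma \ref{EulerNumberOfSeifertFibrationOnS30}, which is where $p$ and $q$ legitimately enter. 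To repair your argument you would have to redo both of your steps on the orbifold: a lens-space analogue of Lemma \ref{LevelSetNearB} and a determinant identity tracking the index of $\langle n_{1},n_{2}\rangle_{\mathbb{Z}}$ in its saturation --- which is essentially to reconstruct the paper's proof.
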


\begin{proof}
$H$ is the boundary of a $T^3$-invariant tubular neighborhood of $\Sigma$. Hence $H$ is a fiber bundle over $S^1$. A fiber $F$ in $H$ is diffeomorphic to a lens space. By the slice theorem for orbifolds (Proposition 2.3 of \cite{LeTo}), $H$ is $T^3$-equivariantly diffeomorphic to $T^{3} \times_{T^{3}_{\Sigma}} F$ where $T^{3}_{\Sigma}$ is the isotropy group of the $T^3$-action $\tau$ at $\Sigma$. By Lemma 3.13 of \cite{Ler2}, $T^{3}_{\Sigma}$ is connected. Hence $T^{3}_{\Sigma}$ is isomorphic to $T^2$. $\tau_{0}$ denotes the $T^3$-action on $S^1 \times F$ induced from the $T^3$-action on $S^1 \times T^{2} \times [0,1]$ defined by
\begin{equation}
(s_{1}, s_{2}, s_{3}) \cdot (\zeta,x,y,u)= (s_{1} \zeta, s_{2} x, s_{3} y, u).
\end{equation}
We will show that $T^{3} \times_{T^{3}_{\Sigma}} F$ is $T^3$-equivariantly diffeomorphic to $S^1 \times F$ with the $T^3$-action $\tau_{0}$. The $T^2$-equivariant diffeomorphism type of a lens space $L$ with a $T^2$-action is determined by the isotropy groups at two singular orbits by the slice theorem. In fact, $L$ is a union of two tubular neighborhoods $T^{2} \times_{T^{2}_{\Sigma_{1}}} D^{2}$ and $T^{2} \times_{T^{2}_{\Sigma_{2}}} D^{2}$ where $T^{2}_{\Sigma_{j}}$ is the isotropy group of the $T^2$-action on $L(p,q)$ at a singular orbit $\Sigma_{j}$ for $j=1$ and $2$. Since every $T^2$-equivariant diffeomorphism on $\partial(T^{2} \times_{T^{2}_{\Sigma_{2}}} D^{2})$ can be $T^2$-equivariantly extended to $T^{2} \times_{T^{2}_{\Sigma_{2}}} D^{2}$, the $T^2$-equivariant diffeomorphism type of $(T^{2} \times_{T^{2}_{\Sigma_{1}}} D^{2}) \cup (T^{2} \times_{T^{2}_{\Sigma_{2}}} D^{2})$ is determined by $T^{2}_{\Sigma_{1}}$ and $T^{2}_{\Sigma_{2}}$. The vectors in $\Lie(T^2)$ corresponding to the isotropy groups at two singular orbits in $F$ is written as $\begin{psmallmatrix} 1 \\ 0 \end{psmallmatrix}$ and $\begin{psmallmatrix} p \\ q \end{psmallmatrix}$ for a pair of coprime integers $(p,q)$ such that $p$ is positive with respect to an identification of $T^3_{\Sigma}$ with $T^2$. Then $F$ with the action of $T^{3}_{\Sigma}$ is $T^2$-equivariantly diffeomorphic to $L(p,q)$ with $T^2$-action induced from the $T^2$-action on $T^{2} \times [0,1]$ defined by
\begin{equation}
(s_{2}, s_{3}) \cdot (x,y,t)= (s_{2} x, s_{3} y, u).
\end{equation}
Then we have a $T^2$-equivariant diffeomorphism $F \longrightarrow L(p,q)$. $\{1\} \times F$ is a $T^2$-invariant transversal of a free $S^1$-subaction of $\tau$ in $H$. Similarly, $\{1\} \times L(p,q)$ is a $T^2$-invariant transversal of a free $S^1$-subaction of $\tau_{0}$. Hence $T^{3} \times_{T^{3}_{\Sigma}} F$ with $T^3$-action $\tau$ is $T^3$-equivariantly diffeomorphic to $S^1 \times L(p,q)$ with $T^3$-action $\tau_{0}$.

Hence we can assume that $H=S^1 \times L(p,q)$ and that $\tau$ is the $T^3$-action induced from the $T^3$-action on $S^1 \times T^{2} \times [0,1]$ defined by
\begin{equation}
(s_{1},s_{2},s_{3}) \cdot (\zeta,x,y,u)= (s_{1}\zeta, s_{2}x, s_{3}y, u).
\end{equation}
Then $\rho_{1}$ is written as 
\begin{equation}
t_{1} \cdot (\zeta, x, y, u)= (t_{1}^{a_{0}}\zeta, t_{1}^{a_{1}}x, t_{1}^{a_{2}}y, u)
\end{equation}
and $\rho_{2}$ is written as 
\begin{equation}
t_{2} \cdot (\zeta, x, y, u)= (t_{2}^{b_{0}}\zeta, t_{2}^{b_{1}} x, t_{2}^{b_{2}}y, u)
\end{equation}
for a positive integer $a_{0}$ and some integers $a_{1}$, $a_{2}$, $b_{0}$, $b_{1}$ and $b_{2}$.

We will show
\begin{equation}\label{ComputationOfa0}
-pa_{0} = \det \begin{pmatrix} n_{1} & n_{2} & \overline{Y} \end{pmatrix}. 
\end{equation}
Let $n_{B}$ be the vector in $\Lie(T^3)$ corresponding to the isotropy group $T^{3}_{B}$ of $\tau$ at $B$. By a theorem of Thornton \cite{Tho} (See Theorem \ref{LensSpaces} in this paper), if $B$ is diffeomorphic to $L(p',q')$, then $H$ is diffeomorphic to $S^1 \times L(p'',q')$ for some integer $p''$ which divides $p'$. Hence $B$ is diffeomorphic to $L(ep,q)$ for some positive integer $e$. We have $ep=-\det \begin{pmatrix} n_{1} & n_{2} & n_{B} \end{pmatrix}$ by Lemma \ref{GoodCone}. Hence \eqref{ComputationOfa0} is equivalent to 
\begin{equation}\label{ComputationOfa02}
\det \begin{pmatrix} n_{1} & n_{2} & a_{0}n_{B}+e\overline{Y} \end{pmatrix}=0. 
\end{equation}
$\{n_{1},n_{2}\}$ is a basis of the subspace of $\Lie(T^3)$ consisting vectors whose infinitesimal actions vanish on $\Sigma$. Note that the actions on $\Sigma$ induced from an $S^1$-action on $S^1 \times T^{2} \times [0,1]$ defined by
\begin{equation}
t \cdot (\zeta, x, y, u)= (t^{w_{0}} \zeta, t^{w_{1}} x, t^{w_{2}}y, u)
\end{equation}
is written as
\begin{equation}
t \cdot \zeta= t^{w_{0}} \zeta.
\end{equation}
We denote the action of the isotropy group of $\tau$ of $B$ by $\tau_{B}$. To show \eqref{ComputationOfa02}, it suffices to show that $\tau_{B}$ is induced from the $S^1$-action on $S^1 \times T^{2} \times [0,1]$ defined by
\begin{equation}
t \cdot (\zeta, x, y, u)= (t^{-e} \zeta, t^{c_{1}} x, t^{c_{2}}y, u)
\end{equation}
for some integers $c_{1}$ and $c_{2}$. We assume that $\tau_{B}$ is induced from the $S^1$-action on $S^1 \times T^{2} \times [0,1]$ defined by 
\begin{equation}
t \cdot (\zeta, x, y, u)= (t^{c_{0}} \zeta, t^{c_{1}} x, t^{c_{2}}y, u).
\end{equation}
To show $|c_{0}|=|e|$, we show that $B=H/\tau_{B}$ is diffeomorphic to $L(|c_{0}|p,q')$ for some $q'$. Since $\tau_{B}|_{\mathbb{Z}/|c_{0}|\mathbb{Z}}$ fixes the first $S^1$-component and acts on the $L(p,q)$-component freely, $H/(\tau_{B}|_{\mathbb{Z}/|c_{0}|\mathbb{Z}})$ is diffeomorphic to $S^1 \times L(|c_{0}|p,q')$ for some $q'$, and we have $H/\tau_{B}=(S^1 \times L(p,q))/\tau_{B}=\big((S^1 \times L(p,q))/(\tau_{B}|_{\mathbb{Z}/|c_{0}|\mathbb{Z}})\big)/\tau_{B}=\big(S^1 \times (L(p,q)/(\tau_{B}|_{\mathbb{Z}/|c_{0}|\mathbb{Z}})\big)/\tau_{B}$. Since $\tau_{B}|_{\mathbb{Z}/|c_{0}|\mathbb{Z}}$ acts freely on the first $S^1$-component of $S^1 \times (L(p,q)/(\tau_{B}|_{\mathbb{Z}/|c_{0}|\mathbb{Z}}))$, the injection $\iota \colon L(p,q)/(\tau_{B}|_{\mathbb{Z}/|c_{0}|\mathbb{Z}}) \longrightarrow \big(S^1 \times (L(p,q)/(\tau_{B}|_{\mathbb{Z}/|c_{0}|\mathbb{Z}})\big)/\tau_{B}$ defined by $\iota(z)=[(1,z)]$ is a diffeomorphism. Hence $H/\tau_{B}$ is diffeomorphic to $L(|c_{0}|p,q')$ for some $q'$. Hence we have $|c_{0}|=|e|$. We show that $c_{0}=-e$. The restriction of the infinitesimal action of $\rho_{1}$ to $\Sigma$ is a positive multiple of the Reeb vector field by the assumption. On the other hand, the infinitesimal action $Y_{B}$ of $\tau_{B}$ to $\Sigma$ is a negative multiple of the Reeb vector field. Then the signature of $a_{0}$ and $c_{0}$ are different. Since $a_{0}$ is positive, $c_{0}$ is negative. Since $e$ is positive, we have $c_{0}=-e$. Hence \eqref{ComputationOfa02} and \eqref{ComputationOfa0} are proved.

We prove Lemma \ref{EulerNumberOfLevelSetsNearB} in a way similar to the proof of Lemma \ref{LocalComputation}. Let $\tilde{\rho}_{1}$ and $\tilde{\rho}_{2}$ be $S^1$-actions on $S^1 \times L(p,q)$ induced from the $S^1$-actions on $S^1 \times T^{2} \times [0,1]$ defined by
\begin{equation}
t_1 \cdot (\xi, w_1, w_2, u)=(t_{1} \xi, w_1, w_2, u)
\end{equation}
and
\begin{equation}
t_2 \cdot (\xi, w_1, w_2, u)=(t_{2}^{b_{0}} \xi, t_{2}^{a_{0}b_{1}-a_{1}b_{0}} w_1, t_{2}^{a_{0}b_{2}-a_{2}b_{0}} w_2, u)
\end{equation}
respectively. Let $\tau$ be a $\mathbb{Z}/a_{0}\mathbb{Z}$-action on $S^1 \times L(p,q)$ induced from the $\mathbb{Z}/a_{0}\mathbb{Z}$-action on $S^1 \times T^{2} \times [0,1]$ defined by
\begin{equation}
s \cdot(\xi,w_1,w_2,u)=(s \xi, s^{-a_1} w_1, s^{-a_2} w_2,u)
\end{equation}
for $s$ in $\mathbb{Z}/a_{0}\mathbb{Z}$ where we identify $\mathbb{Z}/a_{0}\mathbb{Z}$ with a subgroup of the complex numbers with absolute values $1$. We identify $(S^1 \times L(p,q))/\tau$ with $S^1 \times L(p,q)$ by the map induced from $f \colon S^1 \times T^2 \times [0,1] \longrightarrow S^1 \times T^2 \times [0,1]$ defined by $f(\xi,w_1,w_2,u)=(\xi^{a_{0}},\xi^{a_{1}}w_{1},\xi^{a_{2}}w_{2},u)$. Then $\tilde{\rho}_{1}$ induces $\rho_{1}$ on $(S^1 \times L(p,q))/\tau$. $\tilde{\rho}_{2}$ induces an $S^1$-action $\rho_{2}^{a_{0}}$ on $(S^1 \times L(p,q))/\tau$ which is defined by
\begin{equation}
t_{2} \cdot (\zeta, x, y, \xi)= (t_{2}^{a_{0}b_{0}}\zeta, t_{2}^{a_{0}b_{1}} x, t_{2}^{a_{0}b_{2}}y, u).
\end{equation}
Hence the effective $S^1$-action on $(S^1 \times L(p,q))/\tau$ induced from $\tilde{\rho}_{2}$ is $\rho_{2}$.

The cardinality of the isotropy group of the $S^1$-action on $(S^1 \times L(p,q))/\tilde{\rho}_{1}$ induced from $\tilde{\rho}_{2}$ is $\GCD(|a_{0}b_{1}-a_{1}b_{0}|,|a_{0}b_{2}-a_{2}b_{0}|)$. Since the cardinality of the isotropy group of the $S^1$-action on $(S^1 \times L(p,q))/\rho_{1}$ induced from $\rho_{2}$ is $1$ by the effectiveness of $\rho$, the cardinality of the isotropy group of the $S^1$-action on $(S^1 \times L(p,q))/\rho_{1}$ induced from $\rho_{2}^{a_{0}}$ is $a_{0}$. Hence $S^1$-manifold $((S^1 \times L(p,q))/\rho_{1}, \rho_{2}^{a_{0}})$ is equivariantly diffeomorphic to the quotient of the $S^1$-manifold $((S^1 \times L(p,q))/\tilde{\rho}_{1},\tilde{\rho}_{2})$ by the $\mathbb{Z}/l\mathbb{Z}$-subaction of $\tilde{\rho}_{2}$ where $l=\frac{a_{0}}{\GCD(|a_{0}b_{1}-a_{1}b_{0}|,|a_{0}b_{2}-a_{2}b_{0}|)}$. By \eqref{DefinitionOfEulerNumber}, we have
\begin{equation}\label{tilderhoandrho}
e(\tilde{\rho}_{2},(S^1 \times L(p,q))/\tilde{\rho}_{1}) = \frac{1}{\frac{a_{0}}{\GCD(|a_{0}b_{1}-a_{1}b_{0}|, |a_{0}b_{2}-a_{2}b_{0}|)}} e(\rho_{2},(S^1 \times L(p,q))/\rho_{1}).
\end{equation}

Since $(S^1 \times L(p,q))/\tilde{\rho}_{1}=L(p,q)$ and the Euler number of the $S^1$-action on $L(p,q)$ induced from the $S^1$-action on $T^2 \times [0,1]$ defined by 
\begin{equation}
t \cdot (x,y)=(s^{m_1} x, s^{m_2} y)
\end{equation}
for a pair of integers $(m_{1},m_{2})$ is $-\frac{p\GCD(|m_{1}|,|m_{2}|)}{m_{1}(pm_{1}-qm_{2})}$ by Lemma \ref{EulerNumberOfSeifertFibrationOnLensSpaces}, we have
\begin{equation}\label{Covering2}
e(\tilde{\rho}_{2},(S^1 \times L(p,q))/\tilde{\rho}_{1})=-\frac{p\GCD(|a_{0}b_{1}-a_{1}b_{0}|,|a_{0}b_{2}-a_{2}b_{0}|)}{(a_{0}b_{1}-a_{1}b_{0})(p(a_{0}b_{1}-a_{1}b_{0})-q(a_{0}b_{2}-a_{2}b_{0}))}.
\end{equation}
By \eqref{tilderhoandrho} and \eqref{Covering2}, we have
\begin{equation}
e(\rho_{2},(S^1 \times L(p,q))/\rho_{1}) = -\frac{p a_{0}}{(a_{0} b_{1} - a_{1} b_{0})(p(a_{0}b_{1}-a_{1}b_{0})-q(a_{0}b_{2}-a_{2}b_{0}))}.
\end{equation}
Note that $k_{1}=|a_{0}b_{1}-a_{1}b_{0}|$ and $k_{2}=|p(a_{0}b_{1}-a_{1}b_{0})-q(a_{0}b_{2}-a_{2}b_{0})|$. Since $B$ is a minimal component of $\Phi$, the signatures of $a_{0}b_{1}-a_{1}b_{0}$ and $p(a_{0}b_{1}-a_{1}b_{0})-q(a_{0}b_{2}-a_{2}b_{0})$ are the same as in the case of \eqref{Denominator} and we have
\begin{equation}\label{k1k2}
k_{1} k_{2} = (a_{0} b_{1} - a_{1} b_{0})(p(a_{0}b_{1}-a_{1}b_{0})-q(a_{0}b_{2}-a_{2}b_{0})).
\end{equation}
By \eqref{ComputationOfa0} and \eqref{k1k2}, we have \eqref{ComputationOfExtendedEulerNumber}. The proof of \eqref{ComputationOfExtendedEulerNumber2} is similar.
\end{proof}

\begin{lem}\label{EulerNumberOfSeifertFibrationOnLensSpaces}
Let $\sigma$ be an $S^1$-subaction of a $T^2$-action on $L(p,q)$ induced from an $S^1$-action on $T^2 \times [0,1]$ defined by
\begin{equation}
t \cdot (x,y,u)= (t^{m_{1}} x,t^{m_{2}} y,u)
\end{equation}
for a pair of integers $(m_{1},m_{2})$. Then we have
\begin{equation}\label{EulerNumberOfsigma}
e(\sigma,L(p,q))=-\frac{p\GCD(|m_{1}|,|m_{2}|)}{m_{1}(pm_{1}-qm_{2})}. 
\end{equation}
\end{lem}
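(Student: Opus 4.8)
The plan is to reduce the computation to the case of $S^3$, which is already settled in Lemma \ref{EulerNumberOfSeifertFibrationOnS30}, by exploiting the presentation $L(p,q)=S^3/\tau$ fixed just above the statement and pushing the action $\sigma$ up to the covering $S^3$. Under the identification $f$, the coordinate $(x,y)$ on $L(p,q)$ is the image of $((z_1/|z_1|)^p,\,(z_1/|z_1|)^q(z_2/|z_2|))$, so a diagonal action $\hat\sigma$ of the form $t\cdot(z_1,z_2)=(t^{e_1}z_1,t^{e_2}z_2)$ on $S^3$ commutes with $\tau$ and descends to the action on $(x,y)$ with weights $(pe_1,\,qe_1+e_2)$. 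Setting $g=\GCD(p,m_1)$, $e_1=m_1/g$ and $e_2=(pm_2-qm_1)/g$ makes these induced weights equal to $\tfrac{p}{g}(m_1,m_2)$; hence $\hat\sigma$ descends on $L(p,q)$ to a reparametrization of $\sigma$. Lemma \ref{EulerNumberOfSeifertFibrationOnS30}, in the $\GCD$-form used inside the proof of Lemma \ref{LocalComputation}, then gives $e(\hat\sigma,S^3)=-\GCD(|e_1|,|e_2|)/(e_1e_2)$.

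The second step is to convert $e(\hat\sigma,S^3)$ into $e(\sigma,L(p,q))$ using the two defining features of the Euler number in Definition \ref{Definition : EulerNumber}: its behaviour under a finite quotient by a subgroup of the acting circle, which is the factor $1/|G'|$ in \eqref{DefinitionOfEulerNumber}, and, equivalently, its scaling under a reparametrization $t\mapsto t^k$ of the acting circle, which multiplies $e$ by $1/k$. Concretely I would work inside the torus $\{(z_1,z_2)\mapsto(s_1z_1,s_2z_2)\}$ acting on $S^3$, which contains both $\hat\sigma(S^1)$ and $\tau$; using that $\tau$ acts freely on $S^3$ and trivially on $(x,y)$, I would track the order of $\tau\cap\hat\sigma(S^1)$ together with the reparametrization degree $p/g$ that relates the descent of $\hat\sigma$ to $\sigma$. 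Assembling these factors and simplifying by means of the identity $\GCD(|m_1|,|pm_2-qm_1|)=\GCD(|m_1|,|m_2|)$ on the relevant divisors should yield \eqref{EulerNumberOfsigma}.

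The main obstacle is precisely this last normalization bookkeeping. The delicate point is that $\tau$ need not be contained in the acting circle $\hat\sigma(S^1)$, so the subgroup generated by $\hat\sigma(S^1)$ and $\tau$ can be disconnected; one must therefore separate the part of the $p$-fold covering $S^3\to L(p,q)$ that runs along the fibres of the Seifert fibration, measured by $|\tau\cap\hat\sigma(S^1)|$, from the part that covers the base surface, and then recombine these with the reparametrization factor $p/g$ so that the powers of $p$ and the greatest common divisors collapse correctly. I expect the cleanest way to keep the constants under control is to pass to a single common free quotient of both $S^3$ and $L(p,q)$ by a suitable finite subgroup of the acting circle, to which \eqref{DefinitionOfEulerNumber} applies on both sides at once, rather than chaining the reparametrization and the covering separately.
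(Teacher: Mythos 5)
Your overall route is the paper's: push $\sigma$ up the $p$-fold cover $S^3 \to L(p,q)$ to a diagonal action, evaluate that action by Lemma \ref{EulerNumberOfSeifertFibrationOnS30}, and convert back through the finite-quotient behaviour \eqref{DefinitionOfEulerNumber}. Where you differ is in the normalization of the lift, and the paper's choice is the better one: it takes the unnormalized lift $\tilde{\sigma}(t)(z_1,z_2)=(t^{m_1}z_1, t^{pm_1-qm_2}z_2)$, whose weights are automatically integral, identifies the induced action on $L(p,q)$ as $\sigma^{p}$, and then closes the conversion in a single stroke: the principal isotropy of $\tilde{\sigma}$ on $S^3$ has order $\GCD(|m_1|,|pm_1-qm_2|)$, that of $\sigma^{p}$ on $L(p,q)$ has order $p\GCD(|m_1|,|m_2|)$, so $(L(p,q),\sigma^{p})$ is the quotient of $(S^3,\tilde{\sigma})$ by the $\mathbb{Z}/l\mathbb{Z}$-subaction with $l$ the ratio of these two orders, and \eqref{DefinitionOfEulerNumber} gives $e(\tilde{\sigma},S^3)=\frac{1}{l}\,e(\sigma,L(p,q))$ (this is \eqref{tilderhoandrho}/\eqref{Covering3}). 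Note that no relation between the two $\GCD$'s is ever needed: one sits in $l$, the other in $e(\tilde{\sigma},S^3)$, and they cancel. Your ``single common free quotient'' in the last paragraph is exactly this isotropy-order comparison made concrete, so your repair strategy is sound; but your $g=\GCD(p,m_1)$-normalized lift buys nothing and is what forces you into the fibre-versus-base splitting of the covering that you then identify as the main obstacle.

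Two points in your plan are genuinely defective. First, the identity $\GCD(|m_1|,|pm_2-qm_1|)=\GCD(|m_1|,|m_2|)$ is false: the left side equals $\GCD(|m_1|,|pm_2|)$, and for instance $(p,q,m_1,m_2)=(2,1,2,3)$ gives $2\neq 1$. Since with your normalization this $\GCD$ is the numerator of the $S^3$-Euler number, your constants will not collapse as claimed; the cure is to drop the identity and let the two $\GCD$'s cancel structurally, as above. Second, a convention warning you will run into when you execute the bookkeeping: your weight transport under the identification $f$ fixed before the statement, namely $(e_1,e_2)\mapsto(pe_1,\,qe_1+e_2)$, is the correct one, and carried through it yields $e(\sigma,L(p,q))=-\frac{p\GCD(|m_1|,|m_2|)}{m_1(pm_2-qm_1)}$, with $m_1$ and $m_2$ exchanged inside the bracket relative to \eqref{EulerNumberOfsigma}. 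The paper's proof reaches the printed form because it starts from the lift with second weight $pm_1-qm_2$ and asserts, without computing, that it induces $t\cdot(x,y)=(t^{pm_1}x,t^{pm_2}y)$ --- an assertion that under $f$ requires second weight $pm_2-qm_1$. A quick litmus test: for $(p,q)=(2,1)$ and $(m_1,m_2)=(1,2)$ the action on $N$ is locally free, yet the printed denominator $m_1(pm_1-qm_2)$ vanishes, while the transported computation gives $-2/3$. So if you complete your plan faithfully you will not reproduce \eqref{EulerNumberOfsigma} verbatim; you should either fix the identification conventions to match the paper's intended ones or record the corrected indices, and in either case verify your answer on such a test case before assembling the final formula.
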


\begin{proof}
We define an $S^1$-action $\tilde{\sigma}$ on $S^3$ by 
\begin{equation}
t \cdot (z_1,z_2) = (t^{m_{1}} z_1,t^{pm_{1}-qm_{2}} z_2).
\end{equation}
Then $\tilde{\sigma}$ induces an $S^1$-action $p\sigma$ on $N$ which is induced from the $S^1$-action on $T^2 \times [0,1]$ which is the product of the trivial action on $[0,1]$ and the $S^1$-action $\sigma^{p}$ on the $T^2$-component defined by
\begin{equation}
t \cdot (x,y)= (t^{pm_{1}} x,t^{pm_{2}} y).
\end{equation}
Hence the effective $S^1$-action on $L(p,q)$ induced from $\tilde{\sigma}$ is $\sigma$.

The cardinality of the isotropy group of the $S^1$-action $\tilde{\sigma}$ on $S^3$ is $\GCD(|m_{1}|,|pm_{1}-qm_{2}|)$. The cardinality of the isotropy group of the action $\sigma^{p}$ on $L(p,q)$ is $p\GCD(|m_{1}|,|m_{2}|)$. Hence $S^1$-manifold $(L(p,q),\sigma^{p})$ is equivariantly diffeomorphic to the quotient of the $S^1$-manifold $(S^3,\tilde{\sigma})$ by the $\mathbb{Z}/l\mathbb{Z}$-subaction of $\tilde{\sigma}$ where $l=\frac{p\GCD(|m_{1}|,|m_{2}|)}{\GCD(|m_{1}|,|pm_{1}-qm_{2}|)}$. By \eqref{DefinitionOfEulerNumber}, we have
\begin{equation}\label{Covering3}
e(\tilde{\sigma},S^3) = \frac{1}{\frac{p\GCD(|m_{1}|,|m_{2}|)}{\GCD(|m_{1}|,|pm_{1}-qm_{2}|)}} e(\sigma,L(p,q)).
\end{equation}
Since $(S^1 \times S^3)/\tilde{\sigma}_{1}=S^3$ and the Euler number of the $S^1$-action on $S^3$ defined by 
\begin{equation}
t \cdot (z_1,z_2)=(s^{m_1} z_1, s^{m_2} z_2)
\end{equation}
is $-\frac{\GCD(|m_{1}|, |m_{2}|)}{m_1 m_2}$ by Lemma \ref{EulerNumberOfSeifertFibrationOnS30}, we have 
\begin{equation}\label{EulerNumberOfSeifertFibrationOnS31}
e(\tilde{\sigma}_{0},S^3)=-\frac{\GCD(|m_{1}|,|pm_{1}-qm_{2}|)}{m_{1}(pm_{1}-qm_{2})}.
\end{equation}
Then \eqref{EulerNumberOfsigma} follows from \eqref{Covering3} and \eqref{EulerNumberOfSeifertFibrationOnS31}.
\end{proof}

\begin{lem}\label{Formula}
We use the notation in the proof of Proposition \ref{NearChains}. Under the condition (ii) of Proposition \ref{SufficientConditionToBeToric}, we have
\begin{equation}\label{FirstFormula}
\begin{array}{l}
\frac{1}{k^{2}_{l(2)}k^{1}_{l(1)}} \det \begin{pmatrix} n^{2}_{l(2)} & \hat{n}^{1}_{l(1)} & \overline{Y} \end{pmatrix} + \frac{1}{k^{1}_{1}k^{2}_{1}} \det \begin{pmatrix} n^{1}_{1} & n^{2}_{1} & \overline{Y} \end{pmatrix} \\
\hspace{150pt} - \sum_{j,i} \frac{(-1)^{j-1}}{k^{j}_{i+1}k^{j}_{i}} \det \begin{pmatrix} n^{j}_{i+1} & n^{j}_{i} & \overline{Y} \end{pmatrix} = 0 
\end{array}
\end{equation}
for every $\overline{Y}$ in $\Lie(G)$ and
\begin{equation}\label{SecondFormula}
\begin{array}{l}
\frac{1}{k^{2}_{l(2)}k^{1}_{l(1)}} \det \begin{pmatrix} n^{2}_{l(2)} & \hat{n}^{1}_{l(1)} & \overline{X}_{0} \end{pmatrix} + \frac{1}{k^{1}_{1}k^{2}_{1}} \det \begin{pmatrix} n^{1}_{1} & n^{2}_{1} & \overline{X}_{0} \end{pmatrix} \\
\hspace{150pt} - \sum_{j,i}  \frac{(-1)^{j-1}x^{j}_{i i+1}}{k^{j}_{i+1}k^{j}_{i}} \det \begin{pmatrix} n^{j}_{i+1} & n^{j}_{i} & \overline{R} \end{pmatrix} = 0.
\end{array}
\end{equation}
\end{lem}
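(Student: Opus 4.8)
The plan is to interpret every determinant occurring in \eqref{FirstFormula} and \eqref{SecondFormula} as an Euler number of a locally free $S^1$-action on a level set of $\Phi'$ inside the neighborhood $W$ built in the proof of Proposition \ref{NearChains}, and then to read both identities off the additivity of the Euler number along the two chains, exactly as in Lemmas \ref{TotalDifference} and \ref{RelationOfEulerNumbers}.

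For \eqref{FirstFormula} I would first note that both sides are linear in $\overline{Y}$ by multilinearity of $\det$, so it suffices to verify the identity when $\overline{Y}$ is a primitive lattice vector of $\Lie(G)$ generating a circle subaction $\rho_{1}$ whose orbits are transverse to $\ker\alpha$; such vectors span $\Lie(G)$. Fixing such a $\overline{Y}$ and a complementary $\rho_{2}$ with $\rho_{1}\times\rho_{2}=\rho$, Lemma \ref{EulerNumberOfLevelSetsNearB} (together with Lemma \ref{LevelSetNearB} in the cases where $B_{\min}$ or $B_{\max}$ is a closed orbit rather than a lens space) identifies the two boundary terms $\frac{1}{k^{1}_{1}k^{2}_{1}}\det(n^{1}_{1},n^{2}_{1},\overline{Y})$ and $\frac{1}{k^{2}_{l(2)}k^{1}_{l(1)}}\det(n^{2}_{l(2)},\hat{n}^{1}_{l(1)},\overline{Y})$ with $e(\rho_{2},H_{\min}/\rho_{1})$ and $-e(\rho_{2},H_{\max}/\rho_{1})$ respectively, where $H_{\min},H_{\max}$ are level sets of $\Phi'$ near $B_{\min},B_{\max}$. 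The appearance of $\hat{n}^{1}_{l(1)}$ rather than $n^{1}_{l(1)}$ is exactly because near $B_{\max}$ the two singular gradient manifolds seen in $W$ are $L^{2}_{l(2)}$ and $\hat{L}^{1}_{l(1)}$.

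Next I would show that each summand $\frac{(-1)^{j-1}}{k^{j}_{i+1}k^{j}_{i}}\det(n^{j}_{i+1},n^{j}_{i},\overline{Y})$ equals the jump of $e(\rho_{2},(\Phi')^{-1}(x)/\rho_{1})$ as $x$ crosses the critical value $x^{j}_{i\,i+1}$ through the closed orbit $\Sigma^{j}_{i\,i+1}$; this is the local model computation behind Lemma \ref{LocalDifference}, now carried out for the $T^{3}$-action $\tau^{4}$ on $W$ and expressed through $n^{j}_{i},n^{j}_{i+1}$ as in the proof of Lemma \ref{EulerNumberOfLevelSetsNearB}, the sign $(-1)^{j-1}$ recording which chain bounds the polygon from above and which from below. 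Since the level sets $(\Phi')^{-1}(x)\cap W$ form a connected family of closed $3$-orbifolds for $x\in(x_{\min},x_{\max})$ and cross the closed orbits of both chains in order of height, additivity of the Euler number (as in Lemma \ref{TotalDifference}) yields $e(\rho_{2},H_{\max}/\rho_{1})-e(\rho_{2},H_{\min}/\rho_{1})=\sum_{j,i}(\text{jump at }\Sigma^{j}_{i\,i+1})$, which is precisely \eqref{FirstFormula}. For \eqref{SecondFormula} I would use Lemma \ref{VanishingDeterminant} at each interior vertex: since $\tilde{\Phi}_{\alpha}(\Sigma^{j}_{i\,i+1})(\overline{X}_{0})=\Phi'(\Sigma^{j}_{i\,i+1})=x^{j}_{i\,i+1}$, it gives $\det(n^{j}_{i},n^{j}_{i+1},\overline{X}_{0})=x^{j}_{i\,i+1}\det(n^{j}_{i},n^{j}_{i+1},\overline{R})$, and analogously at the two boundary corners (by Lemma \ref{VanishingDeterminant} when $B_{\min},B_{\max}$ are closed orbits and by Lemma \ref{WidthOfB} when they are lens spaces). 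This converts \eqref{SecondFormula} into the height-weighted version of the telescoping identity of \eqref{FirstFormula} with $\overline{Y}=\overline{R}$, each Euler-number jump being weighted by the height $x^{j}_{i\,i+1}$ at which it occurs, and that weighted identity follows from the unweighted one by Abel summation, which is the same bookkeeping already performed in the rearrangement \eqref{ComputationOfPhi}.

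The hard part will be the sign and orientation bookkeeping: pinning down the transverse orientations of the $\rho$-orbits on each level set, the signs $(-1)^{j}$ of the second components of the $n^{j}_{i}$, and the opposite signs attached to $B_{\min}$ and $B_{\max}$, so that the three ingredients (the two boundary Euler numbers and the per-vertex jumps) assemble with exactly the signs displayed in \eqref{FirstFormula} and \eqref{SecondFormula}. A secondary point requiring care is the verification that the level sets $(\Phi')^{-1}(x)\cap W$ are connected closed $3$-orbifolds, so that additivity of the Euler number applies verbatim, together with the uniform treatment of the lens-space and closed-orbit cases for $B_{\min}$ and $B_{\max}$.
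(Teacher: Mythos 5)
Your proposal follows essentially the same route as the paper's proof: \eqref{FirstFormula} is obtained by a linearity reduction in $\overline{Y}$ followed by assembling the two boundary Euler numbers (Lemmas \ref{LevelSetNearB} and \ref{EulerNumberOfLevelSetsNearB}) with the per-vertex Euler-number jumps (Lemmas \ref{LocalDifference} and \ref{TotalDifference}), and \eqref{SecondFormula} is deduced from \eqref{FirstFormula} via the substitution supplied by Lemma \ref{VanishingDeterminant}, exactly as in \eqref{FourthFormula}.

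Two points need tightening. First, your reduction class for $\overline{Y}$ --- primitive lattice vectors whose circle subactions are transverse to $\ker \alpha$ on $M$ --- is not quite the right one in the lens-space case: the hypothesis of Lemma \ref{EulerNumberOfLevelSetsNearB} requires the infinitesimal action of $\overline{Y}$ to restrict to a \emph{positive} multiple of the Reeb vector field of $\beta$ at the auxiliary corner orbit $\Sigma=\Psi^{-1}(v_{0})$ of the extended orbifold $(U,\beta)$ from Lemma \ref{AttachingCorners}, and $v_{0}$ lies \emph{outside} the moment image of $M$, so positivity of $\Phi_{\alpha}(x)(\overline{Y})$ for $x$ in $M$ does not imply it. The paper instead reduces to the cone $\Omega$ of those $\overline{Y}$ whose infinitesimal actions are positive multiples of $R$ on $\Sigma_{\min}$, $\Sigma_{\max}$ \emph{and} every closed orbit in $M$; this cone still contains a basis of $\Lie(G)$, and the same positivity is what fixes the signs in \eqref{I1} and \eqref{I2}, which your sketch defers to ``bookkeeping''. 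Second, your Abel-summation step for \eqref{SecondFormula} is superfluous and slightly off: the boundary terms of \eqref{SecondFormula} already carry $\overline{X}_{0}$, so after applying Lemma \ref{VanishingDeterminant} at each interior closed orbit to replace $x^{j}_{i\,i+1}\det \begin{pmatrix} n^{j}_{i+1} & n^{j}_{i} & \overline{R} \end{pmatrix}$ by $\det \begin{pmatrix} n^{j}_{i+1} & n^{j}_{i} & \overline{X}_{0} \end{pmatrix}$, the left-hand side of \eqref{SecondFormula} becomes literally the left-hand side of \eqref{FirstFormula} evaluated at $\overline{Y}=\overline{X}_{0}$ and vanishes by \eqref{FirstFormula}; no summation by parts, and no separate conversion at the boundary corners via Lemma \ref{WidthOfB}, is needed.
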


\begin{proof}
If $B_{\min}$ is diffeomorphic to a lens space, then we take $(U,\beta)$ which satisfies the conditions (i) and (ii) of Lemma \ref{AttachingCorners} for $B=B_{\min}$. we use the notation of Lemma \ref{AttachingCorners} substituting $B_{\min}$ to $B$. Let $Q^{j}$ be the subspace of $\Lie(T^{3})^{*}$ defined by $n_{j}$ for $j=1$ and $2$. We put $\{v_{0}\}=Q^{1} \cap Q^{2} \cap A$ and $\Sigma_{\min}=\Psi^{-1}(v_{0})$. We define $\Sigma_{\max}$ in a way similarly to $\Sigma_{\min}$ if $B_{\min}$ is diffeomorphic to a lens space. We put $\Omega=\{ \overline{Y} \in \Lie(G) |$ the restriction of the infinitesimal action of $\overline{Y}$ to $\Sigma_{\min}$, $\Sigma_{\max}$ or each closed orbit of the Reeb flow in $M$ is a positive multiple of $R. \}$. To show \eqref{FirstFormula} for every $\overline{Y}$ in $\Lie(G)$, it suffices to show \eqref{FirstFormula} for an element $\overline{Y}$ of $\Omega$, since the both sides of \eqref{FirstFormula} is linear and $\Omega$ contains a basis of $\Lie(G)$. We have
\begin{equation}\label{I1}
I(\sigma,\Sigma^{1}_{i})=\det \begin{psmallmatrix} n^{1}_{i+1} & n^{1}_{i} & \overline{Y} \end{psmallmatrix}
\end{equation}
and 
\begin{equation}\label{I2}
I(\sigma,\Sigma^{2}_{i})=-\det \begin{psmallmatrix} n^{2}_{i+1} & n^{2}_{i} & \overline{Y} \end{psmallmatrix}
\end{equation}
for $\overline{Y}$ in $\Omega$. In fact, $I(\sigma,\Sigma^{j}_{i})$ is equal to the number of intersection points of the isotropy group of $\Sigma^{j}_{i}$ and the $S^1$-subgroup corresponding to $\sigma$ in $T^3$. The right hand sides of \eqref{I1} and \eqref{I2} are equal to the number of intersection points of the isotropy group of $\Sigma^{j}_{i}$ and the $S^1$-subgroup corresponding to $\sigma$ in $T^3$, since $\{n^{j}_{i},n^{j}_{i+1}\}$ is a $\mathbb{Z}$-basis of the Lie algebra of the isotropy group of $\Sigma^{j}_{i}$. Note that the condition $\overline{Y}$ is an element of $\Omega$ determines the sign of the determinants in the right hand sides of \eqref{I1} and \eqref{I2}. 

If both of $B_{\min}$ and $B_{\max}$ are of dimension $1$, \eqref{FirstFormula}  for $\overline{Y}$ in $\Omega$ follows from equations \eqref{I1}, \eqref{I2} and Lemmas \ref{LevelSetNearB}, \ref{TotalDifference}. If $B_{\min}$ or $B_{\max}$ is diffeomorphic to a lens space, then \eqref{FirstFormula} for $\overline{Y}$ in $\Omega$ follows from equations \eqref{I1}, \eqref{I2} and Lemmas \ref{LevelSetNearB}, \ref{TotalDifference}, \ref{EulerNumberOfLevelSetsNearB}.

The equation \eqref{SecondFormula} follows from \eqref{FirstFormula} and Lemma \ref{VanishingDeterminant}. In fact, applying Lemma \ref{VanishingDeterminant} to each term of the left hand side of \eqref{SecondFormula}, we obtain
\begin{equation}\label{FourthFormula}
\begin{array}{l}
\frac{1}{k^{2}_{l(2)}k^{1}_{l(1)}} \det \begin{pmatrix} n^{2}_{l(2)} & \hat{n}^{1}_{l(1)} & \overline{X}_{0} \end{pmatrix} + \frac{1}{k^{1}_{1}k^{2}_{1}} \det \begin{pmatrix} n^{1}_{1} & n^{2}_{1} & \overline{X}_{0} \end{pmatrix} - \sum_{i,j}  \frac{(-1)^{j-1}x^{j}_{i i+1}}{k^{j}_{i+1}k^{j}_{i}} \det \begin{pmatrix} n^{j}_{i} & n^{j}_{i+1} & \overline{R} \end{pmatrix} \\
= \frac{1}{k^{2}_{l(2)}k^{1}_{l(1)}} \det \begin{pmatrix} n^{2}_{l(2)} & \hat{n}^{1}_{l(1)} & \overline{X}_{0} \end{pmatrix} + \frac{1}{k^{1}_{1}k^{2}_{1}} \det \begin{pmatrix} n^{1}_{1} & n^{2}_{1} & \overline{X}_{0} \end{pmatrix} - \sum_{i,j} \frac{(-1)^{j-1}}{k^{j}_{i}k^{j}_{i+1}} \det \begin{pmatrix} n^{j}_{i+1} & n^{j}_{i} & \overline{X}_{0} \end{pmatrix}.
\end{array}
\end{equation}
Then the right hand side of \eqref{FourthFormula} is zero by substituting $\overline{Y}=\overline{X}_{0}$ to \eqref{FirstFormula}.
\end{proof}

$\sigma_{0}$ denotes the $S^1$-action on the annulus $[0,1] \times S^1$ defined by the principal action on the $S^1$-component. We call $\sigma_{0}$ the standard $S^1$-action on $[0,1] \times S^1$.

\begin{lem}\label{ModificationOfTorusActions}
Assume that the conditions (i) and (ii) in Proposition \ref{SufficientConditionToBeToric} are satisfied. Let $C^1$ and $C^2$ be two chains in $(M,\alpha)$. Assume that there is no nontrivial chain except $C^1$ and $C^2$. Let $V$ be an open neighborhood of $C^1 \cup C^2 \cup B_{\min} \cup B_{\max}$ and an $\alpha$-preserving $T^3$-action $\tau$ on $V$. Assume that the image of the contact moment map $\tilde{\Phi}$ of $(V,\alpha|_{V})$ for $\tau$ is an open neighborhood of the boundary of a convex polygon in a $2$-dimensional affine subspace of $\Lie(T^3)^{*}$. Then there exist a $\tau$-invariant open neighborhood $W$ of $C^1 \cup C^2 \cup B_{\min} \cup B_{\max}$ in $V$ and a diffeomorphism $f \colon M - W \longrightarrow [0,1] \times [0,t_{0}] \times T^3$ for some positive number $t_{0}$ which satisfies the following:
\begin{enumerate}
\item the $T^2$-action $\rho$ associated with $\alpha$ is conjugated by $f$ to the product of the trivial action on $[0,1] \times [0,t_{0}] \times S^1$ and the principal action on $T^2$,
\item the $T^3$-action $\tau$ on $V-W$ is conjugated by $f$ to a $T^3$-action on an open neighborhood $U$ of the boundary of $[0,1] \times [0,t_{0}] \times T^3$ which is the product of the trivial action on $[0,1] \times [0,t_{0}]$ and the standard action on $T^3$ and
\item the level sets of $\Phi$ is mapped to the level sets of the projection $[0,1] \times [0,t_{0}] \times T^3 \longrightarrow [0,t_{0}]$.
\end{enumerate}
\end{lem}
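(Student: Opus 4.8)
The plan is to discard a $\tau$-invariant neighborhood $W$ of $C^1\cup C^2\cup B_{\min}\cup B_{\max}$ and show that the complement $M-W$ is a trivial product $[0,1]\times[0,t_0]\times T^3$ on which $\rho$ rotates a $T^2$-factor and $\tau$, where it is defined, rotates the whole $T^3$-factor. First I would establish that $\rho$ acts freely on $M-W$. By Lemma \ref{MorseTheory} the critical set $\Crit\Phi=\Sing\rho$ is the union of $B_{\min}$, $B_{\max}$ and the intermediate closed Reeb orbits, each of which lies on a chain; by Lemma \ref{KContactSubmanifolds} every point of nontrivial discrete isotropy sweeps out a gradient manifold whose closure is a lens space, and hypothesis (ii) of Proposition \ref{SufficientConditionToBeToric} confines all such gradient manifolds to $C^1\cup C^2$. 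Hence every point with nontrivial $\rho$-isotropy lies in $C^1\cup C^2\cup B_{\min}\cup B_{\max}$, so for $W$ small enough $\rho$ is free on $M-W$, and the quotient $B:=(M-W)/\rho$ is a smooth $3$-manifold on which $\Phi$ descends to a function $\bar\Phi$ with no critical points.

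Next I would analyze the level sets. For a regular value $c$, $\Phi^{-1}(c)$ is connected by Lemma \ref{MorseTheory}(i), and the only singular $\rho$-orbits it meets are the two exceptional orbits cut out by $C^1$ and $C^2$; near each of these the normal form of Lemma \ref{GeneralTorusActions} and the $T^3$-action of Proposition \ref{NearChains} exhibit a standard toric collar. The residual circle $T^3/\rho$ makes $\Phi^{-1}(c)/\rho$ an oriented closed $2$-orbifold carrying an $S^1$-action with orbit interval, hence a sphere with two special points; excising the two collars then shows that $(\Phi^{-1}(c)\cap(M-W))/\rho$ is an annulus $[0,1]\times S^1$. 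Since $\rho$ is free, $\Phi^{-1}(c)\cap(M-W)$ is a principal $T^2$-bundle over this annulus, and $B$ fibres over $[0,t_0]$ with annular fibre, so $B$ is homotopy equivalent to $S^1$.

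Then I would trivialize in two stages. Choosing a $\rho$-invariant vector field $Z$ that agrees with $\grad\Phi$ away from $\partial W$, is tangent to $\partial W$, and satisfies $Z\Phi>0$ everywhere (possible because $M-W$ contains no critical points of $\Phi$), its descent to $B$ has no zeros, so its flow identifies $B\cong[0,t_0]\times([0,1]\times S^1)$ and carries level sets of $\bar\Phi$ to the slices $\{t\}\times[0,1]\times S^1$; lifting to $M-W$ gives condition (iii). Because $B\simeq S^1$ and $H^2(S^1;\mathbb{Z}^2)=0$, the principal $T^2$-bundle $M-W\to B$ is trivial, and a trivialization conjugates $\rho$ to the principal action on the $T^2$-factor while the annular $S^1$-factor of $B$ becomes the third circle of $T^3$; this yields condition (i) and the product $M-W\cong[0,1]\times[0,t_0]\times T^3$.

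The hard part will be making $\tau$ standard near the boundary. On $V-W$ the action $\tau$ is free and $\tilde\Phi(V-W)$ is a collar of $\partial P$, an annulus, so $V-W$ is a trivial principal $T^3$-bundle over an annulus and $\tau$ is already a product there. The difficulty is to select the global trivializations of the previous paragraph so that they agree with this $\tau$-determined product along the \emph{whole} boundary: the $\tau$-framing prescribes a particular angular coordinate on the annular fibres near the two chains and near $B_{\min}$ and $B_{\max}$, and this coordinate must be extended coherently across each annular fibre and propagated along the flow of $Z$. I would handle this by first fixing the trivialization on a boundary collar using $\tau$ and then extending inward, using that each annular fibre deformation-retracts onto its boundary and that, by the normal form, $Z$ preserves the $T^3$-product near $\partial W$; verifying that the two boundary framings coming from $C^1$ and $C^2$ are compatible — which follows from the convex-polygon structure furnished by Proposition \ref{NearChains} — is the crux of the argument.
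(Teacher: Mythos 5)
Your first three steps are essentially correct and run parallel to the paper's construction of the product chart: the paper obtains the same structure by trivializing the slab $H=\Phi^{-1}(x_{0})-S$ near $B_{\min}$ in a $T^3$-equivariant way, using that $\tilde{\Phi}$ is a submersion with $\tau$-orbit fibres on the free locus (Lemma \ref{MomentMapsAreSubmersive}), normalizing the metric so that $g_{1}(\grad_{g_{1}}\Phi,\grad_{g_{1}}\Phi)=1$, and transporting by the gradient flow; your reparametrized vector field $Z$ and the triviality of the principal $T^2$-bundle over $B\simeq S^1$ reach the same point, and conditions (i) and (iii) come out either way. One slip worth flagging: you invoke the residual circle $T^3/\rho$ acting on $\Phi^{-1}(c)/\rho$ for an \emph{arbitrary} regular value $c$, but $\tau$ is only defined on $V$, so for middle levels this action does not exist; the annulus structure of the reduced level sets has to be read off near $B_{\min}$ (where Lemma \ref{GeneralTorusActions} and the moment-map submersion apply) and then propagated by the flow, which is exactly how the paper argues.

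The genuine gap is at condition (ii), the step you yourself call the crux, and your proposed resolution does not close it. Two ingredients are missing. First, extending the boundary-induced circle action $\bar{\tau}$ inward across each annular fibre of $D=(M-W)/\rho\longrightarrow[0,t_{0}]$ is not a consequence of the fibre deformation-retracting to its boundary: one must interpolate, through diffeomorphisms of the annulus that are rotations near the boundary, between the $\tau$-induced rotations at $p^{-1}(\epsilon)$ and $p^{-1}(t_{0}-\epsilon)$ and the standard rotation, and this is precisely where the paper invokes Lemma B.2 of \cite{Kar} on the collars $p^{-1}([\epsilon,\epsilon+\delta])$ and $p^{-1}([t_{0}-\epsilon-\delta,t_{0}-\epsilon])$ to produce the extension $\bar{\tau}_{1}$; your appeal to ``the convex-polygon structure'' is not an argument for this compatibility (in the paper that hypothesis is consumed later, in Lemma \ref{FarFromChains} via Lemma \ref{ExtensionOfPsi}). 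Second, and more seriously, you omit the lifting obstruction entirely: even once $\bar{\tau}_{1}$ exists on $D$, a $\rho$-invariant lift $Z$ of its generator need not integrate to a circle action upstairs, because its time-one return map along each orbit $\gamma$ of $\bar{\tau}_{1}$ is translation by the holonomy $m(\gamma)\in T^2$ of the flat $T^2$-connection that $Z$ defines on $\pi^{-1}(\gamma)$. The paper shows $m\equiv e_{T^2}$ near $\partial D$, observes that $m$ then represents a class in $\pi_{2}(T^2)=0$, chooses a null-homotopy $q$ rel boundary, and corrects $Z$ by the terms $q'_{2},q'_{3}$ to kill the holonomy; this yields a genuine $T^3$-action $\tau_{1}$ on $M$ extending $\tau$, commuting with $\rho$ and preserving the level sets of $\Phi$, after which the metric is averaged over $\tau_{1}$ and the flow chart $\chi'$ is rebuilt, now $\tau_{1}$-equivariant everywhere, so that (i), (ii) and (iii) hold for a single $f$. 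Your plan of choosing a bundle trivialization matching the boundary framings directly never constructs the interpolating circle action and never addresses this holonomy, so the central assertion of the lemma remains unproved.
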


\begin{proof}
We fix a Riemannian metric $g$ on $M$ so that $g$ is invariant under $\rho$ on $M$ and invariant under $\tau$ on an open neighborhood $V'$ of $C^1 \cup C^2 \cup B_{\min} \cup B_{\max}$ in $V$. We take a value $x_{0}$ of $\Phi$ sufficiently close to $\Phi(B_{\min})$. We put 
\begin{equation}
H= \Phi^{-1}(x_{0}) - S
\end{equation}
for a sufficiently small open tubular neighborhood $S$ of $(C^{1} \cup C^{2}) \cap \Phi^{-1}(x_{0})$ in $\Phi^{-1}(x_{0})$. Then $\overline{M - V'}$ is contained in $\cup_{0 \leq t \leq t'_{0}} \psi'_{t}(H)$ for some $t'_{0}$ where $\{\psi'_{t}\}$ is the gradient flow of $\Phi$ with respect to $g$. We put $W = \cup_{0 \leq t \leq t'_{0}} \psi_{t}(H)$. We will construct $f$ by modifying the gradient flow of $\Phi$. 

We will modify $g$ so that the norm of the gradient vector field of $\Phi$ is $1$ on $W$. We put a Riemannian metric $g_{1}$ on $M$ so that
\begin{equation}
g_{1}= \begin{cases}
 g & \text{on}\,\, (\mathbb{R}\grad_{g} \Phi)^{\perp} \otimes (\mathbb{R}\grad_{g} \Phi)^{\perp} \\
g(\grad_{g} \Phi, \grad_{g} \Phi) g & \text{on}\,\, (\mathbb{R}\grad_{g} \Phi) \otimes (\mathbb{R}\grad_{g} \Phi)
\end{cases}
\end{equation}
is satisfied on $W$. Note that $g_{1}$ is a metric, since $\grad_{g} \Phi$ is nowhere vanishing on $W$. We show $g_{1}(\grad_{g_{1}}\Phi,\grad_{g_{1}}\Phi)=1$. We can put $\grad_{g_{1}}\Phi=k\grad_{g}\Phi$ for a smooth nonnegative function $k$ on $M$. We have $g_{1}(\grad_{g_{1}}\Phi,\grad_{g_{1}}\Phi)=kg_{1}(\grad_{g_{1}}\Phi,\grad_{g}\Phi)=kd\Phi(\grad_{g}\Phi)=kg(\grad_{g}\Phi,\grad_{g}\Phi)$ by the definition of the gradient vector fields. Hence we have
\begin{equation}\label{Ratio1}
k=\frac{g_{1}(\grad_{g_{1}}\Phi,\grad_{g_{1}}\Phi)}{g(\grad_{g}\Phi,\grad_{g}\Phi)}.
\end{equation}
On the other hand, we have 
\begin{equation}
\begin{array}{rl}
g_{1}(\grad_{g_{1}}\Phi,\grad_{g_{1}}\Phi) & = g(\grad_{g} \Phi, \grad_{g} \Phi) g(\grad_{g_{1}}\Phi,\grad_{g_{1}}\Phi) \\
 & = g(\grad_{g} \Phi, \grad_{g} \Phi) k^{2} g(\grad_{g}\Phi,\grad_{g}\Phi)
\end{array}
\end{equation}
by the definition of $g_{1}$. Hence we have
\begin{equation}\label{Ratio2}
k=\frac{\sqrt{g_{1}(\grad_{g_{1}}\Phi,\grad_{g_{1}}\Phi)}}{g(\grad_{g}\Phi,\grad_{g}\Phi)}.
\end{equation} 
Then we have $g_{1}(\grad_{g_{1}}\Phi,\grad_{g_{1}}\Phi)=1$ by \eqref{Ratio1} and \eqref{Ratio2}.

The gradient flow of $\Phi$ with respect to $g_{1}$ maps the intersection of $W$ and level sets of $\Phi$ to the intersection of $W$ and level sets of $\Phi$, since $(\grad_{g_{1}}\Phi) \Phi=d\Phi(\grad_{g_{1}}\Phi)=g_{1}(\grad_{g_{1}}\Phi,\grad_{g_{1}}\Phi)=1$.

Since the contact moment maps are submersions on the union of the free orbits of the toric actions by Lemma \ref{MomentMapsAreSubmersive}, $\tilde{\Phi}|_{V - (C^1 \cup C^2 \cup B_{\min} \cup B_{\max})}$ is a submersion whose fibers are the orbits of $\tau$. Hence we have a diffeomorphism $\phi \colon H \longrightarrow [0,1] \times T^3$ which is $T^3$-equivariant with respect to $\tau$ and the principal $T^3$-action on the second component of $[0,1] \times T^3$. We identify $H$ with $[0,1] \times T^3$ by $\phi$. Then we have a diffeomorphism $\chi$ defined by
\begin{equation}
\begin{array}{cccc}
\chi \colon & \cup_{0 \leq t \leq t_{0}} \psi_{t}(H) & \longrightarrow & [0,1] \times [0,t_{0}] \times T^3 \\
 & \psi_{t}(s,\zeta_{1},\zeta_{2},\zeta_{3}) & \longmapsto & (t,s,\zeta_{1},\zeta_{2},\zeta_{3})
\end{array}
\end{equation}
where $(s,\zeta_{1},\zeta_{2},\zeta_{3})$ is the standard coordinate on $[0,1] \times T^3$, $\{\psi_{t}\}$ is the gradient flow of $\Phi$ with respect to $g_{1}$ and $t_{0}$ is taken so that $\overline{M - V'}$ is contained in $\cup_{0 \leq t \leq t_{0}} \psi_{t}(H)$. Since $g$ is $\rho$-invariant, $\rho$ commutes with the gradient flow $\{\psi_{t}\}$. Hence $\chi$ is $T^2$-equivariant with respect to $\rho$ and the principal $T^2$-action on the $T^2$-component of $[0,1] \times [0,t_{0}] \times S^1 \times T^2$. Since $g$ is $\tau$-invariant on an open neighborhood of $C^1 \cup C^2$, $\tau$ commutes with the gradient flow $\{\psi_{t}\}$. Hence $\chi$ is $T^3$-equivariant on $\chi^{-1}(\{(t,s,\zeta_{1},\zeta_{2},\zeta_{3}) \in [0,1] \times [0,t_{0}] \times T^3 | 0 \leq s \leq r_{0}, 1-r_{0} \leq s \leq 1\})$ for some $r_{0}$ with respect to $\tau$ and the principal $T^3$-action on the $T^3$-component of $[0,1] \times [0,t_{0}] \times T^3$. Hence the $T^2$-action $\chi \circ \rho \circ \chi^{-1}$ on $[0,1] \times \{0,t_{0}\} \times T^3$ is the principal $T^2$-action to the $T^2$-component of $[0,1] \times [0,t_{0}] \times T^3=[0,1] \times [0,t_{0}] \times S^1 \times T^2$. The restriction of the $T^3$-action $\chi \circ \tau \circ \chi^{-1}$ on an open neighborhood of $[0,1] \times \{0,t_{0}\} \times T^3$ is the principal $T^3$-action on the $T^3$-component. 

We put $D=([0,1] \times [0,t_{0}] \times T^3)/\rho$. $\tau$ induces an $S^1$-action $\overline{\tau}$ on an open neighborhood of the boundary of $D$. We show that $\overline{\tau}$ extends to $D$. By the construction of $g_{1}$, the level sets of $\Phi$ is mapped to the level sets of the second projection $[0,1] \times [0,t_{0}] \times T^3 \longrightarrow [0,t_{0}]$. Since the level sets of $\Phi$ is preserved by $\tau$, $\overline{\tau}$ preserves the level sets of the map $p|_{V/\rho} \colon V/\rho \longrightarrow [0,t_{0}]$ where $p$ is the map induced from the second projection $[0,1] \times [0,t_{0}] \times T^3 \longrightarrow [0,t_{0}]$. The level sets of $p \colon D \longrightarrow [0,t_{0}]$ are annuli. The coordinate on $D$ induced from the coordinate of $[0,1] \times [0,t_{0}] \times T^3$ gives a diffeomorphism from each level set of $p$ to $[0,1] \times S^1$. $\overline{\tau}$ is the standard rotation with respect to the coordinate induced from the coordinate of $[0,1] \times [0,t_{0}] \times T^3$ on each level set of $p$ near the boundary. Choose a small positive number $\epsilon$ so that $\overline{\tau}$ is defined on two level sets $p^{-1}(\epsilon)$ and $p^{-1}( t_{0} - \epsilon)$ of $p$. Since these $S^1$-actions on $p^{-1}(\epsilon)$ and $p^{-1}( t_{0} - \epsilon)$ are induced from the principal $T^3$-action on the principal $T^3$-bundle on the unit interval, they are isomorphic to the standard $S^1$-action $\sigma_{0}$ on the annulus $[0,1] \times S^1$. We fix diffeomorphisms
\begin{equation}
f_{t} \colon p^{-1}(t) \longrightarrow [0,1] \times S^1
\end{equation}
which are $S^1$-equivariant with respect to $\overline{\tau}|_{p^{-1}(t)}$ and $\sigma_{0}$ for $t=\epsilon$ and $t_{0} -\epsilon$. Let $\delta$ be a positive number which satisfies $\delta + \epsilon < \frac{1}{2}$. By Lemma B.2 of \cite{Kar}, there exists an isotopy $\{f^{t}\}_{\epsilon \leq t \leq \epsilon + \delta}$ connecting $f_{\epsilon}$ to $f_{\epsilon + \delta} = \id_{[0,1] \times S^1}$ in the diffeomorphism group of the annulus defined by a smooth map $F_{1} \colon p^{-1}([\epsilon,\epsilon+\delta]) \longrightarrow [0,1] \times S^1$ such that $f_{t}$ is a rotation on the $S^1$-component of each level set $p^{-1}(t)$ diffeomorphic to an annulus near the boundary for each $\epsilon \leq t \leq \epsilon + \delta$. Similarly by Lemma B.2 of \cite{Kar}, there exists an isotopy connecting $f_{t_{0} - \epsilon - \delta} = \id_{[0,1] \times S^1}$ to $f_{t_{0} - \epsilon}$ in the diffeomorphism group of the annulus defined by a smooth map $F_{2} \colon p^{-1}([t_{0}-\epsilon-\delta,t_{0}-\epsilon]) \longrightarrow [0,1] \times S^1$ such that $f_{t}$ is a rotation on the $S^1$-component of each level set $p^{-1}(t)$ diffeomorphic to an annulus near the boundary for each $t_{0}-\epsilon-\delta \leq t \leq t_{0}-\epsilon$. Then we can extend $\overline{\tau}$ to an $S^1$-action $\overline{\tau}_{1}$ on $D$ by defining
\begin{equation}
\overline{\tau}_{1}|_{p^{-1}(t)} = 
\begin{cases}
\overline{\tau}|_{p^{-1}(t)} & t_{0} - \epsilon < t \leq t_{0} \\
F_{2}^{-1}|_{p^{-1}(t)} \circ \sigma_{0} \circ F_{2}|_{p^{-1}(t)} & t_{0} - \epsilon - \delta < t \leq t_{0} - \epsilon \\
\sigma_{0} &  \epsilon + \delta < t \leq t_{0} - \epsilon - \delta \\
F_{1}^{-1}|_{p^{-1}(t)} \circ \sigma_{0} \circ F_{1}|_{p^{-1}(t)} & \epsilon < t \leq \epsilon + \delta \\
\overline{\tau}|_{p^{-1}(t)} & 0 \leq t \leq \epsilon.
\end{cases}
\end{equation}
Note that $\overline{\tau}_{1}$ coincides with $\overline{\tau}$ on an open neighborhood of the boundary of $D$, since every element in the isotopy $\{f_{t}\}_{\epsilon \leq t \leq \epsilon + \delta}$ and $\{f_{t}\}_{t_{0} - \epsilon - \delta \leq t \leq t_{0} - \epsilon}$ is a rotation on the $S^1$-component of each level set $p^{-1}(t)$ diffeomorphic to an annulus near the boundary.

We will show that there exists a $T^3$-action $\tau_{1}$ on $M$ which satisfies the following conditions:
\begin{description}
\item[(a)] $\tau_{1}$ commutes with $\rho$, 
\item[(b)] $\tau_{1}$ is an extension of $\tau$ and 
\item[(c)] $\Phi$ is constant on each orbit of $\tau_{1}$.
\end{description}
To show the existence of $\tau_{1}$, it suffices to show that the $T^3$-action $\chi \circ \tau \circ \chi^{-1}$ defined on an open neighborhood of the boundary of $[0,1] \times [0,t_{0}] \times T^3$ extends to $[0,1] \times [0,t_{0}] \times T^3$ so that the extension commutes with $\chi \circ \rho \circ \chi^{-1}$ and induces $\overline{\tau}_{1}$ on $D$. Let $\overline{Z}$ be a vector field on $D$ generating $\overline{\tau}_{1}$. Let $\pi$ be the projection $[0,1] \times [0,t_{0}] \times T^3 \longrightarrow ([0,1] \times [0,t_{0}] \times T^3)/ \chi \circ \rho \circ \chi^{-1}$. $\pi$ is the trivial $T^2$-bundle over $D$. Let $\sigma$ be the $S^1$-action on $[0,1] \times [0,t_{0}] \times T^3$ defined by the principal $S^1$-action on the first $S^1$-component of  $[0,1] \times [0,t_{0}] \times T^3=[0,1] \times [0,t_{0}] \times S^1 \times  S^1 \times S^1$. Then $\sigma$ induces $\overline{\tau}_{1}$ on an open neighborhood of the boundary of $D$. We take a vector field $Z$ on $[0,1] \times [0,t_{0}] \times T^3$ so that the following conditions are satisfied:
\begin{enumerate}
\item $Z$ is invariant by $\chi \circ \rho \circ \chi^{-1}$,
\item $\pi_{*}Z = \overline{Z}$ and
\item $Z$ coincides with an infinitesimal action of $\sigma$ on an open neighborhood of the boundary of $[0,1] \times [0,t_{0}] \times T^3$.
\end{enumerate}
For each $S^1$-orbit $\gamma$ of $\overline{\tau}_{1}$ in $D$, $Z$ defines a flat principal $T^2$-connection of $\pi|_{\pi^{-1}(\gamma)}$. Integrating $Z$, we have a holonomy of the flat bundle $\pi|_{\pi^{-1}(\gamma)}$ along $\gamma$ which is a left multiplication of an element $m(\gamma)$ of $T^2$. Since we have a smooth map defined by
\begin{equation}
\begin{array}{cccc}
m \colon & D/\overline{\tau}_{1} & \longrightarrow & T^{2} \\
         &     \gamma     & \longmapsto     & m(\gamma).
\end{array}
\end{equation}
If $\gamma$ is sufficiently close to the boundary of $D/\overline{\tau}_{1}$, we have $m(\gamma)=e_{T^2}$ where $e_{T^2}$ is the unit element of $T^2$, since $\tilde{Z}$ generates an $S^1$-action $\sigma$ on $\pi^{-1}(\gamma)$. Note that $\tilde{Z}$ generates an $S^1$-action on $\pi^{-1}(\gamma)$ which induces $\overline{\tau}_{1}$ on $D$ if and only if $m(\gamma)=e_{T^2}$. $D$ is a trivial $S^1$-bundle over $[0,1] \times [0,t_{0}]$. We fix an $S^1$-equivariant diffeomorphism $h \colon D \longrightarrow [0,1] \times [0,t_{0}] \times S^1$. $h$ induces a diffeomorphism $\overline{h} \colon D/\overline{\tau}_{1} \longrightarrow [0,1] \times [0,t_{0}]$. Since $\pi_{2}(T^2)=0$, we have a smooth map
\begin{equation}
q \colon ([0,1] \times [0,t_{0}]) \times [0,1] \longrightarrow T^2
\end{equation}
such that $q|_{([0,1] \times [0,t_{0}]) \times \{0\}}=m \circ \overline{h}^{-1}$ and the restriction of $q$ on an open neighborhood of the boundary of $([0,1] \times [0,t_{0}]) \times [0,1]$ is the constant map to $e_{T^2}$. We put $q(u,v,\theta)=(q_{2}(u,v,\theta),q_{3}(u,v,\theta))$. We take smooth functions $q'_{j}$ on $([0,1] \times [0,t_{0}]) \times [0,1]$ so that
\begin{equation}
\int_{0}^{\theta}q'_{j}(u,v,\eta)d\eta=q_{j}(u,v,\theta) 
\end{equation}
for $j=2$ and $3$. Then $q'_{j}$ induces a smooth map
\begin{equation}
q_{j} \colon ([0,1] \times [0,t_{0}]) \times S^1 \longrightarrow T^2
\end{equation}
on $([0,1] \times [0,t_{0}]) \times S^1 = ([0,1] \times [0,t_{0}]) \times [0,1] / ((u,v) \times \{0\} \sim (u,v) \times \{1\})$ for $j=2$ and $3$, since $q$ is constant near the boundary of $D$. We define a vector field on $[0,1] \times [0,t_{0}] \times T^3$ by
\begin{equation}
Z_{1 (s,t,\zeta_{1},\zeta_{2},\zeta_{3})} = Z - q'_{2} \circ h^{-1}(s,t,\zeta_{1})\frac{\partial}{\partial \zeta_{2}} - q'_{3} \circ h^{-1}(s,t,\zeta_{1})\frac{\partial}{\partial \zeta_{3}},
\end{equation}
where $(s,t,\zeta_{1},\zeta_{2},\zeta_{3})$ is the standard coordinate of $[0,1] \times [0,t_{0}] \times T^3$. Since $q'_{j}=0$ on an open neighborhood of the boundary of $[0,1] \times [0,t_{0}] \times T^3$, $Z_{1}=Z$ there. For each orbit $\gamma$ of $\overline{\tau}$, let $m_{1}(\gamma)$ be the element of $T^2$ whose left multiplication map is the holonomy map of the flat connection given by $Z_{1}$ on $\pi|_{\pi^{-1}(\gamma)}$. Since $m_{1}(\gamma) = m(\gamma) - q \circ h^{-1}(1)=0$ by the construction of $q$, we have $m_{1}(\gamma)=e_{T^2}$. Hence $Z_{1}$ generates an $S^1$-action which commutes with $\chi \circ \rho \circ \chi^{-1}$ and is an extension of $\sigma$.

Let $\tau_{1}$ be the $T^3$-action on $M$ which satisfies the above conditions (a), (b) and (c) in the third paragraph of the proof. We take a metric $g'$ on $M$ which is obtained by averaging $g_{1}$ by $\tau_{1}$. Then $g'(\grad_{g'}\Phi,\grad_{g'}\Phi)=1$ is satisfied. Let $\{\psi'_{t}\}$ be the gradient flow of $\Phi$ with respect to $g'$. Then we have a diffeomorphism $\chi'$ defined by
\begin{equation}
\begin{array}{cccc}
\chi' \colon & \cup_{0 \leq t \leq t_{0}} \psi_{t}(H) & \longrightarrow & [0,1] \times [0,t_{0}] \times T^3 \\
 & \psi'_{t}(s,\zeta_{1},\zeta_{2},\zeta_{3}) & \longmapsto & (t,s,\zeta_{1},\zeta_{2},\zeta_{3}).
\end{array}
\end{equation}
Since $g'$ is invariant under $\tau_{1}$, the desired conditions for $f$ is satisfied by $\chi'$.
\end{proof}

\begin{lem}\label{FarFromChains}
We denote the principal $T^3$-action on the $T^3$-component of $[0,1] \times [0,t_{0}] \times T^3$ by $\tau$. Let $\alpha$ be a $K$-contact form of rank $2$ on $[0,1] \times [0,t_{0}] \times T^3$ which satisfies the following conditions:
\begin{enumerate}
\item the $T^2$-action $\rho$ associated with $\alpha$ is the product of the trivial action on $[0,1] \times [0,t_{0}] \times S^1$ and the principal action on $T^2$, 
\item there exists a $\tau$-invariant open neighborhood $U$ of the boundary of $[0,1] \times [0,t_{0}] \times T^3$ and $\tau$ preserves $\alpha|_{U}$,
\item $[0,1] \times \{t\} \times T^3$ is a level set of the contact moment map for $\rho$ for each $t$ in $[0,t_{0}]$ and
\item the image of $U$ by the contact moment map for $\tau$ is an open neighborhood of the boundary in a convex subset in a $2$-dimensional affine space in $\Lie(T^3)^{*}$.
\end{enumerate}
Then we have an $\alpha$-preserving $T^3$-action $\tilde{\tau}$ on $[0,1] \times [0,t_{0}] \times T^3$ which coincides with $\tau$ on an open neighborhood of the boundary.
\end{lem}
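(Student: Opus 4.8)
The plan is to reduce the statement to the construction of a single extra symmetry. Working in the standard coordinates $(s,t,\zeta_{1},\zeta_{2},\zeta_{3})$ on $[0,1] \times [0,t_{0}] \times T^{3}$, the action $\rho$ is the free principal action on the $(\zeta_{2},\zeta_{3})$-torus, so it suffices to produce an $\alpha$-preserving vector field $\tilde{Z}$ that commutes with $\rho$, equals $\partial / \partial \zeta_{1}$ on a neighborhood of the boundary, and generates a circle action; then $\tilde{\tau}$ is the action generated by $\rho$ and the flow of $\tilde{Z}$. I would first record the correspondence, proved exactly as in Lemma \ref{MorseTheory}, between $\alpha$-preserving vector fields commuting with $\rho$ and $\rho$-invariant functions, given by $W \mapsto \alpha(W)$: a $\rho$-invariant function is automatically invariant under the Reeb flow (which lies in $\rho$), and nondegeneracy of $d\alpha$ on $\ker \alpha$ recovers $W$ from $\alpha(W)$. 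Any such $\tilde{Z}$ satisfies $\tilde{Z}(\Phi) = (L_{\tilde{Z}}\alpha)(X) + \alpha([\tilde{Z},X]) = 0$, so it is tangent to the level sets $[0,1] \times \{t\} \times T^{3}$ of $\Phi$; the problem therefore splits over the $t$-direction.

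The next step is to pass to the symplectic quotient. Choose an $S^{1}$-subgroup $\rho_{1}$ of $\rho$ whose orbits are positively transverse to $\ker \alpha$ with generator $Y_{1}$; as in the proofs of Lemmas \ref{PertubationOfReebVectorFields} and \ref{MorseTheory}, $\beta = \alpha / \alpha(Y_{1})$ has Reeb field $Y_{1}$ and $d\beta$ descends to a symplectic form $\omega$ on the $4$-manifold $W = ([0,1] \times [0,t_{0}] \times T^{3}) / \rho_{1} \cong [0,1] \times [0,t_{0}] \times T^{2}$. The residual action $\bar{\rho} = \rho / \rho_{1}$ is a global free Hamiltonian $S^{1}$-action, while near the boundary $\tau / \rho_{1} \cong T^{2}$ gives a toric structure whose moment image, by condition (iv), is a neighborhood of the boundary of a convex polygon $\bar{\Delta}$. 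Reducing once more by $\bar{\rho}$ produces a smooth family of $2$-dimensional reduced spaces $S_{c} = \mu_{\bar{\rho}}^{-1}(c) / \bar{\rho}$, each an annulus $[0,1] \times S^{1}_{\zeta_{1}}$ carrying an area form, on which the extra circle of $\tau / \rho_{1}$ acts near the two boundary circles as rotation in $\zeta_{1}$. In dimension two a Hamiltonian circle action on an annulus is determined by its moment interval, and such an extension of the prescribed boundary rotations exists precisely when the total area of $S_{c}$ equals the moment width prescribed on $\partial S_{c}$.

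This area--width identity is the heart of the matter, and it is exactly the content of the convexity condition (iv) together with the width computation already carried out in Proposition \ref{NearChains} (the symplectic analogue being the matching of the Duistermaat--Heckman density with the widths of the moment polygon). Granting it, I would extend the rotation over each $S_{c}$, then assemble these level-wise circle actions into a single Hamiltonian $S^{1}$-action on $W$ commuting with $\bar{\rho}$: here one interpolates smoothly in $c$ and, crucially, arranges that the holonomy of the resulting flow in the $\bar{\rho}$-direction vanishes, so that the flow closes up to a genuine circle rather than to a line. The tool for this last point is the path-connectedness of the relevant group of annulus diffeomorphisms, i.e. Lemma B.2 of \cite{Kar} and the argument of the appendix of \cite{HaA}, used in the same way as in Lemma \ref{ModificationOfTorusActions}; the fact that near the boundary the action is already the standard $\zeta_{1}$-rotation pins down the framing.

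Finally I would lift. The extended action on $W$ is generated by a Hamiltonian that is $\bar{\rho}$-invariant and basic for $\rho_{1}$, hence pulls back to a $\rho$-invariant function on $[0,1] \times [0,t_{0}] \times T^{3}$; the corresponding vector field $\tilde{Z}$ is $\alpha$-preserving and commutes with $\rho$ by the correspondence of the first paragraph. Since $\tilde{Z}$ agrees with $\partial / \partial \zeta_{1}$ near the boundary its flow is periodic there, and because it covers the genuine circle action constructed on $W$ its flow closes up globally; thus $\rho$ and the flow of $\tilde{Z}$ generate an $\alpha$-preserving $T^{3}$-action $\tilde{\tau}$ equal to $\tau$ near the boundary. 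I expect the main obstacle to be exactly the periodicity of $\tilde{Z}$, that is, controlling the $\bar{\rho}$-holonomy of the interpolated circle action; the area--width identity and the $2$-dimensionality of the reduced spaces are what make it tractable.
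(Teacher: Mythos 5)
Your strategy is genuinely different from the paper's: you keep $\alpha$ fixed and try to build the extra circle directly, via the correspondence between $\rho$-invariant functions $h$ and $\alpha$-preserving vector fields $\tilde{Z}$ with $\alpha(\tilde{Z})=h$, $\iota(\tilde{Z})d\alpha=-dh$. The paper instead moves the form rather than the action: it extends the second component $\overline{\Psi}$ of the $\tau$-moment map across the interior (Lemma \ref{ExtensionOfPsi}), builds from the extended map a $\tau$-invariant contact form $\beta$ with $\beta(Y)=\tilde{\Phi}(\overline{Y})$ and Reeb vector field $R$ (by defining $\beta$ on the vertical bundle and averaging over $\tau$), checks $\beta$ is contact using the submersivity of $\tilde{\Phi}$, and then transfers the \emph{standard} action $\tau$ back to $\alpha$ through the linear interpolation $\alpha_t=(1-t)\alpha+t\beta$ (nondegenerate transverse to $R$ by Proposition \ref{Karshon2}(i)) and the equivariant Gray stability theorem. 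In that route the new symmetry is the standard product action from the outset, so no closing-up problem for the flow ever arises.

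The gap in your route is exactly at the point you yourself flag: killing the holonomy in the $\rho$-direction. The tools you cite --- Lemma B.2 of \cite{Kar} and the interpolation argument of Lemma \ref{ModificationOfTorusActions} --- operate in the smooth category: there the translation element $m(\gamma)\in T^2$ is cancelled by subtracting terms $q'_{2}\frac{\partial}{\partial \zeta_{2}}+q'_{3}\frac{\partial}{\partial \zeta_{3}}$ from the generating field, and such corrections destroy $L_{\tilde{Z}}\alpha=0$. Within your class the field is rigid: $\tilde{Z}$ is completely determined by $h$, its components in the $\rho$-directions are forced by $h$ and $\partial h/\partial t$ through a $2\times 2$ linear system, and the only perturbations of $h$ preserving the required $2\pi$-periodicity of the reduced flow on each annulus are essentially functions of $t$ alone --- a one-parameter-per-level freedom against a $T^2$-valued holonomy depending on two parameters. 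You supply no mechanism for cancelling it, and none of the quoted lemmas does either; this is precisely why the paper's proof circumvents the problem instead of solving it. A smaller inaccuracy: the area--width identity you call the heart of the matter does not rest on condition (iv) or on Proposition \ref{NearChains}. Writing $\alpha = f\,ds + g\,dt + a\,d\zeta_{1} + b(t)\,d\zeta_{2} + c(t)\,d\zeta_{3}$ (the components $b,c$ depend only on $t$ by conditions (i) and (iii)), the restriction of $d\alpha$ to a level set $[0,1]\times\{t\}\times T^3$ descends to the area form $(\partial_{s}a-\partial_{\zeta_{1}}f)\,ds\wedge d\zeta_{1}$ on the quotient annulus, and integrating, using periodicity of $f$ in $\zeta_{1}$, gives area $=$ boundary-prescribed width automatically. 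So that step is free, and the periodicity of $\tilde{Z}$ --- the step you leave open --- carries all the weight.
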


\begin{proof}
We put $A=\{ v \in \Lie(T^3)^{*} | v(\overline{R})=1\}$. We fix a $1$-dimensional affine subspace $S$ of $A$ so that $A=S \oplus \ker (\pi|_{A})$ where $\pi$ is the restriction map $\Lie(T^3)^{*} \longrightarrow \Lie(G)^{*}$. Let $\tilde{\Phi} \colon U \longrightarrow S \oplus \ker (\pi|_{A})$ be the contact moment map for $\tau$. Let $\Phi \colon [0,1] \times [0,t_{0}] \times T^3 \longrightarrow S$ be the contact moment map for $\rho$. We have $\tilde{\Phi}(x)=(\Phi(x),\Psi(x))$ for some map $\Psi \colon [0,1] \times [0,t_{0}] \times T^3 \longrightarrow \ker (\pi|_{A})$ and every $x$ in $U$. By the conditions (i) and (ii), $\Psi$ and $\Phi$ can be written as $\Phi(x,y)=\overline{\Phi}(x)$ and $\Psi(x,y)=\overline{\Psi}(x)$ for $x$ in $[0,1] \times [0,t_{0}]$ and $y$ in $T^3$. Since $\tilde{\Phi}$ is a submersion by Lemma \ref{MomentMapsAreSubmersive}, $d\Phi \wedge d\Psi$ is nowhere vanishing on $U$. By the conditions (iii), (iv) and Lemma \ref{ExtensionOfPsi} below, we can extend $\overline{\Psi}$ to $[0,1] \times [0,t_{0}] \times T^3$ so that $d\overline{\Phi} \wedge d\overline{\Psi}$ is nowhere vanishing on $[0,1] \times [0,t_{0}]$. We extend $\tilde{\Phi}$ by $\tilde{\Phi}(x,y)=(\overline{\Phi}(x),\overline{\Psi}(x))$ for $x$ in $[0,1] \times [0,t_{0}]$ and $y$ in $T^3$. 

We take a $\tau$-invariant $1$-form $\beta$ on $[0,1] \times [0,t_{0}] \times T^3$ which satisfies $\beta|_{U'}=\alpha|_{U'}$ for an open neighborhood $U'$ of the boundary of $[0,1] \times [0,t_{0}] \times T^3$ and $\beta(Y)=\tilde{\Phi}(\overline{Y})$ for every element $\overline{Y}$ of $\Lie(T^3)$, where $Y$ is the infinitesimal action of $\overline{Y}$, in the following way: Let $F$ be the vector bundle on $[0,1] \times [0,t_{0}] \times T^3$ defined by the kernel of the differential map of the projection $[0,1] \times [0,t_{0}] \times T^3 \longrightarrow [0,1] \times [0,t_{0}]$. The equation $\beta(Y)=\tilde{\Phi}(\overline{Y})$ for every element $\overline{Y}$ of $\Lie(T^3)$ determines a $\tau$-invariant element $\beta_{0}$ of $C^{\infty}(F^{*})$. We take an inverse image $\beta_{1}$ of $\beta_{0}$ by the restriction map $C^{\infty}(T^{*}([0,1] \times [0,t_{0}] \times T^3)) \longrightarrow C^{\infty}(F^{*})$. By averaging $\beta_{1}$ by $\tau$, we obtain $\beta$ which satisfies the conditions.

We show that $\beta$ is a $K$-contact form whose Reeb vector field is $R$. By $\beta(R)=1$ and $L_{R}\beta=0$, we have
\begin{equation}\label{EquationOfBeta0}
\iota(R)d\beta=0.
\end{equation}
Hence it suffices to show that $\beta$ is a contact form. Take a point $x$ on $[0,1] \times [0,t_{0}] \times T^3$. Fix infinitesimal actions $Y_{1}$ and $Y_{2}$ of $\tau$ such that $\{R_{x}, Y_{1x}, Y_{2x}\}$ is linearly independent. Since $\tilde{\Phi}$ is a submersion, $d(\beta(Y_{1})) \wedge d(\beta(Y_{2}))$ is nowhere vanishing. Hence there exists a vector $Z_{1x}$ and $Z_{2x}$ in $T_{x}M$ such that $d(\beta(Y_{1x}))(Z_{2x})=0$, $d(\beta(Y_{2x}))(Z_{1x})=0$, $d(\beta(Y_{1x}))(Z_{1x})$ and $d(\beta(Y_{2x}))(Z_{2x})$ are nonzero. Since $d\beta(Y_{jx},Z_{kx})=-d(\beta(Y_{jx}))(Z_{kx})$ by $L_{Y_{j}}\beta=0$, we have
\begin{equation}\label{EquationOfBeta1}
d\beta(Y_{jx},Z_{kx}) \neq 0
\end{equation}
if $j=k$ and 
\begin{equation}\label{EquationOfBeta2}
d\beta(Y_{jx},Z_{kx}) = 0
\end{equation}
if $j \neq k$. Since $Y_{1x}$ and $Y_{2x}$ are tangent to an orbit of $\tau$, we have
\begin{equation}\label{EquationOfBeta3}
d\beta(Y_{1x},Y_{2x})=0
\end{equation}
by Lemma \ref{RestrictionOfRank}. By equations \eqref{EquationOfBeta0}, \eqref{EquationOfBeta1}, \eqref{EquationOfBeta2} and \eqref{EquationOfBeta3}, we have $\beta \wedge d\beta \wedge d\beta(R_{x},Y_{1x},Y_{2x},Z_{1x},Z_{2x})=-d\beta(Y_{1x},Z_{1x})d\beta(Y_{2x},Z_{2x})$. Hence $\beta \wedge d\beta \wedge d\beta$ is nowhere vanishing and $\beta$ is contact.

We show that $([0,1] \times [0,t_{0}] \times T^3,\alpha)$ and $([0,1] \times [0,t_{0}] \times T^3,\beta)$ are isomorphic by a diffeomorphism whose restriction to an open neighborhood of the boundary is the identity. We put $\alpha_{t}=(1-t)\alpha+t\beta$ for $t$ in $[0,1]$. Since the Reeb vector fields and the contact moment maps of $\alpha$ and $\beta$ are the same, $d\alpha_{t}$ induces a symplectic form on $TM/\mathbb{R}R$ by Proposition \ref{Karshon2} (i). Then $\ker \alpha_{t}$ is a $\rho$-invariant contact structure for every $t$. Then by the equivariant version of Gray's theorem \cite{Gra}, $\ker \alpha$ and $\ker \beta$ are isomorphic by a $\rho$-equivariant diffeomorphism $f$ whose restriction to an open neighborhood of the boundary is the identity. By the $\rho$-equivariance of $f$, we have $f_{*}R=R$. Hence we have $f^{*}\beta=\alpha$. 

By conjugating $\tau$ by $f$, we have a $T^3$-action $\tilde{\tau}$ which satisfies the desired conditions.
\end{proof}

\begin{lem}\label{ExtensionOfPsi}
Let $\overline{\Phi}$ be a second projection $[0,1] \times [0,t_{0}] \longrightarrow [0,t_{0}]$. Let $\overline{\Psi}$ be a smooth function defined on an open neighborhood $U$ of the boundary of $[0,1] \times [0,t_{0}]$. Assume that 
\begin{enumerate}
\item $d\overline{\Phi} \wedge d\overline{\Psi}$ is nowhere vanishing on $U$ and
\item the image of $(\overline{\Phi} \times \overline{\Psi})|_{U} \colon U \longrightarrow \mathbb{R}^{2}$ is an open neighborhood of the boundary of a convex subset in $\mathbb{R}^{2}$.
\end{enumerate}
Then there exists a smooth function $\overline{\Psi}_{1}$ on $[0,1] \times [0,t_{0}]$ such that 
\begin{enumerate}
\item $d\overline{\Phi} \wedge d\overline{\Psi}_{1}$ is nowhere vanishing on $[0,1] \times [0,t_{0}]$ and 
\item $\overline{\Psi}_{1}$ coincides with $\overline{\Psi}$ on an open neighborhood of the boundary of $[0,1] \times [0,t_{0}]$.
\end{enumerate}
\end{lem}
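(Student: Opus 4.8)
The plan is to reduce the statement to a one–variable monotonicity problem and then solve it by prescribing a derivative. Write points of the rectangle as $(x,t)$ with $x\in[0,1]$, $t\in[0,t_{0}]$, so that $\overline{\Phi}(x,t)=t$ and $d\overline{\Phi}=dt$. For any smooth $\overline{\Psi}$ one has $d\overline{\Phi}\wedge d\overline{\Psi}=-\,\partial_{x}\overline{\Psi}\,dx\wedge dt$, so the two nonvanishing conditions are equivalent to $\partial_{x}\overline{\Psi}\neq0$ on $U$ and $\partial_{x}\overline{\Psi}_{1}\neq0$ on the whole rectangle $R=[0,1]\times[0,t_{0}]$; that is, I must extend a function that is strictly monotone in $x$ on each slice of the collar $U$ to one that is strictly monotone in $x$ on every slice of $R$. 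Shrinking $U$ to the connected component containing $\partial R$, the nowhere–vanishing function $\partial_{x}\overline{\Psi}$ has constant sign, which I normalize to be positive (the opposite sign is symmetric). By compactness $U$ contains a uniform collar of some width $\delta>0$, so the traces $a(t)=\overline{\Psi}(0,t)$ and $b(t)=\overline{\Psi}(1,t)$ are defined for all $t$, and near $t=0$ and $t=t_{0}$ the function $\overline{\Psi}(\cdot,t)$ is defined and strictly increasing on the whole interval $[0,1]$.

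First I would extract the geometric content of hypothesis (ii). The map $F=(\overline{\Phi},\overline{\Psi})|_{U}$ is a local diffeomorphism whose image is an open neighborhood of the boundary of a convex body $K$; combined with the monotonicity this forces $F(\partial R)\subseteq\partial K$, with the edge $x=0$ tracing the lower boundary of $K$ and the edge $x=1$ the upper boundary. Concretely, for each $t$ the slice $K\cap\{\overline{\Phi}=t\}$ is a nondegenerate interval $[m(t),M(t)]$ with $m(t)<M(t)$ uniformly, and $a(t),b(t)\in\{m(t),M(t)\}$. Since $a(0)<b(0)$ on the bottom edge, a clopen–subset argument on the connected interval $[0,t_{0}]$, using continuity of $a,b,m,M$ together with $m<M$, gives $a\equiv m$ and $b\equiv M$; in particular
\[
a(t)<b(t)\qquad\text{for all }t\in[0,t_{0}],
\]
with a uniform positive lower bound on $b-a$. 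I expect this step to be the main obstacle: it is the only place where convexity is genuinely used, and it is exactly what guarantees that the two sheets of $\overline{\Psi}$ near $x=0$ and $x=1$ lie on opposite sides and leave room for a monotone interpolation across the middle.

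With $a<b$ in hand, I would construct the extension by prescribing $\partial_{x}\overline{\Psi}_{1}$. Set $\overline{\Psi}_{1}(x,t)=a(t)+\int_{0}^{x}H(\xi,t)\,d\xi$, where $H$ is a smooth positive function on $R$ that agrees with $h:=\partial_{x}\overline{\Psi}$ on the collar and satisfies the normalization $\int_{0}^{1}H(\xi,t)\,d\xi=b(t)-a(t)$ for every $t$. Then $\partial_{x}\overline{\Psi}_{1}=H>0$, and the normalization together with $H=h$ near $x=0,1$ forces $\overline{\Psi}_{1}=\overline{\Psi}$ near the left and right edges, while $H=h$ on the full slices near $t=0,t_{0}$ forces agreement there. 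To build $H$, choose cutoffs $\rho(x)$ supported near $x=0,1$ (so that $\rho\,h$ is a well–defined smooth function on all of $R$), $\theta(t)$ supported near $t=0,t_{0}$ with transition region inside the collar, and a bump $\beta(x)\ge0$ positive on the middle interval with $\int_{0}^{1}\beta=1$, and set $H=(1-\theta)h+\theta(\rho\,h+I_{*}\beta)$ with $I_{*}(t)=\bigl(b(t)-a(t)\bigr)-\int_{0}^{1}\rho\,h\,d\xi$. The remaining work is routine verification: the normalization holds for every $t$ because each summand integrates to $b-a$; positivity of $H$ follows from $\rho h\ge0$, $\beta\ge0$ and the estimate $I_{*}(t)\ge\overline{\Psi}(1-\delta,t)-\overline{\Psi}(\delta,t)>0$, valid for small collar width precisely because $a(t)<b(t)$; and the choice of $\theta$ with transition inside the collar, where $h$ is defined on the full slice, makes the gluing in the $t$–direction smooth and keeps $H$ positive throughout.
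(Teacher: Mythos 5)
Your proposal is correct, and it takes a genuinely different route from the paper. The paper's proof builds an explicit interior comparison function $\overline{\Psi}_{0}$ that is affine in the first coordinate along each level set, interpolating between the shifted boundary values $2\epsilon+\overline{\Psi}(0)$ and $\overline{\Psi}(1)-2\epsilon$, and then blends $\overline{\Psi}$ with $\overline{\Psi}_{0}$ via a cutoff $b(F(x))$ of a min-of-four-distances function $F$; the resulting convex combination has an $x$-dependent coefficient (through $F$, which itself involves $\overline{\Psi}$), so its monotonicity on level sets is not automatic — it does hold after a sign check near the two transition regions, but the paper asserts it without verification. You instead prescribe the $x$-derivative: a positive $H$ agreeing with $\partial_{x}\overline{\Psi}$ on the collar and normalized by $\int_{0}^{1}H(\xi,t)\,d\xi=b(t)-a(t)$, then integrate. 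This makes condition (i) true by construction and converts boundary agreement near $x=1$ into the flux normalization, with positivity reduced to the estimate $I_{*}(t)\geq\overline{\Psi}(1-\delta,t)-\overline{\Psi}(\delta,t)>0$; that is arguably cleaner than the paper's blend. Both proofs rest on the same geometric input extracted from hypothesis (ii), namely the uniform strict gap $a(t)<b(t)$ between the boundary traces: the paper compresses this into the one-line assertion that (i) and (ii) imply $\overline{\Psi}$ is monotone increasing on each level set of $\overline{\Phi}$, whereas you sketch a justification ($F(\partial R)\subseteq\partial K$, identification of the $x=0$ and $x=1$ edges with the lower and upper boundary graphs $m,M$, and a clopen argument) and honestly flag it as the crux; your treatment of this step is no less rigorous than the paper's, though the inclusion $F(\partial R)\subseteq\partial K$ is asserted rather than proved, which is tolerable given that hypothesis (ii) is itself stated loosely. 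One small slip: with your formula $H=(1-\theta)h+\theta(\rho h+I_{*}\beta)$, the cutoff $\theta$ must vanish near $t=0,t_{0}$ and equal $1$ in the middle (so $1-\theta$, not $\theta$, is supported near the ends, with the transition occurring where $h$ is defined on full slices); your later verification sentence shows this is what you intended.
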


\begin{proof}
Note that the assumptions (i) and (ii) imply that $\overline{\Psi}$ is monotone increasing on each level set of $\overline{\Phi}$. We define a positive function $F$ on $U$ by $F(x)=\inf \{ \overline{\Psi}(1) - \overline{\Psi}(x), \overline{\Psi}(x) - \overline{\Psi}(0), t_{0} - \overline{\Phi}(x), \overline{\Phi}(x) \}$. For a positive real number $\epsilon$, we define $D(\epsilon)=\{ x \in U | F(x) \leq \epsilon \}$. $D(\epsilon)$ is an open neighborhood of the boundary of $[0,1] \times [0,t_{0}]$ by the assumption. We take a real number $\epsilon_{0}$ so that $\overline{D(2\epsilon_{0})}$ is contained in $U$. For $x$ in $[0,1] \times [0,t_{0}]$, let $x_{-}$ and $x_{+}$ be two points which satisfy the following: 
\begin{enumerate}
\item $\overline{\Phi}(x)=\overline{\Phi}(x_{-})$, $\overline{\Phi}(x)=\overline{\Phi}(x_{+})$,
\item $\overline{\Psi}(x_{-})= \epsilon + \overline{\Psi}(0)$ and
\item $\overline{\Psi}(x_{+})= \overline{\Psi}(1) - \epsilon$.
\end{enumerate}
Let $p$ be the first projection defined on $[0,1] \times [0,t_{0}]$. We define a function $\overline{\Psi}_{0}$ on $([0,1] \times [0,t_{0}]) - D(\epsilon_{0})$ by
\begin{equation}
\overline{\Psi}_{0}(x) = \Big( 1 - \frac{p(x_{+}) - p(x)}{p(x_{+}) - p(x_{-})} \Big) \Big( \overline{\Psi}(1) - 2\epsilon \Big) +  \frac{p(x_{+}) - p(x)}{p(x_{+}) - p(x_{-})} ( 2\epsilon + \overline{\Psi}(0)).
\end{equation}
Then $\overline{\Psi}_{0}$ is smooth and monotone increasing on each level set of $\Phi$. Let $b$ be a smooth function on $\mathbb{R}_{\geq 0}$ which is monotone increasing and satisfies 
\begin{equation}
b(t)=\begin{cases}
0, & 0 \leq t \leq \frac{4\epsilon_{0}}{3}  \\
1, & \frac{5\epsilon_{0}}{3} \leq t.
\end{cases}
\end{equation}
We extend $F$ to $[0,1] \times [0,t_{0}]$ defining $F(x)=2\epsilon_{0}$ for $x$ in $([0,1] \times [0,t_{0}]) - U$. We define a smooth function $\overline{\Psi}_{1}$ on $[0,1] \times [0,t_{0}]$ by
\begin{equation}
\overline{\Psi}_{1}(x) = (1 - b(F(x))) \overline{\Psi} + b(F(x)) \overline{\Psi}_{0}.
\end{equation}
Then $\overline{\Psi}_{1}$ is smooth and monotone increasing on each level set of $\Phi$. Since $\overline{\Psi}_{1}$ coincides with $\overline{\Psi}$ on $D(\epsilon)$, $\overline{\Psi}_{1}$ satisfies the desired conditions.
\end{proof}

We show Proposition \ref{SufficientConditionToBeToric}.

\begin{proof}
Take two chains $C^1$, $C^2$ in $(M,\alpha)$ so that there is no nontrivial chain except $C^{1}$ and $C^{2}$. By Lemma \ref{NearChains}, we have an $\alpha$-preserving $T^3$-action $\tau$ on an open neighborhood $V$ of $C^1 \cup C^2 \cup B_{\min} \cup B_{\max}$. By Lemma \ref{ModificationOfTorusActions}, there exist an open neighborhood $W$ of $C^1 \cup C^2 \cup B_{\min} \cup B_{\max}$ whose closure is contained in $V$ and a diffeomorphism $f \colon M - W \longrightarrow [0,1] \times [0,t_{0}] \times T^3$ such that the $K$-contact form $(f^{-1})^{*}\alpha$ on $[0,1] \times [0,t_{0}] \times T^3$ and the $T^3$-action $f^{-1} \circ \tau \circ f$ on $f(V \cap (M - W))$ satisfy the assumption of Lemma \ref{FarFromChains}. Then we have an $\alpha$-preserving $T^3$-action on $M$ by Lemma \ref{FarFromChains}.
\end{proof}

\section{Contact blowing up for $5$-dimensional $K$-contact manifolds}\label{ContactBlowingUpAndDown}

We give the definition of contact blowing up and down for $5$-dimensional $K$-contact manifolds as special cases of contact cuts defined by Lerman \cite{Ler1}.

\subsection{Contact blowing up along a closed orbit of the Reeb flow}

We define the contact blowing up along a closed orbit of the Reeb flow. Let $(M,\alpha)$ be a closed $5$-dimensional $K$-contact manifold of rank $2$. We denote the $T^2$-action associated with $\alpha$ by $\rho$. Take a closed orbit $\Sigma$ of the Reeb flow of $\alpha$. 

By Lemma \ref{ToricActionNearSingularOrbits}, we have an $\alpha$-preserving $T^3$-action $\tau$ on an open neighborhood of $\Sigma$. Let $\sigma$ be an $S^1$-subaction of $\tau$ which satisfies
\begin{enumerate}
\item $\{x \in M | \alpha(X)=0 \}$ is the boundary of an open tubular neighborhood $U$ of $\Sigma$ where $X$ is the vector field generating $\sigma$ and
\item $\sigma$ acts freely on $\{x \in M | \alpha(X)=0 \}$.
\end{enumerate}
Then we can define a smooth structure and a $K$-contact structure on $(M-\overline{U}) \cup \big( (\partial U)/\sigma \big)$ by Lerman \cite{Ler1}.
\begin{defn}(Contact blowing up along a closed orbit of the Reeb flow) We call the operation to obtain the $K$-contact manifold $(M-\overline{U}) \cup \big( (\partial U)/\sigma \big)$ from $M$ the contact blowing up along $\Sigma$. We call the inverse operation the contact blowing down along $(\partial U)/\sigma$ to $\Sigma$.
\end{defn}

We describe the contact blowing up in the case where the isotropy group of $\rho$ at $\Sigma$ is connected. In this case, the expression of normal forms is simple. In general cases, we have to take a finite covering of $\Sigma$ to write $\alpha$ by coordinates. 

By Lemma \ref{NormalFormOfSingularOrbits}, there exists a $\rho$-invariant tubular neighborhood $U$ such that $(U,\alpha|_{U})$ is isomorphic to $(S^1 \times D^{4}_{\epsilon}, \alpha_0)$ defined as follows:

$\alpha_{0}$ is the contact form on $S^1 \times D^{4}_{\epsilon}$ defined by
\begin{equation}
\alpha_{0}=\frac{1-\lambda_1 (m_1|z_1|^2 + m_2|z_2|^2)}{\lambda_0}d\zeta + \frac{\sqrt{-1}}{2} (z_{1}d\overline{z}_{1}-\overline{z}_{1}dz_{1}) + \frac{\sqrt{-1}}{2} (z_{2}d\overline{z}_{2}-\overline{z}_{2}dz_{2}),
\end{equation}
where $(m_1,m_2)$ is a pair of coprime integers, $\lambda_0$ and $\lambda_1$ are some real numbers which are linearly independent over $\mathbb{Q}$.

For positive numbers $r_1,r_2$ greater than $\epsilon^{-2}$, we define
\begin{equation}
\begin{array}{l}
D^{4}_{r_1,r_2}=\{ (z_1,z_2)  \in \mathbb{C}^{2} | r_1|z_1|^{2}+r_2|z_2|^{2} < 1 \} \\
S^{3}_{r_1,r_2}=\partial \overline{D^{4}_{r_1,r_2}}.
\end{array}
\end{equation}
If we define a vector field on $S^1 \times S^{3}_{r_1,r_2}$ by
\begin{equation}\label{X}
X=\lambda_0\frac{\partial}{\partial \zeta}+ \Big( \lambda_1 m_1 - r_1 \Big) \sqrt{-1} \Big( z_{1}\frac{\partial}{\partial z_{1}}-\overline{z}_{1}\frac{\partial}{\partial \overline{z}_{1}} \Big) + \Big( \lambda_1 m_2 - r_2 \Big) \sqrt{-1} \Big( z_{2}\frac{\partial}{\partial z_{2}}-\overline{z}_{2}\frac{\partial}{\partial \overline{z}_{2}} \Big),
\end{equation}
then the flow $\sigma$ generated by $X$ has the following properties:
\begin{enumerate}
\item $(\ker \alpha_0)|_{S^1 \times S^{3}_{r_1,r_2}}$ is invariant under $\sigma$,
\item the orbits of $\sigma$ are tangent to $(\ker \alpha_0)|_{S^1 \times S^{3}_{r_1,r_2}}$,
\item $\sigma$ commutes with $\rho$,
\item every orbit of $\sigma$ is closed if and only if $\frac{\lambda_1 m_1 - r_1}{\lambda_0}$ and $\frac{\lambda_1 m_2 - r_2}{\lambda_0}$ are rational, and
\item if every orbit of $\sigma$ is closed, then $X$ generates a free $S^1$-action if and only if $\GCD(a_0,a_1)=1$ and $\GCD(a_0,a_2)=1$, where $a_0$ is the least common multiple of the denominators of $\frac{\lambda_1 m_1 - r_1}{\lambda_0}$ and $\frac{\lambda_1 m_2 - r_2}{\lambda_0}$, $a_1$ and $a_2$ are integers defined by
\begin{equation}
a_1=a_0 \frac{\lambda_1 m_1 - r_1}{\lambda_0}, a_2 = a_0 \frac{\lambda_1 m_2 - r_2}{\lambda_0}.
\end{equation} 
\end{enumerate}

$X$ is characterized by the first two properties among vector fields which are linear with respect to the standard coordinate. Assume that the latter conditions in (iv) and (v) are satisfied. Then we have a smooth structure and a $K$-contact structure on $(M-(S^1 \times D^{4}_{r_1,r_2}))/\sigma$ by Lerman \cite{Ler1}. 

Note that $(S^1 \times S^{3}_{r_1,r_2})/\sigma$ is a lens space with two new closed orbits of the Reeb flow. The number of the closed orbits of the Reeb flow increases by $1$ by this operation. Figure \ref{Figure1} shows the change of the graph of isotropy data when we perform contact blowing up along a closed orbit $\Sigma$ of the Reeb flow.

\begin{figure}
\begin{equation}
\begin{picture}(204,99)(0,-10)
\put(20,40){\blacken\ellipse{4}{4}}
\path(-10,80)(20,40)(-10,0)
\path(60,40)(90,40)
\path(84,38)(90,40)(84,42)
\put(140,53.333){\blacken\ellipse{4}{4}}
\put(140,26.666){\blacken\ellipse{4}{4}}
\path(120,80)(140,53.333)(140,26.666)(120,0)
\end{picture}
\begin{picture}(219,90)(0,-10)
\put(30,15){\blacken\ellipse{30}{10}}
\path(30,70)(30,15)
\path(75,40)(105,40)
\path(100,38)(105,40)(100,42)
\put(150,15){\blacken\ellipse{30}{10}}
\put(150,40){\blacken\ellipse{4}{4}}
\path(150,70)(150,15)
\end{picture}
\end{equation}
\caption{Contact blowing up along $\Sigma$.}
\label{Figure1}
\end{figure}
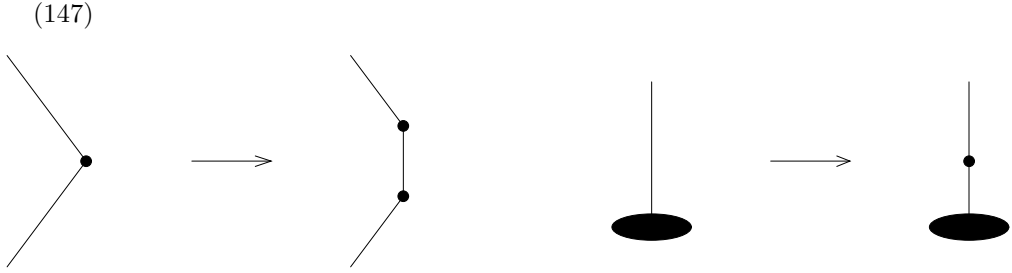

Figure \ref{Figure11} shows the change of the image of contact moment map for the standard $T^3$-action on $S^1 \times D^4$ with a standard $K$-contact form when we perform contact blowing up along $S^1 \times \{0\}$. Note that the image of the contact moment map for a $T^3$-action on a $5$-dimensional contact toric manifold is contained in a $2$-dimensional affine subspace of $\Lie(T^3)^{*}$ (See Subsection \ref{ToricContactManifolds}). Figure \ref{Figure11} is the picture in the $2$-dimensional affine space.

\begin{figure} 
\begin{equation}
\begin{picture}(60,75)(0,-10)
\dashline{4.000}(0,40)(0,0)(30,0)
\thicklines
\path(0,65)(0,35)(25,0)(60,0)
\end{picture}
\end{equation}
\caption{Contact blowing up $S^1 \times D^4$ along $S^1 \times \{0\}$.}
\label{Figure11}
\end{figure}
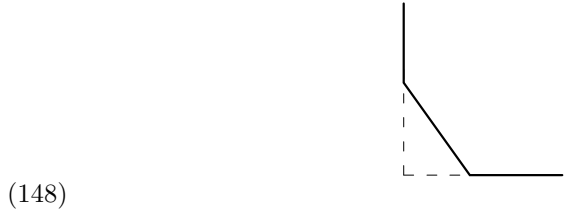

\subsection{Contact blowing up along a $K$-contact lens space}
We define contact blowing up along a lens space. Let $(M,\alpha)$ be a closed $5$-dimensional $K$-contact manifold of rank $2$. We denote the $T^2$-action associated with $\alpha$ by $\rho$. Let $L$ be a $K$-contact submanifold of $(M,\alpha)$ diffeomorphic to a lens space. 

By Lemma \ref{ToricActionNearGradientManifolds}, there exist an open neighborhood $U$ of $L$ and an $(\alpha|_{U})$-preserving $T^3$-action $\tau$ on $U$. Let $\sigma$ be an $S^1$-subaction of $\tau$ which satisfies the following conditions:
\begin{enumerate}
\item $\{x \in M | \alpha(X)=0 \}$ is the boundary of an open tubular neighborhood $U$ of $\Sigma$ where $X$ is the infinitesimal action of $\sigma$ and
\item $\sigma$ acts freely on $\{x \in M | \alpha(X)=0 \}$.
\end{enumerate}
Then we can define a smooth structure and a $K$-contact structure on $(M-\overline{U}) \cup \big( (\partial U)/\sigma \big)$ by Lerman \cite{Ler1}.
\begin{defn}(Contact blowing up along a $K$-contact lens space) We call the operation to obtain a $K$-contact manifold $(M-\overline{U}) \cup \big( (\partial U)/\sigma \big)$ from $M$ the contact blowing up along $L$. We call the inverse operation the contact blowing down along $(\partial U)/\sigma$ to $L$. 
\end{defn}

The number of the closed orbits of the Reeb flow does not change in this operation. Note that we can always perform contact blowing down along any $K$-contact lens space to some $K$-contact lens space. The situation is different from the contact blowing down along a $K$-contact lens space to a closed orbit of the Reeb flow as we will see the next subsection.

When we perform contact blowing up along lens spaces, the underlying graph of the graph of isotropy data does not change, but the attached data change.

Figure \ref{Figure2} shows the change of the image of contact moment map for a $T^3$-action on an open neighborhood of a $K$-contact lens space when we perform contact blowing up along the $K$-contact lens space. Note again that the image of the contact moment map for a $T^3$-action on a contact toric manifold is contained in a $2$-dimensional affine subspace of $\Lie(T^3)^{*}$ (See Subsection \ref{ToricContactManifolds}) and Figure \ref{Figure2} is the picture in the $2$-dimensional affine subspace.

\begin{figure} 
\begin{equation}
\begin{picture}(60,75)(0,-10)
\dashline{4.000}(0,45)(0,20)(20,0)(55,0)
\thicklines
\path(0,80)(0,45)(55,0)(90,0)
\end{picture}
\end{equation}
\caption{Contact blowing up along a $K$-contact lens space.}
\label{Figure2}
\end{figure}
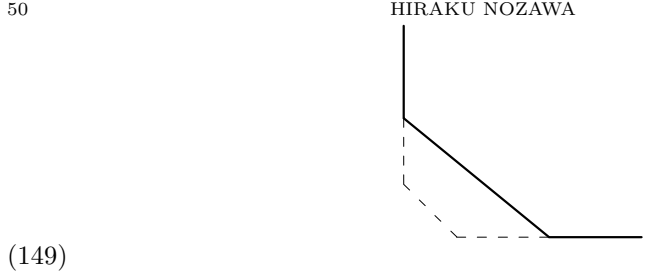

\subsection{Conditions to perform contact blowing down along a lens space to a closed orbit of the Reeb flow}

Let $(M,\alpha)$ be a $5$-dimensional $K$-contact manifold of rank $2$ and $L$ be a $K$-contact submanifold of $(M,\alpha)$ diffeomorphic to a lens space. We denote the torus action associated with $\alpha$ by $\rho$. We present two conditions under which we can perform contact blowing down along $L$ to a closed orbit of the Reeb flow. 

The first condition is a topological characterization which follows directly from the normal form theorem of $K$-contact submanifolds. The second condition is a sufficient condition in terms of the cardinality of the isotropy group of $\rho$. Note that the Euler class of the normal bundle of $L$ is well-defined since the normal bundle has a symplectic structure induced from $d\alpha$.

\begin{lem}\label{TopologicalCharacterization}
The followings are equivalent:
\begin{enumerate}
\item There exist a $K$-contact manifold $(\overline{M},\overline{\alpha})$ of rank $2$ and a closed orbit $\Sigma$ of the Reeb flow on $(\overline{M},\overline{\alpha})$ such that $(M,\alpha)$ is obtained by the contact blowing up along $\Sigma$ to $L$ from $(\overline{M},\overline{\alpha})$.
\item The total space of the normal $S^1$-bundle of $L$ in $M$ is diffeomorphic to $S^1 \times S^3$.
\item The Euler class of the normal $S^1$-bundle of $L$ in $M$ is a generator of $H^2(L;\mathbb{Z})$.
\end{enumerate}
\end{lem}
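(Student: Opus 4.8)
$(i) \Rightarrow (ii) \Rightarrow (iii) \Rightarrow (i)$. The key structural fact I will lean on is the normal form theorem for $K$-contact submanifolds (cited here as the results establishing the normal forms near lens spaces, e.g. Lemma \ref{NormalFormOfB} and the normal form descriptions in Section 8), which tells us that an open $\rho$-invariant tubular neighborhood of $L$ is equivariantly modeled on the total space of the symplectic normal $S^1$-bundle of $L$, with $\alpha$ given in the standard form. Since $L$ is a lens space carrying a $K$-contact structure of rank $2$, it is a contact toric $3$-manifold, and the normal bundle carries the symplectic structure induced by $d\alpha$; its isomorphism type as an $S^1$-bundle is classified by its Euler class in $H^2(L;\mathbb{Z})$.

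\textbf{First I would treat $(i) \Rightarrow (ii)$.} Suppose $(M,\alpha)$ is obtained from $(\overline{M},\overline{\alpha})$ by contact blowing up along a closed orbit $\Sigma$ of the Reeb flow, with $L$ the resulting exceptional lens space $(\partial U)/\sigma$. By the explicit description of the contact cut in the previous subsection, $\Sigma$ has an $\alpha$-preserving local $T^3$-action with $\overline{M}$ modeled near $\Sigma$ on $S^1 \times D^4$ (or a finite quotient thereof), and $L = (S^1 \times S^3_{r_1,r_2})/\sigma$ sits inside $M$ with normal data determined by the linear $S^1$-action. The normal $S^1$-bundle of $L$ in $M$ is, by construction, the boundary of the tubular neighborhood on the $S^1 \times S^3$ side before the cut; more precisely the blow-up replaces a neighborhood of $\Sigma$ by the collapsed boundary, so the link of $L$ recovers $S^1 \times S^3$. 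I would make this precise by tracking the $T^3$-equivariant diffeomorphism type through the cut, using that the relevant sphere $S^3_{r_1,r_2}$ is $S^1$-equivariantly $S^3$ and the extra circle factor survives. This yields that the normal $S^1$-bundle total space is $S^1 \times S^3$.

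\textbf{For $(ii) \Rightarrow (iii)$ and $(iii) \Rightarrow (i)$ I would argue as follows.} The normal $S^1$-bundle of $L$ is an oriented circle bundle over the lens space $L$; a circle bundle $E \to L$ with Euler class $e \in H^2(L;\mathbb{Z})$ has total space $S^1 \times S^3$ precisely when $e$ generates $H^2(L;\mathbb{Z}) \cong \mathbb{Z}/p\mathbb{Z}$, which I would verify by computing the homology of $E$ via the Gysin sequence (the total space fibers as a circle bundle over $L$, and $S^1\times S^3$ has the homology forcing $e$ to be a generator; conversely a generator Euler class produces exactly $S^1 \times S^3$). This is essentially the content of Thornton's theorem on Seifert-fibered circle bundles over lens spaces, cited earlier as Theorem \ref{LensSpaces}. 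This gives the equivalence of $(ii)$ and $(iii)$. Finally, for $(iii) \Rightarrow (i)$, assuming the Euler class is a generator, I would reverse the contact-cut construction: the generator condition is exactly what is needed to collapse the normal $S^1$-bundle of $L$ back to a single closed orbit $\Sigma$, producing a smooth $K$-contact manifold $(\overline{M},\overline{\alpha})$ in which $\Sigma$ is a closed Reeb orbit and from which $(M,\alpha)$ recovers by contact blowing up. The realizability of the inverse cut as a genuine $K$-contact blow-down is guaranteed by the local normal form and Lerman's contact cut construction.

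\textbf{The main obstacle I anticipate} is the careful bookkeeping of orientations and the precise identification of the Euler class with a generator in $(ii) \Leftrightarrow (iii)$: one must confirm that the symplectic (hence oriented) normal bundle's Euler class lands in the cyclic group $H^2(L;\mathbb{Z})$ and that "generator" is the exact numerical condition corresponding to the collapsibility of the boundary to a single orbit, rather than to a lens space of smaller order. I expect to invoke Thornton's classification (Theorem \ref{LensSpaces}) to pin down which circle bundles over $L(p,q)$ have total space $S^1 \times S^3$, and then to match the contact-cut parameters $(a_0,a_1,a_2)$ from the normal form to the generator condition. Verifying that the reconstructed $(\overline{M},\overline{\alpha})$ is genuinely smooth and $K$-contact at $\Sigma$ — i.e.\ that the freeness and coprimality conditions (iv)–(v) in the contact blow-up construction are met — is the delicate point that the generator hypothesis is designed to supply.
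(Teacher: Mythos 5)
Your cyclic scheme $(i)\Rightarrow(ii)\Rightarrow(iii)\Rightarrow(i)$ coincides with the paper's, and your first two implications are exactly the paper's argument: for $(i)\Rightarrow(ii)$, the boundary of the tubular neighborhood of $\Sigma$ that gets collapsed in the cut is precisely the normal circle bundle of $L$ and is diffeomorphic to $S^1\times S^3$; and $(ii)\Leftrightarrow(iii)$ is exactly Thornton's theorem (Theorem \ref{LensSpaces}), with your Gysin-sequence verification a harmless substitute for the citation. So far there is nothing to change.

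Where you diverge from the paper is $(iii)\Rightarrow(i)$, and there your plan has a genuine soft spot. ``Reversing the contact cut'' is not something Lerman's construction \cite{Ler1} hands you: the cut produces $M$ from $\overline{M}$, so the entire content of this implication is to \emph{build} the filling $(\overline{M},\overline{\alpha})$ — that is, to produce a $K$-contact form on the piece glued in along the normal $S^1$-bundle of $L$, with a closed Reeb orbit $\Sigma$ at its core. Matching the cut parameters $(a_0,a_1,a_2)$ against $(r_1,r_2)$ as you propose shows at best that \emph{some} local cut model shares the equivariant invariants of a neighborhood of $L$; you still need (a) the contact form on the filled-in piece and (b) an identification of its complement-of-exceptional-set with a deleted neighborhood of $L$ in $M$. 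The paper obtains (a) from Lemma \ref{AttachingCorners}: it deletes the face of the good cone corresponding to $L$ and applies the Delzant-type construction of Boyer--Galicki \cite{BoGa} to the resulting cone, producing a $K$-contact orbifold whose inverse image of the new vertex is the desired $\Sigma$; smoothness at $\Sigma$ is exactly where the generator hypothesis enters, via Lemma \ref{GoodCone}, which computes the Euler class of the normal bundle of $L$ as $\det \begin{pmatrix} n^{1} & n^{3} & l^{2} \end{pmatrix} \bmod b$ and thereby converts $(iii)$ into the Delzant condition $\det \begin{pmatrix} n^{0} & n^{2} & v \end{pmatrix}=1$ at the new vertex. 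It obtains (b) from Lerman's uniqueness theorem for contact toric manifolds with equal convex moment images \cite{Ler2}. Your route could likely be completed along these lines — the normal form results of Section 8 could replace Lerman's uniqueness in the gluing step — but as written, the construction of the blow-down, which is the heart of $(iii)\Rightarrow(i)$, is asserted rather than performed, and the appeal to ``Lerman's contact cut construction'' for the inverse direction is circular.
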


\begin{proof}
The proof of (ii) from (i) is clear, since the boundary of a tubular neighborhood of $\Sigma$ in $M$ is diffeomorphic to $S^1 \times S^3$. 

The equivalence of (ii) and (iii) follows from the following theorem of Thornton \cite{Tho}: Let $L(p,q)$ be the lens space of type $(p,q)$. Note that $H^2(L(p,q);\mathbb{Z})$ is isomorphic to $\mathbb{Z}/p\mathbb{Z}$ and $\GCD(p,e)$ is well-defined for $e$ in $H^2(L(p,q);\mathbb{Z})$.
\begin{thm}[Thornton \cite{Tho}]\label{LensSpaces}
The total space $N$ of the principal $S^1$-bundle over $L(p,q)$ with the Euler class $e$ in $H^2(L(p,q);\mathbb{Z})$ is homeomorphic to $S^1 \times L(\GCD(p,e),q)$. Hence $N$ is homeomorphic to $S^1 \times S^3$ if and only if the Euler class is a generator of $H^2(L(p,q);\mathbb{Z})$.
\end{thm}

We show (i) from (iii). Assume that the Euler class of the normal $S^1$-bundle of $L$ in $M$ is a generator of $H^2(L;\mathbb{Z})$. By Lemma \ref{ToricActionNearGradientManifolds}, there exists an $\alpha$-preserving $T^3$-action $\tau$ on an open neighborhood $W$ of $L$. Let $L^{0}$ and $L^{2}$ be two gradient manifolds whose closures intersect $L$ and are smooth near $L$. We put $L^{1}=L$. Let $n^{j}$ be the primitive vector in $\Lie(T^3)$ whose infinitesimal action generates the isotropy group of $\tau$ at $L^{j}$ for $j=0$, $1$ and $2$. Let $A=\{ v \in \Lie(T^3)^{*} | v(\overline{R})=1 \}$. Let $\tilde{\Phi}_{\alpha}$ be the contact moment map for $\tau$. Then $V=\tilde{\Phi}_{\alpha}(U)$ is an open neighborhood of $F^{1} \cap A$ in $\Delta \cap A$ where $\Delta$ is the cone defined by \begin{equation}
\Delta=\{ v \in \Lie(T^3)^{*} | v(n^{0}) \geq 0, v(n^{2}) \geq 0, v(n^{1}) \geq 0  \}
\end{equation}
and $F^{i}$ is the face of $\Delta$ defined by the normal vector $n^{i}$ for $i=0$, $1$ and $2$. We define the cones $\Delta'$ and $\tilde{\Delta}$ in $\Lie(T^3)^{*}$ by
\begin{equation}
\Delta'=\{ v \in \Lie(T^3)^{*} | v(n^{0}) \geq 0, v(n^{2}) \geq 0, v(n^{1}) \leq 0 \}
\end{equation}
and 
\begin{equation}
\tilde{\Delta}=\{ v \in \Lie(T^3)^{*} | v(n^{0}) \geq 0, v(n^{2}) \geq 0 \}.
\end{equation}

By Lemma \ref{AttachingCorners}, we have a $K$-contact orbifold $(\tilde{U},\tilde{\alpha})$ and an $\tilde{\alpha}$-preserving $T^3$-action $\sigma$ on $\tilde{U}$ such that 
\begin{description}
\item[(a)] the image of the contact moment map $\Phi$ for $\sigma$ contains $\Delta'$ and
\item[(b)] $(\tilde{U}-\Phi^{-1}(\Delta'),\tilde{\alpha}|_{\tilde{U}-\Phi^{-1}(\Delta')})$ is isomorphic to $(U-B,\alpha|_{U-B})$ as $K$-contact manifolds, where $U$ is an open neighborhood of $B$ in $M$.
\end{description}

By (iii) and Lemma \ref{GoodCone}, there exists a vector $v$ in $\Lie(T^3)^{*}_{\mathbb{Z}}$ such that $\det \begin{psmallmatrix} n^{0} & n^{2} & v \end{psmallmatrix} = 1$. Hence $(\tilde{U},\tilde{\alpha})$ is a smooth manifold, since the image of the contact moment map satisfies the Delzant condition. We obtain a $K$-contact manifold $(\overline{M},\overline{\alpha})$ by attaching $(M - L,\alpha|_{M - L})$ to $(\tilde{U},\tilde{\alpha})$ by the isomorphism between $(\tilde{U}-\Phi^{-1}(\Delta'),\tilde{\alpha}|_{\tilde{U}-\Phi^{-1}(\Delta')})$ and $(U-B,\alpha|_{U-B})$. Let $\Sigma$ be the closed orbit of the Reeb flow of $\overline{\alpha}$ defined by $\Sigma=\Phi^{-1}(F^{0} \cap F^{2})$. We can obtain $A \cap \Delta$ from $A \cap \Delta'$ by cutting off a triangle $A \cap (\Delta - \Delta')$ containing the vertex $A \cap F^{0} \cap F^{2}$. A contact blowing up along $\Sigma$ results the same change of the image of the contact moment maps $A \cap \Delta$ from $A \cap \Delta'$. Hence $(M,\alpha)$ is obtained from $(\overline{M},\overline{\alpha})$ by performing a contact blowing up along $\Sigma$.
\end{proof}

We give a sufficient condition for a gradient lens space to be blown down to a closed orbit of the Reeb flow in terms of the cardinality of the isotropy groups of $\rho$. Let $L^{1}$ be a gradient manifold of $(M,\alpha)$ whose closure is a smooth submanifold $N$ of $M$. We denote the $\alpha$-limit set of $L^{1}$ and the $\omega$-limit set of $L^{1}$ by $\Sigma^{0}$ and $\Sigma^{1}$, respectively. Let $L^{0}$ be the gradient manifold other than $L^{1}$ whose closure contains $\Sigma^{0}$ and is smooth near $\Sigma^{0}$. Let $L^{2}$ be the gradient manifold other than $L^{1}$ whose closure contains $\Sigma^{1}$ and is smooth near $\Sigma^{1}$. We put $k^{j}=I(L^{j},\rho)$ for $j=0$, $1$ and $2$. For a topological group $H$, an $H$-action $\tau$ on a set $A$ and a $\tau$-invariant subset $B$ of $A$, we denote the cardinality of the kernel of $H \longrightarrow \Aut(B)$ by $I(\tau,B)$.

\begin{lem}\label{Characterizationbyk}
There exist a $K$-contact manifold $(\tilde{M},\tilde{\alpha})$ of rank $2$ and a closed orbit $\Sigma$ of the Reeb flow on $(\tilde{M},\tilde{\alpha})$ such that $(M,\alpha)$ is obtained by the contact blowing up along $\Sigma$ from $(\tilde{M},\tilde{\alpha})$ if one of the following holds:
\begin{enumerate}
\item $I(\rho,L^{0})=I(\rho,L^{1})=1$ and the closure of $L^{2}$ is equal to $B_{\max}$.
\item The closure of $L^{0}$ is equal to $B_{\min}$ and $I(\rho,L^{1})=I(\rho,L^{2})=1$.
\end{enumerate}
\end{lem}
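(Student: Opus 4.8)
The plan is to reduce everything to the topological criterion already established in Lemma~\ref{TopologicalCharacterization}. The conclusion we want is precisely condition (i) of that lemma applied to $L=L^{1}$, so it suffices to verify condition (iii): that the Euler class of the normal $S^{1}$-bundle of $N=\overline{L^{1}}$ is a generator of $H^{2}(N;\mathbb{Z})$ (equivalently, by Thornton's Theorem~\ref{LensSpaces}, that the total space of this bundle is $S^{1}\times S^{3}$). Moreover the two cases are interchanged by the involution $\Phi\mapsto-\Phi$: replacing $X$ by $-X$ swaps $B_{\min}$ with $B_{\max}$, reverses the gradient flow, and hence swaps $\alpha$-limit sets with $\omega$-limit sets, so it carries $(L^{0},L^{1},L^{2})$ to $(L^{2},L^{1},L^{0})$ and thus case (i) to case (ii). I would therefore treat only case (ii): $\overline{L^{0}}=B_{\min}$ and $k^{1}=k^{2}=1$.

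Next I would set up the local toric model exactly as in the proof of Lemma~\ref{TopologicalCharacterization}. By Lemma~\ref{ToricActionNearGradientManifolds} there is an $\alpha$-preserving $T^{3}$-action $\tau$ on a neighbourhood of $N$, whose contact moment map realizes $N$ as the facet with primitive inward normal $n^{1}$, and $L^{0},L^{2}$ as the adjacent facets with normals $n^{0},n^{2}\in\Lie(T^3)_{\mathbb{Z}}$; the two closed orbits are the edges $\Sigma^{0}=F^{0}\cap F^{1}$ and $\Sigma^{1}=F^{1}\cap F^{2}$. Since $\Sigma^{0}$ and $\Sigma^{1}$ are closed Reeb orbits, their $T^{3}$-isotropy groups are connected (Lemma~3.13 of \cite{Ler2}), so each of $\{n^{0},n^{1}\}$ and $\{n^{1},n^{2}\}$ generates a primitive rank-$2$ sublattice and hence extends to a $\mathbb{Z}$-basis of $\Lie(T^3)_{\mathbb{Z}}$. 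By Lemma~\ref{GoodCone}, as used in the proof of Lemma~\ref{TopologicalCharacterization}, condition (iii) holds if and only if $\{n^{0},n^{2}\}$ likewise extends to a $\mathbb{Z}$-basis, i.e. $\det\begin{psmallmatrix} n^{0} & n^{2} & v\end{psmallmatrix}=1$ for some $v\in\Lie(T^3)_{\mathbb{Z}}$; this is exactly the Delzant (smoothness) condition at the new vertex $F^{0}\cap F^{2}$ created by the blow-down. Thus the whole problem is reduced to a single unimodularity statement about the pair $(n^{0},n^{2})$.

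To produce that statement I would translate the cardinalities $k^{j}=I(\rho,L^{j})$ into the combinatorics of the normals, using that $I(\rho,L^{j})$ equals the number of intersection points of $G$ with the circle subgroup generated by $n^{j}$ — so, in coordinates in which $\Lie(G)$ is the annihilator of the distinguished transverse direction (as fixed in the proof of Proposition~\ref{NearChains}), $k^{j}$ is the absolute value of the transverse component of $n^{j}$ — together with the determinant identities of Lemmas~\ref{LevelSetNearB} and \ref{EulerNumberOfLevelSetsNearB} relating $k^{0},k^{1},k^{2}$ to the $2\times2$ minors of $\begin{psmallmatrix} n^{0} & n^{1} & n^{2}\end{psmallmatrix}$. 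The hypothesis $k^{2}=I(\rho,L^{2})=1$ forces $\Sigma^{1}$ to be a regular fibre of the Seifert fibration of $N$, while $\overline{L^{0}}=B_{\min}$ identifies $n^{0}$ as the normal of the $B_{\min}$-facet and $k^{1}=I(\rho,L^{1})=1$ says that $\Sigma^{0}$ is a regular fibre of $B_{\min}$. Feeding these regularity conditions into the two basis relations at $\Sigma^{0}$ and $\Sigma^{1}$ pins down the greatest common divisor of the minors of $\begin{psmallmatrix} n^{0} & n^{2}\end{psmallmatrix}$ and shows it is $1$, which is the desired unimodularity; Lemma~\ref{TopologicalCharacterization} then furnishes $(\tilde{M},\tilde{\alpha})$ and the closed orbit $\Sigma$.

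I expect the main obstacle to be precisely this last combinatorial step. The two good-cone conditions at the neighbouring edges $\Sigma^{0},\Sigma^{1}$ are \emph{not} by themselves enough to force $\{n^{0},n^{2}\}$ onto a basis — they leave a genuine gcd ambiguity — so the content of the lemma is that the regularity encoded in $k^{1}=k^{2}=1$ (together with $n^{0}$ being the $B_{\min}$-normal) eliminates that ambiguity. The delicate point is bookkeeping of the signs and orientations of the determinants, which behave differently at $B_{\min}$ than along an interior chain (compare the sign discrepancies in Lemmas~\ref{LevelSetNearB} and \ref{EulerNumberOfLevelSetsNearB}); getting these consistent is where the care is needed. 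Once $\{n^{0},n^{2}\}$ is shown to extend to a $\mathbb{Z}$-basis, condition (iii) of Lemma~\ref{TopologicalCharacterization} holds, completing case (ii), and the symmetry $\Phi\mapsto-\Phi$ then yields case (i).
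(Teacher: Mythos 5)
Your reduction is the right one and is essentially the paper's: both arguments boil the lemma down, via Lemma~\ref{TopologicalCharacterization} and Lemma~\ref{GoodCone}, to a single unimodularity statement, namely that the pair of normals $(n^{0},n^{2})$ of the two facets adjacent to $N$ extends to a $\mathbb{Z}$-basis of $\Lie(T^3)_{\mathbb{Z}}$ (the paper phrases the same unimodularity through the Heegaard attaching matrix and the identities of Lemma~\ref{LemmaEquationfork}, solving $be-af=-1$, $de-cf=0$ to get $f\equiv -d \bmod b$ with $\GCD(b,d)=1$). But your proposal stops exactly at the crux, and the ingredients you list for it are demonstrably insufficient. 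You propose to derive the unimodularity from: the edge-Delzant conditions at $\Sigma^{0}$ and $\Sigma^{1}$ (connected $T^{3}$-isotropy), the identities $k^{j}=|n^{j}(2)|$ in coordinates with $\Lie(G)=\{\begin{psmallmatrix} v_1 \\ 0 \\ v_3 \end{psmallmatrix}\}$, and the hypothesis $k^{1}=k^{2}=1$. Take $n^{0}=\begin{psmallmatrix} 1 \\ 3 \\ 2 \end{psmallmatrix}$, $n^{1}=\begin{psmallmatrix} 0 \\ 1 \\ 0 \end{psmallmatrix}$, $n^{2}=\begin{psmallmatrix} 1 \\ 1 \\ 0 \end{psmallmatrix}$: all three are primitive, both pairs $(n^{0},n^{1})$ and $(n^{1},n^{2})$ are parts of $\mathbb{Z}$-bases, and $|n^{1}(2)|=|n^{2}(2)|=1$, so every condition you actually invoke holds; yet the $2\times 2$ minors of $\begin{psmallmatrix} n^{0} & n^{2} \end{psmallmatrix}$ have greatest common divisor $2$, so $(n^{0},n^{2})$ is not unimodular and the Euler class ($b=\det\begin{psmallmatrix} n^{0} & n^{1} & n^{2}\end{psmallmatrix}=-2$, $f$ even) is not a generator. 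So "feeding the regularity conditions into the two basis relations" cannot by itself close the gap you yourself flagged.

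What is missing is the one genuinely geometric lattice consequence of $\overline{L^{0}}=B_{\min}$, which your outline names but never uses: since $B_{\min}$ is $3$-dimensional it is contained in $\Crit \Phi=\Sing \rho$, so every point of $B_{\min}$ lies on a closed Reeb orbit and the identity components of the $\rho$-isotropy groups along $B_{\min}$ are one fixed circle of $G$ acting trivially on $B_{\min}$; this circle must coincide with the facet-isotropy circle $\exp(\mathbb{R}n^{0})$ of $\tau$, forcing $n^{0}\in\Lie(G)$, i.e.\ $n^{0}(2)=0$ (note in particular that your dictionary "$k^{j}$ is the transverse component of $n^{j}$" fails for $j=0$ here). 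This kills the counterexample above, and with it the proof is one line: writing $n^{0}=\begin{psmallmatrix} x \\ 0 \\ z \end{psmallmatrix}$ primitive, the minors of $\begin{psmallmatrix} n^{0} & n^{2} \end{psmallmatrix}$ have $\GCD$ equal to $\GCD(x,z)=1$ once $|n^{2}(2)|=k^{2}=1$; equivalently, extending $n^{0}$ to a $\mathbb{Z}$-basis $\{n^{0},w\}$ of $\Lie(G)_{\mathbb{Z}}$ one has $|\det\begin{psmallmatrix} n^{0} & w & n^{2} \end{psmallmatrix}|=k^{2}=1$. This is exactly the cone-side counterpart of the step the paper performs with coordinates: deriving $h^{0}=h^{1}=0$ from the connectedness of the $\rho$-isotropy at $\Sigma^{0}$ (Lemma~\ref{GCD} and $k^{0}=k^{1}=1$) and from the fact that the isotropy circle at the closed orbit inside $B_{\max}$ fixes $B_{\max}$ (so that $p_{1}=1$, $v_{1}=0$ in \eqref{RhoNearSigma12}), before substituting into Lemma~\ref{LemmaEquationfork}. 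Until your sketch identifies and proves this input, it is a plan with the decisive step absent, not a proof.
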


\begin{proof}
Assume that the condition (i) is satisfied. By Lemma \ref{TopologicalCharacterization}, it suffices to prove that the Euler class of the normal bundle of $N$ is a generator of $H^2(N;\mathbb{Z})$. We put $k^{j}=I(\rho,L^{j})$ for $j=0$ and $1$. 

We have an $\alpha$-preserving $T^3$-action $\tau$ on an open neighborhood of $N$ by Lemma \ref{ToricActionNearGradientManifolds}. A generic orbit of $\tau$ in $N$ is diffeomorphic to $T^2$. Let $C$ be a $T^2$-orbit of $\tau$ in $N$. Then by the classification of contact toric $3$-manifolds by Lerman \cite{Ler2}, $C$ divides $N$ into two solid tori. We obtain a Heegaard decomposition of $N$ of genus $1$ which is invariant under $\tau$ such that $C$ is the boundary of two solid tori. Cutting the total space $E$ of the normal bundle of $N$ along the union of fibers over $C$, we have a $\tau$-invariant decomposition of $E$ defined by
\begin{equation}
E= (S^1 \times D^2)_{0} \times \mathbb{R}^2 \cup_{\tilde{A}} (S^1 \times D^2)_{1} \times \mathbb{R}^{2}
\end{equation}
such that $\tau$ is written as 
\begin{equation}
t \cdot (\zeta,z_1,z_2)=(l_{0}^{j}(t) \zeta, l_{1}^{j}(t) z_{1}, l_{2}^{j}(t) z_{2})
\end{equation}
 on $(S^1 \times \partial D^2)_{j} \times \mathbb{R}^2$ for every $t$ in $T^3$ where $l_{0}^{j}$, $l_{1}^{j}$ and $l_{2}^{j}$ are homomorphisms from $T^3$ to $S^1$ for $j=0$ and $1$. The attaching map $\tilde{A} \colon (S^1 \times \partial D^2)_{0} \times \mathbb{R}^2 \longrightarrow (S^1 \times \partial D^2)_{1} \times \mathbb{R}^2$ is written as
\begin{equation}
\tilde{A} (\zeta,z_1,z_2) = (\zeta^{a} z_{1}^{b}, \zeta^{c} z_{1}^{d} , \zeta^{e_{0}} z_{1}^{f_{0}} z_{2})
\end{equation}
with respect to the standard coordinates. Note that the map $A_{0} \colon (S^1 \times \partial D^2)_{0} \longrightarrow (S^1 \times \partial D^2)_{1}$ defined by
\begin{equation}
A_{0} (\zeta,z_1) = (\zeta^{a} z_{1}^{b}, \zeta^{c} z_{1}^{d})
\end{equation} 
is the attaching map of the Heegaard decomposition of $N_{0}^{1}$ of genus $1$. The determinant of $A_{0}$ is $-1$. Since $\rho$ is a $T^2$-subaction of $\tau$, $\rho$ is written as 
\begin{equation}\label{RhoNearSigmaj}
(t_{j0},t_{j1}) \cdot (\zeta_{j}, z_{j1}, z_{j2})=(t_{j0}^{s_{j}} t_{j1}^{r_{j}} \zeta_{j}, t_{j0}^{q_{j}} t_{j1}^{p_{j}} z_{j1}, t_{j0}^{u_{j}} t_{j1}^{v_{j}} z_{j2})
\end{equation}
on $(S^1 \times \partial D^2)_{j} \times \mathbb{R}^2$ with respect to the standard coordinate for $j=0$ and $1$.

We put
\begin{equation}
h^{0} = \det \begin{pmatrix} q_{0} & p_{0} \\ u_{0} & v_{0} \end{pmatrix}, h^{1}=\det \begin{pmatrix} q_{1} & p_{1} \\ u_{1} & v_{1} \end{pmatrix}.
\end{equation}

We will show $h^{0}=0$ and $h^{1}=0$. By Lemma \ref{GCD} and $k^{0}=k^{1}=1$, we have $I(\rho,\Sigma^{0})=1$, that is, the isotropy group of $\rho$ at $\Sigma^{0}$ is connected. By \eqref{StandardTorusActionInTheCaseOfConnectedIsotropyGroup}, $\rho$ is written as
\begin{equation}\label{RhoNearSigma0}
(t'_{00},t'_{01}) \cdot (\zeta'_{0},z'_{01},z'_{02})=( t'_{00} \zeta'_{0}, t_{01}^{\prime p'_{0}} z'_{01}, t_{01}^{\prime v'_{0}} z'_{02})
\end{equation}
 on an open tubular neighborhood of $\Sigma^{0}$ where $(\zeta'_{0},z'_{01},z'_{02})$ is the coordinate on an open tubular neighborhood of $\Sigma^{0}$ defined by
\begin{equation}
\begin{pmatrix} \zeta_{0} \\ z_{01} \\ z_{02} \end{pmatrix} = \begin{pmatrix} 1 & 0 \\ x^{0} & U^{0} \end{pmatrix} \begin{pmatrix} \zeta'_{0} \\ z'_{01} \\ z'_{0} \end{pmatrix}
\end{equation}
for some $x^{0}$ in $\mathbb{Z}^{2}$ and some $U^{0}$ in $\GL(2;\mathbb{Z})$. By \eqref{RhoNearSigma0}, we have
\begin{equation}\label{h0}
T^{0} \begin{pmatrix} 1 & 0 & 0 \\ 0 & p'_{0} & v'_{0} \end{pmatrix} \begin{pmatrix} 1 & 0 \\ x^{0} & U^{0} \end{pmatrix} = \begin{pmatrix} s_{0} & q_{0} & u_{0} \\ 0 & p_{0} & v_{0} \end{pmatrix}
\end{equation}
for some $T^{0}$ in $\GL(2;\mathbb{Z})$ which maps $(t'_{00},t'_{01})$ to $(t_{00},t_{01})$. $h^{0}=0$ follows from \eqref{h0}.

By \eqref{RhoNearSigmaj}, $\Sigma^{1}$ is written as
\begin{equation}\label{RhoNearSigma1}
(t'_{10},t'_{11}) \cdot (\zeta_{1},z_{11},z_{12})=( t_{10}^{\prime s'_{1}} \zeta_{1}, t_{10}^{\prime q'_{1}} t_{11}^{\prime p'_{1}} z_{11}, t_{10}^{\prime u'_{1}} t_{11}^{\prime v'_{1}} z_{12})
\end{equation}
for a coordinate $(t'_{10},t'_{11})$ on $T^2$ and integers $s'_{1}$, $q'_{1}$, $p'_{1}$, $u'_{1}$ and $v'_{1}$. Note that $t'_{11}$ generates the identity component of the isotropy group of $\rho$ at $\Sigma^{1}$. Then we have 
\begin{equation}\label{s1}
|s_{1}|=I(\rho,\Sigma^{1})
\end{equation}
 by the definition of $I(\rho,\Sigma^{1})$. Since $L^{2}=B_{\max}$, the isotropy group of $\rho$ at $\Sigma^{1}$ fixes $L^{2}$. Hence we can assume that $p_{1}=1$ and $v_{1}=0$. Then \eqref{RhoNearSigma1} is written as
\begin{equation}\label{RhoNearSigma12}
(t'_{10},t'_{11}) \cdot (\zeta_{1},z_{11},z_{12})=( t_{10}^{\prime s_{1}} \zeta_{1}, t_{10}^{\prime q'_{1}} t_{11} z_{11}, t_{10}^{\prime u'_{1}} z_{12}).
\end{equation}
Since the cardinality of the isotropy group of the $T^2$-action on $T^2$ defined by
\begin{equation}
(t_{0},t_{1}) \cdot (\xi_{0},\xi_{1}) =  (t_{0}^{a_{11}} t_{1}^{a_{12}} \xi_{0}, t_{0}^{a_{21}} t_{1}^{a_{22}} \xi_{1})
\end{equation}
is equal to $|a_{11} a_{22} - a_{12} a_{21}|$, we have 
\begin{equation}
k^{1} = |s_{1}|.
\end{equation}
Hence we have $|s_{1}|=1$ by the assumption. By $|s_1|=1$ and \eqref{s1}, the isotropy group of $\rho$ at $\Sigma^{1}$ is connected. By the same argument that proving $h^{0}$, we can show $h^{1}=0$.

By Lemma \ref{LemmaEquationfork}, we have 
\begin{equation}\label{k}
\begin{array}{l}
(be-af)k^{1}=-k^{2}+ak^{0}-bh^{0}, \\
(de-cf)k^{1}=h^{1}+ck^{0}-dh^{0}.
\end{array}
\end{equation}
Substituting $k^{0}=h^{0}=h^{1}=0$ and $k^{1}=k^{2}=1$ to \eqref{k}, we have 
\begin{equation}
be-af=-1, de-cf=0.
\end{equation}
Hence we have $\begin{psmallmatrix} e \\ f \end{psmallmatrix}=\begin{psmallmatrix} -c \\ -d \end{psmallmatrix}$. Hence $f$ is coprime to $b$. By \eqref{tildeA} and the proof of Lemma \ref{GoodCone} below, $f$ modulo $b$ is the Euler class of the normal bundle of $N$ in $M$. Hence the proof is completed for the case (i). The proof for the case (ii) is symmetric.
\end{proof}

\subsection{Examples of contact toric manifolds}\label{ToricExamples}
We present examples of $5$-dimensional contact toric manifolds of rank $2$ without a gradient manifold which can be blown down to a closed orbit of the Reeb flow. Note that $4$-dimensional symplectic manifolds with hamiltonian $S^1$-actions which are not $\mathbb{C}P^{2}$, Hirzebruch surfaces or ruled surfaces always have a gradient two sphere which can be $S^1$-equivariantly blown down (See \cite{HaA} and \cite{Kar}). 

We show a lemma to compute the Euler class of the normal bundle of invariant lens spaces in contact toric manifolds by the normal vectors of the corresponding good cone: Let $(M,\alpha)$ be a toric $5$-dimensional $K$-contact manifold and $\Delta$ be the corresponding good cone in $\Lie(T^3)^{*}$. We identify $\Lie(T^3)^{*}$ with $\mathbb{R}^{3}$ so that the kernel of $\exp \colon \Lie(T^3) \longrightarrow T^3$ is identified with $\mathbb{Z}^{3}$. We denote the element of $\Lie(T^3)$ whose infinitesimal action is the Reeb vector field of $(M,\alpha)$ by $\overline{R}$.
\begin{lem}\label{GoodCone}
Let $P^1, P^2$ and $P^3$ be three faces of $\Delta$ and assume that $P^{2}$ is adjacent to both of $P^1$ and $P^3$ in $\Delta$. We denote the primitive normal vector of $P^i$ by $n^i$ for $i=1,2$ and $3$. Assume that $\det \begin{pmatrix} n^{1} & n^{2} & n^{3} \end{pmatrix}$ is positive. Let $L$ be the lens space in $M$ which is the inverse image of the intersection of $P^2$ and the affine subspace $\{v \in \mathbb{R}^{3}| v(\overline{R})=1\}$ by the contact moment map for the $T^3$-action. Then the cardinality $c$ of $\pi_1(L)$ and the Euler class $f$ of the normal bundle of $L$ in $M$ are given by
\begin{equation}
\begin{array}{l}
b = \det \begin{pmatrix} n^{1} & n^{2} & n^{3} \end{pmatrix} \\
f = \det \begin{pmatrix} n^{1} & n^{3} & l^{2} \end{pmatrix} \mod b\\
\end{array}
\end{equation}
where $l^{2}$ is an element of $\mathbb{Z}^{3}$ which satisfies $\det \begin{psmallmatrix} n^{3} & l^{2} & n^{2} \end{psmallmatrix}=1$. 
\end{lem}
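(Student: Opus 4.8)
The plan is to reduce the statement to an explicit computation in the two solid tori of a genus-one Heegaard decomposition of $L$, reading off both invariants from the change-of-basis matrices relating the normal vectors. First I would describe the local structure of the $T^3$-action near $L$. The two edges $P^1 \cap P^2$ and $P^2 \cap P^3$ of $\Delta$ correspond, via the contact moment map, to two closed orbits $\Sigma^0$ and $\Sigma^1$ of the Reeb flow, which are exactly the two singular orbits of the effective $T^2 = T^3/S^1_{n^2}$-action on $L$. Since $M$ is a smooth contact toric manifold, the good-cone condition of Lerman \cite{Ler2} guarantees that $\{n^1,n^2\}$ and $\{n^2,n^3\}$ each extend to $\mathbb{Z}$-bases of $\Lie(T^3)_{\mathbb{Z}} \cong \mathbb{Z}^3$, and the vector $l^2$ with $\det(n^3,l^2,n^2)=1$ is precisely such a completion of $\{n^3,n^2\}$. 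By the equivariant normal form near singular orbits (Lemma \ref{GeneralTorusActions}) I obtain coordinates $(\zeta_0,w_{01},w_{02})$ near $\Sigma^0$ and $(\zeta_1,w_{11},w_{12})$ near $\Sigma^1$ in which the $T^3$-action is linear: near $\Sigma^0$ the circles $S^1_{n^1},S^1_{n^2}$ rotate $w_{01},w_{02}$ while the complementary circle rotates $\zeta_0$, and symmetrically near $\Sigma^1$ with $n^3,n^2,l^2$. In these coordinates $L=\{w_{02}=0\}=\{w_{12}=0\}$ and its normal bundle is the $w_{02}$- (resp.\ $w_{12}$-) complex line.

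Second, I would set up the Heegaard picture. The pieces $V_0=\{w_{02}=0\}$ and $V_1=\{w_{12}=0\}$ are solid tori and $L=V_0\cup V_1$ is glued along a regular $T^2$-orbit $C$. In $\Lie(T^2)_{\mathbb{Z}}=\mathbb{Z}^3/\mathbb{Z} n^2$ the meridian of $V_0$ is the image $\bar n^1$ of $n^1$ and the meridian of $V_1$ is $\bar n^3$; the two frames $(\bar n^1,\bar l^0)$ and $(\bar n^3,\bar l^2)$ of $\mathbb{Z}^3/\mathbb{Z} n^2$ differ by a transition matrix $A_0=\begin{psmallmatrix} a & b \\ c & d \end{psmallmatrix}\in\GL(2;\mathbb{Z})$. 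Cutting the normal line bundle $E$ along the fibres over $C$ yields a decomposition with attaching map of the shape $\tilde A(\zeta,z_1,z_2)=(\zeta^a z_1^b,\zeta^c z_1^d,\zeta^e z_1^f z_2)$. The order of $\pi_1(L)$ equals the intersection number of the two meridians, namely $|\det(\bar n^1,\bar n^3)|$ computed in $\mathbb{Z}^3/\mathbb{Z} n^2$; expanding along the $n^2$-direction identifies this with $|\det(n^1,n^3,n^2)|=\det(n^1,n^2,n^3)=b$, where the positivity hypothesis fixes the sign. This establishes the first formula, and shows that the entry $b$ of $A_0$ agrees with $\det(n^1,n^2,n^3)$.

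Third comes the Euler class. Over $V_0$ the normal bundle is trivialized by the coordinate $w_{02}$ and over $V_1$ by $w_{12}$; both are rotated by $S^1_{n^2}$, so the transition function $g\colon C\to\mathbb{C}^{\times}$ of $E$ is the $z_2$-twist $\zeta^e z_1^f$ appearing in $\tilde A$. The Euler class in $H^2(L;\mathbb{Z})\cong H_1(L;\mathbb{Z})\cong\mathbb{Z}/b\mathbb{Z}$ is the winding of $g$ along the meridian bounding in $V_0$, which equals the exponent $f$ read modulo the relation coming from $\bar n^1$ bounding; tracking $f$ through the changes of basis between $\{n^1,n^2\}$, $\{n^3,n^2\}$ and the completion $l^2$ expresses it as $f\equiv\det(n^1,n^3,l^2)\pmod b$. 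I would finish by checking well-definedness: replacing $l^2$ by $l^2+\lambda n^2+\mu n^3$ changes $\det(n^1,n^3,l^2)$ by $\lambda\det(n^1,n^3,n^2)=-\lambda b\equiv 0$, so the right-hand side depends only on its class mod $b$, consistent with $H^2(L;\mathbb{Z})=\mathbb{Z}/b\mathbb{Z}$.

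The main obstacle will be the sign and orientation bookkeeping: the determinant formulas are correct only for a coherent choice of orientations of the normal disks, of the $\Sigma^i$, and of the Heegaard torus $C$, and the hypothesis $\det(n^1,n^2,n^3)>0$ must be propagated through each $\GL(3;\mathbb{Z})$ change of basis so that $b$ comes out positive and the twist lands on $\det(n^1,n^3,l^2)$ rather than its negative. In particular I must verify that $A_0$ satisfies $\det A_0=-1$, as in the Heegaard gluing used in the proof of Lemma \ref{Characterizationbyk}, and that the normal-twist exponent is exactly $\det(n^1,n^3,l^2)$ before reduction; this is the one place where a careless convention would flip a sign in the final answer.
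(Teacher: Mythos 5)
Your proposal is correct and takes essentially the same route as the paper: a $T^3$-invariant genus-one Heegaard splitting of $L$ with meridian disks bounded by orbits of the circles generated by $n^1$ and $n^3$, $n^2$ generating the principal rotation on the normal bundle, equivariant trivializations over the two solid tori, and an attaching map of the triangular shape from which $b$ is the meridian intersection number and $f$ the normal-twist exponent. The one step you leave as a sketch --- ``tracking $f$ through the changes of basis'' --- is exactly what the paper executes in a single stroke via the formula \eqref{EquationforA}, writing the attaching matrix as $\begin{pmatrix} l^{2} & n^{3} & n^{2} \end{pmatrix}^{-1}\begin{pmatrix} l^{1} & n^{1} & n^{2} \end{pmatrix}$ and extracting the $(1,2)$- and $(3,2)$-entries by Cramer's rule, which gives $a_{12}=\det\begin{pmatrix} n^{1} & n^{2} & n^{3} \end{pmatrix}$ and $a_{32}=\det\begin{pmatrix} n^{1} & n^{3} & l^{2} \end{pmatrix}$, confirming both your predicted answer and your sign bookkeeping (including your well-definedness check of $f$ modulo $b$, which the paper omits).
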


\begin{proof}
We denote the normal bundle of $L$ in $M$ by $\nu$. We fix a Heegaard decomposition of genus $1$ of $L$ and compute $b$ and $f$ by $T^3$-equivariantly trivialising $\nu$ on each solid torus in $L$. Let $\Sigma_{12}$ and $\Sigma_{23}$ be the singular $S^1$ orbits of the $T^3$-action which are contained in $L$ where the image of $\Sigma_{ij}$ by the contact moment map for the $T^3$-action is $P^{i} \cap P^{j}$ for $(i,j)=(1,2)$ and $(2,3)$. Note that 
\begin{enumerate}
\item the $S^1$-action generated by $n^{2}$ is the principal $S^1$-action on $\nu$
\item an orbit of the $S^1$-action generated by $n^{1}$ is bounded by a disk $D_1$ near $\Sigma_{12}$ and
\item an orbit of the $S^1$-action generated by $n^{3}$ is bounded by a disk $D_2$ near $\Sigma_{23}$.
\end{enumerate}
We can take a Heegaard decomposition $L=V_1 \cup V_2$ of $L$ of genus $1$ so that $D_i$ is the meridian disk of the solid torus $V_i$ for $i=1$ and $2$. Then we can trivialize $\nu$ equivariantly on $V_i$ as follows: Let $l^{1}$ be an element of $\mathbb{Z}^{3}$ which satisfies $\det \begin{psmallmatrix} l^{1} & n^{1} & n^{2} \end{psmallmatrix}=1$. Let $l^{2}$ be an element of $\mathbb{Z}^{3}$ which satisfies $\det \begin{psmallmatrix} l^{2} & n^{2} & n^{3} \end{psmallmatrix}=1$. Then $\nu|_{V_i}$ is equivariantly isomorphic to $(S^1 \times D^2)_{i} \times \mathbb{R}^2$ where the $T^3$-action on $(S^1 \times D^2)_{i} \times \mathbb{R}^2$ is the product of the rotation for $i=1$ and $2$. Then there exists a tubular neighborhood $U$ of $L$ in $M$ such that $U$ is diffeomorphic to $((S^1 \times D^2)_{1} \times \mathbb{R}^2) \cup_{A} ((S^1 \times D^2)_{2} \times \mathbb{R}^2)$ where the identification $A \colon ((S^1 \times \partial D^2)_{1} \times \mathbb{R}^2) \longrightarrow ((S^1 \times \partial D^2)_{2} \times \mathbb{R}^2$ is written as
\begin{equation}
A (\zeta,z_1,z_2) = (\zeta^{a_{11}} z_{1}^{a_{12}} z_{2}^{a_{13}} , \zeta^{a_{21}} z_{1}^{a_{22}} z_{2}^{a_{23}}, \zeta^{a_{31}} z_{1}^{a_{32}} z_{2}^{a_{33}})
\end{equation}
where $a_{ij}$ is defined by 
\begin{equation}\label{EquationforA}
\begin{pmatrix} a_{11} & a_{12} & a_{13} \\ a_{21} & a_{22} & a_{23} \\ a_{31} & a_{32} & a_{33} \end{pmatrix}=\begin{pmatrix} l^{2} & n^{3} & n^{2} \end{pmatrix}^{-1} \begin{pmatrix} l^{1} & n^{1} & n^{2} \end{pmatrix}.
\end{equation}
Then the cardinality $b$ of $\pi_1(L)$ is the absolute value of $a_{12}$ and $f$ is equal to $a_{32} \mod b$. Hence Lemma \ref{GoodCone} is proved by computing the $(1,2)$-th and $(3,2)$-th entries of the right hand side of \eqref{EquationforA}.
\end{proof}

We will construct good cones corresponding to a contact toric manifold without a gradient manifold which can be blown down to a closed orbit of the Reeb flow by the method. Note that we treat open cones for simplicity of notation, though the image of the moment maps are closed cones with nonempty interior.

Let $\{n^{i}\}_{i=0}^{k+2}$ be the set of primitive vectors in $\mathbb{Z}^{3}$ and define cones $\Delta$ and $\Delta'$ by 
\begin{equation}
\Delta=\{v \in \mathbb{R}^{3} | n^{i} \cdot v > 0, 0 \leq i \leq k+2 \}
\end{equation}
We put $n^{k+3}=n^{0}$. We denote the plane defined by $n^{i}$ by $P^{i}$ for $i=0$, $1$, $\cdots$, $k+3$. We assume that $\det \begin{psmallmatrix} n^{0} & n^{1} & n^{2} \end{psmallmatrix} > 0$.

\begin{lem}\label{FacesOfDelta}
The set of the faces of $\Delta$ is $\{\overline{\Delta} \cap P^{i}\}_{i=0}^{k+2}$ and the pair $(\overline{\Delta} \cap P^{i},\overline{\Delta} \cap P^{i+1})$ is adjacent in $\overline{\Delta}$ for $i$ satisfying $0 \leq i \leq k+2$ if and only if $\det \begin{psmallmatrix} n^{i} & n^{i+1} & n^{j} \end{psmallmatrix} > 0$ for every $i$ and $j$ satisfying $0 \leq i \leq k+2$, $0 \leq j \leq k+2$ and $j \neq i$, $i+1$.
\end{lem}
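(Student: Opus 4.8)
The plan is to translate the determinant conditions into statements about the geometry of the closed cone $\overline{\Delta}$ in $\mathbb{R}^{3}$ via the scalar triple product identity $\det \begin{pmatrix} n^{i} & n^{i+1} & n^{j} \end{pmatrix} = (n^{i} \times n^{i+1}) \cdot n^{j}$. Writing $r_{i} = n^{i} \times n^{i+1}$, the vector $r_{i}$ is orthogonal to $n^{i}$ and $n^{i+1}$, so it lies in $P^{i} \cap P^{i+1}$, while $\det \begin{pmatrix} n^{i} & n^{i+1} & n^{j} \end{pmatrix}$ records the sign of $r_{i}$ against the remaining half-space $\{ n^{j} \cdot v \geq 0 \}$. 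Before treating either implication I would record two structural facts: the standing hypothesis $\det \begin{pmatrix} n^{0} & n^{1} & n^{2} \end{pmatrix} > 0$ makes $n^{0}, n^{1}, n^{2}$ linearly independent, so $\overline{\Delta}$ is contained in a simplicial cone and is therefore pointed; and a point of a pointed polyhedral cone in $\mathbb{R}^{3}$ spans an extreme ray exactly when its set of active defining constraints has rank $2$.

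For the ``if'' direction I would argue as follows. Fix $i$; since there are at least three normals some index $j \neq i, i+1$ exists, and the hypothesis gives $\det \begin{pmatrix} n^{i} & n^{i+1} & n^{j} \end{pmatrix} > 0$, so $r_{i} \neq 0$ and $n^{i}, n^{i+1}$ are independent. By the triple product identity $n^{j} \cdot r_{i} > 0$ for all $j \neq i, i+1$ while $n^{i} \cdot r_{i} = n^{i+1} \cdot r_{i} = 0$; hence $r_{i} \in \overline{\Delta}$, its active constraints are exactly $\{ i, i+1 \}$, and it spans an extreme ray of $\overline{\Delta}$. Each $\overline{\Delta} \cap P^{i}$ then contains the two extreme rays $r_{i-1}$ and $r_{i}$, which are distinct, since if they spanned the same ray we would get $n^{i-1} \cdot r_{i} = 0$, contradicting positivity of $\det \begin{pmatrix} n^{i} & n^{i+1} & n^{i-1} \end{pmatrix}$. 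Thus $\overline{\Delta} \cap P^{i}$ is $2$-dimensional, hence a facet. Conversely every facet is supported by one of the defining hyperplanes $P^{i}$, so the facets are exactly the $\overline{\Delta} \cap P^{i}$, and facets $i$ and $i+1$ share the extreme ray $r_{i}$ and are therefore adjacent.

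For the ``only if'' direction I would start from the given facial structure: adjacency of $\overline{\Delta} \cap P^{i}$ and $\overline{\Delta} \cap P^{i+1}$ produces a common extreme ray $e_{i}$ lying in $P^{i} \cap P^{i+1}$, hence a positive multiple of $\varepsilon_{i} r_{i}$ for some sign $\varepsilon_{i} \in \{ \pm 1 \}$ (the normals $n^{i}, n^{i+1}$ are independent because distinct facets have distinct supporting hyperplanes). Since the faces are precisely the $\overline{\Delta} \cap P^{j}$, the extreme ray $e_{i}$ meets no third hyperplane, so $n^{j} \cdot e_{i} > 0$, that is $\varepsilon_{i} \det \begin{pmatrix} n^{i} & n^{i+1} & n^{j} \end{pmatrix} > 0$ for every $j \neq i, i+1$. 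It remains to show $\varepsilon_{i} = +1$ for all $i$. Comparing the condition for the pair $(i, i+1)$ at $j = i+2$ with the condition for $(i+1, i+2)$ at $j = i$, and using the cyclic invariance $\det \begin{pmatrix} n^{i+1} & n^{i+2} & n^{i} \end{pmatrix} = \det \begin{pmatrix} n^{i} & n^{i+1} & n^{i+2} \end{pmatrix}$, I obtain $\varepsilon_{i} = \varepsilon_{i+1} = \mathrm{sign} \det \begin{pmatrix} n^{i} & n^{i+1} & n^{i+2} \end{pmatrix}$; so all signs agree, and the normalization $\det \begin{pmatrix} n^{0} & n^{1} & n^{2} \end{pmatrix} > 0$ forces the common value to be $+1$.

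The main obstacle is exactly this orientation bookkeeping in the second implication: the facial data determine each edge ray only up to sign, and one must prove that the local signs $\varepsilon_{i}$ are globally consistent and are pinned down by the single normalization $\det \begin{pmatrix} n^{0} & n^{1} & n^{2} \end{pmatrix} > 0$. The propagation argument above, combined with the cyclic invariance of the triple product, is what resolves it. The remaining ingredients, namely pointedness of $\overline{\Delta}$, the rank criterion for extreme rays, and the fact that every facet lies in a defining hyperplane, are standard facts about polyhedral cones that I would simply invoke.
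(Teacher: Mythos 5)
Your proof is correct and takes essentially the same route as the paper's: both rest on the identity $\det \begin{psmallmatrix} n^{i} & n^{i+1} & n^{j} \end{psmallmatrix} = (n^{i} \times n^{i+1}) \cdot n^{j}$, identify $r_{i} = n^{i} \times n^{i+1}$ as the candidate edge direction in $P^{i} \cap P^{i+1}$, and in the converse direction your $\varepsilon_{i}$-propagation is precisely the paper's inductive step, namely that each pair's determinants share one sign which the cyclic identity $\det \begin{psmallmatrix} n^{i+1} & n^{i+2} & n^{i} \end{psmallmatrix} = \det \begin{psmallmatrix} n^{i} & n^{i+1} & n^{i+2} \end{psmallmatrix}$ propagates and the normalization $\det \begin{psmallmatrix} n^{0} & n^{1} & n^{2} \end{psmallmatrix} > 0$ pins to $+1$. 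The only difference is presentational: you make explicit the standard polyhedral-cone facts (pointedness, the rank criterion for extreme rays, that every facet lies in a defining hyperplane) that the paper's first paragraph simply asserts.
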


\begin{proof}
$n^{i} \times n^{j}$ is tangent to $P^{i} \cap P^{j}$. Hence $\overline{\Delta} \cap P^{i}$ and $\overline{\Delta} \cap P^{i+1}$ are two adjacent faces of $\Delta$ if and only if the following two conditions are true:
\begin{enumerate}
\item $n^{i} \times n^{i+1}$ or $-n^{i} \times n^{i+1}$ is contained in $\overline{\Delta}$ and
\item $n^{i} \times n^{i+1}$ is not contained in the planes defined by $n^{j}$ for any $j$ not equal to $i$ nor $i+1$.
\end{enumerate}
Since
\begin{equation}
\overline{\Delta}=\{v \in \mathbb{R}^{3} | n^{i} \cdot v \geq 0, 0 \leq i \leq k+1 \},
\end{equation} 
$n^{i} \times n^{i+1}$ and $-n^{i} \times n^{i+1}$ are contained in $\overline{\Delta}$ if and only if $(n^{i} \times n^{i+1}) \cdot n^{j} \geq 0$ for every $j$ and $(n^{i} \times n^{i+1}) \cdot n^{j} \leq 0$ for every $j$, respectively. Since $(a \times b) \cdot c=\det \begin{psmallmatrix} a & b & c \end{psmallmatrix}$, $n^{i} \times n^{i+1}$ is not contained in the planes defined by $n^{j}$ for any $j$ not equal to $i$ nor $i+1$ if and only if $\det \begin{psmallmatrix} n^{i} & n^{i+1} & n^{j} \end{psmallmatrix}$ is nonzero for every $i$ and $j$ satisfying $j$ is equal to neither $i$ nor $i+1$. 

We show the ``if'' part. Assume that $\det \begin{psmallmatrix} n^{i} & n^{i+1} & n^{j} \end{psmallmatrix} > 0$ for every $i$ and $j$ satisfying $0 \leq i \leq k+2$, $0 \leq j \leq k+2$ and $j \neq i$, $i+1$. Then by the argument in the previous paragraph, $\overline{\Delta} \cap P^{i}$ and $\overline{\Delta} \cap P^{i+1}$ are two adjacent faces of $\Delta$ for every $i$ satisfying $0 \leq i \leq k+2$. Since every $n^{i}$ defines a face of $\Delta$ and $n^{i}$ cannot define two faces of $\Delta$ by the convexity, the set of faces of $\Delta$ is $\{\overline{\Delta} \cap P^{i}\}_{i=0}^{k+2}$. Hence the ``if'' part is proved.

We show the ``only if'' part. Assume that the set of the faces of $\Delta$ is $\{\overline{\Delta} \cap P^{i}\}_{i=0}^{k+2}$ and the pair $(\overline{\Delta} \cap P^{i},\overline{\Delta} \cap P^{i+1})$ is adjacent in $\overline{\Delta}$ for $i=0$, $1$, $\cdots$, $k+2$. By the argument in the first paragraph of the proof, for each $i$, we have $\det \begin{psmallmatrix} n^{i} & n^{i+1} & n^{j} \end{psmallmatrix} > 0$ for every $j$ satisfying $j \neq i$, $i+1$ or $\det \begin{psmallmatrix} n^{i} & n^{i+1} & n^{j} \end{psmallmatrix} < 0$ for every $j$ satisfying $j \neq i$, $i+1$. By the assumption $\det \begin{psmallmatrix} n^{0} & n^{1} & n^{2} \end{psmallmatrix} > 0$, we have $\det \begin{psmallmatrix} n^{0} & n^{1} & n^{j} \end{psmallmatrix} > 0$ for every $j$ satisfying $j \neq 0$, $1$. Since we have $\det \begin{psmallmatrix} n^{1} & n^{2} & n^{0} \end{psmallmatrix} > 0$, we have $\det \begin{psmallmatrix} n^{1} & n^{2} & n^{j} \end{psmallmatrix} > 0$ for every $j$ satisfying $j \neq 1$, $2$. Inductively we have $\det \begin{psmallmatrix} n^{i} & n^{i+1} & n^{i-1} \end{psmallmatrix} > 0$ for every $i$. Hence by the assumption, we have $\det \begin{psmallmatrix} n^{i} & n^{i+1} & n^{j} \end{psmallmatrix} > 0$ for every $i$ and $j$ satisfying $0 \leq i \leq k+2$, $0 \leq j \leq k+2$ and $j \neq i$, $i+1$.
\end{proof}

\begin{lem}\label{FacesOfDelta2}
If $\{n^{i}\}_{i=0}^{k}$ satisfies the following conditions:
\begin{enumerate}
\item $(n^{0} \times n^{i})(3) > 0$ for every $i$ satisfying $1 \leq i \leq k$ and
\item $(n^{i} \times n^{i+1})(3) > 0$ for every $i$ satisfying $0 \leq i \leq k-1$,
\end{enumerate}
then we have $(n^{i} \times n^{j})(3)>0$ for every $i$ and $j$ satisfying $i<j$.
\end{lem}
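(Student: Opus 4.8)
The plan is to reduce the three–dimensional statement to an elementary fact about signed areas of planar vectors. The key observation is that the quantity $(n^{i}\times n^{j})(3)$ depends only on the first two coordinates: writing $\bar n^{i}=(n^{i}(1),n^{i}(2))$ for the projection of $n^{i}$ to the first two axes, one has
\begin{equation}
(n^{i}\times n^{j})(3)=n^{i}(1)n^{j}(2)-n^{i}(2)n^{j}(1)=\det\begin{pmatrix} n^{i}(1) & n^{j}(1) \\ n^{i}(2) & n^{j}(2) \end{pmatrix}.
\end{equation}
Thus the whole lemma is a statement about the planar determinants $\det(\bar n^{i},\bar n^{j})$, and I will work entirely with these. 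Note that hypotheses (i) and (ii) immediately force $\bar n^{0}\neq 0$ and $\bar n^{i}\neq 0$ for every $i$, since each appears in a nonzero determinant.

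Next I would introduce coordinates adapted to $\bar n^{0}$. Since $\bar n^{0}\neq 0$, the vector $w=(-n^{0}(2),n^{0}(1))$ satisfies $\det(\bar n^{0},w)=|\bar n^{0}|^{2}>0$, so $\{\bar n^{0},w\}$ is a positively oriented basis of $\mathbb{R}^{2}$. Writing $\bar n^{i}=p_{i}\bar n^{0}+q_{i}w$, bilinearity of the determinant gives $\det(\bar n^{0},\bar n^{i})=q_{i}\det(\bar n^{0},w)$ and, more generally, $\det(\bar n^{i},\bar n^{j})=(p_{i}q_{j}-q_{i}p_{j})\det(\bar n^{0},w)$. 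Hence the sign of $(n^{i}\times n^{j})(3)$ is exactly the sign of $p_{i}q_{j}-q_{i}p_{j}$. Condition (i) now reads $q_{i}>0$ for all $i\geq 1$; geometrically this says all the $\bar n^{i}$ with $i\geq 1$ lie strictly in the open half–plane on the positive side of $\bar n^{0}$. I expect this half–plane confinement to be the real content of the lemma, and condition (i) is exactly what provides it: it is what rules out the vectors ``wrapping around'' by more than half a turn, which is what would otherwise let the consecutive positivity of (ii) fail to propagate to all pairs.

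With $q_{i}>0$ for $i\geq 1$ I can define slopes $s_{i}=p_{i}/q_{i}$, and then for $1\leq i,j$ dividing by $q_{i}q_{j}>0$ shows
\begin{equation}
(n^{i}\times n^{j})(3)>0 \iff p_{i}q_{j}-q_{i}p_{j}>0 \iff s_{i}>s_{j}.
\end{equation}
Condition (ii) applied to indices $i\geq 1$ then gives $s_{i}>s_{i+1}$, so the slopes are strictly decreasing: $s_{1}>s_{2}>\cdots>s_{k}$. Consequently $s_{i}>s_{j}$ whenever $1\leq i<j$, which yields $(n^{i}\times n^{j})(3)>0$ for all such pairs. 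The remaining pairs with $i=0$ are handled directly by hypothesis (i). This completes the argument; since the reduction to planar slopes is immediate once the half–plane structure is recognized, the only genuine step is that recognition, and the rest is bookkeeping of signs together with the trivial transitivity of a linear order on the slopes.
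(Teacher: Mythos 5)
Your proof is correct, and every step checks out: the reduction $(n^{i}\times n^{j})(3)=\det(\bar n^{i},\bar n^{j})$ is exactly right, condition (i) indeed translates to $q_{i}>0$ (since $\det(\bar n^{0},w)=|\bar n^{0}|^{2}>0$), the sign identity $\det(\bar n^{i},\bar n^{j})=(p_{i}q_{j}-q_{i}p_{j})\det(\bar n^{0},w)$ is correct, and dividing by $q_{i}q_{j}>0$ legitimately converts positivity into the strict slope inequality $s_{i}>s_{j}$, after which (ii) gives the strictly decreasing chain $s_{1}>s_{2}>\cdots>s_{k}$ and transitivity finishes all pairs with $i\geq 1$, the pairs $(0,j)$ being hypothesis (i) verbatim. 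Your route shares the paper's two starting observations — that the third component of the cross product is a planar determinant, and that (i) confines $\{\bar n^{i}\}_{i\geq 1}$ to the open half-plane on the positive side of $\bar n^{0}$ — but diverges in execution: the paper argues by induction on $k$, propagating signs via an ad hoc interpolation lemma for linear functionals $f_{i}(v)=\det(v,\pi(n^{i}))$ on triples of half-plane vectors, whereas you coordinatize the half-plane once and for all and replace the sign-propagation by the transitivity of a linear order on slopes. What your version buys is a shorter, induction-free argument in which the monotonicity is explicit rather than re-derived at each inductive step; what the paper's version buys is a template that is reused almost verbatim in Lemma \ref{FacesOfDelta3}, where the analogous statement for the $3\times 3$ determinants $\det\begin{psmallmatrix} n^{h} & n^{i} & n^{j}\end{psmallmatrix}$ is proved by the same inductive half-plane scheme — a setting where the planar slope function is no longer directly available. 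So your argument is a genuine simplification for this lemma, though not one that generalizes as mechanically to the paper's next statement.
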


\begin{proof}
We show Lemma \ref{FacesOfDelta2} by the induction on $k$. The case where $k=2$ is clear by the assumption. Assume that Lemma \ref{FacesOfDelta2} is true for $k=s-1$. We show 
\begin{equation}\label{Righthanded6}
(n^{i} \times n^{s})(3)>0
\end{equation}
for every $i<s$. Note that $(v^{1} \times v^{2})(3)>0$ if and only if $\det_{\mathbb{R}^2} \begin{psmallmatrix} \pi(v^{1}) & \pi(v^{2}) \end{psmallmatrix} > 0$ where $\pi \colon \mathbb{R}^{3} \longrightarrow \mathbb{R}^{2}$ is the projection defined by $\pi(x_1,x_2,x_3)=(x_1,x_2)$. By the induction hypothesis, we have
\begin{equation}
\det_{\mathbb{R}^2} \begin{psmallmatrix} \pi(n^{i}) & \pi(n^{j}) \end{psmallmatrix} = (n^{i} \times n^{j})(3) > 0
\end{equation}
for every $i$ and $j$ satisfying $i < j < s$. By the assumption, we have
\begin{equation}
\det _{\mathbb{R}^2} \begin{psmallmatrix} \pi(n^{0}) & \pi(n^{s}) \end{psmallmatrix} = (n^{0} \times n^{s})(3) > 0.
\end{equation}
Hence $\{\pi(n^{i})\}_{i=1}^{s}$ is contained in the half space $H=\{v \in \mathbb{R}^{2} | \det_{\mathbb{R}^2} \begin{psmallmatrix} \pi(n^{0}) & v \end{psmallmatrix} > 0\}$. Consider linear functions $f_{i}$ on $\mathbb{R}^{2}$ defined by $f_{i}(v)=\det_{\mathbb{R}^2} \begin{psmallmatrix} v & \pi(n^{i}) \end{psmallmatrix}$. Let $g_1$, $g_2$ be linear functions on $\mathbb{R}^{2}$. Note that for any vectors $v^{1}$, $v^{2}$ and $v^{3}$ in $H$, if we have $g_{1}(v^{1}) < 0, g_{1}(v^{2})=0, g_{1}(v^{3}) > 0$ and $g_{2}(v^{1}) > 0$, $g_{2}(v^{3}) > 0$, then we have $g_{2}(v^{2}) > 0$. Since we have $f_{i}(v^{0}) > 0$, $f_{i}(v^{i}) = 0$, $f_{i}(v^{s-1}) > 0$ and $f_{s}(v^{0}) > 0$, $f_{s}(v^{s-1}) > 0$, we have $f_{s}(v^{i})>0$. Hence the proof of \eqref{Righthanded6} is completed.
\end{proof}

\begin{lem}\label{FacesOfDelta3}
If $\{n^{i}\}_{i=0}^{k+1}$ satisfies the following conditions:
\begin{enumerate}
\item $(n^{0} \times n^{i})(3) > 0$ for every $i$ satisfying $1 \leq i \leq k+1$,
\item $(n^{i} \times n^{i+1})(3) > 0$ for every $i$ satisfying $0 \leq i \leq k$ and
\item $\det \begin{psmallmatrix} n^{i} & n^{i+1} & n^{i+2} \end{psmallmatrix} > 0$ for every $i$ satisfying $0 \leq i \leq k-1$,
\end{enumerate}
then we have 
\begin{enumerate}
\item $(n^{i} \times n^{j})(3)>0$ for every $i<j$ and
\item $\det \begin{psmallmatrix} n^{h} & n^{i} & n^{j} \end{psmallmatrix} > 0$ for every $h<i<j$.
\end{enumerate}
\end{lem}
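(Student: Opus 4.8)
The first conclusion is immediate from Lemma \ref{FacesOfDelta2}: hypotheses (i) and (ii) of Lemma \ref{FacesOfDelta3} are exactly the hypotheses of Lemma \ref{FacesOfDelta2} for the family $\{n^{i}\}_{i=0}^{k+1}$, so I obtain $(n^{i} \times n^{j})(3) > 0$ for all $i < j$. Writing $v_{i} = \pi(n^{i})$ with $\pi(x_{1},x_{2},x_{3}) = (x_{1},x_{2})$ and recalling $(n^{i} \times n^{j})(3) = \det_{\mathbb{R}^2} \begin{psmallmatrix} v_{i} & v_{j} \end{psmallmatrix}$, this says that the planar vectors $v_{0}, \dots, v_{k+1}$ are strictly angularly ordered; since moreover $\det_{\mathbb{R}^2}\begin{psmallmatrix} v_{0} & v_{k+1}\end{psmallmatrix} > 0$, their total angular spread is less than $\pi$ and they lie in a common open half-plane.

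The plan for conclusion (ii) is to turn it into an elementary planar convexity statement. First I would apply a rotation of $\mathbb{R}^{3}$ fixing the third axis, which preserves both the $2\times 2$ determinants of the projections and the $3\times 3$ determinants, so that every $n^{i}$ acquires a positive first coordinate (possible by the half-plane property just established). Rescaling each $n^{i}$ by the reciprocal of its first coordinate is a positive scaling of one column, hence harmless for every determinant sign, and lets me assume $n^{i} = (1, s_{i}, c_{i})$. Then $\det_{\mathbb{R}^2}\begin{psmallmatrix} v_{i} & v_{j} \end{psmallmatrix} = s_{j} - s_{i}$, so conclusion (i) gives $s_{0} < s_{1} < \dots < s_{k+1}$, and a direct expansion yields
\[
\det \begin{pmatrix} n^{h} & n^{i} & n^{j} \end{pmatrix} = \det \begin{pmatrix} 1 & 1 & 1 \\ s_{h} & s_{i} & s_{j} \\ c_{h} & c_{i} & c_{j} \end{pmatrix},
\]
which is twice the signed area of the triangle with vertices $Q_{l} := (s_{l}, c_{l})$. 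For $s_{h} < s_{i} < s_{j}$ this determinant is positive precisely when the middle vertex $Q_{i}$ lies strictly below the segment $Q_{h}Q_{j}$. Thus hypothesis (iii) becomes the assertion that each interior vertex $Q_{m}$ (for $1 \le m \le k$) lies strictly below the chord $Q_{m-1}Q_{m+1}$, while conclusion (ii) is the assertion that every $Q_{i}$ lies strictly below every chord $Q_{h}Q_{j}$ with $h < i < j$.

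It then remains to derive this global convexity from the local convexity given by (iii). Writing $\sigma_{m}$ for the slope of the segment $Q_{m-1}Q_{m}$, the local condition is equivalent to $\sigma_{m} < \sigma_{m+1}$ for each interior $m$, i.e. $\sigma_{1} < \sigma_{2} < \dots < \sigma_{k+1}$. For any $h < i < j$ the slope of $Q_{h}Q_{i}$ is the weighted average $\big(\sum_{m=h+1}^{i}(s_{m}-s_{m-1})\sigma_{m}\big)/\big(\sum_{m=h+1}^{i}(s_{m}-s_{m-1})\big)$ of $\sigma_{h+1}, \dots, \sigma_{i}$ with positive weights, hence is at most $\sigma_{i}$; similarly the slope of $Q_{i}Q_{j}$ is at least $\sigma_{i+1}$. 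Since $\sigma_{i} < \sigma_{i+1}$, the slope of $Q_{h}Q_{i}$ is strictly less than that of $Q_{i}Q_{j}$, which is exactly the statement that $Q_{i}$ lies strictly below $Q_{h}Q_{j}$, proving conclusion (ii).

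The only genuinely non-formal point is the passage from local to global convexity, and the slope computation above handles it cleanly by reducing everything to the monotonicity of the $\sigma_{m}$; the remaining effort is the bookkeeping of the normalization and the verification that the $3 \times 3$ determinant equals the claimed signed area, both of which are routine.
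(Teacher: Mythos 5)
Your proof is correct, and for conclusion (ii) it takes a genuinely different route from the paper's. The treatment of conclusion (i) coincides: both simply invoke Lemma \ref{FacesOfDelta2}. For (ii), the paper argues by an outer induction on $k$ combined with a descending inner induction establishing $\det \begin{psmallmatrix} n^{0} & n^{i} & n^{s} \end{psmallmatrix} > 0$, carried out by comparing signs of the linear functionals $v \mapsto n^{i} \cdot v$ at normalized cross products lying on a common affine line; it then applies a matrix in $\SL(3;\mathbb{Z})$ taking $n^{s}$ to $\begin{psmallmatrix} 0 \\ 0 \\ 1 \end{psmallmatrix}$ and feeds the transformed family back into Lemma \ref{FacesOfDelta2}. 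You avoid induction entirely: after a rotation about the third axis (harmless, since such a rotation preserves both the planar determinants $(n^{i} \times n^{j})(3)$ and the $3 \times 3$ determinants) and positive rescalings of columns, every $n^{i}$ lies in the affine slice $x_{1}=1$; hypothesis (iii) then reads as local convexity of the polygonal graph through the points $Q_{i}=(s_{i},c_{i})$, and conclusion (ii) as global convexity, which you deduce from the strict monotonicity of the chord slopes $\sigma_{m}$ via the weighted-average identity. Your version is more elementary and exposes the geometric content plainly — the lemma is exactly the standard fact that a locally convex polygonal graph over an interval is globally convex — while the paper's argument stays inside the integral cross-product toolkit it reuses elsewhere (the $\SL(3;\mathbb{Z})$ normalization also appears in Lemma \ref{LastVector} and Proposition \ref{ConstructionOfToricManifolds}); since only signs of determinants matter here, your non-integral rotation and rescaling are perfectly legitimate.

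One sentence deserves tightening: the open half-plane property does not follow from positivity of the consecutive determinants together with $\det_{\mathbb{R}^2} \begin{psmallmatrix} \pi(n^{0}) & \pi(n^{k+1}) \end{psmallmatrix} > 0$ alone — planar vectors at angles $0$, $0.8\pi$, $1.6\pi$, $0.4\pi$ (in this order) satisfy both conditions yet fit in no open half-plane. What you should quote is the positivity of all the determinants $\det_{\mathbb{R}^2} \begin{psmallmatrix} \pi(n^{0}) & \pi(n^{j}) \end{psmallmatrix}$, which your already-established conclusion (i) supplies: lifting angles so that $\theta_{j} - \theta_{j-1} \in (0,\pi)$, one gets $\theta_{j} - \theta_{0} \in (0,2\pi)$ inductively, and positivity against $\pi(n^{0})$ then pins $\theta_{j} - \theta_{0}$ into $(0,\pi)$, giving total spread less than $\pi$. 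With that deduction made explicit, your argument is complete.
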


\begin{proof}
(i) follows from Lemma \ref{FacesOfDelta2}. We show (ii) by the induction on $k$. The case where $k=2$ is clear by the assumption $\det \begin{psmallmatrix} n^{0} & n^{1} & n^{2} \end{psmallmatrix} > 0$. Assume that Lemma \ref{FacesOfDelta2} is true for $k=s-1$. It suffices to show
\begin{equation}\label{Righthanded5}
\det \begin{psmallmatrix} n^{h} & n^{i} & n^{s} \end{psmallmatrix} > 0
\end{equation}
for every $h$ and $i$ satisfying $h<i<s$ to complete the induction. 

We show
\begin{equation}\label{Righthanded7}
\det \begin{psmallmatrix} n^{0} & n^{i} & n^{s} \end{psmallmatrix} > 0.
\end{equation}
for $1 \leq i \leq s-1$ by the induction on $i$. We consider the case where $i=s-1$. Note that $\frac{1}{n^{0} \times n^{s-1}(3)} n^{0} \times n^{s-1}$, $\frac{1}{n^{s-2} \times n^{s-1}(3)} n^{s-2} \times n^{s-1}$ and $\frac{1}{n^{s-1} \times n^{s}(3)} n^{s-1} \times n^{s}$ are contained in the $1$-dimensional affine space $H=\{v \in \mathbb{R}^{3} | v(3)=1, n^{s-1} \cdot v = 0\}$. For linear functions $g'_{1}$ and $g'_{2}$ on $\mathbb{R}^{3}$ and vectors $v^{1}$, $v^{2}$ and $v^{3}$ in $H$, if we have $g'_{1}(v^{1}) < 0$, $g'_{1}(v^{2}) = 0$, $g'_{1}(v^{3}) > 0$ and $g'_{2}(v^{1}) = 0$, $g'_{2}(v^{2}) > 0$, then we have $g'_{2}(v^{3}) > 0$. Let $f^{i}$ be the linear function on $\mathbb{R}^{3}$ defined by $f^{i}(v)=n^{i} \cdot v$. By (i), $n^{0} \times n^{s-1}(3)$, $n^{s-2} \times n^{s-1}(3)$ and $n^{s-1} \times n^{s}(3)$ are positive. Hence by the assumption, we have $f^{s-2}(\frac{1}{n^{0} \times n^{s-1}(3)} n^{0} \times n^{s-1}) < 0$, $f^{s-2}(\frac{1}{n^{s-2} \times n^{s-1}(3)} n^{s-2} \times n^{s-1}) = 0$ and $f^{s-2}(\frac{1}{n^{s-1} \times n^{s}(3)} n^{s-1} \times n^{s}) > 0$, $f^{0}(\frac{1}{n^{0} \times n^{s-1}(3)} n^{0} \times n^{s-1}) =0$, $f^{0}(\frac{1}{n^{s-2} \times n^{s-1}(3)} n^{s-2} \times n^{s-1}) > 0$, we have $f^{0}(\frac{1}{n^{s-2} \times n^{s-1}(3)} n^{s-2} \times n^{s-1})>0$. Since $n^{s-2} \times n^{s-1}(3)>0$, we have \eqref{Righthanded7} for $i=s-1$. 

Assume that \eqref{Righthanded7} is true for $i=t$. We consider the case of $i=t-1$. Note that $\frac{1}{n^{0} \times n^{t-1}(3)} n^{0} \times n^{t-1}$, $\frac{1}{n^{0} \times n^{t}(3)} n^{0} \times n^{t}$ and $\frac{1}{n^{0} \times n^{s}(3)} n^{0} \times n^{s}$ are contained in the $1$-dimensional affine space $H=\{v \in \mathbb{R}^{3} | v(3)=1, n^{0} \cdot v = 0\}$. For linear functions $g'_{1}$ and $g'_{2}$ on $\mathbb{R}^{3}$ and vectors $v^{1}$, $v^{2}$ and $v^{3}$ in $H$, if we have $g'_{1}(v^{1}) > 0$, $g'_{1}(v^{2}) = 0$, $g'_{1}(v^{3}) < 0$ and $g'_{2}(v^{1}) = 0$, $g'_{2}(v^{2}) < 0$, then we have $g'_{2}(v^{3}) < 0$. By (i), $n^{0} \times n^{t}(3)$, $n^{t-1} \times n^{t}(3)$ and $n^{t} \times n^{s}(3)$ are positive. By the assumption and the induction hypothesis, we have $f^{t}(\frac{1}{n^{0} \times n^{t-1}(3)} n^{0} \times n^{t-1}) > 0$, $f^{t}(\frac{1}{n^{0} \times n^{t}(3)} n^{0} \times n^{t}) = 0$ and $f^{t-1}(\frac{1}{n^{0} \times n^{s}(3)} n^{0} \times n^{s}) < 0$ and $f^{t}(\frac{1}{n^{0} \times n^{t-1}(3)} n^{0} \times n^{t-1}) = 0$, $f^{t}(\frac{1}{n^{0} \times n^{t}(3)} n^{0} \times n^{t}) < 0$. Hence we have $f^{t}(\frac{1}{n^{0} \times n^{s}(3)} n^{0} \times n^{s}) < 0$. Since $n^{0} \times n^{s}(3)>0$, we have \eqref{Righthanded7} for $i=t$. Hence \eqref{Righthanded7} is true for $1 \leq i \leq s-1$.

We change the coordinate of $\mathbb{R}^{3}$ by a matrix $A$ in $\SL(3;\mathbb{Z})$ so that $An^{s}=\begin{psmallmatrix} 0 \\ 0 \\ 1 \end{psmallmatrix}$. Then $\{An^{i}\}_{i=0}^{s}$ satisfies the assumptions of Lemma \ref{FacesOfDelta2} by \eqref{Righthanded7} and the assumption (iii). Hence we have 
\begin{equation}
\det \begin{psmallmatrix} An^{h} & An^{i} & An^{s} \end{psmallmatrix} = (An^{h} \times An^{i})(3) > 0
\end{equation}
for every $h$ and $i$ satisfying $h<i<s$ by Lemma \ref{FacesOfDelta2}. The proof is completed.
\end{proof}

\begin{lem}\label{LastVector}
Let $\{n^{i}\}_{i=0}^{k+1}$ be a set of primitive vectors in $\mathbb{Z}^{3}$ which satisfies $\det \begin{psmallmatrix} n^{i} & n^{i+1} & n^{j} \end{psmallmatrix} > 0$ for every $i$ and $j$ satisfying $0 \leq i \leq k$, $0 \leq j \leq k+1$ and $j \neq i$, $i+1$. Then there exists a vector $n^{k+2}$ in $\mathbb{Z}^{3}$ such that
\begin{enumerate}
\item $\det \begin{psmallmatrix} n^{k+1} & n^{k+2} & n^{j} \end{psmallmatrix} >0$ for $0 \leq j \leq k$ and $\det \begin{psmallmatrix} n^{k+2} & n^{0} & n^{j} \end{psmallmatrix} >0$ for $1 \leq j \leq k+1$ and
\item There exist vectors $l$ and $l'$ in $\mathbb{Z}^{3}$ such that $\det \begin{psmallmatrix} n^{k+1} & n^{k+2} & l \end{psmallmatrix}=1$ and
$\det \begin{psmallmatrix} n^{k+2} & n^{0} & l' \end{psmallmatrix}=1$.
\end{enumerate} 
\end{lem}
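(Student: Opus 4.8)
The plan is to read conditions (i) and (ii) geometrically and then produce $n^{k+2}$ by an explicit lattice construction. By Lemma \ref{FacesOfDelta}, the hypotheses say that $P^0,\dots,P^{k+1}$ form a convex chain of consecutively adjacent faces; conditions (i) are exactly the adjacency conditions that make a new face $P^{k+2}$ adjacent to $P^{k+1}$ and to $P^0$, and conditions (ii) say that the two new edges $P^{k+1}\cap P^{k+2}$ and $P^{k+2}\cap P^0$ are smooth, i.e.\ that $n^{k+1}\times n^{k+2}$ and $n^{k+2}\times n^0$ are primitive vectors of $\mathbb{Z}^3$. Indeed $\det(n^{k+1},n^{k+2},l)=l\cdot(n^{k+1}\times n^{k+2})$, so the existence of $l$ with this determinant equal to $1$ is equivalent to primitivity of $n^{k+1}\times n^{k+2}$, and similarly for $l'$ and $n^{k+2}\times n^0$.

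First I would record the convex-position consequence of the hypotheses: using the cyclic identity $\det(a,b,c)=\det(b,c,a)$ together with Lemma \ref{FacesOfDelta3}, one gets $\det(n^a,n^b,n^c)>0$ for all $a<b<c$. This is the only nontrivial input; everything else is linear algebra and elementary number theory. Having this, I would apply an element of $\SL(3;\mathbb{Z})$ (which preserves all determinants and the lattice) so that $n^{k+1}=(1,0,0)$. Writing $n^{k+2}=(a,b,c)$, $n^0=(p,q,r)$, and letting $\pi$ denote the projection to the last two coordinates, a direct expansion gives $\det(n^{k+1},n^{k+2},n^j)=\det_{\mathbb{R}^2}\bigl((b,c),\pi(n^j)\bigr)$ and $n^{k+1}\times n^{k+2}=(0,-c,b)$; hence the first half of (i) and the first smoothness condition depend only on $(b,c)$, the latter being $\gcd(b,c)=1$.

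Next I would choose $(b,c)$ and then $a$. By convex position the vectors $\pi(n^0),\dots,\pi(n^k)$ lie in an open half-plane in cyclic order, so the set of $(b,c)$ with $\det_{\mathbb{R}^2}\bigl((b,c),\pi(n^j)\bigr)>0$ for all $0\le j\le k$ is a nonempty open sector; I pick a primitive $(b,c)$ in it, which secures the first half of (i) and the first condition of (ii) (and in particular $g:=br-cq>0$, the $j=0$ case). With $(b,c)$ fixed, the second half of (i) is affine in $a$: the coefficient of $a$ in $\det(n^{k+2},n^0,n^j)$ is $\det(n^{k+1},n^0,n^j)$, which is positive for $1\le j\le k$ by convex position and zero for $j=k+1$ (where the inequality reduces to $g>0$, already guaranteed). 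Thus all these inequalities hold for every sufficiently large $a$, so the feasible set of $a$ is an infinite ray. Finally, on this ray I would use the Chinese remainder theorem to arrange the remaining condition $\gcd(br-cq,\,cp-ar,\,aq-bp)=1$: the first coordinate $g=br-cq$ is fixed, and for each prime $\ell\mid g$ the congruence $br\equiv cq$ forces $(q,r)$ to be proportional modulo $\ell$ to the nonzero vector $(b,c)$, whence the values of $a$ making both $cp-ar$ and $aq-bp$ vanish modulo $\ell$ form at most one residue class; avoiding one class modulo each such $\ell$ is possible since $\ell\ge 2$. A suitable $a$ then exists in any tail of the ray, and the resulting $n^{k+2}=(a,b,c)$ is automatically primitive (its content divides $\gcd(b,c)=1$) and satisfies (i) and (ii).

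The step I expect to be the main obstacle is establishing the full convex position $\det(n^a,n^b,n^c)>0$ for all $a<b<c$, since the hypothesis controls only consecutive pairs and a general triple requires the inductive argument of Lemma \ref{FacesOfDelta3} (and, strictly, checking that one may pass to a coordinate frame in which that lemma's hypotheses hold). Once convex position is in hand, the choice of the primitive pair $(b,c)$ in the open sector and the congruence selection of $a$ along the feasible ray are routine, so the construction above completes the proof.
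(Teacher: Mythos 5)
Your construction is correct in substance, but it takes a genuinely different route from the paper's. The paper does not fix a frame adapted to $n^{k+1}$ and does no congruence arithmetic: it takes $v^{0}$ to be the primitive normal of the plane spanned by $n^{0}$ and $n^{k+1}$ and searches for $n^{k+2}$ in the affine lattice plane $\{n \in \mathbb{Z}^{3} \mid v^{0}\cdot n = 1\}$. The point of that choice is that \emph{both} Delzant conditions in (ii) then hold automatically: after normalizing $v^{0}=\begin{psmallmatrix}0\\1\\0\end{psmallmatrix}$ one has $n^{0}=\begin{psmallmatrix}a^{0}\\0\\b^{0}\end{psmallmatrix}$, $n^{k+1}=\begin{psmallmatrix}a^{k+1}\\0\\b^{k+1}\end{psmallmatrix}$ with coprime entries (primitivity), and $l$, $l'$ are written down explicitly; no analogue of your $\gcd(b,c)=1$ choice or of the CRT selection of $a$ is needed. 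For condition (i) the paper then shows the cone $\Theta'$ meets the plane $v^{0}\cdot n=1$ by a ``fattening'' trick: lattice points $w^{l}$ in $\Theta' \cap \{v^{0}\cdot n = 0\}$ (it contains $n^{0}+n^{k+1}$) with balls of radius $l$ inside $\Theta'$, plus a fixed $u$ with $v^{0}\cdot u=1$. So the paper's method buys condition (ii) for free at the cost of constraining $n^{k+2}$ to a hyperplane, while yours buys an unconstrained ray of candidates at the cost of the primitivity arithmetic for $n^{k+2}\times n^{0}$; both are valid, and your CRT step checks out (one caveat below).

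Two caveats. First, the step you yourself flag as the main obstacle is indeed the soft spot: Lemma \ref{FacesOfDelta3} cannot be invoked directly, because its hypothesis (i), $(n^{0}\times n^{i})(3)>0$ for all $i$, is a \emph{non-consecutive} positivity condition relative to a distinguished direction, and any natural candidate for that direction (e.g.\ $w=n^{k+1}+\epsilon n^{0}$) leads to leading terms $\det(n^{0}, n^{i}, n^{k+1})$ whose positivity is exactly what you are trying to prove — the reduction is circular as stated. The fact you need, $\det(n^{a}, n^{b}, n^{c})>0$ for all $a<b<c$, is nevertheless true and provable directly: one first builds a linear functional positive on all the $n^{j}$ (perturb $\det(n^{0},n^{1},\cdot\,)$ by small multiples of $\det(n^{1},n^{2},\cdot\,)$ and $\det(n^{k},n^{k+1},\cdot\,)$, using the hypothesis), dehomogenizes to points $p_{0},\dots,p_{k+1}$ in an affine plane, and observes that the hypothesis says each segment $p_{i}p_{i+1}$ is an edge of the convex hull with all points strictly on its left; since a hull vertex lies on exactly two hull edges, the $p_{j}$ are hull vertices in counterclockwise order and all increasing triples are positive. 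It is worth noting the paper is no more careful here: its claim that $n^{0}+n^{k+1}\in\Theta'$ requires $\det(n^{k+1},n^{0},n^{j})>0$ for $1\le j\le k$, which is the same non-consecutive convexity fact, asserted without proof. Second, a small point in your CRT step: when $(q,r)\equiv(0,0)\bmod\ell$ (your $\lambda\equiv 0$), the bad set of residues for $a$ is all of $\mathbb{Z}/\ell$ if $p\equiv 0$ and empty otherwise; the first possibility must be excluded by the primitivity of $n^{0}$, which your one-line justification does not mention but which makes your ``at most one residue class'' claim correct.
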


\begin{proof}
Let $v^{0}$ be a primitive normal vector of the plane spanned by $n^{0}$ and $n^{k}$ such that $\det \begin{psmallmatrix} n^{0} & n^{k} & v^{0} \end{psmallmatrix} > 0$. We show that any vector $n^{k+2}$ in $\mathbb{Z}^{3}$ satisfying $v^{0} \cdot n^{k+2}=1$ satisfies the condition (ii). We can assume that $v^{0}=\begin{psmallmatrix} 0 \\ 1 \\ 0 \end{psmallmatrix}$. Then we can put $n^{i}=\begin{psmallmatrix} a^{i} \\ 0 \\ b^{i} \end{psmallmatrix}$ and $n^{k+2}=\begin{psmallmatrix} x \\ 1 \\ z \end{psmallmatrix}$ where $(a^{i},b^{i})$ are coprime integers for $i=0$ and $k+1$. If we take integers $c^{i}$ and $d^{i}$ so that $a^{i}d^{i} - b^{i}c^{i}=1$ for $i=0$, $k+1$ and put $l=\begin{psmallmatrix} c^{0} \\ 0 \\ d^{0} \end{psmallmatrix}$ and $l'=\begin{psmallmatrix} -c^{k+1} \\ 0 \\ -d^{k+1} \end{psmallmatrix}$, we have $\det \begin{psmallmatrix} n^{k+1} & n^{k+2} & l \end{psmallmatrix}=1$ and $\det \begin{psmallmatrix} n^{k+2} & n^{0} & l' \end{psmallmatrix}=1$. 

We show that there exists a vector $n^{k+2}$ in $\mathbb{Z}^{3}$ satisfying $v^{0} \cdot n^{k+2}=1$ and the condition (i). We define cones $\Theta$ and $\Theta'$ in $\mathbb{R}^{3}$ by
\begin{equation}
\Theta=\{ v \in \mathbb{R}^{3} | \det \begin{psmallmatrix} n^{k+1} & v & n^{j} \end{psmallmatrix} > 0, 0 \leq j \leq k, \det \begin{psmallmatrix} v & n^{0} & n^{j} \end{psmallmatrix} >0, 1 \leq j \leq k+1 \}.
\end{equation}
and
\begin{equation}
\Theta'=\{ v \in \mathbb{R}^{3} | \det \begin{psmallmatrix} n^{k+1} & v & n^{j} \end{psmallmatrix} > 0, 1 \leq j \leq k, \det \begin{psmallmatrix} v & n^{0} & n^{j} \end{psmallmatrix} >0, 1 \leq j \leq k \}.
\end{equation}
To show the existence of a vector $n^{k+2}$ satisfying $v^{0} \cdot n^{k+2}=1$ and the condition (i), it suffices to show that $\{n \in \mathbb{Z}^{3} | v^{0} \cdot n=1\} \cap \Theta$ is not empty. To show that $\{n \in \mathbb{Z}^{3} | v^{0} \cdot n=1\} \cap \Theta$ is not empty, it suffices to show that $\{n \in \mathbb{Z}^{3} | v^{0} \cdot n=1\} \cap \Theta'$ is not empty, since $\{n \in \mathbb{Z}^{3} | v^{0} \cdot n=1\} \cap \Theta'$ is a subset of $\{n \in \mathbb{Z}^{3} | v^{0} \cdot n=1\} \cap \Theta$. 

We show that if $\{n \in \mathbb{Z}^{3} | v^{0} \cdot n=0\} \cap \Theta'$ is not empty, then $\{n \in \mathbb{Z}^{3} | v^{0} \cdot n=1\} \cap \Theta'$ is not empty. We assume that $\{n \in \mathbb{Z}^{3} | v^{0} \cdot n=0\} \cap \Theta'$ is not empty. Then there exists a sequence of vectors $\{w^{l}\}_{l \in \mathbb{N}}$ in $\{n \in \mathbb{Z}^{3} | v^{0} \cdot n=0\} \cap \Theta'$ such that the ball centered at $w^{l}$ of radius $l$ is contained in $\Theta'$, since $\{n \in \mathbb{Z}^{3} | v^{0} \cdot n=0\} \cap \Theta'$ is an open cone in $\{n \in \mathbb{Z}^{3} | v^{0} \cdot n=0\}$. We take a vector $u$ in $\mathbb{Z}^{3}$ such that $v^{0} \cdot u=1$. We take a positive integer $i_{0}$ so that $i_{0} > \sqrt{u \cdot u}$. Then $w^{i_{0}} + u$ is an element of $\{n \in \mathbb{Z}^{3} | v^{0} \cdot n=1\} \cap \Theta'$. Hence $\{n \in \mathbb{Z}^{3} | v^{0} \cdot n=1\} \cap \Theta'$ is not empty.

Since $\{n \in \mathbb{Z}^{3} | v^{0} \cdot n=0\} \cap \Theta'$ contains $n^{0} + n^{k+1}$, $\{n \in \mathbb{Z}^{3} | v^{0} \cdot n=0\} \cap \Theta'$ is not empty. The proof of Lemma \ref{LastVector} is completed.
\end{proof}

\begin{prop}\label{ConstructionOfToricManifolds}
For an arbitrarily positive integer $k$ greater than $1$, there exists a $5$-dimensional contact toric manifold with a $K$-contact form of rank $2$ and a nontrivial chain $C$ of length $k$ such that we cannot perform a contact blowing down along any lens space in $C$ to a closed orbit of the Reeb flow.
\end{prop}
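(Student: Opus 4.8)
The plan is to realize the required manifold as a contact toric $5$-manifold and to read the obstruction to blowing down off the good cone. By Lemma~\ref{TopologicalCharacterization}, a $K$-contact lens space $L \subset (M,\alpha)$ can be contact blown down to a closed orbit of the Reeb flow if and only if the Euler class of its symplectic normal $S^1$-bundle is a generator of $H^{2}(L;\mathbb{Z}) \cong \mathbb{Z}/|\pi_{1}(L)|\mathbb{Z}$. Thus it suffices to build a good cone $\Delta \subset \Lie(T^3)^{*}$ whose associated contact toric manifold carries a chain of $k$ lens spaces $L^{1},\dots,L^{k}$, each with $|\pi_{1}(L^{i})| > 1$ and normal-bundle Euler class $f_{i}$ satisfying $\GCD(f_{i},|\pi_{1}(L^{i})|) \neq 1$. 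By Lemma~\ref{GoodCone} both $|\pi_{1}(L^{i})|$ and $f_{i}$ are explicit $3\times 3$ determinants in the primitive normals of the three consecutive faces meeting along $L^{i}$, so the whole problem reduces to a choice of integer vectors.

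First I would fix the chain normals $n^{i}=(1,2i,i^{2}) \in \mathbb{Z}^{3}$ for $0 \le i \le k+1$. A short column reduction gives $\det(n^{i-1},n^{i},n^{i+1})=4$ for $1 \le i \le k$, so each $L^{i}$ is a lens space $L(4,q_{i})$ with $|\pi_{1}(L^{i})|=4$. Taking $l^{i}=(0,-1,-i)$, which satisfies $\det(n^{i+1},l^{i},n^{i})=1$, Lemma~\ref{GoodCone} yields $f_{i}=\det(n^{i-1},n^{i+1},l^{i}) \equiv 0 \pmod 4$; hence $\GCD(f_{i},4)=4 \neq 1$ and no $L^{i}$ can be blown down to a closed orbit. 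Moreover the consecutive $2\times 2$ minors of $(n^{i},n^{i+1})$ are $\pm 2,\ \pm(2i+1),\ \pm 2i(i+1)$, whose greatest common divisor is $1$ because $2i+1$ is odd; this is exactly the saturation condition at the closed orbits $L^{i}\cap L^{i+1}$, so the cone is good and the total space is a smooth manifold along the chain.

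Next I would close the cone. Since the $n^{i}$ satisfy $(n^{0}\times n^{i})(3)=2i>0$, $(n^{i}\times n^{i+1})(3)=2>0$ and $\det(n^{i},n^{i+1},n^{i+2})=4>0$, Lemma~\ref{FacesOfDelta3} gives $\det(n^{h},n^{i},n^{j})>0$ for all $h<i<j$, and Lemma~\ref{LastVector} then produces a primitive $n^{k+2}$ completing $\{n^{i}\}_{i=0}^{k+2}$ to a cyclically ordered system with $\det(n^{i},n^{i+1},n^{j})>0$ for all admissible indices and with the saturation condition met at the two closing edges. By Lemma~\ref{FacesOfDelta} the cone $\Delta$ with these normals is a good cone whose faces are exactly $P^{0},P^{1},\dots,P^{k+2}$ in cyclic order. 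Applying the Delzant-type construction for $K$-contact manifolds (Theorem~5.1 of \cite{BoGa}, as used in Lemma~\ref{AttachingCorners}) together with Lerman's classification \cite{Ler2}, I obtain a closed $5$-dimensional contact toric manifold $(M,\alpha)$ realizing $\Delta$.

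Finally I would choose the Reeb vector $\overline{R}$ in the interior of the cone spanned by the $n^{i}$, lying in a rational $2$-plane $\Lie(G)$ but with irrational slope inside it, so that the $K$-contact form has rank exactly $2$ (cf.\ Subsection~\ref{Subsubsection : ContactToricManifolds}); and I would take $\overline{X}_{0}\in\Lie(G)$ so that $\Phi'=\alpha(X_{0})$ attains its minimum and maximum precisely at the end closed orbits $P^{0}\cap P^{1}$ and $P^{k}\cap P^{k+1}$ of the distinguished arc. Because the edge slopes $-1/(2i)$ are monotone, this arc is a graph over the $\Phi'$-axis, so $\Phi'$ is monotone along it and $\{L^{1},\dots,L^{k}\}$ becomes one chain of length $k$ from $B_{\min}$ to $B_{\max}$; since $k>1$ and each $L^{i}$ has nontrivial isotropy, the chain is nontrivial. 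Combined with Lemma~\ref{TopologicalCharacterization}, this finishes the proof. The main obstacle is precisely this last step: although the determinant computations are routine, one must verify that the explicit family yields a good cone with the correct cyclic combinatorics (the purpose of Lemmas~\ref{FacesOfDelta2}, \ref{FacesOfDelta3} and \ref{LastVector}) and, above all, that the chosen Reeb direction places $B_{\min}$ and $B_{\max}$ at the endpoints of the arc $P^{1},\dots,P^{k}$, so that these $k$ lens spaces form a single honest chain rather than being split between the two boundary arcs.
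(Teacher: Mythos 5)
Your combinatorial and number-theoretic work checks out and follows the same architecture as the paper's proof (and is in fact closer in spirit to the paper's closing Example, which uses the family $n^{i}=(1,i,i^{2}-i+1)$ with $b=2$, than to the paper's inductive construction $n^{s+1}=an^{s-1}+cl^{s-1}+en^{s}$ with $c_{i}=2$). Concretely: your family $n^{i}=(1,2i,i^{2})$ gives $\det \begin{psmallmatrix} n^{i-1} & n^{i} & n^{i+1} \end{psmallmatrix} = 2(1)(2)(1)=4$ by the Vandermonde identity, the cross product $n^{i}\times n^{i+1}=(2i(i+1),-(2i+1),2)$ is primitive since $2i+1$ is odd, your $l^{i}=(0,-1,-i)$ does satisfy $\det \begin{psmallmatrix} n^{i+1} & l^{i} & n^{i} \end{psmallmatrix}=1$, and $\det \begin{psmallmatrix} n^{i-1} & n^{i+1} & l^{i} \end{psmallmatrix} = 4i - 4i = 0$, so by Lemmas \ref{GoodCone} and \ref{TopologicalCharacterization} each $L^{i}$ is an $L(4,q_{i})$ whose normal-bundle Euler class is $0$ in $\mathbb{Z}/4\mathbb{Z}$, not a generator; the positivity hypotheses of Lemmas \ref{FacesOfDelta2} and \ref{FacesOfDelta3} hold, and Lemmas \ref{LastVector} and \ref{FacesOfDelta} close the cone exactly as in the paper.

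The genuine gap is the step you yourself flag: placing $B_{\min}$ and $B_{\max}$ at the vertices $P^{0}\cap P^{1}$ and $P^{k}\cap P^{k+1}$. First, you cannot tune $\overline{X}_{0}$ freely: once $\overline{R}$ is fixed, $\Lie(G)$ is the smallest rational subspace containing it, and for every $X_{0}\in\Lie(G)$ the level lines of $\Phi'=\alpha(X_{0})$ in the affine slice $A$ have the \emph{same} direction, namely the annihilator of $\Lie(G)$; up to sign and additive constants there is only one sweep direction available on the moment polygon. Second, for the min and max of that functional to land at two prescribed vertices, the polygon must admit parallel supporting lines through them, an open condition that can fail for non-adjacent vertices of a convex polygon and that depends on the closing normal $n^{k+2}$, which Lemma \ref{LastVector} produces via ``sufficiently large'' choices with no control of this kind. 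Your slope remark about $-1/(2i)$ does not resolve this, since slopes in $A$ are only defined after $\overline{R}$ is chosen. The paper's choice makes the problem vanish: take $\overline{R}=n^{0}+\sqrt{2}\,n^{k+1}$, so that $\Lie(G)=\mathrm{span}(n^{0},n^{k+1})$ and the rank is $2$. Writing $s=\langle v,n^{0}\rangle$, $t=\langle v,n^{k+1}\rangle$ on $A$, where $s+\sqrt{2}\,t=1$ and $s,t\geq 0$, every $\Phi'$ from $\Lie(G)$ is an affine function of $s$ alone, hence constant exactly on the two edges $P^{0}\cap A$ and $P^{k+1}\cap A$ and strictly monotone on all other edges. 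Thus $B_{\min}$ and $B_{\max}$ are forced to be the three-dimensional lens spaces over $P^{0}$ and $P^{k+1}$, and the arc $P^{1},\dots,P^{k}$ is automatically a single nontrivial chain of length $k$ (its bottom and top closed orbits lie over the vertices $P^{0}\cap P^{1}$ and $P^{k}\cap P^{k+1}$, which are exceptional orbits inside $B_{\min}$ and $B_{\max}$); nothing in the statement requires the extrema to be isolated closed orbits. With that substitution for your last paragraph, your construction yields a complete proof.
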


\begin{proof}
It suffices to construct a cone $\Delta$ defined by normal vectors $\{n^{i}\}_{i=0}^{k+2}$ which satisfies the following conditions:
\begin{enumerate}
\item $(n^{0} \times n^{i})(3) > 0$ for $1 \leq i \leq k+1$,
\item $(n^{i} \times n^{i+1})(3)>0$ for $0 \leq i \leq k$,
\item $\det \begin{psmallmatrix} n^{i} & n^{i+1} & n^{i+2} \end{psmallmatrix} > 0$ for $0 \leq i \leq k-1$,
\item $\det \begin{psmallmatrix} n^{k+1} & n^{k+2} & n^{j} \end{psmallmatrix} > 0$ for $0 \leq j \leq k$ and $\det \begin{psmallmatrix} n^{k+2} & n^{0} & n^{j} \end{psmallmatrix} >0$ for $1 \leq j \leq k+1$,
\item there exists a vector $l^{i}$ in $\mathbb{Z}^{3}$ such that $\det \begin{psmallmatrix} n^{i} & n^{i+1} & l^{i} \end{psmallmatrix}=1$ for $0 \leq i \leq k+2$ where $n^{k+3}=n^{0}$,
\item If we put 
\begin{equation}\label{MatrixA}
\begin{pmatrix} n^{i+2} & l^{i+1} & n^{i+1} \end{pmatrix} = \begin{pmatrix}  n^{i} & l^{i} & n^{i+1} \end{pmatrix} \begin{pmatrix} a_i & b_i & 0 \\ c_i & d_i & 0 \\ e_i & f_i & 1 \end{pmatrix},
\end{equation}
then $e_i$ and $c_i$ is not coprime for $1 \leq i \leq k-2$.
\end{enumerate}
In fact, the conditions (i), (ii), (iii) and (iv) imply that $\det \begin{psmallmatrix} n^{i} & n^{i+1} & n^{j} \end{psmallmatrix} > 0$ for every $i$ and $j$ satisfying $0 \leq i \leq k+2$, $0 \leq j \leq k+2$ and $j \neq i$, $i+1$ by Lemma \ref{FacesOfDelta2}. Hence by Lemma \ref{FacesOfDelta}, the set of the faces of $\Delta$ is $\{\overline{\Delta} \cap P^{i}\}_{i=0}^{k+2}$ and the pair $(\overline{\Delta} \cap P^{i},\overline{\Delta} \cap P^{i+1})$ is adjacent in $\overline{\Delta}$ for $i=0$, $1$, $\cdots$, $k+2$. The condition (iv) implies that $\Delta$ is good. Hence we obtain a contact toric manifold $(M,\xi)$ such that the image of the moment map of the symplectization is $\Delta$ by the Delzant type construction of Boyer-Galicki \cite{BoGa2}. If we define the affine space $A$ by $A=\{ v \in \mathbb{R}^{3} | (n^{0} + \sqrt{2} n^{k+1}) \cdot v =1\}$, then $A$ determines a Reeb vector field on $(M,\xi)$ and hence a $K$-contact form $\alpha$. The rank of $(M,\alpha)$ is $2$ (See \ref{Subsubsection : ContactToricManifolds}). The condition (v) implies that we cannot perform a contact blowing down along any lens space in $M$ which is the inverse image of $\overline{\Delta} \cap P^{i}$ for $i=1$, $2$, $\cdots$, $k$ by Lemma \ref{TopologicalCharacterization} and \ref{GoodCone}. Hence the proof of Proposition \ref{ConstructionOfToricManifolds} is completed.

First, we construct $\{n^{i}\}_{i=0}^{k+1}$ which satisfies the conditions (i), (ii), (iii), (v) and (vi). We put
\begin{equation}
n^0=\begin{pmatrix}
1 \\ 0 \\ 1
\end{pmatrix},
n^1=\begin{pmatrix}
1 \\ 1 \\ 1
\end{pmatrix},
l^0=\begin{pmatrix}
0 \\ 0 \\ 1
\end{pmatrix}.
\end{equation}
Then the conditions (i), (ii) and (v) are satisfied for $n^{0}$ and $n^{1}$. Then conditions (iii) and (vi) are trivially satisfied for $n^{0}$ and $n^{1}$. Assume that we have $\{n^{i}\}_{i=0}^{s}$ for $s < k+1$ so that
\begin{enumerate}
\item $(n^{0} \times n^{i})(3) > 0$ for $1 \leq i \leq s$,
\item $(n^{i} \times n^{i+1})(3) > 0$ for $0 \leq i \leq s-1$,
\item $\det \begin{psmallmatrix} n^{i} & n^{i+1} & n^{i+2} \end{psmallmatrix} > 0$ for $0 \leq i \leq s-2$,
\item there exists a vector $l^{i}$ in $\mathbb{Z}^{3}$ such that $\det \begin{psmallmatrix} n^{i} & n^{i+1} & l^{i} \end{psmallmatrix}=1$ for $0 \leq i \leq s-1$,
\item If we put 
\begin{equation}
\begin{pmatrix} n^{i+2} & l^{i+1} & n^{i+1} \end{pmatrix} = \begin{pmatrix}  n^{i} & l^{i} & n^{i+1} \end{pmatrix} \begin{pmatrix} a_i & b_i & 0 \\ c_i & d_i & 0 \\ e_i & f_i & 1 \end{pmatrix},
\end{equation}
then $e_i$ and $c_i$ is not coprime for $0 \leq i \leq s-2$.
\end{enumerate}
We show that we can take $n^{s+1}$ and $l^{s+1}$ so that $\{n^{i}\}_{i=0}^{s+1}$ satisfies the conditions (i), (ii), (iii), (v) and (vi). We put $n^{s+1}=  an^{s-1} + cl^{s-1} + en^{s}$. Then the conditions (i), (ii), (iii) for $\{n^{i}\}_{i=0}^{s+1}$ are satisfied if
\begin{enumerate}
\item $a (n^{0} \times n^{s-1})(3) + c (n^{0} \times l^{s-1})(3) + e  (n^{0} \times n^{s})(3) > 0$,
\item $a (n^{s} \times n^{s-1})(3) + c (n^{s} \times l^{s-1})(3) > 0$ and
\item $c > 0$.
\end{enumerate}
If $a$ and $c$ are coprime and $c$ and $e$ are not coprime, we can take integers $b$, $d$ and $f$ and a vector $l^{s}$ so that the equation
\begin{equation}
\begin{pmatrix} n^{s+1} & l^{s} & n^{s} \end{pmatrix} = \begin{pmatrix}  n^{s-1} & l^{s-1} & n^{s} \end{pmatrix} \begin{pmatrix} a & b & 0 \\ c & d & 0 \\ e & f & 1 \end{pmatrix}
\end{equation}
and conditions (v), (vi) are satisfied. We can take such $a$, $c$ and $e$ as follows: We put $c=2$. We take a negative odd number $a$ of sufficiently large absolute value so that $a (n^{s} \times n^{s-1})(3) + c (n^{s} \times l^{s-1})(3) > 0$ is satisfied. Note that $(n^{s} \times n^{s-1})(3)$ is negative by the induction hypothesis. We take a positive even number $e$ sufficiently large so that $a (n^{0} \times n^{s-1})(3) + c (n^{0} \times l^{s-1})(3) + e (n^{0} \times n^{s})(3) > 0$ is satisfied. Note that $(n^{0} \times n^{s})(3)$ is positive by the induction hypothesis. Hence we can take $\{n^{i}\}_{i=0}^{k+1}$ so that the conditions (i), (ii), (iii), (v) and (vi) are satisfied. 

By Lemma \ref{LastVector}, we can take $n^{k+2}$ so that 
\begin{enumerate}
\item $\det \begin{psmallmatrix} n^{k+1} & n^{k+2} & n^{j} \end{psmallmatrix} >0$ for every $j$ not equal to $k+1$, $k+2$ and $\det \begin{psmallmatrix} n^{k+2} & n^{0} & n^{j} \end{psmallmatrix} >0$ for every $j$ not equal to $0$, $k+2$, 
\item There exist vectors $l$ and $l'$ in $\mathbb{Z}^{3}$ such that $\det \begin{psmallmatrix} n^{k} & n^{k+1} & l \end{psmallmatrix}=1$ and
$\det \begin{psmallmatrix} n^{k+1} & n^{0} & l' \end{psmallmatrix}=1$.
\end{enumerate}
Then $\{n^{i}\}_{i=0}^{k+2}$ satisfies the conditions (i), (ii), (iii), (iv), (v) and (vi).
\end{proof}

\begin{exam}
We put $n^{i}=\begin{pmatrix} 1 \\ i \\ i^{2} - i + 1 \end{pmatrix}$ for $0 \leq i \leq k+1$ and put $n^{k+2}=\begin{pmatrix} 1 \\ 1 \\ k+2 \end{pmatrix}$. Let $\Delta$ be the good cone defined by the normal vectors $\{n^{i}\}_{i=0}^{k+2}$. We put $A=\{ v \in \mathbb{R}^{3} | (n^{0} + \sqrt{2} n^{k+1}) \cdot v =1\}$. Let $(M,\alpha)$ be the $K$-contact manifold of rank $2$ determined by $\Delta$ and $A$. Then we cannot perform the contact blowing down along the $K$-contact lens space in $(M,\alpha)$ defined by the inverse image of a face of $\Delta$ by the contact moment map to a closed orbit of the Reeb flow. The $K$-contact lens space in $(M,\alpha)$ defined by the inverse images of the face defined by $n^{i}$ of $\Delta$ by the contact moment map is diffeomorphic to $\mathbb{R}P^{3}$ and has a trivial normal bundle in $M$ for $1 \leq i \leq k$. 
\end{exam}

\section{Classification up to contact blowing up and down}

We show Theorem \ref{BlowDownToLensSpaceBundles} which gives the classification of $5$-dimensional $K$-contact manifolds of rank $2$ up to finite times of contact blowing up and down. Theorem \ref{BlowDownToLensSpaceBundles} is shown by using Proposition \ref{Fibersum}, Lemma \ref{fib2} and Lemma \ref{EmbeddingChains} at the end of this section.

\subsection{Germs of chains}

We define germs of chains, which are of combinatorial nature and used for the classification of $5$-dimensional $K$-contact manifolds of rank $2$. 

\begin{defn}\label{DefinitionOfGermsOfChains}(Germs of chains) A germ of a chain is an equivalence class of connected open neighborhoods of chains in $5$-dimensional $K$-contact manifolds of rank $2$: An open neighborhood $U_1$ of a chain $C_1$ in a $5$-dimensional $K$-contact manifold $(N_1,\alpha_1)$ is defined to be equivalent to an open neighborhood $U_2$ of a chain $C_2$ in a $5$-dimensional $K$-contact manifold $(N_2,\alpha_2)$ if there exists an open neighborhood $V_1$ of $C_1$ in $U_1$ and an open neighborhood $V_2$ of $C_2$ in $U_2$ such that $(V_1,\alpha_1|_{V_1})$ is isomorphic to $(V_2,\alpha_2|_{V_2})$ as $K$-contact manifolds. 

A germ of a chain is defined to be trivial if it is the germ of a trivial chain.
\end{defn}

Note that the germ of a chain $C$ is determined by combinatorial data of $C$ and every germ of chains can be embedded in a contact toric manifold by the normal form theorem (See Lemma \ref{NormalFormOfChains} and \ref{ToricActionNearChains}).

We define the contact blowing up and down for germs of chains.

\begin{defn}(Contact blowing up and down for germs of chains) A germ of a chain $C$ is obtained by a contact blowing up from a germ of a chain $\overline{C}$ if $C$ and $\overline{C}$ can be embedded in $K$-contact manifolds $(N,\beta)$ and $(\overline{N},\overline{\beta})$ as chains, respectively, and $(N,\beta)$ is obtained from $(\overline{N},\overline{\beta})$ by a contact blowing up along a closed orbit of the Reeb flow in $\overline{C}$. Contact blowing down is defined to be the inverse operation of the contact blowing up.
\end{defn}

We define the fiber sum, which is a construction to obtain a $K$-contact manifold from germs of chains and a lens space bundle.

\begin{defn}\label{DefinitionOfFiberSum}(Fiber sums) Let $E$ be a lens space bundle over a closed surface $S$ with a $K$-contact structure of rank $2$ whose fibers are $K$-contact submanifolds of rank $2$. Let $\{C^{i}\}_{i=1}^{m}$ be germs of chains. Assume that 
\begin{enumerate}
\item $C^{i}$ has the minimal component and the maximal component of dimension $3$ for each $i=1$, $2$, $\cdots$, $m$ and
\item fibers of $E$ and general fibers of $C^{i}$ are isomorphic as $K$-contact manifolds.
\end{enumerate}
Choose $m$ points $\{x^{i}\}_{i=1}^{m}$ on $S$ and let $F^{i}$ be the fiber of $E$ over $x^i$ for $1 \leq i \leq m$. Then we can attach germs of chains $\sqcup_{i=1}^{m} C^i$ to $E-(\cup_{i=1}^{m}F^i)$ and obtain a $K$-contact manifold $N$ of rank $2$. We call $N$ the fiber sum of a lens space bundle $E$ with $\{C^{i}\}_{i=1}^{m}$.
\end{defn}

Note that lens spaces with a $K$-contact structure of rank $2$ are classified by the maximal and minimal values of the contact moment map and the element corresponding to the Reeb vector field in the Lie algebra of the closure of the Reeb flow in the isometry group by the classification of $3$-dimensional contact toric manifolds by Lerman \cite{Ler2}.

\subsection{Contact blowing up to obtain fiber sums}

We show the following Proposition which is used to show Theorem \ref{BlowDownToLensSpaceBundles}. 

\begin{prop}\label{Fibersum}
Let $(M,\alpha)$ be a closed $5$-dimensional $K$-contact manifold of rank $2$. 
\begin{enumerate}
\item If $(\dim B_{\min}, \dim B_{\max})=(1,1)$, $(M,\alpha)$ is isomorphic to the fiber sum of a lens space bundle over a closed surface with $k$ germs of chains for some $k$ after performing contact blowing up along a closed orbit of the Reeb flow twice.
\item If $(\dim B_{\min}, \dim B_{\max})=(1,3)$ or $(3,1)$, $(M,\alpha)$ is isomorphic to the fiber sum of a lens space bundle over a closed surface with $k$ germs of chains for some $k$ after performing contact blowing up along a closed orbit of the Reeb flow once.
\end{enumerate}
\end{prop}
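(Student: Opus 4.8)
The plan is to prove Proposition \ref{Fibersum} in two stages: first use contact blowing up along the $1$-dimensional ends to reduce to the case $(\dim B_{\min},\dim B_{\max})=(3,3)$, and then recognize the resulting manifold as a fiber sum. The blowing-up count in the statement is exactly the number of ends that start out $1$-dimensional: two in case (i), one in case (ii).

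For the reduction, suppose $\dim B_{\min}=1$, so $B_{\min}$ is a single closed orbit $\Sigma$ of the Reeb flow. By Lemma \ref{ToricActionNearSingularOrbits} there is an $\alpha$-preserving $T^3$-action on a neighborhood of $\Sigma$, and in the normal coordinates used to define contact blowing up I would choose the $S^1$-subaction $\sigma$ so that $\{\alpha(X)=0\}$ bounds a tubular neighborhood $U$ of $\Sigma$ contained in $\{\Phi<\min\Phi+\epsilon\}$ and so that $\sigma$ acts freely on $\partial U$. Contact blowing up along $\Sigma$ then deletes $\overline U$ and glues in the lens space $(\partial U)/\sigma$, which becomes the new minimal component; it is $3$-dimensional and is Seifert fibered over $S^2$ by the Reeb flow. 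Doing the symmetric operation from above at $B_{\max}$ when $\dim B_{\max}=1$ produces a $K$-contact manifold $(N,\beta)$ of rank $2$ with $(\dim B_{\min},\dim B_{\max})=(3,3)$.

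For the second stage, both $B_{\min}$ and $B_{\max}$ of $(N,\beta)$ are now $3$-dimensional and each is Seifert fibered over an oriented closed surface by the Reeb flow. The nontrivial chains are finite in number, say $C^1,\dots,C^m$, and by Lemma \ref{MorseBottTheory} and Lemma \ref{KContactSubmanifolds} each has $3$-dimensional minimal and maximal components, hence defines a germ of a chain with the properties required in Definition \ref{DefinitionOfFiberSum}. Away from $\rho$-invariant neighborhoods of the $C^i$, the manifold is swept out by free gradient manifolds, each of which has $\alpha$-limit set in $B_{\min}$ and $\omega$-limit set in $B_{\max}$ and whose closure is a lens space; over connected base this common closure is a single lens space $L$, which is also the general fiber of each $C^i$. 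Using the normal forms of Section 8 near $B_{\min}$, near $B_{\max}$, and along the free gradient manifolds, I would trivialize the gradient flow over the complement of the chain points to identify this free region with the corresponding region of a lens space bundle $E\to S$ with fiber $L$, where $S$ is the common Seifert base surface. I expect $S=S^2$ precisely when a $1$-dimensional end occurred, since the minimal lens space produced by the blow up is Seifert over $S^2$ and determines the base. Filling the chain neighborhoods back in with the $C^i$ then displays $(N,\beta)$ as the fiber sum of $E$ with $\{C^i\}_{i=1}^m$.

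The main obstacle is the second stage: producing an honest locally trivial lens space bundle $E$ over the closed surface $S$ and checking that gluing $E-\bigcup_i F^i$ to the germs $C^i$ reproduces $(N,\beta)$ as a $K$-contact manifold. Concretely one must match the Seifert base surfaces of $B_{\min}$ and $B_{\max}$ to a single $S$, verify that it is $S^2$ exactly when one end arose from a single orbit, and show that the free gradient flow genuinely trivializes over $S$ minus the finitely many chain points so that the generic fibers assemble into a bundle whose fiber agrees with the general fibers of the $C^i$. These facts should follow from the normal form theorems together with the fact that free gradient manifolds carry trivial isotropy, but the bookkeeping of the gluing maps and of the $K$-contact data along the boundaries of the chain neighborhoods is where the real work lies; this is the content I would isolate as Lemma \ref{fib2}.
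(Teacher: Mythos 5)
Your two-stage strategy (blow up each $1$-dimensional end to make it $3$-dimensional, then recognize the $(3,3)$ case as a fiber sum) is exactly the paper's, but your first stage has a genuine gap. Choosing the cutting circle action $\sigma$ merely so that $\{\alpha(X)=0\}$ bounds a small tubular neighborhood $U$ of $\Sigma=B_{\min}$ and $\sigma$ acts freely on $\partial U$ does \emph{not} make $(\partial U)/\sigma$ the new minimal component. For a generic admissible $\sigma$ the quotient lens space contains two new closed orbits of the Reeb flow and is not a critical component of $\Phi$ (this is the generic behavior of contact blowing up recorded in Figure \ref{Figure1}): the graph gains an ordinary vertex, the new $B_{\min}$ is again a single closed orbit, and your reduction to the $(3,3)$ case fails. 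What is needed is that $\sigma$ be a subaction of the rank-$2$ torus $\rho$ itself, not merely of the local $T^3$-action. In the normal form \eqref{NormalFormOfAlpha}, with $X$ as in \eqref{X}, this is the linear condition $(\lambda_1 m_1 - r_1)m_2-(\lambda_1 m_2 - r_2)m_1=0$, forcing $(r_1,r_2)=l(m_1,m_2)$; one must then verify that this constraint is compatible with closedness and freeness of the orbits of $X$, which is an arithmetic matter: choose $l$ so that $(\lambda_1-l)/\lambda_0=u/v$ is rational with $\GCD(v,m_1)=\GCD(v,m_2)=1$ and $lm_1,lm_2>\epsilon^{-2}$. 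This uses $m_1m_2\neq 0$, which holds precisely because $\dim B_{\min}=1$, and (for disconnected isotropy at $\Sigma$) a preliminary reduction via the finite covering of Lemma \ref{FiniteCovering}. This simultaneous solvability is the actual content of Lemma \ref{fib1} and is absent from your proposal.

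Your second stage is the paper's Lemma \ref{fib2} in outline, and you correctly flag where the work lies, but you leave it open; for completeness the missing steps are: (a) choose the compatible metric, using Lemma \ref{Linearization} near $B_{\min}$ and $B_{\max}$, so that the closures of the gradient manifolds form a smooth foliation of $M-\sqcup_{i}C^{i}$ by lens spaces (near the $B$'s the leaves are fibers of the normal bundles); (b) this foliation has trivial holonomy, because the fundamental group of each leaf is generated by an orbit of an $S^1$-subaction of $\rho$ and the isotropy group of $\rho$ at a free leaf is trivial, so the leaves are fibers of a proper submersion; (c) all fibers are mutually isomorphic as $K$-contact manifolds by Lerman's classification of $3$-dimensional contact toric manifolds (they are determined by $\overline{R}$ and the extrema of the contact moment map), which also matches them with the general fibers of the germs $C^{i}$. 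Note also that the bundle $E$ is obtained by replacing the chains $\sqcup_i C^i$ with trivial germs of chains rather than by trivializing the gradient flow over $S$ minus the chain points, and your side claim that $S=S^2$ exactly when a $1$-dimensional end occurred is not needed for the proposition.
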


Proposition \ref{Fibersum} is a consequence of the following Lemmas \ref{fib1} and \ref{fib2}. Let $(M,\alpha)$ be a closed $5$-dimensional $K$-contact manifold of rank $2$. We denote by $\rho$ the $T^2$-action associated with $\alpha$. We put $\Phi=\alpha(X)$ where $X$ is an infinitesimal action of $\rho$ which is not parallel to $R$. The minimal component and the maximal component of $\Phi$ are denoted by $B_{\min}$ and $B_{\max}$. Note that $B_{\min}$ and $B_{\max}$ are of dimension $1$ or $3$ by Lemma \ref{MorseTheory}.

\begin{lem}G\label{fib1}
If $B_{\min}$ is of dimension $1$, then we can perform a contact blowing up along $B_{\min}$ so that the new minimal component of the contact moment map is of dimension $3$.
\end{lem}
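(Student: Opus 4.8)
The plan is to realize the blow-up as a contact cut in the sense of Lerman \cite{Ler1}, taken along a circle subgroup $\sigma$ that lies inside the torus $G$ itself, chosen so that the new face it introduces is a level set of $\Phi$; then that face automatically becomes a $3$-dimensional minimal component. First I would record the local picture. Since $\dim B_{\min}=1$, Lemma \ref{MorseTheory}(iv) makes $B_{\min}$ an isolated closed orbit of the Reeb flow, and by Lemma \ref{ToricActionNearSingularOrbits} there is an $\alpha$-preserving $T^3$-action $\tau$ extending $\rho$ near $B_{\min}$. On a finite covering of a tubular neighbourhood one has $\Phi=w_1|z_1|^2+w_2|z_2|^2+c$ with $w_1,w_2>0$ (both weights are positive exactly because $B_{\min}$ is $0$-dimensional transversally), so $c=\min\Phi$, and the two gradient manifolds $L^1,L^2$ emanating from $B_{\min}$ carry finite cyclic isotropy groups $\mathbb{Z}/k^1\mathbb{Z},\mathbb{Z}/k^2\mathbb{Z}$ of $\rho$ by Lemma \ref{KContactSubmanifolds}(iii), where $k^i=I(\rho,L^i)$.

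For the key idea, write $\Lie(G)=\Span\{\overline{R},\overline{X}\}$. For an element $\overline{X}_\sigma=\beta\overline{R}+\gamma\overline{X}\in\Lie(G)$ with infinitesimal action $X_\sigma$ we have $\alpha(X_\sigma)=\beta+\gamma\Phi$, so the cut locus $\{\alpha(X_\sigma)=0\}$ is precisely the level set $\{\Phi=-\beta/\gamma\}$. Thus if $\sigma$ is generated by such an $\overline{X}_\sigma$, then (after orienting $\sigma$ so that the removed region $\{\alpha(X_\sigma)<0\}$ is the neighbourhood of $B_{\min}$) the contact cut deletes a sublevel neighbourhood of $B_{\min}$ and glues in the $\sigma$-quotient of a single level set of $\Phi$. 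Since $\Phi$ is constant along that level set it descends to a constant on the glued-in lens space, which is therefore a $3$-dimensional critical submanifold of the resulting $\Phi$; taking the level just above $c$ makes it the new minimal component. The element of $\Lie(G)$ that is $\alpha$-horizontal over $B_{\min}$ is $\eta=\overline{X}-c\overline{R}$, so that $\alpha(\eta)=\Phi-c$ vanishes on $B_{\min}$, and the cut level tends to $c$ exactly as the direction of $\overline{X}_\sigma$ tends to that of $\eta$. In the coordinates of Section \ref{ContactBlowingUpAndDown} this amounts to choosing the cut parameters $(r_1,r_2)$ proportional to $(w_1,w_2)$, i.e. to cutting along a sphere $S^{3}_{r_1,r_2}$ that is a level set of $\Phi$.

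The heart of the argument, and the step I expect to be the main obstacle, is producing an admissible $\overline{X}_\sigma$. I need it to be: (a) primitive in the rank-$2$ lattice $\Lie(G)_{\mathbb{Z}}$ (which is genuinely of rank $2$ because $G$ is a subtorus of $T^3$), so that $\sigma$ is an honest circle subgroup of $G\subset T^3$ and hence an $S^1$-subaction of $\tau$; (b) of direction close enough to $\eta$, on the side giving $-\beta/\gamma$ slightly larger than $c$ but below the next critical value of $\Phi$, so that $\{\alpha(X_\sigma)=0\}$ bounds a tubular neighbourhood of $B_{\min}$; and (c) such that $\sigma$ acts freely on that cut locus, as the cut requires to yield a smooth $K$-contact manifold. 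Condition (b) holds for infinitely many primitive vectors: $\overline{R}$ is irrational in $\Lie(G)_{\mathbb{Z}}$ while rational directions are dense, so primitive approximants to the ray $\mathbb{R}_{>0}\eta$ exist with cut level arbitrarily close to $c$ from the correct side. For (c), the cut locus is a regular level set diffeomorphic to $S^1\times S^3$ whose only exceptional $\rho$-orbits are $L^1,L^2$ with finite cyclic isotropy, so $\sigma$ is free there precisely when $\langle\overline{X}_\sigma\rangle$ meets $\mathbb{Z}/k^1\mathbb{Z}$ and $\mathbb{Z}/k^2\mathbb{Z}$ trivially, i.e. when $\overline{X}_\sigma$ avoids finitely many residue classes modulo $k^1$ and modulo $k^2$. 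This is where the hypothesis $\dim B_{\min}=1$ is used decisively: the isotropy being finite cyclic rather than a full circle reduces freeness to a mild congruence condition, which is compatible with the density statement of (b), so a single $\overline{X}_\sigma$ satisfying (a), (b) and (c) exists. Performing the contact cut along the resulting $\sigma$ gives the desired contact blowing up of $(M,\alpha)$ along $B_{\min}$ whose new minimal component is the glued-in $3$-dimensional lens space. The case of disconnected isotropy of $\rho$ at $B_{\min}$ is handled identically after passing to the finite covering of Lemma \ref{FiniteCovering}, the construction being equivariant and descending to $M$.
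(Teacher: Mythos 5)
Your proposal is correct and is essentially the paper's own proof in invariant language: the paper's condition (iii) on the cut parameters, realized by taking $(r_1,r_2)=l(m_1,m_2)$, is exactly your requirement that the cutting circle $\sigma$ be a subgroup of $G$ itself, which forces the cut locus $\{\alpha(X_\sigma)=0\}$ to be a level set of $\Phi$, and the paper's choice of $l$ with $\frac{\lambda_1-l}{\lambda_0}=\frac{u}{v}$, $\GCD(v,m_1)=\GCD(v,m_2)=1$, is precisely your primitive lattice vector approximating the isotropy direction $\eta$ subject to the coprimality (freeness) conditions. The reduction to connected isotropy via the finite covering of Lemma \ref{FiniteCovering} likewise matches the paper.
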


\begin{proof}
We show Lemma \ref{fib1} in the case where the isotropy group of $\rho$ at $B_{\min}$ is connected. General cases are reduced to this case by taking a finite covering of a neighborhood of $B_{\min}$ by Lemma \ref{FiniteCovering}. By the argument in Subsection \ref{ConnectedIsotropyGroupCases}, we have a tubular neighborhood $V$ of $B_{\min}$ such that $(V,\alpha|_{V})$ is isomorphic to $(S^1 \times D^{4}_{\epsilon},\alpha_{0})$ defined as follows:

$\alpha_{0}$ is defined by
\begin{equation}
\alpha_{0}=\frac{1-\lambda_1(m_1|z_1|^2+m_2|z_2|^2)}{\lambda_0}d\zeta+\frac{\sqrt{-1}}{2}(z_1d\overline{z}_1-\overline{z}_1dz_1)+\frac{\sqrt{-1}}{2}(z_2 d\overline{z}_2 - \overline{z}_{2} dz_2)
\end{equation}
for a pair of coprime integers $(m_1,m_2)$ and a pair of real numbers $(\lambda_0,\lambda_1)$ which are linearly independent over $\mathbb{Q}$. The $T^2$-action $\rho_{0}$ associated with $\alpha_{0}$ is written as
\begin{equation}
(t_0,t_1) \cdot (\zeta,z_1,z_2)=(t_{0}\zeta,t_{1}^{m_1}z_1,t_{1}^{m_2}z_2)
\end{equation}
and the $S^1$-subaction of $\{ (1, t_1)|t_1 \in S^1\}$ is the isotropic action of $\rho$ at $B_{\min}$. Note that $m_1$ and $m_2$ are nonzero, since $B_{\min}$ is of dimension $1$.

We perform a contact blowing up by cutting $S^1 \times D^{4}_{\epsilon}$ at $U=\{(\zeta,z_1,z_2) \in S^1 \times D^{4}_{\epsilon} | r_1|z_1|^2+r_2|z_2|^2=1 \}$ for positive numbers $r_1$ and $r_2$ greater than $\epsilon^{-2}$. To perform a contact blowing up along $B_{\min}$ to obtain a new minimal component of dimension $3$, it suffices to take real numbers $r_1$ and $r_2$ which satisfy the following conditions: There exists a real number $N_{0}$ such that
\begin{enumerate}
\item  $N_{0}\begin{psmallmatrix} \lambda_0 \\ \lambda_1 m_1 - r_1 \\ \lambda_1 m_2 - r_2 \end{psmallmatrix}$ is a vector in $\mathbb{Z}^{3}$ so that the vector field $X$ defined by the equation \eqref{X} generates an $S^1$-action $\sigma$ on $U$,
\item $\GCD(N_{0}\lambda_0, N_{0}(\lambda_1 m_1 - r_1))=1, \GCD(N_{0}\lambda_0, N_{0}(\lambda_1 m_2 - r_2))=1$ are satisfied so that the $S^1$-action $\sigma$ on $U$ generated by $X$ is free and
\item $N_{0}(\lambda_1 m_1 - r_1)m_2 - N_{0}(\lambda_1 m_2 - r_2)m_1=0$ is satisfied so that $\sigma$ is a subaction of $\rho$ and hence the new minimal component of the contact moment map is of dimension $3$.
\end{enumerate}
Note that the equation in the condition (iii) means $\rho$ degenerates to an $S^1$-action on the lens space $U/\sigma$ so that $U/\sigma$ becomes a new minimal component. 

We show that for any $\epsilon$ there exist $r_1$ and $r_2$ which are greater than $\epsilon^{-2}$ and satisfy the above conditions. We put $r_1=lm_1$ and $r_2=lm_2$ where $l$ is a real number. Then the condition (iii) is satisfied. $\begin{psmallmatrix} \lambda_0 \\ \lambda_1 m_1 - r_1 \\ \lambda_1 m_2 - r_2 \end{psmallmatrix}$ is tangent to $\begin{psmallmatrix} 1 \\ \frac{\lambda_1 - l}{\lambda_0} m_1 \\ \frac{\lambda_1 - l}{\lambda_0} m_2 \end{psmallmatrix}$. Hence the conditions (i) and (ii) are satisfied, if we can choose $l$ so that $\frac{\lambda_1 - l}{\lambda_0}$ is a rational number $\frac{u}{v}$ for coprime integers $u,v$ and $\GCD(v,m_1)=\GCD(v,m_2)=1$. It is possible to choose $l$ so that $\GCD(v,m_1)=\GCD(v,m_2)=1$ and $r_1$, $r_2$ are greater than $\epsilon^{-2}$ for any $\epsilon$. Then $r_1=lm_{1}$ and $r_2=lm_{2}$ satisfy the conditions (i), (ii) and (iii).
\end{proof}

\begin{lem}\label{fib2}
If both of $B_{\min}$ and $B_{\max}$ are of dimension $3$, then $(M,\alpha)$ is isomorphic to the fiber sum of a lens space bundle over a closed surface with a finite number of germs of chains.
\end{lem}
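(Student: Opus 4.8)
The plan is to split $M$ into a "generic" part, which will assemble into a lens space bundle over a closed surface, and finitely many $\rho$-invariant neighborhoods of the nontrivial chains, which will be the germs of chains attached in the fiber sum. First I would record the relevant structure: by Lemma \ref{MorseBottTheory}(i) both $B_{\min}$ and $B_{\max}$ are closed $3$-dimensional $K$-contact submanifolds of rank $2$, so their Reeb flows define Seifert fibrations over oriented closed surfaces $S_{\min}$ and $S_{\max}$. I would then show that there are only finitely many nontrivial chains: by Lemma \ref{MorseTheory} every connected component of $\Crit\Phi$ other than $B_{\min}$ and $B_{\max}$ is a closed orbit of the Reeb flow, and by compactness of $M$ these form a finite set; together with Lemma \ref{KContactSubmanifolds}(i), which exhibits the connected components of $N=\{x\in M-\Crit\Phi\mid G_x\neq\{e\}\}$ as finitely many gradient lens spaces, this bounds the number of nontrivial chains by a finite integer $m$. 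Let $x_1,\dots,x_m\in S_{\min}$ be the base points of $S_{\min}$ over which these chains lie.

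Next I would identify the generic fiber. For $x\in S_{\min}-\{x_1,\dots,x_m\}$ with Reeb orbit $\Sigma^{\min}_x\subset B_{\min}$, Lemma \ref{MorseBottTheory}(iv) shows that $\Sigma^{\min}_x$ is the $\alpha$-limit set of a unique gradient manifold $L_x$, which is free, connects directly to a Reeb orbit $\Sigma^{\max}_{\phi(x)}\subset B_{\max}$, and has smooth closure $F_x=\overline{L_x}$ by Lemma \ref{MorseBottTheory}(v). Then $F_x$ is a closed $3$-dimensional $K$-contact submanifold of rank $2$ (Lemma \ref{MorseBottTheory}(i)), hence diffeomorphic to a lens space by Lerman's classification of $3$-dimensional contact toric manifolds \cite{Ler2}. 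The assignment $x\mapsto\phi(x)$ gives a diffeomorphism between $S_{\min}$ and $S_{\max}$ away from $\{x_1,\dots,x_m\}$ and the corresponding base points in $S_{\max}$; I would check it is proper and extends to a diffeomorphism $S_{\min}\cong S_{\max}=:S$, so in particular the genera agree, and the $F_x$ fit into a lens space bundle over $S-\{x_1,\dots,x_m\}$.

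Finally I would assemble $E$ and realize $M$ as a fiber sum. Take small $\rho$-invariant tubular neighborhoods $U^1,\dots,U^m$ of the nontrivial chains, disjoint away from $B_{\min}\cup B_{\max}$ and meeting them in normal-form charts; by Lemma \ref{NormalFormOfChains} and Lemma \ref{ToricActionNearChains} each $(U^i,\alpha|_{U^i})$ represents the germ of a chain $C^i$ whose minimal and maximal components (pieces of $B_{\min},B_{\max}$) are of dimension $3$. Removing the $U^i$ leaves a lens space bundle over $S$ minus $m$ disks; to produce the closed bundle I would fill in over each removed disk with the free fiber $F_x$. Here the normal forms near $B_{\min}$ and $B_{\max}$ (Lemmas \ref{NormalFormOfB}, \ref{ToricActionNearB}) together with \cite{Ler2} guarantee that the general fiber of every $C^i$ is isomorphic to $F_x$, so the clutching data and the rank-$2$ $K$-contact structure extend across the punctures to a lens space bundle $(E,\alpha_E)$ over the closed surface $S$. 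Regluing $\sqcup_{i=1}^m C^i$ to $E-\bigcup_{i=1}^m F^i$ by the normal-form identifications then reproduces $(M,\alpha)$, exhibiting it as the fiber sum of Definition \ref{DefinitionOfFiberSum}.

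The main obstacle will be step three: constructing the closed bundle $E$ and checking that $M$ is isomorphic \emph{as a $K$-contact manifold}, not merely diffeomorphic, to the fiber sum. This demands (a) that the generic fiber carry the correct induced rank-$2$ $K$-contact structure uniformly in $x$; (b) that the lens space bundle over the punctured surface extend across the punctures with contact data intact, a clutching argument resting on the normal forms near $B_{\min},B_{\max}$ and on the fact that the structure near $B$ is determined by the moment data and \cite{Ler2}; and (c) that the gluing of the germs of chains loses no contact information. The uniqueness input from Lemmas \ref{NormalFormOfChains} and \ref{NormalFormOfB} and from Lerman's classification \cite{Ler2} is exactly what makes all these identifications canonical.
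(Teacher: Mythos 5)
Your overall strategy is the paper's: cut out the finitely many nontrivial chains, show the complement fibers by lens spaces over a surface, replace the chains by trivial germs to obtain the closed bundle $E$, and recover $(M,\alpha)$ as the fiber sum. The genuine gap is at the sentence where you assert that ``the $F_x$ fit into a lens space bundle over $S-\{x_1,\dots,x_m\}$.'' Lemma \ref{MorseBottTheory} (v) only gives smoothness of each individual closure $\overline{L_x}$ near its limit orbits; it says nothing about the closures assembling into a smooth foliation, let alone a locally trivial bundle, and for an arbitrary metric compatible with $\alpha$ this is precisely where the argument can fail, since how the family $\overline{L_x}$ approaches $B_{\min}$ and $B_{\max}$ depends on the gradient flow near the critical sets. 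The paper's proof spends its substance exactly here. First it chooses the metric via Lemma \ref{Linearization} so that near $B_{\min}$ and $B_{\max}$ the gradient flow is conjugate to a linear $\mathbb{R}$-action on the normal bundle; this forces the leaf closures near $B_s$ to coincide with the fibers of the normal bundle, so the closures define a smooth foliation $\mathcal{F}$ on $M-\sqcup_i C^i$ (away from $B_{\min}\cup B_{\max}$ smoothness is automatic because the gradient manifolds are orbits of the $(\mathbb{R}\times T^2)$-action). Second, it upgrades $\mathcal{F}$ to a fibration by a holonomy argument you omit: each leaf is a lens space whose fundamental group is generated by an orbit of an $S^1$-subaction of $\rho$, and since the isotropy group of $\rho$ at a free leaf is trivial, the holonomy of $\mathcal{F}$ along every leaf is trivial, whence the leaf closures are the fibers of a proper submersion. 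Your diffeomorphism $\phi\colon S_{\min}\to S_{\max}$ and its proper smooth extension are consequences of, not substitutes for, these two steps.

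By contrast, the obstacle you flag in your step three is lighter in the paper than you anticipate: once the fibration is in place, the fibers are $3$-dimensional $K$-contact manifolds of rank $2$, which by Lerman's classification are determined by $\overline{R}$ and the extreme values of the contact moment map and are therefore all isomorphic as $K$-contact manifolds; ``replacing $\sqcup_i C^i$ by trivial germs of chains'' is then the whole construction of $E$, and the identification of $(M,\alpha)$ with the fiber sum of $E$ with $\sqcup_i C^i$ is essentially definitional (Definition \ref{DefinitionOfFiberSum}), without any delicate clutching analysis near $B_{\min}$ and $B_{\max}$. So your plan becomes the paper's proof once you add the metric-linearization input of Lemma \ref{Linearization} and the trivial-holonomy argument to your step two.
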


\begin{proof}
Let $C^{1},C^{2},\cdots,C^{m}$ be nontrivial chains of $(M,\alpha)$. By Lemma \ref{Linearization}, there exists a metric $g_{s}$ compatible with $\alpha$ on open neighborhoods of $B_{s}$ respectively such that the gradient flow of $\Phi$ with respect to $g_{s}$ is conjugate to an $\mathbb{R}$-subaction of the $\mathbb{C}^{\times}$-action on the normal bundles of $B_{s}$ defined by a linear complex structure for $s=\min$ and $\max$, respectively. We fix a metric $g$ on $M$ compatible with $\alpha$ so that the restriction of $g$ on an open neighborhood of $B_{s}$ coincides with $g_{s}$ for $s=\min$ and $\max$, respectively.

We show that closures of gradient manifolds with respect to $g$ are leaves of a smooth foliation $\mathcal{F}$ of $M - \sqcup_{i=1}^{m}C^{i}$. Since gradient manifolds are orbits of an effective $(\mathbb{R} \times T^2)$-action on $M - \sqcup_{i=1}^{m}C^{i} - B_{\max} - B_{\min}$, they are leaves of a smooth foliation on $M - \sqcup_{i=1}^{m}C^{i} - B_{\max} - B_{\min}$. Near $B_{s}$, the closures of gradient manifolds are mapped to fibers of the normal bundle of $B_{s}$ by the construction of $g$ for $s=\min$ and $\max$, respectively. Hence closures of gradient manifolds with respect to $g$ define a smooth foliation on $M - \sqcup_{i=1}^{m}C^{i}$.

We show that closures of gradient manifolds with respect to $g$ are fibers of a proper submersion. It suffices to show that the foliation $\mathcal{F}$ defined by closures of gradient manifolds has no nontrivial holonomy. Let $F$ be a leaf of $\mathcal{F}$ and fix a point $x_{0}$ on $F$. By Lerman's classification, $F$ is diffeomorphic to a lens space and the generator of $\pi_{1}(F,x_{0})$ is an orbit of an $S^1$-subaction of $\rho$. Since the isotropy group of $\rho$ at $F$ is trivial, $F$ has no nontrivial holonomy. Hence $\mathcal{F}$ has no nontrivial holonomy.

The closure of each gradient manifold is a $3$-dimensional $K$-contact submanifold of rank $2$ by Lemmas \ref{MorseTheory} and \ref{MorseBottTheory}. $3$-dimensional toric $K$-contact manifolds are classified by the element in $\Lie(T^2)$ corresponding to the Reeb vector field, the maximal value and the minimal value of the contact moment map by Lerman \cite{Ler2}. Hence they are isomorphic to each other.

We obtain a lens space bundle $E$ over a closed manifold with a $K$-contact form by replacing $\sqcup_{i=1}^{m}C^{i}$ to trivial germs of chains. Then $(M,\alpha)$ is isomorphic to a fiber sum of $E$ with $\sqcup_{i=1}^{m}C^{i}$.
\end{proof}

\subsection{Contact blowing down in contact toric manifolds}

We defined contact blowing up and down using local $T^3$-actions in the previous section. The computation is simpler, if we have global $T^3$-actions. We show a combinatorial lemma by using Dirichlet prime number theorem and apply it to perform contact blowing down for germs of chains for the proof of Theorem \ref{BlowDownToLensSpaceBundles}. 

We denote the standard inner product on $\mathbb{R}^{3}$ by $\cdot$. We identify $\mathbb{R}^{3}$ with the dual of $\mathbb{R}^{3}$ by the inner product. 

\begin{lem}\label{Toric1}
Let $x=\begin{psmallmatrix} x_1 \\ x_2 \\ x_3 \end{psmallmatrix}$ and $y=\begin{psmallmatrix} y_1 \\ y_2 \\ y_3 \end{psmallmatrix}$ be primitive vectors in $\mathbb{Z}^{3}$. Assume that there exists a primitive vector $z$ in $\mathbb{Z}^{3}$ which satisfies $\det \begin{psmallmatrix} x & y & z \end{psmallmatrix}=1$. Let $\Theta$ be a nonempty cone in $\mathbb{R}^{3}$ defined by 
\begin{equation}
\Theta=\{v \in \mathbb{R}^{3} | n^1 \cdot v > 0, n^2 \cdot v > 0, n^3 \cdot v < 0\}
\end{equation}
for primitive vectors $n^1,n^2,n^3$ in $\mathbb{Z}^{3}$ defined by
\begin{equation}
\begin{array}{l}
n^1= x \times y, n^2= y \times \begin{psmallmatrix} 0 \\ 0 \\ 1 \end{psmallmatrix}, n^3 = x \times \begin{psmallmatrix} 0 \\ 0 \\ 1 \end{psmallmatrix}.
\end{array}
\end{equation}
Assume that $y_1 x_3 - x_1 y_3$ and $y_1 x_2 - x_1 y_2$ are nonzero. Then there exists an element $t=\begin{psmallmatrix} t_1 \\ t_2 \\ t_3 \end{psmallmatrix}$ of $\Theta \cap \mathbb{Z}^{3}$ such that 
\begin{description}
\item[(a)] there exists a primitive vector $u$ in $\mathbb{Z}^{3}$ such that $\det \begin{psmallmatrix} x & t & u \end{psmallmatrix} = 1$,
\item[(b)] $\GCD(t_1,t_2)=1$ and $\GCD(t_1,t_3)=1$ are satisfied.
\end{description}
\end{lem}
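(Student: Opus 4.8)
The plan is to pass to the $\mathbb{Z}$-basis $\{x,y,z\}$ of $\mathbb{Z}^3$ furnished by the hypothesis $\det \begin{pmatrix} x & y & z \end{pmatrix}=1$ and to search for $t$ among the vectors $t = ax + by + cz$ with $a,b,c \in \mathbb{Z}$. The first step is to rewrite each constraint in terms of $(a,b,c)$. Since $(x\times y)\cdot z = \det \begin{pmatrix} x & y & z \end{pmatrix} = 1$ while $(x \times y)\cdot x = (x\times y)\cdot y = 0$, the vector $x\times y$ is exactly the dual-basis covector of $z$; likewise $x \times z$ is minus the dual covector of $y$. Hence $x \times t = b(x\times y) + c(x\times z)$ is a primitive vector of $\mathbb{Z}^3$ if and only if $\GCD(b,c)=1$. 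Because $\det \begin{pmatrix} x & t & u \end{pmatrix} = (x\times t)\cdot u$ attains the value $1$ for some $u\in\mathbb{Z}^3$ precisely when $x\times t$ is primitive (and such a $u$ is automatically primitive), condition (a) is equivalent to $\GCD(b,c)=1$. Moreover $n^1\cdot t = \det \begin{pmatrix} x & y & t \end{pmatrix} = c$, so the first cone inequality becomes $c>0$.

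Second, I would exploit the hypotheses $(x\times y)_2 \neq 0$ and $(x\times y)_3 \neq 0$ to decouple the remaining two cone inequalities. A direct expansion, using that $\det \begin{pmatrix} y & e_3 & y \end{pmatrix}=0$ and $\det \begin{pmatrix} x & e_3 & x \end{pmatrix}=0$ with $e_3 = (0,0,1)$, gives $n^2\cdot t = \gamma_3\, a + (\ast)\,c$ and $n^3\cdot t = -\gamma_3\, b + (\ast)\,c$, where $\gamma_3 = (x\times y)_3 \neq 0$ and the $b$, respectively $a$, term drops out. Setting $c=1$ then makes both condition (a) and $n^1\cdot t>0$ automatic and turns the two remaining cone inequalities into $a\in H_a$ and $b\in H_b$, where $H_a,H_b$ are independent infinite half-lines whose directions are fixed by the sign of $\gamma_3$. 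All that remains is to realize condition (b), that is $\GCD(t_1,t_2)=\GCD(t_1,t_3)=1$ for $t = ax+by+z$, while keeping $a\in H_a$ and $b\in H_b$.

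The decisive step is an application of Dirichlet's theorem on primes in arithmetic progressions. Since the first-coordinate functional is primitive, its expression $(x_1,y_1,z_1)$ in the basis satisfies $\GCD(x_1,y_1,z_1)=1$; together with $(x\times y)_2,(x\times y)_3\neq0$ this forces $(x_1,y_1)\neq(0,0)$, and after interchanging the symmetric roles of $(a,x)$ and $(b,y)$ I may assume $x_1\neq0$. I would first fix $b_0\in H_b$, outside the finitely many excluded values, so that, writing $w=b_0y+z$, the two integers $D_2 = x_1 w_2 - x_2 w_1$ and $D_3 = x_1 w_3 - x_3 w_1$ are nonzero — each is a nonconstant affine function of $b_0$ precisely because $\gamma_3\neq0$ and $(x\times y)_2\neq0$ — and so that $\GCD(x_1,w_1)=1$, which is possible since $\GCD(x_1,y_1,z_1)=1$ leaves a nonempty set of admissible residues of $b_0$. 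With $b_0$ fixed, $t_1 = x_1 a + w_1$ runs over an arithmetic progression of modulus $x_1$ with $\GCD(x_1,w_1)=1$ as $a$ ranges over $H_a$, so by Dirichlet one can choose $a\in H_a$ for which $\ell := |t_1|$ is a prime exceeding $|D_2|,|D_3|,|x_1|$. The identities $x_1 t_2 - x_2 t_1 = D_2$ and $x_1 t_3 - x_3 t_1 = D_3$ then give $x_1 t_2\equiv D_2$ and $x_1 t_3\equiv D_3 \pmod{\ell}$; since $\ell\nmid x_1$ and $\ell\nmid D_2,D_3$, we obtain $\ell\nmid t_2,t_3$, hence $\GCD(t_1,t_2)=\GCD(t_1,t_3)=1$. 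This $t$ then satisfies (a), satisfies (b), and lies in $\Theta$.

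The main obstacle is the tension between the constraints: condition (b) is tied to the literal standard coordinates, whereas condition (a) and membership in $\Theta$ are natural under $\GL(3;\mathbb{Z})$, so one cannot simply normalize $x$ to $e_1$. The two nonvanishing hypotheses $(x\times y)_2,(x\times y)_3\neq0$ are used precisely to decouple the cone into independent half-line conditions on $a$ and $b$ and to guarantee $D_2,D_3\neq0$, which is what lets the Dirichlet argument produce the required coprimalities without leaving $\Theta$. Keeping track of the signs and directions of the half-lines — so that the prime values of $|t_1|$ are genuinely attained for $a\in H_a$, including the case where $H_a$ runs to $-\infty$ — is the one point that demands care.
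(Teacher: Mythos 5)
Your proposal is correct, and it follows the paper's proof in its essentials: both expand $t=ax+by+cz$ in the unimodular basis $\{x,y,z\}$, observe that $n^1\cdot t=c$ while the two nonvanishing hypotheses decouple $n^2\cdot t$ and $n^3\cdot t$ into affine functions of $a$ alone and $b$ alone (your sign bookkeeping via $\gamma_3=x_1y_2-x_2y_1$ is accurate), set $c=1$ so that condition (a) --- which both arguments correctly reduce to $\GCD(b,c)=1$ --- is automatic, and then invoke Dirichlet's theorem on primes in arithmetic progressions to secure condition (b). The one genuine difference is the endgame. The paper runs Dirichlet along a ray: it picks positive $s_1^{0},s_2^{0}$ with $s_1^{0}x_1+s_2^{0}y_1$ coprime to $z_1$, scales to make $t_1$ a prime $q$ coprime to $y_1x_2-x_1y_2$ and $y_1x_3-x_1y_3$, and then pigeonholes among three candidates related by the shift $(s_1,s_2)\mapsto(s_1-y_1,\,s_2+x_1)$, which preserves $t_1=q$ while moving $t_2$ and $t_3$ through distinct residues mod $q$, so at most two of the three candidates can fail. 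You instead freeze $b_0$ first, arranging the resultants $D_2=x_1w_2-x_2w_1$ and $D_3=x_1w_3-x_3w_1$ to be nonzero (possible exactly because each is a nonconstant affine function of $b_0$, by the two nonvanishing hypotheses) together with $\GCD(x_1,w_1)=1$, then apply Dirichlet in the single variable $a$ to make $\ell=|t_1|$ a prime exceeding $|x_1|,|D_2|,|D_3|$, after which $\ell\nmid t_2,t_3$ falls out of the identities $x_1t_2-x_2t_1=D_2$ and $x_1t_3-x_3t_1=D_3$. Your quantitative choice of the prime eliminates the paper's three-candidate adjustment and turns the coprimality check into a one-line congruence; the price is the (easy) preliminary facts you correctly supply, namely that $\GCD(x_1,y_1,z_1)=1$ because $(x_1,y_1,z_1)$ is the first row of a unimodular matrix, and that $x_1=y_1=0$ is excluded by $y_1x_3-x_1y_3\neq0$, which legitimizes your reduction to $x_1\neq0$ via the $(a,x)\leftrightarrow(b,y)$ symmetry of the decoupled constraints. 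Your closing caution about the direction of the half-line $H_a$ is exactly the right point to flag: Dirichlet must be applied to $\pm t_1$ according to whether $x_1a+w_1$ tends to $+\infty$ or $-\infty$ on $H_a$, and your argument accommodates both cases.
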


\begin{proof}
We put $t=s_1 x + s_2 y + s_3 z$ for integers $s_1,s_2$ and $s_3$. We have
\begin{equation}
\begin{array}{l}
t \cdot n^1 = s_3 z \cdot (x \times y) = s_3, \\
t \cdot n^2 = s_1 x \cdot (y \times \begin{psmallmatrix} 0 \\ 0 \\ 1 \end{psmallmatrix}) + s_3 z \cdot (y \times \begin{psmallmatrix} 0 \\ 0 \\ 1 \end{psmallmatrix}), \\
t \cdot n^3 = s_2 y \cdot (x \times \begin{psmallmatrix} 0 \\ 0 \\ 1 \end{psmallmatrix}) + s_3 z \cdot (x \times \begin{psmallmatrix} 0 \\ 0 \\ 1 \end{psmallmatrix}).
\end{array}
\end{equation}
Note that $x \cdot (y \times \begin{psmallmatrix} 0 \\ 0 \\ 1 \end{psmallmatrix})=y_1 x_2 - x_1 y_2$. We define $\epsilon=1$ if $y_1 x_2 - x_1 y_2$ is positive and $\epsilon=-1$ if $y_1 x_2 - x_1 y_2$ is negative. $t$ is an element of $\Theta$ if $s_3=1$ and $\epsilon s_1$ and $\epsilon s_2$ are sufficiently large.

We translate the condition (a) in terms of $s_1,s_2$ and $s_3$. Define the matrix $A$ by $A=\begin{psmallmatrix} x & y & z \end{psmallmatrix}$. Then $t$ satisfies the condition (a) if and only if there exists a primitive vector $u'$ in $\mathbb{Z}^{3}$ such that $\det \begin{psmallmatrix} A^{-1}x & A^{-1}t & u' \end{psmallmatrix} = 1$. Since $A^{-1}x=\begin{psmallmatrix} 1 \\ 0 \\ 0 \end{psmallmatrix}$ and $A^{-1}t=\begin{psmallmatrix} s_1 \\ s_2 \\ s_3 \end{psmallmatrix}$, the latter condition is equivalent to $\GCD(s_2,s_3)=1$.

Hence $t=s_1 x + s_2 y + z$ satisfies the conditions if $\epsilon s_1$ and $\epsilon s_2$ are sufficiently large and $\GCD(t_1,t_2)=1$ and $\GCD(t_1,t_3)=1$. 

We take positive integers $s_{1}^{0}$ and $s_{2}^{0}$ so that $s_{1}^{0} x_1+ s_{2}^{0} y_1$ is coprime to $z_1$. By the assumption, $y_1 x_2 - x_1 y_2$ and $y_1 x_3 - x_1 y_3$ are nonzero. Hence we have a sufficiently large positive integer $c$ such that $c( \epsilon s_{1}^{0} x_1+ \epsilon s_{2}^{0} y_1) + z_1$ is equal to a prime number $q$ which is coprime to $y_1 x_2 - x_1 y_2$ and $y_1 x_3 - x_1 y_3$ by the Dirichlet prime number theorem. Then one of 
\begin{equation}
\begin{psmallmatrix}
s_1 \\ s_2 \\ s_3
\end{psmallmatrix}=
\begin{psmallmatrix}
c \epsilon s_{1}^{0} \\ c \epsilon s_{2}^{0} \\ 1
\end{psmallmatrix},
\begin{psmallmatrix}
c \epsilon s_{1}^{0} - y_1 \\ c \epsilon s_{2}^{0} + x_1 \\ 1
\end{psmallmatrix},
\begin{psmallmatrix}
c \epsilon s_{1}^{0} - 2y_1 \\ c \epsilon s_{2}^{0} + 2 x_1 \\ 1
\end{psmallmatrix}
\end{equation}
satisfies the required conditions.
\end{proof}

\begin{defn}(Delzant conditions and good cones) We say two primitive vectors $n^1$ and $n^2$ in $\mathbb{Z}^{3}$ satisfy the Delzant condition if there exists a vector $v$ in $\mathbb{Z}^{3}$ such that $\det \begin{psmallmatrix} n^1 & n^2 & v \end{psmallmatrix}=1$. We say two planes defined by primitive normal vectors $n^1$ and $n^2$ in $\mathbb{Z}^{3}$ satisfy the Delzant condition if $n^1$ and $n^2$ satisfy the Delzant condition. A closed cone $\overline{\Delta}$ in $\mathbb{R}^{3}$ with nonempty interior is a good cone if every pair of adjacent planes of $\overline{\Delta}$ satisfies the Delzant condition (See \cite{Ler2}).
\end{defn}

We obtain the following by Lemma \ref{Toric1}:
\begin{lem}\label{Toric2}
Let $P^0,P^1,P^2$ and $P^3$ be planes in $\mathbb{R}^{3}$ defined by primitive normal vectors $n^0,n^1,n^2$ and $n^3$ in $\mathbb{Z}^{3}$ respectively. Assume that $P^0 \cap \overline{\Delta}$, $P^1 \cap \overline{\Delta}$, $P^2 \cap \overline{\Delta}$ and $P^3 \cap \overline{\Delta}$ are four faces of a good cone $\overline{\Delta}$ so that $P^i \cap \overline{\Delta}$ and $P^{i+1} \cap \overline{\Delta}$ are adjacent in $\overline{\Delta}$ for $i=0,1$ and $2$. Let $\Theta$ be a cone in $\mathbb{R}^{3}$ defined by 
\begin{equation}
\Theta=\{v \in \mathbb{R}^{3} | (n^0 \times n^1) \cdot v > 0, (n^1 \times n^2) \cdot v > 0, (n^0 \times n^2) \cdot v < 0\}.
\end{equation}
Then we have a plane $Q$ defined by a primitive normal vector $t$ in $\Theta \cap \mathbb{Z}^{3}$ such that $(Q,P^0)$, $(Q,P^2)$ and $(Q,P^3)$ satisfy the Delzant condition.
\end{lem}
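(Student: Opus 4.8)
Lemma \ref{Toric2}: Given four consecutive faces $P^0, P^1, P^2, P^3$ of a good cone with normals $n^0, n^1, n^2, n^3$, I want to find a new plane $Q$ with normal $t$ in a specified cone $\Theta$ such that $(Q,P^0)$, $(Q,P^2)$, $(Q,P^3)$ all satisfy the Delzant condition.

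Let me understand what's happening geometrically and how Lemma \ref{Toric1} applies.The plan is to deduce Lemma \ref{Toric2} from Lemma \ref{Toric1} by a change of coordinates that normalizes the three relevant face-normals. First I would set up the correspondence between the two lemmas. In Lemma \ref{Toric1} the roles are played by two primitive vectors $x,y$ satisfying a Delzant condition $\det\begin{psmallmatrix} x & y & z\end{psmallmatrix}=1$, together with the cone $\Theta$ cut out by the cross products $x\times y$, $y\times\begin{psmallmatrix} 0\\0\\1\end{psmallmatrix}$, $x\times\begin{psmallmatrix} 0\\0\\1\end{psmallmatrix}$. In Lemma \ref{Toric2} the cone $\Theta$ is defined by $(n^0\times n^1)\cdot v>0$, $(n^1\times n^2)\cdot v>0$, $(n^0\times n^2)\cdot v<0$. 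The natural dictionary is to set $x=n^0$ and $y=n^2$, so that the first defining covector $n^0\times n^1$ plays no direct structural role beyond locating $\Theta$, while the roles of the coordinate vector $\begin{psmallmatrix}0\\0\\1\end{psmallmatrix}$ in Lemma \ref{Toric1} must be arranged to match $n^1$ here. The Delzant hypothesis that $(P^1,P^2)$ are adjacent faces of a good cone supplies a vector $z$ with $\det\begin{psmallmatrix} n^0 & n^2 & z\end{psmallmatrix}=\pm 1$, which after adjusting sign gives the Delzant condition required as input to Lemma \ref{Toric1}.

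Second, I would perform a unimodular change of coordinates $A\in\GL(3;\mathbb{Z})$ sending $n^1$ to $\begin{psmallmatrix}0\\0\\1\end{psmallmatrix}$; this is possible precisely because $n^1$ is primitive. Under this map the cone $\Theta$, being defined by integral cross-product inequalities, is carried to a cone of exactly the shape appearing in Lemma \ref{Toric1} with $x=An^0$, $y=An^2$, and the distinguished vector $\begin{psmallmatrix}0\\0\\1\end{psmallmatrix}=An^1$. I would then verify the nondegeneracy hypotheses of Lemma \ref{Toric1}, namely that the analogues of $y_1x_3-x_1y_3$ and $y_1x_2-x_1y_2$ are nonzero: these are (up to the coordinate change) the third components of $An^0\times An^2$ paired against the coordinate directions, which are nonzero exactly because $P^0,P^1,P^2$ are genuinely distinct adjacent faces and $n^1$ is not coplanar with $n^0,n^2$. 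This follows from the goodness and convexity of $\overline{\Delta}$, so that no two of the normals are parallel and the determinant $\det\begin{psmallmatrix} n^0 & n^1 & n^2\end{psmallmatrix}$ is nonzero.

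Third, applying Lemma \ref{Toric1} in the transformed coordinates yields a lattice point $t'=At$ in the transformed $\Theta$ satisfying condition (a), a Delzant-type relation $\det\begin{psmallmatrix} An^0 & t' & u'\end{psmallmatrix}=1$, together with the coprimality condition (b). Pulling back by $A^{-1}$, unimodularity preserves all determinants, so $t=A^{-1}t'$ lies in $\Theta\cap\mathbb{Z}^3$ and satisfies $\det\begin{psmallmatrix} n^0 & t & u\end{psmallmatrix}=1$, i.e. $(Q,P^0)$ satisfies the Delzant condition. The remaining requirements that $(Q,P^2)$ and $(Q,P^3)$ satisfy the Delzant condition I expect to obtain as follows: the $(Q,P^2)$ condition should come out of the same Lemma \ref{Toric1} output by a symmetric argument interchanging the roles of $x$ and $y$, and the coprimality statements $\GCD(t_1,t_2)=1$, $\GCD(t_1,t_3)=1$ in condition (b) translate, after the coordinate change, into the existence of the required complementary lattice vectors for $P^2$ and $P^3$.

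The main obstacle will be the bookkeeping of condition (b): I must check that the two coprimality conditions produced by Lemma \ref{Toric1} are precisely what is needed to build unimodular triples with $n^2$ and $n^3$ respectively, rather than just with $n^0$. This requires identifying, in the chosen coordinates, how the components of $t$ relate to the Delzant pairings against $n^2$ and $n^3$; the role of $P^3$ (which does not appear explicitly in Lemma \ref{Toric1}) must be handled by noting that $(P^2,P^3)$ is an adjacent pair of the good cone, so a unimodular relation for $n^2,n^3$ exists and can be combined with the relation for $t,n^2$. Verifying that the sign choices $\epsilon=\pm 1$ in Lemma \ref{Toric1} place $t$ on the correct side of all three defining hyperplanes of $\Theta$ simultaneously is the delicate point, and I would carry it out by tracking the signs of the cross products under $A$.
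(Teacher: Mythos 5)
Your overall strategy --- reduce to Lemma \ref{Toric1} by a unimodular change of coordinates --- is the right one, but your dictionary is wrong, and the error is fatal rather than cosmetic. You set $x=n^0$, $y=n^2$ and send $n^1$ to $\begin{psmallmatrix} 0 \\ 0 \\ 1 \end{psmallmatrix}$. Three things then break. First, Lemma \ref{Toric1} requires the Delzant condition for the pair $(x,y)$, i.e.\ a vector $z$ with $\det \begin{psmallmatrix} n^0 & n^2 & z \end{psmallmatrix}=1$; your claimed source for this (adjacency of $(P^1,P^2)$) supplies a unimodular triple for $(n^1,n^2)$, not for $(n^0,n^2)$, and since $P^0$ and $P^2$ are \emph{not} adjacent there is no reason $(n^0,n^2)$ is a Delzant pair at all. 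Second, with your dictionary the cone of Lemma \ref{Toric1} is cut out by $(n^0\times n^2)\cdot v>0$, $(n^2\times n^1)\cdot v>0$, $(n^0\times n^1)\cdot v<0$, which is $-\Theta$: all three target inequalities are reversed (this alone could be patched by replacing $t$ with $-t$, since the Delzant and $\GCD$ conditions are negation-invariant, but the other two defects cannot). Third --- the point you flagged as ``bookkeeping'' but which is unrepairable within your setup --- the conclusions $\GCD(t_1,t_2)=1$ and $\GCD(t_1,t_3)=1$ of Lemma \ref{Toric1} are precisely the Delzant conditions of $t$ against the coordinate vectors $\begin{psmallmatrix} 0 \\ 0 \\ 1 \end{psmallmatrix}$ and $\begin{psmallmatrix} 0 \\ 1 \\ 0 \end{psmallmatrix}$. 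In your coordinates these are $n^1$ (irrelevant, since $P^1$ is the face being discarded) and an arbitrary vector completing $n^1$ to a basis, so condition (b) says nothing about $n^2$ or $n^3$; in particular $(Q,P^3)$ is untouched by your construction, and your fallback --- combining Delzant for $(n^2,n^3)$ with Delzant for $(t,n^2)$ --- does not yield Delzant for $(t,n^3)$, as the condition is not transitive.

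The fix is to transpose the dictionary, which is what the paper does. Use the good-cone Delzant condition for the \emph{adjacent} pair $(P^2,P^3)$ to normalize coordinates so that $n^2=\begin{psmallmatrix} 0 \\ 0 \\ 1 \end{psmallmatrix}$ and $n^3=\begin{psmallmatrix} 0 \\ 1 \\ 0 \end{psmallmatrix}$, and apply Lemma \ref{Toric1} with $x=n^0$, $y=n^1$, whose Delzant condition holds because $(P^0,P^1)$ are adjacent. Then everything lines up with no sign issues: the cone of Lemma \ref{Toric1} is literally $\Theta$, since $y\times \begin{psmallmatrix} 0 \\ 0 \\ 1 \end{psmallmatrix}=n^1\times n^2$ and $x\times \begin{psmallmatrix} 0 \\ 0 \\ 1 \end{psmallmatrix}=n^0\times n^2$; conclusion (a) is the Delzant condition for $(Q,P^0)$; and the two $\GCD$ conditions in (b) become exactly the Delzant conditions for $(Q,P^2)$ and $(Q,P^3)$ because $n^2$ and $n^3$ are now coordinate vectors. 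The nondegeneracy hypotheses of Lemma \ref{Toric1} read $(n^0\times n^1)\cdot n^2\neq 0$ and $(n^0\times n^1)\cdot n^3\neq 0$, which hold because $n^0\times n^1$ spans the edge $P^0\cap P^1$ of $\overline{\Delta}$, and by convexity of $\overline{\Delta}$ this edge lies in neither the face $P^2$ nor the face $P^3$.
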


\begin{proof}
Note that two primitive vectors $v^1$ and $v^2$ satisfy the Delzant condition if and only if there exists an element $A$ of $\SL(3;\mathbb{Z})$ such that $A v^{1}= \begin{psmallmatrix} 0 \\ 1 \\ 0 \end{psmallmatrix}$ and  $A v^{2}= \begin{psmallmatrix} 0 \\ 0 \\ 1 \end{psmallmatrix}$. Hence we can assume that $n^{2}= \begin{psmallmatrix} 0 \\ 0 \\ 1 \end{psmallmatrix}$ and $n^{3}= \begin{psmallmatrix} 0 \\ 1 \\ 0 \end{psmallmatrix}$. Note that we have $(An^0 \times An^1) \cdot Av=\det \begin{psmallmatrix} An^0 & An^1 & Av \end{psmallmatrix}= \det \begin{psmallmatrix} n^0 & n^1 & v \end{psmallmatrix}=(n^0 \times n^1) \cdot v$. 

We put $x=n^0$, $y=n^1$. We show that the assumptions of Lemma \ref{Toric1} are satisfied. $x$ and $y$ satisfies the Delzant condition by the assumption. We have $y_1 x_3 - x_1 y_3=(n^0 \times n^1) \cdot n^3$ and $y_1 x_2 - x_1 y_2=- (n^0 \times n^1) \cdot n^2$. $n^0 \times n^1$ is a vector contained in $\overline{\Delta}$. Among the vectors in $\overline{\Delta}$, the point $v$ satisfying the equation $v \cdot n^i=0$ is contained in the face defined by $n^i$ by the convexity of $\overline{\Delta}$. Since $n^0 \times n^1$ is not contained in the face defined by $n^2$ nor $n^3$, $y_1 x_3 - x_1 y_3$ and $y_1 x_2 - x_1 y_2$ are nonzero. Hence the assumptions of Lemma \ref{Toric1} are satisfied.

Hence there exists a vector $t = \begin{psmallmatrix} t_1 \\ t_2 \\ t_3 \end{psmallmatrix}$ in $\Theta \cap \mathbb{Z}^{3}$ which satisfies $\GCD(t_1,t_2)=1$, $\GCD(t_1,t_3)=1$ and the Delzant condition with $n^{1}$. Then the plane $Q$ defined by $t$ satisfies the Delzant condition with $P^0$, $P^2$ and $P^3$.
\end{proof}

We deduce a useful lemma from Lemma \ref{Toric2} to perform contact blowing down for germs of chains. 

For an open cone $\Delta$, let $\overline{\Delta}$ be its closure. Let $k$ be an integer greater than $2$. Let $\overline{\Delta}$ be a good cone defined by primitive normal vectors $\{n^i\}_{i=0}^{k}$ in $\mathbb{Z}^{3}$. We put $n^{k+1}=n^{0}$. The plane defined by $n^i$ is denoted by $P^i$ for $0 \leq i \leq k+1$. We assume that the faces $P^{i} \cap \overline{\Delta}$ and $P^{i+1} \cap \overline{\Delta}$ are adjacent for $0 \leq i \leq k$. Let $\Delta_1$ be the cone defined by $\{n^{0}\} \cup \{n^{i}\}_{i=k'}^{k}$ for an integer $k'$ greater than $1$.

\begin{lem}\label{Toric4}
There exists a plane $Q$ defined by a primitive normal vector $l$ in $\mathbb{Z}^{3}$ which satisfies the following:
\begin{enumerate}
\item $Q$ intersects $\Delta_1$ and does not intersect $\overline{\Delta}$.
\item $(Q,P^0)$ and $(Q,P^k)$ satisfy the Delzant condition.
\item Let $\overline{\Delta}_2$ be the good cone defined by $\{n^{0}\} \cup \{l\} \cup \{n^{i}\}_{i=k'}^{k}$. Then the contact toric manifold corresponding to $\overline{\Delta}_{2}$ is obtained from the contact toric manifold corresponding to $\overline{\Delta}$ by a finite sequence of contact blowing down. 
\end{enumerate}
\end{lem}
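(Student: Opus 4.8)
The plan is to reduce the statement to a two-dimensional toric–surface computation and to produce the vector $l$ by a single application of Lemma \ref{Toric2}. First I would pass to a $2$-dimensional affine slice $A = \{v \mid v(\overline{R}) = 1\}$ transverse to the cone, so that the faces $P^i \cap \overline{\Delta}$ become edges of a convex polygonal region, the edges $P^i \cap P^{i+1}$ become $2$-dimensional vertices, and the Delzant condition becomes unimodularity of adjacent edge–normals. In this picture $\Delta_1$ is the larger region obtained by deleting the chain of edges $P^1, \ldots, P^{k'-1}$ and extending $P^0$ and $P^{k'}$, while the sought cone $\overline{\Delta}_2 = \{n^0\} \cup \{l\} \cup \{n^i\}_{i=k'}^k$ is $\Delta_1$ cut by one new edge $Q$ crossing the deleted chain, so that $\overline{\Delta} \subset \overline{\Delta}_2 \subset \Delta_1$. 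Since contact blowing up along a closed orbit is a corner cut (a vertex becomes an edge, Figure \ref{Figure11}) and contact blowing up along a lens space pivots an edge (Figure \ref{Figure2}), with their inverses the two kinds of contact blowing down, condition (iii) becomes the toric–surface assertion that the finer unimodular convex chain $P^0, P^1, \ldots, P^{k'}$ is obtained from the coarser unimodular chain $P^0, Q, P^{k'}$ by finitely many corner cuts.

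To produce $l$ I would feed to Lemma \ref{Toric2} the four faces framing the chain (taking the roles of $P^0, P^1, P^2, P^3$ so that $P^0 = $ the face with normal $n^0$, $P^2 = $ the face with normal $n^{k'}$), obtaining a plane $Q$ with primitive lattice normal $t = l$ lying in the cone $\Theta$ swept by the chain and satisfying the Delzant condition with $P^0$ and with the face adjacent to $Q$ inside $\overline{\Delta}_2$ (I read the face $P^k$ in condition (ii) as $P^{k'}$, the neighbor of $Q$ in $\overline{\Delta}_2$). Membership $l \in \Theta$ is exactly what forces condition (i): a normal lying strictly between the edge-directions $n^0 \times n^1$ and $n^{k'-1} \times n^{k'}$ defines a plane meeting the interior of $\Delta_1$ through the chain region while keeping $\overline{\Delta}$ strictly on one side, so $\overline{\Delta}$ meets $Q$ only at the origin whereas $Q \cap \Delta_1 \neq \{0\}$. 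The Delzant conditions supplied by Lemma \ref{Toric2}, which ultimately rest on the Dirichlet prime number theorem through Lemma \ref{Toric1}, give condition (ii) and guarantee that every pair of adjacent faces of $\overline{\Delta}_2$ satisfies the Delzant condition, i.e. that $\overline{\Delta}_2$ is a good cone.

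For condition (iii) I would argue by induction on $k'$, peeling one interior edge off the chain at each step. Because $\overline{\Delta} \subset \overline{\Delta}_2$ share the endpoint faces with normals $n^0$ and $n^{k'}$ and all intermediate vertices are unimodular, each interior edge of the finer chain is removed by the inverse of a corner cut: one first pivots it into Delzant position with its neighbors (a contact blowing down along a lens space, always available as noted in Section \ref{ContactBlowingUpAndDown}) and then extends the two neighboring edges to a smooth vertex (a contact blowing down along a lens space to a closed orbit). Lemma \ref{Toric2} is precisely what furnishes, at each stage, a lattice normal realizing the required pivot while preserving goodness, so the induction closes and exhibits the contact toric manifold of $\overline{\Delta}_2$ as the result of finitely many contact blowing downs applied to that of $\overline{\Delta}$.

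The hard part will be the final identification: showing that each elementary combinatorial move on the good cone is realized by one of the contact blowing down operations defined in Section \ref{ContactBlowingUpAndDown}, and that the unimodularity data carried through the induction (the cardinalities of isotropy groups and the Euler classes of normal bundles) matches the hypotheses required by Lemmas \ref{TopologicalCharacterization} and \ref{GoodCone}. Equally delicate is arranging that the single vector $l$ output by Lemma \ref{Toric2} simultaneously satisfies the positioning condition (i) and both Delzant conditions in (ii); I expect to handle this by choosing the cone $\Theta$ in Lemma \ref{Toric2} to be exactly the cone of normals separating the chain $P^1, \ldots, P^{k'-1}$ from $\overline{\Delta}$, so that the lattice-point argument of Lemma \ref{Toric1} yields an $l$ meeting all three requirements at once.
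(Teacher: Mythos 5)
Your overall skeleton---induction on $k'$, Lemma \ref{Toric2} invoked at each stage, and the cone-level moves realized as contact blow-downs via the Boyer--Galicki Delzant construction together with Lerman's uniqueness for convex moment-map images---is indeed the paper's strategy, and your reading of $P^{k}$ in condition (ii) as $P^{k'}$ is the correct one. But your plan for producing the final normal $l$ by a \emph{single} application of Lemma \ref{Toric2}, feeding it $x=n^{0}$, $y=n^{k'}$ and ``choosing'' $\Theta$ to be the cone of normals separating the whole chain $P^{1},\dots,P^{k'-1}$ from $\overline{\Delta}$, has a genuine gap. Lemma \ref{Toric1} assumes the pair $(x,y)$ admits a primitive $z$ with $\det \begin{psmallmatrix} x & y & z \end{psmallmatrix}=1$, i.e.\ that $x \times y$ is primitive; this is exactly the Delzant condition, which holds for \emph{adjacent} faces of a good cone but is not available for the non-adjacent pair $(n^{0},n^{k'})$ when $k'>2$. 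Moreover the cone $\Theta$ in Lemma \ref{Toric2} is dictated by its input quadruple of consecutive faces, not a free parameter, so the quadruple $(P^{0},P^{1},P^{k'},P^{k'+1})$ you propose violates the adjacency hypotheses of that lemma. The paper never constructs $l$ in one shot: at each induction step Lemma \ref{Toric2} is applied only to the first three (adjacent, hence pairwise Delzant) faces of the \emph{current} cone, producing a pivot normal $l$ that is Delzant with $n^{0}$, $n^{2}$ \emph{and} $n^{3}$, and the final plane $Q$ appears only at the base case $k'=2$.

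Your atomic peeling step is also mis-specified. Collapsing an edge to a smooth vertex (contact blow-down along a lens space to a closed orbit) requires the two \emph{neighboring} normals to be mutually Delzant---in the proof of Lemma \ref{TopologicalCharacterization} this is what makes the Delzant-constructed orbifold $(\tilde U,\tilde\alpha)$ a manifold---and this condition is unaffected by any pivot of the edge being collapsed, so ``pivot $e$, then collapse $e$'' cannot work; for the same reason your slogan that the finer chain is obtained from $P^{0},Q,P^{k'}$ by corner cuts alone is false, since pivots are unavoidable in the sequence. The correct bookkeeping, as in the paper, is: pivot $P^{1}$ to $Q^{1}$ (a blow-down along a lens space, moment image $\overline{\Delta}\rightsquigarrow\overline{\Delta}_{3}$), then collapse the \emph{next} face $P^{2}$, whose neighbors after the pivot are $Q^{1}$ and $P^{3}$---a pair that is Delzant precisely because Lemma \ref{Toric2} supplied $l$ Delzant with $n^{3}$ as well---yielding the cone $\overline{\Delta}_{4}$ with faces $\{n^{0}\}\cup\{l\}\cup\{n^{i}\}_{i=3}^{k}$, to which the induction hypothesis applies; each identification of the blown-down manifold with the Delzant-construction model uses Lerman's result that contact toric $5$-manifolds with the same convex moment image are isomorphic. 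With these two corrections your argument coincides with the paper's.
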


\begin{proof}
We show the case where $k'=2$. If $k'=2$, our claim directly follows from Lemma \ref{Toric2}. In fact, we apply Lemma \ref{Toric2} by substituting $P^0$, $P^1$ and $P^2$ to $P^0$, $P^1$ and $P^2$ in Lemma \ref{Toric4}, we have a plane $Q$ which satisfies the conditions (i) and (ii). Note that we can ignore $P^3$. By the Delzant type construction of Boyer-Galicki \cite{BoGa2}, we have contact toric manifolds $(M,\xi)$ and $(M_{2},\xi_{2})$ such that the images of the symplectic moment maps of the symplectizations of $(M,\xi)$ and $(M_{2},\xi_{2})$ are $\Delta$ and $\overline{\Delta}_2$ respectively. $\overline{\Delta}_{2}$ is obtained from $\overline{\Delta}$ by attaching a rectangle which is an open neighborhood of $\overline{\Delta}_{2} \cap Q$ in $\overline{\Delta}_{2}$. We can perform a contact blowing down for $(M,\xi)$ along the lens space which is the inverse image of $\overline{\Delta} \cap P^1$ by the contact moment map of $(M,\xi)$ so that the image of the contact moment map change from $\overline{\Delta}$ to $\overline{\Delta}_{2}$. Then the contact toric manifold obtained after the contact blowing up is isomorphic to $(M_{2},\xi_{2})$ by a result of Lerman \cite{Ler2}, since the images of the contact moment maps are the same and convex. Hence $(M_{2},\xi_{2})$ is obtained from $(M,\xi)$ by a contact blowing up. $Q$ satisfies the condition (iii).

Assume that Lemma \ref{Toric4} is true for the case of $k'=s-1$. We consider the case of $k'=s$. Applying Lemma \ref{Toric2}, we can take a plane $Q^1$ defined by a primitive vector $l$ in $\mathbb{Z}^{3}$ such that 
\begin{enumerate}
\item $l$ is contained in the cone $\Theta=\{v \in \mathbb{R}^{3} | (n^{0} \times n^{1}) \cdot l >0, (n^{1} \times n^{2}) \cdot l >0, (n^{0} \times n^{2}) \cdot l <0\}$ and 
\item $l$ satisfies the Delzant condition with $n^0$, $n^2$ and $n^3$ respectively.
\end{enumerate}
Then we have
\begin{enumerate}
\item $Q^1$ intersects $\Delta_1$,
\item $Q^1$ does not intersect $\overline{\Delta}$ and
\item $Q^1$ intersects $P^0,P^2$ and $P^3$ as in the left picture in Figure \ref{Figure} satisfying the Delzant condition with $P^0,P^2$ and $P^3$ respectively.
\end{enumerate}

Let $\overline{\Delta}_{3}$ be the cone defined by normal vectors $\{n^{0}\} \cup \{l\} \cup \{n^i\}_{i=2}^{k}$. Let $\overline{\Delta}_{4}$ be the cone defined by normal vectors $\{n^{0}\} \cup \{l\} \cup \{n^i\}_{i=3}^{k}$. By the Delzant type construction of Boyer-Galicki \cite{BoGa2}, we have contact toric manifolds $(M,\xi)$, $(M_{2},\xi_{2})$, $(M_{3},\xi_{3})$ and $(M_{4},\xi_{4})$ such that the images of the symplectic moment maps of the symplectizations of $(M,\xi)$, $(M_{2},\xi_{2})$, $(M_{3},\xi_{3})$ and $(M_{4},\xi_{4})$ are $\overline{\Delta}$, $\overline{\Delta}_{2}$, $\overline{\Delta}_{3}$ and $\overline{\Delta}_{4}$ respectively. As in the case where $k=2$, we can show that $(M,\xi)$ is obtained from $(M_{3},\xi_{3})$ by a contact blowing up. Similarly $(M_{3},\xi_{3})$ is obtained from $(M_{4},\xi_{4})$ by a contact blowing up. By the induction hypothesis, $(M_{4},\xi_{4})$ is obtained from $(M_{2},\xi_{2})$ by a finite sequence of contact blowing down. Hence $(M_{2},\xi_{2})$ is obtained from $(M,\xi)$ by a finite sequence of contact blowing down.
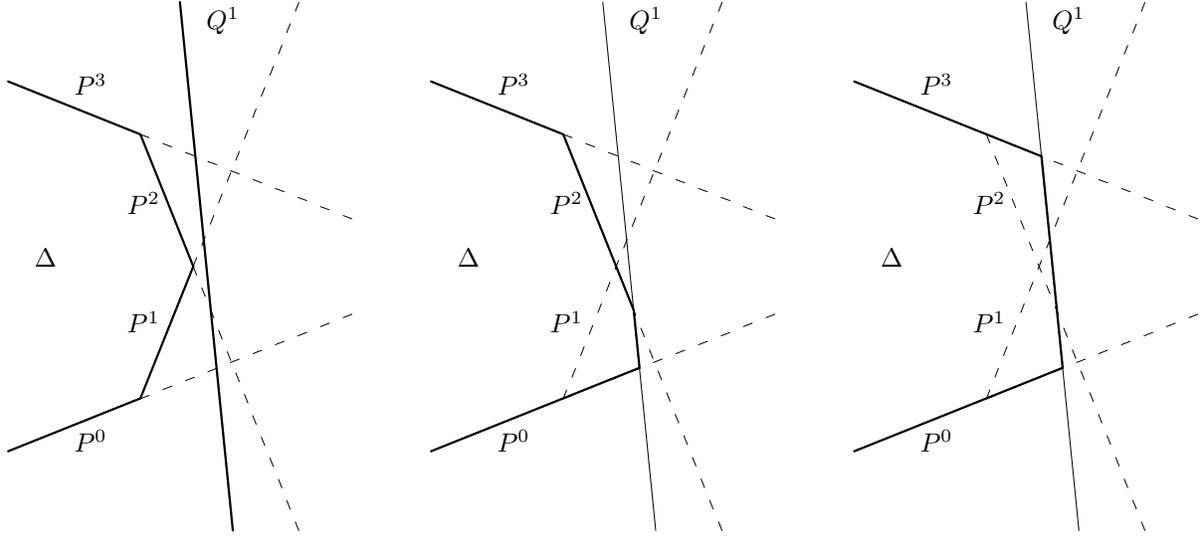
\begin{figure}
\begin{equation*}
\begin{array}{ccc}
\begin{picture}(150,200)(50,-100)
\put(10,0){$\Delta$}
\put(25,65){$P^3$}
\put(45,20){$P^2$}
\put(45,-25){$P^1$}
\put(25,-70){$P^0$}
\put(75,90){$Q^1$}
\dashline{4.000}(50,50)(130,18)
\dashline{4.000}(70,0)(110,-100)
\dashline{4.000}(70,0)(110,100)
\dashline{4.000}(50,-50)(130,-18)
\thicklines
\path(0,70)(50,50)(70,0)(50,-50)(0,-70)
\path(65,100)(85,-100)
\end{picture}
&
\begin{picture}(150,200)(50,-100)
\put(10,0){$\Delta$}
\put(25,65){$P^3$}
\put(45,20){$P^2$}
\put(45,-25){$P^1$}
\put(25,-70){$P^0$}
\put(75,90){$Q^1$}
\path(65,100)(85,-100)
\dashline{4.000}(50,50)(130,18)
\dashline{4.000}(50,50)(110,-100)
\dashline{4.000}(50,-50)(110,100)
\dashline{4.000}(50,-50)(130,-18)
\thicklines
\path(0,70)(50,50)(76.666,-16.666)(78.846,-38.46)(0,-70)
\end{picture}
&
\begin{picture}(150,200)(50,-100)
\put(10,0){$\Delta$}
\put(25,65){$P^3$}
\put(45,20){$P^2$}
\put(45,-25){$P^1$}
\put(25,-70){$P^0$}
\put(75,90){$Q^1$}
\path(65,100)(85,-100)
\dashline{4.000}(50,50)(130,18)
\dashline{4.000}(50,50)(110,-100)
\dashline{4.000}(50,-50)(110,100)
\dashline{4.000}(50,-50)(130,-18)
\thicklines
\path(0,70)(70.833,41.666)(78.846,-38.46)(0,-70)
\end{picture}
\end{array}
\end{equation*}
\caption{Contact blowing down.}
\label{Figure}
\end{figure}
Hence the proof of Lemma \ref{Toric4} is completed.
\end{proof}

We remark on realization of contact blowing up and down for germs of chains in $K$-contact manifolds. We must be careful on realization of desired contact blowing up on a germ of a chain of $(M,\alpha)$ in $M$. It is because we need a large open neighborhood coordinate of closed orbits of the Reeb flow or lens spaces in $M$ where the toric contact form is in the normal form, which contains the necessary hypersurface. On the contrary, we can realize any desired contact blowing down in $M$. 

\subsection{Proof of the classification Theorem \ref{BlowDownToLensSpaceBundles}}

We show Theorem \ref{BlowDownToLensSpaceBundles} as a corollary of Proposition \ref{Fibersum} and Lemma \ref{EmbeddingChains}.

\begin{lem}\label{EmbeddingChains}
Let $C$ be a germ of a chain. Assume that the maximal component and the minimal component of $C$ are of dimension $3$. Then we have
\begin{enumerate}
\item $C$ can be realized as a nontrivial chain in a closed $5$-dimensional $K$-contact toric manifold of rank $2$.
\item $C$ is isomorphic to a germ of a chain with one gradient manifold by a finite sequence of contact blowing down.
\item Assume that $C$ has only one gradient manifold $L$. Then we can perform contact blowing down along $L$ to obtain a trivial germ of a chain.
\end{enumerate}
\end{lem}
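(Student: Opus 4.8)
The plan is to reduce all three statements to the combinatorics of good cones in $\mathbb{R}^{3}$, passing back and forth between cones and contact toric manifolds through the Delzant-type construction of Boyer-Galicki \cite{BoGa2} and realising the blowing down operations exactly as in the cone pictures of the previous section.

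For (i), I would start from the combinatorial data of $C$. By the normal form theorem (Lemmas \ref{NormalFormOfChains} and \ref{ToricActionNearChains}) the germ is determined by the gradient manifolds $L_{1},\dots,L_{k}$ and the intervening closed orbits, which supply a sequence of primitive normal vectors $n^{1},\dots,n^{k}\in\mathbb{Z}^{3}$ for the $L_{i}$, flanked by normals $n^{0}$ and $n^{k+1}$ for the two $3$-dimensional caps $B_{\min}$ and $B_{\max}$. Because these vectors come from a genuine chain, the local convexity of the moment image of the symplectization (Theorem 4.7 of \cite{GuSt}) forces exactly the sign conditions $\det\begin{psmallmatrix} n^{i} & n^{i+1} & n^{i+2}\end{psmallmatrix}>0$ and $(n^{i}\times n^{i+1})(3)>0$ appearing in Lemmas \ref{FacesOfDelta2} and \ref{FacesOfDelta3}, while smoothness gives the Delzant condition at each closed orbit. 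I would then apply Lemma \ref{LastVector} to append one further primitive vector $n^{k+2}$ closing up the fan while preserving goodness, so that by Lemma \ref{FacesOfDelta} the data $\{n^{i}\}_{i=0}^{k+2}$ defines a good cone $\overline{\Delta}$. Feeding $\overline{\Delta}$ into the Boyer-Galicki construction yields a closed $5$-dimensional contact toric manifold, and choosing the Reeb vector through a generic affine slice $A$ as in \ref{Subsubsection : ContactToricManifolds} gives a $K$-contact form of rank $2$ in which $C$ sits as the prescribed nontrivial chain, its faces being $P^{0},\dots,P^{k+1}$.

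For (ii), I would embed $C$ as in (i) and then repeatedly apply Lemma \ref{Toric4} to the cone. Each application replaces a block of intermediate chain-faces by a single plane $Q$ satisfying the Delzant condition with its two neighbours, and realises the passage from the old cone to the new one as a finite sequence of contact blowing downs, via the intermediate cones and Lerman's isomorphism theorem \cite{Ler2}. Iterating, or choosing the index $k'$ in Lemma \ref{Toric4} so that a single intermediate face survives, collapses all but one of the gradient-manifold faces. Since a contact blowing down on the ambient toric manifold restricts to a contact blowing down of the germ, the germ of the resulting chain has exactly one gradient manifold, which is the assertion of (ii).

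For (iii), with $C=\{L\}$ a single gradient manifold between two $3$-dimensional caps, I would again embed via (i), so that $L$ corresponds to one face $Q$ (normal $l$) adjacent to the cap-faces $P^{0}$ ($B_{\min}$, normal $n^{0}$) and $P^{2}$ ($B_{\max}$, normal $n^{2}$). Contact blowing down $L$ to a closed orbit corresponds to deleting $Q$ and letting $P^{0}$ and $P^{2}$ meet along the new edge $\Sigma=P^{0}\cap P^{2}$; after this operation the only closed orbit remaining on the chain is $\Sigma$, so that every gradient manifold near it is free and the germ is trivial. By Lemma \ref{TopologicalCharacterization} the deletion is legitimate precisely when the Euler class of the normal $S^{1}$-bundle of $L$ generates $H^{2}(L;\mathbb{Z})$, and by Lemma \ref{GoodCone} this class equals $\det\begin{psmallmatrix} n^{0} & n^{2} & l^{2}\end{psmallmatrix} \bmod b$ with $b=\lvert\det\begin{psmallmatrix} n^{0} & l & n^{2}\end{psmallmatrix}\rvert=\lvert\pi_{1}(L)\rvert$ and $\det\begin{psmallmatrix} n^{2} & l^{2} & l\end{psmallmatrix}=1$. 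The hard part is exactly this generator property: I expect to extract the required coprimality from the Delzant conditions at the two flanking closed orbits $(n^{0},l)$ and $(l,n^{2})$ together with the constraint that $L$ is the \emph{unique} gradient manifold joining the two caps, so that the cone carries no further faces between $P^{0}$ and $P^{2}$, reducing the claim to the same number-theoretic input (the Dirichlet prime argument) underlying Lemmas \ref{Toric1} and \ref{Characterizationbyk}. Verifying this coprimality cleanly, rather than the cone bookkeeping of (i) and (ii), is the main obstacle.
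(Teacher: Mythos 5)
Your parts (i) and (ii) are essentially the paper's argument: for (i) the paper likewise reads the planes $P_{\min},P^{1},\dots,P^{m},P_{\max}$ off the contact moment map of the local $T^3$-action from Lemma \ref{ToricActionNearChains}, closes the cone with a single plane $Q$ via Lemma \ref{LastVector}, and feeds the resulting good cone into the Boyer--Galicki construction; for (ii) it applies Lemma \ref{Toric4} with $\{P^{0},P^{1},\dots,P^{k}\}$ read as $\{P_{\min},P^{1},\dots,P^{k-1},P_{\max}\}$, exactly as you propose. (Your extra sign verifications via Lemmas \ref{FacesOfDelta2} and \ref{FacesOfDelta3} are unnecessary: the planes come from an actual moment image, so convexity is automatic and only Lemma \ref{LastVector} is needed.)

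Part (iii), however, contains a genuine gap: you have misidentified the operation, and the statement you set out to prove is false. A trivial germ of a chain is the germ of a chain consisting of a single \emph{free} gradient manifold (one with trivial isotropy group of $\rho$), not a chain with no gradient manifold; correspondingly, the blow-down in (iii) is the blow-down along a $K$-contact lens space to \emph{another lens space} (always possible, as remarked after the definition in Section \ref{ContactBlowingUpAndDown}), not the blow-down of $L$ to a closed orbit of the Reeb flow. Your route through the generator condition of Lemma \ref{TopologicalCharacterization} cannot be repaired. First, that condition genuinely fails in examples: Proposition \ref{ConstructionOfToricManifolds} and the Example following it exhibit chains whose lens spaces are copies of $\mathbb{R}P^{3}$ with trivial normal bundle, so no blow-down to a closed orbit exists. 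Second, and decisively in the configuration of (iii), blowing $L$ down to a closed orbit is structurally impossible when its two neighbours are the three-dimensional caps: the normals $n^{0}$ and $n^{2}$ of the cap faces (in your notation) generate the isotropy circles $G_{B_{\min}}$ and $G_{B_{\max}}$ and hence span $\Lie(G)$, so every point $v$ of $P^{0}\cap P^{2}$ annihilates $\Lie(G)$ and in particular satisfies $v(\overline{R})=0$; thus $P^{0}\cap P^{2}$ misses the Reeb slice $A=\{v\in\Lie(T^3)^{*} \mid v(\overline{R})=1\}$ and there is no vertex for the two cap edges to meet at. Equivalently, the would-be closed orbit would have to lie in $B_{\min}\cap B_{\max}=\emptyset$. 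No coprimality extracted from the Delzant conditions can circumvent this.

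The paper's actual proof of (iii) is easier than what you anticipated: perform the lens-space blow-down along $L$, replacing it by the lens space $L_{Q}$ attached to a plane $Q$ with primitive normal $l$ lying in the cone $\Theta=\{v\in\mathbb{R}^{3}\mid \det\begin{psmallmatrix} n^{1} & n^{2} & v \end{psmallmatrix}>0,\ \det\begin{psmallmatrix} n^{2} & n^{3} & v \end{psmallmatrix}>0\}$ (here $n^{1},n^{2},n^{3}$ are the normals of $B_{\max}$, $L$ and $B_{\min}$ in the paper's notation) and satisfying the Delzant condition with $n^{1}$ and $n^{3}$. The key count is $I(\rho,L_{Q})=|v^{0}(l)|$, where $v^{0}$ is the primitive covector defining $\Lie(G)$, so it suffices to find $l\in\Theta\cap\mathbb{Z}^{3}$ with $v^{0}(l)=1$ (the Delzant conditions then come for free by the computation in Lemma \ref{LastVector}, since $n^{1},n^{3}\in\ker v^{0}$). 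Existence follows from the translation argument in the proof of Lemma \ref{LastVector}, using that $n^{1}+n^{3}\in\Theta\cap\ker v^{0}$; notably, no Dirichlet-type input is needed here at all.
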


\begin{proof}
We show (i). By the assumption, the closure of every gradient manifold in $C$ is a smooth submanifold by Lemma \ref{MorseBottTheory}. Let $\alpha$ be the contact form defined near the chain of $C$. By Lemma \ref{ToricActionNearChains}, there exists an open neighborhood $U$ of the chain and an $\alpha$-preserving $T^3$-action $\tau$ on $U$. Then we have a contact moment map $\Phi_{\alpha}$ for $\tau$ from $U$ to $\Lie(T^3)^{*}$. Let $A$ be the plane $\{ v \in \Lie(T^3)^{*} |v(\overline{R})=1\}$ in $\Lie(T^3)^{*}$ where $\overline{R}$ is the element of $\Lie(T^3)$ whose infinitesimal action is the Reeb vector field of $\alpha$. The image of $\Phi_{\alpha}$ is contained in $A$. Let $P_{\max}$ and $P_{\min}$ be two planes in $\Lie(T^3)^{*}$ such that the germs of the maximal component and the minimal component of $C$ are the inverse images of $P_{\min} \cap A$ and $P_{\max} \cap A$ by $\Phi_{\alpha}$ respectively. Let $\{P^{i}\}_{i=1}^{m}$ be the planes in $\Lie(T^3)^{*}$ such that the inverse images of $P^{1} \cap A, P^{2} \cap A, \cdots, P^{m} \cap A$ of $\Phi_{\alpha}$ are the gradient manifolds in $C$. By Lemma \ref{LastVector}, we have a plane $Q$ in $\Lie(T^3)^{*}$ such that $Q$ intersects $P_{\max}$ and $P_{\min}$ satisfying the Delzant condition and does not intersect $\cup_{i=1}^{m} P^{i} \cap A$. Since we have a good cone with faces $\{P_{\min},P_{\max},Q\} \cup \{P^{i}\}_{i=1}^{m}$ defining a closed contact toric manifold by the Delzant type construction theorem of Boyer-Galicki \cite{BoGa2}.

(ii) is shown by applying Lemma \ref{Toric4} to the set of planes $\{P_{\min},P_{\max}\} \cup \{P^{i}\}_{i=1}^{m}$ where $\{P^{0},P^{1},\cdots,P^{k-1},P^{k}\}$ in Lemma \ref{Toric4} reads as $\{P_{\min},P^{1},\cdots,P^{k-1},P_{\max}\}$.

We prove (iii). Let $G$ be the closure of the Reeb flow in the isometry group for a compatible metric and $\rho$ be the $G$-action associated with $\alpha$. It suffices to show that we can perform contact blowing down along $L$ so that the isotropy group of $\rho$ at the unique gradient manifold of $C$ becomes trivial. We compute the cardinality of the isotropy group of the gradient manifolds which we obtain after contact blowing down. We identify $\Lie(T^3)$ with $\mathbb{R}^{3}$. Let $v^{0}$ be a primitive normal vector defining $\Lie(G)$.

Let $Q$ be the plane in $\Lie(T^3)^{*}$ defined by a primitive normal vector $l$. Let $L_{Q}$ be the lens space obtained by performing contact blowing down for $C$ along $L$ by the plane $Q$ if it is possible. The cardinality of the isotropy group of $\rho$ at $L_{Q}$ is equal to $|v_{0}(l)|$. In fact, since the $T^3$-action is effective, the cardinality of the isotropy group of $\rho$ at $L_{Q}$ is equal to the number of intersection points of $G$ and the $S^1$-subgroup generated by $l$ in $T^3$. This number is equal to $|v_{0}(l)|$. We follow the proof of Lemma \ref{LastVector}.  Let $P^{1},P^{2}$ and $P^{3}$ be the three planes in $\Lie(T^3)^{*}$ corresponding to the germs of the maximal component, the gradient manifold $L$ and the germ of the minimal component respectively. Let $n^i$ be the primitive normal vector which defines $P^i$ for $i=1$, $2$ and $3$. We put the cone $\Theta$ in $\mathbb{R}^{3}$ by
\begin{equation}
\Theta=\{ v \in \mathbb{R}^{3} | \det \begin{psmallmatrix} n^{1} & n^{2} & v \end{psmallmatrix} > 0, \det \begin{psmallmatrix} n^{2} & n^{3} & v \end{psmallmatrix} > 0 \}.
\end{equation}
If $l$ is a vector contained in $\Theta$ which satisfies the Delzant condition with $n^{1}$ and $n^{3}$, then we can perform a contact blowing down along $L$ to $L_{Q}$. To show (iii), it suffices to show that $\Theta \cap \{n \in \mathbb{R}^{3}| v^{0}(n)=1\}$ is nonempty. By the argument of the proof of Lemma \ref{LastVector}, to show that $\Theta \cap \{n \in \mathbb{R}^{3}| v^{0}(n)=1\}$ is nonempty, it suffices to show that $\Theta \cap \{n \in \mathbb{R}^{3}| v^{0}(n)=0\}$ is nonempty. $\Theta \cap \{n \in \mathbb{R}^{3}| v^{0}(n)=0\}$ is nonempty, because $n^{1} + n^{3}$ is contained in $\Theta \cap \{n \in \mathbb{R}^{3}| v^{0}(n)=0\}$. Hence (iii) is proved. Hence the proof of Lemma \ref{EmbeddingChains} is completed.
\end{proof}

We show Theorem \ref{BlowDownToLensSpaceBundles}. Let $(M,\alpha)$ be a closed $5$-dimensional $K$-contact manifold of rank $2$.

\begin{proof}
If (i) is shown, then (ii) and (iii) are shown by Proposition \ref{Fibersum}. We show (i). By Lemma \ref{fib2}, it suffices to show that we can perform contact blowing down to the germ of every nontrivial chain of $(M,\alpha)$ to obtain a trivial germ of a chain. It is possible by Lemma \ref{EmbeddingChains} (ii) and (iii).
\end{proof}

\section{Existence of compatible Sasakian metrics}\label{Sasakian}

In this section, we show Theorem \ref{CompatibleMetric} which asserts the existence of compatible Sasakian metrics for a closed $5$-dimensional $K$-contact manifold $(M,\alpha)$ of rank $2$. We define Sasakian metrics.
\begin{defn}\label{DefinitionOfSasakianMetrics}(Sasakian metrics) A pair $(g,\alpha)$ of a Riemannian metric $g$ on $M$ and a contact form $\alpha$ is a Sasakian metric on $M$ if $(M \times \mathbb{R}_{>0},r^2 g+dr \otimes dr)$ is a K\"{a}hler manifold with K\"{a}hler form $d(r^2 \alpha)$ where $\mathbb{R}_{>0}$ is the set of positive real numbers and $r$ is the standard coordinate on $\mathbb{R}_{>0}$
\end{defn}
We refer \cite{BoGa5} for Sasakian manifolds and basic terminology for orbifolds used in this section.

We denote the torus action associated with $\alpha$ by $\rho$. We put $\Phi=\alpha(X)$ where $X$ is an infinitesimal action of $\rho$ which is not parallel to the Reeb vector field $R$. The maximal component and the minimal component of $\Phi$ are denoted by $B_{\max}$ and $B_{\min}$ respectively.

\textit{Proof of Theorem \ref{CompatibleMetric} in the case where both of $B_{\max}$ and $B_{\min}$ are of dimension $3$.} Let $\{C^i\}_{i=1}^{m}$ be nontrivial chains of $(M,\alpha)$. If $(M,\alpha)$ has no nontrivial chain, then we take a trivial chain as $C^{1}$. By Lemma \ref{EmbeddingChains} (iii), there exist an open neighborhood $U^i$ of $C^i$, an $\alpha$-preserving $T^3$-action on $U^{i}$, a contact toric manifold $(N^{i},\xi^{i})$ and a $T^3$-equivariant embedding $F^{i} \colon U^{i} \longrightarrow N^{i}$ such that $F^{i}_{*} \ker \alpha = \xi^{i}$. By the Delzant type Theorem of Boyer and Galicki \cite{BoGa2}, $(N^{i},\xi^{i})$ has a $T^3$-invariant Sasakian metric $(\tilde{g}^{i},\tilde{\alpha}^{i})$ such that $\ker \tilde{\alpha}^{i}=\xi^{i}$. Let $\tilde{\alpha}^{i}_{1}$ be the $K$-contact form on $N^{i}$ such that $F^{i *} \tilde{\alpha}^{i}_{1}=\alpha$. Then by a theorem of Takahashi \cite{Tak}, there exists a $T^3$-invariant Riemannian metric $\tilde{g}^{i}_{1}$ such that $(\tilde{g}^{i}_{1},\tilde{\alpha}^{i}_{1})$ is Sasakian. Hence $(F^{i *}\tilde{g}^{i}_{1},\alpha|_{U^{i}})$ is a Sasakian metric on $U^{i}$. 

We have a contact form $\alpha'$ on $M$ such that $\ker \alpha=\ker \alpha'$ and every orbit of the Reeb flow of $\alpha'$ is closed. $\alpha'$ is obtained by changing the Reeb vector field to another infinitesimal action of $\rho$. By a theorem of Takahashi \cite{Tak}, if we show the existence of a Sasakian metric $g$ compatible with $\alpha'$ which is invariant under $\rho$, we obtain a Sasakian metric compatible with $\alpha$. We denote the Reeb flow of $\alpha'$ by $\sigma_{\alpha'}$. We denote the $S^1$-action on $M/\sigma_{\alpha'}$ induced from $\rho$ by $\overline{\rho}$. $M/\sigma_{\alpha'}$ is an orbifold with the $\overline{\rho}$-invariant symplectic form $d\alpha'$. To construct a $\rho$-invariant Sasakian metric on $M$ compatible with $\alpha'$, it suffices to construct a $\overline{\rho}$-invariant integrable complex structure on $M/\sigma_{\alpha'}$ compatible with $d\alpha'$ by Lemma 2 of \cite{Noz} which claims a manifold with a contact form $\beta$ whose Reeb flow has a transverse K\"{a}hler structure with K\"{a}hler form $d\beta$ is Sasakian.

Since $(\tilde{g}^{i},\alpha)$ is a Sasakian metric, we have a $\rho$-invariant Riemannian metric $g^{\prime i}$ on $U^{i}$ such that $(g^{i},\alpha'|_{U^{i}})$ is Sasakian by a theorem of Takahashi \cite{Tak}. Then we have a $\overline{\rho}$-invariant complex structure $J_{i}$ on $U^{i}/\sigma_{\alpha'}$ compatible with $d\alpha'$ by Lemma 2 of \cite{Noz}. We show that we have a trivial holomorphic $2$-dimensional orbifold bundle structure on $W^{i}-(C^{i}/\sigma_{\alpha'})$ whose fibers are quotient of gradient manifolds, where $W^{i}$ is an open neighborhood $W^{i}$ of $C^{i}/\sigma_{\alpha'}$ in $U^i/\sigma_{\alpha'}$. 

We show that the product of $\overline{\rho}$ and the gradient flow defines a holomorphic $\mathbb{C}^{\times}$-action on $(U^i-C^i)/\sigma_{\alpha'}$. Let $X$ be the infinitesimal action of $\overline{\rho}$. Then the gradient flow is generated by $J_{i}X$ by \eqref{gradientflow}. Since $L_{X}J_{i}=0$, it suffices to show $L_{J_{i}X}J_{i}=0$ to show that the holomorphic $\mathbb{C}^{\times}$-action is defined. Note that we have $[J_{i}Y,J_{i}Z] - J_{i}([J_{i}Y,Z] + [Y,J_{i}Z]) - [Y,Z]=0$ for any vector fields $Y$ and $Z$ on $(U^i-C^i)/\sigma_{\alpha'}$ by the integrability of $J_{i}$. For any vector field $Y$ on $(U^i-C^i)/\sigma_{\alpha'}$, we have 
\begin{equation}
\begin{array}{rl}
(L_{J_{i}X}J_{i})Y + (L_{X}J_{i})J_{i}Y & = ([J_{i}X,J_{i}Y] - J_{i}[J_{i}X,Y]) + ([X,J_{i}^{2}Y] - J_{i}[X,J_{i}Y]) \\ 
& = [J_{i}X,J_{i}Y] - J_{i}([J_{i}X,Y] + [X,J_{i}Y]) - [X,Y] \\
& =0.
\end{array}
\end{equation}
By $L_{X}J_{i}=0$, we have $L_{J_{i}X}J_{i}=0$.

Let $\tau$ be the holomorphic $\mathbb{C}^{\times}$-action which is the product of $\overline{\rho}$ and the gradient flow. We will define a map which defines the holomorphic fiber bundle structure on $W^i - (C^i/\sigma_{\alpha'})$ for an open neighborhood $W^{i}$ of $(C^i/\sigma_{\alpha'})$ in $M$. Note that the Sasakian metric $(\tilde{g}^{i},\tilde{\alpha}^{i})$ on $N^{i}$ is $T^3$-invariant. Let $\overline{R}'$ be the element of $\Lie(T^3)$ which corresponds to the Reeb vector field of $\alpha'$. We take an infinitesimal action $X_{1}$ of the $T^3$-action so that $\{F^{i}_{*}\overline{R}',\overline{X},\overline{X}_{1}\}$ is a basis of $\Lie(T^3)$ where $\overline{X}$ and $\overline{X}_{1}$ correspond to $X$ and $X_{1}$, respectively. Take a small open neighborhood $W^{i}$ of $C^{i}/\sigma_{\alpha'}$ in $U^{i}/\sigma_{\alpha'}$. The product of the flows generated by $X$, $JX$, $X_{1}$ and $JX_{1}$ defines a local $(\mathbb{C}^{\times})^{2}$-action $\tau_{(\mathbb{C}^{\times})^{2}}$ on $W^{i}$. $\tau_{(\mathbb{C}^{\times})^{2}}$ commutes with $\tau$ and is holomorphic in the same reason as $\tau$. Fix a point $x^{i}$ on $C^{i}/\sigma_{\alpha'}$ which is not a fixed point of $\tau$. Let $\tau_{x^{i}}$ be the $\mathbb{C}^{\times}$-action of the identity component of the isotropy group of $\tau_{(\mathbb{C}^{\times})^{2}}$ at $x^{i}$. We take a transversal $T^{i}$ of $\tau$ which satisfies the following conditions:
\begin{enumerate}
\item $T^{i}$ contains $x^{i}$.
\item $T^{i}$ is a complex submanifold of $U^{i}/\sigma_{\alpha'}$ near $x^{i}$ and invariant.
\item $\tau_{x^{i}}(z)(T^{i}) \cap W^{i}$ is contained in $T^{i}$ for $z$ in an open neighborhood of $1$ in $\mathbb{C}^{\times}$.
\end{enumerate}
We define a map $\pi^{i} \colon W^{i} - (W^{i} \cap (B_{\min} \cup B_{\max})) - (C^{i}/\sigma_{\alpha'}) \longrightarrow T^{i}$ by $\pi^{i}(x)=x_{0}$ where $x_{0}$ is the intersection point of $T^{i}$ and the orbit of $\tau$ which contains $x$. $\pi^{i}$ is holomorphic on $A^{i} - (C^{i}/\sigma_{\alpha'})$ by definition where $A^{i}$ is a small open neighborhood of $x^{i}$ in $U^i/\sigma_{\alpha'}$. $\pi^{i}$ is holomorphic on $W^{i} - (W^{i} \cap (B_{\min} \cup B_{\max})) - (C^{i}/\sigma_{\alpha'})$, since $\pi^{i}$ is written as $\pi^{i}(x) = (\pi^{i}|_{A^{i} - (C^{i}/\sigma_{\alpha'})}) (\tau(z_{0})(x))$ for any point $x$ in $W^{i} - (W^{i} \cap (B_{\min} \cup B_{\max})) - (C^{i}/\sigma_{\alpha'})$ where we take $z_{0}$ so that $\tau(z_{0})(x)$ belongs to $A^{i}$. Take a point $x$ in $B_{\max} \cap (W^{i} - C^{i})$. Then by Lemma \ref{MorseBottTheory} (v), the closure of the gradient manifold whose limit set contains $x$ is smooth. Since $\pi^{i}$ is equivariant with respect to $\tau_{x^{i}}$, $\pi^{i}$ is smooth near $x$. Hence $\pi^{i}$ extends to $W^{i}$ smoothly. Since $\pi^{i}$ is smooth on $W^{i}$ and holomorphic on a dense subset $W^{i} - (W^{i} \cap (B_{\min} \cup B_{\max})) - (C^{i}/\sigma_{\alpha'})$ in $W^{i}$, $\pi^{i}$ is holomorphic on $W^{i}$. Note that the set $\{ x \in W^{i} | \pi^{i}$ is holomorphic at $x \}$ is closed in $W^{i}$.

We show that fibers of $\pi^{i}$ are isomorphic as complex orbifolds. Let $F$ be a fiber of $\pi^{i}$. Since $F - (F \cap (B_{\min} \cup B_{\max}))$ is a principal orbit of $\tau$, $F$ is biholomorphic to $\mathbb{C}^{\times}$. Let $J_{1}$ and $J_{2}$ be two complex structures on $F$. Then both of $(F - (F \cap (B_{\min} \cup B_{\max})),J_{1})$ and $(F - (F \cap (B_{\min} \cup B_{\max})),J_{2})$ are isomorphic to $\mathbb{C}^{\times}$. Hence there exists a biholomorphic map $H \colon (F - (F \cap (B_{\min} \cup B_{\max})),J_{1}) \longrightarrow (F - (F \cap (B_{\min} \cup B_{\max})),J_{2})$. Applying the removable singularity theorem to $H$ on orbifold charts, $H$ extends to $F$. Hence $(F,J_{1})$ and $(F,J_{2})$ are biholomorphic.

Since $\pi^{i}|_{W^{i} - (W^{i} \cap (B_{\min} \cup B_{\max})) - (C^{i}/\sigma_{\alpha'})}$ has a holomorphic section $T^{i}$, $\pi^{i}|_{W^{i} - (W^{i} \cap (B_{\min} \cup B_{\max})) - (C^{i}/\sigma_{\alpha'})}$ is a trivial holomorphic principal $\mathbb{C}^{\times}$-bundle over $T^{i}-\{x^{i}\}$. We denote one of the fibers of $\pi^{i}$ by $F_{0}$. We denote the holomorphic trivialization by $\chi^{i} \colon W^{i} - (W^{i} \cap (B_{\min} \cup B_{\max})) - (C^{i}/\sigma_{\alpha'}) \longrightarrow (T^{i}-\{x^{i}\}) \times \mathbb{C}^{\times}$. $\chi^{i}$ extends to $\chi \colon W^{i} - (C^{i}/\sigma_{\alpha'}) \longrightarrow (T^{i}-\{x^{i}\}) \times F^{0}$ smoothly, since the trivialization coincides with a smooth trivialization of the fiber bundle structure on $(U^{i} - C^{i})/\sigma_{\alpha'}$ whose fibers are the quotient of gradient manifolds by $\sigma_{\alpha'}$ on the dense subset $W^{i} -  (W^{i} \cap (B_{\min} \cup B_{\max})) - (C^{i}/\sigma_{\alpha'})$. Since $\chi^{i}$ is holomorphic on the dense subset $W^{i} -  (W^{i} \cap (B_{\min} \cup B_{\max})) - (C^{i}/\sigma_{\alpha'})$, $\chi^{i}$ is holomorphic on $W^{i}$. Hence $\pi^{i}$ is a trivial holomorphic $F_{0}$-bundle with fiberwise $\mathbb{C}^{\times}$-action by the trivialization $\chi^{i}$.

$(M-\cup_{i=1}^{m}C^i)/\sigma_{\alpha'}$ has a structure of trivial $2$-dimensional orbifold bundle over a surface whose fibers are quotient of gradient manifolds by Proposition \ref{fib2}. We denote the base space by $S$. $\overline{\rho}$ is a fiberwise rotation on $(M-\cup_{i=1}^{m}C^i)/\sigma_{\alpha'}$. We denote the restriction of $\overline{\rho}$ to $F_{0}$ by $\overline{\rho}_{F_{0}}$. Fix a trivialization $\chi_{0} \colon (M-\cup_{i=1}^{m}C^i)/\sigma_{\alpha'} \longrightarrow S \times F_{0}$. Note that the map from $S^1$ to $\overline{\rho}_{F_{0}}$-equivariant diffeomorphism group $\Diff(F,\overline{\rho}_{F_{0}})$ on $F$ defined by the $S^1$-action $\overline{\rho}_{F_{0}}$ is a homotopy equivalence. We denote the homotopy inverse $\Diff(F,\overline{\rho}_{F_{0}}) \longrightarrow S^1$ by $p$. We can assume that $T^{i} - \{x^{i}\}$ is biholomorphic to $D^2 - \{0\}$. Let $\overline{\chi}^{i}$ be the map $T^{i} - \{x^{i}\} \longrightarrow \Diff(F,\overline{\rho}_{F_{0}})$ defined by $\overline{\chi}^{i}(t)(x) = \pr_{2} \circ \chi^{i} \circ \chi^{-1}_{0}(t,x)$ for $x$ in $F_{0}$ and $t$ in $T^{i}$ where $\pr_{2} \colon (T^{i}-\{x^{i}\}) \times F^{0} \longrightarrow F^{0}$ is the second projection. Then the rotation number $r(i)$ of $p \circ \overline{\chi}^{i} \colon T^{i} \longrightarrow S^1$ is defined. Let $\chi^{\prime i}$ be a holomorphic trivialization of $\pi^{i}$ defined by $(\zeta)^{r(i)} \circ \chi^{i}$ where $\zeta^{r(i)}$ is the $r(i)$-times composition of a biholomorphic map $\zeta$ on $T^{i} \times F^{0}$ defined by $\zeta(t,z) = (t, \phi(- h^{i}(t))(z))$ where the $\mathbb{C}^{\times}$-action $\phi(t)$ is the restriction of $\tau^{i}$ on $F_{0}$ and $h^{i} \colon T^{i} - \{x^{i}\} \longrightarrow D^{2} - \{0\} \cong \mathbb{C}^{\times}$ is a biholomorphic map. Then the rotation number of $\chi^{\prime i}$ is zero. Hence we can take a trivialization $\chi \colon (M-\cup_{i=1}^{m}C^i)/\sigma_{\alpha'} \longrightarrow S \times F_{0}$ such that
\begin{enumerate}
\item the restriction of $\chi$ to $\cup_{i=1}^{m}\big( V^i-(C^i/\sigma_{\alpha'}) \big)$ coincides with $\chi^{\prime i}$ for some open neighborhood $V^i$ of $C^i/\sigma_{\alpha'}$ in $W^i$ and 
\item $\chi$ is $S^1$-equivariant with respect to $\overline{\rho}$ and an $S^1$-action on $S \times F_{0}$ defined by the product of the trivial action on the first component and $\overline{\rho}_{F_{0}}$.
\end{enumerate}

We extend the complex structures on $\cup_{i=1}^{m}(V^i-C^i)/\sigma_{\alpha'}$ to a complex structure $J$ on $(M-\cup_{i=1}^{m}C^i)/\sigma_{\alpha'}$ as a product of complex structures on $F_{0}$ and $S$ using the trivialization $\chi$.

We will show that $J$ is an integrable $\overline{\rho}$-invariant complex structure on $(M-\cup_{i=1}^{m}C^i)/\sigma_{\alpha'}$ compatible with $d\alpha'$. The integrability and $\overline{\rho}$-invariance are clear by the construction. Since the fibers of $(M-\cup_{i=1}^{m}C^i)/\sigma_{\alpha'}$ are symplectic and invariant under $J$, the tangent space of $(M-\cup_{i=1}^{m}C^i)/\sigma_{\alpha'}$ at each point is decomposed into two $2$-dimensional $J$-invariant symplectic subspaces. Then $J$ is compatible with $d\alpha'$. Here a complex structure on a $2$-dimensional symplectic linear space is compatible with the symplectic structure, if the orientation given by the complex structure coincides with the orientation given by the symplectic structure.

Hence we have an integrable complex structure on the orbifold $M/\sigma_{\alpha'}$ compatible with $d\alpha'$ and invariant under the $S^1$-action $\overline{\rho}$ induced from $\rho$. The proof in the case where both of $B_{\max}$ and $B_{\min}$ are of dimension $3$ is completed. 

The proof of Theorem \ref{CompatibleMetric} in the other cases is completed by Lemmas \ref{Blowdown} and \ref{fib1}.

\begin{lem}\label{Blowdown}
Let $\Sigma$ be a closed orbit of the Reeb flow on $(M,\alpha)$. Let $(\tilde{M},\tilde{\alpha})$ be the $K$-contact manifold obtained from $(M,\alpha)$ by performing a contact blowing up along $\Sigma$. If there exists a Riemannian metric $\tilde{g}$ on $\tilde{M}$ such that $(\tilde{M},\tilde{\alpha},\tilde{g})$ is a Sasakian manifold, then there exists a Riemannian metric $g$ on $M$ such that $(M,\alpha,g)$ is a Sasakian manifold.
\end{lem}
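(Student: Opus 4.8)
The plan is to reduce the statement, exactly as in the proof of Theorem \ref{CompatibleMetric} in the case where $B_{\max}$ and $B_{\min}$ are of dimension $3$, to the construction of a transverse Kähler structure on the orbit space of a quasi-regular Reeb flow, and then to transport such a structure from $\tilde{M}$ to $M$ across the blown-down center. First I would pass to a quasi-regular contact form: by changing the Reeb vector field to a suitable rational infinitesimal action of the torus $G$, I obtain a contact form $\alpha'$ with $\ker \alpha' = \ker \alpha$ whose Reeb flow $\sigma_{\alpha'}$ has only closed orbits, and likewise $\tilde{\alpha}'$ on $\tilde{M}$. By the theorem of Takahashi \cite{Tak} it suffices to produce a $\overline{\rho}$-invariant integrable complex structure on the symplectic orbifold $M/\sigma_{\alpha'}$ compatible with $d\alpha'$, for then Lemma 2 of \cite{Noz} yields a compatible Sasakian metric for $\alpha'$, and Takahashi's theorem returns one for $\alpha$.

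Next I would identify the contact cut on the orbifold level. The contact blowing up along $\Sigma$ in the sense of Lerman \cite{Ler1} modifies $(M,\alpha)$ only inside a tubular neighborhood $U$ of $\Sigma$, so $M - \overline{U}$ is isomorphic as a $K$-contact manifold to the complement $\tilde{M} - L$ of the new lens space $L = (\partial U)/\sigma$. Passing to orbit spaces, the point $p \in M/\sigma_{\alpha'}$ represented by $\Sigma$ is replaced by the image $E$ of $L$, an orbifold $\mathbb{C}P^{1}$; in other words $\tilde{M}/\sigma_{\tilde{\alpha}'}$ is obtained from $M/\sigma_{\alpha'}$ by a weighted blow-up at $p$, and the two orbifolds are canonically isomorphic as symplectic orbifolds away from $p$ on the one side and from $E$ on the other. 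Since $(\tilde{M},\tilde{\alpha},\tilde{g})$ is Sasakian, $\tilde{M}/\sigma_{\tilde{\alpha}'}$ carries a $\overline{\rho}$-invariant integrable complex structure $\tilde{J}$ compatible with $d\tilde{\alpha}'$; restricting it through this isomorphism furnishes such a complex structure $J$ on $M/\sigma_{\alpha'} - \{p\}$.

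It then remains to extend $J$ holomorphically across the single orbifold point $p$. For this I would exploit the local holomorphic $\mathbb{C}^{\times}$-action near $p$ given by the product of $\overline{\rho}$ and the gradient flow of $\Phi$, exactly as in the fiberwise argument of the fat-$B$ case: the relations \eqref{gradientflow} show that this action preserves $J$, and the local normal forms of Lemma \ref{NormalFormOfSingularOrbits}, together with the linearization provided by Lemma \ref{Linearization} and the $T^3$-action near $\Sigma$ of Lemma \ref{ToricActionNearSingularOrbits}, identify a punctured neighborhood of $p$ with a standard weighted $\mathbb{C}^{2}$-chart carrying its standard integrable complex structure. Comparing $J$ with this standard structure on the overlap and applying the removable singularity theorem on orbifold charts, equivariantly with respect to the $\mathbb{C}^{\times}$-action, extends $J$ to an integrable complex structure on all of $M/\sigma_{\alpha'}$; compatibility with $d\alpha'$ is preserved because the fibers over the base surface remain symplectic and $J$-invariant. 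Feeding this back through \cite{Noz} and \cite{Tak} completes the argument, and it is precisely this lemma (combined with Lemma \ref{fib1}) that settles Theorem \ref{CompatibleMetric} in the remaining cases.

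The hard part will be the extension across $p$: one must verify that the complex structure inherited from $\tilde{M}$ really is biholomorphic near the exceptional locus to the standard model coming from the normal form, so that the removable singularity theorem applies and the resulting $J$ is genuinely integrable and compatible, rather than merely continuous. This is the analogue of the holomorphicity of the trivializations $\pi^{i}$ and $\chi^{i}$ established in the fat-$B$ case, and I expect it to require the same care in matching the $\mathbb{C}^{\times}$-equivariant data on the annular overlap region.
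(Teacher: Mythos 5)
Your reduction to a transverse K\"ahler structure on $M/\sigma_{\alpha'}$ via \cite{Noz} and \cite{Tak}, and your identification of the blow-down on the orbit-space level, both match the paper. But the central step --- extending $J$ across $p$ by a removable singularity theorem --- has a genuine gap, and it is exactly the point where the paper switches to a completely different mechanism. First, the contact cut of \cite{Ler1} gives $\tilde{M}-L = M-\overline{U}$, so transporting $\tilde{J}$ yields a complex structure only on the complement of a \emph{neighborhood} of $p$, not on the punctured orbifold $M/\sigma_{\alpha'}-\{p\}$; you are not removing a point singularity but filling a hole, and the boundary data on the annular overlap must be matched holomorphically. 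Second, even granting a punctured-neighborhood structure, there is no removable singularity theorem for integrable complex structures in complex dimension $2$: Riemann/Hartogs extension applies to holomorphic \emph{functions}, but to use it on transition maps you would already need a holomorphic chart of the unknown structure around $p$, which is precisely what must be constructed. The normal form lemmas you invoke (\ref{NormalFormOfSingularOrbits}, \ref{Linearization}, \ref{ToricActionNearSingularOrbits}) describe the contact form on the $M$-side near $\Sigma$; they give no biholomorphism between the transported $\tilde{J}$ on the annulus and the standard weighted model, and producing one is equivalent to the contraction problem itself. This is why the paper instead follows the Enriques--Castelnuovo argument of \cite{Bea}: Baily's embedding theorem \cite{Bai} makes $\tilde{M}/\sigma_{\tilde{\alpha}'}$ projective, the Kodaira--Baily vanishing theorem kills $H^{1}$ of the sheaves $\mathcal{O}(nE_{\tilde{M}}+i[L/\sigma_{\tilde{\alpha}'}])$, and carefully chosen sections of $E_{0}=nE_{\tilde{M}}+k[L/\sigma_{\tilde{\alpha}'}]$ (using Lemma \ref{d1} to control their divisors) give a holomorphic map $\iota_{E_{0}}$ contracting $L/\sigma_{\tilde{\alpha}'}$ to a point together with explicit holomorphic coordinates $\frac{sa_{k-1,0}}{a_{k,0}}$, $\frac{sa_{k-1,1}}{a_{k,0}}$ defining an orbifold chart there. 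Positivity and vanishing, not local analytic extension, are what make the filled-in complex structure exist.

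A second omission: compatibility of the extended $J$ with $d\alpha'$ is not automatic and your closing argument (fibers symplectic and $J$-invariant) is borrowed from the fiber-sum geometry of the fat-$B$ case, which is absent near $p$. Since $\pi^{*}[d\alpha']$ and $[d\tilde{\alpha}']$ differ by a multiple of $[L/\sigma_{\tilde{\alpha}'}]$, one must verify the cohomological matching $f^{*}[d\alpha']=\frac{1}{n}[\omega]$ --- this is the basic-cohomology Mayer--Vietoris computation of Lemma \ref{EulerClasses} --- and then run the equivariant Gray/Moser argument of Lemma \ref{Karshon3} (together with the equivariant Darboux theorem \cite{Wei} and the isotopy argument from the end of Proposition \ref{Karshon1}) to produce the diffeomorphism under which $J$ becomes compatible with $d\alpha'$. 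Without these two ingredients --- the Castelnuovo-type contraction via positivity, and the cohomological/Moser step --- the proposed proof does not close.
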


We follow the proof of the Enriques-Castelnouvo's theorem (See \cite{Bea}).

\begin{proof}
Let $\overline{R}'$ be an element of $\Lie(G)$ such that the infinitesimal actions of $\overline{R}'$ on $\tilde{M}$ generates an $S^1$-action whose orbits are transverse to $\ker \tilde{\alpha}$. Then the infinitesimal action of $\overline{R}'$ on $M$ generates an $S^1$-action whose orbits are transverse to $\ker \alpha$. We define the $1$-form $\alpha'$ on $M$ by $\alpha'=\frac{1}{\alpha(R')}\alpha$ where $R'$ is the infinitesimal action of $\overline{R}'$ on $M$. Then $\alpha'$ is a contact form on $M$ whose Reeb vector field is $R'$ and which defines the contact structure $\ker \alpha$. Similarly we define a $1$-form $\tilde{\alpha}'$ on $\tilde{M}$ by $\tilde{\alpha}=\frac{1}{\tilde{\alpha}(\tilde{R}')}\tilde{\alpha}$ where $\tilde{R}'$ is the infinitesimal action of $\overline{R}'$ on $\tilde{M}$. Then $\tilde{\alpha}'$ is a contact form on $\tilde{M}$ whose Reeb vector field is $\tilde{R}'$ and which define the contact structure $\ker \tilde{\alpha}$. We denote the Reeb flows of $\alpha'$ and $\tilde{\alpha}'$ by $\sigma_{\alpha'}$ and $\sigma_{\tilde{\alpha}'}$, respectively. We denote the $T^2$-actions on $M$ and $\tilde{M}$ associated with $\alpha$ and $\tilde{\alpha}$ by $\rho_{\alpha}$ and $\rho_{\tilde{\alpha}}$, respectively. We denote the $S^1$-actions on $M/\sigma_{\alpha'}$ and $\tilde{M}/\sigma_{\tilde{\alpha}'}$ induced from $\rho_{\alpha}$ and $\rho_{\tilde{\alpha}}$ by $\overline{\rho}_{\alpha}$ and $\overline{\rho}_{\tilde{\alpha}}$, respectively. To prove Lemma \ref{Blowdown}, it suffices to show that there exists a $\overline{\rho}_{\alpha}$-invariant integrable complex structure compatible with $d\alpha'$ on the symplectic orbifold $(M/\sigma_{\alpha'},d\alpha')$. In fact, then $(M,\alpha')$ has a Sasakian metric compatible with $\alpha'$ by Lemma 2 of \cite{Noz}. Then by a theorem of Takahashi \cite{Tak}, $(M,\alpha)$ has a Sasakian metric compatible with $\alpha$.

Let $L$ be the lens space in $\tilde{M}$ obtained by the contact blowing up along $\Sigma$. $L/\sigma_{\tilde{\alpha}'}$ is a weighted projective space (See Godinho \cite{God}). Holomorphic line orbibundles over the weighted projective space $L/\sigma_{\tilde{\alpha}'}$ are classified by the first Chern classes in $H^{2}(L/\sigma_{\tilde{\alpha}'} ; \mathbb{Q})$. We refer to \cite{Roa}. For a holomorphic line orbibundle $E$ over $L/\sigma_{\tilde{\alpha}'}$, we denote the first Chern class by $c_{1}(E)$. We denote $\int_{L/\sigma_{\tilde{\alpha}'}} c_{1}(E)$ by $E \cdot [L/\sigma_{\tilde{\alpha}'}]$. $\mathcal{O}(E)$ denotes the sheaf of sections of a holomorphic line orbibundle $E$ over $\tilde{M}/\sigma_{\tilde{\alpha}'}$. Note that a holomorphic line orbibundle $E$ over $L/\sigma_{\tilde{\alpha}'}$ is positive if and only if $E \cdot [L/\sigma_{\tilde{\alpha}'}]$ is positive in the sense of Baily \cite{Bai} as in the case of holomorphic line bundles over Riemann surfaces. We denote the holomorphic line orbibundle over $\tilde{M}/\sigma_{\tilde{\alpha}'}$ defined by the divisor $L/\sigma_{\tilde{\alpha}'}$ by $[L/\sigma_{\tilde{\alpha}'}]$.

$\tilde{M}$ is an $S^1$-orbibundle over $\tilde{M}/\sigma_{\tilde{\alpha}'}$ associated with a positive holomorphic line orbibundle over $\tilde{M}/\sigma_{\tilde{\alpha}'}$ (See Theorem 7.1.3 of \cite{BoGa5}). We denote the line orbibundle by $E_{\tilde{M}}$. By Baily's theorem \cite{Bai}, there exists a sufficiently large positive integer $n$ such that $\frac{n E_{\tilde{M}} \cdot [L/\sigma_{\tilde{\alpha}'}]}{[L/\sigma_{\tilde{\alpha}'}] \cdot [L/\sigma_{\tilde{\alpha}'}]}$ is an integer and $\tilde{M}/\sigma_{\tilde{\alpha}'}$ is embedded as a complex orbifold into the complex projective space $\mathbb{C}P^{N}$ by the holomorphic map $\iota_{nE_{\tilde{M}}}$ defined as follows:
\begin{equation}
\begin{array}{cccc}
\iota_{nE_{\tilde{M}}} & \colon \tilde{M}/\sigma_{\tilde{\alpha}'} & \longrightarrow & \mathbb{C}P^{N} \\
 & x & \longrightarrow & [s_{0}(x) \colon s_{1}(x) \colon \cdots \colon s_{N}(x)]
\end{array}
\end{equation}
where $\{s_{i}\}_{i=0}^{N}$ is a basis of $H^{0}(\tilde{M}/\sigma_{\tilde{\alpha}'},\mathcal{O}(nE_{\tilde{M}}))$. Note that $n$ is chosen so that the line orbibundle $nE_{\tilde{M}}$ over $\tilde{M}/\sigma_{\tilde{\alpha}'}$ is absolute in the sense of Baily \cite{Bai} (See also \cite{BoGa5}). A line orbibundle $E$ over an orbifold $S$ is absolute if $E$ is a topological fiber bundle over $S$.

We construct a line orbibundle on $\tilde{M}/\sigma_{\tilde{\alpha}'}$ which is positive on the complement of $L/\sigma_{\tilde{\alpha}'}$ and whose restriction to $L/\sigma_{\tilde{\alpha}'}$ is trivial. $[L/\sigma_{\tilde{\alpha}'}] \cdot [L/\sigma_{\tilde{\alpha}'}]$ is equal to the Euler number of the $S^1$-action $\overline{\rho}_{\tilde{\alpha}}$ on the quotient of the total space of the normal $S^1$-orbibundle of $L/\sigma_{\tilde{\alpha}'}$ in $\tilde{M}/\sigma_{\tilde{\alpha}'}$ by $\sigma_{\tilde{\alpha}'}$ by Proposition 2.15 of \cite{God}. Hence $[L/\sigma_{\tilde{\alpha}'}] \cdot [L/\sigma_{\tilde{\alpha}'}]$ is a negative rational number. We put
\begin{equation}
k=-\frac{nE_{\tilde{M}} \cdot [L/\sigma_{\tilde{\alpha}'}]}{[L/\sigma_{\tilde{\alpha}'}] \cdot [L/\sigma_{\tilde{\alpha}'}]}.
\end{equation}
Since $nE_{\tilde{M}} \cdot [L/\sigma_{\tilde{\alpha}'}]$ is positive by the positivity of $E_{\tilde{M}}$, $k$ is a positive integer. Note that for a positive holomorphic line orbibundle $E$ on $\tilde{M}/\sigma_{\tilde{\alpha}'}$, $E|_{L/\sigma_{\tilde{\alpha}'}}$ is positive on $L/\sigma_{\tilde{\alpha}'}$ by definition.

We show that $H^{1}(\tilde{M}/\sigma_{\tilde{\alpha}'} ; \mathcal{O}(nE_{\tilde{M}} + i [L/\sigma_{\tilde{\alpha}'}]))=0$ for $1 \leq i \leq k$. We have the long exact sequence
\begin{equation}\label{CohomologyExactSequence}
\xymatrix{ H^{1}(\tilde{M}/\sigma_{\tilde{\alpha}'} ; \mathcal{O}(nE_{\tilde{M}} + (i-1) [L/\sigma_{\tilde{\alpha}'}])) \ar[r]^{f_{i}} & H^{1}(\tilde{M}/\sigma_{\tilde{\alpha}'} ; \mathcal{O}(nE_{\tilde{M}} + i [L/\sigma_{\tilde{\alpha}'}])) \\
\ar[r] & H^{1}(L/\sigma_{\tilde{\alpha}'} ; \mathcal{O}(nE_{\tilde{M}} + i [L/\sigma_{\tilde{\alpha}'}])).}
\end{equation}
Since $(nE_{\tilde{M}} + i [L/\sigma_{\tilde{\alpha}'}]) \cdot [L/\sigma_{\tilde{\alpha}'}]=-([L/\sigma_{\tilde{\alpha}'}] \cdot [L/\sigma_{\tilde{\alpha}'}]) (k - i) \geq 0$ for $1 \leq i \leq k$, we have
\begin{equation}
H^{1}(L/\sigma_{\tilde{\alpha}'} ; \mathcal{O}(nE_{\tilde{M}} + i [L/\sigma_{\tilde{\alpha}'}]))=0
\end{equation}
for $1 \leq i \leq k$ by the Kodaira-Baily vanishing theorem \cite{Bai}. Hence $f_{i}$ in \eqref{CohomologyExactSequence} is surjective for $1 \leq i \leq k$. Since 
\begin{equation}
H^{1}(\tilde{M}/\sigma_{\tilde{\alpha}'} ; \mathcal{O}(nE_{\tilde{M}}))=0
\end{equation}
by the Kodaira-Baily vanishing theorem \cite{Bai}, we have
\begin{equation}\label{VanishingH1}
H^{1}(\tilde{M}/\sigma_{\tilde{\alpha}'} ; \mathcal{O}(nE_{\tilde{M}} + i [L/\sigma_{\tilde{\alpha}'}])))=0
\end{equation}
for $0 \leq i \leq k$ inductively on $i$. 

By \eqref{VanishingH1} and the exact sequence
\begin{equation}
\xymatrix{ H^{0}(\tilde{M}/\sigma_{\tilde{\alpha}'} ; \mathcal{O}(nE_{\tilde{M}} + i [L/\sigma_{\tilde{\alpha}'}])) \ar[r]^{f'_{i}} & H^{0}(L/\sigma_{\tilde{\alpha}'} ; \mathcal{O}(nE_{\tilde{M}} + i [L/\sigma_{\tilde{\alpha}'}])) \\
 \ar[r] & H^{1}(\tilde{M}/\sigma_{\tilde{\alpha}'} ; \mathcal{O}(nE_{\tilde{M}} + (i-1) [L/\sigma_{\tilde{\alpha}'}])), }
\end{equation}
$f'_{i}$ is surjective for $1 \leq i \leq k + 1$. For $1 \leq i \leq k$, we put
\begin{equation}
d(i)=\dim H^{0}(L/\sigma_{\tilde{\alpha}'} ; \mathcal{O}(nE_{\tilde{M}} + i [L/\sigma_{\tilde{\alpha}'}])) - 1.
\end{equation}
For $1 \leq i \leq k-2$ and $i=k$, let $\{a_{i,0}, a_{i,1}, \cdots, a_{i,d(i)}\}$ be a set of elements of $H^{0}(\tilde{M}/\sigma_{\tilde{\alpha}'} ; \mathcal{O}(nE_{\tilde{M}} + i [L/\sigma_{\tilde{\alpha}'}]))$ which is mapped to a basis of $H^{0}(L/\sigma_{\tilde{\alpha}'} ; \mathcal{O}(nE_{\tilde{M}} + i [L/\sigma_{\tilde{\alpha}'}]))$. For $i=k-1$, we take $\{a_{k-1,0}, a_{k-1,1}, \cdots, a_{k-1,d(k-1)}\}$ as follows: By Lemma \ref{d1} below, there exist elements $b_{k-1,0}$ and $b_{k-1,1}$ of $H^{0}(L/\sigma_{\tilde{\alpha}'} ; \mathcal{O}(nE_{\tilde{M}} + (k-1) [L/\sigma_{\tilde{\alpha}'}]))$ such that the divisor on $L/\sigma_{\tilde{\alpha}'}$ defined by $b_{k-1,0}$ is a rational multiple of one of the fixed points of $\overline{\rho}_{\tilde{\alpha}}$ on $L/\sigma_{\tilde{\alpha}'}$ and the divisor on $L/\sigma_{\tilde{\alpha}'}$ defined by $b_{k-1,1}$ is a rational multiple of the other fixed point of $\overline{\rho}_{\tilde{\alpha}}$ on $L/\sigma_{\tilde{\alpha}'}$. We take $\{a_{k-1,0}, a_{k-1,1}, \cdots, a_{k-1,d(k-1)}\}$ so that $f'_{k-1}(a_{k-1,j})=b_{k-1,j}$ for $j=0$, $1$ and $\{a_{k-1,0}, a_{k-1,1}, \cdots, a_{k-1,d(k-1)}\}$ is mapped to a $H^{0}(L/\sigma_{\tilde{\alpha}'} ; \mathcal{O}(nE_{\tilde{M}} + i [L/\sigma_{\tilde{\alpha}'}]))$. Let $s$ be an element of $H^{0}(\tilde{M}/\sigma_{\tilde{\alpha}'} ; \mathcal{O}([L/\sigma_{\tilde{\alpha}'}]))$. Since $E_{\tilde{M}}$ and $[L/\sigma_{\tilde{\alpha}'}]$ are $\overline{\rho}_{\tilde{\alpha}}$-equivariant, we can take $a_{i,j}$ and $s$ so that they are $\overline{\rho}_{\tilde{\alpha}}$-equivariant by taking the average by $\overline{\rho}_{\tilde{\alpha}}$. We put
\begin{equation}
E_{0}= nE_{\tilde{M}} + k [L/\sigma_{\tilde{\alpha}'}].
\end{equation}
Then $\{s^{k}s_{0},\cdots,s^{k}s_{N}\} \cup \{s^{k-i} a_{i,0}, s^{k-i} a_{i,1}, \cdots, s^{k-i} a_{i,d(i)}\}_{i=1}^{k}$ is a basis of $H^{0}(\tilde{M}/\sigma_{\tilde{\alpha}'} ; \mathcal{O}(E_{0}))$. We define a map
\begin{equation}
\begin{array}{cccc}
\iota_{E_{0}} & \colon \tilde{M}/\sigma_{\tilde{\alpha}'} & \longrightarrow & \mathbb{C}P^{N'} 
\end{array}
\end{equation}
by 
\begin{equation}
\begin{array}{rl}
 \iota_{E_{0}}(x)=[s^{k}s_{0}(x) \colon \cdots \colon s^{k}s_{N}(x) \colon s^{k-1}a_{1,0}(x) \colon \cdots \colon s^{k-1}a_{1,d(1)}(x)\\
& \hspace{-80pt}  \colon s^{k-2}a_{2,0}(x) \colon \cdots \colon a_{k,d(k)}(x)].
\end{array}
\end{equation}
Since $\iota_{nE_{\tilde{M}}}$ is a holomorphic embedding and $s^{k}$ is nowhere vanishing on the complement of $L/\sigma_{\tilde{\alpha}'}$, $\iota_{E_{0}}$ is a holomorphic embedding on the complement of $L/\sigma_{\tilde{\alpha}'}$. Since 
\begin{equation}
[E_{0}|_{L/\sigma_{\tilde{\alpha}'}}] \cdot [L/\sigma_{\tilde{\alpha}'}] = \left( nE_{\tilde{M}} - \frac{nE_{\tilde{M}} \cdot [L/\sigma_{\tilde{\alpha}'}]}{[L/\sigma_{\tilde{\alpha}'}] \cdot [L/\sigma_{\tilde{\alpha}'}]} [L/\sigma_{\tilde{\alpha}'}] \right) \cdot [L/\sigma_{\tilde{\alpha}'}] =0,
\end{equation}
$E_{0}|_{L/\sigma_{\tilde{\alpha}'}}$ is trivial. Hence the restriction of every section of $E_{0}$ to $L/\sigma_{\tilde{\alpha}'}$ is constant. Hence $\iota_{E_{0}}(L/\sigma_{\tilde{\alpha}'})$ is a point. We put
\begin{equation}
\{x_{0}\}=\iota_{E_{0}}(L/\sigma_{\tilde{\alpha}'}).
\end{equation}
Moreover we have 
\begin{equation}
d(k)=0
\end{equation}
by the triviality of $E_{0}|_{L/\sigma_{\tilde{\alpha}'}}$.

We will show that there exists a symplectic form $\omega$ on $\iota_{E_{0}}(\tilde{M}/\sigma_{\tilde{\alpha}'})$ such that $\omega$ is cohomologous to $[E_{0}]$ and $\iota_{E_{0}}(\tilde{M}/\sigma_{\tilde{\alpha}'})$ has a structure of a complex orbifolds whose complex structure is $S^1$-invariant and compatible with $\omega$.

Let $U$ be an open neighborhood of $L/\sigma_{\tilde{\alpha}'}$ in $\tilde{M}/\sigma_{\tilde{\alpha}'}$ defined by the equation $a_{k,0} \neq 0$. On $U$, we have
\begin{equation}\label{iota}
\iota_{E_{0}}(x)=[\frac{s^{k}s_{0}(x)}{a_{k,0}(x)} \colon \cdots \colon \frac{s^{2}a_{k-2,d(k-2)}(x)}{a_{k,0}(x)} \colon \frac{sa_{k-1,0}(x)}{a_{k,0}(x)} \colon \cdots \colon \frac{sa_{k-1,d(k-1)}(x)}{a_{k,0}(x)} \colon 1].
\end{equation}
We consider functions $\frac{sa_{k-1,0}(x)}{a_{k,0}(x)}$ and $\frac{sa_{k-1,1}(x)}{a_{k,0}(x)}$ on $\tilde{M}/\sigma_{\tilde{\alpha}'}$. By \eqref{iota}, $\frac{sa_{k-1,0}(x)}{a_{k,0}(x)}$ and $\frac{sa_{k-1,1}(x)}{a_{k,0}(x)}$ are the restriction of holomorphic functions on $\mathbb{C}P^{N'}$ to $\iota_{E_{0}}(\tilde{M}/\sigma_{\tilde{\alpha}'})$. Since 
\begin{enumerate}
\item $a_{k-1,0}$ and $a_{k-1,1}$ are not constant on $L/\sigma_{\tilde{\alpha}'}$,
\item $a_{k}$ is nonzero constant near $L/\sigma_{\tilde{\alpha}'}$ and
\item $s$ vanishes along $L/\sigma_{\tilde{\alpha}'}$ at degree $1$,
\end{enumerate}
$\frac{sa_{k-1,0}(x)}{a_{k,0}(x)}$ and $\frac{sa_{k-1,1}(x)}{a_{k,0}(x)}$ vanish along $L/\sigma_{\tilde{\alpha}'}$ at degree $1$ on $\tilde{M}/\sigma_{\tilde{\alpha}'}$. Hence $d\left( \frac{sa_{k-1,0}}{a_{k,0}} \right)_{x_{0}}$ and $d\left( \frac{sa_{k-1,1}}{a_{k,0}} \right)_{x_{0}}$ are nonzero elements of the cotangent space of $\iota_{E_{0}}(\tilde{M}/\sigma_{\tilde{\alpha}'})$ at $x_{0}$. Let $D_{j}$ be the divisor on $M/\sigma_{\tilde{\alpha}'}$ defined by $a_{k-1,j}$ for $j=0$ and $1$. Since $a_{k-1,j}$ is $\overline{\rho}_{\tilde{\alpha}}$-equivariant, $D_{j}$ are $\overline{\rho}_{\tilde{\alpha}}$-invariant. Since $a_{k-1,0}$ is not constant on $L/\sigma_{\tilde{\alpha}'}$, $D_{0}$ does not contain $L/\sigma_{\tilde{\alpha}'}$. Then $D_{0}$ contains a $\overline{\rho}_{\tilde{\alpha}}$-invariant complex suborbifold $C_{1}$ which intersects $L/\sigma_{\tilde{\alpha}'}$. Similarly $D_{1}$ contains a $\overline{\rho}_{\tilde{\alpha}}$-invariant complex suborbifold $C_{0}$. By the construction of $a_{k-1,0}$ and $a_{k-1,1}$, $D_{0} \cap (L/\sigma_{\tilde{\alpha}'})$ and $D_{1} \cap (L/\sigma_{\tilde{\alpha}'})$ are two fixed points of $\overline{\rho}_{\tilde{\alpha}}$ on $L/\sigma_{\tilde{\alpha}'}$. Hence $D_{0} \cap (L/\sigma_{\tilde{\alpha}'})$ and $D_{1} \cap (L/\sigma_{\tilde{\alpha}'})$ are disjoint. Since $D_{0}$ does not intersect $D_{1}$ near $L/\sigma_{\tilde{\alpha}'}$, the kernels of $d \left( \frac{sa_{k-1,0}}{a_{k,0}} \right)$ and $d \left( \frac{sa_{k-1,1}}{a_{k,0}} \right)$ are not equal at $x_{0}$. Then
\begin{equation}
d \left( \frac{sa_{k-1,0}}{a_{k,0}} \right)_{x_{0}} \wedge d \left( \frac{sa_{k-1,1}}{a_{k,0}} \right)_{x_{0}} \neq 0.
\end{equation}
Since $D_{0}$ contains $C_{1}$, the restriction of $\frac{sa_{k-1,0}}{a_{k,0}}$ to $C_{1}$ is zero. Similarly the restriction of $\frac{sa_{k-1,1}}{a_{k,1}}$ to $C_{0}$ is zero. Hence we have
\begin{equation}\label{NondegeneracyOfCharts}
d \left( \frac{sa_{k-1,j}}{a_{k,0}} \right) \Big|_{C_{j}} \neq 0
\end{equation}
for $j=0$ and $1$.

We define a $C^{\infty}$ orbifold atlas $\mathcal{U}$ on $\iota_{E_{0}}(\tilde{M}/\sigma_{\tilde{\alpha}'})$ by an orbifold chart $\chi$ around $x_{0}$ and the orbifold charts of the $C^{\infty}$ orbifold atlas $\mathcal{U}_{\tilde{M}/\sigma_{\tilde{\alpha}'}}$ on $\tilde{M}/\sigma_{\tilde{\alpha}'}$ which do not contain $x_{0}$. We define $\chi$ as follows: Take an orbifold chart $(U_{j},\phi_{j},\Gamma_{j})$ of the orbifold $C_{j}$ around a point $C_{j} \cap (L/\sigma_{\tilde{\alpha}'})$ for $j=0$ and $1$. Let $W$ be a small open neighborhood of $x_{0}$ in $\iota_{E_{0}}(\tilde{M}/\sigma_{\tilde{\alpha}'})$. We define the map $\psi_{j} \colon W \longrightarrow U_{j}$ by sending each point $x$ in $W$ to a point $y$ in $U_{j}$ which satisfies $\frac{sa_{k-1,j}(x)}{a_{k,0}(x)}(x)=\frac{sa_{k-1,j}(x)}{a_{k,0}(x)}(y)$ for $j=0$ and $1$. We define a map $\phi \colon W \longrightarrow (U_{0}/\Gamma_{0}) \times (U_{1}/\Gamma_{1})$ by $\phi(x)=(\psi_{0}(x), \psi_{1}(x))$ for $x$ in $W$. By \eqref{NondegeneracyOfCharts}, $\phi$ is a biholomorphic map into $(U_{0}/\Gamma_{0}) \times (U_{1}/\Gamma_{1})$. Since $s$, $a_{k-1,0}$, $a_{k-1,1}$ and $a_{k,0}$ are $\overline{\rho}_{\tilde{\alpha}'}$-equivariant, $\phi$ is $\overline{\rho}_{\tilde{\alpha}'}$-equivariant. In particular, $\phi$ is a $\overline{\rho}_{\tilde{\alpha}'}$-equivariant homeomorphism into $(U_{0}/\Gamma_{0}) \times (U_{1}/\Gamma_{1})$. We change $U_{0}$ and $U_{1}$ so that $(U_{0}/\Gamma_{0}) \times (U_{1}/\Gamma_{1})$ is contained in the image of $\phi$. Let $\psi \colon U_{0} \times U_{1} \longrightarrow W$ be the inverse map of $\phi$. We put $\chi=(U_{0} \times U_{1},\Gamma_{0} \times \Gamma_{1},\psi)$. Then $\chi$ and the orbifold charts in $\mathcal{U}_{\tilde{M}/\sigma_{\tilde{\alpha}'}}$ which do not contain $x_{0}$ form a $C^{\infty}$ orbifold atlas.

Since $\mathcal{U}_{\tilde{M}/\sigma_{\tilde{\alpha}'}}$ defines a complex structure on $\tilde{M}/\sigma_{\tilde{\alpha}'}$ and $\psi$ is a biholomorphic map, $\mathcal{U}$ defines a complex structure $J$ on $\iota_{E_{0}}(\tilde{M}/\sigma_{\tilde{\alpha}'})$. Since the restriction of holomorphic functions on $\mathbb{C}P^{N'}$ to $\iota_{E_{0}}(\tilde{M}/\sigma_{\tilde{\alpha}'})$ are holomorphic with respect to $\mathcal{U}$, $J$ is the complex structure induced from $\mathbb{C}P^{N'}$. Since $\psi$ is $S^1$-equivariant, $J$ is $S^1$-invariant. Let $S$ be a hypersurface in $\mathbb{C}P^{N'}$ defined by $S=\{[ z_0 \colon z_1 \colon \cdots \colon z_{N'}] \in \mathbb{C}P^{N'}| z_0=0 \}$. Then the divisor $\iota_{E_{0}}(\tilde{M}/\sigma_{\tilde{\alpha}'}) \cap S$ on $\iota_{E_{0}}(\tilde{M}/\sigma_{\tilde{\alpha}'})$ is defined by the equation $s^{k}s_{0}=0$ by the definition of $\iota_{E_{0}}$. Since the line bundle defined by $S$ is the dual of the tautological bundle over $\mathbb{C}P^{N'}$, we have $\iota_{E_{0}}^{*}[\omega_{0}]=[E_{0}]$ in $H_{\dR}^{2}(\tilde{M}/\sigma_{\tilde{\alpha}'})$ where $\omega_{0}$ is the symplectic form on $\mathbb{C}P^{N'}$ which is compatible with the complex structure on $\mathbb{C}P^{N'}$ and whose cohomology class is the first Chern class of the dual of the tautological bundle over $\mathbb{C}P^{N'}$. Since $J$ is induced from $\mathbb{C}P^{N'}$ and the complex structure on $\mathbb{C}P^{N'}$ is compatible with $\omega_{0}$, $J$ is compatible with $\omega_{0}|_{\iota_{E_{0}}(\tilde{M}/\sigma_{\tilde{\alpha}'})}$. We put $\omega=\omega_{0}|_{\iota_{E_{0}}(\tilde{M}/\sigma_{\tilde{\alpha}'})}$.

Let $V$ be the open tubular neighborhood of $\Sigma$ in $M$ which is cut off in the contact blowing up along $\Sigma$ to obtain $\tilde{M}$ from $M$. Then $M - V$ is identified with $\tilde{M} - L$. We denote the identification map by
\begin{equation}
q \colon M - V \longrightarrow \tilde{M} - L.
\end{equation}
We denote the tautological map from $\tilde{M}/\sigma_{\tilde{\alpha}'}$ to $\iota_{E_{0}}(\tilde{M}/\sigma_{\tilde{\alpha}'})$ by 
\begin{equation}
\pi \colon \tilde{M}/\sigma_{\tilde{\alpha}'} \longrightarrow \iota_{E_{0}}(\tilde{M}/\sigma_{\tilde{\alpha}'}).
\end{equation}
We show that there exists an $S^1$-equivariant diffeomorphism $f \colon M/\sigma_{\alpha'} \longrightarrow \iota_{E_{0}}(\tilde{M}/\sigma_{\tilde{\alpha}'})$ whose restriction to the complement of an open tubular neighborhood of $\Sigma/\sigma_{\alpha'}$ coincides with $\pi \circ q$. This follows from the argument similar to the last part of the proof of Proposition \ref{Karshon1}. We put 
\begin{equation}
\{x_{1}\}=\Sigma/\sigma_{\alpha'}.
\end{equation}
Since
\begin{enumerate}
\item the orbifold structure groups at $x_{0}$ and $x_{1}$ are isomorphic and 
\item the linear actions of $S^1$ on $T_{x_{0}}(\iota_{E_{0}}(\tilde{M}/\sigma_{\tilde{\alpha}'}))$ and $T_{x_{1}}(M/\sigma_{\alpha'})$ are equal,
\end{enumerate}
by applying the equivariant Darboux theorem \cite{Wei} to orbifold charts, there exist an open neighborhood $V_{0}$ of $x_{0}$ in $\iota_{E_{0}}(\tilde{M}/\sigma_{\tilde{\alpha}'})$, an open neighborhood $V_{1}$ of $x_{1}$ in $M/\sigma_{\alpha'}$ and an $S^1$-equivariant diffeomorphism $h \colon V_{0} \longrightarrow V_{1}$ such that $h(x_{0})=x_{1}$ and $h^{*}\omega=d\alpha'$. Let $\overline{\Phi}_{0}$ be the map on $M/\sigma_{\alpha'}$ induced from $\Phi$. Let $\overline{\Phi}_{1}$ be the hamiltonian function on $\iota_{E_{0}}(\tilde{M}/\sigma_{\tilde{\alpha}'})$ for $\overline{\rho}_{\alpha}$. We choose two level sets $H_{1}$ and $H_{2}$ of $\Phi$ so that $H_{1}/\sigma_{\alpha'}$ is contained in $V_{0}$ and $H_{2}$ is the boundary of an open tubular neighborhood of $\Sigma$ containing $H_{1}$. Note that since they are $S^1$-equivariant and preserve the symplectic forms, $h$ and $\pi$ map the level sets of $\overline{\Phi}_{0}$ to the level sets of $\overline{\Phi}_{1}$. The level sets of $\overline{\Phi}_{0}$ between $H_{1}/\sigma_{\alpha'}$ and $H_{2}/\sigma_{\alpha'}$ are diffeomorphic to $(S^1 \times S^3)/\sigma_{\alpha'}$ which is a $3$-dimensional orbifold with $S^1$-action $\overline{\rho}_{\alpha}$ with at most two exceptional $S^1$-orbits by Lemma \ref{GeneralTorusActions}. Then we can isotope $h|_{H_{1}/\sigma_{\alpha'}}$ to $\pi'|_{H_{2}/\sigma_{\alpha'}}$ as a family of $\overline{\rho}_{\alpha}$-equivariant diffeomorphisms of $(S^1 \times S^3)/\sigma_{\alpha'}$ by the argument in the previous paragraph of the last paragraph of the proof of Proposition \ref{Karshon1}. Hence there is an $S^1$-equivariant diffeomorphism $f \colon \iota_{E_{0}}(\tilde{M}/\sigma_{\tilde{\alpha}'}) \longrightarrow M/\sigma_{\alpha'}$ whose restriction to an open tubular neighborhood of $\iota_{E_{0}}(L/\sigma_{\tilde{\alpha}'})$ coincides with $\pi \circ q$.

By Lemma \ref{EulerClasses} (b) and (c) below, we have
\begin{equation}
\begin{array}{l}
\pi^{*}f^{*}[d\alpha']=\pi^{*}[d\alpha']=[d\tilde{\alpha}'] - \frac{[d\tilde{\alpha}'] \cdot [L/\sigma_{\tilde{\alpha}'}]}{[L/\sigma_{\tilde{\alpha}'}] \cdot [L/\sigma_{\tilde{\alpha}'}]}[L/\sigma_{\tilde{\alpha}'}] \\ 
= [E_{\tilde{M}}] - \frac{[ E_{\tilde{M}} ] \cdot [L/\sigma_{\tilde{\alpha}'}]}{[L/\sigma_{\tilde{\alpha}'}] \cdot [L/\sigma_{\tilde{\alpha}'}]}[L/\sigma_{\tilde{\alpha}'}]=\frac{1}{n}[E_{0}]=\frac{1}{n}\pi^{*}[\omega].
\end{array}
\end{equation}
Hence we have $f^{*}[d\alpha']=\frac{1}{n}[\omega]$ by Lemma \ref{EulerClasses} (a).
We regard $\omega$ as a basic $2$-form on $(M,\mathcal{F}')$, where $\mathcal{F}'$ is the foliation of $M$ defined by the orbits of $\sigma_{\alpha'}$. Since $[d\alpha']$ is the Euler class of the isometric flow $\mathcal{F}'$ on $M$ by the definition of the Euler class, the basic cohomology class of the basic $2$-form $(f^{-1})^{*}\omega$ is the Euler class of $(M,\mathcal{F})$. Hence we have a nonzero constant $c$ and a $1$-form $\beta$ on $M$ such that $\beta(R)=1$ and $d\beta=c(f^{-1})^{*}\omega$ (See \cite{Sar}). By taking the average of $\beta$ by $\rho$, we can assume that $\beta$ is invariant by $\rho$. Then applying Lemma \ref{Karshon3} to $(M,\alpha')$ and $(M,\beta)$, we have a $\rho$-equivariant diffeomorphism $f_{1} \colon M \longrightarrow M$ such that $f_{1}^{*}\beta=\alpha'$. By pulling back $J$ by $f \circ f_{1}$, we have a $\rho$-invariant integrable complex structure on $M/\sigma_{\alpha'}$ compatible with $d\alpha'$.
\end{proof}

\begin{lem}\label{d1}
Let $N$ be a weighted projective space of complex dimension $1$ with a holomorphic $S^1$-action $\sigma$. Let $z_{0}$ and $z_{1}$ be two fixed points of $\sigma$. Let $E$ be a $\sigma$-equivariant holomorphic line orbibundle over $N$ whose first Chern class is positive. For a point $x$ in $N$, $[x]$ denotes the $\mathbb{Q}$-divisor defined by $x$. There exist rational numbers $r_{0}$ and $r_{1}$ such that the divisor $[r_{j}z_{j}]$ defines the holomorphic line orbibundle $E$ for $j=0$ and $1$.
\end{lem}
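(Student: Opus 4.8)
The plan is to reduce the statement to a computation of first Chern classes, using the classification of holomorphic line orbibundles over $N$ that is already invoked in the proof of Lemma \ref{Blowdown}. Since $N$ is a weighted projective space of complex dimension $1$, its coarse space is $\mathbb{C}P^1$, so I may identify $H^{2}(N;\mathbb{Q})$ with $\mathbb{Q}$, the generator being dual to any point of the coarse space. The two fixed points $z_{0}$ and $z_{1}$ of $\sigma$ are exactly the orbifold points of $N$, of isotropy orders $m_{0}$ and $m_{1}$, and $\sigma$ is a holomorphic $S^{1}$-action on this one-dimensional orbifold whose only fixed points are $z_{0}$ and $z_{1}$.

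First I would record that for each $j$ the reduced divisor $[z_{j}]$ defines a line orbibundle $\mathcal{O}([z_{j}])$ whose first Chern class $c_{1}(\mathcal{O}([z_{j}]))$ is a nonzero, indeed positive, element of $H^{2}(N;\mathbb{Q})\cong\mathbb{Q}$, because the underlying Weil divisor is a single point of the coarse space $\mathbb{C}P^{1}$. Since $c_{1}$ is additive under tensor products, the assignment $r\mapsto c_{1}(\mathcal{O}([r z_{j}]))=r\,c_{1}(\mathcal{O}([z_{j}]))$ is a $\mathbb{Q}$-linear isomorphism $\mathbb{Q}\to H^{2}(N;\mathbb{Q})$. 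Hence, setting
\begin{equation}
r_{j}=\frac{c_{1}(E)}{c_{1}(\mathcal{O}([z_{j}]))}\qquad(j=0,1),
\end{equation}
I obtain rational numbers with $c_{1}(\mathcal{O}([r_{j}z_{j}]))=c_{1}(E)$, and each $r_{j}$ is positive because $c_{1}(E)$ is positive by hypothesis. By the classification of holomorphic line orbibundles over the weighted projective line by their first Chern classes in $H^{2}(N;\mathbb{Q})$ (see \cite{Roa}), the equality of first Chern classes forces $\mathcal{O}([r_{j}z_{j}])\cong E$ as holomorphic line orbibundles, which is the desired conclusion.

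The hard part, and the step deserving care, is the orbifold bookkeeping at $z_{0}$ and $z_{1}$: I must ensure that $r_{j}[z_{j}]$ is an admissible $\mathbb{Q}$-divisor, so that $\mathcal{O}([r_{j}z_{j}])$ is a genuine line orbibundle rather than merely a rational class, and that the positivity of $c_{1}(E)$ makes the divisor effective, so that in the application in Lemma \ref{Blowdown} the tautological section of $\mathcal{O}([r_{j}z_{j}])$ yields an honest holomorphic section of $E$ whose zero divisor is the rational multiple $r_{j}[z_{j}]$ of the single fixed point $z_{j}$. Here the weighted structure and the $\sigma$-equivariance of $E$ are what I would exploit: every $\sigma$-invariant effective divisor on $N$ is automatically supported at the fixed points $z_{0},z_{1}$, so working $\sigma$-equivariantly produces a weight-homogeneous section with divisor of the form $c_{0}[z_{0}]+c_{1}[z_{1}]$, after which I arrange $c_{1-j}=0$ by the degree matching above. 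Verifying that the resulting multiplicity $r_{j}$ lies in the lattice $\tfrac{1}{m_{j}}\mathbb{Z}$ permitted by the isotropy order $m_{j}$ of $z_{j}$ — and hence that $\mathcal{O}([r_{j}z_{j}])$ is the genuine orbibundle $E$ — is the only delicate point; it follows from the description of $\mathrm{Pic}^{orb}(N)$ as rationally generated by $\mathcal{O}([z_{0}])$ and $\mathcal{O}([z_{1}])$, together with the positivity hypothesis on $c_{1}(E)$.
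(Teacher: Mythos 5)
Your proof coincides with the paper's: the paper likewise identifies $\Pic(N)_{\mathbb{Q}}$ with $\mathbb{Q}$ via the first Chern class (justified there by $\dim H^{1}(N;\mathcal{O}_{N})=H^{2}(N;\mathcal{O}_{N})=0$ rather than by citing \cite{Roa} for the classification) and produces $r_{j}$ exactly as your ratio $c_{1}(E)/c_{1}(\mathcal{O}([z_{j}]))$, phrased as the unique intersection point of the affine line $\phi^{-1}(E)$ with the axis $\{q_{j}[z_{j}]\}$ inside the space $\Omega$ of $\sigma$-invariant $\mathbb{Q}$-divisors. The lattice condition $r_{j}\in\tfrac{1}{m_{j}}\mathbb{Z}$ that you flag in your final paragraph is not verified (or needed) in the paper's proof, since there ``$[r_{j}z_{j}]$ defines $E$'' is interpreted through the natural map $\phi\colon\Omega\longrightarrow\Pic(N)_{\mathbb{Q}}$ on $\mathbb{Q}$-divisors, so your first two paragraphs already constitute the complete argument.
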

\begin{proof}
We put
\begin{equation}
\Omega=\{ q_{0}[z_{0}] + q_{1}[z_{1}] | q_{0}, q_{1} \in \mathbb{Q} \}.
\end{equation}
$\Omega$ is the space of $\sigma$-invariant $\mathbb{Q}$-divisors on $N$. We put $\Pic(N)_{\mathbb{Q}}=\Pic(N) \otimes_{\mathbb{Z}} \mathbb{Q}$, where $\Pic(N)$ is the Picard group of $N$. There is a natural map
\begin{equation}
\phi \colon \Omega \longrightarrow \Pic(N)_{\mathbb{Q}}
\end{equation}
which corresponds the holomorphic line orbibundle defined by the $\mathbb{Q}$-divisor $[z]$ to an element $[z]$ of $\Omega$ (See section 3 of \cite{Roa}). Let
\begin{equation}
c \colon \Pic(N)_{\mathbb{Q}} \longrightarrow H^{2}(N;\mathbb{Q})
\end{equation}
be the map which corresponds the first Chern class to a $\sigma$-equivariant line orbibundle. Let $s \colon H^{2}(N;\mathbb{Q}) \longrightarrow \mathbb{Q}$ be the canonical isomorphism. Since $E$ is positive, $s \circ c (E)$ is positive. Since $\dim H^{1}(N;\mathcal{O}_{N}) = H^{2}(N;\mathcal{O}_{N})=0$, $c$ is an isomorphism. Since $s$ and $c$ are isomorphisms, $\Pic(N)_{\mathbb{Q}}$ is identified with $\mathbb{Q}$ so that $E$ is identified with a positive rational number. Hence $\phi^{-1}(E)$ is a $1$-dimensional affine subspace of $\Omega$. Then $\phi^{-1}(E) \cap \{ q_{j}[z_{j}] | q_{j} \in \mathbb{Q} \}$ is not empty and a singleton for $j=0$ and $1$. We put 
\begin{equation}
\{ r_{j} [z_{j}]\} = \phi^{-1}(E) \cap \{ q_{j}[z_{j}] | q_{j} \in \mathbb{Q} \}
\end{equation}
for $j=0$ and $1$. Then $r_{j}$ satisfies the desired condition.
\end{proof}

\begin{lem}\label{EulerClasses}
Under the notation of the proof of Lemma \ref{Blowdown}, we have
\begin{description}
\item[(a)] $\pi$ induces an injection $H_{\dR}^{2}((M - V)/\sigma_{\alpha'}) \longrightarrow  H_{\dR}^{2}((\tilde{M} - L)/\sigma_{\tilde{\alpha}'})$,
\item[(b)] a diffeomorphism $f \colon M/\sigma_{\alpha'} \longrightarrow M/\sigma_{\alpha'}$ whose restriction is to an open tubular neighborhood of $\iota_{E_{0}}(L/\sigma_{\tilde{\alpha}'})$ coincides with $\pi \circ q$ induces the identity on $H_{\dR}^{2}(\tilde{M}/\sigma_{\tilde{\alpha}'})$ and
\item[(c)] \begin{equation}\label{AlphaAndAlphaTilde}
\pi^{*}[d\alpha']=[d\tilde{\alpha}'] - \frac{[d\tilde{\alpha}'] \cdot [L/\sigma_{\tilde{\alpha}'}]}{[L/\sigma_{\tilde{\alpha}'}] \cdot [L/\sigma_{\tilde{\alpha}'}]}[L/\sigma_{\tilde{\alpha}'}].
\end{equation}
\end{description}
\end{lem}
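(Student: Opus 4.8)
The plan is to treat all three assertions as cohomological consequences of the single fact that the blow-down relating $\tilde{M}/\sigma_{\tilde{\alpha}'}$ and $M/\sigma_{\alpha'}$ is a diffeomorphism away from the exceptional loci. Write $D=L/\sigma_{\tilde{\alpha}'}$ for the exceptional divisor and $x_{1}=\Sigma/\sigma_{\alpha'}$ for the point to which it is collapsed; by Godinho \cite{God}, $D$ is a weighted projective line, so it is a closed oriented $2$-orbifold with $H^{1}_{\dR}(D)=0$, and its self-intersection $[L/\sigma_{\tilde{\alpha}'}]\cdot[L/\sigma_{\tilde{\alpha}'}]$ is the negative, hence nonzero, Euler number already computed in the main proof. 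Both $M/\sigma_{\alpha'}$ and $\tilde{M}/\sigma_{\tilde{\alpha}'}$ are closed symplectic $4$-orbifolds, for which rational de Rham cohomology enjoys Poincar\'{e} duality, a well-defined intersection pairing on $H^{2}_{\dR}$, and a Gysin sequence for the codimension-$2$ suborbifold $D$. Throughout I will denote by $\pi\colon \tilde{M}/\sigma_{\tilde{\alpha}'}\to M/\sigma_{\alpha'}$ the blow-down that collapses $D$ to $x_{1}$ and coincides with the identification $\overline{q}$ induced by $q$ off $D$.

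For (a) and (b) I would first observe that $\pi$ restricts to a diffeomorphism $(\tilde{M}-L)/\sigma_{\tilde{\alpha}'}\xrightarrow{\;\cong\;}(M-V)/\sigma_{\alpha'}$, namely $\overline{q}$, hence induces an isomorphism, in particular an injection, on $H^{2}_{\dR}$ of the complements; this is (a). To upgrade this to the injectivity of $\pi^{*}$ on $H^{2}_{\dR}(M/\sigma_{\alpha'})$ that the main proof actually uses, I would note that the link of the orbifold point $x_{1}$ is a rational homology sphere $S^{3}/\Gamma$, so a Mayer--Vietoris argument gives $H^{2}_{\dR}(M/\sigma_{\alpha'})\cong H^{2}_{\dR}((M-V)/\sigma_{\alpha'})$; composing with $\overline{q}^{*}$ shows $\pi^{*}$ is injective. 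Assertion (b) is then immediate from the same ``removing a point does not change $H^{2}$'' fact: the diffeomorphism $f$ was constructed to agree with $\pi\circ q$ off a neighborhood of $x_{1}$, so $f$ and the natural blow-down have equal restrictions to $(M-V)/\sigma_{\alpha'}$ and therefore induce the same pullback on $H^{2}_{\dR}$, that is $\pi^{*}f^{*}=\pi^{*}$.

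The substance is (c). Since the contact blowing up alters $\alpha$ only inside $V$, on the overlap $M-V=\tilde{M}-L$ one has $q^{*}\tilde{\alpha}=\alpha$, and because the Reeb vector fields $R'$ and $\tilde{R}'$ come from the same element of $\Lie(G)$ this gives $q^{*}\tilde{\alpha}'=\alpha'$, hence $q^{*}\,d\tilde{\alpha}'=d\alpha'$ there. Consequently $\pi^{*}[d\alpha']-[d\tilde{\alpha}']$ restricts to $0$ on $(\tilde{M}-L)/\sigma_{\tilde{\alpha}'}$, so it lies in the kernel of the restriction $H^{2}_{\dR}(\tilde{M}/\sigma_{\tilde{\alpha}'})\to H^{2}_{\dR}((\tilde{M}-L)/\sigma_{\tilde{\alpha}'})$. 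The Gysin sequence of $D$, together with $H^{1}_{\dR}(D)=0$, identifies this kernel with $\mathbb{Q}\cdot[L/\sigma_{\tilde{\alpha}'}]$, so I may write
\begin{equation}
\pi^{*}[d\alpha']-[d\tilde{\alpha}']=-c\,[L/\sigma_{\tilde{\alpha}'}]
\end{equation}
for a unique $c\in\mathbb{Q}$. To evaluate $c$ I would pair both sides with $[L/\sigma_{\tilde{\alpha}'}]$: since $\pi$ maps $D$ to the single point $x_{1}$, the restriction $\pi^{*}[d\alpha']|_{D}$ is the pullback of a class on a point, hence $0$, so $\pi^{*}[d\alpha']\cdot[L/\sigma_{\tilde{\alpha}'}]=0$; the pairing then yields $-[d\tilde{\alpha}']\cdot[L/\sigma_{\tilde{\alpha}'}]=-c\,[L/\sigma_{\tilde{\alpha}'}]\cdot[L/\sigma_{\tilde{\alpha}'}]$, and dividing by the nonzero self-intersection gives $c=\frac{[d\tilde{\alpha}']\cdot[L/\sigma_{\tilde{\alpha}'}]}{[L/\sigma_{\tilde{\alpha}'}]\cdot[L/\sigma_{\tilde{\alpha}'}]}$, which is exactly \eqref{AlphaAndAlphaTilde}.

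I expect the main obstacle to be carrying out this bookkeeping rigorously in the orbifold category rather than the manifold one: I must be sure that the de Rham (Satake) cohomology of these cyclic-quotient orbifolds obeys the Gysin sequence and Poincar\'{e} duality with $\mathbb{Q}$-coefficients, that $[L/\sigma_{\tilde{\alpha}'}]\cdot[L/\sigma_{\tilde{\alpha}'}]$ is genuinely nonzero (this is where the identification of the self-intersection with the Euler number of $\overline{\rho}_{\tilde{\alpha}}$ via Godinho \cite{God} is needed), and that the blow-down $\pi$ really does collapse $D$ to a single point and is otherwise $\overline{q}$. Once these orbifold tools are in place the remaining steps are formal.
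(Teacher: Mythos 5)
Your proposal is correct, and its logical skeleton is exactly the paper's: for (a) and (b) you reduce to the fact that restriction to the complement of the exceptional locus is an isomorphism on $H^{2}$ (your Mayer--Vietoris at the point $x_{1}$, whose link is the rational homology sphere $S^{3}/\Gamma$, is the same computation the paper performs via the level set $H$ with $H/\sigma_{\alpha'}\cong S^{3}/\Gamma$); for (c) you observe that $\pi^{*}[d\alpha']-[d\tilde{\alpha}']$ dies on the complement, identify the kernel of restriction with the line spanned by $[L/\sigma_{\tilde{\alpha}'}]$, and then determine the coefficient by pairing with $[L/\sigma_{\tilde{\alpha}'}]$ using $\pi^{*}[d\alpha']\cdot[L/\sigma_{\tilde{\alpha}'}]=0$ and the nonvanishing (negative) self-intersection --- precisely the paper's final step. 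Where you genuinely diverge is the cohomological machinery: you stay in orbifold (Satake) de Rham cohomology and invoke rational Poincar\'{e} duality and the Gysin sequence of the suborbifold $D$ (using $H^{1}_{\dR}(D)=0$) to identify the kernel, whereas the paper deliberately transfers everything to the \emph{basic cohomology} of the Reeb foliations $\mathcal{F}$, $\tilde{\mathcal{F}}$ --- using the canonical isomorphism $H^{j}_{\dR}(X/\mathcal{G})\cong H^{j}_{B}(X,\mathcal{G})$ --- so that it can apply the Mayer--Vietoris sequence for basic cohomology of Riemannian foliations \cite{BPR} together with the homotopy-invariance results of \cite{EKN}, computing the pieces explicitly ($H^{i}_{B}(W,\mathcal{F})\cong H^{i}_{B}(\Sigma,\mathcal{F})$, $H^{i}_{B}(\tilde{W},\tilde{\mathcal{F}})\cong H^{i}_{B}(L,\tilde{\mathcal{F}})$, and the lens-space cohomology of $H/\sigma_{\alpha'}$) to obtain the splitting $H^{2}_{B}(\tilde{M},\tilde{\mathcal{F}})\cong H^{2}_{B}(\tilde{M}-L,\tilde{\mathcal{F}})\oplus\mathbb{R}[L]$. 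The trade-off is real: your route is shorter and more standard-looking, but it rests exactly on the orbifold Gysin sequence and duality that you yourself flag as the obstacle; the paper's route avoids citing those by working in a foliated framework where the needed exact sequences are documented, at the cost of the back-and-forth between leaf spaces and basic forms. One caution: in your sketch of (b) you conclude $\pi^{*}f^{*}=\pi^{*}$ from agreement of restrictions and then need injectivity of $\pi^{*}$ on the full $H^{2}_{\dR}(M/\sigma_{\alpha'})$ to deduce $f^{*}=\mathrm{id}$; you do supply that injectivity in your upgrade of (a), so the argument closes, but you should state explicitly that this upgraded injectivity (not merely (a) as literally stated, which concerns the complements) is what (b) consumes --- the paper instead closes (b) by noting $(\pi\circ q)|_{M-V}$ is isotopic to the identity, which is an equivalent resolution.
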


\begin{proof}
Let $p \colon \tilde{M} - L \longrightarrow M - V$ be the inverse of $q$. Let $\mathcal{F}$ and $\tilde{\mathcal{F}}$ be foliations on $M$ and $\tilde{M}$ whose leaves are the orbits of the Reeb flows of $\alpha'$ and $\tilde{\alpha}'$. $p$ maps the leaves of $\tilde{\mathcal{F}}$ in $\tilde{M} - L$ to the leaves of $\mathcal{F}$ in $M - V$. Let $H_{B}^{j}(X,\mathcal{G})$ be the basic cohomology of degree $j$ of a smoothly foliated smooth manifold $(X,\mathcal{G})$. If the leaf space $X/\mathcal{G}$ of $(X,\mathcal{G})$ is an orbifold, then the de Rham cohomology of a smooth orbifold $X/\mathcal{G}$ of degree $j$ is canonically isomorphic to the basic cohomology of $(X,\mathcal{G})$, because their differential complexes are isomorphic by definition. Note that $M/\mathcal{F}=M/\sigma_{\alpha'}$ and $\tilde{M}/\tilde{\mathcal{F}}=\tilde{M}/\sigma_{\tilde{\alpha}'}$. By the definition of $\pi$ and $p$, we have the following diagram:
\begin{equation}\label{PandPi}
\xymatrix{ H_{\dR}^{2}((M - V)/\sigma_{\alpha'}) \ar[r]^{\pi^{*}} \ar[d] & H_{\dR}^{2}((\tilde{M} - L)/\sigma_{\tilde{\alpha}'}) \ar[d] \\
H_{B}^{2}(M - V, \mathcal{F}) \ar[r]^{p^{*}} & H_{B}^{2}(\tilde{M} - L, \tilde{\mathcal{F}})}
\end{equation}
whose vertical arrow are isomorphisms.

By the Mayer-Vietoris exact sequence for basic cohomology of Riemannian foliations \cite{BPR}, we have
\begin{equation}\label{SequenceForM}
\xymatrix{ H_{B}^{1}(W - V, \mathcal{F}) \ar[r] & H_{B}^{2}(M, \mathcal{F}) \ar[r] & H_{B}^{2}(M - V, \mathcal{F}) \oplus H_{B}^{2}(W, \mathcal{F}) \ar[r] & H^{2}(W - V, \mathcal{F})}
\end{equation}
and
\begin{equation}\label{SequenceForTildeM}
\xymatrix{ H_{B}^{1}(\tilde{W} - L,\tilde{\mathcal{F}}) \ar[r] & H_{B}^{2}(\tilde{M},\tilde{\mathcal{F}}) \ar[r] & H_{B}^{2}(\tilde{M} - L, \tilde{\mathcal{F}}) \oplus H_{B}^{2}(\tilde{W},\tilde{\mathcal{F}}) \ar[r] & H^{2}(\tilde{W} - L,\tilde{\mathcal{F}}).}
\end{equation}

We choose a level set $H$ of the contact moment map contained in $W - V$. Let $i_{H} \colon (H,\mathcal{F}) \longrightarrow (W - V,\mathcal{F})$ be the inclusion map. Note that since $H$ is the boundary of an open tubular neighborhood of $\Sigma$, $H$ is diffeomorphic to $S^1 \times S^3$. The leaf space $H/\mathcal{F}$ of $(H,\mathcal{F})$ is an orbifold diffeomorphic to a lens space. In fact, $\sigma_{\alpha'}$ is written as
\begin{equation}
t \cdot (\zeta,z_1,z_2) = (t^{n_{0}} \zeta, t^{n_{1}} z_1, t^{n_{2}} z_2)
\end{equation}
in the standard coordinate on $H$ by Lemma \ref{GeneralTorusActions}. Then we have $H/\mathcal{F}=H/\sigma_{\alpha'} \cong ((S^1 \times S^3)/\rho|_{\mathbb{Z}/n^{0}\mathbb{Z}})/\rho \cong (S^1 \times (S^3/\rho|_{\mathbb{Z}/n^{0}\mathbb{Z}})) /\rho \cong S^3/\rho|_{\mathbb{Z}/n^{0}\mathbb{Z}}$. By Proposition 2.4 of \cite{EKN}, we have 
\begin{equation}\label{BasicCohomology3}
H^{i}_{B}(H,\mathcal{F}) \cong H^{i}_{\dR}(H/\sigma_{\alpha'}) \cong 
\begin{cases}
 \mathbb{R} & i=0, 3, \\
 0          & \text{otherwise}.
\end{cases}
\end{equation}

Let $i_{\Sigma} \colon (\Sigma,\mathcal{F}) \longrightarrow (V,\mathcal{F})$ and $i_{L} \colon (L,\tilde{\mathcal{F}}) \longrightarrow (\tilde{W},\tilde{\mathcal{F}})$ be the inclusion maps. By Proposition 2.4 of \cite{EKN}, $i_{\Sigma}$ and $i_{L}$ induce isomorphisms on basic cohomology. Hence we have
\begin{equation}\label{BasicCohomology1}
H^{i}_{B}(W,\mathcal{F}) \cong H^{i}_{B}(\Sigma,\mathcal{F}) \cong 
\begin{cases}
 \mathbb{R} & i=0, \\
 0          & \text{otherwise}
\end{cases}
\end{equation}
and
\begin{equation}\label{BasicCohomology2}
H^{i}_{B}(\tilde{W},\tilde{\mathcal{F}}) \cong H^{i}_{B}(L,\tilde{\mathcal{F}}) \cong 
\begin{cases}
 \mathbb{R} & i=0, 2, \\
 0          & \text{otherwise}.
\end{cases}
\end{equation}

Substituting \eqref{BasicCohomology1}, \eqref{BasicCohomology2} and \eqref{BasicCohomology3} to sequences \eqref{SequenceForTildeM} and \eqref{SequenceForM}, we have
\begin{equation}\label{SequenceForTildeMandM}
\xymatrix{ H_{B}^{2}(M, \mathcal{F}) \ar[rr]^{r_{M - V} \oplus r_{W} } & & H_{B}^{2}(M - V, \mathcal{F}) \oplus 0 \ar[d]^{p^{*} \oplus 0} \\
H_{B}^{2}(\tilde{M},\tilde{\mathcal{F}}) \ar[rr]^{r_{\tilde{M}-L} \oplus r_{\tilde{W}}} & & H_{B}^{2}(\tilde{M} - L, \tilde{\mathcal{F}}) \oplus \mathbb{R}[L]}
\end{equation}
whose horizontal arrows and $p^{*}$ are isomorphisms where $[L]$ is the Poincar\'{e} dual of $L$. Note that $[L]$ is contained in the image of the canonical map $H^{2}_{B}(\tilde{M},\tilde{\mathcal{F}}) \longrightarrow H^{2}(\tilde{M})$. 

We prove (a) and (b). (a) follows from the following two facts and the diagram \eqref{PandPi}:
\begin{enumerate}
\item $r_{M - V} \colon H_{B}^{2}(M, \mathcal{F}) \longrightarrow H_{B}^{2}(M - V, \mathcal{F})$ is an isomorphism, 
\item $p^{*} \colon H_{B}^{2}(M - V, \mathcal{F}) \longrightarrow H_{B}^{2}(\tilde{M} - L, \tilde{\mathcal{F}})$ is an injection.
\end{enumerate}
(b) follows from the fact that $r_{M - V} \colon H_{B}^{2}(M, \mathcal{F}) \longrightarrow H_{B}^{2}(M - V, \mathcal{F})$ is an isomorphism and $(\pi \circ q)|_{M-V}$ is isotopic to the identity.

To prove (c), we will show  
\begin{equation}\label{AlphaAndAlphaTilde2}
\pi^{*}[d\alpha'] - [d\tilde{\alpha}'] = a [L/\sigma_{\tilde{\alpha}'}]
\end{equation}
for some real number $a$. Since we have $a=-\frac{[d\tilde{\alpha}'] \cdot [L/\sigma_{\tilde{\alpha}'}] \cdot}{[L/\sigma_{\tilde{\alpha}'}] \cdot [L/\sigma_{\tilde{\alpha}'}]}$ by \eqref{AlphaAndAlphaTilde2} and $\pi^{*}[d\alpha'] \cdot [L/\sigma_{\tilde{\alpha}'}]=0$, \eqref{AlphaAndAlphaTilde} follows from \eqref{AlphaAndAlphaTilde2}. By \eqref{PandPi} and \eqref{SequenceForTildeMandM}, we have the commutative diagram
\begin{equation}
\xymatrix{ H_{\dR}^{2}(M/\sigma_{\alpha'}) \ar[r]^{\cong} \ar[d] & H_{\dR}^{2}((M - V)/\sigma_{\alpha'}) \ar[r]^{\pi^{*}} & H_{\dR}^{2}((\tilde{M}-L)/\sigma_{\tilde{\alpha}'}) \ar[r]^{\cong} & H_{\dR}^{2}(\tilde{M}/\sigma_{\tilde{\alpha}'}) \ar[d] \\
H_{B}^{2}(M, \mathcal{F}) \ar[r]_{r_{M - V}} & H_{B}^{2}(M - V, \mathcal{F}) \ar[r]_{p^{*} \oplus 0} & H_{B}^{2}(\tilde{M} - L, \tilde{\mathcal{F}}) \oplus \mathbb{R}[L] \ar[r]_{(r_{\tilde{M}-L} \oplus r_{\tilde{W}})^{-1}} & H_{B}^{2}(\tilde{M},\tilde{\mathcal{F}})}
\end{equation}
which satisfies
\begin{enumerate}
\item the composition of the upper horizontal homomorphism is $\pi^{*}$,
\item $\pi^{*}$ and $p^{*} \oplus 0$ are injective and
\item the other arrows are isomorphisms.
\end{enumerate}
Since we have 
\begin{equation}
p^{*} r_{M - V} [d\alpha] = r_{\tilde{M}-L} [d\tilde{\alpha}]
\end{equation}
by the definition of the contact blowing down, we have \eqref{AlphaAndAlphaTilde2}.
\end{proof}

\begin{lem}\label{Karshon3}
Let $M$ be a closed $5$-dimensional manifold. Let $\alpha_{0}$ and $\alpha_{1}$ be two $K$-contact forms on $M$ of rank $1$. Assume that the Reeb vector fields of $\alpha_{0}$ and $\alpha_{1}$ are the same. If both of $\alpha_{0}$ and $\alpha_{1}$ are invariant under a $T^2$-action $\tau$ on $M$ such that the Reeb flow is a subaction of $\tau$, then there exists a $\tau$-equivariant diffeomorphism $f_{1} \colon M \longrightarrow M$ such that $f_{1}^{*}\alpha_{1}=\alpha_{0}$.
\end{lem}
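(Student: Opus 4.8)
The plan is to imitate the proof of Proposition \ref{Karshon2}, replacing the equality of contact moment maps used there by the rank one structure. Set $\alpha_t=(1-t)\alpha_0+t\alpha_1$ for $t\in[0,1]$. Since $\alpha_0$ and $\alpha_1$ share the Reeb vector field $R$, we have $\alpha_t(R)=1$ and $L_R\alpha_t=0$, hence $\iota_R d\alpha_t=0$, so each $\alpha_t$ is a $\tau$-invariant $1$-form with $\alpha_t(R)=1$ whose differential descends to $TM/\mathbb{R}R$. The whole lemma reduces to showing that $d\alpha_t$ is symplectic on $TM/\mathbb{R}R$ for every $t$: then $\ker\alpha_t$ is an isotopy of $\tau$-invariant contact structures, and the equivariant version of Gray's stability theorem \cite{Gra} produces a $\tau$-equivariant diffeomorphism $f_1$ with $f_{1*}\ker\alpha_0=\ker\alpha_1$. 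Because $f_1$ commutes with $\tau$ it preserves the Reeb subaction, so $f_{1*}R=R$; writing $f_1^{*}\alpha_1=h\,\alpha_0$ and evaluating on $R$ gives $h\equiv 1$, that is, $f_1^{*}\alpha_1=\alpha_0$.

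To prove the symplecticity I would pass to the orbit space. As the rank is $1$, the Reeb flow generates a locally free $S^1$-action (the Reeb field never vanishes), so $B:=M/S^1$ is a symplectic orbifold on which $d\alpha_i$ descends to a symplectic form $\omega_i$, and the residual circle $\bar S^1=\tau/S^1$ acts in a Hamiltonian fashion with moment map $\mu_i=\alpha_i(X)$, where $X$ is an infinitesimal generator of $\tau$ complementary to $R$ (compare \eqref{Hamiltonian}). First, since $\alpha_0$ and $\alpha_1$ have the common Reeb field, both $d\alpha_0$ and $d\alpha_1$ represent the basic Euler class of the isometric flow, so $[d\alpha_0\wedge d\alpha_0]_B=[d\alpha_1\wedge d\alpha_1]_B$ exactly as in the proof of Proposition \ref{Karshon2}; this forces $\omega_0$ and $\omega_1$ to induce the same orientation of $TM/\mathbb{R}R$, whence $\omega_0\wedge\omega_0$ and $\omega_1\wedge\omega_1$ are positive pointwise. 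It then remains to rule out degeneracy of $\omega_t$ for intermediate $t$. Near a point where $\bar S^1$ acts locally freely I would use that $d\mu_i=-\iota_{\bar X}\omega_i$ is nonzero and split $TB$ into the plane spanned by $\bar X$ and a $\mu_t$-direction together with its symplectic complement, on which the orientation computation controls the reduced area forms, as in Proposition \ref{Karshon2}(i). Near the fixed-point set of $\bar S^1$, which consists of the images of the closed Reeb orbits on which $X$ is proportional to $R$ (Lemma \ref{MorseTheory}) and which is the \emph{same} set for $\omega_0$ and $\omega_1$ because it depends only on the action, the linear isotropy weights of $\bar S^1$ are independent of $i$; the equivariant Darboux theorem \cite{Wei} then puts both $\omega_0$ and $\omega_1$ in the same linear normal form, and the convex combination of the two models is symplectic by Lemma 4.13 of \cite{Kar}.

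The main obstacle is precisely this symplecticity of the interpolation $\omega_t$, and in particular reconciling the two Hamiltonian structures when the moment maps $\mu_0$ and $\mu_1$ do not coincide. The point I would emphasize is that the data responsible for nondegeneracy are \emph{intrinsic to the torus action and the Euler class}, not to the moment-map values: the common orientation comes from the shared Reeb field, the fixed-point set together with its isotropy weights comes from $\tau$ alone, and the differing constants $\mu_0,\mu_1$ at the fixed points are irrelevant to the Darboux normal form. Once $\omega_t$ is shown to be symplectic for all $t$, the equivariant Gray argument of the first paragraph completes the proof.
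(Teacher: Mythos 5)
Your route is genuinely different from the paper's, and it contains a gap at exactly the point you flag as the main obstacle. The paper does not re-run the interpolation argument at rank $1$: it applies Lemma \ref{PertubationOfReebVectorFields} to pick a single infinitesimal action $R'$ of $\tau$ close to $R$ with $\alpha_{j}(R')$ nowhere vanishing, sets $\beta_{j}=\frac{1}{\alpha_{j}(R')}\alpha_{j}$, obtaining two rank-$2$ $K$-contact forms with the \emph{common} Reeb vector field $R'$, invokes Proposition \ref{Karshon2} as a black box to get $f_{1}$ with $f_{1}^{*}\beta_{1}=\beta_{0}$, and then recovers $f_{1}^{*}\alpha_{1}=\alpha_{0}$ exactly as in your first paragraph, from $f_{1*}R=R$ and $f_{1*}\ker\alpha_{0}=\ker\alpha_{1}$. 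Your closing Gray-stability paragraph is therefore fine; the problem is the middle of your argument, where you claim that the nondegeneracy of $\omega_{t}=(1-t)\omega_{0}+t\omega_{1}$ depends only on the torus action and the Euler class, with the discrepancy $\mu_{0}\neq\mu_{1}$ ``irrelevant.''

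That claim is false, and the equal-moment-map hypothesis is used essentially at \emph{both} steps of Proposition \ref{Karshon2} that you are copying. In the locally free region, the pairing \eqref{PairingWithv} requires a single function $\Phi$ with nonvanishing differential; in your setting $\iota(\overline{X})\omega_{t}=-d\mu_{t}$ with $\mu_{t}=(1-t)\mu_{0}+t\mu_{1}$, and $d\mu_{0}\neq 0$, $d\mu_{1}\neq 0$ do not prevent $d\mu_{0}$ and $d\mu_{1}$ from being antiparallel at a point, in which case $d\mu_{t}=0$ for some $t$ and $\omega_{t}$ is degenerate there. At the fixed points the failure is sharper: Lemma 4.13 of \cite{Kar} assumes the two invariant forms have the \emph{same} hamiltonian function, and equal isotropy weights plus equal orientation do not suffice. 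Take the diagonal $S^1$-action on $\mathbb{C}^{2}$, $\omega_{0}=\frac{\sqrt{-1}}{2}(dz_{1}\wedge d\overline{z}_{1}-dz_{2}\wedge d\overline{z}_{2})$ and $\omega_{1}=-\omega_{0}$: same action and weights, $\omega_{0}\wedge\omega_{0}=\omega_{1}\wedge\omega_{1}$ so the same orientation, moment maps agreeing at the fixed point ($\mu_{1}=-\mu_{0}$, both vanishing at $0$), yet $\omega_{1/2}=0$. So the moment maps differ not merely ``by constants at the fixed points'' but in their Hessians, which is precisely what Karshon's lemma controls. Nor can the equivariant Darboux theorem repair this: it normalizes $\omega_{0}$ and $\omega_{1}$ by two \emph{different} charts, and applying different diffeomorphisms to the two summands gives no control over their convex combination. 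As written, then, the nondegeneracy of $\omega_{t}$ is not established, and some input beyond the action and the Euler class is required — in the paper, this is supplied by reducing, via the rank-$2$ perturbation, to the situation covered by Proposition \ref{Karshon2} rather than by re-proving its interpolation step without its moment-map hypothesis.
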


\begin{proof}
By Lemma \ref{PertubationOfReebVectorFields}, we have $K$-contact forms $\beta_{0}$ and $\beta_{1}$ of rank $2$ on $M$ such that $\ker \beta_{j}=\ker \alpha_{j}$ and the Reeb vector field of $\beta_{j}$ is an infinitesimal action $R'$ of $\tau$ for $j=0$ and $1$. By Proposition \ref{Karshon2}, there exists a diffeomorphism $f_{1} \colon M \longrightarrow M$ such that $f_{1}^{*}\beta_{1}=\beta_{0}$. Since $f_{1}$ is equivariant with respect to $\tau$, we have $f_{1 *}R=R$ where $R$ denotes the Reeb vector field of $\alpha_{0}$ and $\alpha_{1}$. Since $f_{1 *}\ker \alpha_{0}=f_{1 *}\ker \beta_{0}=\ker \beta_{1}=\ker \alpha_{1}$, we have $f^{*}\alpha_{1}=\alpha_{0}$.
\end{proof}

\section{Normal forms of $K$-contact structures}

We show Lemma \ref{NormalFormTheorem} which is the $K$-contact analog of the equivariant normal form theorem of Weinstein \cite{Wei} for symplectic submanifolds in Subsection \ref{subsection:NormalFormTheorem}. By Lemma \ref{NormalFormTheorem}, the equivalence class of open neighborhoods of a $K$-contact submanifold is determined by the $T^2$-equivariant symplectic normal bundle and the element $\overline{R}$ of $\Lie(T^2)$ corresponding to the Reeb vector field. We can construct good coordinates and Lie group actions on open neighborhoods of $K$-contact submanifolds by Lemma \ref{NormalFormTheorem}.

For a group $H$, a set $A$ with an $H$-action and a subset $B$ of $A$ which is invariant under the $H$-action, we denote the isotropy group of the $H$-action at $B$ by $H_{B}$.

\subsection{Normal forms of $K$-contact submanifolds}\label{subsection:NormalFormTheorem}
Let $M$ be a closed smooth manifold. Let $N$ be a smooth submanifold of $M$. Let $\alpha_0$ and $\alpha_1$ be two $K$-contact forms of rank $n$ defined on $M$. Assume that $N$ is a $K$-contact submanifold of $(M,\alpha_{i})$ for $i=0$ and $1$. Let $G_i$ be the closure of the Reeb flow of $\alpha_i$ in $\Isom(M,g_i)$ for a metric $g_i$ invariant under the Reeb flow of $\alpha_i$ for $i=0$ and $1$. Let $\rho_i$ be the $G_i$-action on $M$ for $i=0$ and $1$. The element in $\Lie(G_i)$ corresponding to the Reeb vector field of $\alpha_i$ is denoted by $\overline{R}_i$ for $i=0$ and $1$. The symplectic normal bundle $TN^{d\alpha_{i}}$ of $N$ in $(M,\alpha_{i})$ is defined by $(TN^{d\alpha_{i}})_{x}=\{ v \in T_{x}M | \alpha_i(v)=0, d\alpha_i(v,w)=0$ for every $w$ in $T_{x}N \}$ for $i=0$ and $1$. $TN^{d\alpha_{i}}$ is $\rho_i$-equivariant for $i=0$ and $1$. The normal form theorem of $K$-contact submanifolds is stated as follows:
\begin{lem}\label{NormalFormTheorem}
Assume that 
\begin{enumerate}
\item $\alpha_0|_{N}=\alpha_1|_{N}$ and 
\item The $T^n$-equivariant symplectic vector bundles $(TN^{d\alpha_{0}},d\alpha_0|_{\wedge^{2} TN^{d\alpha_{0}}})$ and $(TN^{d\alpha_{1}},d\alpha_1|_{\wedge^{2} TN^{d\alpha_{1}}})$ over $N$ are isomorphic by a bundle map $q$ which covers $\id_{N}$ and
\item $q_{*}(\overline{R}_{0})=\overline{R}_{1}$ where $q_{*} \colon \Lie(G_0) \longrightarrow \Lie(G_1)$ is the map induced from the map $G_0 \longrightarrow G_1$ associated with the equivariant bundle map $q$.
\end{enumerate}
Then there exist a $\rho_i$-invariant open neighborhood $U_i$ of $N$ for $i=0,1$ and a diffeomorphism $f \colon U_0 \longrightarrow U_1$ such that $f|_{N}=\id_{N}$ and $f^{*}\alpha_1=\alpha_0$.
\end{lem}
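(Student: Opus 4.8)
The plan is to follow the standard two-step route for normal-form theorems of Weinstein type, carried out equivariantly: first use the bundle isomorphism $q$ to build a diffeomorphism of tubular neighborhoods that fixes $N$ and makes $\alpha_0$ and the pulled-back $\alpha_1$ agree to first order along $N$, and then remove the remaining discrepancy by an equivariant Moser deformation. Throughout I would identify the structure tori $G_0$ and $G_1$ through the isomorphism $G_0 \to G_1$ underlying $q$ (so that $q_*$ identifies $\Lie(G_0)$ with $\Lie(G_1)$), and I would fix metrics invariant under the resulting $T^n$-action. Hypothesis (iii), $q_*(\overline{R}_0)=\overline{R}_1$, is exactly what lets this identification carry the Reeb element of $\alpha_0$ to that of $\alpha_1$, and it is this point that makes the common Reeb vector field in the Moser step available.

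First I would produce the first-order matching. Choosing a $T^n$-invariant metric and the associated equivariant exponential map, I get $\rho_i$-equivariant tubular neighborhood identifications of $N$ with (a disk bundle in) its normal bundle for $i=0,1$; composing these with the bundle isomorphism $q$ gives an equivariant diffeomorphism $\psi$ of a neighborhood of $N$ with $\psi|_N = \id_N$. After replacing $\alpha_1$ by $\psi^*\alpha_1$, I claim $\alpha_0$ and $\alpha_1$ have the same $1$-jet along $N$, i.e. they agree on $TM|_N$ and their exterior derivatives agree on $\wedge^2 TM|_N$. On $TN$ this is hypothesis (i) together with $d(\alpha_0|_N)=d(\alpha_1|_N)$; on the symplectic normal bundle $\alpha_i$ vanishes and $d\alpha_i$ is carried to $d\alpha_1$ by $q$ by hypothesis (ii); the mixed terms vanish on both sides because $TN^{d\alpha_i}$ is by definition $d\alpha_i$-orthogonal to $TN$; and the Reeb directions are matched by hypothesis (iii). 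This is the Weinstein step and is where the real work sits.

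Then I would run the Moser argument. Using the identification, both $\alpha_0$ and $\alpha_1$ are $\rho$-invariant and, by hypothesis (iii), have the same Reeb element $\overline{R}\in\Lie(T^n)$; write $R$ for its infinitesimal action and put $\alpha_t=(1-t)\alpha_0+t\alpha_1$ for $t\in[0,1]$. Since $\alpha_0$ and $\alpha_1$ agree to first order along $N$, $\alpha_t$ is contact on a neighborhood of $N$ uniformly in $t$ (by openness of the contact condition and compactness of $N\times[0,1]$), and on that neighborhood $\alpha_t(R)=1$ and $\iota_R d\alpha_t=0$, so $R$ is the Reeb vector field of every $\alpha_t$. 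Set $\beta=\alpha_1-\alpha_0$; then $\beta(R)=0$ everywhere and $\beta$ vanishes on $TM|_N$. Because $d\alpha_t$ is nondegenerate on $\ker\alpha_t$, there is a unique section $X_t$ of $\ker\alpha_t$ with $\iota_{X_t}d\alpha_t=-\beta$ (the equation on $R$ holds automatically, since both sides annihilate $R$), and $X_t$ is $\rho$-invariant and vanishes along $N$. A direct computation gives $\dot\alpha_t+L_{X_t}\alpha_t=0$, so the flow $\phi_t$ of $X_t$ --- which exists for $t\in[0,1]$ on a possibly smaller invariant neighborhood, fixes $N$, and is $\rho$-equivariant --- satisfies $\phi_1^*\alpha_1=\alpha_0$. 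Composing $\psi$ with $\phi_1$ yields the desired $f$.

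I expect the main obstacle to be the first-order matching of the second paragraph: constructing $\psi$ equivariantly and verifying that $q$, which only controls the restriction of $d\alpha_i$ to the symplectic normal bundle, together with (i) and (iii) actually forces equality of the full $1$-jets of $\alpha_0$ and $\psi^*\alpha_1$ along $N$, including the mixed tangent--normal and Reeb components. The Moser step is then essentially routine, the only subtlety being to keep everything invariant (which is automatic, since $\beta$ and $d\alpha_t$ are invariant and $X_t$ is defined by a pointwise-invertible, invariant equation) and to shrink the neighborhood so that the flow is defined on all of $[0,1]$.
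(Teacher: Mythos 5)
Your proof is correct, and its first half coincides with the paper's: the paper likewise builds the tubular-neighborhood map $f^{1}=\exp^{1} \circ q \circ (\exp^{0})^{-1}$ from $\rho_i$-invariant metrics (Lemma \ref{Equivariant}) and uses hypothesis (iii), through the equivariance of $f^{1}$, to reduce to two forms with a common Reeb vector field $R$ on a whole neighborhood of $N$ and with equal differentials along $N$ --- exactly your first-order matching. Where you genuinely diverge is the deformation step. You run a single contact Moser on the interpolation $\alpha_t=(1-t)\alpha_0+t\alpha_1$, solving $\iota(X_t)d\alpha_t=-\beta$ with $X_t$ in $\ker \alpha_t$; your checks (that $\beta(R)=0$ makes the equation hold on all of $TM$, that $X_t$ vanishes along $N$ and is $\rho$-invariant) are precisely what is needed, so this works. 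The paper instead splits the deformation into three stages: a Moser argument for the interpolated $2$-forms $\omega_t=(1-t)d\alpha_0+t\,d\alpha_1$, which needs only nondegeneracy transverse to $R$ and arranges $d\alpha_0=d\alpha_1$ (Lemma \ref{EquivariantSymplectic}); an exactness step writing the residual difference as $dh$ with $h|_{N}=0$, where $\rho$-invariance of $h$ is extracted from density of Reeb orbits in the $\rho$-orbits (Lemma \ref{Exactness}); and finally the flow of $Z=-hR$ applied to $\delta_t=\alpha_0+t\,dh$. Your route is shorter and sidesteps the density argument entirely, since invariance of $X_t$ is automatic from invariance of $\beta$ and $\alpha_t$, at the cost of verifying that each $\alpha_t$ is contact with Reeb field $R$ --- which your first-order matching supplies; the paper's decomposition buys a clean separation of the transverse symplectic correction from the Reeb-direction correction, with the last stage moving points only along Reeb orbits. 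One small caution: your uniform-in-$t$ contactness of $\alpha_t$ invokes compactness of $N$, which the statement does not assert explicitly but which the paper also uses implicitly (compare its localized variant, Lemma \ref{NormalFormTheorem2}).
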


\begin{proof}
First, we show the following lemma:
\begin{lem}\label{Equivariant}
There exist a $\rho_i$-invariant open neighborhood $U_i$ of $N$ for $i=0,1$ and a diffeomorphism $f^1 \colon U_0 \longrightarrow U_1$ such that $f^1|_{N}=\id_{N}$, $f^1$ induces $q$ between on the symplectic normal bundles $TN^{d\alpha_{0}}$ and $TN^{d\alpha_{1}}$ of $N$ and $f^1$ is $T^n$-equivariant with respect to $\rho_0$ and $\rho_1$.
\end{lem}
\begin{proof}
For $i=0$ and $1$, fix a metric $g_{i}$ on $M$ invariant under $\rho_{i}$. There exists a small number $\epsilon_i$ such that the exponential map $\exp^{i} \colon (TN)^{d\alpha_{i}} \longrightarrow M$ with respect to $g_{i}$ defined by $\exp^{i}(x,v)=\exp^{i}_{x}(v)$ for $x$ in $N$ and $v$ in $(TN^{d\alpha_{i}})_x$ is a diffeomorphism on a tubular neighborhood $V_{\epsilon_i}$ of the zero section of $TN^{d\alpha_{i}}$ of radius $\epsilon_i$. We take a sufficiently small open neighborhood $U_{0}$ of $N$ so that a diffeomorphism $f^{1}=\exp^{1} \circ q \circ (\exp^{0})^{-1}$ is well-defined on $U_{0}$. We show that $f^{1}$ satisfies the desired conditions. $f^1|_{N}=\id_{N}$ is clear. Since $g_{i}$ is invariant under $\rho_i$, $\exp^{i}$ is $T^n$-equivariant. Then $f^{1}$ is $T^n$-equivariant. Since the differential map of $\exp^{i}$ at $x$ on the zero section of $(TN)^{d\alpha_{i}}$ is the identity through the natural identification of $T_{x}V_{\epsilon_i}$ with $T_{x}M$, the tangent map of $f^{1}$ along $N$ is $q$.
\end{proof}
Pulling back $\alpha_{1}$ by $f^1$ in Lemma \ref{Equivariant} to an open neighborhood $W^{1}$ of $N$, we reduce the proof of Lemma \ref{NormalFormTheorem} to the case where
\begin{enumerate}
\item $(d\alpha_0)_{x}=(d\alpha_1)_{x}$ for $x$ in $N$ and
\item the Reeb flows of $\alpha_0$ and $\alpha_1$ coincide on $W^{1}$.
\end{enumerate}
We identify $G_0$ and $G_1$, and denote them by $G$. The action of $G$ on $W^1$ is denoted by $\rho$. We denote the restriction of the Reeb vector fields of $\alpha_0$ and $\alpha_1$ to $W^{1}$ by $R$.

\begin{lem}\label{EquivariantSymplectic}
There exist $\rho$-invariant open neighborhoods $U_0$ and $U_1$ of $N$ in $W^{1}$ and a $\rho$-equivariant diffeomorphism $f^2 \colon U_0 \longrightarrow U_1$ such that $f^{2}|_{N}=\id_{N}$ and $f^{2 *} d\alpha_1 = d\alpha_0$.
\end{lem}
\begin{proof}
We define the $1$-form $\beta$ on $W^1$ by $\beta=\alpha_1 - \alpha_0$ and the $2$-form $\omega_t$ on $W^1$ by $\omega_t=(1-t)d\alpha_0 + td\alpha_1$ for $t$ in $[0,1]$. Since $(d\alpha_{0})_{x}=(d\alpha_{1})_{x}$ for a point $x$ in $N$ and the Reeb vector fields of $\alpha_{0}$ and $\alpha_{1}$ are equal to $R$ on $W^1$ by the assumption, there exists an open neighborhood $U'_0$ of $N$ in $W^1$ such that $\omega_t$ induces a nondegenerate $2$-form on $TU/\mathbb{R}R$ for every $t$ in $[0,1]$. Since $\beta(R)=\alpha_{1}(R)-\alpha_{0}(R)=0$ by the assumption, there exists a unique smooth family $\{X_t\}_{t \in [0,1]}$ of vector field tangent to $\ker \alpha_0$ on $U'_0$ such that $\iota(X_t)\omega_t + \beta =0$ for $t$ in $[0,1]$. Since $\omega_t,\beta$ and $\ker \alpha_0$ are invariant under $\rho$, $X_t$ is invariant under $\rho$. Note that since $\beta|_{N}=0$, $X_t|_{N}=0$. Then the isotopy $\{\phi_{t}\}_{t \in [0,1]}$ generated by $\{X_t\}_{t \in [0,1]}$ satisfies $\phi_{t}^{*}\omega_t=\omega_0$. In fact, we have
\begin{equation}
\frac{d\phi_{t}^{*}\omega_t}{dt}=\phi_{t}^{*} \Big( d\iota(X_t)\omega_t + \frac{d\omega_t}{dt} \Big) = \phi_{t}^{*}d( \iota(X_t)\omega_t + \beta )=0.
\end{equation}
Since $X_t|_{N}=0$, the isotopy $\{\phi_{t}\}_{t \in [0,1]}$ is well-defined on an open neighborhood $U_{0}$ of $N$ in $U'_{0}$. Put $f^2=\phi_{1}$ and the desired conditions for $f^2$ are satisfied. Note that since $X_t$ is invariant under $\rho$, $f^2$ is $\rho$-equivariant.
\end{proof}
Pulling back $\alpha_{1}$ by $f^2$ in Lemma \ref{EquivariantSymplectic} on an open neighborhood $W^2$ of $N$ in $W^1$, we reduce the proof of Lemma \ref{NormalFormTheorem} to the case where 
\begin{enumerate}
\item $d\alpha_0$ and $d\alpha_1$ are equal and
\item the Reeb flows of $\alpha_0$ and $\alpha_1$ are equal on $W^{2}$.
\end{enumerate}

\begin{lem}\label{Exactness}
There exists a $\rho$-invariant open neighborhood $W^3$ of $N$ in $W^2$ and a $\rho$-invariant function $h$ on $W^3$ such that $\alpha_1|_{W^{3}} - \alpha_0|_{W^{3}}=dh$ and $h|_{N}=0$.
\end{lem}
\begin{proof}
Let $W^{3}$ be a $\rho$-invariant tubular neighborhood of $N$ in $W^{2}$. We put $\beta=\alpha_1|_{W^{3}}-\alpha_0|_{W^{3}}$. Since $N$ is a retract of $W^3$ and $\beta|_{N}=0$, $\beta$ is exact on $W^3$.

We have a function $h'$ on $W^{3}$ such that $dh'=\alpha_1|_{W^{3}}-\alpha_0|_{W^{3}}$. Since the Reeb vector fields of $\alpha_0$ and $\alpha_1$ are equal to $R$ on $W^3$, we have $Rh'=dh'(R)=\alpha_1(R) - \alpha_0(R)=0$ on $W^3$. Since the orbits of the Reeb flow are dense in the orbits of $\rho$, $h'$ is constant on each orbit of $\rho$. Hence $h'$ is invariant under $\rho$. Take a point $x_0$ on $N$ and define $h=h'-h'(x_0)$. Since $h(x_0)=0$ and $dh'|_{N}=0$, we have $h|_{N}=0$. $h$ satisfies the desired conditions.
\end{proof}

We prove Lemma \ref{NormalFormTheorem}. We define the $1$-form $\delta_t=\alpha_0+tdh$ for $t$ in $[0,1]$. $\delta_t$ is a contact form at a point $x$ on $N$ for every $t$ by the assumption. Hence we have an open neighborhood $W^{4}$ of $N$ in $W^{3}$ such that $\delta_t|_{W^{4}}$ is contact for every $t$. Let $R$ be the common Reeb vector field of $\delta_t|_{W^{4}}$ for $t$ in $[0,1]$ and put $Z=-h R$. Let $\{\psi_{t}\}_{t \in [0,1]}$ be the flow generated by $Z$. Since $Z|_{N}=0$, $\{\psi_{t}\}_{t \in [0,1]}$ is well-defined on an open neighborhood $U_0$ of $N$ for $t$ in $[0,1]$. Then we have $\psi_{t}^{*}\delta_{t}=\alpha_{0}$ on $U^{0}$. In fact, we have
\begin{equation}
\frac{d\psi_{t}^{*}\delta_t}{dt}=\psi_{t}^{*} \Big( L_{Z}\delta_t + \frac{d\delta_t}{dt} \Big)=\psi_{t}^{*}(d\iota(Z)\delta_t + dh)=0.
\end{equation}
We put $f=\psi_{1}$. Since $h$ and $R$ are $\rho$-invariant, $f$ is $\rho$-equivariant. $f$ satisfies the desired conditions.
\end{proof}

Let $(M,\alpha)$ be a closed $K$-contact manifold. Let $N$ be a $K$-contact submanifold of $M$. Let $E$ be the symplectic normal bundle of $N$ in $(M,\alpha)$. We denote the zero section of $E$ by $0_{E}$ and identify $0_{E}$ with $N$ naturally. At a point $x$ on $N$, the tangent space $T_{x}M$ is identified with $T_{x}E$ naturally. In fact, we have the decompositions $T_{x}M=T_{x}N \oplus E_{x}$ and $T_{x}E=T_{x}0_{E} \oplus E_{x}$ where $E_{x}$ is the fiber of $E$ at $x$ in $N$. We have the following corollary of Lemma \ref{NormalFormTheorem}:
\begin{cor}\label{NormalBundles}
There exists a $K$-contact form $\beta$ defined on an open neighborhood of $0_{E}$ in $E$ which satisfies 
\begin{enumerate}
\item $(d\beta)_{x}=(d\alpha)_{x}$ for $x$ in $N$,
\item $\beta|_{0_{E}}=\alpha|_{N}$ and 
\item the Reeb flow of $\beta$ is the linear flow on $E$ induced from the Reeb flow of $\alpha$.
\end{enumerate}
If a $K$-contact form $\beta'$ defined on an open neighborhood $W$ of $0_{E}$ satisfies (i), (ii) and (iii), then there exist an open neighborhood $V$ of $0_{E}$ in $E$, an open neighborhood $U$ of $N$ in $M$ and a diffeomorphism $f \colon V \longrightarrow U$ such that $f|_{0_{E}}=\id_{N}$ and $f^{*}\alpha=\beta'$. 
\end{cor}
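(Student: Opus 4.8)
The plan is to obtain the model form $\beta$ by pulling back $\alpha$ through an equivariant tubular neighbourhood map, and then to reduce the uniqueness clause to Lemma \ref{NormalFormTheorem} applied to the total space $E$ near its zero section. First I would fix a Riemannian metric on $M$ invariant under the torus $G$, the closure of the Reeb flow of $\alpha$; such a metric exists because $G$ is compact. Since $N$ is a $K$-contact submanifold it is invariant under the Reeb flow, so $G$ preserves $N$ and acts linearly on $E$ via the linearization of $\rho$ along $N$. The exponential map $\exp$ of the invariant metric is then $G$-equivariant (linear action on $E$ against $\rho$ on $M$) and restricts to a diffeomorphism from a neighbourhood $V$ of $0_{E}$ onto a neighbourhood $U$ of $N$, with $\exp|_{0_{E}}=\id_{N}$ and with differential the identity along $N$ under the canonical identification $T_{x}E\cong T_{x}M$. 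Setting $\beta=\exp^{*}\alpha$, conditions (i) and (ii) are immediate from these properties of $\exp$, and condition (iii) holds because the Reeb field of $\exp^{*}\alpha$ is $(\exp^{-1})_{*}R$, which by equivariance of $\exp$ is the generator of the linear flow on $E$ induced by the Reeb flow of $\alpha$.

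For uniqueness, given $\beta'$ satisfying (i)--(iii) on a neighbourhood $W$ of $0_{E}$, I would compare $\beta'$ with the model $\beta$ on $E$ near $0_{E}$ and then compose with $\exp$. The crucial observation is that (iii) forces the Reeb flow of $\beta'$ to be exactly the same linear flow on $E$ as that of $\beta$; hence their Reeb-flow closures and the induced torus actions $\rho$ on $E$ coincide, and the element of $\Lie(G)$ corresponding to the Reeb vector field is the same $\overline{R}$ for both. Note that $\beta'$ is then $G$-invariant, being preserved by the torus action associated with it. I would now check the hypotheses of Lemma \ref{NormalFormTheorem} with ambient $E$, submanifold $0_{E}$, $\alpha_{0}=\beta'$ and $\alpha_{1}=\beta$: hypothesis (i) is $\beta'|_{0_{E}}=\alpha|_{N}=\beta|_{0_{E}}$, and hypothesis (iii) reads $q_{*}\overline{R}=\overline{R}$, which is automatic once the bundle map $q$ below is $G$-equivariant over $\id_{N}$. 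Lemma \ref{NormalFormTheorem} then yields a diffeomorphism $h$ with $h|_{0_{E}}=\id_{N}$ and $h^{*}\beta=\beta'$, and $f:=\exp\circ h$ satisfies $f|_{0_{E}}=\id_{N}$ and $f^{*}\alpha=h^{*}\beta=\beta'$, as required.

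For hypothesis (ii) I would exhibit an explicit isomorphism of $T^{n}$-equivariant symplectic normal bundles. At $x\in N$ the symplectic normal bundle of $0_{E}$ in $(E,\beta')$ sits inside $T_{x}E\cong T_{x}M=\mathbb{R}R_{x}\oplus E_{x}$ and equals $(\mathbb{R}R_{x}\oplus E_{x})\cap\ker\beta'_{x}$; since by (iii) the Reeb field of $\beta'$ restricts to $R$ along $N$, one has $\beta'_{x}(R_{x})=1\neq 0$, so the projection along $\mathbb{R}R$ is a linear isomorphism of this subspace onto $E_{x}$, defining $q$. Because $d\beta'_{x}=d\alpha_{x}$ by (i) and $d\alpha(R,\cdot)=0$, the projection is symplectic; because $\beta'$, $R$ and the splitting $\mathbb{R}R\oplus E$ are all $\rho$-invariant, $q$ is $G$-equivariant and covers $\id_{N}$.

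The main obstacle I expect is precisely this verification of hypothesis (ii): the abstract symplectic normal bundle of $0_{E}$ in $(E,\beta')$ need not be the vertical subbundle $E$, and identifying the two $T^{n}$-equivariantly and symplectically relies on the Reeb-direction projection above, whose well-definedness rests on the fact (forced by (iii)) that the Reeb field of $\beta'$ restricts to $R$ on $N$ and is therefore non-null for $\beta'$. A secondary point to address is that Lemma \ref{NormalFormTheorem} is stated for closed ambient manifolds whereas here we work on a neighbourhood of $0_{E}$ in the non-compact $E$; this is harmless since $N$ is compact and the relevant group is the closure of the linear flow, which is a torus with no appeal to compactness of the ambient, so the Moser-type argument in the proof of Lemma \ref{NormalFormTheorem} applies to the germ along $N$.
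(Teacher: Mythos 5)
Your proposal is correct and follows essentially the same strategy as the paper: existence by pulling back $\alpha$ under the $\rho$-equivariant exponential map of an invariant metric, with (i), (ii) from $(D\exp)_{x}=\id$ along the zero section and (iii) from equivariance, and uniqueness by Lemma \ref{NormalFormTheorem}. The only structural difference is cosmetic: you apply the normal form lemma on $E$ (comparing $\beta'$ with $\beta=\exp^{*}\alpha$ and setting $f=\exp\circ h$), while the paper transports $\beta'$ to $M$ via $\exp$ and compares $(\exp^{-1})^{*}\beta'$ with $\alpha$ there; these are the same argument conjugated by $\exp$. Where you genuinely add value is in the construction of $q$: the paper simply asserts that the symplectic normal bundle of the zero section for $\beta'$ is $E$ and takes $q=\id_{E}$, but conditions (i)--(iii) alone do not force $\beta'$ to annihilate vertical vectors along $0_{E}$ (e.g.\ along a free orbit one can add to $\beta$ a small closed $G$-invariant vertical covector $\mu$ with $\mu(R)=0$ without disturbing (i)--(iii)), so a priori that normal bundle is only the graph of a map $E\to\mathbb{R}R$; your projection along $\mathbb{R}R$, which is symplectic because $d\beta'_{x}=d\alpha_{x}$ kills $R_{x}$ and is $G$-equivariant because $R$ and the vertical subbundle are invariant, is exactly the identification needed, and it repairs this small imprecision in the paper's own proof. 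Your remark on applying the lemma in the non-compact ambient $E$ is also apt; the paper itself records the localized variant as Lemma \ref{NormalFormTheorem2}, and the Moser isotopies involved are generated by vector fields vanishing on the compact $N$, so the germ-along-$N$ application is harmless.
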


\begin{proof}
Fix a metric $g$ on $M$ invariant under $\rho$. There exists a tubular neighborhood $V'$ of $0_{E}$ such that the restriction of $\exp \colon E \longrightarrow M$ defined by $\exp(x,v)=\exp_{x}(v)$ to $V'$ is a diffeomorphism into $M$. 

Then $\exp^{*}\alpha$ is a $K$-contact form on $V'$ which satisfies (i), (ii) and (iii). In fact, (i) and (ii) follow from the fact that $(D\exp)_{x}$ is the identity for a point $x$ on $0_{E}$ through the natural identification of $T_{x}V'$ with $T_{x}M$. $\exp$ is $\rho$-equivariant with respect to $\rho$ on $M$ and the linear action on $E$ induced from $\rho$. (iii) follows from the  $\rho$-equivariance of $\exp$.

Assume that a $K$-contact form $\beta'$ on an open neighborhood $W$ of $0_{E}$ satisfies (i), (ii) and (iii). We put $U'=\exp(V' \cap W)$. Then $(\exp^{-1})^{*}\beta'$ is a $K$-contact form on $U'$. The $T^n$-equivariant symplectic normal bundle of $N$ in $(U',(\exp^{-1})^{*}\beta)$ is $E$. In fact, $(D\exp)_{x}$ is the identity for a point $x$ on $0_{E}$ and $\exp$ is equivariant with respect to the torus action $\rho$ on $M$ and the linear action on $E$ induced from $\rho$. Hence $\id_{E}$ satisfies the assumption on $q$ in Lemma \ref{NormalFormTheorem}. By Lemma \ref{NormalFormTheorem}, there exist an open neighborhood $V$ of $0_{E}$ in $E$, an open neighborhood $U$ of $N$ in $M$ and a diffeomorphism $f \colon V \longrightarrow U$ such that $f|_{0_{E}}=\id_{N}$ and $f^{*}\alpha=\beta'$. 
\end{proof}

Let $H$ be a compact Lie group. We have the following lemma:
\begin{lem}\label{LocalTorusActions}
If there exists an $H$-action $\tau$ on $E$ which commutes with $\rho$ and preserves $\alpha|_{0_{E}}$ and the symplectic structures on fibers of $E$, then there exists an $\alpha$-preserving $H$-action on an open neighborhood of $N$.
\end{lem}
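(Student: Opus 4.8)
The plan is to pull the whole problem back to the model bundle $E$ and there average the normalized model contact form over $\tau$. First I would invoke Corollary \ref{NormalBundles} to fix, on an open neighborhood of the zero section $0_E$ in $E$, a $K$-contact form $\beta$ satisfying the three normalizing conditions: (i) $(d\beta)_x=(d\alpha)_x$ for $x\in N$, (ii) $\beta|_{0_E}=\alpha|_N$, and (iii) the Reeb flow of $\beta$ is the linear flow $R$ on $E$ induced from the Reeb flow of $\alpha$. Since $H$ is compact I may shrink this neighborhood to a $\tau$-invariant one, and then set $\beta_H=\int_H \tau_h^{*}\beta\,dh$ with respect to normalized Haar measure. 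By left invariance of Haar measure, $\tau_g^{*}\beta_H=\int_H (\tau_g\tau_h)^{*}\beta\,dh=\beta_H$, so $\beta_H$ is $\tau$-invariant. Note that preservation of $\alpha|_{0_E}$ and of the fiber symplectic structures forces $\tau$ to carry $0_E$ to itself and to cover an $\alpha|_N$-preserving action $\overline{\tau}$ on $N$, acting on each fiber $E_x$ by a symplectic linear map.

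The heart of the argument is to verify that $\beta_H$ again satisfies (i)--(iii), so that Corollary \ref{NormalBundles} can be applied to it. For (ii): since $\tau_h$ restricts on $0_E$ to the $\alpha|_N$-preserving map $\overline{\tau}_h$, we get $(\tau_h^{*}\beta)|_{0_E}=\overline{\tau}_h^{*}(\alpha|_N)=\alpha|_N$, and averaging yields $\beta_H|_{0_E}=\alpha|_N$. For (iii): because $\tau$ commutes with the induced linear $\rho$-action on $E$ and $R$ generates a subflow of this action, $\tau_{h*}R=R$; applying $\iota_X\phi^{*}\omega=\phi^{*}\iota_{\phi_{*}X}\omega$ gives $(\tau_h^{*}\beta)(R)=1$ and $\iota_R\,d(\tau_h^{*}\beta)=\tau_h^{*}(\iota_R\,d\beta)=0$, so every $\tau_h^{*}\beta$, and hence $\beta_H$, has Reeb field $R$. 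The delicate condition is (i). Evaluating at $x\in N$ and using $(d\beta)_{\tau_h(x)}=(d\alpha)_{\tau_h(x)}$, the claim reduces to $(\tau_h^{*}d\alpha)_x=(d\alpha)_x$. Here I use the splitting $T_xE=T_xN\oplus E_x$, under which $(d\alpha)_x$ decomposes as $d(\alpha|_N)_x$ on $T_xN$ and the fiber symplectic form on $E_x$, with vanishing cross terms; this vanishing is exactly the defining property of the symplectic normal bundle. Since $(D\tau_h)_x$ respects the splitting, acts on the $T_xN$-part by $\overline{\tau}_h$ and symplectically on $E_x$, it fixes $(d\alpha)_x$; averaging then gives $(d\beta_H)_x=(d\alpha)_x$.

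With (i)--(iii) in hand, $\beta_H$ is contact near $N$: along $N$ its differential equals $d\alpha$ and its Reeb field $R$ satisfies $\beta_H(R)=1$, which forces $\beta_H\wedge(d\beta_H)^{\wedge}\neq 0$ there, and contactness is an open condition. It is moreover $K$-contact, because its Reeb flow $R$ is a subflow of the compact torus action $\rho$ on $E$, which preserves a (fiberwise-averaged) invariant metric. Corollary \ref{NormalBundles} now supplies a neighborhood $V$ of $0_E$, a neighborhood $U$ of $N$, and a diffeomorphism $f\colon V\longrightarrow U$ with $f|_{0_E}=\id_N$ and $f^{*}\alpha=\beta_H$. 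Finally I transport $\tau$ to $M$ by setting $\widetilde{\tau}_h=f\circ\tau_h\circ f^{-1}$ on $U$; this is an $H$-action, and
\begin{equation}
\widetilde{\tau}_h^{*}\alpha=(f^{-1})^{*}\tau_h^{*}(f^{*}\alpha)=(f^{-1})^{*}\tau_h^{*}\beta_H=(f^{-1})^{*}\beta_H=\alpha,
\end{equation}
using the $\tau$-invariance of $\beta_H$. Thus $\widetilde{\tau}$ is the required $\alpha$-preserving $H$-action on a neighborhood of $N$. The main obstacle is precisely condition (i): showing that simultaneous $\tau$-invariance of $\alpha|_N$ and of the fiber symplectic forms upgrades to invariance of the full restricted two-form $(d\alpha)_x$ along $N$, which is where the direct-sum structure of the symplectic normal bundle is indispensable.
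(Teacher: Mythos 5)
Your proof is correct and takes essentially the same route as the paper: both reduce the problem via Corollary \ref{NormalBundles} to producing a $\tau$-invariant $K$-contact form on a neighborhood of $0_{E}$ satisfying conditions (i)--(iii), obtain it by Haar averaging a form $\beta'$ already satisfying those conditions, and verify (iii) by the identical computation resting on $\tau_{h*}R = R$, which follows from $\tau$ commuting with $\rho$. Your extra verifications --- the splitting argument $T_{x}E = T_{x}N \oplus E_{x}$ for condition (i), the contactness and $K$-contactness of the averaged form, and the explicit conjugation $\widetilde{\tau}_{h} = f \circ \tau_{h} \circ f^{-1}$ --- only make explicit details the paper leaves implicit.
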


\begin{proof}
By Corollary \ref{NormalBundles}, it suffices to show that there exists a $\tau$-invariant $K$-contact form $\beta$ on $E$ which satisfies the conditions (i), (ii) and (iii) in Corollary \ref{NormalBundles}.

Let $\beta'$ be a $K$-contact form defined on an open neighborhood $W$ of $0_{E}$ which satisfies (i), (ii) and (iii). We define $\beta$ by $\beta=\int_{H} h^{*}\beta' dh$ where $dh$ is the Haar measure on $H$ satisfying $\int_{H} dh=1$. Then $\beta$ is invariant under $\tau$ and satisfies (i), (ii) and (iii). The $\tau$-invariance is clear. (i) and (ii) follow from the conditions (i) and (ii) for $\beta'$ and the fact that $\tau$ preserves $\alpha|_{0_{E}}$ and the symplectic structure on the fibers of $E$. Let $R'$ be the Reeb vector field of $\beta'$. We show that $R'$ is the Reeb vector field of $\beta$. Note that since $\tau$ commutes with $\rho$, we have $h_{*}R'=R'$ for every $h$ in $H$. We can calculate $\beta(R')$ and $\iota(R')d\beta$ as follows:
\begin{equation}
\beta(R')=\iota(R') \int_{H} h^{*}\beta' dh= \int_{H} \iota(R') h^{*}\beta' dh= \int_{H} h^{*} \big( \iota(R')\beta' \big) dh=\int_{H} dh=1
\end{equation}
and
\begin{equation}
\iota(R')d\beta=\iota(R')d \int_{H} h^{*}\beta' dh= \int_{H} \iota(R') dh^{*}\beta' dh= \int_{H} h^{*} \big( \iota(R')d\beta' \big) dh=0.
\end{equation}
Hence $\beta$ satisfies (iii).
\end{proof}

We have the following relative version of Lemma \ref{NormalFormTheorem}: Let $M$ be a closed smooth manifold. Let $A_i$ be an open set of $M$ for $i=1$ and $2$. Let $N$ be a subset of $M$ such that $N-\overline{A}_1$ is a smooth submanifold of $M-\overline{A_1}$. $\alpha_0$ and $\alpha_1$ be two $K$-contact forms of rank $2$ defined on an open neighborhood of $N$. We define $G_i$, $\overline{R}_i$ and $\rho_i$ for $i=0$ and $1$ in the same way as in Lemma \ref{NormalFormTheorem}. Assume that $(N-\overline{A_1},\alpha_i|_{N-\overline{A_1}})$ is a $K$-contact submanifold of $(M-\overline{A_1},\alpha_{i}|_{M-\overline{A_1}})$ for $i=0$ and $1$. Assume that $\alpha_0|_{A_2}=\alpha_1|_{A_2}$, $A_{j}$ is invariant under the Reeb flows of $\alpha_{0}$ and $\alpha_{1}$ for $j=1$ and $2$, the closure of $A_1$ is contained in $A_{2}$ and every connected component of $A_{1}$ intersects $N$.
\begin{lem}\label{RelativeNormalFormTheorem}
Assume that 
\begin{enumerate}
\item $\alpha_0|_{N-\overline{A_1}}=\alpha_1|_{N-\overline{A_1}}$ and 
\item $T^n$-equivariant symplectic vector bundles $(T(N-\overline{A_1})^{d\alpha_{0}},d\alpha_0|_{\wedge^{2} T(N-\overline{A_1})^{d\alpha_{0}}})$ and $(T(N-\overline{A_1})^{d\alpha_{1}},d\alpha_1|_{\wedge^{2} T(N-\overline{A_1})^{d\alpha_{1}}})$ over $N-\overline{A_1}$ are isomorphic by a bundle map $q$ which covers $\id_{N-\overline{A_1}}$ and whose restriction $q|_{(N-\overline{A_1}) \cap A_2}$ to $(N-\overline{A_1}) \cap A_2$ is the identity and
\item $q_{*}(\overline{R}_{0})=\overline{R}_{1}$ where $q_{*} \colon \Lie(G_0) \longrightarrow \Lie(G_1)$ is the map induced from the map $G_0 \longrightarrow G_1$ associated with the equivariant bundle map $q$.
\end{enumerate}
Then there exist open neighborhoods $U_0$ and $U_1$ of $N$ and a diffeomorphism $f \colon U_0 \longrightarrow U_1$ such that $f|_{N \cup A_1}=\id_{N \cup A_1}$ and $f^{*}\alpha_1=\alpha_0$.
\end{lem}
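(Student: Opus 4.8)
The plan is to re-run the proof of Lemma \ref{NormalFormTheorem}, but with the submanifold $N-\overline{A_1}$ of the open manifold $M-\overline{A_1}$ in place of $N$ and $M$, and to verify at every stage that the diffeomorphism produced is the identity on the region $A_2$, where $\alpha_0$ and $\alpha_1$ already coincide. Since $\overline{A_1}\subset A_2$, this lets me glue the constructed map with the identity map on $A_2$: writing $W$ for the neighbourhood of $N-\overline{A_1}$ produced by the construction and $f_{\mathrm{constr}}$ for the map on $W$, the two definitions agree on the overlap $W\cap A_2\subset A_2-\overline{A_1}$ because $f_{\mathrm{constr}}$ is the identity there, and $W\cup A_2$ is an open neighbourhood of $N$ because $N-\overline{A_1}\subset W$ while $N\cap\overline{A_1}\subset A_2$. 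On $N\cup A_1\subset (N-\overline{A_1})\cup A_2$ the resulting $f$ is then the identity, and $f^{*}\alpha_1=\alpha_0$ holds on $A_2$ trivially and on $W$ by the construction.

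First I would choose the invariant metrics $g_0$ and $g_1$ to agree on $A_2$; this is possible because the two Reeb flows, and hence the actions $\rho_0$ and $\rho_1$, coincide on $A_2$ where $\alpha_0=\alpha_1$. Then the exponential maps used in Lemma \ref{Equivariant} agree on $A_2$, and since $q$ restricts to the identity on $(N-\overline{A_1})\cap A_2$ by hypothesis (ii), the map $f^1=\exp^1\circ q\circ(\exp^0)^{-1}$ is the identity on a neighbourhood of $(N-\overline{A_1})\cap A_2$ in $A_2-\overline{A_1}$. Pulling back $\alpha_1$ by $f^1$ reduces me to the situation where $\alpha_0$ and $\alpha_1$ still agree on $A_2$, so that the Moser field $X_t$ of Lemma \ref{EquivariantSymplectic}, determined by $\iota(X_t)\omega_t+\beta=0$ with $\beta=\alpha_1-\alpha_0$, vanishes on $A_2$ (since $\beta=0$ there); hence its isotopy fixes $A_2-\overline{A_1}$ pointwise. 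The same applies to the final isotopy generated by $Z=-hR$ as soon as the primitive $h$ of Lemma \ref{Exactness} is known to vanish on the relevant part of $A_2$, so this last normalization is the crux.

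The hard part will be exactly this: arranging that the $\rho$-invariant primitive $h$ of $\beta=\alpha_1-\alpha_0$ vanishes on $A_1$, so that $Z=-hR\equiv 0$ there and the final flow is the identity on $A_1$ and on a neighbourhood of $\partial A_1$ in $A_2-\overline{A_1}$, which is what makes the gluing smooth across $\partial A_1$. This is where the hypotheses $\overline{A_1}\subset A_2$ and ``every connected component of $A_1$ meets $N$'' are used: on $A_2\supset A_1$ one has $dh=\beta=0$, so $h$ is locally constant on $A_1$, and since each component of $A_1$ contains a point of $N$ on which $h$ is arranged to vanish, $h\equiv 0$ on $A_1$. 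Carrying this normalization through the tubular-neighbourhood/retraction argument of Lemma \ref{Exactness}—choosing the primitive to vanish on all of $N$, and hence, by local constancy on $A_2$, near $\overline{A_1}$—is the delicate bookkeeping step. Once it is in place, the remaining verifications (the $\rho$-invariance of $h$ and $Z$, and the well-definedness of the flows, which follows from $Z|_{N-\overline{A_1}}=0$) are identical to those in Lemma \ref{NormalFormTheorem}.
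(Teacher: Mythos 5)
Your proposal is correct and follows essentially the same route as the paper's proof: choose $\rho_i$-invariant metrics agreeing on $A_2$ (the paper gets this by averaging a $\rho_0$-invariant metric by $\rho_1$, which fixes it on $A_2$ since $R_0=R_1$ there), so that $f^1$ is the identity near $A_1$ by hypothesis (ii), the Moser field vanishes where $\beta=\alpha_1-\alpha_0=0$, and the primitive $h$ --- locally constant where $dh=\beta=0$ and vanishing on $N$ --- is forced to vanish on all of $A_1$ precisely by the hypothesis that every component of $A_1$ meets $N$. You identified the crux (the normalization $h|_{A_1}=0$, hence $Z=-hR\equiv 0$ there) and the role of each hypothesis exactly as the paper does, so no further comparison is needed.
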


\begin{proof}
First, we show that there exists a metric $\tilde{g}_i$ on $M$ for $i=0$, $1$ such that $\tilde{g}_{i}$ is invariant under $\rho_{i}$ for $i=0$, $1$ and $\tilde{g}_{0}|_{A_2}=\tilde{g}_{1}|_{A_2}$. Let $\tilde{g}_{0}$ be a metric on $M$ invariant under $\rho_{0}$. Let $\tilde{g}_{1}$ be the average of $\tilde{g}_{0}$ by $\rho_{1}$. Since $R_{0}|_{A_{2}}=R_{1}|_{A_{2}}$ and $L_{R_{0}}\tilde{g}_{0}=0$, we have $L_{R_{1}}\tilde{g}_{0}=0$ on $A_{2}$. Hence $\tilde{g}_{0}|_{A_{2}}$ is invariant under $\rho_{1}$. $\tilde{g}_{0}|_{A_{2}}$ is equal to the average of $\tilde{g}_{0}|_{A_{2}}$ by $\rho_{1}$. Hence we have $\tilde{g}_0|_{A_2}=\tilde{g}_1|_{A_2}$. 

Take a metric $\tilde{g}_i$ on $M$ for $i=0$, $1$ such that $\tilde{g}_{i}$ is invariant under $\rho_{i}$ for $i=0$, $1$ and $\tilde{g}_{0}|_{A_2}=\tilde{g}_{1}|_{A_2}$. In Lemma \ref{Equivariant}, we define $f^{1}$ by $f^{1}= (\exp^{1})^{-1} \circ q \circ \exp^{0}$ by the exponential map $\exp^{i}$ of $\tilde{g}_{i}$ for $i=0$, $1$. Then we have $f^{1}|_{A'_{1}}= (\exp^{1})^{-1} \circ q \circ \exp^{0}|_{A'_{1}} = (\exp^{0})^{-1} \circ \exp^{0}|_{A'_{1}}= \id_{A'_{1}}$ for an open neighborhood $A'_1$ of $A_1$ by the assumption on $q$. In Lemma \ref{EquivariantSymplectic}, $f^2$ satisfies $f^2|_{N \cup A_1}=\id_{N \cup A_1}$ by $\beta=\alpha_0|_{A'_1}-\alpha_1|_{A'_1}=0$. In Lemma \ref{Exactness}, we have $dh|_{N \cup A_1}=0$ since $\alpha_0|_{A'_1}=\alpha_1|_{A'_1}$. Since $h|_{N}=0$ and every connected component of $A_1$ intersects $N$, we have $h|_{N \cup A_1}=0$. Hence $Z|_{N \cap A_1}=0$ in the proof of Lemma \ref{NormalFormTheorem} and we obtain $f$ which satisfies the desired conditions.
\end{proof}

Let $E$ be the symplectic normal bundle of $N-\overline{A}_{1}$ in $M - \overline{A}_{1}$. We identify $0_{E}$ with $N -\overline{A}_{1}$ naturally. We also have the relative versions of Lemmas \ref{NormalBundles} and \ref{LocalTorusActions}: 
\begin{cor}\label{RelativeNormalBundles}
Let $\beta$ be a $K$-contact form on $E$ which satisfies (i), (ii) and (iii) in Corollary \ref{NormalBundles}. If we have a metric $g$ on $M$ such that the restriction of the exponential map $\exp \colon E|_{N \cap A_{2}} \longrightarrow M - \overline{A}_{1}$ to an open neighborhood $W$ of $0_{E} \cap A_{2}$ in $E$ satisfies $(\exp|_{W})^{*}\alpha=\beta$, then there exist an open neighborhood $V$ of $0_{E}$ in $E$, an open neighborhood $U$ of $N - \overline{A}_{1}$ in $M - \overline{A}_{1}$ and a diffeomorphism $f \colon V \longrightarrow U$ such that $f|_{0_{E}}=\id_{N - \overline{A}_{1}}$, $f^{*}\alpha=\beta$ and $f|_{\exp^{-1}(A'_{1})}=\exp|_{\exp^{-1}(A'_{1})}$ for an open neighborhood $A'_1$ of $\overline{A}_1$.
\end{cor}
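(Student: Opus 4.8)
The plan is to reduce Corollary \ref{RelativeNormalBundles} to the relative normal form theorem (Lemma \ref{RelativeNormalFormTheorem}) exactly as Corollary \ref{NormalBundles} was reduced to Lemma \ref{NormalFormTheorem}, the only new ingredient being that the given metric $g$ already matches $\exp^{*}\alpha$ with $\beta$ over $A_{2}$, which is what supplies the relative hypothesis. First I would use $g$ to form the exponential map $\exp\colon E\to M-\overline{A}_{1}$, which is a diffeomorphism from a tubular neighborhood $V'$ of the zero section $0_{E}=N-\overline{A}_{1}$ onto an open set $U'$ of $M-\overline{A}_{1}$. On $U'$ I would consider the two $K$-contact forms $\alpha_{1}:=\alpha|_{U'}$ and $\alpha_{0}:=(\exp^{-1})^{*}\beta$. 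As in Corollary \ref{NormalBundles}, both have $E$ as their $T^{2}$-equivariant symplectic normal bundle along $N-\overline{A}_{1}$, the identity $\id_{E}$ serves as the comparison map $q$, and the Reeb element condition $q_{*}\overline{R}_{0}=\overline{R}_{1}$ holds because $\beta$ has linear Reeb flow (condition (iii) of Corollary \ref{NormalBundles}) and $\exp$ is equivariant.

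Next I would verify the hypotheses of Lemma \ref{RelativeNormalFormTheorem} for $\alpha_{0}$, $\alpha_{1}$ and the regions $A_{1}\subset A_{2}$. Conditions (i), (ii) and (iii) of that lemma are immediate: $\alpha_{0}|_{N-\overline{A}_{1}}=\alpha_{1}|_{N-\overline{A}_{1}}$ follows from $\beta|_{0_{E}}=\alpha|_{N}$ together with $\exp|_{0_{E}}=\id$; the comparison bundle map $q=\id_{E}$ restricts to the identity on $(N-\overline{A}_{1})\cap A_{2}$ tautologically; and the Reeb condition is as above. The one substantive point is the relative coincidence hypothesis $\alpha_{0}|_{A_{2}}=\alpha_{1}|_{A_{2}}$. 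This is precisely where I would invoke the standing assumption $(\exp|_{W})^{*}\alpha=\beta$: pushing it forward by $\exp$ gives $\alpha=(\exp^{-1})^{*}\beta$ on $\exp(W)$, an open neighborhood of $(N-\overline{A}_{1})\cap A_{2}$ in $M-\overline{A}_{1}$. Since the assertion concerns only germs of neighborhoods of $N-\overline{A}_{1}$, I may shrink $A_{2}$ (and with it $A_{1}$) so that the coincidence holds on all of the shrunk $A_{2}$, retaining $\overline{A}_{1}\subset A_{2}$ and the property that each component of $A_{1}$ meets $N$.

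With the hypotheses in hand, Lemma \ref{RelativeNormalFormTheorem} yields a diffeomorphism $\phi$ between neighborhoods of $N-\overline{A}_{1}$ in $M-\overline{A}_{1}$ with $\phi^{*}\alpha=(\exp^{-1})^{*}\beta$ and $\phi=\id$ on $(N-\overline{A}_{1})\cup A_{1}$; moreover, because $\alpha_{0}$ and $\alpha_{1}$ already agree on the part of the collar of $\overline{A}_{1}$ lying in $\exp(W)$, the normal form construction produces the identity there, so $\phi=\id$ on a neighborhood $A'_{1}$ of $\overline{A}_{1}$. Setting $f:=\phi\circ\exp$ then finishes the proof: $f^{*}\alpha=\exp^{*}\phi^{*}\alpha=\exp^{*}(\exp^{-1})^{*}\beta=\beta$, $f|_{0_{E}}=\phi|_{N-\overline{A}_{1}}=\id_{N-\overline{A}_{1}}$, and $f|_{\exp^{-1}(A'_{1})}=\exp|_{\exp^{-1}(A'_{1})}$ because $\phi$ is the identity on $A'_{1}$. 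The main obstacle I anticipate is not analytic but bookkeeping: carefully tracking the open sets $A_{1}\subset A_{2}$ under the exponential map so that the germ reduction stays compatible with the closure condition $\overline{A}_{1}\subset A_{2}$ and with the requirement that every component of $A_{1}$ meet $N$, and confirming that the region on which $\phi=\id$ is genuinely a neighborhood of $\overline{A}_{1}$ in $M$ and not merely of $A_{1}$.
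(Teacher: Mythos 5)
Your overall route is exactly the paper's: transplant $\beta$ to $(\exp^{-1})^{*}\beta$ on a tubular neighborhood of $N-\overline{A}_{1}$, verify the hypotheses of Lemma \ref{RelativeNormalFormTheorem} with $q=\id_{E}$, use the standing assumption $(\exp|_{W})^{*}\alpha=\beta$ to supply the relative coincidence of the two forms near $A_{1}$, and set $f=\phi\circ\exp$. There is, however, one load-bearing assertion that is unjustified as written: you claim that ``$\exp$ is equivariant'' and use this both to identify the $T^{2}$-equivariant symplectic normal bundles (so that $\id_{E}$ is an admissible $q$) and to check $q_{*}\overline{R}_{0}=\overline{R}_{1}$. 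The metric $g$ in the hypothesis is an arbitrary metric on $M$ --- it is not assumed invariant under $\rho$ --- so $\exp_{g}$ need not intertwine the torus actions. The paper inserts a device precisely for this: choose a $\rho$-invariant metric $\tilde{g}$ on $M$ with $\tilde{g}|_{\tilde{A}_{1}}=g|_{\tilde{A}_{1}}$ for an open neighborhood $\tilde{A}_{1}$ of $A_{1}$, and run the whole argument with $\exp_{\tilde{g}}$. This suffices because the conclusion only requires $f$ to agree with $\exp_{g}$ over a neighborhood $A'_{1}$ of $\overline{A}_{1}$, where the two exponential maps coincide, while over $\tilde{A}_{1}$ the hypothesis still gives $(\exp_{\tilde{g}}^{-1})^{*}\beta=\alpha$, which is the relative hypothesis of the lemma.

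A second, smaller point concerns your bookkeeping of the sets: you propose to shrink $A_{1}$ together with $A_{2}$ and then to recover ``$\phi=\id$ on a neighborhood of $\overline{A}_{1}$'' by inspecting the internals of the normal-form construction; but the conclusion concerns the original $\overline{A}_{1}$, and Lemma \ref{RelativeNormalFormTheorem} as stated only yields $\phi=\id$ on $N\cup A_{1}$ for whatever open set is fed in as its $A_{1}$. The paper resolves both issues simultaneously: it fixes an open neighborhood $A'_{1}$ of $\overline{A}_{1}$ with $\overline{A'_{1}}\subset\tilde{A}_{1}$ and applies the lemma with the lemma's $A_{1}$ taken to be $A'_{1}$ and the lemma's $A_{2}$ taken to be $\tilde{A}_{1}$ (also note that $(\exp^{-1})^{*}\beta$ is a priori defined only on $M-\overline{A}_{1}$, so the coincidence with $\alpha$ on $\tilde{A}_{1}$ is what lets the comparison make sense across $\overline{A}_{1}$, where $N$ need not even be smooth). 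The lemma's conclusion then literally says $\phi=\id$ on $A'_{1}$, an open neighborhood of $\overline{A}_{1}$, and $f=\phi\circ\exp$ satisfies $f|_{\exp^{-1}(A'_{1})}=\exp|_{\exp^{-1}(A'_{1})}$ with no appeal to how $\phi$ was built. With these two adjustments your argument coincides with the paper's.
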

\begin{proof}
Take a metric $\tilde{g}$ on $M$ invariant under $\rho$ which satisfies $\tilde{g}|_{\tilde{A}_1}=g|_{\tilde{A}_1}$ for an open neighborhood $\tilde{A}_1$ of $A_1$. There exists an open neighborhood $V'$ of $0_{E}$ such that the restriction $\exp|_{V'}$ of $\exp \colon E \longrightarrow M - \overline{A}_{1}$ to $V'$ is a diffeomorphism into $M$. We put $U'=\exp(V')$. The symplectic normal bundles of $N$ in $(U',(\exp^{-1})^{*}\beta)$ is equal to $E$ by $(D\exp)_{x}=\id_{T_{x}M}$ for $x$ in $N$ and the condition (ii) on $d\beta$. We also have $(\exp^{-1})^{*}(\beta|_{\exp^{-1}(\tilde{A}_1)})=\alpha|_{\tilde{A}_1}$. Take an open neighborhood $A'_1$ of $\overline{A}_1$ in $M$ so that the closure $\overline{A'_{1}}$ is contained in $\tilde{A}_{1}$. By applying Lemma \ref{RelativeNormalFormTheorem} to two $K$-contact forms $(\exp^{-1})^{*}\beta$ and $\alpha$ by putting $q=\id_{E}$, $A_2=\tilde{A}_1$ and $A'_1=A_1$, we can take a diffeomorphism $f$ satisfying the desired conditions.
\end{proof}

\begin{lem}\label{RelativeLocalTorusActions}
Let $\tau$ be an $H$-action on $E$ which commutes with $\rho$ and preserves $\alpha|_{N-\overline{A}_1}$ and the symplectic structures on fibers of $E$. Assume that we have an $\alpha$-preserving $H$-action $\tilde{\tau}$ on $A_{2}$ such that the linear action induced from $\tilde{\tau}$ on $E|_{(A_2 - \overline{A_1}) \cap N}$ coincides with $\tau$. Then there exist an open neighborhood $U$ of $N - \overline{A}_{1}$ in $M - \overline{A}_{1}$ and an $\alpha$-preserving $H$-action on $U$ which coincides with $\tilde{\tau}$ on $A'_1 \cap U$ for an open neighborhood $A'_{1}$ of $\overline{A}_{1}$. 
\end{lem}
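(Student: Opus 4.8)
The plan is to adapt the averaging argument used in the proof of Lemma \ref{LocalTorusActions}, replacing Corollary \ref{NormalBundles} by its relative version Corollary \ref{RelativeNormalBundles}, and to obtain the desired $H$-action on $U$ by transporting $\tau$ through the diffeomorphism supplied by Corollary \ref{RelativeNormalBundles}. The essential new point compared with Lemma \ref{LocalTorusActions} is that the averaged contact form on $E$ must agree with the pullback $\exp^{*}\alpha$ on a neighborhood of $0_{E} \cap A_{2}$, so that the transported action coincides with the given action $\tilde{\tau}$ near $\overline{A}_{1}$.

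First I would fix a metric $g$ on $M$ that is invariant under $\rho$ on all of $M$ and invariant under $\tilde{\tau}$ on an open neighborhood of $\overline{A}_{1}$ in $A_{2}$. Such a metric exists: since $\tilde{\tau}$ preserves $\alpha$, it commutes with $\rho$ by property (iv) of the torus action associated with a $K$-contact form, so averaging a $\rho$-invariant metric $g_{0}$ over the compact group $H$ by $\tilde{\tau}$ produces a metric $g_{1}$ on $A_{2}$ that is simultaneously $\rho$- and $\tilde{\tau}$-invariant; patching $g_{1}$ and $g_{0}$ by a $\rho$-invariant cutoff function equal to $1$ near $\overline{A}_{1}$ and supported in $A_{2}$ (which exists because $A_{1}$ and $A_{2}$ are $\rho$-invariant) yields $g$. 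With this $g$ the exponential map $\exp\colon E \to M-\overline{A}_{1}$ is $\rho$-equivariant and is $\tilde{\tau}$-equivariant near $\overline{A}_{1}$.

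Next I would set $\beta' = \exp^{*}\alpha$ on an open neighborhood of $0_{E}$ and define $\beta = \int_{H} h^{*}\beta'\,dh$ with respect to the normalized Haar measure. As in the proof of Lemma \ref{LocalTorusActions}, $\beta$ is a $\tau$-invariant $K$-contact form satisfying conditions (i), (ii) and (iii) of Corollary \ref{NormalBundles}. The key observation is that on a neighborhood of $0_{E} \cap A_{2} = (A_{2}-\overline{A}_{1}) \cap N$ the form $\beta'$ is already $\tau$-invariant: there $\tau$ equals the linear action induced from $\tilde{\tau}$ by hypothesis, $\exp$ is $\tilde{\tau}$-equivariant, and $\tilde{\tau}$ preserves $\alpha$, so $\tau$ preserves $\exp^{*}\alpha$. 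Hence $\beta = \beta' = \exp^{*}\alpha$ on a neighborhood $W$ of $0_{E} \cap A_{2}$, which is exactly the hypothesis of Corollary \ref{RelativeNormalBundles}. Applying that corollary gives open neighborhoods $V$ of $0_{E}$ and $U$ of $N-\overline{A}_{1}$ together with a diffeomorphism $f\colon V \to U$ such that $f|_{0_{E}} = \id$, $f^{*}\alpha = \beta$, and $f = \exp$ on $\exp^{-1}(A'_{1})$ for some open neighborhood $A'_{1}$ of $\overline{A}_{1}$.

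Finally I would define the $H$-action on $U$ by $h \cdot x = f(\tau(h)(f^{-1}(x)))$. Since $f^{*}\alpha = \beta$ and $\tau$ preserves $\beta$, this action preserves $\alpha$. On $A'_{1} \cap U$ we have $f = \exp$, and there $\tau$ is the linear action induced from $\tilde{\tau}$, so $f \circ \tau(h) \circ f^{-1} = \exp \circ \tau(h) \circ \exp^{-1} = \tilde{\tau}(h)$ by the $\tilde{\tau}$-equivariance of $\exp$; thus the new action coincides with $\tilde{\tau}$ near $\overline{A}_{1}$. The main obstacle is the verification of the previous paragraph, namely arranging the averaged form $\beta$ to coincide with $\exp^{*}\alpha$ on a full neighborhood of $0_{E} \cap A_{2}$; this is what forces the careful choice of the metric $g$ so that $\exp$ is $\tilde{\tau}$-equivariant near $A_{1}$, and it is precisely the ingredient that makes the transported action match $\tilde{\tau}$.
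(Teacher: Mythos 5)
Your proposal is correct and follows essentially the same route as the paper's proof: fix a metric that is $\rho$-invariant on $M$ and $\tilde{\tau}$-invariant on $A_{2}$, average $\beta'=\exp^{*}\alpha$ over $H$ to get a $\tau$-invariant $K$-contact form $\beta$ that agrees with $\exp^{*}\alpha$ over $(A_{2}-\overline{A}_{1})\cap N$ because $\beta'$ is already $\tau$-invariant there, then apply Corollary \ref{RelativeNormalBundles} and conjugate $\tau$ by the resulting diffeomorphism $f$. Your explicit construction of the metric by averaging and patching, and your verification that the conjugated action equals $\tilde{\tau}$ near $\overline{A}_{1}$ via the $\tilde{\tau}$-equivariance of $\exp$, merely spell out details the paper leaves implicit.
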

\begin{proof}
Fix a metric $g$ on $M$ invariant under $\rho$ and whose restriction $g|_{A_2}$ to $A_{2}$ is invariant under $\tau$. There exists an open neighborhood $V$ of $0_{E}$ such that the restriction $\exp|_{V}$ of $\exp \colon E \longrightarrow M - \overline{A}_{1}$ to $V$ is a diffeomorphism into $M - \overline{A}_{1}$. We put $\beta'=(\exp)^{*}\alpha$ and define $\beta$ by $\beta=\int_{H} h^{*}\beta' dh$ where $dh$ is the Haar measure on $H$ satisfying $\int_{H} dh=1$. Then $\beta$ is invariant under $\tau$ and satisfies (i), (ii) and (iii) in Corollary \ref{NormalBundles}. We have $\beta|_{\pi^{-1}(A_{2} - \overline{A}_{1}) \cap V}=\beta'|_{\pi^{-1}(A_{2} - \overline{A}_{1}) \cap V}$ by the $\tau$-invariance of $\beta'|_{\pi^{-1}(A_{2} - \overline{A}_{1}) \cap V}$. Hence we have $\exp^{*}\alpha|_{(A_{2} - \overline{A}_{1}) \cap \exp(V)}=\beta'|_{(A_{2} - \overline{A}_{1}) \cap\exp(V)}=\beta|_{(A_{2} - \overline{A}_{1}) \cap\exp(V)}$. By Corollary \ref{RelativeNormalBundles}, we have a diffeomorphism $f \colon V \longrightarrow M$ into $M$ such that $f|_{0_{E}}=\id_{N-\overline{A}_{1}}$, $f^{*}\alpha=\beta$ and $f^{*}\alpha=\beta$ and $f|_{\exp^{-1}(A'_{1})}=\exp|_{\exp^{-1}(A'_{1})}$ for an open neighborhood $A'_1$ of $\overline{A}_1$. Hence conjugating $\tau$ on $E$ by $f$, we have an $H$-action on $U$ which satisfies the desired conditions.
\end{proof}

Let $\alpha_0$ and $\alpha_1$ be $K$-contact forms of rank $n$ on a smooth manifold $M$. Let $G_i$ be the closure of the Reeb flow of $\alpha_i$ in $\Isom(M,g_i)$ for a metric $g_i$ invariant under the Reeb flow of $\alpha_i$ for $i=0$ and $1$. Let $\rho_i$ be the $G_i$-action on $M$. The element in $\Lie(G_i)$ corresponding to the Reeb vector field of $\alpha_i$ is denoted by $\overline{R}_i$. Let $N$ be a smooth submanifold of $M$. Assume that $N$ is a $K$-contact submanifold of $(M,\alpha_{i})$ for $i=0$ and $1$. The symplectic normal bundle $TN^{d\alpha_{i}}$ of $N$ in $(M,\alpha_{i})$ is defined by $(TN^{d\alpha_{i}})_{x}=\{ v \in T_{x}M | \alpha_i(v)=0, d\alpha_i(v,w)=0$ for every $w$ in $T_{x}N \}$ for $i=0$ and $1$. $TN^{d\alpha_{i}}$ is $\rho_i$-equivariant for $i=0$ and $1$. Localizing the argument of the proof of Lemma \ref{NormalFormTheorem} to a relatively compact open subset $V$ of $N$ which is invariant under $\rho_{0}$ and $\rho_{1}$, we have the following variant of the normal form theorem:
\begin{lem}\label{NormalFormTheorem2}
Assume that 
\begin{enumerate}
\item $\alpha_0|_{N}=\alpha_1|_{N}$ and 
\item $T^n$-equivariant symplectic vector bundles $(TN^{d\alpha_{0}},d\alpha_0|_{\wedge^{2} TN^{d\alpha_{0}}})$ and $(TN^{d\alpha_{1}},d\alpha_1|_{\wedge^{2} TN^{d\alpha_{1}}})$ over $N$ are isomorphic by a bundle map $q$ which covers $\id_{N}$ and
\item $q_{*}(\overline{R}_{0})=\overline{R}_{1}$ where $q_{*} \colon \Lie(G_0) \longrightarrow \Lie(G_1)$ is the map induced from the map $G_0 \longrightarrow G_1$ associated with the equivariant bundle map $q$.
\end{enumerate}
Then there exist a $\rho_i$-invariant open neighborhood $U_i$ of $V$ for $i=0,1$ and a diffeomorphism $f \colon U_0 \longrightarrow U_1$ such that $f|_{N}=\id_{N}$ and $f^{*}\alpha_1=\alpha_0$.
\end{lem}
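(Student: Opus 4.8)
The plan is to reproduce the three-step proof of Lemma \ref{NormalFormTheorem} almost verbatim, with the sole modification that every construction is carried out over the relatively compact set $V$ rather than over all of $N$. The point in Lemma \ref{NormalFormTheorem} was that, $M$ being closed, $N$ is compact, so the tubular neighborhoods and the Moser-type isotopies are automatically defined on a fixed neighborhood of $N$. Here $M$ need not be closed and $N$ need not be compact, so the injectivity radius of the normal exponential map may degenerate and the isotopy flows may escape; the hypothesis that $V$ is relatively compact, with compact closure $\overline{V}$ in $N$, is exactly what restores uniformity near $V$.

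First I would carry out the analogue of Lemma \ref{Equivariant}. Fixing $\rho_i$-invariant metrics $g_i$ on $M$, the normal exponential maps $\exp^i$ are diffeomorphisms on a tubular neighborhood of the zero section of $TN^{d\alpha_i}$; over the compact set $\overline{V}$ this tubular neighborhood can be taken of a uniform radius, so $f^1 = \exp^1 \circ q \circ (\exp^0)^{-1}$ is a well-defined $T^n$-equivariant diffeomorphism on a $\rho_0$-invariant open neighborhood of $V$, with $f^1|_N = \mathrm{id}_N$ and tangent map $q$ along $N$. Pulling $\alpha_1$ back by $f^1$ reduces matters to the case $(d\alpha_0)_x = (d\alpha_1)_x$ for $x \in N$ and $R_0 = R_1 =: R$, $G_0 = G_1 =: G$, $\rho_0 = \rho_1 =: \rho$ on this neighborhood.

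Next I would run the Moser argument of Lemma \ref{EquivariantSymplectic}. The forms $\omega_t = (1-t)d\alpha_0 + t\,d\alpha_1$ induce nondegenerate forms on $TM/\mathbb{R}R$ near $N$, and since $\beta = \alpha_1 - \alpha_0$ satisfies $\beta(R) = 0$ there is a unique $\rho$-invariant time-dependent vector field $X_t$, tangent to $\ker\alpha_0$ and vanishing on $N$, solving $\iota(X_t)\omega_t + \beta = 0$. Because $X_t$ vanishes on $N$ and $\overline{V}$ is compact, $X_t$ is uniformly small on a neighborhood of $\overline{V}$, so its flow $\{\phi_t\}$ exists for all $t \in [0,1]$ on some open neighborhood of $V$; the Cartan-formula computation then gives $\phi_1^*(d\alpha_1) = d\alpha_0$, with $\phi_1$ being $\rho$-equivariant and fixing $N$. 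After this reduction $d\alpha_0 = d\alpha_1$, and Lemma \ref{Exactness} produces a $\rho$-invariant $h$ with $\alpha_1 - \alpha_0 = dh$ and $h|_N = 0$ (the retraction argument there is purely local and needs no compactness). The final isotopy is generated by $Z = -hR$, which again vanishes on $N$, so relative compactness of $V$ guarantees that its time-one flow $\psi_1$ is defined on a neighborhood of $V$; it satisfies $\psi_1^*\alpha_0 = \alpha_1$ and is $\rho$-equivariant.

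The main obstacle is precisely the existence of these three flows for the full interval $[0,1]$ on a single neighborhood of $V$, which in the non-compact setting is not automatic; it is resolved by the observation that every generating vector field vanishes along $N$, which contains the compact closure $\overline{V}$, so each flow is defined on an open neighborhood of $V$ for $t \in [0,1]$. Composing the three maps and unwinding the reductions (conjugating the composite by $f^1$ and restoring $\rho_0,\rho_1$) yields a diffeomorphism $f$ from a $\rho_0$-invariant neighborhood $U_0$ of $V$ onto a $\rho_1$-invariant neighborhood $U_1$ of $V$ with $f|_N = \mathrm{id}_N$ and $f^*\alpha_1 = \alpha_0$; invariance of $U_0$ and $U_1$ is inherited from the $\rho$-equivariance of each flow.
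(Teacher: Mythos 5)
Your proposal is correct and takes essentially the same approach the paper intends: the paper proves Lemma \ref{NormalFormTheorem2} merely by ``localizing the argument of the proof of Lemma \ref{NormalFormTheorem}'' to the relatively compact invariant subset $V$, and your write-up supplies exactly that localization, with the right key observation that all three generating vector fields (the field $f^1$ is built from, the Moser field $X_t$, and $Z=-hR$) vanish along $N$, so their time-one flows exist on a neighborhood of the compact set $\overline{V}$. The only cosmetic slip is the direction of the last pullback: the final isotopy satisfies $\psi_{1}^{*}\alpha_{1}=\alpha_{0}$ (since $\psi_{t}^{*}\delta_{t}=\alpha_{0}$ with $\delta_{1}=\alpha_{1}$), not $\psi_{1}^{*}\alpha_{0}=\alpha_{1}$.
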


\subsection{Local $T^3$-actions}

We construct $\alpha$-preserving $T^3$-actions on open neighborhoods of closed orbits of the Reeb flow, $K$-contact lens spaces and chains.

Let $(M,\alpha)$ be a closed $5$-dimensional $K$-contact manifold of rank $2$. The Reeb vector field of $\alpha$ is denoted by $R$. Let $G$ be the closure of the Reeb flow in $\Isom(M,g)$ for a metric $g$ compatible with $\alpha$. We denote the action of $G$ on $M$ by $\rho$. Let $B_{\max}$ and $B_{\min}$ be the maximal component of the minimal component of the contact moment map for $\rho$ on $(M,\alpha)$.

\subsubsection{Local $T^3$-actions near closed orbits of the Reeb flow}\label{ClosedOrbits}

Let $\Sigma$ be a closed orbit of the Reeb flow of $\alpha$. We will show
\begin{lem}\label{ToricActionNearSingularOrbits}
There exists an $\alpha$-preserving $T^3$-action on an open neighborhood of $\Sigma$.
\end{lem}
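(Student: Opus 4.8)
The plan is to apply Lemma~\ref{LocalTorusActions} with $N=\Sigma$: it suffices to produce a $T^3$-action $\tau$ on the symplectic normal bundle $E$ of $\Sigma$ in $(M,\alpha)$ which commutes with $\rho$, preserves $\alpha|_{\Sigma}=\alpha|_{0_E}$, and preserves the symplectic structure $d\alpha$ on the fibers of $E$; Lemma~\ref{LocalTorusActions} then upgrades $\tau$ to an $\alpha$-preserving $T^3$-action on an open neighborhood of $\Sigma$. I would arrange $\tau$ to contain $\rho$, so that the resulting action is effective of rank $3$. Note that once such an effective $\alpha$-preserving $T^3$-action exists on a $5$-dimensional manifold, $\rho$ is automatically one of its subtori by the remark following Lemma~\ref{RestrictionOfRank}.

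First I would treat the case where the isotropy group $G_{\Sigma}$ of $\rho$ at $\Sigma$ is connected. By the normal form theorem near a closed orbit (Lemma~\ref{NormalFormOfSingularOrbits}) there is a $\rho$-invariant tubular neighborhood of $\Sigma$ isomorphic to $(S^1\times D^4_{\epsilon},\alpha_0)$ with $\alpha_0$ as in \eqref{NormalFormOfAlpha}, in which $\Sigma=S^1\times\{0\}$ and $\rho$ is linear. In these coordinates $E$ is identified with $S^1\times\mathbb{C}^2$, and I define the standard linear action $(t_0,t_1,t_2)\cdot(\zeta,z_1,z_2)=(t_0\zeta,t_1z_1,t_2z_2)$. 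Because $\alpha_0$ is built from $d\zeta$, from the rotation-invariant quantities $|z_j|^2$, and from the rotation-invariant forms $\tfrac{\sqrt{-1}}{2}(z_jd\bar z_j-\bar z_jdz_j)$, this $T^3$-action preserves $\alpha_0$ and the fiber symplectic structures, commutes with $\rho$, and contains $\rho$ as the subtorus spanned by the Reeb direction $\partial_\zeta$ and the isotropy weight vector $(0,m_1,m_2)$. Thus $\tau$ exists, and either Lemma~\ref{LocalTorusActions} or the explicit invariance on the model finishes this case.

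For the general case I would reduce to the above by passing to a finite cyclic covering $\pi\colon\widetilde U\to U$ of a tubular neighborhood of $\Sigma$ provided by Lemma~\ref{FiniteCovering}, for which $\widetilde U\cong S^1\times D^4_{\epsilon}$ and $\pi^*\alpha=\alpha_0$ is again in normal form. The standard $T^3$-action on $\widetilde U$ preserves $\pi^*\alpha$. The deck transformation group $\Gamma$ is a finite cyclic group acting freely on $\widetilde U$, and by the explicit form of the covering in Lemma~\ref{FiniteCovering} it acts through rotations, so $\Gamma$ is a subgroup of the standard $T^3$. Since $T^3$ is abelian, $\Gamma$ is central, so the $T^3$-action descends to $U=\widetilde U/\Gamma$ and induces an $\alpha$-preserving action of the quotient torus $T^3/\Gamma\cong T^3$ on $U$. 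This produces $\tau$ in general and completes the proof.

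The main obstacle is the global consistency over the circle $\Sigma$ encoded in the disconnected-isotropy case: I must know that the product structure of the normal form together with the fiberwise $U(1)^2$-rotation is genuinely $\rho$-equivariant and globally defined around $\Sigma$, equivalently that the monodromy of $E$ around $\Sigma$ is realized by an element of the torus and that the deck group $\Gamma$ embeds in $T^3$ and acts freely. These facts are exactly what the normal form of Lemma~\ref{NormalFormOfSingularOrbits} and the structure of the covering in Lemma~\ref{FiniteCovering} supply, so the remaining work is the routine verification that the descended torus action is smooth, effective, and preserves $\alpha$.
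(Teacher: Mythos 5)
Your proposal is correct, but it takes a genuinely different route from the paper's. The paper never passes to normal form coordinates or to a finite covering: it works invariantly on the symplectic normal bundle $E$ of $\Sigma$ and builds three commuting circles there. Using Lemma \ref{TwoKContactSubmanifolds2} it splits $E=E_{1}\oplus E_{2}$ into $\rho$-invariant symplectic subbundles, chooses $\rho$-invariant compatible complex structures, and takes $\sigma_{1},\sigma_{2}$ to be the fiberwise rotations of $E_{2}$ and $E_{1}$ (a weight-one argument shows these commute with $\rho$); for the third circle it picks an $S^1$-subgroup $G'$ with $G'\times(G_{\Sigma})_{0}=G$, notes that the first-return map of the linear $G'$-action on a fiber $E_{x}$ is $\mathrm{diag}(e^{2\pi i\theta_{1}},e^{2\pi i\theta_{2}})$, and corrects the generator to $Z'=Z-\theta_{1}Y_{1}-\theta_{2}Y_{2}$, which generates a free circle $\sigma_{3}$; Lemma \ref{LocalTorusActions} then transplants $\sigma_{1}\times\sigma_{2}\times\sigma_{3}$ to a neighborhood of $\Sigma$. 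This handles connected and disconnected $G_{\Sigma}$ uniformly: the first-return correction absorbs exactly the monodromy that you handle by covering and descent. Your route buys explicitness --- in the model, the standard $T^3$ visibly preserves $\alpha_{0}$ of \eqref{NormalFormOfAlpha}, and the deck group sits inside it, so the descent (with $T^3/\Gamma\cong T^3$ acting effectively, $\rho$ a subtorus by the remark after Lemma \ref{RestrictionOfRank}) is immediate --- at the price of invoking the normal form machinery, which in the paper appears only in the later subsection containing Subsection \ref{ConnectedIsotropyGroupCases}; fortunately there is no circularity, since neither that normal form nor Lemmas \ref{FiniteCovering} and \ref{GeneralTorusActions} rely on the present lemma. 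Three small inaccuracies to fix: the normal form $(S^1\times D^{4}_{\epsilon},\alpha_{0})$ in the connected case comes from the construction in Subsection \ref{ConnectedIsotropyGroupCases} via Lemma \ref{NormalFormTheorem}, not from Lemma \ref{NormalFormOfSingularOrbits} (which is the ensuing classification statement, formulated for $\Sigma$ not contained in $B_{\min}\cup B_{\max}$, whereas the present lemma must also cover closed orbits in the extrema); the fact that the deck group acts by diagonal rotations, hence embeds in the standard $T^3$, is extracted from the proof of Lemma \ref{GeneralTorusActions} (where the $\mathbb{Z}/k\mathbb{Z}$-action is written as $u\cdot(\xi,w_{1},w_{2})=(u\xi,u^{-n_{1}}w_{1},u^{-n_{2}}w_{2})$) rather than from Lemma \ref{FiniteCovering} itself; and $\partial/\partial\zeta$ is not the Reeb direction, since the Reeb field of $\alpha_{0}$ is $\lambda_{0}\partial_{\zeta}$ plus $\lambda_{1}$ times the $(m_{1},m_{2})$-weighted fiber rotation --- the correct statement is that $\rho$ is the subtorus generated by the $\zeta$-rotation and that weighted rotation, with $\overline{R}$ in its Lie algebra.
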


Let $E$ be the symplectic normal bundle of $\Sigma$ in $M$. Fix a point $x$ on $\Sigma$ and $E_{x}$ be the fiber of $E$ over $x$. For a vector bundle $V$ over a manifold, $0_{V}$ denotes the zero section of $V$. 

We show Lemma \ref{ToricActionNearSingularOrbits}.

\begin{proof}
By Lemma \ref{LocalTorusActions}, it suffices to show that there exists a $T^3$-action on $E$ which commutes with $\rho$ and preserves $\alpha|_{0_{E}}$ and the symplectic structures on the fibers of $E$.

By Lemma \ref{TwoKContactSubmanifolds2}, we have $\rho$-invariant symplectic vector subbundles $E_{1}$ and $E_{2}$ of $E$ such that $E= E_{1} \oplus E_{2}$. Fix a metric on $E_{j}$ which is compatible with $d\alpha|_{\wedge^{2}E_{j}}$ and invariant under $\rho$ for $j=1$ and $2$. Then we have a complex structure $J_{j}$ on $E_{j}$ invariant under $\rho$. Let $\sigma_{1}$ be the $S^1$-action on $E$ which is the product of the trivial $S^1$-action on $E_{1}$ and the $S^1$-subaction of the linear $\mathbb{C}^{\times}$-action on $E_{2}$ defined by $J_{2}$. Let $\sigma_{2}$ be the $S^1$-action on $E$ which is the product of the trivial $S^1$-action on $E_{2}$ and the $S^1$-subaction of the linear $\mathbb{C}^{\times}$-action on $E_{1}$ defined by $J_{1}$. Then $\sigma_{j}$ preserves $d\alpha$ for $j=1$ and $2$. We will prove that $\sigma_{j}$ commutes with $\rho$ for $j=1$ and $2$. The weight of $\sigma_{j}|_{E_{j x}}$ is $1$. Since $\sigma$ preserves $J_{j}$, $g^{-1} \circ \sigma|_{(E_{j})_{g \cdot x}} \circ g$ is also the $S^1$-action on $E_{x}$ with weight $1$ for every $x$ on $\Sigma$ and every $g$ in $G$. Hence we have $g^{-1} \circ \sigma|_{(E_{j})_{g \cdot x}} \circ g=\sigma|_{E_{j x}}$ for $x$ on $\Sigma$ and $g$ in $G$. Then $\sigma_{j}$ commutes with $\rho$.

$(G_{x})_{0}$ denotes the identity component of the isotropy group $G_{x}$ at $x$. Take an $S^1$-subgroup $G'$ of $G$ which satisfies $G' \times (G_{\Sigma})_{0}=G$. Let $Z$ be the vector field on $E$ generating the linear action of $G'$ on $E$. Fix a point $x$ on $\Sigma$ and let $r$ be the first return map $r \colon E_{x} \longrightarrow E_{x}$ of the flow generated by $Z$. Let $z_i$ be a vector of $E_{j x}$ for $j=1$ and $2$. Since $r$ preserves $E_{j x}$, $J_{j}$ and $g_{j}$ for $j=1$ and $2$, $r$ can be written as $\begin{psmallmatrix} e^{2\pi i \theta_{1}} & 0 \\ 0 & e^{2\pi i \theta_{2}} \end{psmallmatrix}$ for real numbers $\theta_{1}$ and $\theta_{2}$ with respect to the basis $\{z_1,z_2\}$ of $E_{x}$ as a complex vector space. Put $Z'= Z - \theta_{1} Y_{1} -\theta_{2} Y_{2}$ where $Y_{j}$ is the vector field generating $\sigma_{j}$ for $j=1$ and $2$. Then $Z'$ generates a free $S^1$-action $\sigma_{3}$ on $E$ which commutes with $\rho$ and preserves $\alpha|_{0_{E}}$ and the symplectic structures on the fibers of $E$.

The product $\sigma_{1} \times \sigma_{2} \times \sigma_{3}$ gives a $T^3$-action which commutes with $\rho$, preserves $\alpha|_{0_{E}}$ and the symplectic structures on the fibers of $E$.
\end{proof}

\subsubsection{Local $T^3$-actions near $K$-contact lens spaces of rank $2$}

Let $N$ be a $K$-contact submanifold of $(M,\alpha)$. Assume that the rank of $(N,\alpha|_{N})$ is $2$. Then $N$ is a lens space and the Reeb flow of $\alpha|_{N}$ has two closed orbits $\Sigma_{0}$ and $\Sigma_{1}$ by the classification of $3$-dimensional contact toric manifolds by Lerman \cite{Ler2}. Assume that there exist an open neighborhood $U_{j}$ of $\Sigma_{j}$ and an $\alpha$-preserving $T^3$-action $\tilde{\tau}_{j}$ on $U_{j}$ which preserves $N$ for $j=0$ and $1$. We show
\begin{lem}\label{ToricActionNearGradientManifolds}
There exists an $\alpha$-preserving $T^3$-action $\tilde{\tau}$ on an open neighborhood of $N$ which coincides with $\tilde{\tau}_{0}$ on an open neighborhood of $\Sigma_{0}$ and is conjugate to $\tilde{\tau}_{1}$ on an open neighborhood of $\Sigma_{1}$. 
\end{lem}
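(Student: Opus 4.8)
The plan is to realize the required $T^3$-action as the product of $\rho$ with a single fibrewise circle action on the symplectic normal bundle of $N$, and then to invoke the relative normal form machinery to match the prescribed actions at the two singular orbits. Write $E$ for the symplectic normal bundle $TN^{d\alpha}$ of $N$ in $(M,\alpha)$; it is a $\rho$-equivariant real rank-$2$ bundle carrying a $\rho$-invariant compatible complex structure $J$ obtained by averaging. Let $\sigma$ denote the fibrewise rotation $\sigma_t(v)=e^{2\pi\sqrt{-1}t}v$ with respect to $J$. As in the proof of Lemma \ref{ToricActionNearSingularOrbits}, $\sigma$ commutes with $\rho$: for $g\in G$ the map $\rho(g)\colon E_x\to E_{g\cdot x}$ is $\mathbb{C}$-linear, so conjugating fibrewise scalar multiplication by it leaves $\sigma$ unchanged. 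Moreover $\sigma$ fixes the zero section $0_E=N$ pointwise (hence preserves $\alpha|_N$) and preserves the symplectic form on each fibre. Thus $\rho\times\sigma$ is a $T^3$-action on $E$ commuting with $\rho$ and preserving $\alpha|_N$ and the fibre symplectic structures.

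First I would analyse the given action near $\Sigma_0$. We may assume $\tilde\tau_0$ contains $\rho$ as a subaction, as holds for the actions produced by Lemma \ref{ToricActionNearSingularOrbits}. The image of $\tilde\tau_0$ in the diffeomorphism group of $N$ is an $\alpha|_N$-preserving torus action on the $3$-manifold $N$, hence at most $T^2$ by Lemma \ref{RestrictionOfRank}, and since it contains $\rho|_N$ it coincides with $\rho|_N$; therefore the kernel $K_0$ of $T^3\to\Aut(N)$ is a circle acting trivially on the base $N$ and, by effectiveness of $\tilde\tau_0$, effectively on the fibres of $E$, i.e. by a weight $\pm1$ fibrewise rotation. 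Choosing the orientation of $J$ appropriately, I can arrange $\sigma=K_0$ on fibres near $\Sigma_0$, so that the linearization of $\tilde\tau_0$ along $E$ near $\Sigma_0$ coincides with $\rho\times\sigma$.

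Next I would feed this into the relative construction. Take $A_2=U_0$, $\tilde\tau=\tilde\tau_0$, and let $A_1$ be a small neighborhood of $\Sigma_0$ with $\overline{A}_1\subset U_0$, so that $N-\overline{A}_1$ is a smooth submanifold. Since $\tau=\rho\times\sigma$ on $E$ commutes with $\rho$, preserves $\alpha|_{N-\overline{A}_1}$ and the fibre symplectic structures, and restricts on $E|_{(A_2-\overline{A}_1)\cap N}$ to the linearization of $\tilde\tau_0$ by the previous paragraph, Lemma \ref{RelativeLocalTorusActions} yields an $\alpha$-preserving $T^3$-action on a neighborhood of $N-\overline{A}_1$ coinciding with $\tilde\tau_0$ near $\Sigma_0$. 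Gluing this action with $\tilde\tau_0$ on the overlap near $\Sigma_0$ produces the required $\tilde\tau$ on a neighborhood of all of $N$, equal to $\tilde\tau_0$ near $\Sigma_0$.

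Finally, near $\Sigma_1$ the constructed action has linearization $\rho\times\sigma$. Running the same analysis for $\tilde\tau_1$, its kernel-on-$N$ circle $K_1$ acts on the fibres of $E$ by a weight $\pm1$ rotation, so $\sigma=K_1^{\pm1}$ there. Hence near $\Sigma_1$ the actions $\tilde\tau$ and $\tilde\tau_1$ coincide up to the automorphism of $T^3$ reversing the $K_1$-direction, that is $\tilde\tau$ is conjugate to $\tilde\tau_1$, which is exactly the stated conclusion; the sign ambiguity is the reason only conjugacy rather than equality can be claimed. The main obstacle I anticipate is precisely this bookkeeping of the propagated extra circle: the fibrewise rotation direction is pinned down globally by the single complex structure $J$, so once it is matched to $K_0$ at $\Sigma_0$ there is no further freedom at $\Sigma_1$, and one must verify that the residual discrepancy is only the $T^3$-automorphism permitted by the statement rather than a genuine obstruction to extending the action across $N$.
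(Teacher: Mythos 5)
Your overall skeleton matches the paper's: build the fibrewise circle action $\sigma$ on the symplectic normal bundle $E$, take $\tau=\rho\times\sigma$, and feed it to Lemma \ref{RelativeLocalTorusActions}. But there is a genuine gap at the matching step. You construct $J$ only by averaging over $\rho$, and then claim that ``choosing the orientation of $J$ appropriately'' makes $\sigma$ equal to the kernel circle $K_0$ of $\tilde\tau_0$ near $\Sigma_0$. Neither half of this works. First, the orientation of $J$ is not a free parameter: a complex structure compatible with the fibre symplectic form $d\alpha|_{\wedge^{2}E}$ necessarily induces the symplectic orientation, since $-J$ is compatible only with $-d\alpha$; the weight-$\pm1$ ambiguity must instead be absorbed by precomposing with an automorphism of $T^3$ (this is exactly what the paper means by conjugating $\tau$ so that it coincides with $\tau_0$ near $\Sigma_0$), and it is also the reason the statement can only assert conjugacy at $\Sigma_1$. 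Second, and more seriously: even with the correct orientation, $K_0$ acts on the fibres of $E$ as the rotation with respect to some $K_0$-invariant compatible complex structure $J_0$, and two compatible complex structures on a rank-$2$ symplectic bundle generate genuinely different circle actions --- on $(\mathbb{R}^{2},\omega_{\mathrm{std}})$ the rotations with respect to $AJA^{-1}$ for $A\in Sp(2,\mathbb{R})$ are $AR_{\theta}A^{-1}\neq R_{\theta}$ unless $A\in U(1)$. So with your merely $\rho$-invariant $J$, near $\Sigma_0$ the action $\sigma$ is only conjugate to $K_0^{\pm1}$ by a fibrewise symplectic bundle automorphism, not equal to it; then the hypothesis of Lemma \ref{RelativeLocalTorusActions} (that the linear action induced by $\tilde\tau_0$ on $E|_{(A_2-\overline{A}_1)\cap N}$ \emph{coincides} with $\tau$) fails, and the analogous identification $\sigma=K_1^{\pm1}$ at $\Sigma_1$ fails in the same way.

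The missing ingredient is the paper's choice of metric: a metric $g$ on $E$ compatible with $d\alpha|_{\wedge^{2}E}$, invariant under $\rho$ everywhere, and in addition invariant under $\tau_j$ on a neighborhood $V_j$ of $\Sigma_j$ for $j=0,1$ (obtained by averaging over $\tilde\tau_j$ near $\Sigma_j$ and patching $\rho$-invariantly). With such a $g$, the induced $J$ near $\Sigma_j$ is preserved by $K_j$, and your kernel analysis then really does pin down $\sigma=K_0^{\pm1}$ on the nose near $\Sigma_0$ and $\sigma=K_1^{\pm1}$ near $\Sigma_1$. Note that the paper establishes this conjugacy by a different route: if $\tau$ and $\tau_j$ were not conjugate, the two commuting actions would generate an effective $\alpha$-preserving $T^4$-action near $\Sigma_j$ via Lemma \ref{LocalTorusActions}, contradicting Lemma \ref{RestrictionOfRank}. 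Your direct computation of the kernel circle of $T^3\to\Aut(N)$ is a reasonable substitute for that contradiction argument, but only after the invariance of $J$ near $\Sigma_0$ and $\Sigma_1$ has been secured; as written, the proposal's central matching claim is false for a general $\rho$-invariant $J$, and the proof is incomplete.
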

\begin{proof}
Let $E$ be the symplectic normal bundle of $N$ in $M$ and $\tau_{j}$ be the linear action on $E|_{U_{j} \cap N}$ induced from $\tilde{\tau}_{j}$ for $j=0$ and $1$. Fix a metric $g$ on $E$ which is compatible with $d\alpha|_{\wedge^{2} E}$, invariant under $\rho$ and whose restriction $g|_{V_{j}}$ to $V_{j}$ is invariant under $\tau_{j}$ where $V_{j}$ is an open neighborhood of $\Sigma_{j}$ in $U_{j}$ for $j=0$ and $1$. Let $\sigma$ be the $S^1$-subaction of the linear $\mathbb{C}^{\times}$-action defined by the complex structure determined by $d\alpha|_{\wedge^{2} E}$ and $g$. $\sigma$ commutes with $\rho$ by the invariance of $g$ under $\rho$. We obtain an effective torus action $\tau$ on $E$ generated by $\sigma$ and $\rho$. Since the isotropy group $G_{N}$ of $\rho$ at $N$ is discrete and $\sigma$ fixes $N$, $\sigma$ is not a subaction of $\rho$. Hence $\tau$ is an effective $T^3$-action. Note that $\sigma$ preserves $d\alpha|_{\wedge^{2} E}$ by the compatibility of $g$ with $\alpha$. Hence $\tau$ commutes with $\rho$ and preserves $\alpha|_{0_{E}}$ and the symplectic structures on the fibers of $E$. 

We show that $\tau$ is conjugate to the linear $T^3$-action $\tau_{j}$ induced from $\tilde{\tau}_{j}$ near $\Sigma_{j}$ for $j=0$ and $1$. Assume that $\tau$ is not conjugate to the linear $T^3$-action $\tau_{0}$ induced from $\tilde{\tau_{0}}$ near $\Sigma_{0}$. Since $\tau_{0}$ preserves the complex structure determined by $d\alpha|_{\wedge^{2} E}$ and $g$ on the fibers of $E$ over an open neighborhood of $\Sigma_{0}$, $\tau_{0}$ commutes with $\sigma$ by the argument in the last part of the second paragraph of the proof of Lemma \ref{ToricActionNearSingularOrbits}. Since $\tau$ and $\tau_{0}$ commute and are not conjugate to each other, their orbits do not coincide. Hence $\tau$ and $\tau_{0}$ generate an effective $T^4$-action which commutes with $\rho$, preserves $\alpha|_{0_{E}}$ and the symplectic structures on the fibers of $E$ over an open neighborhood of $\Sigma_{j}$. By Lemma \ref{LocalTorusActions}, we have an effective $T^4$-action which preserves $\alpha$ near $\Sigma_{j}$. This contradicts Lemma \ref{RestrictionOfRank}. Hence $\tau$ restricted to an open neighborhood of $\Sigma_{0}$ is conjugate to $\tau_{0}$. on the fibers of $E$ over an open neighborhood of $\Sigma_{0}$. Similarly $\tau$ restricted to an open neighborhood of $\Sigma_{1}$ is conjugate to $\tau_{1}$ on the fibers of $E$.

We conjugate $\tau$ to $\tilde{\tau}$ so that $\tilde{\tau}$ coincides with $\tau_{0}$ on an open neighborhood of $\Sigma_{0}$. Then $\tilde{\tau}$ is a $T^3$-action on $E$ which commutes with $\rho$ and preserves $\alpha|_{0_{E}}$ and the symplectic structures on the fibers of $E$ over an open neighborhood of $\Sigma_{j}$ for $j=0$, $1$. Moreover $\tilde{\tau}$ coincides with $\tau_{0}$ on an open neighborhood of $\Sigma_{0}$ in $N$, and $\tilde{\tau}$ is conjugate to $\tau_{1}$ on an open neighborhood of $\Sigma_{1}$. By Lemma \ref{LocalTorusActions}, we obtain $\tilde{\tau}$ which satisfies the desired condition.
\end{proof}

\subsubsection{Local $T^3$-actions near chains}

Assume that the closure of every gradient manifold in a chain $C$ is a smooth submanifold of $M$. Let $\Sigma_{0}$ be the bottom closed orbit in $C$ and $\Sigma_{l}$ be the top closed orbit in $C$. Assume that there exist an open neighborhood $U_{j}$ of $\Sigma_{j}$ and an $\alpha$-preserving $T^3$-action $\tilde{\tau}_{j}$ on $U_{j}$ which preserves $N$ for $j=0$ and $l$.
We have
\begin{lem}\label{ToricActionNearChains}
There exists an $\alpha$-preserving $T^3$-action $\tilde{\tau}$ on an open neighborhood of $C$ which coincides with $\tilde{\tau}_{0}$ on an open neighborhood of $\Sigma_{0}$ and is conjugate to $\tilde{\tau}_{l}$ on an open neighborhood of $\Sigma_{l}$.
\end{lem}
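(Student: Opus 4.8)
The plan is to extend the two local $T^3$-actions $\tilde{\tau}_0$ and $\tilde{\tau}_l$ inductively along the chain $C$, gradient manifold by gradient manifold, by applying Lemma \ref{ToricActionNearGradientManifolds} at each step. Since the closure of every gradient manifold $L^1, L^2, \ldots, L^l$ in $C$ is a smooth submanifold of $M$, each closed $K$-contact submanifold $\overline{L^i}$ is a lens space of rank $2$ with two closed orbits of the Reeb flow at its top and bottom, namely the intermediate closed orbits $\Sigma_0, \Sigma_1, \ldots, \Sigma_l$ along the chain. The strategy is to first produce an $\alpha$-preserving $T^3$-action on an open neighborhood of each intermediate closed orbit $\Sigma_i$ by Lemma \ref{ToricActionNearSingularOrbits}, and then glue these across the gradient manifolds.

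Concretely, I would proceed as follows. First I would apply Lemma \ref{ToricActionNearSingularOrbits} to each intermediate closed orbit $\Sigma_1, \ldots, \Sigma_{l-1}$ to obtain $\alpha$-preserving $T^3$-actions on open neighborhoods of each, while $\tilde{\tau}_0$ and $\tilde{\tau}_l$ are already given near $\Sigma_0$ and $\Sigma_l$. Next, I would treat each closure $\overline{L^i}$ as a $K$-contact lens space whose two boundary closed orbits are $\Sigma_{i-1}$ and $\Sigma_i$, both of which now carry local $T^3$-actions. Applying Lemma \ref{ToricActionNearGradientManifolds} to $\overline{L^i}$, with $\tilde{\tau}_{i-1}$ and $\tilde{\tau}_i$ playing the roles of the two prescribed boundary actions, yields an $\alpha$-preserving $T^3$-action $\sigma^i$ on an open neighborhood of $\overline{L^i}$ that coincides with the action near $\Sigma_{i-1}$ and is conjugate to the action near $\Sigma_i$. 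The key point is that Lemma \ref{ToricActionNearGradientManifolds} allows one boundary action to be matched exactly (not merely up to conjugacy) while absorbing the conjugation freedom at the other boundary.

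The main obstacle, and what requires care in the induction, is the bookkeeping of the conjugations: Lemma \ref{ToricActionNearGradientManifolds} coincides exactly with the given action on one end but only up to conjugacy on the other end. The plan is to propagate the conjugation consistently from the bottom $\Sigma_0$ upward. Starting from $\tilde{\tau}_0$ near $\Sigma_0$, I would fix the local $T^3$-action near $\Sigma_1$ to be the one produced by Lemma \ref{ToricActionNearGradientManifolds} applied to $\overline{L^1}$, then use that (rather than the ambient action from Lemma \ref{ToricActionNearSingularOrbits}) as the prescribed bottom action when processing $\overline{L^2}$, and so on. At each stage the action on the neighborhood of $\Sigma_i$ is determined so as to match the action coming up from $\overline{L^i}$, and the overlaps on neighborhoods of the intermediate closed orbits then agree by construction, so the locally defined $T^3$-actions patch together into a single $\alpha$-preserving $T^3$-action on an open neighborhood of $C$. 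This patched action coincides with $\tilde{\tau}_0$ near $\Sigma_0$ and, after accumulating all the intermediate conjugations, is conjugate to $\tilde{\tau}_l$ near $\Sigma_l$, which is exactly the conclusion asserted.
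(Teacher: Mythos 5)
Your proposal is correct and is essentially the paper's own argument: the paper's proof consists of applying Lemma \ref{ToricActionNearGradientManifolds} inductively to the lens spaces $L_1,\dots,L_l$ from the bottom of the chain, which is precisely your scheme. Your additional details --- supplying local $T^3$-actions at the intermediate closed orbits via Lemma \ref{ToricActionNearSingularOrbits} and propagating the conjugation upward so that each step matches exactly at the bottom orbit and absorbs the conjugacy at the top --- are exactly what the paper's terse ``inductively'' leaves implicit.
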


\begin{proof}
We denote the lens spaces in $C$ by $L_1$, $L_2$, $\cdots$, $L_l$ from the bottom. Applying Lemma \ref{ToricActionNearGradientManifolds} inductively to $L_{1}$, $L_{2}$, $\cdots$, $L_{l}$, we have a $T^3$-action which satisfies the desired condition.
\end{proof}

\subsubsection{Local $T^3$-actions near $K$-contact lens spaces of rank $1$}

Let $B$ be a $3$-dimensional $K$-contact submanifold of $(M,\alpha)$. Assume that the rank of $(B,\alpha|_{B})$ is $1$ and $B$ is diffeomorphic to a lens space. We show 
\begin{lem}\label{ToricActionNearB}
There exists an $\alpha$-preserving $T^3$-action on an open neighborhood of $B$.
\end{lem}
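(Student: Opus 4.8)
The plan is to reduce Lemma~\ref{ToricActionNearB} to the construction problem on the symplectic normal bundle, exactly as was done for closed orbits in Lemma~\ref{ToricActionNearSingularOrbits} and for $K$-contact lens spaces of rank $2$ in Lemma~\ref{ToricActionNearGradientManifolds}. By Lemma~\ref{LocalTorusActions}, it suffices to produce a $T^3$-action on the symplectic normal bundle $E$ of $B$ in $M$ which commutes with $\rho$ and preserves $\alpha|_{0_E}$ together with the symplectic structures on the fibers of $E$. Here $B$ is a $3$-dimensional $K$-contact lens space of rank $1$, so its Reeb flow is the principal $S^1$-action of a Seifert fibration, and the closure $G$ of the Reeb flow in the isometry group already gives a $T^2$-action $\rho$ on an open neighborhood of $B$.

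First I would analyze the existing torus actions. Since $(B,\alpha|_B)$ has rank $1$, the Reeb flow generates an $S^1$-action whose closure is $1$-dimensional, so $\rho$ restricted to $B$ has $1$-dimensional generic orbits while $\rho$ itself is a $T^2$-action whose second circle factor acts in the normal directions. The key point is that the normal bundle $E$ is a rank-$2$ symplectic vector bundle over the lens space $B$, equivariant under $\rho$. I would fix a $\rho$-invariant metric on $E$ compatible with $d\alpha|_{\wedge^2 E}$, producing a $\rho$-invariant complex structure $J$ on $E$, and let $\sigma$ be the $S^1$-subaction of the linear $\mathbb{C}^{\times}$-action defined by $J$. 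As in the proof of Lemma~\ref{ToricActionNearGradientManifolds}, $\sigma$ preserves $d\alpha|_{\wedge^2 E}$ and commutes with $\rho$ by the invariance of the metric; moreover $\sigma$ fixes the zero section $0_E=B$ pointwise and hence is not a subaction of $\rho$, since $\rho$ has isotropy at $B$ which is only discrete in the fiber directions. Thus $\sigma$ and $\rho$ generate an effective $T^3$-action $\tau$ on $E$ that commutes with $\rho$ and preserves $\alpha|_{0_E}$ and the fiberwise symplectic structures.

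To finish, I would verify effectiveness and the $T^3$ (as opposed to higher-rank) dimension count. The action $\tau$ is effective of rank at most $3$; it cannot be a $T^4$-action by the Lagrangian restriction of Lemma~\ref{RestrictionOfRank}, which caps the dimension of orbit closures of an $\alpha$-preserving action on a $5$-dimensional $K$-contact manifold at $3$, so the effective action generated by $\rho$ and $\sigma$ is genuinely $3$-dimensional and $\sigma$ is independent of $\rho$. Applying Lemma~\ref{LocalTorusActions} to the $T^3$-action $\tau$ on $E$ then yields an $\alpha$-preserving $T^3$-action on an open neighborhood of $B$ in $M$.

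The main obstacle I anticipate is the verification that $\sigma$ is independent of $\rho$ and that $\tau$ is exactly a $T^3$-action rather than degenerating to $T^2$. Unlike the rank-$2$ lens space case of Lemma~\ref{ToricActionNearGradientManifolds}, where $B$ would carry a full $T^2$-action tangentially, here $(B,\alpha|_B)$ has rank $1$, so only one circle acts with dense generic orbits along $B$ itself; I must confirm that the fiberwise rotation $\sigma$ together with the tangential $T^2$-action $\rho$ still spans three independent directions. This is where the geometry of the Seifert fibration on $B$ and the normal representation of the isotropy groups at the exceptional orbits enter, and the argument should parallel the effectiveness check in the proof of Lemma~\ref{ToricActionNearSingularOrbits} using the first-return map of the complementary $S^1$-subgroup; the invariance and commutation checks are then routine averaging arguments as in Lemma~\ref{LocalTorusActions}.
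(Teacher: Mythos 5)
Your construction breaks down at exactly the point you flagged as the anticipated obstacle: in the rank-$1$ case the fiberwise rotation $\sigma$ is \emph{not} independent of $\rho$, and the action generated by $\rho$ and $\sigma$ is only a $T^2$-action. Since $(B,\alpha|_{B})$ has rank $1$, the generic orbits of $\rho$ in $B$ are the closed Reeb orbits, which are $1$-dimensional; hence the isotropy group $G_{B}$ of $\rho$ at $B$ has a $1$-dimensional identity component isomorphic to $S^1$ (compare the proof of Lemma \ref{ClosedOrbitsInB}, where this is shown because $G_{B}$ acts effectively and linearly on the fibers of the normal bundle and so embeds in $\SO(2)$). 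This circle fixes $B$ pointwise, preserves $d\alpha$ on the fibers and any $\rho$-invariant compatible complex structure $J$, and therefore acts on each fiber of the rank-$2$ bundle $E$ as a unitary rotation of a complex line with weight $\pm 1$ by effectiveness of $\rho$. So the $G_{B}$-subaction of $\rho$ on $E$ coincides with $\sigma^{\pm 1}$, and your claim that $\rho$ ``has isotropy at $B$ which is only discrete in the fiber directions'' is false here. Discreteness of the isotropy is precisely what distinguishes Lemma \ref{ToricActionNearGradientManifolds} (where $N$ has rank $2$, $G_{N}$ is discrete, and $\sigma$ is genuinely new) from the present lemma; the dimension-count worry you raise (ruling out $T^4$ via Lemma \ref{RestrictionOfRank}) points in the wrong direction, since the actual failure is degeneration to $T^2$.

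The missing circle must instead come from a new action \emph{tangent} to $B$, and producing it is the main content of the paper's proof. The paper first constructs a hamiltonian $S^1$-action on the $2$-dimensional symplectic orbifold $(B/\rho, d\alpha)$ --- by exhibiting an explicit weighted model with the correct singular data and transporting its rotation by a Moser-type argument --- and then lifts it to an $(\alpha|_{B})$-preserving $S^1$-action on $B$ via the vector field $X'=\tilde{X}-hR$, where $h$ is the hamiltonian function (Lemma \ref{LensSpacesAreRank2}); verifying that $X'$ generates a genuine circle action requires showing that the function $f(t)$ appearing in the invariant trivialization is constant. This tangential $T^2$-action $\tau$ on $B$ must then be extended to $E$: the paper chooses an $S^1$-subaction $\tau_1$ with $\tau_1\times\rho'=\tau$, trivializes $E$ equivariantly over the two solid tori of a $\tau$-invariant Heegaard decomposition of the lens space $B$, extends $\tau_1$ to an action $\tau'_1$ on $E$ commuting with $\rho$, and finally averages the fiberwise symplectic structure under $\tau'_1$ and conjugates by a $\rho$-equivariant bundle isomorphism so that the extended action preserves $\omega$. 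Only then does taking the product with $\rho$ and applying Lemma \ref{LocalTorusActions} produce the desired $T^3$-action; none of these steps appears in your proposal, and without them your construction returns only the $T^2$-action you started with.
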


Let $B/\rho$ is a $2$-dimensional smooth orbifold with a volume form $d\alpha$. $B/\rho$ has at most two singular points. The singular points of $B/\rho$ are denoted by $p_1$ and $p_2$. We refer \cite{BoGa5} for basic terminology on orbifolds.

We show
\begin{lem}
$(B/\rho,d\alpha)$ has a smooth hamiltonian $S^1$-action $\sigma$.
\end{lem}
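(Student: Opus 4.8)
The plan is to realize $(B/\rho, d\alpha)$ as a symplectic $2$-orbifold which is, as an orbifold, a rotationally symmetric spindle, and then transport the obvious rotational circle action onto $(B/\rho, d\alpha)$ by a Moser-type argument. First I would record that $d\alpha$ is basic for the Reeb $S^1$-action on $B$: since $\iota_R d\alpha = 0$ and $L_R d\alpha = 0$, the form $d\alpha$ descends to an area form on the quotient $B/\rho$, which is therefore a compact symplectic $2$-orbifold. Because $B$ is diffeomorphic to a lens space and the effective action of $\rho$ on $B$ is the locally free circle action generated by the Reeb flow (the rank of $(B, \alpha|_{B})$ being $1$), the quotient map $B \longrightarrow B/\rho$ is a Seifert fibration whose base has underlying topological space $S^2$ and at most the two cone points $p_1, p_2$ coming from the exceptional fibers.

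Next I would fix a standard model. The spindle $S^2$ with cone points of orders $(m_1,m_2)$ is orbifold-diffeomorphic to the weighted projective line $\mathbb{CP}^{1}_{m_1,m_2} = S^3/S^1_{m_1,m_2}$, which carries the residual rotational circle action $\sigma_0$ (induced by $[z_1 : z_2] \longmapsto [z_1 : t z_2]$) together with a $\sigma_0$-invariant orbifold area form $\omega_0$. Identifying $B/\rho$ with this model, I would average $d\alpha$ over $\sigma_0$ to obtain a $\sigma_0$-invariant area form $\omega$; averaging over a compact group does not change the (orbifold de Rham) cohomology class, and since $H^{2}_{\mathrm{orb}}(B/\rho; \mathbb{R}) = \mathbb{R}$ is detected by the total area, a preliminary rescaling of $\omega_0$ arranges $[\omega] = [d\alpha]$. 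Moser's theorem in the orbifold category then yields a diffeomorphism $\psi$ of $B/\rho$ with $\psi^{*}\omega = d\alpha$, and I would set $\sigma = \psi^{-1} \circ \sigma_0 \circ \psi$, a circle action preserving $d\alpha$.

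Finally I would promote $\sigma$ from symplectic to hamiltonian. Since the underlying space of $B/\rho$ is $S^2$ and its orbifold singular locus consists of isolated points, $H^{1}_{\mathrm{orb}}(B/\rho; \mathbb{R}) = 0$, so the contraction $\iota_X d\alpha$ of the vector field $X$ generating $\sigma$ — a closed $1$-form because $\sigma$ is symplectic — is exact, giving a hamiltonian function for $\sigma$. The main obstacle will be the care required to stay in the orbifold category throughout: verifying that the model action $\sigma_0$ is genuinely smooth across the cone points in the orbifold charts $\mathbb{C}/\mathbb{Z}_{m_i}$, and invoking the orbifold versions of averaging, of Moser's theorem, and of the Poincaré lemma. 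These are standard once the spindle identification is in place, so the substantive step is the clean identification of $B/\rho$ with the weighted model together with the construction of $\sigma_0$ near the two cone points.
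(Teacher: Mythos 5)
Your overall architecture --- a model spindle with a visible rotation, a cohomological normalization plus a Moser isotopy to match the area forms, conjugation of the model action, and $H^1$-vanishing to promote the symplectic action to a hamiltonian one --- is exactly the paper's. But there is a genuine gap in your first step: the identification of $B/\rho$ with the weighted projective line $\mathbb{C}P^{1}_{m_1,m_2}=S^3/S^1_{m_1,m_2}$ fails when $p=\GCD(m_1,m_2)>1$, and this case really occurs. For a lens space $B$ the two exceptional fibers of the Seifert fibration defined by the Reeb flow need not have coprime multiplicities (coprimality is forced only for $S^3$; e.g.\ Seifert fibrations over $S^2$ with two cone points of equal order $m$ yield lens spaces). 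When $p>1$, the weight-$(m_1,m_2)$ circle action on $S^3$ contains the diagonal $\mathbb{Z}/p\mathbb{Z}$ in its kernel, so the effective orbifold quotient $S^3/S^1_{m_1,m_2}$ is the spindle with cone orders $(m_1/p,\,m_2/p)$, not $(m_1,m_2)$; since cone orders are orbifold invariants, your model is then not orbifold-diffeomorphic to $B/\rho$ and the transport argument cannot start. This is precisely what the paper's proof is organized around: it sets $m'_j=m_j/p$, forms $S^3/\rho_{\alpha_0}$ with the coprime weights $(m'_1,m'_2)$, and then quotients further by a diagonal $\mathbb{Z}/p\mathbb{Z}$-action fixing both cone points, manufacturing a spindle with cone orders $(m_1,m_2)$ equipped with a volume form induced by $d\alpha_0$ and a circle action induced by a weight-$(n_1,n_2)$ action with $m'_1 n_2-m'_2 n_1\neq 0$.

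The rest of your argument is sound and parallels the paper's: hamiltonicity of the model action via $d(\iota(Y)\omega)=L_Y\omega=0$ together with $H^1=0$ (the paper checks this on the model, you check it on $B/\rho$ after conjugation --- equivalent, since the underlying space is $S^2$ and orbifold de Rham cohomology agrees with that of the underlying surface); your class-matching rescaling is the paper's constant $c$ with $d\beta=cf^{*}\omega-d\alpha$; and the Moser isotopy through the convex path of area forms requires only the orientation compatibility you arrange. Note also that the gap is repairable even without the paper's double-quotient: for arbitrary, not necessarily coprime, orders one can build a rotationally symmetric spindle directly by gluing the two rotation charts $\mathbb{C}/\mathbb{Z}_{m_1}$ and $\mathbb{C}/\mathbb{Z}_{m_2}$ equivariantly, which already carries an invariant area form. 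But as written, your proof silently assumes $\GCD(m_1,m_2)=1$, which is exactly the hypothesis the paper cannot and does not make.
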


\begin{proof}
Let $m_1$ and $m_2$ be the multiplicity of the two singular points of $B/\rho$. We put $p=\GCD(m_1,m_2)$ and $m'_{j}=m_{j}/p$ for $j=1,2$. Define a contact form $\alpha_{0}$ on $S^3$ by
\begin{equation}
\alpha_{0}=m'_1 \sqrt{-1}(z_1 d\overline{z}_1 - \overline{z}_1 dz_1) + m'_2 \sqrt{-1}(z_2 d\overline{z}_2 - \overline{z}_2dz_2).
\end{equation}
Then the Reeb flow $\rho_{\alpha_{0}}$ of $\alpha_{0}$ is written as 
\begin{equation}
t \cdot (z_1,z_2)=(t^{m'_1} z_1,t^{m'_2} z_2).
\end{equation}
Hence the multiplicity of the singular points of $S^{3}/\rho_{\alpha_{0}}$ are $m'_1$ and $m'_2$. Let $\sigma$ be a $\mathbb{Z}/p\mathbb{Z}$-action on $S^3$ defined by
\begin{equation}
s \cdot (z_1,z_2)=(e^{\frac{2 \pi i s}{p}} z_1,e^{\frac{2 \pi i s}{p}} z_2)
\end{equation}
for $s$ in $\mathbb{Z}/p\mathbb{Z}$. Since $\rho_{\alpha_{0}}$ commutes with $\sigma$, $\sigma$ acts on $S^{3}/\rho_{\alpha_{0}}$. The multiplicity of the singular points of $(S^{3}/\rho_{\alpha_{0}})/\sigma$ is $m_1$ and $m_2$. $d\alpha_{0}$ induces a volume form $\omega$ on $(S^{3}/\rho_{\alpha_{0}})/\sigma$. Let $\tau$ be an $S^1$-action on $S^3$ defined by 
\begin{equation}
t \cdot (z_1,z_2)=(t^{n_1} z_1,t^{n_2} z_2)
\end{equation}
for $t$ in $S^1$ where $n_1$ and $n_2$ are integers such that $m'_1 n_2 - m'_2 n_1$ is not equal to $0$. Then $\tau$ induces an $S^1$-action $\tau_{1}$ on $(S^{3}/\rho_{\alpha_{0}})/\sigma$ which preserves $d\alpha_{0}$. Let $Y$ be the vector field on $(S^{3}/\rho_{\alpha_{0}})/\sigma$ generating $\tau_{1}$. Since we have $d(\iota(Y)\omega)=L_{Y}\omega=0$, the $1$-form $\iota(Y)\omega$ on $(S^{3}/\rho_{\alpha_{0}})/\sigma$ is closed. By $H^{1}_{\dR}((S^{3}/\rho_{\alpha_{0}})/\sigma)=0$, we have a smooth function $h$ on $(S^{3}/\rho_{\alpha_{0}})/\sigma$ such that $dh=\iota(Y)\omega$. Hence $\tau_{1}$ is hamiltonian.

Since the genus and the multiplicity of the two singular points of $(S^{3}/\rho_{\alpha_{0}})/\sigma$ and $B/\rho$ are equal, we have a diffeomorphism $\phi \colon (S^{3}/\rho_{\alpha_{0}})/\sigma \longrightarrow B/\rho$. Changing $\omega$ to $-\omega$, we can assume that $f$ preserves the orientations determined by $\omega$ and $d\alpha$. By $H^{2}_{\dR}((S^{3}/\rho_{\alpha_{0}})/\sigma)=\mathbb{R}$, we have a real number $c$ and a $1$-form $\beta$ on $B/\rho$ such that $d\beta=cf^{*}\omega-d\alpha$. We define a volume form $\omega_{t}$ by $\omega_{t}=(1-t)d\alpha + t\omega$ on $B/\rho$ and a vector field $Z_{t}$ on $B/\rho$ by $\iota(Z)\omega_{t}+\beta=0$ for $t$ in $[0,1]$. Let $\{\psi_{t}\}_{t \in [0,1]}$ be the flow generated by $\{Z_{t}\}_{t \in [0,1]}$. Then we have $\psi_{1}^{*}\omega=d\alpha$. In fact, we have
\begin{equation}
\frac{d\psi_{t}^{*}\omega_{t}}{dt}=\psi_{t}^{*}(L_{Z_{t}}\omega_{t}+\frac{d\omega_{t}}{dt})=\psi_{t}^{*}d(\iota(Z_{t})\omega_{t} + \beta)=0.
\end{equation}
Conjugating $\phi^{-1} \circ \tau_{1} \circ \phi$ by $\psi_{1}$, we have an $S^1$-action $\sigma$ on $B/\rho$ preserving $d\alpha$. Since $\tau_{1}$ is hamiltonian and $\psi_{1}^{*}\omega=d\alpha$, $\sigma$ is hamiltonian.
\end{proof}

\begin{lem}\label{LensSpacesAreRank2}
There exists an $(\alpha|_{B})$-preserving $T^2$-action on $B$.
\end{lem}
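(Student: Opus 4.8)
The plan is to realize $B$ as a Seifert $S^1$-orbibundle over the symplectic orbifold $(B/\rho,\bar\omega)$ and to lift the hamiltonian $S^1$-action produced in the preceding lemma to an $(\alpha|_{B})$-preserving circle action on $B$, exactly as in the circle-bundle-over-symplectic-manifold construction of Section 2 but in the orbifold category. Since $(B,\alpha|_{B})$ has rank $1$, the Reeb flow of $\alpha|_{B}$ is periodic and the restriction of $\rho$ to $B$ is the resulting $S^1$-action; writing $p\colon B \longrightarrow B/\rho$ for the quotient, $\alpha|_{B}$ is a connection form for this orbibundle and $d(\alpha|_{B})=p^{*}\bar\omega$, where $\bar\omega$ is the area form on the $2$-orbifold $B/\rho$ (this is the form denoted $d\alpha$ in the previous lemma). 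Its Reeb vector field $R$ satisfies $\alpha|_{B}(R)=1$ and $\iota_{R}\,d(\alpha|_{B})=0$.

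First I would take the hamiltonian $S^1$-action $\sigma$ on $(B/\rho,\bar\omega)$ from the previous lemma, with generator $X_{\sigma}$ and hamiltonian $h$ (so $\iota_{X_{\sigma}}\bar\omega=dh$), form the horizontal lift $\tilde{X}$ determined by $p_{*}\tilde{X}=X_{\sigma}$ and $\alpha|_{B}(\tilde{X})=0$, and set $Y=\tilde{X}-(h\circ p)R$. A direct computation gives $\iota_{Y}(\alpha|_{B})=-h\circ p$ and $\iota_{Y}\,d(\alpha|_{B})=\iota_{\tilde{X}}p^{*}\bar\omega=p^{*}dh=d(h\circ p)$, whence $L_{Y}(\alpha|_{B})=0$; invariance of $h$ and of the connection under the Reeb action also yields $[Y,R]=0$. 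Thus $Y$ generates an $(\alpha|_{B})$-preserving flow commuting with the Reeb action, whose projection to $B/\rho$ is $\sigma$.

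The remaining point, and the main obstacle, is closedness of the $Y$-orbits. Letting $T$ be the period of $\sigma$, the time-$T$ map $\phi_{T}^{Y}$ covers the identity on the base, hence preserves each fiber and acts there as $\phi_{\lambda(\bar{b})}^{R}$ for some function $\lambda$ on $B/\rho$. I would show $\lambda$ is constant: for $b\in B$ I close up the path $t\mapsto\phi_{t}^{Y}(b)$ with a fiber arc into a loop $\ell_{b}$ projecting to the orbit $\gamma_{\bar{b}}$ of $\sigma$, and evaluate $\oint_{\ell_{b}}\alpha|_{B}$ in two ways. Along the $Y$-part the integrand is $\alpha|_{B}(Y)=-h$, which is constant on $\sigma$-orbits, and along the fiber arc it contributes $\pm\lambda(\bar{b})$; on the other hand Stokes together with $d(\alpha|_{B})=p^{*}\bar\omega$ identifies $\oint_{\ell_{b}}\alpha|_{B}$ with the $\bar\omega$-area of the disk in $B/\rho$ bounded by $\gamma_{\bar{b}}$, which by the moment-map area formula equals $\pm(h(\bar{b})-h(q))$ for a fixed point $q$ of $\sigma$. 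The terms in $h(\bar{b})$ cancel, leaving $\lambda$ equal to the constant $h(q)$; here I must be careful that $\gamma_{\bar{b}}$ bounds an orbifold disk through one of the two cone points of $B/\rho$ and that Stokes is applied in the orbifold sense.

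With $\lambda$ constant I would replace $Y$ by $Y'=Y-cR$ for the constant $c$ with $cT\equiv\lambda$ modulo the Reeb period, equivalently replace $h$ by $h$ plus a constant; since $Y$ and $R$ commute this gives $\phi_{T}^{Y'}=\mathrm{id}$, so $Y'$ generates an $(\alpha|_{B})$-preserving $S^1$-action $\tau$. As $\tau$ projects to the nontrivial action $\sigma$ on $B/\rho$ while $R$ acts trivially there, $\tau$ and the Reeb action are independent commuting circle actions, and together they define the desired effective $(\alpha|_{B})$-preserving $T^2$-action on $B$.
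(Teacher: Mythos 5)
Your construction of $Y=\tilde{X}-(h\circ p)R$ and the verifications $L_{Y}(\alpha|_{B})=0$ and $[Y,R]=0$ match the paper's proof exactly, but your route to closedness of the orbits is genuinely different. The paper trivializes the complement of the two exceptional Reeb orbits (the extrema of $h$) as $T^{2}\times(0,1)$ by means of a carefully lifted section, so that $R=\frac{\partial}{\partial x}$ and $X'=\frac{\partial}{\partial y}+f(t)\frac{\partial}{\partial x}$, writes $\alpha=dx+h_{1}(t)dy+h_{2}(t)dt$, and kills the slope by the one-line computation $d(\alpha(X'))=\pm\iota(X')d\alpha=\pm dh_{1}$, whence $df=0$; you instead prove constancy of the return-time function $\lambda$ by integrating $\alpha|_{B}$ over closed-up $Y$-orbits and invoking the action--area relation for the hamiltonian circle action. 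These are two faces of the same mechanism (the hamiltonian condition pins down the transverse variation of the rotation number), but yours is coordinate-free and makes the geometric content explicit, while the paper's is shorter at the cost of building the trivialization by hand. One step of yours needs repair as written: $\ell_{b}$ is in general not null-homotopic in $B$, so Stokes against the orbifold disk $D$ through the cone point $q$ does not literally identify $\oint_{\ell_{b}}\alpha|_{B}$ with the $\bar\omega$-area of $D$; capping $\ell_{b}$ inside $p^{-1}(D)$ produces a defect $k\oint_{\Sigma}\alpha|_{B}$ from wrapping the exceptional fiber $\Sigma=p^{-1}(q)$ (with $k$ a fixed integer), and the area term should carry the period, $T\bigl(h(\bar{b})-h(q)\bigr)$, to match the contribution $-Th(\bar{b})$ from the $Y$-part of the loop. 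Since both corrections are independent of $\bar{b}$, the cancellation you want survives; alternatively you can avoid capping altogether by comparing $\oint_{\ell_{b_{1}}}\alpha|_{B}$ and $\oint_{\ell_{b_{2}}}\alpha|_{B}$ across the cylinder swept out between two nearby loops, where the integral of $d\alpha$ equals $T\bigl(h(\bar{b}_{2})-h(\bar{b}_{1})\bigr)$, which gives local and hence global constancy of $\lambda$ on the regular part. With that fixed, your final adjustment $Y'=Y-cR$ and the assembly of the two commuting circle actions into the $(\alpha|_{B})$-preserving $T^{2}$-action are correct and play the role of the paper's observation that constant $f$ makes the pair $(R,X')$ generate a torus.
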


\begin{proof}
Let $X$ be the vector field on $B/\rho$ generating $\sigma$. Let $h$ be a hamiltonian function of $\sigma$. Let $\tilde{X}$ be the lift of $X$ on $B$ tangent to $\ker \alpha$. We put $X'=\tilde{X}-hR$. We prove that $X'$ and $R$ generate an $\alpha$-preserving $T^2$-action.

The flow generated by $X'$ preserves $\alpha$, since we have $L_{X'}\alpha=d(\iota(-hR)\alpha)+\iota(\tilde{X})d\alpha=0$. The flow generated by $X'$ commutes with the Reeb flow. In fact, since we have $\alpha([R,X'])=R(\alpha(X'))-L_{R}\alpha(X')=0$, $[R,X']$ is a section of $\ker \alpha$. For a section of $\ker \alpha$, we have $d\alpha([R,X'],Y)=-X'(d\alpha(R,Y))+d\alpha(R,[X',Y])=0$. Hence we have $[R,X']=0$ by the nondegeneracy of $d\alpha$ on $\ker \alpha$. 

Let $\tau$ be the $(S^1 \times \mathbb{R})$-action on $B$ generated by $R$ and $X'$. We show that $\tau$ induces a $T^2$-action. By $Rh=0$ and $X'h=0$, the orbits of the $(S^1 \times \mathbb{R})$-action generated by $R$ and $X'$ coincide with the fibers of $h$. Let $\Sigma_{0}$ and $\Sigma_{1}$ be the maximal point set and the minimum point set of $h$. $\Sigma_{j}$ is a closed orbit of the Reeb flow of $\alpha$. Since $h$ is a proper submersion on $B - (\Sigma_{0} \cup \Sigma_{1})$, $h|_{B - (\Sigma_{0} \cup \Sigma_{1})}$ is a $T^2$-bundle. We fix a trivialization $B - (\Sigma_{0} \cup \Sigma_{1}) \cong T^{2} \times (0,1)$ such that $R$ and $X'$ are written as $R=\frac{\partial}{\partial x}$ and $X'=\frac{\partial}{\partial y} + f(t)\frac{\partial}{\partial x}$ for some function $f$ with respect to $t$ where $(x,y)$ is the coordinate on $T^2$ as follows: $(B - (\Sigma_{0} \cup \Sigma_{1}))/\rho$ is diffeomorphic to $S^1 \times (0,1)$. Let $s \colon [0,1] \times (0,1) \longrightarrow S^1 \times (0,1)$ be the map defined by the product of the map $[0,1] \longrightarrow S^1=[0,1]/\{0\} \sim \{1\}$ and $\id_{(0,1)}$. Take a lift $\tilde{s}$ of $s$ to $B - (\Sigma_{0} \cup \Sigma_{1})$ such that the image of the lift is invariant under the $(S^1 \times \mathbb{R})$-action generated by $R$ and $X'$. We define a smooth function $f$ on $(0,1)$ by $f(t)=-\tilde{s}(t,1) + \tilde{s}(t,0)$. By putting $\frac{\partial}{\partial x}=R$ and $\frac{\partial}{\partial y}=X' + f(t) R$, we can define a trivialization $B - (\Sigma_{0} \cup \Sigma_{1}) \cong T^{2} \times (0,1)$ such that $R$ and $X'$ are written as $R=\frac{\partial}{\partial x}$ and $X'=\frac{\partial}{\partial y} + f(t)\frac{\partial}{\partial x}$ for some function $f$ with respect to $t$ where $(x,y)$ is the coordinate on $T^2$. 

To show that $\tau$ induces a $T^2$-action, it suffices to show that $f(t)$ is a constant function. We can write $\alpha=dx + h_{1}(t)dy + h_{2}(t)dt$ where $h_{1}(t)$ and $h_{2}(t)$ are smooth functions on $t$. Since $\alpha(X')=f(t)+h_{1}(t)$ and $d(\alpha(X'))=\iota(X')d\alpha=\iota(X')(h'_{1}(t)dt \wedge dy)=dh_{1}$, we have $df(t)=0$. Hence the proof is completed.
\end{proof}

Let $\tau$ be the $T^2$-action on $B$ obtained in Lemma \ref{LensSpacesAreRank2}. Let $E$ be the symplectic normal bundle of $B$ in $M$. Let $\omega$ be the symplectic structure on fibers of $E$. $E$ has a complex structure compatible with  $\omega$.

\textit{Proof of Lemma \ref{ToricActionNearB}}. By Lemma \ref{LocalTorusActions}, it suffices to show that we have a $d\alpha$-preserving $T^3$-action $\tilde{\tau}$ on $E$ such that $\tilde{\tau}$ commutes with $\rho$ and $E$ is equivariant with respect to $\tilde{\tau}$ and $\tau$. Let $\tau_1$ be an $S^1$-subaction of $\tau$ which satisfies $\tau_1 \times \rho'=\tau$ where $\rho'$ is the $S^1$-action on $B$ induced from $\rho$.

First, we show that there exists an $S^1$-action $\tau'_1$ on $E$ so that $\tau'_1$ commutes with $\rho$ and $E$ is $S^1$-equivariant with respect to $\tau_1$ and $\tau'_1$. Let $C$ be a $T^2$-orbit of $\tau$ in $B$. We denote two connected components of $B-C$ by $B_1$ and $B_2$. Since $B$ has a $T^2$-invariant contact structure by Lemma \ref{LensSpacesAreRank2}, $B_1$ and $B_2$ are solid tori which are $\rho$-equivariantly homotopy equivalent to $S^1$ by the classification of $3$-dimensional contact toric manifolds by Lerman \cite{Ler2}. $E|_{B_1}$ and $E|_{B_2}$ are trivial as $\rho$-equivariant vector bundles, since the base spaces $B_1$ and $B_2$ can be $\rho$-equivariantly retracted to $S^1$. We can take $T^2$-equivariant trivializations $E|_{B_i} \cong \mathbb{R}^{2} \times B_i$ for $i=1$ and $2$ where the $T^2$-action on $\mathbb{R}^{2} \times B_i$ is defined by the product of the standard $S^1$-action on $\mathbb{R}^{2}$ and $\rho'$. Then we have an $S^1$-action $\tau'_1$ on $\mathbb{R}^{2} \times B_1$ defined by the product of the trivial $S^1$-action on $\mathbb{R}^{2}$ and the $S^1$-action $\tau$ on $B_1$. $\tau'_1$ induces a rotation on each component of $\mathbb{R}^{2} \times \partial \overline{B_2}$ through the identification $\mathbb{R}^{2} \times \partial \overline{B_1} \longrightarrow \mathbb{R}^{2} \times \partial \overline{B_2}$. Hence we can extend $\tau'_1$ to $E$ so that $\tau'_1$ commutes with $\rho$ and $E$ is $S^1$-equivariant with respect to $\tau_1$ and $\tau'_1$.

We show that we can modify $\tau'_1$ to an $\omega$-preserving $S^1$-action $\tilde{\tau}_1$ on $E$ such that $\tilde{\tau}_1$ commutes with $\rho$ and the vector bundle $E$ is $S^1$-equivariant with respect to $\tau_1$ and $\tilde{\tau}_1$. Let $dh$ be a Haar measure on $S^1$ We define a symplectic structure $\omega'$ on fibers on $E$ by the average $\omega'=\int_{S^1}h^{*}\omega dh$ of $\omega$ by $\tau'_1$. Then $\omega'$ is invariant under $\rho$ by $g^{*}\omega'=\int_{S^1}g^{*}h^{*}\omega dh=\int_{S^1}h^{*}g^{*}\omega dh=\int_{S^1}h^{*}\omega dh=\omega'$. Since the rank of $E$ is $2$, we have a $\rho$-equivariant isomorphism $\psi$ from $(E,\omega')$ to $(E,\omega)$ as oriented vector bundles. Conjugating $\tau'_1$ by $\psi$, we have an $S^1$-action $\tilde{\tau}_{1}$ which satisfies the desired conditions.

We obtain a $T^3$-action $\tilde{\tau}$ on $E$ which satisfies the desired conditions by taking the product of $\tilde{\tau}_1$ and $\rho$.

\subsection{Normal forms of closed orbits of the Reeb flow}

We write normal forms of closed orbits of Reeb flows in this subsection. See \cite{Yam2} for the proof of the normal form theorem of closed orbits of the Reeb flow. 

Let $(M_{i},\alpha_{i})$ be a closed $5$-dimensional $K$-contact manifolds of rank $2$ for $i=0$ and $1$. Let $G_{i}$ be the closure of the Reeb flow in $\Isom(M_{i},g_{i})$ for a metric $g_{i}$ invariant under the Reeb flow. The action of $G_{i}$ on $M_{i}$ is denoted by $\rho_{i}$.

In this subsection, we fix an isomorphism $G_{i} \longrightarrow T^2$ and identify $G_{i}$ with $T^2$ for $i=0$ and $1$. The isotropy groups of $\rho_{i}$ are regarded as subgroups of $T^2$. The element of $\Lie(T^2)$ corresponding to the Reeb vector field of $\alpha_{i}$ through the isomorphism $\Lie(G_{i}) \longrightarrow \Lie(T^2)$ is denoted by $\overline{R}_{i}$. We assume that $\overline{R}_{0}=\overline{R}_{1}$ and denote them by $\overline{R}$. We fix $\overline{X}$ in $\Lie(T^2)$ which is not parallel to $\overline{R}$. We put $\Phi_{i}=\alpha_{i}(X)$ where $X$ is the infinitesimal action of $\overline{X}$. The maximal component and the minimal component of $\Phi_{i}$ are denoted by $B_{\max i}$ and $B_{\min i}$, respectively.

Let $\Sigma_{i}$ be a closed orbit of the Reeb flow of $\alpha_{i}$ for $i=0$ and $1$. A diffeomorphism $f$ from an open neighborhood $U_{0}$ of $\Sigma_{0}$ to an open neighborhood $U_{1}$ of $\Sigma_{1}$ is said to be an isomorphism if $f(\Sigma_{0})=\Sigma_{1}$, $f^{*}\alpha_{1}=\alpha_{0}$ and $f(\rho_{0}(x,g))=\rho_{1}(f(x),g)$ for every $x$ in $\Sigma_{0}$ and $g$ in $T^2$.

We will omit the index $i$ in the proof and the statement of lemmas, when we can fix $i$ in the argument.

\subsubsection{Length of closed orbits of the Reeb flow}

Note that $(\Sigma_{0},\alpha_{0}|_{\Sigma_{0}})$ and $(\Sigma_{1},\alpha_{1}|_{\Sigma_{1}})$ are isomorphic as $K$-contact manifolds if and only if $\int_{\Sigma_{0}}\alpha=\int_{\Sigma_{1}}\alpha$. We show the following:
\begin{lem}\label{LengthOfClosedOrbits}
We have $\int_{\Sigma_{0}}\alpha_{0}=\int_{\Sigma_{1}}\alpha_{1}$ if $(G_{0})_{\Sigma_{0}}=(G_{1})_{\Sigma_{1}}$.
\end{lem}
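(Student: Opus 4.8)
The plan is to show that each side equals the minimal period of the Reeb flow on the corresponding closed orbit, and that this period is an invariant of the pair consisting of $\overline{R}$ and the isotropy group $(G_i)_{\Sigma_i}$. Since both $K$-contact forms share the Reeb element $\overline{R}$ and, by hypothesis, the isotropy groups agree, the two periods must coincide.

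First I would record that, because $\Sigma_i$ is a closed orbit of the Reeb flow, it is a singular $S^1$-orbit of $\rho_i$ (by property (ii) of the action $\rho$), hence a circle on which $\rho_i$ acts transitively with isotropy group $(G_i)_{\Sigma_i}$; thus $\Sigma_i$ is identified with $T^2/(G_i)_{\Sigma_i}$. Parametrising $\Sigma_i$ by the Reeb flow $\phi^i_t(x)=\exp(t\overline{R})\cdot x$ and using $\alpha_i(R_i)=1$, I obtain $\int_{\Sigma_i}\alpha_i=\int_0^{T_i}\alpha_i(R_i)\,dt=T_i$, where $T_i>0$ is the smallest positive time with $\phi^i_{T_i}(x)=x$. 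Since $\rho_i$ is abelian, the stabiliser of each point of $\Sigma_i$ is the whole group $(G_i)_{\Sigma_i}$, so $\phi^i_t(x)=x$ is equivalent to $\exp(t\overline{R})\in(G_i)_{\Sigma_i}$, whence $T_i=\min\{t>0 : \exp(t\overline{R})\in(G_i)_{\Sigma_i}\}$.

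Next I would verify that this minimum is positive, finite, and depends only on $\overline{R}$ and the subgroup $(G_i)_{\Sigma_i}$. Consider the quotient homomorphism $T^2\to T^2/(G_i)_{\Sigma_i}\cong S^1$ together with the induced projection on Lie algebras $\Lie(T^2)\to\Lie(T^2)/\Lie((G_i)_{\Sigma_i})$. Because $\Sigma_i$ is one-dimensional, $\overline{R}$ is not tangent to $(G_i)_{\Sigma_i}$ (otherwise the Reeb flow would fix $\Sigma_i$ pointwise), so $\overline{R}$ has nonzero image, and the composite $\mathbb{R}\to S^1$, $t\mapsto \exp(t\overline{R})\bmod(G_i)_{\Sigma_i}$, is a nontrivial one-parameter subgroup whose kernel is exactly $T_i\mathbb{Z}$. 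Since we quotient by the full isotropy group, this already accounts for every component of $(G_i)_{\Sigma_i}$, and $T_i$ is manifestly a function of the element $\overline{R}$ and the subgroup $(G_i)_{\Sigma_i}$ alone (it is the time for the projected Reeb direction to traverse the circle $T^2/(G_i)_{\Sigma_i}$ once, measured in the integral structure inherited from the lattice of $T^2$).

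Finally, since $\overline{R}_0=\overline{R}_1=\overline{R}$ and $(G_0)_{\Sigma_0}=(G_1)_{\Sigma_1}$ as subgroups of $T^2$, the two quotient circles, the projected Reeb directions, and hence the minimal periods coincide, giving $\int_{\Sigma_0}\alpha_0=T_0=T_1=\int_{\Sigma_1}\alpha_1$. The step requiring the most care is the period formula itself: I must use the full isotropy group $(G_i)_{\Sigma_i}$ rather than merely its identity component, since $\exp(t\overline{R})$ can meet a nontrivial component of $(G_i)_{\Sigma_i}$ before completing a loop of the circle and so shorten the period — which is precisely why the hypothesis is stated in terms of the entire isotropy group. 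The orientation convention, needed so that the integral equals the positive period, is a minor bookkeeping matter resolved by taking the Reeb direction as the positive generator of $\Sigma_i$.
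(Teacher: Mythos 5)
Your proof is correct and follows essentially the same route as the paper: both arguments identify $\int_{\Sigma_i}\alpha_i$ with the period of the Reeb parametrization of $\Sigma_i\cong G_i/(G_i)_{\Sigma_i}$ and observe that this period is determined by $\overline{R}$ together with the full isotropy group. The paper phrases the period via the lattice identification $\pi(\overline{R})=\frac{1}{c}$ in $\Lie(G/G_{x})$, while you phrase it as the minimal return time $T_i=\min\{t>0 : \exp(t\overline{R})\in (G_i)_{\Sigma_i}\}$ — these are the same computation, and your explicit remark that the full (possibly disconnected) isotropy group must be used is a correct reading of the same point implicit in the paper's use of $G_{x}$ rather than its identity component.
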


\begin{proof}
It suffices to compute $\int_{\Sigma}\alpha$ from $\overline{R}$ and $G_{\Sigma}$. Take a parametrization $\gamma \colon S^1 \longrightarrow \Sigma$ so that $\gamma_{*}(\frac{\partial}{\partial \zeta})_{x}=c R_{x}$ for every point $x$ on $\Sigma$ and a positive constant $c$. Then we have $\int_{\Sigma}\alpha=\int_{S^1}\alpha(\gamma_{*}(\frac{\partial}{\partial \zeta}))d\zeta=c$. Fix a point $x$ on $\Sigma$. Then $\Sigma$ is identified with $G/G_{x}$. We denote the projection $\Lie(G) \longrightarrow \Lie(G/G_{x})$ by $\pi$ and identify $\Lie(G/G_{x})$ with $\mathbb{R}$ so that the lattice defined by the kernel of $\Lie(G/G_{x}) \longrightarrow G/G_{x}$ is identified with $\mathbb{Z}$. Then we have $\pi(\overline{R})=\frac{1}{c}$. Hence Lemma \ref{LengthOfClosedOrbits} follows.
\end{proof}

We remark on the values of the contact moment map at closed orbits of the Reeb flow. Let $\Phi_{\alpha_{i}} \colon M_{i} \longrightarrow \Lie(T^2)^{*}$ be the contact moment map for $\rho_{i}$ for $i=0$ and $1$.
\begin{lem}\label{ValuesOfContactMomentMaps}
We have $\Phi_{\alpha_{0}}(\Sigma_{0})=\Phi_{\alpha_{1}}(\Sigma_{1})$ if $(G_{0})_{\Sigma_{0}}=(G_{1})_{\Sigma_{1}}$.
\end{lem}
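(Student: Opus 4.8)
The plan is to show that the covector $\Phi_{\alpha}(\Sigma) \in \Lie(T^2)^{*}$ is determined entirely by $\overline{R}$ and the isotropy group $G_{\Sigma}$, in exactly the same spirit as the computation of $\int_{\Sigma}\alpha$ from $\overline{R}$ and $G_{\Sigma}$ in Lemma \ref{LengthOfClosedOrbits}. Once this is established, the conclusion is immediate from the hypotheses $\overline{R}_{0}=\overline{R}_{1}=\overline{R}$ and $(G_{0})_{\Sigma_{0}}=(G_{1})_{\Sigma_{1}}$.

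First I would record two evaluations of $\Phi_{\alpha}(\Sigma)$. Since $\Sigma$ is a closed orbit of the Reeb flow, $\alpha(R)=1$ everywhere, so by the definition of the contact moment map $\Phi_{\alpha}(\Sigma)(\overline{R})=\alpha(R)=1$; in particular $\Phi_{\alpha}(\Sigma)$ lies on the affine line $\{v \in \Lie(T^2)^{*} \mid v(\overline{R})=1\}$, consistently with the presentation \eqref{CoordinatePresentation}. Second, let $\overline{Y}$ be a generator of the one-dimensional Lie algebra $\Lie(G_{\Sigma})$ of the isotropy group, with infinitesimal action $Y$. Because $\overline{Y}$ lies in the isotropy Lie algebra at each point of $\Sigma$, the vector field $Y$ vanishes identically on $\Sigma$, whence $\alpha(Y)=0$ on $\Sigma$ and therefore $\Phi_{\alpha}(\Sigma)(\overline{Y})=0$.

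Next I would verify that $\{\overline{R},\overline{Y}\}$ is a basis of $\Lie(T^2)$: the Reeb field $R$ is nowhere vanishing and tangent to $\Sigma$, whereas $Y$ vanishes on $\Sigma$, so $\overline{R}\notin \Lie(G_{\Sigma})$ and the two vectors are linearly independent. Consequently $\Phi_{\alpha}(\Sigma)$ is the unique element of $\Lie(T^2)^{*}$ taking the value $1$ on $\overline{R}$ and the value $0$ on $\overline{Y}$, so it depends only on $\overline{R}$ and $\Lie(G_{\Sigma})$. Applying this description to both manifolds and using that $\overline{R}_{0}=\overline{R}_{1}$ while $(G_{0})_{\Sigma_{0}}=(G_{1})_{\Sigma_{1}}$ have the same Lie algebra (hence the same generator $\overline{Y}$), we conclude $\Phi_{\alpha_{0}}(\Sigma_{0})=\Phi_{\alpha_{1}}(\Sigma_{1})$.

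The argument is short, and the only points requiring any care are the linear independence of $\overline{R}$ and $\overline{Y}$ and the observation that elements of the isotropy Lie algebra annihilate $\alpha$ along $\Sigma$. The possible non-connectedness of $G_{\Sigma}$ is harmless, since only its Lie algebra enters the computation; the equality of the full isotropy subgroups in the hypothesis in particular forces equality of their Lie algebras, which is all that is used.
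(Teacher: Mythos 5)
Your proof is correct and follows the same route as the paper: evaluate $\Phi_{\alpha}(\Sigma)$ on $\overline{R}$ (getting $1$) and on a generator $\overline{Y}$ of $\Lie(G_{\Sigma})$ (getting $0$ since the infinitesimal action vanishes along $\Sigma$), then conclude the covector is determined by $\overline{R}$ and $\Lie(G_{\Sigma})$. Your explicit check that $\overline{R}$ and $\overline{Y}$ are linearly independent is a detail the paper leaves implicit, but it is the same argument.
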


\begin{proof}
It suffices to compute $\Phi_{\alpha}(\Sigma)$ from $\overline{R}$ and $G_{\Sigma}$. $\Phi_{\alpha}(\Sigma)$ is a covector which defines the Lie algebra of $G_{\Sigma}$. In fact, if we take a vector $\overline{X}$ in $\Lie(T^2)$ which generates the identity component $(G_{\Sigma})_{0}$ of $G_{\Sigma}$ and denote the infinitesimal action of $\overline{X}$ by $X$, we have $\Phi_{\alpha}(x)(\overline{X})=\alpha_{x}(X_{x})=0$ for any point $x$ in $\Sigma$ by $X_{x}=0$. $\Phi_{\alpha}(\Sigma)$ is contained in the affine space $\{v \in \Lie(T^2)^{*} | v(\overline{R})=1\}$ of $\Lie(T^2)^{*}$. Hence $\Phi_{\alpha}(\Sigma)$ is determined by the equation $\Phi_{\alpha}(\Sigma)(\overline{R})=1$ among covectors defining the Lie algebra of $(G_{\Sigma})_{0}$.
\end{proof}

\subsubsection{Connected isotropy group cases}\label{ConnectedIsotropyGroupCases}

We assume that $(G_{i})_{\Sigma_{i}}$ is connected for $i=0$ and $1$ in this subsubsection. 

We show
\begin{lem}\label{LocalFreeActions}
There exists an $S^1$-subaction of $T^2$ which acts freely on an open neighborhood of $\Sigma$.
\end{lem}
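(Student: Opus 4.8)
The plan is to produce the desired circle as a complement to the isotropy group $H := G_{\Sigma}$ inside $G = T^2$. Since $\Sigma$ is a one-dimensional orbit of $\rho$, we have $\Sigma \cong G/H$ with $\dim H = 1$, and by the standing hypothesis $H = (G_i)_{\Sigma_i}$ is connected; hence $H$ is a circle subgroup of $T^2$. Writing $\Lie(T^2)_{\mathbb{Z}}$ for the kernel of $\exp \colon \Lie(T^2) \longrightarrow T^2$, the subgroup $H$ is of the form $\exp(\mathbb{R} v_H)$ for a primitive vector $v_H \in \Lie(T^2)_{\mathbb{Z}}$.

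First I would choose the candidate subaction. Because $v_H$ is primitive, it extends to a $\mathbb{Z}$-basis $\{v_H, v_K\}$ of $\Lie(T^2)_{\mathbb{Z}}$; set $K = \exp(\mathbb{R} v_K)$, a circle subgroup of $T^2$. The key elementary point is that $H \cap K = \{e\}$: an element of $H \cap K$ is represented by some $s v_H \equiv t v_K$ modulo $\Lie(T^2)_{\mathbb{Z}}$ with $s, t \in \mathbb{R}$, so $s v_H - t v_K \in \Lie(T^2)_{\mathbb{Z}}$, which forces $s, t \in \mathbb{Z}$ since $\{v_H, v_K\}$ is a basis, and therefore the element is trivial in $T^2$. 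This is precisely where the connectedness hypothesis on the isotropy group enters: if $H$ were only a finite extension of a circle, no transverse $K$ with $H \cap K = \{e\}$ would in general exist.

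Next I would verify that $K$ acts freely on a whole neighborhood of $\Sigma$. By the slice theorem for the compact torus action $\rho$, a $G$-invariant tubular neighborhood $U$ of $\Sigma$ is $G$-equivariantly modelled on $G \times_{H} V$ for the slice representation $V$, so every point $y$ in $U$ has isotropy group $G_y$ contained in $H$. Consequently, for $k$ in $K$ with $k \cdot y = y$ we obtain $k \in K \cap G_y \subseteq K \cap H = \{e\}$, whence the $S^1$-action of $K$ is free on $U$. Equivalently, $K$ already acts freely on $\Sigma = G/H$ (a point $[g]$ is fixed by $k$ only if $k \in g H g^{-1} = H$), and since the free locus of a compact group action is open, freeness propagates to a neighborhood.

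I do not anticipate a serious obstacle: the argument is just the slice theorem combined with the fact that a primitive lattice vector extends to a basis. The one step needing care is the bookkeeping that $H \cap K$ is trivial, and hence that the nearby isotropy groups meet $K$ trivially; this rests entirely on the connectedness assumption, and it is exactly this that forces the disconnected case to be treated separately by passing to a finite covering as in Lemma \ref{FiniteCovering}.
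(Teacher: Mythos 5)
Your proof is correct and takes essentially the same route as the paper: the paper's argument is precisely to note that connectedness makes $G_{x}$ a circle, pick an $S^1$-subgroup $G'$ with $G' \times G_{x}=T^2$ (your complementary circle $K$ from extending the primitive vector $v_H$ to a lattice basis), and conclude that $G'$ acts freely near $\Sigma$. The only difference is that you spell out the two steps the paper leaves implicit — that primitivity yields the splitting and that the slice theorem confines nearby isotropy groups to $G_{x}$ — which is a faithful filling-in rather than a new approach.
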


\begin{proof}
By the assumption, $G_{x}$ is isomorphic to $S^1$. Hence there exists an $S^1$-subgroup $G'$ of $T^2$ such that $G' \times G_{x} = T^2$. Then $G'$ acts freely on an open neighborhood of $\Sigma$.
\end{proof}

\begin{lem}\label{TwoKContactSubmanifolds}
There exist two $K$-contact submanifolds containing $\Sigma$.
\end{lem}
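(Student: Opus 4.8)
The plan is to realize the two submanifolds as exponential images of two $\rho$-invariant rank-$2$ symplectic subbundles of the symplectic normal bundle $E$ of $\Sigma$, so that the content of the lemma reduces to producing a $\rho$-invariant symplectic splitting of $E$ and then checking the contact condition near $\Sigma$ by an openness argument. Throughout I work with the $T^2$-action $\rho$ and the symplectic normal bundle $E$ of $\Sigma$, whose fibers lie in $\ker\alpha$.

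First I would use the connectedness of the isotropy group: by Lemma \ref{LocalFreeActions} there is an $S^1$-subgroup $G'$ of $T^2$ with $G' \times G_{\Sigma}=T^2$ acting freely near $\Sigma$, so that the map $G' \to \Sigma$, $g'\mapsto g'\cdot x$, is a bijection (that is, $G'$ acts simply transitively on $\Sigma\cong T^2/G_{\Sigma}$). Averaging over $T^2$ gives a $\rho$-invariant complex structure $J$ on $E$ compatible with $d\alpha|_{\wedge^{2}E}$, so $G_{\Sigma}\cong S^1$ acts complex-linearly on the fiber $E_x\cong\mathbb{C}^2$ over a fixed point $x\in\Sigma$. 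I then pick a weight line $(E_1)_x\subset E_x$ of this $G_{\Sigma}$-representation: being a $J$-complex line, it is a $G_{\Sigma}$-invariant symplectic rank-$2$ subspace of $(E_x,d\alpha_x)$.

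Next I transport this subspace around $\Sigma$ by $G'$, setting $(E_1)_{g'\cdot x}=(dg')((E_1)_x)$; this is well defined because $G'$ acts freely transitively, and a short computation using $[G',G_{\Sigma}]=0$ shows that the resulting subbundle $E_1$ is $G_{\Sigma}$-invariant at every point, hence $\rho$-invariant. Its symplectic orthogonal $E_2=E_1^{\perp d\alpha}$ is then also a $\rho$-invariant rank-$2$ symplectic subbundle, and $E=E_1\oplus E_2$. Fixing a $\rho$-invariant metric, the $\rho$-equivariant exponential map $\exp\colon E\to M$ embeds a small disk bundle, and I set $N_j=\exp(E_j)$ for $j=1,2$. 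Each $N_j$ is a $3$-dimensional submanifold containing $\Sigma$ with $T_xN_j=\mathbb{R}R_x\oplus(E_j)_x$ along $\Sigma$, so $N_1\neq N_2$; moreover $N_j$ is $\rho$-invariant by equivariance of $\exp$, hence invariant under the Reeb flow (as $R$ generates a subgroup of $T^2$ whose closure is $T^2$), so $R$ is tangent to $N_j$. Finally I check that $\alpha|_{N_j}$ is a contact form: along $\Sigma$ one has $T_xN_j\cap\ker\alpha_x=(E_j)_x$, since $E_j\subset E\subset\ker\alpha$ while $R_x\notin\ker\alpha_x$, and $(E_j)_x$ is symplectic; because nondegeneracy of $d\alpha$ on $TN_j\cap\ker\alpha$ is an open condition it persists on a neighborhood of $\Sigma$ in $N_j$, making $N_j$ a $K$-contact submanifold.

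The one genuinely non-formal step is the construction of the $T^2$-invariant symplectic splitting $E=E_1\oplus E_2$: the subtlety is that when the two $G_{\Sigma}$-weights on $E_x$ coincide the weight-space decomposition no longer singles out the subbundle, but transporting any single $G_{\Sigma}$-weight line by the free $G'$-action produces a globally consistent $\rho$-invariant choice, so no case distinction on the weights is required. Everything else, namely the exponential construction and the contact check, is routine, and the lemma is understood as asserting the existence of such submanifolds on a neighborhood of $\Sigma$ (a germ), for which the openness argument is exactly what is needed.
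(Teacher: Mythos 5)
Your proof is correct and follows essentially the same route as the paper: Lemma \ref{LocalFreeActions} supplies the freely acting $S^1$-subgroup $G'$, the fiberwise weight decomposition of the symplectic normal bundle $E$ is globalized to a $\rho$-invariant symplectic splitting $E=E_1\oplus E_2$ by transporting along $G'$, and the two invariant subbundles are realized as $K$-contact submanifolds near $\Sigma$. The only (harmless) deviation is the final realization step, where the paper invokes Corollary \ref{NormalBundles} to transplant the invariant subbundles of the linear model while you use the $\rho$-equivariant exponential map together with openness of the condition $\alpha\wedge d\alpha|_{TN_j}\neq 0$ along $\Sigma$ --- an equally valid, slightly more self-contained finish; note also that the equal-weight case you flag requires no special care in the paper's version either, since the equivariant Darboux diagonalization of $(E_x,d\alpha|_{E_x})$ always provides two invariant coordinate lines, coprimality of $(m_1,m_2)$ notwithstanding.
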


\begin{proof}
By Lemma \ref{LocalFreeActions}, there exists an $S^1$-subgroup $G'$ of $T^2$ such that $G'$ acts freely on an open neighborhood of $\Sigma$. Fix a point $x$ on $\Sigma$. Let $E$ be the symplectic normal bundle of $\Sigma$ in $(M,\alpha)$. Then the isotropy group of $\Sigma$ acts to the fibers of $E$ preserving the symplectic structure induced from $d\alpha$. $(E_{x},d\alpha|_{E_{x}})$ is $S^1$-equivariantly isomorphic to $\mathbb{C}^{2}$ with the standard symplectic structure and an $S^1$-action defined by
\begin{equation}
s \cdot (z_1,z_2)=(s^{m_1}z_{1}, s^{m_2}z_{2})
\end{equation}
where $(m_1,m_2)$ is a pair of coprime integers. Since the $G'$-action is free on an open neighborhood of $\Sigma$, $E$ has two $T^2$-invariant symplectic vector subbundles defined by the equations $z_1=0$ and $z_2=0$. The proof is completed by Corollary \ref{NormalBundles}.
\end{proof}

We take a coordinate to write $\alpha$ in a simple form near $\Sigma$. Fix an $S^1$-subgroup $G'$ of $T^2$ such that $G'$ acts freely on an open neighborhood of $\Sigma$. We change the identification from $G$ to $T^2$ so that $G'=\{(t,s) \in T^2 | s=1\}$ and $G_{\Sigma}=\{(t,s) \in T^2 | t=1\}$. We put $\overline{R}=\lambda_{0}\frac{\partial}{\partial s} + \lambda_{1}\frac{\partial}{\partial t}$. Since $(M,\alpha)$ is of rank $2$, $\lambda_{0}$ and $\lambda_{1}$ are real numbers which are linearly independent over $\mathbb{Q}$. The length of $\Sigma$ is $\frac{1}{\lambda_{0}}$. Fix a point $x$ on $\Sigma$. Since the action of $G'$ is free, the isomorphism class of the $T^2$-equivariant symplectic normal bundle $E$ of $\Sigma$ is determined by the isomorphism class of $(E_{x},d\alpha|_{E_{x}})$ as a $S^1$-equivariant symplectic vector space. By the equivariant Darboux theorem, $(E_{x},d\alpha|_{E_{x}})$ is isomorphic to $(\mathbb{C}^{2},\sqrt{-1}\sum_{j=1}^{2}dz_{j} \wedge d\overline{z}_{j})$ with the $S^1$-action defined by
\begin{equation}
s \cdot (z_1,z_2)=(s^{m_1}z_{1}, s^{m_2}z_{2})
\end{equation}
for coprime integers $(m_1,m_2)$. 

Define a $1$-form $\alpha_{0}$ on $S^1 \times D^{4}_{\epsilon}$ by 
\begin{equation}\label{NormalFormOfAlpha}
\alpha_{0}=\frac{1-\lambda_1 (m_1 |z_1|^{2} + m_2 |z_2|^{2})}{\lambda_0}d\zeta + \frac{\sqrt{-1}}{2} (z_{1}d\overline{z}_{1}-\overline{z}_{1}dz_{1}) + \frac{\sqrt{-1}}{2} (z_{2}d\overline{z}_{2}-\overline{z}_{2}dz_{2}).
\end{equation}
$\alpha_{0}$ is a $K$-contact form of rank $2$. $S^1 \times \{0\}$ is a closed orbit of the Reeb flow of $\alpha_{0}$ of length $\frac{1}{\lambda_{0}}$. The $T^2$-equivariant symplectic normal bundle of $S^1 \times \{0\}$ is isomorphic to the symplectic normal bundle of $\Sigma$. By Lemma \ref{NormalFormTheorem} we have an open neighborhood $U$ of $\Sigma$, an open neighborhood $V$ of $S^1 \times \{0\}$ and a diffeomorphism $f \colon U \longrightarrow V$ such that $\alpha=f^{*}\alpha_{0}$.

The Reeb vector field $R_{0}$ of $\alpha_{0}$ is written as
\begin{equation}
R_{0}=\lambda_0\frac{\partial}{\partial \zeta}+ \lambda_1 \Big( m_1 \sqrt{-1} \Big( z_{1}\frac{\partial}{\partial z_{1}}-\overline{z}_{1}\frac{\partial}{\partial \overline{z}_{1}} \Big) + m_2 \sqrt{-1} \Big(z_{2}\frac{\partial}{\partial z_{2}}-\overline{z}_{2}\frac{\partial}{\partial \overline{z}_{2}} \Big) \Big).
\end{equation}
The $T^2$-action $\rho_{0}$ associated with $\alpha_{0}$ is given by 
\begin{equation}\label{StandardTorusActionInTheCaseOfConnectedIsotropyGroup}
(t,s) \cdot (\zeta,z_1,z_2)=(t \zeta, s^{m_1}z_{1}, s^{m_2}z_{2}).
\end{equation}
If we take an infinitesimal action 
\begin{equation}\label{FormulaOfX}
X=s_0 \frac{\partial}{\partial \zeta}+ s_1 \Big( m_1 \sqrt{-1} \Big( z_{1}\frac{\partial}{\partial z_{1}}-\overline{z}_{1}\frac{\partial}{\partial \overline{z}_{1}} \Big) + m_2 \sqrt{-1} \Big(z_{2}\frac{\partial}{\partial z_{2}}-\overline{z}_{2}\frac{\partial}{\partial \overline{z}_{2}} \Big) \Big)
\end{equation}
of $\rho_{0}$ which is not parallel to $R_{0}$, then $\Phi_{0}=\alpha_{0}(X_{0})$ is written as 
\begin{equation}\label{NormalFormOfPhi}
\Phi_{0}= m_1 \left( -\frac{s_0 \lambda_1}{\lambda_0} + s_1 \right) |z_1|^{2} + m_2 \left(  -\frac{s_0 \lambda_1}{\lambda_0} + s_1 \right) |z_2|^{2} + \frac{s_0}{\lambda_0}.
\end{equation}

Let $N_{i}^{1}$ and $N_{i}^{2}$ be two $K$-contact submanifolds containing $\Sigma_{i}$ for $i=0$ and $1$. Assume that $N_{i}^{1}$ is contained in $\Phi_{i}^{-1}((-\infty,\Phi_{i}(\Sigma_{i})])$ and $N_{i}^{2}$ is contained in $\Phi_{i}^{-1}([\Phi_{i}(\Sigma_{i}),\infty))$ for $i=0$ and $1$. If $\Sigma_{i}$ is not contained in $B_{\min i}$ nor $B_{\max i}$ for $i=0$ and $1$, we have
\begin{lem}\label{NormalFormOfSingularOrbits}
There exists an isomorphism from an open neighborhood of $\Sigma_{0}$ to an open neighborhood of $\Sigma_{1}$ mapping $N_{0}^{1}$ to $N_{1}^{1}$ if and only if $(G_{0})_{\Sigma_{0}}=(G_{1})_{\Sigma_{1}}$, $I(\rho,N_{0}^{1})=I(\rho,N_{1}^{1})$ and $I(\rho,N_{0}^{2})=I(\rho,N_{1}^{2})$. 
\end{lem}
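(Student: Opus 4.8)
The plan is to prove the two directions separately, treating the labelled invariants $(G_i)_{\Sigma_i}$, $I(\rho,N_i^1)$, $I(\rho,N_i^2)$ as a complete set of data for the germ of $\alpha_i$ along $\Sigma_i$, and to reconstruct the germ from them via the standard model \eqref{NormalFormOfAlpha} together with Lemma \ref{NormalFormTheorem}.

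For the ``only if'' part, suppose $f$ is an isomorphism from a neighborhood of $\Sigma_0$ onto a neighborhood of $\Sigma_1$ with $f(\Sigma_0)=\Sigma_1$, $f^*\alpha_1=\alpha_0$, $f(N_0^1)=N_1^1$, and $f\circ\rho_0(\cdot,g)=\rho_1(\cdot,g)\circ f$ for every $g\in T^2$. The equivariance forces $f$ to carry the common stabilizer of the points of $\Sigma_0$ onto that of $\Sigma_1$, so $(G_0)_{\Sigma_0}=(G_1)_{\Sigma_1}$ as subgroups of $T^2$. Moreover $f_*X_0=X_1$ for the infinitesimal action $X$ of $\overline X$, whence $\Phi_0=f^*\Phi_1$; thus $f$ preserves the sub- and super-level sets of $\Phi$ and sends the unique lower (resp. upper) invariant $K$-contact submanifold through $\Sigma_0$ to that through $\Sigma_1$ (there are exactly two by Lemma \ref{TwoKContactSubmanifolds}). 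Hence $f(N_0^1)=N_1^1$ and $f(N_0^2)=N_1^2$, and since $f$ is equivariant these restrictions identify the kernels of $T^2\to\Aut(N_0^j)$ and $T^2\to\Aut(N_1^j)$, giving $I(\rho,N_0^1)=I(\rho,N_1^1)$ and $I(\rho,N_0^2)=I(\rho,N_1^2)$.

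For the ``if'' part, I would first exploit the common identification with $T^2$: since $(G_0)_{\Sigma_0}=(G_1)_{\Sigma_1}$, a single change of coordinates on $T^2$ puts both stabilizers in the form $\{t=1\}$ with a complementary free circle $G'=\{s=1\}$ (Lemma \ref{LocalFreeActions}), and writes the common $\overline R$ as $\lambda_0\partial/\partial s+\lambda_1\partial/\partial t$ with $\lambda_0,\lambda_1$ rationally independent. By Lemmas \ref{LengthOfClosedOrbits} and \ref{ValuesOfContactMomentMaps} the lengths and the contact moment map values agree, so $\alpha_0|_{\Sigma_0}$ corresponds to $\alpha_1|_{\Sigma_1}$ under the equivariant diffeomorphism $\Sigma_0\cong T^2/\{t=1\}\cong\Sigma_1$. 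Because $G'$ acts freely near $\Sigma_i$, the $T^2$-equivariant symplectic normal bundle $E_i$ is determined by the $\{t=1\}$-representation on a fiber, which by the equivariant Darboux theorem is $\mathbb{C}^2$ with weights $(m_1^{(i)},m_2^{(i)})$; here the subbundle cut out by the second (resp. first) coordinate is the normal bundle of $N_i^1$ (resp. $N_i^2$), so $I(\rho,N_i^1)=|m_1^{(i)}|$ and $I(\rho,N_i^2)=|m_2^{(i)}|$. The signs of the weights are pinned down by the level-set conditions through formula \eqref{NormalFormOfPhi}: since $\Sigma_i$ lies in neither $B_{\min i}$ nor $B_{\max i}$, it is a saddle and the two weights contribute opposite signs to $\Phi$, the negative one being forced to belong to $N_i^1$ and the positive one to $N_i^2$. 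Thus the hypotheses give $(m_1^{(0)},m_2^{(0)})=(m_1^{(1)},m_2^{(1)})$, so $E_0$ and $E_1$ are isomorphic as $T^2$-equivariant symplectic bundles by a map $q$ covering the identification $\Sigma_0\cong\Sigma_1$ and sending the normal bundle of $N_0^1$ to that of $N_1^1$; as $q_*\overline R=\overline R$ holds automatically, Lemma \ref{NormalFormTheorem} produces an isomorphism $f$ of neighborhoods with $f^*\alpha_1=\alpha_0$, and $f(N_0^1)=N_1^1$ by construction. Concretely, both germs are thereby reduced to the single model \eqref{NormalFormOfAlpha} with the same parameters $(\lambda_0,\lambda_1,m_1,m_2)$ and the same $T^2$-action \eqref{StandardTorusActionInTheCaseOfConnectedIsotropyGroup}.

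The main obstacle is this last step: showing that the unordered weight data visible to the integers $I(\rho,N_i^j)$ actually reconstructs the ordered, signed pair $(m_1^{(i)},m_2^{(i)})$, and hence the equivariant normal bundle \emph{together with its two distinguished subbundles in the correct roles}. This is exactly where the hypothesis that $\Sigma_i$ avoids $B_{\min i}$ and $B_{\max i}$ enters, via \eqref{NormalFormOfPhi}, to separate the two weights by the sign of their contribution to $\Phi$ and so to match $N_0^1$ with $N_1^1$ rather than with $N_1^2$; without this the bundle map $q$ could fail to respect the labelling demanded by the statement.
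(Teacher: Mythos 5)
Your proposal is correct and takes essentially the same route as the paper's proof: reduce via Lemma \ref{NormalFormTheorem} to showing the data determine the $T^2$-equivariant symplectic normal bundle, use the free circle of Lemma \ref{LocalFreeActions} to reduce to the fiber representation with coprime weights $(m_1,m_2)$, recover $|m_j|=I(\rho,N^j)$, and pin down the signs (hence the labelling of $N^1$ versus $N^2$) through \eqref{NormalFormOfPhi} and the hypothesis that $\Sigma_i$ avoids $B_{\min i}$ and $B_{\max i}$. Your explicit ``only if'' argument and your sign bookkeeping (the coefficient on the direction tangent to $N^1$ is the nonpositive one, where the paper's stated sign appears to be a typo) merely fill in details the paper leaves implicit; the substance is identical.
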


\begin{proof}
It suffices to show that the normal form of $\Sigma$ is determined by $\overline{R}$, $I(\rho,N^{1})$, $I(\rho,N^{2})$ and $G_{\Sigma}$. By Lemma \ref{LengthOfClosedOrbits}, the isomorphism class of $(\Sigma,\alpha|_{\Sigma})$ is determined by $\overline{R}$ and $G_{\Sigma}$. By Lemma \ref{NormalFormTheorem}, it suffices to show that the given data determine the isomorphism class of $E$ as a $T^2$-equivariant symplectic vector bundle over $\Sigma$. By Lemma \ref{LocalFreeActions}, we take an $S^1$-subgroup $G'$ of $T^2$ so that $G'$ acts freely on an open neighborhood of $\Sigma$. Hence the isomorphism class of $(E_{x},d\alpha|_{E_{x}})$ as a symplectic vector space with an $S^1$-action determines the isomorphism class of $E$. $(E_{x},d\alpha|_{E_{x}})$ is isomorphic to $\mathbb{C}^{2}$ with the standard symplectic structure and an $S^1$-action defined by
\begin{equation}
s \cdot (z_1,z_2)=(s^{m_1}z_{1}, s^{m_2}z_{2})
\end{equation}
where $(m_1,m_2)$ is a pair of coprime integers. We can assume that $N^{1}$ is defined by $z_{2}=0$ and $N^{2}$ is defined by $z_{1}=0$. For $X$ given by \eqref{FormulaOfX}, $\alpha(X)$ is written as 
\begin{equation}
\alpha(X)= m_1 \left( -\frac{s_0 \lambda_1}{\lambda_0} + s_1 \right) |z_1|^{2} + m_2 \left(  -\frac{s_0 \lambda_1}{\lambda_0} + s_1 \right) |z_2|^{2} + \frac{s_0}{\lambda_0}
\end{equation}
near $\Sigma$ in the above coordinate on $S^1 \times D^{4}_{\epsilon}$. Then $m_1 \left( -\frac{s_0 \lambda_1}{\lambda_0} + s_1 \right)$ is positive and $m_2 \left(  -\frac{s_0 \lambda_1}{\lambda_0} + s_1 \right)$ is negative by the assumption. Since the $G'$-action is free near $\Sigma$, we have $|m_1|=I(\rho,N_1)$ and $|m_2|=I(\rho,N_2)$. Hence $m_1$ and $m_2$ are determined by the given data. Since the isomorphism class of $(E_{x},d\alpha|_{E_{x}})$ is determined by $m_1$ and $m_2$, the proof is completed.
\end{proof}

Assume that
\begin{enumerate}
\item $\Sigma_{0}$ is contained in $B_{\min 0}$ and $\Sigma_{1}$ is contained in $B_{\min 1}$ or
\item $\Sigma_{0}$ is contained in $B_{\max 0}$ and $\Sigma_{1}$ is contained in $B_{\max 1}$.
\end{enumerate}
We have the following by the same argument as in the proof of Lemma \ref{NormalFormOfSingularOrbits}:
\begin{lem}\label{NormalFormOfSingularOrbitsInB}
There exists an isomorphism from an open neighborhood of $\Sigma_{0}$ to an open neighborhood of $\Sigma_{1}$ mapping $N_{0}^{1}$ to $N_{1}^{1}$ if and only if $(G_{0})_{\Sigma_{0}}=(G_{1})_{\Sigma_{1}}$, $I(\rho,N_{0}^{1})=I(\rho,N_{1}^{1})$ and $I(\rho,N_{0}^{2})=I(\rho,N_{1}^{2})$. 
\end{lem}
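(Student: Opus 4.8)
The plan is to follow the proof of Lemma~\ref{NormalFormOfSingularOrbits} almost verbatim, the only change being the sign bookkeeping for the two weights of the local $S^1$-action. The assertion is an equivalence, and the ``only if'' direction is immediate: an isomorphism $f$ of neighborhoods carrying $N_0^1$ to $N_1^1$ is $T^2$-equivariant, hence preserves the isotropy group $(G_i)_{\Sigma_i}$; moreover $f$ permutes the two $K$-contact submanifolds through the orbit furnished by Lemma~\ref{TwoKContactSubmanifolds}, so it also carries $N_0^2$ to $N_1^2$, whence $I(\rho,N_0^j)=I(\rho,N_1^j)$ for $j=1,2$. It therefore suffices to treat the ``if'' direction, namely to show that a germ of a neighborhood of $\Sigma$ together with its marked pair $(N^1,N^2)$ is determined up to isomorphism by $(\overline{R},G_\Sigma,I(\rho,N^1),I(\rho,N^2))$ and by the hypothesis recording whether $\Sigma\subset B_{\min}$ or $\Sigma\subset B_{\max}$.

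First I would invoke Lemma~\ref{LengthOfClosedOrbits} to see that $(\Sigma,\alpha|_\Sigma)$ is determined by $\overline{R}$ and $G_\Sigma$, so that after the identification $G_0=G_1$ one has $\alpha_0|_{\Sigma_0}=\alpha_1|_{\Sigma_1}$. By the equivariant normal form theorem (Lemma~\ref{NormalFormTheorem}) it then remains to produce a $T^2$-equivariant symplectic bundle isomorphism of the normal bundles respecting $\overline{R}$ and carrying $N_0^1$ to $N_1^1$. Using Lemma~\ref{LocalFreeActions} I would fix an $S^1$-subgroup $G'$ acting freely near $\Sigma$; this reduces the classification of the normal bundle $E$ to the $S^1$-equivariant symplectic type of a single fibre $(E_x,d\alpha|_{E_x})$, which is $\mathbb{C}^2$ with the weighted action $s\cdot(z_1,z_2)=(s^{m_1}z_1,s^{m_2}z_2)$ for a coprime pair $(m_1,m_2)$, and with $N^1=\{z_2=0\}$, $N^2=\{z_1=0\}$. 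Freeness of $G'$ yields $|m_1|=I(\rho,N^1)$ and $|m_2|=I(\rho,N^2)$, exactly as in Lemma~\ref{NormalFormOfSingularOrbits}.

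The sole point at which the argument diverges from the saddle case is the determination of the signs of $m_1$ and $m_2$. Writing $\alpha(X)$ in the coordinates of \eqref{NormalFormOfPhi}, the coefficients of $|z_1|^2$ and $|z_2|^2$ are $m_1 c$ and $m_2 c$ with $c=-\frac{s_0\lambda_1}{\lambda_0}+s_1$. Since here $\Sigma$ is an extremum of $\Phi$ rather than a saddle, the hypothesis $\Sigma\subset B_{\min}$ (resp.\ $\Sigma\subset B_{\max}$) forces $m_1 c$ and $m_2 c$ to have the \emph{same} sign, both positive (resp.\ both negative), in contrast to the opposite-sign conclusion of Lemma~\ref{NormalFormOfSingularOrbits}. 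Consequently $(m_1,m_2)$ is pinned down up to the simultaneous sign change $(m_1,m_2)\mapsto(-m_1,-m_2)$, which corresponds to replacing the free generator $s$ by $s^{-1}$ and hence gives isomorphic fibres. This fixes $(E_x,d\alpha|_{E_x})$, therefore $E$, and feeding the result into Lemma~\ref{NormalFormTheorem} produces the desired isomorphism.

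I expect no genuine obstacle, as the computation coincides with the saddle case; the only delicate step is checking that the common sign of the two weights is actually forced by the $B_{\min}/B_{\max}$ hypothesis, so that these two same-sign normal forms are not conflated with a saddle. This is precisely guaranteed by the Bott-Morse index computation in Lemma~\ref{MorseBottTheory}, which shows that an extremal orbit has index $0$ or $4$ while a saddle has index $2$.
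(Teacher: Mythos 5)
Your proposal is correct and coincides with the paper's treatment, whose entire proof is literally ``by the same argument as in the proof of Lemma \ref{NormalFormOfSingularOrbits}'': one reruns the saddle-case argument (Lemma \ref{LengthOfClosedOrbits}, the normal form \eqref{NormalFormOfAlpha}, $|m_j|=I(\rho,N^j)$ via the free $G'$ of Lemma \ref{LocalFreeActions}, then Lemma \ref{NormalFormTheorem}) with the single modification that the hypothesis $\Sigma\subset B_{\min}$ (resp.\ $B_{\max}$) forces $m_1 c$ and $m_2 c$ to be both positive (resp.\ both negative) rather than of opposite signs, exactly as you do. One small tightening of your sign bookkeeping: since the identification $G_i\cong T^2$, the element $\overline{X}$, and $\overline{R}$ are fixed, the sign of $c=-\tfrac{s_0\lambda_1}{\lambda_0}+s_1$ is itself determined, so $(m_1,m_2)$ is pinned down outright; your residual ``simultaneous sign change'' is only the freedom of replacing the coordinate $s$ by $s^{-1}$ (which also flips $\lambda_1$), and in feeding the result into Lemma \ref{NormalFormTheorem} one should make the same such choice for both manifolds, so that the induced map on $\Lie(T^2)$ is the identity and condition (iii), $q_{*}\overline{R}_{0}=\overline{R}_{1}$, is satisfied.
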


\subsubsection{General cases}

Fix a point $x$ on $\Sigma$. We identify $G/G_{x}$ with $\Sigma$. Define $\tilde{\Sigma}=G/(G_{x})_{0}$ and denote the projection $\tilde{\Sigma} \longrightarrow G/G_{x}$ by $p$. Then $p$ is a $T^2$-equivariant finite cyclic covering with respect to the induced $T^2$-actions from the principal action of $G$ on $G$. Since $G_{\tilde{\Sigma}}=(G_{x})_{0}$, the isotropy group of the $T^2$-action induced on $p^{*}E$ at $0_{p^{*}E}$ is $(G_{x})_{0}$. Hence we have
\begin{lem}\label{FiniteCovering}
There exists a $T^2$-equivariant finite cyclic covering $p \colon \tilde{\Sigma} \longrightarrow \Sigma$ such that the isotropy group of the induced $T^2$-action on $p^{*}E$ at $0_{p^{*}E}$ is connected.
\end{lem}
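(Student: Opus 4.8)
The plan is to realize the desired covering explicitly as a quotient of the group $G \cong T^2$ and then to read off the isotropy group from the group-theoretic structure of $G_x$, following the construction already sketched in the paragraph preceding the statement. First I would fix a point $x$ on $\Sigma$ and use that $\Sigma$, being a single orbit of $\rho$ (it is the closure of a closed orbit of the Reeb flow, hence an orbit of $G$), is $T^2$-equivariantly diffeomorphic to $G/G_x$, where $G_x$ is the isotropy group of $\rho$ at $x$. Since $\Sigma$ is one-dimensional, $G_x$ is a closed one-dimensional subgroup of $G = T^2$; its identity component $(G_x)_0$ is therefore a circle subgroup, and the component group $G_x/(G_x)_0$ is finite.

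The key group-theoretic observation is that $G_x/(G_x)_0$ is cyclic. Indeed, the identity component of a closed subgroup of a torus is a subtorus, so $G_x/(G_x)_0$ embeds as a finite subgroup of the one-dimensional torus $T^2/(G_x)_0 \cong S^1$, and every finite subgroup of $S^1$ is cyclic. I would then set $\tilde{\Sigma}=G/(G_x)_0$, equipped with the $G$-action induced from the principal left action of $G$ on itself, and let $p \colon \tilde{\Sigma} \longrightarrow G/G_x = \Sigma$ be the natural projection. This map is $G$-equivariant by construction, and its fibers are the cosets of $(G_x)_0$ in $G_x$, that is, copies of the finite cyclic group $G_x/(G_x)_0$; hence $p$ is a $T^2$-equivariant finite cyclic covering.

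Finally I would pull back the symplectic normal bundle $E$ of $\Sigma$ along $p$ to obtain the $T^2$-equivariant vector bundle $p^{*}E$ over $\tilde{\Sigma}$. The induced action of $G$ on the zero section $0_{p^{*}E}$ coincides with its action on the base $\tilde{\Sigma}=G/(G_x)_0$, and because $G$ is abelian the isotropy group is constant along this orbit; it equals $G_{\tilde{\Sigma}}=(G_x)_0$, which is connected by definition. This is precisely the assertion of the lemma. I do not expect any serious obstacle: the only step demanding care is the verification that $G_x/(G_x)_0$ is cyclic, and this rests entirely on the classification of closed subgroups of a torus, which forces the component group to sit inside the circle $T^2/(G_x)_0$.
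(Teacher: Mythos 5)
Your proposal is correct and follows essentially the same route as the paper: both take $\tilde{\Sigma}=G/(G_x)_0$ with the natural projection to $G/G_x=\Sigma$, observe that the deck group $G_x/(G_x)_0$ is finite cyclic, and conclude that the isotropy group of the induced action on $p^{*}E$ at the zero section is the connected group $(G_x)_0$. Your explicit verification that $G_x/(G_x)_0$ is cyclic (by embedding it into $T^2/(G_x)_0\cong S^1$) is a detail the paper leaves implicit, but it is the same argument.
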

By Lemma \ref{FiniteCovering}, we can take a good coordinate near a closed orbit if we take a finite covering of an open neighborhood.

The symplectic structure $\omega$ on $\mathbb{C}^{2}$ defined by $\omega=\sqrt{-1}\sum_{j=1}^{2}dz_{j} \wedge d\overline{z}_{j}$ is called standard. We show
\begin{lem}\label{GeneralTorusActions}
The $T^2$-equivariant symplectic vector bundle $E$ is isomorphic to the trivial bundle $S^1 \times \mathbb{C}^{2}$ over $S^1$ with the symplectic structure which is standard on each fiber and the $T^2$-action defined by
\begin{equation}
(t_0,t_1) \cdot (\zeta,z_1,z_2) = (t_{0}^{n_0} \zeta, t_{0}^{n_1} t_{1}^{m_1} z_1, t_{0}^{n_2} t_{1}^{m_2} z_2)
\end{equation}
where $n_0=I(\rho,\Sigma)$ and $n_1,n_2,m_1$ and $m_2$ are some integers which satisfy $\GCD(n_0,n_1,n_2)=1$ and $\GCD(m_1,m_2)=1$.
\end{lem}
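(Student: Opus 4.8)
The plan is to exploit that $\Sigma$ is a homogeneous space for $\rho$ and that a $T^2$-equivariant vector bundle over a homogeneous space is recovered from its isotropy representation. First I would fix a point $x\in\Sigma$ and identify $\Sigma = T^2/T^2_x$, where $T^2_x$ is the stabilizer, which is constant along $\Sigma$ since $T^2$ is abelian. As $\Sigma$ is one dimensional, $T^2_x$ is a closed $1$-dimensional subgroup; after applying an automorphism of $T^2$ I may assume its identity component is the second circle factor $\{1\}\times S^1$, so that $T^2_x=\{(t_0,t_1)\mid t_0^{n_0}=1\}=(\mathbb{Z}/n_0\mathbb{Z})\times S^1$ for a positive integer $n_0$ equal to the number of components of $T^2_x$, where $\mathbb{Z}/n_0\mathbb{Z}$ is viewed as the group of $n_0$-th roots of unity. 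Computing the induced action on $\Sigma\cong S^1/(\mathbb{Z}/n_0\mathbb{Z})$ shows $(t_0,t_1)\cdot\zeta=t_0^{n_0}\zeta$, which identifies $n_0$ with $I(\rho,\Sigma)$.

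Next I would analyze the fiber. As follows directly from the definition of the symplectic normal bundle, $E_x=(\ker\alpha)_x$ with symplectic form $d\alpha_x$, a symplectic representation of $T^2_x$ on $\mathbb{C}^2$. Averaging a compatible complex structure over the compact group $T^2_x$, exactly as in the construction of compatible metrics in Subsection 3.2, makes $E_x$ a Hermitian $T^2_x$-module, which splits into two $d\alpha$-symplectic weight lines $L_1\oplus L_2$. On $L_j$ the group $(\mathbb{Z}/n_0\mathbb{Z})\times S^1$ acts by a character $(\omega,s)\mapsto\omega^{p_j}s^{m_j}$. By the standard equivalence between $T^2$-equivariant vector bundles over $T^2/T^2_x$ and $T^2_x$-representations, $E$ is $T^2$-equivariantly and fiberwise-symplectically isomorphic to $T^2\times_{T^2_x}(L_1\oplus L_2)$.

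Then I would exhibit the trivialization explicitly. Choosing integer lifts $n_j$ of $p_j$ modulo $n_0$, I would send $[(t_0,t_1),(z_1,z_2)]$ to $(t_0^{n_0},\,t_0^{n_1}t_1^{m_1}z_1,\,t_0^{n_2}t_1^{m_2}z_2)\in S^1\times\mathbb{C}^2$. A direct check using $\omega^{n_0}=1$ shows this is well defined on the quotient by $T^2_x$ and is a fiberwise rotation, hence a symplectomorphism on each fiber; reading off the residual action gives exactly $(t_0,t_1)\cdot(\zeta,z_1,z_2)=(t_0^{n_0}\zeta,\,t_0^{n_1}t_1^{m_1}z_1,\,t_0^{n_2}t_1^{m_2}z_2)$. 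Finally the two coprimality conditions come from effectiveness of $\rho$: if $g\mid m_1,m_2$ with $g>1$ then $(1,e^{2\pi i/g})\in\{1\}\times S^1$ acts trivially on the whole model neighborhood of $\Sigma$, and if $g\mid n_0,n_1,n_2$ with $g>1$ then $(e^{2\pi i/g},1)$ does, each contradicting effectiveness; hence $\GCD(m_1,m_2)=\GCD(n_0,n_1,n_2)=1$.

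The point needing the most care is the non-connected isotropy: the $\mathbb{Z}/n_0\mathbb{Z}$ factor of the stabilizer is precisely what forces the $t_0^{n_j}$ twists in the fiber action and what makes well-definedness of the trivialization a modular (mod $n_0$) rather than an integral condition. An alternative that matches the paper's own setup is to pass to the $n_0$-fold $T^2$-equivariant cover $\tilde\Sigma$ of Lemma \ref{FiniteCovering}, where the isotropy becomes connected and the explicit normal form \eqref{StandardTorusActionInTheCaseOfConnectedIsotropyGroup} applies, and then descend along the cyclic deck group; tracking the deck action on the weights is the analogue of the modular computation above.
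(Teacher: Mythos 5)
Your proposal is correct and takes essentially the same route as the paper's proof: both realize $E$ as the associated bundle of the isotropy representation at a point of $\Sigma$ (the paper via $G'\times_{G'\cap G_x}E_x$ after splitting $G=G'\times(G_x)_0$, you via $T^2\times_{T^2_x}E_x$ directly), diagonalize the stabilizer action using an invariant compatible metric, untwist by the explicit map $(t_0,t_1,z_1,z_2)\mapsto(t_0^{n_0},t_0^{n_1}t_1^{m_1}z_1,t_0^{n_2}t_1^{m_2}z_2)$, and deduce both $\GCD$ conditions from effectiveness of $\rho$. Your treatment of the disconnected isotropy with independent weights $p_1,p_2$ modulo $n_0$ corresponds exactly to (and in fact slightly cleans up) the paper's change-of-generator step for the cyclic group $G'\cap G_x$.
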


\begin{proof}
We denote the projection $G \longrightarrow G/G_{x}=\Sigma$ by $\pi$. Let $g_{*} \colon E \times G \longrightarrow E \times G$ be the map defined as the product of $\rho(g) \colon E \longrightarrow E$ and the left multiplication map of $g$ to $G$. We have the map $\chi \colon G \times E_{x} \longrightarrow \pi^{*}E$ defined by $\chi(g,v)=g_{*}v$ where $g_{*} \colon \pi^{*}E \longrightarrow \pi^{*}E$ is the restriction of $g_{*} \colon E \times G \longrightarrow E \times G$ to $\pi^{*}E$. Since the symplectic structure on the fibers of $E$ are preserved by $\rho$, $\chi$ is an isomorphism as $G$-equivariant symplectic vector bundles. Hence $E$ is obtained by the quotient of the trivial $G$-equivariant vector bundle $G \times E_{x}$ by the action of $G_{x}$.

We fix a subgroup $G'$ of $G$ so that $G' \times (G_{x})_{0}=G$ where $(G_{x})_{0}$ is the identity component of $G_{x}$. Then $G' \times_{G' \cap G_{x}} E_{x}$ has a structure of a $G$-equivariant symplectic vector bundle by the $G$-action defined by $g' h'[g,x]=[g'g,h'x]$ for $g$ in $G$, $g'$ in $G'$, $h'$ in $(G_{x})_{0}$ and $x$ in $E_{x}$. The canonical injection $G' \times_{G' \cap G_{x}} E_{x} \longrightarrow G \times_{G_{x}} E_{x}$ is an isomorphism of $G$-equivariant symplectic vector bundles. Hence $E$ is the quotient of the $T^2$-equivariant vector bundle $G' \times E_{x}$ by a $G' \cap G_{x}$-subaction $\sigma$.

We fix a metric on $E_{x}$ which is compatible with $d\alpha|_{\wedge^{2} E_{x}}$ and invariant under $G_{x}$. We regard $E_{x}$ as a complex vector space with a symplectic form. We fix a basis $\{v_1,v_2\}$ of $E_{x}$ so that $E_{x}$ is identified with $\mathbb{C}^{2}$ with the standard Euclidean metric and the standard symplectic structure. The action of $G_{x}$ on $E_{x}$ is unitary. Since $G_{x}$ is abelian, we can conjugate the action of $G_{x}$ so that the image of $G_{x} \longrightarrow \U(2)$ is contained in the group of diagonal matrices by the simultaneous diagonalization. Hence there exist homomorphisms $\phi_{i} \colon G_{x} \longrightarrow \mathbb{C}$ for $i=0$, $1$ such that the action of $G_{x}$ is written as $g \cdot (w_1,w_2)=( \phi_1(g) w_1, \phi_2(g) w_2)$ for each $g$ in $G_{x}$ and $(w_1,w_2)$ in $\mathbb{C}^{2}$. For $\xi$ in $G' \cong S^1$, the $G$-action on $G' \times E_{x}$ is written as
\begin{equation}
(t_0,t_1) \cdot (\xi,w_1,w_2) = (t_{0} \xi, t_{1}^{m_1} w_1, t_{1}^{m_2} w_2)
\end{equation}
for some integers $m_{1}$ and $m_{2}$ where $G' = G' \times (G_{x})_{0}$ . The $G' \cap G_{x}$-subaction $\sigma$ on $G' \times E_{x}$ is written as
\begin{equation}
s \cdot (\xi,w_1,w_2) = (s^{l_0} \xi, s^{l_1 m_1} w_1, s^{l_1 m_2} w_2)
\end{equation}
for a positive integer $l_0$ and an integer $l_1$ where $s$ is a generator of $\mathbb{Z}/k\mathbb{Z}$. Since $\sigma$ is free, $l_0$ satisfies $\GCD(l_0,k)=1$. By changing the generator of $\mathbb{Z}/k\mathbb{Z}$, we can write $\sigma$ as\begin{equation}
u \cdot (\xi,w_1,w_2) = (u \xi, u^{-n_1} w_1, u^{-n_2} w_2)
\end{equation}
for integers $n_1$ and $n_2$ where $u$ is a generator of $\mathbb{Z}/k\mathbb{Z}$. We have a diffeomorphism $f$ from $G' \times_{G' \cap G_{x}} E_{x}=E$ to $S^1 \times \mathbb{R}^{4}$ induced from a map $f$ defined on $G' \times E_{x}$ by $f(\xi,w_{1},w_{2})=(\xi^{k},\xi^{n_1}w_{1},\xi^{n_2}w_{2})$. Since $\rho$ is induced from $\tilde{\rho}$, $\rho$ is written in the new coordinate $(\zeta,z_1,z_2)=(\xi,\xi^{n_1}w_{1},\xi^{n_2}w_{2})$ on $G' \times_{G' \cap G_{x}} E_{x}$ as
\begin{equation}
(t_{0},t_{1}) \cdot (\zeta,z_1,z_2) = (t_{0}^{k} \zeta, t_{0}^{n_1} t_{1}^{m_1} z_1, t_{0}^{n_2} t_{1}^{m_2} z_2).
\end{equation}
The conditions $\GCD(n_0,n_1,n_2)=1$ and $\GCD(m_1,m_2)=1$ follow from the effectiveness of $\rho$. Note that since $\sigma$ preserves the symplectic structure on $G' \times E_{x}$, the symplectic structure on $E$ is standard with respect to the coordinate $(z_{1},z_{2})$ on each fiber.
\end{proof}

\begin{lem}\label{TwoKContactSubmanifolds2}
There exist two $K$-contact submanifolds containing $\Sigma$.
\end{lem}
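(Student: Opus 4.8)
The plan is to deduce the statement directly from the equivariant normal form established in Lemma \ref{GeneralTorusActions}, in the same spirit as the connected-isotropy argument of Lemma \ref{TwoKContactSubmanifolds}. In that earlier lemma one produced an $S^1$-subgroup acting freely near $\Sigma$ (via Lemma \ref{LocalFreeActions}) and used it to split the symplectic normal bundle; in the general case such a free subgroup need not exist, so instead I would invoke Lemma \ref{GeneralTorusActions} to identify the $T^2$-equivariant symplectic normal bundle $E$ of $\Sigma$ with the trivial bundle $S^1 \times \mathbb{C}^2$, equipped with the fiberwise standard symplectic structure and the $T^2$-action $(t_0,t_1)\cdot(\zeta,z_1,z_2)=(t_0^{n_0}\zeta, t_0^{n_1}t_1^{m_1}z_1, t_0^{n_2}t_1^{m_2}z_2)$. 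This explicit model is what replaces the locally free action as the input.

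First I would observe that the two sub-loci $E^1=\{z_2=0\}$ and $E^2=\{z_1=0\}$ are $T^2$-invariant symplectic vector subbundles of $E$. Invariance is immediate from the diagonal form of the action, and each is symplectic because Lemma \ref{GeneralTorusActions} guarantees that the symplectic form is standard on each fiber $\mathbb{C}^2$, so its restriction to the coordinate line $\mathbb{C}\times\{0\}$ (resp. $\{0\}\times\mathbb{C}$) is nondegenerate. Both subbundles contain the zero section $0_E$, which is identified with $\Sigma$.

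Next I would realize these subbundles as submanifolds of $M$ near $\Sigma$ by Corollary \ref{NormalBundles}: there is a $K$-contact form $\beta$ on a neighborhood of $0_E$ satisfying conditions (i)--(iii) together with a diffeomorphism $f$ from a neighborhood of $0_E$ onto a neighborhood of $\Sigma$ with $f^{*}\alpha=\beta$ and $f|_{0_E}=\id$. Setting $N^j=f(E^j)$ for $j=1,2$, it remains to check that each $N^j$ is a $K$-contact submanifold. Reeb-invariance holds because the Reeb field $R$ is an infinitesimal action of the $T^2$-action and $E^j$ is $T^2$-invariant, so the linear Reeb flow of $\beta$ preserves $E^j$. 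The contact condition follows along $\Sigma$: since fibers of $E$ lie in $\ker\beta$ there while $\beta(R)=1$, the kernel of $\beta|_{TN^j}$ at points of $\Sigma$ is exactly the fiber of $E^j$, on which $d\beta=d\alpha$ restricts nondegenerately because $E^j$ is a symplectic subbundle; hence $\beta\wedge d\beta|_{N^j}\neq 0$ on $\Sigma$, and therefore on a neighborhood of $\Sigma$. Since $E^1\neq E^2$, the submanifolds $N^1$ and $N^2$ are distinct, which is what the lemma requires.

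The routine verifications (symplectic nondegeneracy on the coordinate lines, Reeb-invariance, and the contact inequality near $\Sigma$) are all straightforward, so the only genuine content is extracting the correct $T^2$-invariant splitting of $E$. The main point, and the place where the general case truly differs from Lemma \ref{TwoKContactSubmanifolds}, is that this splitting must be read off from the global normal form of Lemma \ref{GeneralTorusActions} rather than manufactured from a locally free $S^1$-action, since the latter is unavailable once the isotropy group $G_\Sigma$ is disconnected. Once the splitting is in hand, Corollary \ref{NormalBundles} does the rest.
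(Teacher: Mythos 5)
Your proof is correct and follows essentially the same route as the paper's: invoke Lemma \ref{GeneralTorusActions} to put the $T^2$-equivariant symplectic normal bundle $E$ in the trivialized diagonal normal form, observe that the coordinate subbundles $\{z_1=0\}$ and $\{z_2=0\}$ are $T^2$-invariant symplectic subbundles, and conclude via Corollary \ref{NormalBundles}. The extra verifications you spell out (Reeb-invariance of the subbundles and the contact inequality along $\Sigma$) are left implicit in the paper but are accurate.
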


\begin{proof}
By Lemma \ref{GeneralTorusActions}, $E$ is isomorphic to the trivial bundle $S^1 \times \mathbb{C}^{4}$ over $S^1$ with the $T^2$-action defined by
\begin{equation}
(t_0,t_1) \cdot (\zeta,z_1,z_2) = (t_{0}^{n_0} \zeta, t_{0}^{n_1} t_{1}^{m_1} z_1, t_{0}^{n_2} t_{1}^{m_2} z_2)
\end{equation}
for some integers $n_0,n_1,n_2,m_1$ and $m_2$. $S^1 \times \mathbb{C}^{4}$ has two $T^2$-invariant symplectic vector subbundles $E_1$ and $E_2$ defined by the equations $z_1=0$ and $z_2=0$. The claim follows from Corollary \ref{NormalBundles}.
\end{proof}

The $S^1$-action on $\mathbb{C}$ defined by $s \cdot z= sz$ for $s$ in $S^1$ is called standard. Let $H$ be a $1$-dimensional Lie group. For a $1$-dimensional complex nondiscrete $H$-action $\sigma$, we define the signature $s(\sigma)$ of the weight of $\sigma$ by $s(\sigma)=1$ if the induced action of $H/\ker \sigma$ on $\mathbb{C}$ is standard, $s(\sigma)=-1$ if the induced action of $H/\ker \sigma$ on $\mathbb{C}$ is not standard. The following lemma is clear:
\begin{lem}\label{ComplexRepresentations}
Two $1$-dimensional complex nondiscrete $H$-actions $\sigma_0$ and $\sigma_1$ are isomorphic if and only if the kernels and the signatures of the weights of $\sigma_{0}$ and $\sigma_{1}$ are equal.
\end{lem}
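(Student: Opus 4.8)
The plan is to reduce the statement to the elementary classification of continuous homomorphisms from $H$ into $\U(1)$. First I would make precise what the data of a one-dimensional complex nondiscrete $H$-action $\sigma$ are: writing the action as $h \cdot z = \sigma(h)z$ for $z$ in $\mathbb{C}$, the map $\sigma \colon H \longrightarrow \U(1)$ is a continuous homomorphism, and nondiscreteness of the action means that the image $\sigma(H)$ is a nondiscrete, hence dense and therefore closed, subgroup of $\U(1)$; thus $\sigma$ is surjective. The kernel $\ker \sigma$ is then a closed subgroup and $\sigma$ factors as the quotient projection $H \longrightarrow H/\ker\sigma$ followed by an effective homomorphism $\overline{\sigma} \colon H/\ker\sigma \longrightarrow \U(1)$ that is an isomorphism onto $\U(1)$. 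Using the orientation that $H$ carries as a one-dimensional Lie group (for $H = S^1$ the standard coordinate $\zeta$, and for $H = \mathbb{R}$ the standard coordinate), one identifies $H/\ker\sigma$ with $S^1$, and the signature $s(\sigma)$ records exactly whether $\overline{\sigma}$ is orientation preserving, so that $\sigma$ is completely recovered from the pair $(\ker\sigma, s(\sigma))$.

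For the forward implication I would observe that an isomorphism between $\sigma_0$ and $\sigma_1$ is an $H$-equivariant isomorphism of $\mathbb{C}$ compatible with its complex structure, hence complex linear, i.e.\ multiplication by some $c$ in $\mathbb{C}^{\times}$. Equivariance gives $c\,\sigma_0(h)z = \sigma_1(h)\,cz$ for all $h$ in $H$ and $z$ in $\mathbb{C}$, and cancelling $c$ forces $\sigma_0 = \sigma_1$ as homomorphisms; in particular their kernels and signatures agree. For the converse, if $\ker\sigma_0 = \ker\sigma_1 =: K$ and $s(\sigma_0) = s(\sigma_1)$, then both $\sigma_i$ descend to effective homomorphisms $\overline{\sigma}_i \colon H/K \longrightarrow \U(1)$; an effective homomorphism from the oriented circle $H/K$ to $\U(1)$ of a prescribed orientation behaviour is unique, so the equality of signatures gives $\overline{\sigma}_0 = \overline{\sigma}_1$ and hence $\sigma_0 = \sigma_1$. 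The identity map of $\mathbb{C}$ is then an isomorphism from $\sigma_0$ to $\sigma_1$.

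The one point that deserves care, and which is the only real obstacle, is justifying that an admissible isomorphism must be complex linear rather than merely real linear. This is what makes the two signatures genuine invariants: complex conjugation $z \mapsto \overline{z}$ is an $H$-equivariant real-linear isomorphism intertwining $\sigma$ with the action of opposite signature, so if arbitrary real-linear equivariant maps were allowed the signature would not be an invariant at all. In the situations where the lemma is applied the relevant lines carry the standard symplectic structure of $\mathbb{C}$, and the admissible isomorphisms are the symplectic ones; since a symplectic linear automorphism of $\mathbb{C}$ commuting with the nondiscrete rotation action $\sigma$ is necessarily multiplication by an element of $\U(1)$, it preserves the orientation and the complex structure, so conjugation is excluded and the argument above applies verbatim.
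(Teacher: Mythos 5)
Your proposal is correct, and it supplies more than the paper does: the paper simply declares this lemma clear and records no argument at all, so there is no ``paper's proof'' to diverge from, and your writeup is exactly the intended elementary classification — reduce a nondiscrete one-dimensional complex $H$-action to a continuous homomorphism $\sigma \colon H \longrightarrow \U(1)$, and classify such homomorphisms by the kernel together with the orientation behaviour of the induced injective homomorphism $H/\ker\sigma \longrightarrow \U(1)$. Your final paragraph is the genuinely valuable part of the writeup. In the places where the lemma is invoked (Lemmas \ref{NormalFormOfSingularOrbits2} and \ref{ClosedOrbitsInB}) the lines carry the symplectic structure coming from $d\alpha$ and an invariant compatible complex structure, and your observation that a symplectic automorphism of $\mathbb{C}$ commuting with a nondiscrete rotation action must lie in the centralizer of $\SO(2)$ in $\GL(2,\mathbb{R})$, namely $\mathbb{C}^{\times}$, intersected with $\SL(2,\mathbb{R})$, hence in $\U(1)$, is precisely what excludes complex conjugation and makes the signature an invariant at all. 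Note that nondiscreteness is doing real work there: if the image of $\sigma$ were a discrete subgroup such as $\{\pm 1\}$, its centralizer would be all of $\GL(2,\mathbb{R})$ and the signature would not be well defined as an invariant.

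One local slip: ``nondiscrete, hence dense and therefore closed'' is not a valid implication — a dense proper subgroup of $\U(1)$ is never closed. The conclusion you actually need, surjectivity of $\sigma$, is nevertheless true in every case where the paper's signature makes sense, for a different reason: in all applications $H=G_{\Sigma}$ is a closed subgroup of $T^2$, hence compact, so the image of $\sigma$ is compact, hence closed, and a nondiscrete closed subgroup of $\U(1)$ is all of $\U(1)$; alternatively, whenever the identity component of $H$ acts nontrivially, its image is already all of $\U(1)$, since a nontrivial one-parameter subgroup $t \mapsto e^{\sqrt{-1}\lambda t}$ of $\U(1)$ is surjective, not merely dense. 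With this repair (and the tacit standing hypothesis, built into the paper's definition of $s(\sigma)$, that $H/\ker\sigma$ is identified with $S^1$ with an orientation induced from $H$), your argument is complete: the forward direction via complex-linearity of the intertwiner, and the converse via uniqueness of an injective homomorphism $S^1 \longrightarrow \U(1)$ with prescribed orientation behaviour, are both sound.
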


Let $N_{i}^{0}$ and $N_{i}^{1}$ be two $K$-contact submanifolds containing $\Sigma_{i}$ for $i=0$ and $1$. Assume that $N_{i}^{1}$ is contained in $\Phi_{i}^{-1}((-\infty,\Phi_{i}(\Sigma_{i})])$ and $N_{i}^{2}$ is contained in $\Phi_{i}^{-1}([\Phi_{i}(\Sigma_{i}),\infty))$ for $i=0$ and $1$. If $\Sigma_{i}$ is neither contained in $B_{\min i}$ nor in $B_{\max i}$ for $i=0$ and $1$, we have the following:
\begin{lem}\label{NormalFormOfSingularOrbits2}
There exists an isomorphism from an open neighborhood of $\Sigma_{0}$ to an open neighborhood of $\Sigma_{1}$ mapping $N_{0}^{0}$ to $N_{1}^{0}$ if and only if $(G_{0})_{\Sigma_{0}}=(G_{1})_{\Sigma_{1}}$ and $(G_{0})_{N_{0}^{j}}=(G_{1})_{N_{1}^{j}}$ for $j=0$ and $1$.
\end{lem}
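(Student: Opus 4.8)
The plan is to deduce the statement from the normal form theorem (Lemma \ref{NormalFormTheorem}) by showing that the two data it requires—the $K$-contact structure on the orbit and the $T^2$-equivariant symplectic normal bundle together with its marked decomposition—are completely determined by the isotropy subgroups $(G)_{\Sigma}$ and $(G)_{N^{j}}$. The ``only if'' direction is immediate: an isomorphism $f$ from a neighborhood of $\Sigma_{0}$ to a neighborhood of $\Sigma_{1}$ is $T^2$-equivariant and satisfies $f(\Sigma_{0})=\Sigma_{1}$ and $f(N_{0}^{j})=N_{1}^{j}$, so it carries the stabilizer of every point and of the marked submanifolds to the corresponding stabilizer, giving $(G_{0})_{\Sigma_{0}}=(G_{1})_{\Sigma_{1}}$ and $(G_{0})_{N_{0}^{j}}=(G_{1})_{N_{1}^{j}}$.

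For the ``if'' direction I would first record that the orbits agree as $K$-contact manifolds: since $\overline{R}_{0}=\overline{R}_{1}$ and $(G_{0})_{\Sigma_{0}}=(G_{1})_{\Sigma_{1}}$, Lemma \ref{LengthOfClosedOrbits} gives $\int_{\Sigma_{0}}\alpha_{0}=\int_{\Sigma_{1}}\alpha_{1}$, and both orbits are the homogeneous circle $T^2/G_{\Sigma}$ with the same Reeb element, so they are equivariantly isomorphic, matching $\alpha_{0}|_{\Sigma_{0}}$ with $\alpha_{1}|_{\Sigma_{1}}$. By Lemma \ref{TwoKContactSubmanifolds2} the symplectic normal bundle $E_{i}$ of $\Sigma_{i}$ splits $T^2$-equivariantly and symplectically as $E_{i}=E_{i}^{0}\oplus E_{i}^{1}$, where $E_{i}^{j}$ is the line subbundle tangent to $N_{i}^{j}$ (the two being distinguished by lying below or above $\Sigma$ relative to $\Phi$).

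The crux is to identify each line subbundle. Viewing $\Sigma_{i}=T^2/G_{\Sigma}$, a $T^2$-equivariant complex line bundle over it is the associated bundle $T^2\times_{G_{\Sigma}}\mathbb{C}_{\chi}$ of a character $\chi$ of $G_{\Sigma}$, and two such are $T^2$-equivariantly isomorphic exactly when the characters coincide. Using the explicit model of Lemma \ref{GeneralTorusActions}, in which $\rho$ acts by $(t_{0},t_{1})\cdot(\zeta,z_{1},z_{2})=(t_{0}^{n_{0}}\zeta,\,t_{0}^{n_{1}}t_{1}^{m_{1}}z_{1},\,t_{0}^{n_{2}}t_{1}^{m_{2}}z_{2})$, I would compute that the isotropy character $\chi_{j}$ of $E_{i}^{j}$ restricted to $G_{\Sigma}$ has kernel exactly $(G)_{N_{i}^{j}}$. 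Since $G_{\Sigma}/(G)_{N^{j}}$ is a compact one-dimensional group, the induced faithful character into $U(1)$ is $z\mapsto z^{\pm1}$, so by Lemma \ref{ComplexRepresentations} the character $\chi_{j}$ is determined, up to its signature, by $(G)_{\Sigma}$ and $(G)_{N^{j}}$; the signature is then fixed by the requirement that the complex structure be compatible with the intrinsic symplectic form $d\alpha|_{E_{i}^{j}}$. Consequently $\chi_{j}^{(0)}=\chi_{j}^{(1)}$ for $j=0,1$, the line bundles $E_{0}^{j}$ and $E_{1}^{j}$ are $T^2$-equivariantly symplectically isomorphic, and summing yields an isomorphism $q\colon E_{0}\to E_{1}$ covering the identification $\Sigma_{0}\to\Sigma_{1}$, carrying $E_{0}^{0}$ to $E_{1}^{0}$ and with $q_{*}=\id_{T^2}$, so $q_{*}(\overline{R}_{0})=\overline{R}_{1}$. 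Feeding these into Lemma \ref{NormalFormTheorem} produces a diffeomorphism of neighborhoods pulling back $\alpha_{1}$ to $\alpha_{0}$ and inducing $q$, and since $q$ matches the subbundles the resulting isomorphism sends $N_{0}^{0}$ to $N_{1}^{0}$.

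I expect the main obstacle to be this crux step of pinning down the isotropy characters: one must verify through the model of Lemma \ref{GeneralTorusActions} that $(G)_{N^{j}}$ is genuinely the kernel of $\chi_{j}|_{G_{\Sigma}}$—rather than a character on a complementary direction—and that the orientation and signature bookkeeping correctly forces the two signs to agree, so that the matched pair $(N^{0},N^{1})$ is not inadvertently swapped. As a fallback, the entire comparison can be lifted through the finite cyclic covering of Lemma \ref{FiniteCovering}, reducing to the connected-isotropy case of Lemma \ref{NormalFormOfSingularOrbits} and then descending, which handles the disconnectedness of $G_{\Sigma}$ at the cost of tracking the deck-group data.
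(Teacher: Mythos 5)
Your skeleton coincides with the paper's: you reduce to Lemma \ref{NormalFormTheorem} by showing that $\overline{R}$, $G_{\Sigma}$ and the groups $G_{N^{j}}$ determine $(\Sigma,\alpha|_{\Sigma})$ (Lemma \ref{LengthOfClosedOrbits}) and the $T^2$-equivariant symplectic normal bundle, via the splitting of Lemma \ref{TwoKContactSubmanifolds2}, the slice/associated-bundle description, and Lemma \ref{ComplexRepresentations}; the ``only if'' direction and this reduction are fine. The genuine gap is in your crux step: the signature of the weight is \emph{not} ``fixed by the requirement that the complex structure be compatible with the intrinsic symplectic form $d\alpha|_{E_{i}^{j}}$.'' For a circle acting linearly on $(\mathbb{R}^{2},\omega)$, the weight-$(+m)$ and weight-$(-m)$ representations have the same kernel, both admit invariant compatible complex structures, and they are not isomorphic as symplectic representations (a symplectic isomorphism preserves the orientation induced by $\omega$). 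So compatibility only says the character is an invariant of the symplectic representation on each manifold separately; it supplies no reason why the signs computed on $M_{0}$ and on $M_{1}$ agree, which is exactly what you need to conclude $E_{0}^{j}\cong E_{1}^{j}$. As written, your argument would equally ``prove'' the lemma in a configuration where a summand has weight $+m$ over $\Sigma_{0}$ and $-m$ over $\Sigma_{1}$ — same kernels, same $\overline{R}$ — where no equivariant symplectic isomorphism exists.

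What actually pins down the signs is the standing hypothesis stated just before the lemma, that one marked submanifold lies in $\Phi_{i}^{-1}((-\infty,\Phi_{i}(\Sigma_{i})])$ and the other in $\Phi_{i}^{-1}([\Phi_{i}(\Sigma_{i}),\infty))$ for \emph{both} $i=0$ and $1$: passing to the finite covering of Lemma \ref{FiniteCovering}, one writes $\Phi$ in the form \eqref{NormalFormOfPhi}, and the sign of each weight is the sign of the corresponding coefficient of $|z_{j}|^{2}$, hence is determined by whether the submanifold sits below or above the critical level — this is the argument at the end of the proof of Lemma \ref{NormalFormOfSingularOrbits}, and it is precisely what the paper invokes at this point (``the signatures of weights are determined by the assumption on $N^{j}_{i}$''). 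You correctly flag this verification as the main obstacle, and your fallback of lifting through Lemma \ref{FiniteCovering} to the connected-isotropy case is viable if you track deck-group equivariance on descent; but in the proposal as it stands, compatibility of $J$ with $d\alpha$ is doing work it cannot do, and the moment-map positioning argument must replace it.
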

\begin{proof}
It suffices to show that the normal form of $\Sigma_{i}$ is determined by $\overline{R}_{i}$, $(G_{i})_{\Sigma_{i}}$, $(G_{i})_{N^{0}_{i}}$ and $(G_{i})_{N^{1}_{i}}$ for $i=0$ and $1$. Since we argue on each $i=0$ and $1$ separately, we omit the subscript $i$ in the following argument. By Lemma \ref{LengthOfClosedOrbits}, the isomorphism class of $(\Sigma,\alpha|_{\Sigma})$ is determined by $\overline{R}$ and $G_{\Sigma}$. 
 
We will show that the $T^2$-equivariant symplectic normal bundle $E$ of $\Sigma$ is determined by the given data. By the slice theorem, the $T^2$-equivariant normal bundle of $\Sigma$ is determined by $G_{\Sigma}$ and the action of $G_{\Sigma}$ on a fiber $E_{x}$ of $E$ over a point $x$ in $\Sigma$. Hence it suffices to show that the symplectic representation of $G_{\Sigma}$ on $E_{x}$ is determined by the given data. By Lemma \ref{TwoKContactSubmanifolds2}, the $G_{\Sigma}$-action on $E_{x}$ is decomposed into a direct sum of two $2$-dimensional symplectic linear representations as $E_{x}= ((TN^{1})^{d\alpha})_{x} \oplus ((TN^{2})^{d\alpha})_{x}$ where $N^{1}$ and $N^{2}$ are the $K$-contact submanifolds containing $\Sigma$ and $(TN^{j})^{d\alpha}$ is the symplectic normal bundle of $N^{j}$ in $(M,\alpha)$ for $j=1$ and $2$. Fix complex structures on $TN^{1}_{x}$ and $TN^{2}_{x}$ which is compatible with symplectic structures and invariant under $G_{\Sigma}$. Note that the actions of $G_{\Sigma}$ on $TN^{1}_{x}$ and $TN^{2}_{x}$ are not discrete by the assumption, since the kernels of the direct summands are $G_{N^{1}}$ and $G_{N^{2}}$. Since the signatures of weights are determined by the assumption on $N^{j}_{i}$ using the expression of $\alpha$ on a finite covering of an open tubular neighborhood as in the argument in the last part of the proof of Lemma \ref{NormalFormOfSingularOrbits}, the isomorphism classes of direct summands are determined as unitary representations by the given data by Lemma \ref{ComplexRepresentations}.
\end{proof}

Assume that 
\begin{enumerate}
\item $\Sigma_{0}$ is contained in $B_{\min 0}$ of dimension $1$ and $\Sigma_{1}$ is contained in $B_{\min 1}$ of dimension $1$ or
\item $\Sigma_{0}$ is contained in $B_{\max 0}$ of dimension $1$ and $\Sigma_{1}$ is contained in $B_{\max 1}$ of dimension $1$.
\end{enumerate}
In these cases, the action of the isotropy group is decomposed into a direct sum of two $2$-dimensional symplectic nondiscrete linear representations. Hence we have the following by the same argument as in the proof of Lemma \ref{NormalFormOfSingularOrbits2}:
\begin{lem}
There exists an isomorphism from an open neighborhood of $\Sigma_{0}$ to an open neighborhood of $\Sigma_{1}$ mapping $N_{0}^{0}$ to $N_{1}^{0}$ if and only if $(G_{0})_{\Sigma_{0}}=(G_{1})_{\Sigma_{1}}$, $(G_{0})_{N_{0}^{0}}=(G_{1})_{N_{1}^{0}}$ and $(G_{0})_{N_{0}^{1}}=(G_{1})_{N_{1}^{1}}$.
\end{lem}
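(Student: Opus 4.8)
The plan is to follow, almost verbatim, the strategy of the proof of Lemma \ref{NormalFormOfSingularOrbits2}, the only genuinely new input being the assertion recorded in the paragraph preceding the statement: the isotropy representation on the symplectic normal bundle of $\Sigma_i$ splits into two \emph{nondiscrete} two-dimensional symplectic summands. As in Lemma \ref{NormalFormOfSingularOrbits2}, the ``only if'' direction is immediate, since $(G_i)_{\Sigma_i}$, $(G_i)_{N_i^0}$ and $(G_i)_{N_i^1}$ are invariants of the $T^2$-equivariant $K$-contact germ. So the work lies entirely in the ``if'' direction, and it suffices to show that the listed data determine the $K$-contact normal form of $\Sigma_i$ up to isomorphism; the conclusion will then follow from the normal form theorem, Lemma \ref{NormalFormTheorem}.

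First I would fix $i$ and dispose of the base orbit: by Lemma \ref{LengthOfClosedOrbits} the isomorphism class of $(\Sigma_i,\alpha_i|_{\Sigma_i})$, and hence an identification $\Sigma_0 \cong \Sigma_1$, is determined by $\overline{R}$ and $(G_i)_{\Sigma_i}$. By the slice theorem (equivalently by the normal form of Lemma \ref{GeneralTorusActions}), the $T^2$-equivariant symplectic normal bundle $E_i$ of $\Sigma_i$ is then determined by $(G_i)_{\Sigma_i}$ together with the symplectic representation of $(G_i)_{\Sigma_i}$ on a fiber $(E_i)_x$. By Lemma \ref{TwoKContactSubmanifolds2} this fiber splits $(G_i)_{\Sigma_i}$-equivariantly as $(E_i)_x = ((TN_i^0)^{d\alpha_i})_x \oplus ((TN_i^1)^{d\alpha_i})_x$, a direct sum of two two-dimensional symplectic representations whose kernels are $(G_i)_{N_i^0}$ and $(G_i)_{N_i^1}$.

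The crucial point is that both summands are nondiscrete. Since $B_{\min i}$ (resp. $B_{\max i}$) has dimension $1$, the orbit $\Sigma_i$ is an isolated closed orbit and hence $(G_i)_{\Sigma_i}$ is one-dimensional, while $(G_i)_{N_i^j}$ is finite; thus $(G_i)_{\Sigma_i}/(G_i)_{N_i^j}$ is one-dimensional and each summand is a nondiscrete one-dimensional complex representation. Its isomorphism class is therefore governed by Lemma \ref{ComplexRepresentations}, i.e. by its kernel $(G_i)_{N_i^j}$ and the signature of its weight. The signatures are pinned down by the standing hypothesis: in case (i) the orbit is an isolated minimum of $\Phi_i$, so writing $\Phi_i$ on a finite cover as in Lemma \ref{NormalFormOfSingularOrbits} shows both weights positive, whereas in case (ii) both are negative; and because we assume $\Sigma_0,\Sigma_1$ lie simultaneously in the minimal or simultaneously in the maximal component, the two problems carry the same signature pattern. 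Hence the equalities $(G_0)_{N_0^j}=(G_1)_{N_1^j}$ force the corresponding summands of $(E_0)_x$ and $(E_1)_x$ to be isomorphic as unitary representations for $j=0,1$.

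Assembling these fibrewise isomorphisms yields an isomorphism $q$ of $E_0$ onto $E_1$ as $T^2$-equivariant symplectic vector bundles, covering the identification $\Sigma_0 \cong \Sigma_1$, sending $\overline{R}_0$ to $\overline{R}_1$ and carrying $(TN_0^0)^{d\alpha_0}$ to $(TN_1^0)^{d\alpha_1}$. Applying Lemma \ref{NormalFormTheorem} with $N=\Sigma_i$ then produces the desired isomorphism of open neighborhoods mapping $N_0^0$ to $N_1^0$. The one step requiring care, and the place where the dimension-one hypothesis on $B_{\min}$ or $B_{\max}$ is essential, is the verification that both summands are nondiscrete: this is precisely what can fail when the extremal component is three-dimensional (then $\Sigma$ need not be isolated, $(G_i)_{\Sigma_i}$ may be larger, and Lemma \ref{ComplexRepresentations} does not apply in the same form), and it is what licenses the reduction to Lemma \ref{ComplexRepresentations} and the reuse of the argument of Lemma \ref{NormalFormOfSingularOrbits2}.
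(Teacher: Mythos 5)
Your proposal is correct and takes essentially the same route as the paper: the paper's own proof consists precisely of the observation that, when $B_{\min}$ (resp.\ $B_{\max}$) is one-dimensional, the isotropy representation on the normal fiber splits into two \emph{nondiscrete} two-dimensional symplectic summands, after which the argument of Lemma \ref{NormalFormOfSingularOrbits2} (length of the orbit via Lemma \ref{LengthOfClosedOrbits}, the slice theorem, Lemma \ref{ComplexRepresentations}, and Lemma \ref{NormalFormTheorem}) is repeated verbatim. Your explicit verification of nondiscreteness (finiteness of $(G_i)_{N_i^j}$ against one-dimensionality of $(G_i)_{\Sigma_i}$) and of the sign pattern of the weights forced by the minimum/maximum hypothesis merely spells out what the paper leaves implicit.
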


Assume that 
\begin{enumerate}
\item $\Sigma_{0}$ is contained in $B_{\min 0}$ of dimension $3$ and $\Sigma_{1}$ is contained in $B_{\min 1}$ of dimension $3$ or 
\item $\Sigma_{0}$ is contained in $B_{\max 0}$ of dimension $3$ and $\Sigma_{1}$ is contained in $B_{\max 1}$ of dimension $3$.
\end{enumerate}
We denote $B_{\max i}$ or $B_{\min i}$ containing $\Sigma_{i}$ by $B_i$ for $i=0$ and $1$. Let $N^{i}$ be a $K$-contact submanifold containing $\Sigma_{i}$ other than $B_{i}$ for $i=0$ and $1$. We denote the Seifert fibration on $B_{i}$ defined by the orbits of $\rho_{i}$ by $\mathcal{F}_{i}$ for $i=0$ and $1$.
\begin{lem}\label{ClosedOrbitsInB}
There exists an isomorphism from an open neighborhood of $\Sigma_{0}$ to an open neighborhood of $\Sigma_{1}$ mapping $N^{0}$ to $N^{1}$ if and only if $(G_{0})_{\Sigma_{0}}=(G_{1})_{\Sigma_{1}}$, $(G_{0})_{N_{0}}=(G_{1})_{N_{1}}$ and the Seifert invariant of $\mathcal{F}_{0}$ at $\Sigma_{0}$ is equal to the Seifert invariant of $\mathcal{F}_{1}$ at $\Sigma_{1}$.
\end{lem}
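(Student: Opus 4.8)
The plan is to follow the pattern of Lemmas~\ref{NormalFormOfSingularOrbits} and~\ref{NormalFormOfSingularOrbits2}: I would reduce the existence of an isomorphism of neighborhoods to the assertion that the listed data determine the germ of $\alpha_i$ near $\Sigma_i$, and then invoke the normal form theorem. The ``only if'' direction is immediate, since any isomorphism carrying $N^0$ to $N^1$ must carry $(G_0)_{\Sigma_0}$ to $(G_1)_{\Sigma_1}$ and $(G_0)_{N_0}$ to $(G_1)_{N_1}$, and must preserve the Seifert fibration on $B_i$ defined by the Reeb flow together with its orientation, hence preserve the Seifert invariant at the exceptional orbit $\Sigma_i$.

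For the ``if'' direction, by Lemma~\ref{NormalFormTheorem} it suffices to show that $(G_i)_{\Sigma_i}$, $(G_i)_{N_i}$, the Seifert invariant of $\mathcal{F}_i$ at $\Sigma_i$, and the prescribed element $\overline{R}$ together determine, up to $T^2$-equivariant symplectic isomorphism covering $\id_{\Sigma}$, both the isomorphism class of $(\Sigma_i,\alpha_i|_{\Sigma_i})$ and the $T^2$-equivariant symplectic normal bundle $E_i$ of $\Sigma_i$, respecting the summand singling out $N_i$. First, $(\Sigma_i,\alpha_i|_{\Sigma_i})$ is determined by $\overline{R}$ and $(G_i)_{\Sigma_i}$ by Lemma~\ref{LengthOfClosedOrbits}. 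Next, by Lemma~\ref{TwoKContactSubmanifolds2} the bundle splits $T^2$-equivariantly as $E_i = (TB_i)^{d\alpha_i}|_{\Sigma_i} \oplus (TN_i)^{d\alpha_i}|_{\Sigma_i}$, where the first summand is the complex line along $N_i$ transverse to $B_i$, and the second is the normal direction of the exceptional fiber $\Sigma_i$ inside $B_i$. I would put $\rho_i$ in the normal form of Lemma~\ref{GeneralTorusActions} (which passes through the finite cyclic covering of Lemma~\ref{FiniteCovering} to handle disconnected isotropy), so that each summand becomes a weighted complex line with explicit integer weights, $B_i$ and $N_i$ being the two coordinate slices.

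For the summand $(TB_i)^{d\alpha_i}|_{\Sigma_i}$ transverse to $B_i$ the argument is exactly as in Lemma~\ref{NormalFormOfSingularOrbits2}: the $(G_i)_{\Sigma_i}$-action on it is a nondiscrete one-dimensional complex representation whose kernel is $(G_i)_{N_i}$, and whose weight has a fixed signature because $\Sigma_i$ lies in the extremal component $B_i$, so the gradient direction points uniformly into the manifold (cf.\ the sign discussion in the proof of Lemma~\ref{MorseBottTheory}); hence by Lemma~\ref{ComplexRepresentations} this summand is determined by $(G_i)_{N_i}$ and $(G_i)_{\Sigma_i}$. The hard part is the remaining summand $(TN_i)^{d\alpha_i}|_{\Sigma_i}$, the normal bundle of $\Sigma_i$ as an exceptional fiber of $\mathcal{F}_i$, where I must establish the dictionary between its $T^2$-weight and the Seifert invariant. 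In the model of Lemma~\ref{GeneralTorusActions} with $B_i$ a coordinate slice, the Reeb flow is the infinitesimal action of the prescribed $\overline{R}$, and its rotation number transverse to $\Sigma_i$ on the solid torus neighborhood of $\Sigma_i$ in $B_i$ is precisely the Seifert invariant of $\mathcal{F}_i$ at $\Sigma_i$; conversely this rotation number, together with $\overline{R}$ and $(G_i)_{\Sigma_i}=n_0$, recovers the weight governing this summand up to the generator fixed by the orientation of the base. The two points requiring care are matching the standard normalization of the Seifert invariant (oriented base, reduced residue) to a canonical choice of $T^2$-weight, and verifying that the recovered data pin down the full $T^2$-action and not merely the Reeb $S^1$; the latter holds because $\overline{R}$ is prescribed and $\rho_i$ is its closure. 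Assembling the two summands yields the required $T^2$-equivariant symplectic bundle isomorphism taking $N^0$ to $N^1$ and respecting $\overline{R}$, and Lemma~\ref{NormalFormTheorem} then produces the isomorphism of neighborhoods.
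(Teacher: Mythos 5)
Your proposal is correct and follows essentially the same route as the paper's proof: reduction via Lemma \ref{NormalFormTheorem} and Lemma \ref{LengthOfClosedOrbits}, the splitting of $E_{x}$ into the plane tangent to $N_{i}$ and the plane tangent to $B_{i}$, Lemma \ref{ComplexRepresentations} with kernel $(G_{i})_{N_{i}}$ and sign of the weight fixed by the extremality of $B_{i}$ for the first summand, and the Seifert invariant for the second. The only cosmetic difference is that where you recover the weight on the $TB$-summand from the rotation number of the Reeb flow on a transversal in $B_{i}$, the paper makes the underlying structural fact explicit by proving $G_{B}=(G_{\Sigma})_{0}$, so that this representation factors through the finite group $G_{\Sigma}/G_{B}$, whose action on $TB_{x}$ is precisely the local Seifert datum at $\Sigma$ --- the same dictionary you invoke, established directly.
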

\begin{proof}
The proof is similar to the proof of Lemma \ref{NormalFormOfSingularOrbits2}. It suffices to show that the normal form of $\Sigma_{i}$ is determined by $\overline{R}_{i}$, $(G_{i})_{\Sigma_{i}}$, $(G_{i})_{N}$ and the Seifert invariant of $\mathcal{F}_{i}$ at $\Sigma_{i}$ for each $i=0$ and $1$. Since we argue on each $i=0$ and $1$ separately, we omit the subscript $i$ in the following argument. By Lemma \ref{LengthOfClosedOrbits}, the isomorphism class of $(\Sigma,\alpha|_{\Sigma})$ is determined by $\overline{R}$ and $G_{\Sigma}$. 

We will show that the $T^2$-equivariant symplectic normal bundle $E$ of $\Sigma$ is determined by the given data. By the slice theorem, the $T^2$-equivariant normal bundle of $\Sigma$ is determined by $G_{\Sigma}$ and the action of $G_{\Sigma}$ on the fiber $E_{x}$ of $E$ over a point $x$ in $\Sigma$. Hence it suffices to show that the symplectic representation of $G_{\Sigma}$ on $E_{x}$ is determined by the given data. By the assumption, the $G_{\Sigma}$-action on $E_{x}$ is decomposed into a direct sum of two $2$-dimensional symplectic linear representations as $E_{x}=TN_{x} \oplus TB_{x}$. Note that the actions of $G_{\Sigma}$ on $TN_{x}$ is not discrete by the assumption. Since the signature of the weight of  the actions of $G_{\Sigma}$ on $TN_{x}$ is determined by the assumption on $N^{j}_{i}$ using the expression of $\alpha$ on a finite covering of an open tubular neighborhood as the argument in the last part of the proof of Lemma \ref{NormalFormOfSingularOrbits}, the isomorphism classes of direct summands are determined as unitary representations by the given data by Lemma \ref{ComplexRepresentations}. Hence it suffices to show that the action of $G_{\Sigma}$ on $TB_{x}$ is determined by the given data. First, we show that $G_{B}=(G_{\Sigma})_{0}$. $G_{B}$ is contained in $G_{\Sigma}$ clearly. Since $G_{B}$ acts on a fiber of the normal bundle of $B$ effectively, $G_{B}$ is isomorphic to a subgroup of $\SO(2)$. Since $G_{B}$ is of dimension $1$, $G_{B}$ is isomorphic to $S^1$. Hence we have $G_{B}=(G_{\Sigma})_{0}$. Then the action of $G_{\Sigma}$ on $TB_{x}$ factors the action of $G_{\Sigma}/(G_{\Sigma})_{0}=G_{\Sigma}/G_{B}$ on $TB_{x}$. Since the action of $G_{\Sigma}/G_{B}$ on $TB_{x}$ is determined by the Seifert invariant of $\mathcal{F}$ at $\Sigma$, the proof is completed.
\end{proof}

Assume that $\Sigma$ is neither contained in $B_{\min}$ nor in $B_{\max}$. Let $N^1$ and $N^2$ be the two $K$-contact submanifolds containing $\Sigma$. We have
\begin{lem}\label{GCD}
$I(\rho,\Sigma)=\GCD(I(\rho,N^1),I(\rho,N^2))$.
\end{lem}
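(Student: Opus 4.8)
The plan is to derive the statement directly from the equivariant normal form of $\rho$ near $\Sigma$ given by Lemma \ref{GeneralTorusActions}. First I would use that lemma to identify an open neighborhood of $\Sigma$ with the trivial bundle $S^1 \times \mathbb{C}^2$ on which $\rho$ acts by
\begin{equation}
(t_0,t_1)\cdot(\zeta,z_1,z_2)=(t_0^{n_0}\zeta,\ t_0^{n_1}t_1^{m_1}z_1,\ t_0^{n_2}t_1^{m_2}z_2),
\end{equation}
where $n_0=I(\rho,\Sigma)$ and $\GCD(m_1,m_2)=1$. By Lemma \ref{TwoKContactSubmanifolds2} the two $K$-contact submanifolds through $\Sigma$ are precisely the two $\rho$-invariant symplectic subbundles, which in this coordinate are $N^1=\{z_2=0\}$ and $N^2=\{z_1=0\}$ (the labelling is irrelevant since $\GCD$ is symmetric).

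Next I would compute the cardinalities $I(\rho,N^1)$ and $I(\rho,N^2)$, i.e. the orders of the pointwise stabilizers of $N^1$ and $N^2$ in $T^2$. An element $(t_0,t_1)$ fixes $N^1$ pointwise if and only if it fixes every $(\zeta,z_1)$, that is if and only if $t_0^{n_0}=1$ and $t_0^{n_1}t_1^{m_1}=1$; since the exponent matrix $\begin{psmallmatrix} n_0 & 0 \\ n_1 & m_1 \end{psmallmatrix}$ has nonzero determinant, this is a finite group of order $\bigl|\det \begin{psmallmatrix} n_0 & 0 \\ n_1 & m_1 \end{psmallmatrix}\bigr|=n_0|m_1|$. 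The same computation for $N^2$ gives $I(\rho,N^2)=n_0|m_2|$. This is consistent with the identity $I(\rho,L^1_{\min})=|a_0b_1-a_1b_0|=n_0|m_1|$ used in the proof of Lemma \ref{LevelSetNearB} via \eqref{Transformation}.

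The one point that needs the hypothesis is the nonvanishing of $m_1$ and $m_2$: if, say, $m_2=0$, then $t_1$ would fix $N^2$ pointwise and the stabilizer of $N^2$ would be positive-dimensional. This happens exactly when $N^2$ is a critical component of $\Phi$ of dimension $3$, that is when $\Sigma$ lies in a three-dimensional $B_{\min}$ or $B_{\max}$; the assumption of the lemma rules this out (cf. Lemma \ref{MorseTheory}), so both $m_1,m_2$ are nonzero and both cardinalities above are genuinely finite.

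Finally I would combine these. Using $\GCD(m_1,m_2)=1$ from the normal form,
\begin{equation}
\GCD\bigl(I(\rho,N^1),I(\rho,N^2)\bigr)=\GCD\bigl(n_0|m_1|,\,n_0|m_2|\bigr)=n_0\,\GCD(|m_1|,|m_2|)=n_0=I(\rho,\Sigma),
\end{equation}
which is the claim. There is no real analytic obstacle here: the entire content is bookkeeping of the exponents in the normal form, and the only step requiring care is confirming $m_1m_2\neq 0$ from the hypothesis so that the stabilizers are finite and the determinant count applies.
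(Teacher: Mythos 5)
Your proof is correct and is essentially the paper's own argument: both use the normal form of Lemma \ref{GeneralTorusActions} (transferred to an actual neighborhood of $\Sigma$, as in Corollary \ref{NormalBundles}) to identify $N^1$ and $N^2$ with the coordinate subbundles, deduce $m_1 m_2 \neq 0$ from the hypothesis that $\Sigma$ lies in neither $B_{\min}$ nor $B_{\max}$, and conclude from $I(\rho,N^j)=n_0|m_k|$ together with $\GCD(m_1,m_2)=1$. Your explicit determinant count of the pointwise stabilizers is the only step the paper leaves implicit.
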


\begin{proof}
By Lemmas \ref{NormalBundles} and \ref{GeneralTorusActions}, we have a coordinate on an open neighborhood of $\Sigma$ such that $\rho$ is written as
\begin{equation}
(t,s) \cdot (\zeta,z_1,z_2)=(t^{n_0} \zeta, t^{n_1} s^{m_1} z_{1}, t^{n_2} s^{m_2} z_{2})
\end{equation}
where $n_1,n_2,m_1$ and $m_2$ are integers satisfying $\GCD(n_0,n_1,n_2)=1$, $\GCD(m_1,m_2)=1$ and $n_0=I(\rho,\Sigma)$. $N^1$ and $N^2$ are defined by the equations $z_1=0$ and $z_2=0$ respectively in the above coordinate $(\zeta,z_{1},z_{2})$, because their tangent bundles are invariant under $\rho$. By the assumption, $m_1$ and $m_2$ are nonzero. Since we have $I(\rho,N^1)=|n_0 m_2|$ and $I(\rho,N^2)=|n_0 m_1|$, the proof is completed.
\end{proof}

\subsection{Normal forms of $K$-contact lens spaces and gradient manifolds}

A gradient manifold of a $5$-dimensional $K$-contact manifold of rank $2$ is a $K$-contact manifold of rank $2$. The closures of gradient manifolds are not submanifolds of $M$ in general near $B_{\min}$ or $B_{\max}$ of dimension $1$. Since the limit set of a gradient manifold is the union of two different closed orbits of the Reeb flow, we can obtain normal forms of gradient manifolds by the normal form theorem of closed orbits.

 We fix an isomorphism $G_{i} \longrightarrow T^2$ and identify $G_{i}$ with $T^2$ for $i=0$ and $1$. Let $\overline{R}_{i}$ denote the element of $\Lie(T^2)$ corresponding to the Reeb vector field of $\alpha_{i}$ for $i=0$ and $1$. We assume $\overline{R}_{0}=\overline{R}_{1}$ and denote them by $\overline{R}$. We fix $\overline{X}$ in $\Lie(T^2)$ which is not parallel to $\overline{R}$. Put $\Phi_{i}=\alpha_{i}(X)$ where $X$ is the infinitesimal action of $\overline{X}$.  The maximal component and the minimal component of $\Phi_{i}$ are denoted by $B_{\max i}$ and $B_{\min i}$, respectively, for $i=0$ and $1$.

Let $L_{i}$ be a gradient manifold of $\Phi_{i}$ in $(M_{i},\alpha_{i})$ for $i=0$ and $1$. Let $\Sigma_{i}^{0}$ be the $\alpha$-limit set of $L_{i}$, and $\Sigma_{i}^{1}$ be the $\omega$-limit set of $L_{i}$. Let $N_{i}^{1}$ be the closure of $L_{i}$ in $M_{i}$ for $i=0$ and $1$.

A diffeomorphism $f$ from an open neighborhood $U_{0}$ of $L_{0}$ to an open neighborhood $U_{1}$ of $L_{1}$ is said to be an isomorphism if $f(L_{0})=L_{1}$, $f^{*}\alpha_{1}=\alpha_{0}$ and $f(\rho_{0}(x,g))=\rho_{1}(f(x),g)$ for every $x$ in $U_{0}$ and every $g$ in $T^2$.

\subsubsection{Smooth cases}

Assume that $N_{i}^{1}$ is a smooth submanifold of $M_{i}$ for $i=0$ and $1$. Let $N_{i}^{0}$ be a $3$-dimensional $K$-contact submanifold containing $\Sigma_{i}^{0}$ other than $N_{i}^{1}$ for $i=0$ and $1$. Let $N_{i}^{2}$ be a $3$-dimensional $K$-contact submanifold containing $\Sigma_{i}^{1}$ other than $N_{i}^{1}$ for $i=0$ and $1$. The existence of $N_{i}^{0}$ and $N_{i}^{2}$ for $i=0$ and $1$ follows from Lemma \ref{TwoKContactSubmanifolds}. Note that $N_{i}^{0}$ and $N_{i}^{2}$ can be $B_{\min i}$ or $B_{\max i}$ of dimension $3$ for $i=0$ and $1$. 

We prepare a lemma.
\begin{lem}\label{Isomorphisms}
Let $(N_{0},\alpha_{0})$ and $(N_{1},\alpha_{1})$ be two connected $3$-dimensional $K$-contact manifolds of rank $2$. We fix an isomorphism $G_{i} \longrightarrow T^2$. We assume that $\overline{R}_{0}=\overline{R}_{1}$ where $\overline{R}_{i}$ is the element of $\Lie(T^2)$ corresponding to the Reeb vector field of $\alpha_{i}$ for $i=0$ and $1$. We denote $\overline{R}_{0}$ and $\overline{R}_{1}$ by $\overline{R}$. We denote the contact moment map of $(N_{i},\alpha_{i})$ by $\Phi_{\alpha_{i}}$ for $i=0$ and $1$.

Let $U_{i}$ and $V_{i}$ be open sets of $N_{i}$ invariant under the Reeb flow of $\alpha_{i}$ for $i=0,1$. Let $f$ be an isomorphism from $(V_{0},\alpha_{0}|_{V_{0}})$ to $(V_{1},\alpha_{1}|_{V_{1}})$. Assume that 
\begin{enumerate}
\item $\overline{U}_{i}$ is contained in $V_{i}$, 
\item $N_{i}-U_{i}$ is connected and
\item $U_{i}$ and $V_{i}$ have two connected components.
\end{enumerate}
Then we have an isomorphism $\tilde{f} \colon (N_{0},\alpha_{0}) \longrightarrow (N_{1},\alpha_{1})$ such that $\tilde{f}_{0}|_{U_{0}}=f$.
\end{lem}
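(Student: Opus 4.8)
The plan is to build $\tilde f$ in two stages: first extend $f$ to a $\rho$-equivariant diffeomorphism $N_0\to N_1$ that respects the contact moment map but perhaps not $\alpha$ on the nose, and then correct this extension to an honest $K$-contact isomorphism by an equivariant Moser--Gray argument carried out relative to $U_0$. The point of the splitting is that the crude extension is unobstructed precisely because we postpone the condition $\tilde f^*\alpha_1=\alpha_0$ over the middle region.

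First I would record the structure forced by the hypotheses. Since $f$ is $\rho$-equivariant with $f^*\alpha_1=\alpha_0$, the computation at the end of the proof of Proposition \ref{Karshon1} gives $\Phi_{\alpha_1}\circ f=\Phi_{\alpha_0}$, and in particular $\Phi_1\circ f=\Phi_0$ for the functions $\Phi_i=\alpha_i(X)$. Thus $f$ preserves moment values and matches the minimal and maximal components at the two ends. By Lerman's classification of $3$-dimensional contact toric manifolds, $N_0$ and $N_1$ are lens spaces whose isomorphism type is fixed by $\overline R$ together with the two extremal values of $\Phi$, and these agree because $f$ is an isomorphism near both closed Reeb orbits. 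The hypotheses that $U_i$ and $V_i$ each have two components while $N_i-U_i$ is connected force the two components of $U_i$ to be collar neighborhoods of $B_{\min i}$ and $B_{\max i}$, so that $W_i:=\overline{N_i-U_i}$ is a thick torus $T^2\times[a,b]$ on which $\rho$ is the free principal $T^2$-action and $\Phi$ is, up to reparametrization, the projection to $[a,b]$.

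Next I would extend $f$ across the middle as a mere equivariant diffeomorphism. On $W_i$ every $\rho$-equivariant, $\Phi$-preserving diffeomorphism has the form $(z,t)\mapsto(G(t)\cdot z,t)$ for a smooth $G\colon[a,b]\to T^2$, since a $T^2$-equivariant self-map of a free principal orbit is a translation. The restriction of $f$ to the collars $V_0\cap W_0$ near the two boundary tori supplies such germs of $G$ near $t=a$ and near $t=b$; because $T^2$ is connected these germs extend to a global smooth $G$, producing a $\rho$-equivariant, $\Phi$-preserving diffeomorphism of the middle regions agreeing with $f$ on the collars. Gluing this to $f$ on the two caps $U_0$ yields a $\rho$-equivariant diffeomorphism $\hat f\colon N_0\to N_1$ with $\hat f|_{U_0}=f|_{U_0}$ and $\Phi_{\alpha_1}\circ\hat f=\Phi_{\alpha_0}$. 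No compatibility with $\alpha$ over the middle is required at this stage, which is exactly what makes the interpolation of $G$ free of any obstruction.

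Finally I would repair the contact form. The two contact forms $\alpha_0$ and $\hat f^*\alpha_1$ on $N_0$ are $\rho$-invariant, share the Reeb vector field $R_0$ by equivariance of $\hat f$, and have the same contact moment map: their $\overline R$-components are both $1$ and their $X$-components agree because $\Phi_{\alpha_1}\circ\hat f=\Phi_{\alpha_0}$. Moreover they coincide on the open set $U_0$. Setting $\beta_t=(1-t)\alpha_0+t\,\hat f^*\alpha_1$, Proposition \ref{Karshon2}(i) shows each $d\beta_t$ is symplectic on $TN_0/\mathbb{R}R_0$, so $\ker\beta_t$ is an isotopy of $\rho$-invariant contact structures that is stationary on $U_0$; the equivariant Gray stability theorem \cite{Gra} then produces a $\rho$-equivariant diffeomorphism $g$ of $N_0$, equal to the identity on $U_0$ since the Gray vector field vanishes where $\beta_t$ is independent of $t$, with $g^*(\hat f^*\alpha_1)=\alpha_0$. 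Then $\tilde f=\hat f\circ g$ is the desired isomorphism and $\tilde f|_{U_0}=\hat f|_{U_0}=f|_{U_0}$. The main technical point demanding care is this last step: one must run the Gray--Moser isotopy relative to $U_0$ while keeping it $\rho$-equivariant, so that the correction preserves the agreement $\tilde f=f$ already achieved on $U_0$; verifying the thick-torus decomposition extracted from the connectivity hypotheses is the secondary point that needs to be nailed down.
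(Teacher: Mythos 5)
Your proof is correct, but it runs on a genuinely different mechanism than the paper's. Both arguments begin with the same reduction, which is the ``secondary point'' you flagged: using connectedness of the fibers of the contact moment map (the first paragraph of the paper's proof) and Lerman's classification, the middle region $N_{i}-U_{i}$ is identified with the model $(T^2 \times S, \beta)$, $\beta=\frac{dx \cos t + dy \sin t}{a\cos t + b\sin t}$; in your setup the fiber-connectedness also follows from hypothesis (ii), since a proper submersion over an interval with connected total space has connected fibers, each of which is then a single $T^2$-orbit (free, by the identification with the model). Where you diverge: the paper extends $f$ itself through the middle as a form-preserving map $(x,y,t)\mapsto(x+\tilde{f}_1(t),y+\tilde{f}_2(t),t)$, so the extension must solve $\tilde{f}'_1(t)\cos t+\tilde{f}'_2(t)\sin t=0$ with prescribed germs at \emph{both} ends; this produces a holonomy-type obstruction, the constant $c$, which the paper kills by correcting $\tilde{f}_1$ with a function $r$ satisfying $\int_{t_0}^{t_1}r'(t)\frac{\cos t}{\sin t}\,dt=-c$. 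You instead extend $f$ merely as a smooth equivariant $\Phi$-preserving map $\hat{f}$ (unobstructed, since germs of paths in the connected group $T^2$ always interpolate) and push the difficulty into a relative equivariant Moser--Gray step. That step is sound, and is in fact cleaner than you state: since $\alpha_0$ and $\hat{f}^{*}\alpha_1$ share the Reeb field $R$ and the contact moment map, one has $\beta_t(R)=1$ and $\dot{\beta}_t(R)=0$, so the Moser field $X_t\in\ker\beta_t$ defined by $\iota(X_t)d\beta_t|_{\ker\beta_t}=-\dot{\beta}_t|_{\ker\beta_t}$ satisfies $L_{X_t}\beta_t+\dot{\beta}_t=0$ exactly (no conformal factor), is $\rho$-invariant, and vanishes identically on $U_0$, so its flow fixes $U_0$ pointwise and yields $g^{*}(\hat{f}^{*}\alpha_1)=\alpha_0$ directly. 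In effect the paper solves the matching ODE by hand, making the obstruction explicit, while you convert the same obstruction into a Moser problem where it vanishes automatically; your route is more conceptual and robust, the paper's is elementary and self-contained.

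One repair is needed: you invoke Proposition \ref{Karshon2} (i), which is stated for closed $5$-dimensional manifolds, to get contactness of $\beta_t$ on the $3$-dimensional $N_0$. The cited proof does not literally apply, but in your situation the statement is immediate: on $U_0$ one has $\beta_t=\alpha_0$, and off $U_0$ the function $\Phi=\beta_t(X)$ is independent of $t$ and has no critical points --- the closed Reeb orbits lie in $U_0$, since $U_0$, being open and Reeb-invariant, is a union of full $T^2$-orbits and hence of $\Phi$-fibers --- so $L_{X}\beta_t=0$ gives $\iota(X)d\beta_t=-d\Phi\neq 0$, and $\iota(R)d\beta_t=0$ yields $\beta_t\wedge d\beta_t(R,X,v)=d\beta_t(X,v)\neq 0$ for suitable $v$. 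With that substitution your argument is complete.
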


\begin{proof}
First, we show that each fiber of $\Phi_{\alpha_{i}}$ is an orbit of $\rho_{i}$. Note that each connected component of the fibers of $\Phi_{\alpha_{i}}$ is an orbit of $\rho_{i}$. It is because the orbits of $\rho_{i}$ are of codimension $1$ in $N_{i}$ and the contact moment map $\Phi_{\alpha_{i}}$ is a submersion on the union of orbits with trivial isotropy group of $\rho_{i}$ by Lemma \ref{MomentMapsAreSubmersive}. Hence it suffices to show that each fiber of $\Phi_{i}$ is connected. Fix an infinitesimal action $Y_{i 0}$ of $\rho_{i}$ so that every orbit of the flow generated by $Y_{i 0}$ is closed. We denote the $S^1$-action generated by $Y_{i 0}$ by $\rho'_{i}$. We put $\alpha'_{i}=\frac{1}{\alpha'_{i}(Y_{i 0})}\alpha$. Then $d\alpha'_{i}$ induces a symplectic form on the orbifold $N_{i}/\rho'_{i}$. $\rho_{i}$ induces an $S^1$-action $\overline{\rho}_{i}$ on $N_{i}/\rho'_{i}$. We show that $\overline{\rho}_{i}$ is a hamiltonian $S^1$-action on $(N_{i}/\rho'_{i},d\alpha'_{i})$. Let $\overline{Z}_{i}$ be an infinitesimal action of $\overline{\rho}_{i}$. Then there exists an infinitesimal action $Z_{i}$ of $\rho_{i}$ such that $\pi_{*}Z_{i}=\overline{Z}_{i}$, where $\pi$ is the canonical projection $N_{i} \longrightarrow N_{i}/\rho'_{i}$. Note that $\alpha(Z_{i})$ is constant on the orbits of $\rho_{i}$ and can be regarded as a smooth function on $N_{i}/\rho'_{i}$. The $1$-form $\iota(Z_{i})d\alpha'$ is basic with respect to $\rho'$ and can be regarded as a $1$-form on $N_{i}/\rho'_{i}$. $\iota(Z_{i})d\alpha'$ is equal to $\iota(\overline{Z}_{i})d\alpha'$ as a $1$-form on $N_{i}/\rho'_{i}$. Since $L_{Z_{i}}\alpha'=0$, we have $d(\alpha'(Z_{i}))=-\iota(\overline{Z}_{i})d\alpha'$, which implies that $\overline{\rho}$ is hamiltonian. The fibers of the hamiltonian function $\alpha'(Z_{i})$ on $N_{i}/\rho'_{i}$ is the quotient of the fibers of the contact moment map on $(N_{i},\alpha_{i})$ by $\rho'$. Hence to show that a fiber of $\Phi_{i}$ is connected, it suffices to show that the the fiber of the hamiltonian function $\alpha'(Z_{i})$ on $N_{i}/\rho'_{i}$ is connected. This follows from the connectivity of the fibers of the moment map of symplectic orbifolds. We refer \cite{LeTo}. 

Since $U_{i}$ is invariant under the Reeb flow and the fiber of $\Phi_{\alpha_{i}}$ is an orbit of $\rho_{i}$,
\begin{equation}\label{MomentMapImage0}
\Phi_{\alpha_{i}}(U_{i}) \cap \Phi_{\alpha_{i}}(N_{i}-U_{i})=\emptyset
\end{equation}
for $i=0$ and $1$. Assume that $\Phi_{\alpha_{i}}(U_{i}) \cap \Phi_{\alpha_{i}}(N_{i}-U_{i})$ contains a point $x_{i 0}$. Since $\Phi_{\alpha_{i}}^{-1}(x_{i 0})$ is an orbit of $\rho_{i}$, $\Phi_{\alpha_{i}}^{-1}(x_{i 0})$ is contained in $U_{i} \cap (N_{i}-U_{i})$ by the invariance of $U_{i}$ under the Reeb flows. It is contradiction. Hence \eqref{MomentMapImage0} is proved.

We show
\begin{equation}\label{MomentMapImage2}
\Phi_{\alpha_{0}}(N_{0})=\Phi_{\alpha_{1}}(N_{1}).
\end{equation}
In fact, we show that the assumption implies that $\Phi_{\alpha_{i}}(N_{i})$ is the convex hull of $\Phi_{\alpha_{i}}(U_{i})$ for $i=0$ and $1$. Note that the image of $\Phi_{\alpha_{0}}$ and $\Phi_{\alpha_{1}}$ are contained in a $1$-dimensional affine space $\{v \in \Lie(T^2)^{*}| v(\overline{R})=1\}$ of $\Lie(T^2)^{*}$. $\Phi_{\alpha_{i}}(N_{i}-U_{i})$ is a connected interval in $\Phi_{\alpha_{i}}(N_{i})$. Hence the number of the connected components of $\Phi_{\alpha_{i}}(U_{i})$ is one or two. If the number of the connected components of $\Phi_{\alpha_{i}}(U_{i})$ is two, $\Phi_{\alpha_{i}}(N_{i})$ is the convex hull of $\Phi_{\alpha_{i}}(U_{i})$ clearly. Assume that the number of the connected component of $\Phi_{\alpha_{i}}(U_{i})$ is one. Let $U_{i}^{1}$ and $U_{i}^{2}$ be the connected components of $U_{i}$. Since a fiber of $\Phi_{\alpha_{0}}$ and $\Phi_{\alpha_{1}}$ is an orbit of $\rho_{i}$ for $i=0$ and $1$, $U_{i}^{1} \cap U_{i}^{2} = \emptyset$, and it is contradiction. Hence \eqref{MomentMapImage2} is proved.

Since $f|_{U_{0}} \colon U_{0} \longrightarrow U_{1}$ is an isomorphism, we have $\Phi_{\alpha_{1}} \circ f=\Phi_{\alpha_{0}}$. Then we have
\begin{equation}\label{MomentMapImage1}
\Phi_{\alpha_{0}}(U_{0})=\Phi_{\alpha_{1}}(U_{1}).
\end{equation}

By \eqref{MomentMapImage0}, \eqref{MomentMapImage2} and \eqref{MomentMapImage1}, we have
\begin{equation}\label{MomentMapImage}
\Phi_{\alpha_{0}}(N_{0}-U_{0})=\Phi_{\alpha_{1}}(N_{1}-U_{1}).
\end{equation}

We put $S=\Phi_{\alpha_{0}}(N_{0}-U_{0})=\Phi_{\alpha_{1}}(N_{1}-U_{1})$. We have a $3$-dimensional $K$-contact manifold $(T^2 \times S, \beta)$ of rank $2$ where $\beta$ is defined by
\begin{enumerate}
\item $\beta(\frac{\partial}{\partial s})=0$ where $\frac{\partial}{\partial s}$ is the basis of the tangent space of $S$ in the product $T^2 \times S$ and
\item $\beta|_{p^{-1}(s)}=s$ for $s$ in $S$ where we regard $s$ as a $1$-form on $T^2$.
\end{enumerate}
Then the image of the contact moment map of $(T^2 \times S, \beta)$ is $S$. By Lemma 4.9 and Proposition 5.2 of \cite{Ler2}, two $3$-dimensional $K$-contact manifolds of rank $2$ are isomorphic if the images of the contact moment maps are the same convex sets. Hence $(N_{0}-U_{0},\alpha_{0}|_{N_{0}-U_{0}})$ and $(N_{1}-U_{1},\alpha_{1}|_{N_{1}-U_{1}})$ are isomorphic to $(T^2 \times S, \beta)$. We put $\overline{R}_{0}=\overline{R}_{1}=a \frac{\partial}{\partial x} + b \frac{\partial}{\partial y}$ for real numbers $a$ and $b$. We put a coordinate $s=s_{1} dx + s_{2} dy$ on $\Lie(T^2)^{*}$. Then $S$ is defined by the equation $a s_{1} + b s_{2}=0$. If we put $s_{1}=\frac{\cos t}{a\cos t + b\sin t}$ and $s_{2}=\frac{\sin t}{a\cos t + b\sin t}$, $\beta$ is written as $\beta=\frac{dx(\cos t)+dy(\sin t)}{a\cos t + b\sin t}$.

We identify  $(N_{0}-U_{0},\alpha_{0}|_{N_{0}-U_{0}})$ and $(N_{1}-U_{1},\alpha_{1}|_{N_{1}-U_{1}})$ with $(T^2 \times S, \beta)$ respectively. Then $f$ is regarded as an isomorphism $\overline{f}$ from an open neighborhood of the boundary of $(T^2 \times S,\beta)$ to an open neighborhood of the boundary of $(T^2 \times S,\beta)$. To prove Lemma \ref{Isomorphisms}, it suffices to show that $\overline{f}$ extends to $(T^2 \times S,\beta)$ as an isomorphism. Since $\overline{f}$ commutes with contact moment maps and is $T^2$-equivariant, $\overline{f}$ is written as $\overline{f}(x,y,t)=(x+\overline{f}_1(t),y+\overline{f}_2(t),t)$ where $\overline{f}_1$ and $\overline{f}_2$ are functions on $S$ defined only near the boundary of $S$. Note that for a diffeomorphism $F$ on $T^2 \times S$ written as $F(x,y,t)=(x+F_1(t),y+F_2(t),t)$, we have $F^{*}\beta=\beta$ if and only if $F'_1(t) \cos t + F'_2(t) \sin t=0$. Hence to extend $\overline{f}$, it suffices to show that $\overline{f}_1$ and $\overline{f}_2$ extend to $S$ satisfying $\overline{f}'_1(t) \cos t + \overline{f}'_2(t) \sin t=0$. 

We denote the endpoints of $S$ by $s_0$ and $s_1$. We put the values of the parameter $t$ at $s_{j}$ by $t_{j}$ for $j=0$ and $1$. First, take a smooth function $\tilde{f}_{1}$ on $S$ whose restriction to an open neighborhood of $t=t_{0}$ coincides with $\overline{f}_{1}$ and whose restriction to open neighborhoods of the points $t=0$ and $t=\pi$ is equal to $\cos t$ if $t=0$ or $t=\pi$ is contained in the interior of the domain of $\tilde{f}_{1}$. Then there exists a unique smooth function $\tilde{f}_{2}$ which satisfies 
\begin{enumerate}
\item $\tilde{f}'_1(t) \cos t + \tilde{f}'_2(t) \sin t=0$ and
\item $\tilde{f}_{2}|_{W_{0}}=\overline{f}_{2}|_{W_{0}}$
\end{enumerate}
where $W_{0}$ is an open neighborhood of $s_{0}$. Since the derivation of $\tilde{f}_{2}|_{W_{1}}$ and $\overline{f}_{2}|_{W_{1}}$ are equal, we have a constant $c$ such that $\tilde{f}_{2}|_{W_{1}}-\overline{f}_{2}|_{W_{1}}=c$ where $W_{1}$ is an open neighborhood of $s_{1}$ contained in the domain of $\overline{f}_{2}$. $c$ is determined by $\tilde{f}_{1}$. To extend $\overline{f}_1$ and $\overline{f}_2$ on $S$, it suffices to choose $\tilde{f}_1$ so that the constant $c$ is zero. We show that there exists a smooth function $r$ on $S$ such that $\tilde{f}_1 + r$ satisfies the desired condition. Choose an interval $[w_{0},w_{1}]$ in $S$ which does not contain parameters $t=0$ nor $t=\pi$ and does not intersect $W_{0}$ nor $W_{1}$. Take a nonzero smooth function $r_{1}$ on $S$ whose support is compact in $(w_{0},w_{1})$ and satisfies $\int_{t_{0}}^{t_{1}}r'_{1}(t)\frac{\cos t}{\sin t}dt=1$. Put $r(t)=-cr_{1}(t)$. Then we have $\int_{t_{0}}^{t_{1}}r'(t)\frac{\cos t}{\sin t}dt=-c$. Let $\hat{f}_{2}$ be the unique smooth function on $S$ which satisfies $(\tilde{f}'_1(t) + r(t) )\cos t + \hat{f}'_2(t) \sin t=0$ and $\hat{f}_{2}|_{W_{0}}=f_{2}|_{W_{0}}$. Then we have $\tilde{f}_{2}|_{W_{1}}-\overline{f}_{2}|_{W_{1}}=c - \int_{t_{0}}^{t_{1}}r'(t)\frac{\cos t}{\sin t}dt=0$. Hence the proof is completed.
\end{proof}

We have
\begin{lem}\label{IsomorphismClassOfL}
There exists an isomorphism from $(N_{0}^{1},\alpha|_{N_{0}^{1}})$ to $(N_{1}^{1},\alpha|_{N_{1}^{1}})$ mapping $\Sigma_{0}^{0}$ to $\Sigma_{1}^{0}$ if and only if $(G_{0})_{\Sigma_{0}^{j}} = (G_{1})_{\Sigma_{1}^{j}}$ for $j=0$ and $1$.
\end{lem}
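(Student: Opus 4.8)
The plan is to reduce the statement to Lerman's classification of $3$-dimensional contact toric manifolds of rank $2$. The key structural observation is that each $N_i^1$ is a closed $3$-dimensional $K$-contact manifold of rank $2$: it is the closure of the gradient manifold $L_i$, hence a $K$-contact manifold of rank $2$ by Lemma \ref{MorseBottTheory} (i) and a lens space by Lemma \ref{KContactSubmanifolds} (i). It carries the restriction of the $T^2$-action $\rho_i$, and its only closed orbits of the Reeb flow are the two limit sets $\Sigma_i^0$ and $\Sigma_i^1$. Its contact moment map is the restriction of $\Phi_{\alpha_i}$ to $N_i^1$, and since the image lies in the affine line $\{ v \in \Lie(T^2)^{*} \mid v(\overline{R}) = 1 \}$ and the fibers of the moment map are connected, this image is the segment whose endpoints are the critical values $\Phi_{\alpha_i}(\Sigma_i^0)$ and $\Phi_{\alpha_i}(\Sigma_i^1)$ (here the critical set equals the set of closed orbits by Lemma \ref{MorseTheory} (ii)).

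For the ``if'' direction I would first invoke Lemma \ref{ValuesOfContactMomentMaps}: the value $\Phi_{\alpha_i}(\Sigma_i^j)$ depends only on $\overline{R}$ and the isotropy group $(G_i)_{\Sigma_i^j}$. Since $\overline{R}_0 = \overline{R}_1 = \overline{R}$ and $(G_0)_{\Sigma_0^j} = (G_1)_{\Sigma_1^j}$ by hypothesis, we obtain $\Phi_{\alpha_0}(\Sigma_0^j) = \Phi_{\alpha_1}(\Sigma_1^j)$ for $j = 0, 1$, so the two moment map images coincide as segments in the affine line. By Lemma 4.9 and Proposition 5.2 of \cite{Ler2} (applied exactly as in the proof of Lemma \ref{Isomorphisms}), two $3$-dimensional $K$-contact manifolds of rank $2$ with the same contact moment map image are isomorphic, and the isomorphism $f \colon N_0^1 \longrightarrow N_1^1$ can be taken to satisfy $\Phi_{\alpha_1} \circ f = \Phi_{\alpha_0}$. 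Because $f$ commutes with the moment maps and $\Sigma_i^0 = \Phi_{\alpha_i}^{-1}(\Phi_{\alpha_i}(\Sigma_i^0))$, the equality $f(\Sigma_0^0) = \Sigma_1^0$ then holds automatically, yielding the required isomorphism.

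For the ``only if'' direction, suppose $f \colon N_0^1 \longrightarrow N_1^1$ is an isomorphism with $f(\Sigma_0^0) = \Sigma_1^0$. Then $f^{*}\alpha_1 = \alpha_0$ forces $f_{*}R_0 = R_1$, so the induced automorphism of $T^2$ fixes $\overline{R}$; since the manifolds have rank $2$, the slope of $\overline{R}$ is irrational, and the only element of $\GL(2;\mathbb{Z})$ fixing such a vector is the identity. Hence $f$ is $T^2$-equivariant with respect to the identity of $T^2$ and carries the stabilizer of $\Sigma_0^0$ onto that of $\Sigma_1^0$, giving $(G_0)_{\Sigma_0^0} = (G_1)_{\Sigma_1^0}$ as subgroups of $T^2$. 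Moreover $f$ permutes the two closed orbits of the Reeb flow on the lens space, so $f(\Sigma_0^0) = \Sigma_1^0$ forces $f(\Sigma_0^1) = \Sigma_1^1$, whence $(G_0)_{\Sigma_0^1} = (G_1)_{\Sigma_1^1}$ by the same equivariance.

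I expect no serious obstacle, since every structural fact needed is already available. The one point that merits explicit care — and which I would verify carefully — is that the contact moment map image of $N_i^1$ is exactly the segment joining the two endpoint values $\Phi_{\alpha_i}(\Sigma_i^j)$, so that agreement of the endpoints (obtained from Lemma \ref{ValuesOfContactMomentMaps}) forces agreement of the entire image; this rests on the connectedness of the moment map fibers and the identification of the critical set with the closed orbits. The small but essential rigidity observation that the induced torus automorphism must be the identity, coming from the irrationality of the slope of $\overline{R}$, is what makes the comparison of isotropy subgroups in the ``only if'' direction well posed.
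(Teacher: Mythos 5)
Your proof is correct and, for the substantive ``if'' direction, follows exactly the paper's route: Lerman's classification of $3$-dimensional contact toric manifolds by the extreme values of the contact moment map, combined with Lemma \ref{ValuesOfContactMomentMaps} to pin those values down from $\overline{R}$ and the isotropy groups. You go beyond the paper in two harmless ways: you verify explicitly that the isomorphism sends $\Sigma_{0}^{0}$ to $\Sigma_{1}^{0}$ (the paper leaves this implicit; as you note it follows because the two critical values are distinct and the isomorphism intertwines the moment maps), and you supply the ``only if'' direction, which the paper's proof silently reduces away with ``it suffices to show that the isomorphism class is determined by the data.'' One small imprecision in your ``only if'' argument deserves flagging: the restricted $T^2$-action on $N_{i}^{1}$ need not be effective --- the gradient manifold may have nontrivial isotropy $(G_{i})_{N_{i}^{1}}$ --- so $f$ a priori conjugates the quotient tori $T^2/(G_{0})_{N_{0}^{1}}$ and $T^2/(G_{1})_{N_{1}^{1}}$ rather than inducing an element of $\GL(2;\mathbb{Z})$. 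Your irrationality argument nonetheless repairs this: the induced map on $\Lie(T^2)$ carries one rational overlattice of $\mathbb{Z}^{2}$ to the other and fixes $\overline{R}$, whose slope is irrational, so it is the identity on $\Lie(T^2)$; hence the two lattices, and with them the kernels $(G_{i})_{N_{i}^{1}}$, coincide, and the equality of the stabilizers inside $T^2$ follows as you claim. With that adjustment made explicit, the argument is complete.
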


\begin{proof}
It suffices to show that the isomorphism class of $(N_{i}^{1},\alpha|_{N_{i}^{1}})$ is determined by $\overline{R}_{i}$, $G_{\Sigma_{i}^{0}}$ and $G_{\Sigma_{i}^{1}}$ for $i=0$ and $1$. Since we argue on each $i=0$ and $1$ separately, we omit the subscript $i$ in the following argument. $(N^{1},\alpha|_{N^{1}})$ is a $3$-dimensional contact toric manifold. By the classification theorem of $3$-dimensional contact toric manifold by Lerman \cite{Ler2}, the isomorphism class of a $3$-dimensional contact toric manifold is determined by the maximum value and the minimum value of the contact moment map. By Lemma \ref{ValuesOfContactMomentMaps}, the maximum and minimum values of contact moment map on $N^{1}$ are determined by $\overline{R}$, $G_{\Sigma_{0}}$ and $G_{\Sigma_{1}}$. Hence Lemma \ref{IsomorphismClassOfL} is proved.
\end{proof}

Let $A_{j}$ be an open tubular neighborhood of $\Sigma_{0}^{0}$ in $M_{0}$ for $j=1$ and $2$ and $f^{0} \colon A_{2} \longrightarrow M_{1}$ be a diffeomorphism into $M_1$. Assume that 
\begin{enumerate}
\item $f^{0}(A_2 \cap N^{0})=f^{0}(A_2) \cap N^{1}$,
\item $\alpha_0|_{A_2}=f^{0 *}\alpha_1|_{A_2}$, 
\item $A_{j}$ is invariant under the Reeb flows of $\alpha_0|_{A_2}$ and $f^{0 *}\alpha_1|_{A_2}$ and
\item the closure of $A_1$ is contained in $A_{2}$.
\end{enumerate}
We show
\begin{lem}\label{NormalFormOfGradientManifolds2}
There exists an isomorphism $f$ from an open neighborhoods of $N_{0}^{1}$ to an open neighborhood of $N_{1}^{1}$ which maps $\Sigma_{0}^{0}$ to $\Sigma_{1}^{0}$ and satisfies $f|_{A_{1}}=f^{0}|_{A_1}$ if and only if $(G_{0})_{\Sigma_{0}^{j}}=(G_{1})_{\Sigma_{1}^{j}}$ for $j=0$, $1$ and $(G_{0})_{N_{0}^{j}}=(G_{1})_{N_{1}^{j}}$ for $j=0$, $1$, $2$.
\end{lem}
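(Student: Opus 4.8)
The \emph{only if} direction is immediate: an isomorphism $f$ between neighborhoods carrying $L_0$ to $L_1$ and $\Sigma_0^j$ to $\Sigma_1^j$ intertwines $\rho_0$ and $\rho_1$, so it carries each isotropy group to the corresponding one, forcing $(G_0)_{\Sigma_0^j}=(G_1)_{\Sigma_1^j}$ for $j=0,1$ and $(G_0)_{N_0^j}=(G_1)_{N_1^j}$ for $j=0,1,2$. Thus only the \emph{if} direction requires work, and my plan is to build the isomorphism in two stages: first along the gradient lens space $N_i^1$ itself, then across its symplectic normal bundle.

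For the first stage, Lemma \ref{IsomorphismClassOfL} produces an isomorphism $\phi\colon (N_0^1,\alpha_0|_{N_0^1}) \longrightarrow (N_1^1,\alpha_1|_{N_1^1})$ with $\phi(\Sigma_0^j)=\Sigma_1^j$, since the hypotheses $(G_0)_{\Sigma_0^j}=(G_1)_{\Sigma_1^j}$ for $j=0,1$ are exactly what that lemma demands. The restriction of $f^0$ to $N_0^1 \cap A_2$ and the restriction of $\phi$ near $\Sigma_0^1$ are then two isomorphisms defined near the two closed orbits of the lens space $N_0^1$; because $N_0^1$ minus small tubular neighborhoods of $\Sigma_0^0$ and $\Sigma_0^1$ is connected, Lemma \ref{Isomorphisms} glues these over the intermediate $T^2$-piece into a single isomorphism $\tilde\phi\colon N_0^1 \longrightarrow N_1^1$ that coincides with $f^0$ on $N_0^1 \cap A_1$ and maps $\Sigma_0^j$ to $\Sigma_1^j$.

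The second stage is to promote $\tilde\phi$ to a $T^2$-equivariant isomorphism of the symplectic normal bundles $E_i$ of $N_i^1$ in $M_i$, covering $\tilde\phi$, sending $\overline R$ to $\overline R$, and agreeing with the normal derivative of $f^0$ along $\Sigma_0^0 \cap A_1$. Here $E_i$ is a rank-two symplectic $T^2$-bundle over the lens space $N_i^1$ whose only non-locally-free orbits lie over $\Sigma_i^0$ and $\Sigma_i^1$, so by the slice theorem its equivariant isomorphism class is determined by the isotropy representations on the fibers over $\Sigma_i^0$ and $\Sigma_i^1$ together with the representation over a generic orbit. By Lemma \ref{TwoKContactSubmanifolds2} the fiber of $E_i$ over $\Sigma_i^0$ is the tangent line to the other $K$-contact submanifold $N_i^0$, so the kernel of this representation is $(G_i)_{N_i^0}$; likewise the fiber over $\Sigma_i^1$ has kernel $(G_i)_{N_i^2}$, and the generic fiber has kernel $(G_i)_{N_i^1}$. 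The signatures of the weights are pinned down by writing $\alpha$ in the normal form \eqref{NormalFormOfPhi} on a finite cover, exactly as in the proof of Lemma \ref{NormalFormOfSingularOrbits}, the sign recording whether $\Sigma_i^j$ is the $\alpha$- or the $\omega$-limit set of $L_i$. Since all three kernels and both signatures agree by hypothesis, Lemma \ref{ComplexRepresentations} gives isomorphic representations, hence an equivariant bundle isomorphism $q$, which a fiberwise adjustment forces to match $Df^0$ near $\Sigma_0^0$. Transporting $\alpha_1$ back to a neighborhood of $N_0^1$ by $\tilde\phi$ and $q$ and applying the relative normal form theorem Lemma \ref{RelativeNormalFormTheorem}, with $A_1$ and $A_2$ as the distinguished neighborhoods, then yields the isomorphism $f$ with $f|_{A_1}=f^0|_{A_1}$.

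The main obstacle I anticipate is the compatibility at the bottom limit set: I must choose $q$ to agree with $Df^0$ along $\Sigma_0^0$ while remaining globally $T^2$-equivariant over all of $N_0^1$, and I must correctly track the signatures of the weights over $\Sigma_i^0$ and $\Sigma_i^1$ through the $\Phi$-normal form, including the degenerate possibilities in which $N_i^0$ or $N_i^2$ is $B_{\min i}$ or $B_{\max i}$ of dimension three. Making the relative normal-form construction of Lemma \ref{RelativeNormalFormTheorem} respect the data prescribed on $A_1$ is precisely where the bookkeeping must be carried out with care.
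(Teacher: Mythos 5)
Your skeleton coincides with the paper's: stage 1 (produce $\phi\colon N_0^1\to N_1^1$ via Lemma \ref{IsomorphismClassOfL}, then correct it by Lemma \ref{Isomorphisms} so that it agrees with $f^0$ near $\Sigma_0^0$) and the final reduction through Lemmas \ref{RelativeNormalFormTheorem} and \ref{RelativeNormalBundles} are exactly what the paper does, and your ``only if'' direction is fine. The genuine gap is in stage 2, in the sentence ``by the slice theorem its equivariant isomorphism class is determined by the isotropy representations on the fibers over $\Sigma_i^0$ and $\Sigma_i^1$ together with the representation over a generic orbit.'' The slice theorem only identifies the bundle $E_i$ over each solid torus of an invariant Heegaard decomposition of the lens space $N_i^1$; the bundle itself is then recovered from an attaching map $\tilde A_i$ as in \eqref{tildeA}, which carries two extra integers $(e_i,f_i)$ that are invisible in every slice representation. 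These integers are a genuine invariant: by the computation in Lemma \ref{GoodCone}, $f_i \bmod b$ is the Euler class of the normal bundle of $N_i^1$ in $M_i$, and for bundles over a lens space this varies while all the local representation data stay fixed. So matching the representations at $\Sigma_i^0$, $\Sigma_i^1$ and a generic orbit via Lemma \ref{ComplexRepresentations} matches $E_0$ and $E_1$ over the two solid tori separately but does not, as stated, produce a global $q$; asserting the global classification is precisely the point that needs proof, not a citation.

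What closes this gap in the paper is the following computation, which your proposal omits. Transporting the Heegaard decomposition by $f$ (the coordinates \eqref{CoordinatesForDecomposition} and \eqref{CoordinatesForDecomposition2}) makes both bundles glue over the same matrix $\begin{psmallmatrix} a & b \\ c & d \end{psmallmatrix}$ and yields \eqref{CommonT3Actions} and \eqref{CommonComponents}; the requirement that $\tilde A_i$ intertwine $\rho_i$ gives \eqref{WelldefinednessOfRho}, from which Lemma \ref{LemmaEquationfork} derives the two linear equations \eqref{Equationfork} for $(e_i,f_i)$. Their coefficient matrix $k^1\begin{psmallmatrix} b & -a \\ d & -c \end{psmallmatrix}$ has determinant $\pm k^1\neq 0$, so $(e_i,f_i)$ is uniquely determined by $a,b,c,d$, $k^0,k^1,k^2$, $h^0,h^1$, all of which coincide for $i=0,1$ under your hypotheses; hence $(e_0,f_0)=(e_1,f_1)$ and $q$ exists. (Equivalently one can argue that equivariant bundle automorphisms over the gluing orbit covering the identity are constant, since a winding $\zeta^{e'}z_1^{f'}$ invariant under the transitive base action forces $(e',f')$ into the kernel of the nondegenerate weight matrix; but either way the rigidity must be proved.) Incidentally, the obstacle you flagged --- making $q$ agree with $Df^0$ near $\Sigma_0^0$ --- is handled automatically by working in the $f$-transported coordinates, which is why the paper's bookkeeping concentrates instead on the twisting integers you did not address.
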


\begin{proof}
By Lemmas \ref{RelativeNormalFormTheorem} and \ref{RelativeNormalBundles}, it suffices to construct an isomorphism $\phi \colon N_{0}^{1} \longrightarrow N_{1}^{1}$ and the isomorphism $q$ between $T^2$-equivariant symplectic normal bundles of $N_{0}^{1}$ and $N_{1}^{1}$ which covers $\phi$ such that $\phi|_{A'_1}=f^{0}|_{A'_1}$ and the restriction of $q$ on $A'_1$ is induced from $f^{0}$ for an open neighborhood $A'_1$ of $A_1$.

We construct $\phi$. By Lemma \ref{IsomorphismClassOfL}, we have an isomorphism $\phi' \colon N_{0}^{1} \longrightarrow N_{1}^{1}$. $f \circ \phi^{\prime -1}$ is an isomorphism from an open neighborhood $\phi(U)$ to $\phi'(U)$ where $U$ is an open neighborhood of $A_{1}$ in $N_{0}^{1}$. By Lemma \ref{Isomorphisms}, there exists an isomorphism $\phi'' \colon N_{1}^{1} \longrightarrow N_{1}^{1}$ such that $\phi''|_{A'_{1}}=f \circ \phi^{\prime -1}|_{U}$. We put $\phi = \phi' \circ \phi''$. Then $\phi$ is an isomorphism $\phi \colon N_{0}^{1} \longrightarrow N_{1}^{1}$ such that $\phi|_{A'_1}=f^{0}|_{A'_1}$.

We construct $q$. Since the normal bundles of $N_{0}^{1}$ and $N_{1}^{1}$ are vector bundles of rank $2$, the symplectic structures on fibers of the normal bundles of $N_{0}^{1}$ and $N_{1}^{1}$ are volume forms. Hence to construct an isomorphism as symplectic vector bundles between the normal bundles of $N_{0}^{1}$ and $N_{1}^{1}$, it suffices to construct an isomorphism $q$ between their underlying oriented $T^2$-equivariant vector bundles. We have an $\alpha_{0}$-preserving $T^3$-action $\tau_{0}$ on an open neighborhood of $N_{0}^{1}$ by Lemma \ref{ToricActionNearGradientManifolds}. Conjugating the restriction of $\tau_{0}$ to $A'_1$ by $f$, we have an $\alpha_{1}$-preserving $T^3$-action $\tau_{1}$ on an open neighborhood of $\Sigma_{1}^{0}$ such that $\tau_{0}=f^{-1} \circ \tau_{1} \circ f$. The generic orbit of $\tau_{i}$ in $N_{i}^{1}$ is diffeomorphic to $T^2$. Let $C_{0}$ be a $T^2$-orbit of $\tau_{0}$ in $N_{0}^{1} \cap A'_1$. We fix the Heegaard decomposition of $N_{0}^{1}$ of genus $1$ which is invariant under $\tau_{0}$ such that $C_{0}$ is the boundary of two solid tori. We fix the Heegaard decomposition of $N_{1}^{1}$ of genus $1$ by mapping the Heegaard decomposition of $N_{0}^{1}$ by $f$ as in the following diagram:
\begin{equation}\label{CoordinatesForDecomposition}
\xymatrix{ S^1 \times D^2 \ar[rd] \ar[rrd] &              &           \\
                          & N_{0}^{1} \ar[r]^<{\,\,\,\,\,f} & N_{1}^{1}.         \\
           S^1 \times D^2 \ar[ru] \ar[rru] &              &               }
\end{equation}

Cutting the total space $E_{0}$ of the normal bundle of $N_{0}^{1}$ along the union of fibers over $C_{0}$, we have a $\tau$-invariant decomposition of $E_{0}$ defined by
\begin{equation}
E_{0}= (S^1 \times D^2)_{0} \times \mathbb{R}^2 \cup_{\tilde{A}_{0}} (S^1 \times D^2)_{1} \times \mathbb{R}^{2}
\end{equation}
such that $\tau_{0}$ is written as 
\begin{equation}
\mathbf{t} \cdot (\zeta,z_1,z_2)=( l_{00}^{j}(\mathbf{t}) \zeta, l_{01}^{j}(\mathbf{t}) z_{1}, l_{02}^{j}(\mathbf{t}) z_{2})
\end{equation}
 on $(S^1 \times \partial D^2)_{j} \times \mathbb{R}^2$ for every $\mathbf{t}$ in $T^3$ where $l_{00}^{j}$, $l_{01}^{j}$ and $l_{02}^{j}$ are homomorphisms from $T^3$ to $S^1$ for $j=0$ and $1$. The attaching map $\tilde{A}_{0} \colon (S^1 \times \partial D^2)_{0} \times \mathbb{R}^2 \longrightarrow (S^1 \times \partial D^2)_{1} \times \mathbb{R}^2$ is written as
\begin{equation}\label{tildeA}
\tilde{A}_{0} (\zeta,z_1,z_2) = (\zeta^{a} z_{1}^{b}, \zeta^{c} z_{1}^{d} , \zeta^{e_{0}} z_{1}^{e_{0}} z_{2}^{f_{0}})
\end{equation}
with respect to the standard coordinates. Note that the map $A_{0} \colon (S^1 \times \partial D^2)_{0} \longrightarrow (S^1 \times \partial D^2)_{1}$ defined by
\begin{equation}
A_{0} (\zeta,z_1) = (\zeta^{a} z_{1}^{b}, \zeta^{c} z_{1}^{d})
\end{equation} 
is the attaching map of the Heegaard decomposition of $N_{0}^{1}$ of genus $1$. The determinant of $A_{0}$ is $-1$.

We decompose the total space $E_{1}$ of the normal bundle of $N_{0}^{1}$ along the union of fibers over $f(C_{0})$ as
\begin{equation}
E_{1} = (S^1 \times D^2)_{0} \times \mathbb{R}^2 \cup_{\tilde{A}_{1}} (S^1 \times D^2)_{1} \times \mathbb{R}^2.
\end{equation}
Using the coordinate
\begin{equation}\label{CoordinatesForDecomposition2}
\xymatrix{(S^1 \times D^2)_{0} \times \mathbb{R}^2 \ar[r] & E_{0} \ar[r]^{f_{*}} & E_{1}}
\end{equation}
as the coordinate near $\Sigma_{0}^{1}$, $\tau_{1}$ is written as 
\begin{equation}
\mathbf{t} \cdot (\zeta,z_1,z_2)=(l_{10}^{j}(\mathbf{t}) \zeta, l_{11}^{j}(\mathbf{t}) z_{1}, l_{12}^{j}(\mathbf{t}) z_{2})
\end{equation}
on $(S^1 \times \partial D^2)_{j} \times \mathbb{R}^2$ for every $t$ in $T^3$ where $l_{10}^{j}(\mathbf{t})$, $l_{11}^{j}(\mathbf{t})$ and $l_{12}^{j}(\mathbf{t})$ are homomorphisms from $T^3$ to $S^1$ for $j=0$ and $1$.

Since the Heegaard decomposition of $N_{0}^{1}$ is mapped to the Heegaard decomposition of $N_{1}^{1}$ by $\phi$, the attaching map $\tilde{A}_{1} \colon (S^1 \times \partial D^2)_{0} \times \mathbb{R}^2 \longrightarrow (S^1 \times \partial D^2)_{1} \times \mathbb{R}^2$ is written as
\begin{equation}
\tilde{A}_{1} = 
\begin{pmatrix}
a & b & 0 \\
c & d & 0 \\
e_{1} & f_{1} & 1
\end{pmatrix}
\end{equation}
with respect to the standard coordinates where $A_{1} = A_{0} = \begin{psmallmatrix} a & b \\ c & d \end{psmallmatrix} \colon (S^1 \times \partial D^2)_{0} \longrightarrow (S^1 \times \partial D^2)_{1}$ is the attaching map of the Heegaard decomposition of $N_{1}^{1}$ of genus $1$. The determinant of $A_{1}$ is $-1$.

Since $f$ is $T^3$-equivariant and we took the decomposition by the coordinates \eqref{CoordinatesForDecomposition} and \eqref{CoordinatesForDecomposition2}, the following equations are satisfied:
\begin{equation}\label{CommonT3Actions}
l_{00}^{0}=l_{10}^{0}, l_{01}^{0}=l_{11}^{0}, l_{02}^{0}=l_{12}^{0}, l_{00}^{1}=l_{10}^{1}, l_{01}^{1}=l_{11}^{1}.
\end{equation}
Let $\rho_{i}$ be given as 
\begin{equation}
(t_0,t_1) \cdot (z_1,\zeta,z_2)=(t_{0}^{q_{i}^{j}} t_{1}^{p_{i}^{j}} z_{1}, t_{0}^{s_{i}^{j}} t_{1}^{r_{i}^{j}} \zeta, t_{0}^{u_{i}^{j}} t_{1}^{v_{i}^{j}} z_{2})
\end{equation}
on $(S^1 \times \partial D^2)_{j} \times \mathbb{R}^2$ so that $\det \begin{pmatrix} s_{0}^{j} & r_{0}^{j} \\ q_{0}^{j} & p_{0}^{j} \end{pmatrix}$ is positive for $j=0$, $1$ and $i=0$, $1$. Then we have
\begin{equation}\label{CommonComponents}
q_{0}^{0}=q_{1}^{0}, p_{0}^{0}=p_{1}^{0}, s_{0}^{0}=s_{1}^{0}, t_{0}^{0}=t_{1}^{0}, u_{0}^{0}=u_{1}^{0}, v_{0}^{0}=v_{1}^{0}
\end{equation}
by \eqref{CommonT3Actions}.

 For $i=0$ and $1$, the $T^2$-equivariant normal bundle of $N_{i}^{1}$ is determined by $\begin{psmallmatrix} s_{i}^{j} \\ q_{i}^{j} \\ u_{i}^{j} \end{psmallmatrix}$, $\begin{psmallmatrix} r_{i}^{j} \\ p_{i}^{j} \\ v_{i}^{j} \end{psmallmatrix}$ and $\tilde{A}_{i}$ for $j=0,1$. Note that since the attaching map $\tilde{A}_{i}$ respects $\rho_{i}$ for $i=0$ and $1$ we have
\begin{equation}\label{WelldefinednessOfRho}
\begin{pmatrix}
s_{i}^{1} \\ q_{i}^{1} \\ u_{i}^{1} 
\end{pmatrix}
=\tilde{A}_{i}
\begin{pmatrix}
s_{i}^{0} \\ q_{i}^{0} \\ u_{i}^{0} 
\end{pmatrix}, 
\begin{pmatrix}
r_{i}^{1} \\ p_{i}^{1} \\ v_{i}^{1} 
\end{pmatrix}
=\tilde{A}_{i}
\begin{pmatrix}
r_{i}^{0} \\ p_{i}^{0} \\ v_{i}^{0}
\end{pmatrix}.
\end{equation}
Hence by \eqref{WelldefinednessOfRho}, $\begin{psmallmatrix} s_{i}^{1} \\ q_{i}^{1} \\ u_{i}^{1} \end{psmallmatrix}$ and $\begin{psmallmatrix} r_{i}^{1} \\ p_{i}^{1} \\ v_{i}^{1} \end{psmallmatrix}$ are determined by $\begin{psmallmatrix} s_{i}^{0} \\ q_{i}^{0} \\ u_{i}^{0} \end{psmallmatrix}$, $\begin{psmallmatrix} r_{i}^{0} \\ p_{i}^{0} \\ v_{i}^{0} \end{psmallmatrix}$ and $\tilde{A}_{i}$. Hence to construct $q$, by \eqref{CommonComponents}, it suffices to show $e_{0}=e_{1}$ and $f_{0}=f_{1}$. Then Lemma \ref{NormalFormOfGradientManifolds2} follows from Lemma \ref{LemmaEquationfork} below.
\end{proof}

\begin{lem}\label{LemmaEquationfork}
We drop the index $i$ in the notation in the proof of Lemma \ref{NormalFormOfGradientManifolds2}. We put $k^{0}=I(\rho,N^{0})$, $k^{1}=I(\rho,N^{1})$ and $k^{2}=I(\rho,N^{2})$.
 We put
\begin{equation}\label{ReductionToCircleActions}
h^{0}=\det \begin{pmatrix} s_{0} & r_{0} \\ u_{0} & v_{0} \end{pmatrix}, h^{1}=\det \begin{pmatrix} s_{1} & r_{1} \\ u_{1} & v_{1} \end{pmatrix}.
\end{equation}
 We have equations
\begin{equation}\label{Equationfork}
\begin{array}{l}
(be-af)k^{1}=-k^{2}+ak^{0}-bh^{0}, \\
(de-cf)k^{1}=h^{1}+ck^{0}-dh^{0}.
\end{array}
\end{equation}
\end{lem}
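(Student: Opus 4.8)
The plan is to read off each quantity in \eqref{Equationfork} as a $2\times 2$ determinant of weight vectors of $\rho$ and then substitute the transformation law \eqref{WelldefinednessOfRho}. On the chart $(S^1\times\partial D^2)_j\times\mathbb{R}^2$ the coordinates $\zeta$, $z_1$, $z_2$ carry the weights $(s_j,r_j)$, $(q_j,p_j)$, $(u_j,v_j)$. Exactly as in the proof of Lemma \ref{GCD}, the isotropy cardinality of $\rho$ along a coordinate submanifold $\{z_k=0\}$ is the absolute value of the determinant formed by the weight vectors of the two remaining acted-upon coordinates. First I would record: $k^1=\left|\det\begin{pmatrix} s_0 & r_0\\ q_0 & p_0\end{pmatrix}\right|$ for $N^1=\{z_2=0\}$, the analogous determinants for $k^0$ and $k^2$ computed near $\Sigma^0$ and $\Sigma^1$, and the signed determinants $h^0=\det\begin{pmatrix} s_0 & r_0\\ u_0 & v_0\end{pmatrix}$ and $h^1=\det\begin{pmatrix} s_1 & r_1\\ u_1 & v_1\end{pmatrix}$ involving the normal coordinate $z_2$. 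The overall sign is fixed by the normalization $\det\begin{pmatrix} s_j & r_j\\ q_j & p_j\end{pmatrix}>0$ already imposed in the proof of Lemma \ref{NormalFormOfGradientManifolds2}.

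The key step is that $e$ and $f$ enter \eqref{WelldefinednessOfRho} only through the $z_2$-row, namely $u_1=e s_0+f q_0+u_0$ and $v_1=e r_0+f p_0+v_0$, whereas $s_1=a s_0+b q_0$, $r_1=a r_0+b p_0$, $q_1=c s_0+d q_0$, $p_1=c r_0+d p_0$ are free of $e,f$. Substituting the two $z_2$-identities into $h^1=s_1 v_1-r_1 u_1$ and expanding, the antisymmetric terms cancel and the coefficients of $e$ and $f$ reduce to $\mp b$ and $\pm a$ times $\det\begin{pmatrix} s_0 & r_0\\ q_0 & p_0\end{pmatrix}=\pm k^1$; this produces the first equation of \eqref{Equationfork}. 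Running the same substitution through the determinant that computes the chart-$1$ isotropy of $N^2$, and using $\det A_0=ad-bc=-1$ to simplify the block spanned by $(s_1,r_1)$ and $(q_1,p_1)$, yields the second equation.

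Concretely I would proceed in the order: (1) write the five determinant formulas for $k^0,k^1,k^2,h^0,h^1$ with their signs; (2) isolate from \eqref{WelldefinednessOfRho} the two scalar identities for $u_1,v_1$ as the sole carriers of $(e,f)$; (3) expand the two relevant $2\times2$ determinants multilinearly, collecting the coefficients of $e$ and of $f$ to obtain the left-hand sides $(be-af)k^1$ and $(de-cf)k^1$; (4) identify the remaining, $(e,f)$-independent, terms with the right-hand sides $-k^2+ak^0-bh^0$ and $h^1+ck^0-dh^0$. The main obstacle is the sign bookkeeping in step (4): the $(e,f)$-independent constant term of each expansion naturally comes out as a combination of $h^0$ and the auxiliary minor $\det\begin{pmatrix} q_0 & p_0\\ u_0 & v_0\end{pmatrix}$, and this must be matched against the stated right-hand side, which is written in the positive cardinalities $k^0,k^2$. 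This requires using the transverse-orientation conventions that relate the signed minors $h^0,h^1$ to the cardinalities $k^0,k^2$ together with the defining geometry of $N^0$ and $N^2$ near the two closed orbits; once these sign identifications are pinned down, the remaining verification is routine linear algebra given the weight vectors and the attaching matrix $\tilde A$.
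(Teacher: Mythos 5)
Your proposal is correct and is essentially the paper's own proof: the paper records the weight-minor identities \eqref{DefinitionOfk}, notes that $e$ and $f$ enter \eqref{WelldefinednessOfRho} only through the $z_{2}$-row, and obtains \eqref{Equationfork} by substituting into the two chart-$1$ minors $\det\begin{psmallmatrix} s_{1} & r_{1} \\ u_{1} & v_{1} \end{psmallmatrix}$ and $\det\begin{psmallmatrix} q_{1} & p_{1} \\ u_{1} & v_{1} \end{psmallmatrix}$ and expanding multilinearly, which is exactly your steps (1)--(4). The sign- and label-matching you single out in step (4) is indeed the only delicate point, and it is precisely where the printed text slips: since $N^{0}$ and $N^{2}$ are locally $\{z_{1}=0\}$ with coordinates $(\zeta,z_{2})$, the minors written as $h^{0},h^{1}$ in \eqref{ReductionToCircleActions} actually compute $\pm k^{0},\pm k^{2}$, while the genuine auxiliary minors are $\mp\det\begin{psmallmatrix} q_{j} & p_{j} \\ u_{j} & v_{j} \end{psmallmatrix}$ (the convention used in the proof of Lemma \ref{Characterizationbyk}); with that identification your expansion yields \eqref{Equationfork} verbatim, so the obstacle you anticipated is real but resolvable exactly as you planned.
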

\begin{proof}
We have
\begin{equation}\label{DefinitionOfk}
-k^{0}=\det \begin{pmatrix} q_{0} & p_{0} \\ u_{0} & v_{0} \end{pmatrix}, k^{1}=\det \begin{pmatrix} s_{0} & r_{0} \\ q_{0} & p_{0} \end{pmatrix}=-\det \begin{pmatrix} s_{1} & r_{1} \\ q_{1} & p_{1} \end{pmatrix}, -k^{2}= \det \begin{pmatrix} q_{1} & p_{1} \\ u_{1} & v_{1} \end{pmatrix}
\end{equation}
by the definition of $k^{0}$, $k^{1}$ and $k^{2}$.

Substituting \eqref{WelldefinednessOfRho} to the third equation of \eqref{DefinitionOfk}, we have 
\begin{equation}\label{ks1}
-k^{2}=(be-af)k^{1}-ak^{0}-bh^{0}.
\end{equation}
Substituting \eqref{WelldefinednessOfRho} to the second equation of \eqref{ReductionToCircleActions}, we have
\begin{equation}\label{ks2}
h^{1}=(de-cf)k^{1}-ck^{0}+dh^{0}.
\end{equation}
\end{proof}

\subsubsection{Nonsmooth cases}

Assume that the closure $N_{i}^{1}$ of $L_{i}$ is not a smooth submanifold of $M_{i}$ for $i=0$ and $1$. Then we have $I(\rho_{i},L_{i})=1$ for $i=0$ and $1$ by Lemma \ref{KContactSubmanifolds}. 

Assume that there exist a $\rho_{i}$-invariant open tubular neighborhood $U_{i}^{j}$ of $\Sigma_{i}^{j}$ for $i=0,1$ and $j=0,1$ and an isomorphism $f^{j} \colon (U_{0}^{j},\alpha_{0}|_{U_{0}^{j}}) \longrightarrow (U_{1}^{j},\alpha_{1}|_{U_{1}^{j}})$ such that $f^{j}(L_{0} \cap U_{0}^{j})=L_{1} \cap U_{1}^{j}$ for $j=0$ and $1$. 
\begin{lem}\label{RelativeNormalFormTheoremForL}
There exist an open neighborhood $U_{i}$ of $L_{i}$ for $i=0$ and $1$ and an isomorphism $f \colon (U_{0},\alpha_{0}|_{U_{0}}) \longrightarrow (U_{1},\alpha_{1}|_{U_{1}})$ such that $f(L_{0})=L_{1}$ and $f|_{W^{j} \cap U_{0}}=f^{j}|_{W^{j} \cap U_{0}}$ for an open neighborhood $W^{j}$ of $\Sigma_{0}^{j}$ for $j=0$ and $1$. 
\end{lem}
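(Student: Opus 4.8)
The plan is to run the same three-step scheme as in the smooth case of Lemma \ref{NormalFormOfGradientManifolds2}, but feeding in the prescribed isomorphisms $f^{0}$ and $f^{1}$ at both ends rather than extending a single normal-bundle datum across a smooth closure. Since $N_{i}^{1}$ is not a smooth submanifold, we have $I(\rho_{i},L_{i})=1$ by Lemma \ref{KContactSubmanifolds}, so $\rho_{i}$ acts freely on $L_{i}$ and the gradient manifold is $L_{i}\cong T^{2}\times\mathbb{R}$, a free $K$-contact manifold of rank $2$ by Lemma \ref{MorseBottTheory} (i). By Lemma \ref{MorseBottTheory} (v) the nonsmoothness of $\overline{L_{i}}$ is concentrated at the limit sets lying in a component of dimension $1$, so no normal-form theorem applies to the closed set $N_{i}^{1}$ directly; instead I would work on the open smooth manifold $L_{i}$ and use $f^{0},f^{1}$ to control a neighborhood of each end.

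First I would produce an isomorphism $\phi$ of the gradient manifolds $L_{0}\to L_{1}$ that coincides with $f^{0}$ near $\Sigma_{0}^{0}$ and with $f^{1}$ near $\Sigma_{0}^{1}$. By the classification of $3$-dimensional contact toric manifolds of Lerman \cite{Ler2} together with the matching of $\overline{R}$ and of the limit-set data already guaranteed by the existence of $f^{0},f^{1}$ (as in Lemma \ref{IsomorphismClassOfL}), the $K$-contact manifolds $L_{0}$ and $L_{1}$ are isomorphic; fix a reference isomorphism $\psi$. Near each end $\psi^{-1}\circ f^{j}$ is a self-isomorphism of $L_{0}$, and since $L_{0}\cong T^{2}\times\mathbb{R}$ with $\alpha_{0}|_{L_{0}}$ in the normal form $\beta=\frac{dx\cos t+dy\sin t}{a\cos t+b\sin t}$, I would invoke the interpolation argument of Lemma \ref{Isomorphisms} (with the two ends playing the role of the two components of $U_{i}$ and the middle portion connected) to correct $\psi$ into an isomorphism $\phi$ matching $f^{0}$ and $f^{1}$ simultaneously on the two ends.

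Second I would lift $\phi$ to an isomorphism $q$ of the $T^{2}$-equivariant symplectic normal bundles of $L_{0}$ and $L_{1}$ agreeing with the differentials of $f^{0}$ and $f^{1}$ near the ends. As the normal bundles have rank $2$ it suffices to match the underlying oriented equivariant bundles, which I would organize exactly as in the Heegaard-decomposition computation of Lemma \ref{NormalFormOfGradientManifolds2}: cut $L_{i}$ into two solid-torus pieces, trivialize the bundle equivariantly on each, and check that the attaching data at both ends agree, the required equalities following from Lemma \ref{LemmaEquationfork} and $I(\rho_{i},L_{i})=1$. With $\phi$ and $q$ in hand, the final step is to apply the relative normal form theorem Lemma \ref{RelativeNormalFormTheorem} (or its bundle form Corollary \ref{RelativeNormalBundles}), taking neighborhoods of $\Sigma_{0}^{0}$ and $\Sigma_{0}^{1}$ as the fixed region $A_{1}$ and $A_{2}=U_{0}^{0}\cup U_{0}^{1}$, to promote $\phi$ and $q$ to a diffeomorphism $f$ on a neighborhood of $L_{0}$ with $f^{*}\alpha_{1}=\alpha_{0}$, $f(L_{0})=L_{1}$, and $f=f^{j}$ near $\Sigma_{0}^{j}$.

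The main obstacle is the simultaneous compatibility at the two nonsmooth ends. Unlike Lemma \ref{NormalFormOfGradientManifolds2}, where only one end was pinned down and the bundle could be extended across a smooth closure, here both $f^{0}$ and $f^{1}$ constrain $\phi$ and $q$, and there is a potential rotation-number (equivalently $H^{1}(T^{2};\mathbb{Z})$-valued) obstruction to interpolating across the middle an isomorphism prescribed on both ends of $T^{2}\times\mathbb{R}$. The heart of the argument is to show this obstruction vanishes, which I expect to do using the freeness $I(\rho_{i},L_{i})=1$ together with the remaining freedom in the interpolation — concretely the adjustable constant $c$ appearing in the extension construction of Lemma \ref{Isomorphisms} — so that the corrections needed at the two ends can be realized by a single isomorphism of the connected middle.
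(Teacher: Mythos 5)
Your overall scheme (interpolate on the $3$-dimensional piece, lift to the symplectic normal bundle, finish with the relative normal form theorem) is the paper's, and your first and third steps are essentially correct as stated: Lemma \ref{Isomorphisms} is formulated precisely for $U_{i}$ with \emph{two} connected components, so it applies directly with the two ends pinned by $f^{0}$ and $f^{1}$ — the ``rotation-number'' obstruction you worry about is exactly what the adjustable constant $c$ in its proof kills, and no argument beyond citing the lemma is needed; likewise the paper concludes via Lemma \ref{RelativeNormalFormTheorem} just as you propose.

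The genuine gap is in your second step. You propose to ``cut $L_{i}$ into two solid-torus pieces'' and verify the attaching data via Lemma \ref{LemmaEquationfork}, but in the nonsmooth case this machinery does not exist: the closure $N_{i}^{1}$ is not a smooth submanifold, so there is no lens space, no Heegaard decomposition of genus $1$, and the quantities $a,b,c,d,e_{i},f_{i},h^{0},h^{1}$ entering Lemma \ref{LemmaEquationfork} — which are attaching data of the normal bundle of a \emph{smooth} closed lens space — cannot even be defined. The symplectic normal bundle is only available over $L_{i}-\overline{U_{i}^{0}\cup U_{i}^{1}}$ relative to the given end data. The paper's actual argument is both different and simpler, and hinges on the freeness $I(\rho_{i},L_{i})=1$ in a different way than you use it: the middle part $L_{i}-(U_{i}^{0}\cup U_{i}^{1})$ is $T^{2}$-equivariantly diffeomorphic to $T^{2}\times[0,1]$, every $T^{2}$-equivariant vector bundle over $T^{2}\times[0,1]$ is trivial, so the bundle isomorphism induced by $f^{1}$ at one end extends across the middle via these trivializations; the compatibility with the isomorphism $\psi$ induced by $f^{0}$ at the other end then reduces to the equality $A_{0}=\psi^{-1}\circ A_{1}\circ\phi$ of two $T^{2}$-equivariant bundle maps of the trivial equivariant bundle over a free $T^{2}$-orbit, which the paper obtains from the rigidity (uniqueness) of such equivariant maps. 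So there is no obstruction to compute at all in this step — your anticipated $H^{1}(T^{2};\mathbb{Z})$-valued obstruction and the Heegaard bookkeeping belong to the smooth case (Lemma \ref{NormalFormOfGradientManifolds2}), not here. Replacing your second step by this triviality-plus-rigidity argument repairs the proposal.
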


\begin{proof}
By Lemma \ref{Isomorphisms}, there exist an open neighborhood $U_{i}$ of $L_{i}$ for $i=0,1$ and a diffeomorphism $f \colon U_{0} \longrightarrow U_{1}$ such that $f(L_{0})=L_{1}$ and $(f|_{L_{0} \cup U_{0}^{0} \cup U_{1}^{0}})^{*}\alpha_{1}=\alpha_{0}$. 

Let $E_{i}$ be the $T^2$-equivariant symplectic normal bundles of $L_{i}-\overline{U_{i}^{0} \cup U_{i}^{1}}$ in $(M_{i}-\overline{U_{i}^{0} \cup U_{i}^{1}},\alpha_{i})$ for $i=0$ and $1$. By Lemma \ref{RelativeNormalFormTheorem}, to prove Lemma \ref{RelativeNormalFormTheoremForL} it suffices to show that $E_{0}$ and $E_{1}$ are isomorphic by an isomorphism whose restriction near $\Sigma_{0}^{j}$ covers $f^{j}$. Since $E_{0}$ and $E_{1}$ are vector bundles of rank $2$, the symplectic structures on fibers of $E_{0}$ and $E_{1}$ are volume forms. Hence to construct an isomorphism as $T^2$-equivariant symplectic vector bundles between $E_{0}$ and $E_{1}$, it suffices to construct an isomorphism $q$ between their underlying oriented $T^2$-equivariant vector bundles. $L_{i} - (U_{i}^{0} \cup U_{i}^{1})$ is $T^2$-equivariantly diffeomorphic to $T^2 \times [0,1]$ for $i=0$ and $1$. Since every $T^2$-equivariant vector bundle over $T^2 \times [0,1]$ is trivial, $E_{i}|_{L_{i}-\overline{U_{i}^{0} \cup U_{i}^{1}}}$ is isomorphic to the trivial $T^2$-equivariant vector bundle over $T^2 \times [0,1]$ for $i=0$ and $1$. Using these trivializations, we can extend the isomorphism $E_{0}|_{U_{0}^{1}} \longrightarrow E_{0}|_{U_{1}^{1}}$ induced from $f^{1}$ to an isomorphism $\phi \colon E_{0}|_{L_{0} - U_{0}^{0}} \longrightarrow E_{1}|_{L_{1} - U_{1}^{0}}$. We have an isomorphism $\psi \colon E_{0}|_{U_{0}^{0}} \longrightarrow E_{1}|_{U_{1}^{0}}$ induced from $f^{0}$. To construct an isomorphism from $E_{0}$ to $E_{1}$ from $\phi$ and $\psi$, it suffices to show that the diagram
\begin{equation}
\xymatrix{ E_{0}|_{\partial (L_{0} - U_{0}^{0})} \ar[r]^{\phi} \ar[d]_{A_{0}} & E_{1}|_{\partial (L_{1} - U_{1}^{0})} \ar[d]^{A_{1}} \\
           E_{0}|_{\partial U_{0}^{0}}  & E_{1}|_{\partial U_{1}^{0}}  \ar[l]^{\psi^{-1}} }
\end{equation}
commutes where $A_{i}$ is the attaching map of $E_{i}$ for $i=0$ and $1$. By the assumption, $A_{0}$ and $ \psi^{-1} \circ A_{1} \circ \phi$ is $T^2$-equivariant bundle maps from a trivial $T^2$-equivariant vector bundle over $T^2$ to itself. Since such map is unique, we have $A_{0}=\psi^{-1} \circ A_{1} \circ \phi$. The proof is completed.
\end{proof}

Assume that $B_{\min i}$ is a closed orbit of the Reeb flow for $i=0$ and $1$. Let $g_{i}$ be a metric on $M_{i}$ compatible with $\alpha_{i}$ for $i=0$ and $1$. Let $\Sigma_{i}^{j}$ is a closed orbit of the Reeb flow of $\alpha_{i}$ for $j=1$, $2$, $\cdots$, $l$ and $i=0$ and $1$. Let $L_{i}^{j}$ be the gradient manifold with respect to $g_{i}$ whose $\omega$-limit set is $\Sigma_{i}^{j}$. Assume that $I(\rho_{i},L_{i}^{j})=1$ and the $\alpha$-limit set of $L_{i}^{j}$ is $B_{\min i}$. Assume that $g_{0}$ is Euclidean with respect to a coordinate on a finite cover of an open neighborhood of $B_{\min 0}$ which represents $\alpha$ as \eqref{NormalFormOfAlpha}. Assume that we have an open neighborhood $V_{i}$ of $\cup_{j=1}^{l} \Sigma_{i}^{j} \cup B_{\min i}$ for $i=0$, $1$ and an isomorphism $f_{\min} \colon (V_{0}, \alpha_{0}|_{V_{0}}) \longrightarrow (V_{1}, \alpha_{1}|_{V_{1}})$ which satisfies $g_{0}=f^{*}g_{1}$. 
\begin{lem}\label{Octopus}
There exists an open neighborhood $U_{0}$ of $\cup_{j=1}^{l}L_{0}^{j}$, an open neighborhood $U_{1}$ of $\cup_{j=1}^{l}\Sigma_{1}^{j} \cup B_{\min 1}$ and an isomorphism $f \colon U_{0} \longrightarrow U_{1}$ such that $f|_{W_{0}}=f_{\min}|_{W_{0}}$ for an open neighborhood $W_{0}$ of $\cup_{j=1}^{l} \Sigma_{0}^{j} \cup B_{\min 0}$.
\end{lem}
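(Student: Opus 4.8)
The plan is to build the isomorphism $f$ one gradient manifold at a time, extending $f_{\min}$ across each tentacle $L_0^j$ by the relative normal form theorem for gradient manifolds (Lemma \ref{RelativeNormalFormTheoremForL}), and then to glue the resulting local isomorphisms along the common bottom orbit $B_{\min 0}$, which all of them share. First I would exploit the hypothesis that $g_0$ is Euclidean near $B_{\min 0}$ together with $f_{\min}^{*}g_1=g_0$ and $f_{\min}^{*}\alpha_1=\alpha_0$. Since $f_{\min}$ is simultaneously an isometry and a contactomorphism, it conjugates the gradient flow of $\Phi_0$ to the gradient flow of $\Phi_1$ on $V_0$ by \eqref{gradientflow}. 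Consequently $f_{\min}$ carries gradient manifolds to gradient manifolds and maps each $L_0^j \cap V_0$ onto $L_1^j \cap V_1$, after labeling the closed orbits so that $f_{\min}(\Sigma_0^j)=\Sigma_1^j$ and $f_{\min}(B_{\min 0})=B_{\min 1}$.

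Writing $\alpha_0$ in the normal form \eqref{NormalFormOfAlpha} on the Euclidean coordinate $S^1 \times D^4_{\epsilon}$ around $B_{\min 0}$, the tentacles $L_0^j$ meet a small level set $\{|z_1|^2+|z_2|^2=\epsilon\}$ in distinct $T^2$-orbits of $\rho_0$, which are pairwise disjoint; hence the $L_0^j$ are pairwise disjoint away from $B_{\min 0}$ and all pinch to $B_{\min 0}$. This geometric picture lets me choose, for each $j$, a $\rho_0$-invariant cone-type neighborhood of $L_0^j$ that meets the other tentacle neighborhoods only inside a fixed neighborhood of $B_{\min 0}$. Here the Euclidean normalization of $g_0$ is exactly what makes the tentacle geometry near $B_{\min 0}$ explicit enough both to confirm that $f_{\min}$ respects the tentacles and to separate them off the bottom orbit.

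Next, for each $j$ I would apply Lemma \ref{RelativeNormalFormTheoremForL} to the single gradient manifold $L_0^j$, whose closure is not a smooth submanifold because its $\alpha$-limit set is the one-dimensional $B_{\min 0}$ (Lemma \ref{MorseBottTheory} (v)) and whose isotropy satisfies $I(\rho_0,L_0^j)=1$. The two isomorphisms near the limit sets required by that lemma are supplied by restricting $f_{\min}$: near the $\alpha$-limit set $B_{\min 0}$ I take $f^0=f_{\min}$, and near the $\omega$-limit set $\Sigma_0^j$ I take $f^1=f_{\min}$. The previous paragraph guarantees $f^0(L_0^j \cap U_0^0)=L_1^j \cap U_1^0$ and $f^1(L_0^j \cap U_0^1)=L_1^j \cap U_1^1$, so the hypotheses hold, and Lemma \ref{RelativeNormalFormTheoremForL} produces an isomorphism $f^{(j)}$ on a neighborhood of $L_0^j$ that agrees with $f_{\min}$ on neighborhoods of both $B_{\min 0}$ and $\Sigma_0^j$.

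Finally I would glue the $f^{(j)}$ into a single $f$. By construction every $f^{(j)}$ coincides with $f_{\min}$ on a common neighborhood $W$ of $B_{\min 0}$, so they agree there; on the complement of $W$ the cone neighborhoods of the tentacles are disjoint, so there is no conflict. Setting $f=f_{\min}$ on $W$ and $f=f^{(j)}$ on the part of each tentacle neighborhood outside $W$ yields a well-defined isomorphism $f \colon U_0 \longrightarrow U_1$ on $U_0 = W \cup \bigcup_{j} (\text{tentacle neighborhood})$, with $f=f_{\min}$ on a neighborhood $W_0$ of $\bigcup_{j=1}^{l}\Sigma_0^j \cup B_{\min 0}$ as required. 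The main obstacle is the simultaneous compatibility of the extensions at the shared bottom orbit: I must ensure that the $j$ extensions produced by Lemma \ref{RelativeNormalFormTheoremForL} can be arranged to equal $f_{\min}$ on \emph{one} common neighborhood $W$ of $B_{\min 0}$, rather than on $j$-dependent neighborhoods, so that the gluing is genuinely consistent along the circle where every tentacle collapses.
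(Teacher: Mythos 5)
There is a genuine gap, and it sits exactly where your second sentence glides past it: the assertion that $f_{\min}$ ``maps each $L_0^j \cap V_0$ onto $L_1^j \cap V_1$.'' Since $f_{\min}$ is an isometry and satisfies $f_{\min}^{*}\alpha_1=\alpha_0$, it does conjugate the gradient flows \emph{inside} $V_0$, and near the top orbits this suffices: the germ of $L_0^j$ at $\Sigma_0^j$ is the local stable manifold, so $f_{\min}$ carries it to the germ of the gradient manifold with $\omega$-limit $\Sigma_1^j$. But near the bottom the situation is completely different: $B_{\min 1}$ is an extremum of dimension $1$, so it is the $\alpha$-limit set of \emph{uncountably many} gradient manifolds (Lemma \ref{MorseBottTheory} (iv)), parametrized by the $2$-dimensional orbifold $(S^1\times S^3)/\rho_1$ of gradient rays. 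The map $f_{\min}$ sends the germ of $L_0^j$ at $B_{\min 0}$ to the germ of \emph{some} gradient ray out of $B_{\min 1}$, but whether that ray, continued by the global flow of $g_1$ through $M_1-V_1$, reaches $\Sigma_1^j$ is a global dynamical question that $f_{\min}$ (defined only on $V_1$) cannot see; in general it does not, and the labeling of the $\Sigma^j$ is already forced by the top-end matching, so no relabeling can repair it. Consequently the hypothesis $f^0(L_0\cap U_0^0)=L_1\cap U_1^0$ of Lemma \ref{RelativeNormalFormTheoremForL} fails at the bottom end, and your per-tentacle extensions never get off the ground.

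This missing step is in fact the bulk of the paper's proof. The paper first \emph{modifies the metric} $g_1$ to a new metric $g'_1$ compatible with $\alpha_1$, equal to $g_1$ on a neighborhood of $\cup_j\Sigma_1^j\cup B_{\min 1}$ (so that $f_{\min}$ remains an isometry there), such that the gradient manifolds $L_1^{\prime j}$ of $g'_1$ emanating from $B_{\min 1}$ in the directions $f_{\min}(L_0^j)$ really do have $\omega$-limit $\Sigma_1^j$. Concretely: the Euclidean hypothesis on $g_0$ near $B_{\min 0}$ (which you use only for a soft separation argument) is what makes the gradient manifolds invariant under a scaling $\mathbb{R}_{>0}$-action, identifying the space of bottom rays with the orbifold $S=(S^1\times S^3)/\rho_1$; one chooses an isotopy $\{\psi_t\}$ of $S$ carrying $[L_1^j]$ to $[f_{\min}(L_0^j)]$, lifts its generating fields to a $\rho_1$-invariant vector field $\tilde Y$ tangent to $\ker\alpha_1$, and sets $Z=\tilde Y+\grad\Phi_1$. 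Since $d\alpha_1(Z,X)=Z\Phi_1>0$, the plane field $\mathbb{R}Z\oplus\mathbb{R}X$ is symplectic, so there is a compatible complex structure $J'$ with $J'X=Z$, agreeing with $J$ near the closed orbits, and by \eqref{gradientflow} the metric $g'_1$ determined by $J'$ has $Z$ as its gradient field. Only after this modification does the paper do what you propose, namely apply Lemma \ref{RelativeNormalFormTheoremForL} to $L_0^j$ and $L_1^{\prime j}$ for each $j$; your final gluing step along a common neighborhood of $B_{\min 0}$ is unproblematic (intersect the finitely many neighborhoods on which the extensions equal $f_{\min}$), so once you insert the metric-modification argument your outline matches the paper's proof.
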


\begin{proof}
First, we show that there exists a metric $g_{1}^{\prime}$ on $M_1$ compatible with $\alpha_{1}$ such that $g_{0}=f_{\min}^{*}(g'_1|_{W_{0}})$ and $L_{0}^{j} \cap f_{\min}^{-1}(W_{0}) = f_{\min}^{-1}(L_{1}^{\prime j} \cap W_{0})$ for an open neighborhood $W_{0}$ of $\cup_{j=1}^{l} \Sigma_{1}^{j} \cup B_{\min 1}$, where $L_{1}^{\prime j}$ is the gradient manifold with respect to $g'_{1}$ whose $\omega$-limit set is $\Sigma_{1}^{j}$. 

Since $f_{\min}$ is an isometry on $V_{0}$, there exists an open neighborhood $V'_{1}$ of $\cup_{j=1}^{l} \Sigma_{1}^{j}$ in $V_{1}$ for $i=0$, $1$ such that $L_{0}^{j} \cap f_{\min}^{-1}(V'_{1}) = f_{\min}^{-1}(L_{1}^{\prime j} \cap V'_{1})$. We modify $g_{1}$ on an open neighborhood of the boundary of a tubular neighborhood of $B_{\min 1}$ to obtain $g'_1$ satisfying the above conditions. We define an $\mathbb{R}_{>0}$-action $\sigma$ on an open neighborhood of $B_{\min i}$ conjugating the linear $\mathbb{R}_{>0}$-action on the orthogonal normal bundle of $B_{\min i}$ by the exponential map with respect to $g_{i}$ for $i=0$ and $1$. Since $g_{1}$ is Euclidean with respect to a coordinate on a finite cover of an open neighborhood of $B_{\min 0}$ which represent $\alpha$ as \eqref{NormalFormOfAlpha}, the gradient manifolds are invariant under $\sigma_{i}$. Hence the space of gradient manifold whose $\alpha$-limit set is $B_{\min 1}$ is $(S^1 \times S^3)/\rho_{1}$, which is diffeomorphic to a $2$-dimensional orbifold $S$ with at most two singular points. We denote the points in $S$ corresponding to $f_{\min}(L_{0}^{j})$ and $L_{1}^{j}$ by $[f_{\min}(L_{0}^{j})]$ and $[L_{1}^{j}]$ respectively. There exist small positive numbers $\delta$, $\epsilon$, $\epsilon'$ and an isotopy $\{\psi_{t}\}_{t \in [\epsilon,\epsilon']}$ on $S$ such that $\epsilon < \epsilon + \delta < \epsilon' - \delta < \epsilon'$ and $\psi_{t}$ is the identity for $t$ in $[\epsilon,\epsilon+\delta]$, $\psi_{t}$ is a diffeomorphism $\psi$ such that $\psi([L_{1}^{j}])=[f_{\min}(L_{0}^{j})]$ for $j=1$, $2$, $\cdots$, $l$ for $t$ in $[\epsilon'-\delta,\epsilon']$. Let $\{Y_{t}\}_{t \in [\epsilon,\epsilon']}$ be vector fields on $S$ which generate $\{\psi_{t}\}_{t \in [\epsilon,\epsilon']}$. We fix a trivialization $\phi \colon (S^1 \times S^3 \times [\epsilon,\epsilon'])/\rho_{1} \longrightarrow S \times [\epsilon,\epsilon']$ as a family of orbifolds diffeomorphic to $S$ by using $\sigma_{i}$. Let $Y$ be a vector field on $S \times [\epsilon,\epsilon']$ defined by $Y_{(x,t)}=(Y_{t})_{x}$ for $x$ in $S$ and $t$ in $[\epsilon,\epsilon']$. We define a vector field $Y'$ on $(S^1 \times S^3 \times [\epsilon,\epsilon'])/\rho_{1}$ by $Y'=(\phi_{*})^{-1}Y$. We take a lift $\tilde{Y}$ of $Y'$ on $S^1 \times S^3 \times [\epsilon,\epsilon']$ so that $\tilde{Y}$ is tangent to $\ker \alpha_{1}$ and invariant under $\rho_{1}$. We put $Z=\tilde{Y} + \grad \Phi_{1}$. We show that $Z$ is realized as the vector field generating the gradient flow with respect to a metric $g'_1$ on $M$ compatible with $\alpha_{1}$ such that $g_{0}=f_{\min}^{*}(g'_1|_{W_{1}})$ for an open neighborhood $W_{1}$ of $\cup_{j=1}^{l} \Sigma_{1}^{j} \cup B_{\min 1}$. It suffices to show that there exists a complex structure $J'$ on $\ker \alpha_{1}$ which satisfies $J'|_{W_{1}}=J|_{W_{1}}$ for an open neighborhood $W_{1}$ of $\cup_{j=1}^{l} \Sigma_{1}^{j} \cup B_{\min 1}$ and defines a metric with the symplectic structure $d\alpha_{1}$ and $J'X=Z$ by \eqref{gradientflow}. We have $d\alpha_{1}(Z,X)=Z\Phi_{1}$ on $S^1 \times S^3 \times [\epsilon,\epsilon']$. Since $Z\Phi_{1}$ is positive by the construction of $Z$,  the vector bundle $\mathbb{R}Z \oplus \mathbb{R}X$ is symplectic on $S^1 \times S^3 \times [\epsilon,\epsilon']$. Hence we have $J'$ satisfying the desired conditions.

Then Lemma \ref{Octopus} is proved by applying Lemma \ref{RelativeNormalFormTheoremForL} to $L_{0}^{j}$ and $L_{1}^{j}$ for $j=1$, $2$, $\cdots$, $l$.
\end{proof}

\subsubsection{Normal forms of chains}

Let $C_{i}$ be a chain in $(M_{i},\alpha_{i})$ for $i=0$ and $1$. Let $L_{i}^{1}$, $L_{i}^{2}$, $\cdots$, $L_{i}^{l}$ be the gradient manifolds which form $C_{i}$ in the order of the superscripts. We denote the $\alpha$-limit set of $L_{i}^{j}$ by $\Sigma_{i}^{j-1}$ for $j=1$, $2$, $\cdots$, $l$ and $i=0$, $1$. We denote the $\omega$-limit set of $L_{i}^{l}$ by $\Sigma_{i}^{l}$ for $i=0$ and $1$. We assume that the closure $N_{i}^{j}$ of $L_{i}^{j}$ in $M_{i}$ is a smooth submanifold of $M_{i}$ for $j=1$, $2$, $\cdots$, $l$. We denote a $K$-contact submanifold containing $\Sigma_{i}^{0}$ other than $N_{i}^{1}$ by $N_{i}^{0}$ for $i=0$ and $1$. We denote a $K$-contact submanifold containing $\Sigma_{i}^{l}$ other than $N_{i}^{l}$ by $N_{i}^{l+1}$ for $i=0$ and $1$. Let $\Sigma_{i}$ be a closed orbit of the Reeb flow of $\alpha_{i}$ for $i=0$ and $1$. A diffeomorphism $f$ from an open neighborhood $U_{0}$ of $C_{0}$ to an open neighborhood $U_{1}$ of $C_{1}$ is said to be an isomorphism if $f(C_{0})=C_{1}$, $f^{*}\alpha_{1}=\alpha_{0}$ and $f(\rho_{0}(x,g))=\rho_{1}(f(x),g)$ for every $x$ in $\Sigma_{0}$ and $g$ in $T^2$. We have the following:
\begin{lem}\label{NormalFormOfChains}
There exists an isomorphism $f$ from an open neighborhood of $C_{0}$ to an open neighborhood of $C_{1}$ mapping $\Sigma_{0}^{0}$ to $\Sigma_{1}^{0}$ if and only if $(G_{0})_{\Sigma_{0}^{j}}=(G_{1})_{\Sigma_{1}^{j}}$ for $j=0$, $1$, $\cdots$, $l$ and $(G_{0})_{N_{0}^{j}}=(G_{1})_{N_{1}^{j}}$ for $i=0$, $1$, $\cdots$, $l+1$.
\end{lem}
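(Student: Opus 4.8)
The plan is to prove the ``only if'' direction by equivariance and the ``if'' direction by induction on the length $l$ of the chain, gluing together the normal forms of the individual closed orbits and gradient manifolds that were established in the previous subsections.

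For the ``only if'' direction, suppose $f$ is an isomorphism from a neighborhood of $C_0$ to a neighborhood of $C_1$ with $f(\Sigma_0^0)=\Sigma_1^0$. Since $f$ is $T^2$-equivariant and carries the Reeb flow of $\alpha_0$ to that of $\alpha_1$, it maps each closed orbit $\Sigma_0^j$ of the chain to the corresponding $\Sigma_1^j$ and each $K$-contact submanifold $N_0^j$ to $N_1^j$, and it intertwines the two $T^2$-actions. Hence the isotropy groups are preserved, giving $(G_0)_{\Sigma_0^j}=(G_1)_{\Sigma_1^j}$ and $(G_0)_{N_0^j}=(G_1)_{N_1^j}$ for all indices.

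For the ``if'' direction I would argue by induction on $l$. The base case is the normal form theorem of a single closed orbit: using Lemma \ref{NormalFormOfSingularOrbitsInB} when $\Sigma^0$ lies in $B_{\min}$, or Lemma \ref{NormalFormOfSingularOrbits2} and Lemma \ref{ClosedOrbitsInB} in general, the matching data $(G_0)_{\Sigma_0^0}=(G_1)_{\Sigma_1^0}$, $(G_0)_{N_0^0}=(G_1)_{N_1^0}$ and $(G_0)_{N_0^1}=(G_1)_{N_1^1}$ produce an isomorphism $f$ from a neighborhood of $\Sigma_0^0$ to a neighborhood of $\Sigma_1^0$ carrying $N_0^1$ to $N_1^1$. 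For the inductive step, suppose $f$ has been constructed on a neighborhood of the initial segment $\Sigma_0^0 \cup L_0^1 \cup \cdots \cup L_0^j \cup \Sigma_0^j$, so that $f$ is already defined and equivariant near $\Sigma_0^j$. I would then apply the relative normal form theorem for gradient manifolds, Lemma \ref{NormalFormOfGradientManifolds2}, to the next gradient manifold $L^{j+1}$, whose $\alpha$-limit set is $\Sigma^j$ and whose $\omega$-limit set is $\Sigma^{j+1}$, taking the already-built $f$ near $\Sigma^j$ as the prescribed map $f^0$. The hypotheses of that lemma, namely $(G_0)_{\Sigma_0^k}=(G_1)_{\Sigma_1^k}$ for $k=j,j+1$ and $(G_0)_{N_0^k}=(G_1)_{N_1^k}$ for $k=j,j+1,j+2$, are all among the assumptions, so we obtain an extension of $f$ to a neighborhood of $N^{j+1}$ which agrees with the old $f$ near $\Sigma^j$. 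Since this extension is an isomorphism on a neighborhood of the whole of $N^{j+1}$, it is in particular defined near $\Sigma^{j+1}$, which is exactly the input needed to begin the next step; after $l$ iterations $f$ is defined on a neighborhood of the entire chain.

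The hard part will be the gluing across each interior closed orbit $\Sigma^j$: one must ensure that the prescribed behavior of $f$ near $\Sigma^j$ extends consistently over $L^{j+1}$ and then pins down the normal form near $\Sigma^{j+1}$. This reduces to knowing that the $T^2$-equivariant symplectic normal bundle of $N^{j+1}$, together with the way it is attached to the normal data already fixed below, is completely determined by the isotropy cardinalities. That determination is precisely the computation of Lemma \ref{LemmaEquationfork}, which expresses the integer gluing exponents of the attaching map in terms of $k^0,k^1,k^2,h^0,h^1$; combined with the rank-two triviality of the symplectic normal bundle it forces uniqueness of the equivariant normal bundle. The relative normal form theorem Lemma \ref{RelativeNormalFormTheorem}, on which Lemma \ref{NormalFormOfGradientManifolds2} rests, is what packages this uniqueness into an extension that leaves $f$ untouched near the lower closed orbit, making the induction go through.
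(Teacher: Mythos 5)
Your proof is correct and matches the paper's argument: the paper proves Lemma \ref{NormalFormOfChains} by exactly this induction on the chain length, invoking Lemma \ref{NormalFormOfGradientManifolds2} at each step with the already-constructed isomorphism near the lower closed orbit playing the role of the prescribed map $f^{0}$, so that the extension agrees with the previous stage. Your added remarks correctly identify where the real work lies, namely in Lemma \ref{RelativeNormalFormTheorem} and the computation of the attaching exponents in Lemma \ref{LemmaEquationfork}.
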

\begin{proof}
We show Lemma \ref{NormalFormOfChains} inductively on $i$. By Lemma \ref{NormalFormOfGradientManifolds2}, there exists an isomorphism $f^{0}$ from an open neighborhood of $N_{0}^{0}$ to an open neighborhood of $N_{1}^{0}$ mapping $\Sigma_{0}^{0}$ to $\Sigma_{1}^{0}$. Lemma \ref{NormalFormOfChains} is true in the case where $l=s$. Consider the case where $l=s+1$. By the induction hypothesis, there exists an isomorphism $f^{s}$ from an open neighborhood of $\cup_{j=0}^{l-1} N_{0}^{j}$ to an open neighborhood of $\cup_{j=0}^{l-1} N_{1}^{j}$ mapping $\Sigma_{0}^{0}$ to $\Sigma_{1}^{0}$. Applying Lemma \ref{NormalFormOfGradientManifolds2} to $N_{0}^{l}$ and $N_{1}^{l}$, we extend the isomorphism $f^{s}$ to an isomorphism $f$ from an open neighborhood of $C_{0}$ to an open neighborhood of $C_{1}$ mapping $\Sigma_{0}^{0}$ to $\Sigma_{1}^{0}$.
\end{proof}

\subsubsection{Normal forms of a $T^2$-orbit with trivial isotropy group}

Let $C$ be a $T^2$-orbit of $\rho$ with trivial isotropy group. Let $L$ be a subset in $M$ such that  for an open neighborhood $U'$ of $C$ in $M$, $L \cap U'$ is a $K$-contact submanifold of $(U',\alpha|_{U'})$ containing $C$. Then we have
\begin{lem}\label{NormalFormOfC}
There exists an open neighborhood $U$ of $C$ in $U'$ and a diffeomorphism $f \colon T^2 \times [t^{0},t^{1}] \times \mathbb{C} \longrightarrow U$ such that 
\begin{equation}\label{CoordinateNearC}
f^{*}\alpha=\frac{dx \cos t + dy \sin t}{a \cos t + b \sin t} + \frac{\sqrt{-1}}{2} (z_1 d\overline{z}_1 - \overline{z}_1dz_1)
\end{equation}
and $U \cap L$ is defined by the equation $z_1=0$ where $(x,y)$ is the coordinate on $T^2$ induced from the Euclidean coordinate on $\mathbb{R}^{2}$, $t$ and $z_1$ are the standard coordinate on $[t^{0},t^{1}]$ and $\mathbb{C}$ respectively.
\end{lem}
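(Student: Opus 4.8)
The plan is to realize a neighborhood of $C$ in $M$ as a neighborhood of the zero section of the symplectic normal bundle of $L$, and then to recognize this data as the model appearing in \eqref{CoordinateNearC}. Since $L$ is invariant under the Reeb flow, it is invariant under $\rho$, and since $C$ has trivial isotropy group, the submanifold $(L,\alpha|_{L})$ is, near $C$, a $3$-dimensional $K$-contact manifold of rank $2$ containing the free $T^2$-orbit $C$ (its rank is $2$ precisely because it contains the free orbit $C$). I would therefore fix a relatively compact $\rho$-invariant neighborhood of $C$ in $L$ and apply the normal form theorem for $K$-contact submanifolds (Corollary \ref{NormalBundles} together with its localized form Lemma \ref{NormalFormTheorem2}); to do so I must pin down two pieces of data: the induced contact form $\alpha|_{L}$ near $C$, and the $T^2$-equivariant symplectic normal bundle $TL^{d\alpha}$ near $C$.

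First I would establish the $3$-dimensional normal form on $L$. Because $C$ has trivial isotropy group, $\rho$ acts freely on a neighborhood of $C$ in $L$, and by Lemma \ref{MomentMapsAreSubmersive} the contact moment map of $(L,\alpha|_{L})$ is a submersion there whose fibers are the $T^2$-orbits. Writing $\overline{R}=a\frac{\partial}{\partial x}+b\frac{\partial}{\partial y}$ and using $T^2$-invariance, $\alpha|_{L}$ takes the form $P(t)\,dx+Q(t)\,dy$ modulo an exact basic term, and the contact condition together with $\alpha(\overline{R})=1$ and $\iota_{\overline{R}}d\alpha=0$ forces the curve $(P,Q)$ to trace the affine line $\{v\in\Lie(T^2)^{*}\mid v(\overline{R})=1\}$; after reparametrizing $t$ this is exactly $\frac{dx\cos t+dy\sin t}{a\cos t+b\sin t}$, with $C$ sitting at an interior level $t_{*}\in(t^{0},t^{1})$. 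This is the model used in Lemma \ref{Isomorphisms}, and it is justified rigorously by Lerman's classification of $3$-dimensional contact toric manifolds \cite{Ler2}.

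Next I would identify the normal bundle. The symplectic normal bundle $TL^{d\alpha}$ is a rank $2$, $T^2$-equivariant symplectic vector bundle over a neighborhood of $C$ in $L$; since $\rho$ acts freely there, it descends to a rank $2$ symplectic vector bundle over the quotient interval $[t^{0},t^{1}]$, which is trivial. Hence $TL^{d\alpha}$ is equivariantly isomorphic to the trivial bundle $T^2\times[t^{0},t^{1}]\times\mathbb{C}$ on which $T^2$ acts by translation on the base and trivially on the fiber, with the standard symplectic form $\sqrt{-1}\,dz_1\wedge d\overline{z}_1$ on each fiber. It is precisely the triviality of the fiber action that relies on the hypothesis that $C$ has trivial isotropy group.

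Finally I would check that the model form $\alpha_{0}=\frac{dx\cos t+dy\sin t}{a\cos t+b\sin t}+\frac{\sqrt{-1}}{2}(z_1d\overline{z}_1-\overline{z}_1dz_1)$ on $T^2\times[t^{0},t^{1}]\times\mathbb{C}$ meets the three hypotheses of Corollary \ref{NormalBundles}: its restriction to $\{z_1=0\}$ equals $\alpha|_{L}$ by the first step; $d\alpha_0$ agrees with $d\alpha$ along $L$ because there are no cross terms between the $(x,y,t)$-directions and the fiber directions and the fiber symplectic forms were matched in the second step; and a direct computation shows the Reeb field of $\alpha_0$ is $a\frac{\partial}{\partial x}+b\frac{\partial}{\partial y}$, whose flow is the linear flow on the fibers induced from $\rho$. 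The uniqueness clause of Corollary \ref{NormalBundles}, localized via Lemma \ref{NormalFormTheorem2}, then produces the diffeomorphism $f$ with $f^{*}\alpha=\alpha_0$ carrying $\{z_1=0\}$ to $L$, which is the assertion. I expect the main obstacle to be the first step: expressing $\alpha|_{L}$ in the exact stated model and simultaneously arranging the trivialization of $TL^{d\alpha}$ to be symplectic with the correct fiber orientation, so that all three compatibility conditions of the normal form theorem hold on the nose.
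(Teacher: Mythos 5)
Your proof is correct and follows essentially the same route as the paper: identify $(L\cap U'',\alpha|_{L\cap U''})$ with the model $(T^2\times[t^{0},t^{1}],\beta)$ via the argument of Lemma \ref{Isomorphisms} (Lerman's classification), observe that the triviality of the isotropy groups makes the $T^2$-equivariant symplectic normal bundle of $L$ trivial, and conclude by the localized normal form theorem (Lemma \ref{NormalFormTheorem2}). Your explicit verification of the hypotheses of Corollary \ref{NormalBundles} — including the computation that the Reeb field of the model form is $a\frac{\partial}{\partial x}+b\frac{\partial}{\partial y}$ — is detail the paper leaves implicit, but it is accurate and adds nothing structurally different.
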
 

\begin{proof}
By the argument in the second paragraph of Lemma \ref{Isomorphisms}, we have an open neighborhood $U''$ of $C$ in $M$ such that $(L \cap U'',\alpha|_{L \cap U''})$ is isomorphic to $(T^2 \times [t^{0},t^{1}], \beta)$ for some real numbers $t^{0}$ and $t^{1}$ where $\beta$ is defined by $\beta=\frac{dx \cos t + dy \sin t}{a \cos t + b \sin t}$ for some real numbers $a$ and $b$. Since the $T^2$-orbits contained in $L$ have trivial isotropy group of $\rho$, the normal bundle of $L \cap U'$ is trivial as a $T^2$-equivariant vector bundle. By Lemma \ref{NormalFormTheorem2}, we have an open neighborhood $U$ of $C$ in $M$ and a diffeomorphism $f \colon T^2 \times [t^{0},t^{1}] \times \mathbb{R}^{2} \longrightarrow U$ which satisfies the desired conditions.
\end{proof}

\subsection{Normal forms of the minimal component and the maximal component of the contact moment map of dimension $3$}

Let $B_{i}$ be the maximal component or the minimal component of the contact moment map of dimension $3$ in $(M_{i},\alpha_{i})$ for $i=0$ and $1$. We show
\begin{lem}\label{NormalFormOfB}
There exists an isomorphism from an open neighborhood of $B_{0}$ to an open neighborhood of $B_{1}$ if and only if $G_{B_{0}}=G_{B_{1}}$, the oriented Seifert fibrations defined by the Reeb flows on $B_{0}$ and $B_{1}$ are isomorphic and the Euler classes of the normal bundles of $B_{0}$ and $B_{1}$ are equal.
\end{lem}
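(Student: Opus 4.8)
The plan is to reduce the statement to the equivariant normal form theorem, Lemma \ref{NormalFormTheorem}, applied to the $3$-dimensional $K$-contact submanifold $B_i$ of $(M_i,\alpha_i)$. First note that on $B_i$ the function $\alpha_i(X)$ is constant, so by Lemma \ref{MorseTheory} (ii) the vector field $X$ is everywhere parallel to $R_i$ along $B_i$; hence the $G$-orbits in $B_i$ are the closed Reeb orbits and $(B_i,\alpha_i|_{B_i})$ is a $K$-contact manifold of rank $1$ whose Reeb flow is the oriented Seifert $S^1$-fibration recorded in the datum. The \emph{only if} direction is immediate: an isomorphism of neighborhoods carries $R_0$ to $R_1$, hence $G_{B_0}$ to $G_{B_1}$ and the oriented Seifert fibration of $B_0$ to that of $B_1$, and it carries the symplectic normal bundle of $B_0$ to that of $B_1$ preserving the orientation induced by $d\alpha$, so the Euler classes agree. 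For the \emph{if} direction I would construct a $K$-contact isomorphism $\phi\colon B_0\to B_1$ and then show it is covered by an isomorphism of $T^2$-equivariant symplectic normal bundles, after which Lemma \ref{NormalFormTheorem} (with $n=2$, using $\overline R_0=\overline R_1$) finishes the proof.

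To build $\phi$ I would work on the base $2$-orbifolds. The Reeb $S^1$-action presents $B_i$ as an $S^1$-orbibundle over a closed oriented $2$-orbifold $S_i$, and $d(\alpha_i|_{B_i})$ descends to an area form $\omega_i$ on $S_i$; the length of a generic fiber is fixed by $\overline R$ and $G_{B_i}$ as in Lemma \ref{LengthOfClosedOrbits}, and the total area $\int_{S_i}\omega_i$ is then determined by the Seifert invariant. Since the oriented Seifert fibrations of $B_0$ and $B_1$ are isomorphic, I would fix an orbifold isomorphism $S_0\to S_1$; the two area forms then have the same total area, so by the Moser argument on $2$-orbifolds used in the proofs of Lemmas \ref{LensSpacesAreRank2} and \ref{ToricActionNearB} they are carried into one another by a diffeomorphism isotopic to the identity. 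Lifting the resulting base isomorphism to a bundle isomorphism of the Seifert orbibundles and correcting the connection form by a gauge transformation (the two connection forms have equal curvature, so differ by a closed basic form that can be absorbed) yields a $T^2$-equivariant $\phi$ with $\phi^*(\alpha_1|_{B_1})=\alpha_0|_{B_0}$.

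It remains to compare the $T^2$-equivariant symplectic normal bundles $E_0$ and $\phi^*E_1$ over $B_0$. Choosing $T^2$-invariant compatible complex structures by averaging makes each a $T^2$-equivariant complex line bundle; the effective $G_{B_i}\cong S^1$ acts on a fiber with a primitive weight whose sign is fixed by the orientation of the normal fibre induced by $d\alpha_i$, so the fiberwise $G_{B_i}$-representations agree. Since the Reeb circle $G/G_{B_i}$ acts along the fibers of $B_i\to S_i$ and commutes with the normal action, each $E_i$ descends to a complex line orbibundle $\overline E_i$ over the base orbifold $S_i$ and is recovered as its pullback; complex line orbibundles over a compact oriented $2$-orbifold are classified by their first Chern class (cf.\ the orbifold line-bundle theory used in Lemma \ref{Blowdown}), and I would argue that $c_1(\overline E_i)$ is pinned down by the assumed equality of the Euler classes of the normal bundles together with the exceptional-fibre compatibility of Lemma \ref{ClosedOrbitsInB}. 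This produces a $T^2$-equivariant symplectic bundle isomorphism $q\colon E_0\to\phi^*E_1$ covering $\id_{B_0}$ with $q_*\overline R=\overline R$, and Lemma \ref{NormalFormTheorem} then yields the desired isomorphism of neighborhoods.

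The main obstacle is this last step: classifying $T^2$-equivariant symplectic (equivalently complex) line bundles over the Seifert-fibered $B_i$ and checking that the prescribed data—the isotropy group $G_{B_i}$, the oriented Seifert invariant, and the Euler class of the normal bundle—determine them uniquely. The delicate point is matching the equivariant structure over the exceptional fibers, where the weight of the normal action and the multiplicity of the fiber are linked (the relation recorded after Definition \ref{KarshonGraphs} and proved in Lemma \ref{ClosedOrbitsInB}); one must verify that the descended class $c_1(\overline E_i)$ is genuinely recovered from the global Euler class of $E_i\to B_i$ under the pullback $H^2_{\mathrm{orb}}(S_i)\to H^2(B_i)$, which is exactly where the Seifert data beyond the Euler number are needed.
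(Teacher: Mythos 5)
Your overall architecture is the same as the paper's (build a $K$-contact isomorphism $B_0\to B_1$ from the given data, match the $T^2$-equivariant symplectic normal bundles, and conclude with Lemma \ref{NormalFormTheorem}), but both of your substitute sub-arguments contain gaps. First, in the construction of $\phi$: after matching the area forms by Moser on the base orbifold, the two connection forms differ by a closed basic $1$-form, and your parenthetical claim that this difference ``can be absorbed'' by a gauge transformation is false in general. Gauge transformations, i.e.\ maps from the base to $S^1$, absorb only the integral classes $H^1(S;\mathbb{Z})$, while the difference class lives in $H^1(S;\mathbb{R})$: two connections with the same curvature but different holonomy are not gauge equivalent. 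This is not a vacuous worry here, because positive-genus bases genuinely occur for $B_{\min}$ and $B_{\max}$ (for instance $B\cong S_g\times S^1$ in the fiber-join examples $S_g\times L(p,q)$), so $H^1(S;\mathbb{R})/H^1(S;\mathbb{Z})\neq 0$. The discrepancy can be repaired, either by composing your base identification with an area-preserving diffeomorphism of prescribed flux, or, as the paper does in Lemma \ref{ReebFlowsDetermineForms}, by avoiding the base reduction entirely: after conjugating so that the Reeb vector fields coincide, the linear path $\beta_t=(1-t)\alpha_0+t\alpha_1$ consists of contact forms because the transverse parts are area forms inducing the same orientation, and the equivariant Gray stability theorem \cite{Gra} produces the equivariant isomorphism with $f_{*}R=R$ directly, with no holonomy bookkeeping at all.

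Second, the normal-bundle step, which you yourself flag as the ``main obstacle,'' is left unresolved and is also harder than necessary. There is no need to descend $E_i$ to a complex line orbibundle over the base orbifold and invoke orbifold Picard theory with exceptional-fiber compatibility: the symplectic normal bundle is an oriented real rank-$2$ bundle over the $3$-manifold $B_i$, and such bundles are classified by the Euler class in $H^2(B_i;\mathbb{Z})$, which is precisely the datum assumed in the statement. The equivariant structure is then rigid fiberwise: by effectiveness of $\rho$, the isotropy group $G_{B_i}$ acts principally on the normal circle bundle of $B_i$ (this is exactly the remark used in the second paragraph of the proof of Proposition \ref{Karshon1}), with the sign of the weight fixed by the orientation that $d\alpha_i$ induces on the normal fibers, so the fiberwise representations of $G_{B_0}=G_{B_1}$ on the two bundles agree automatically; since the group is compact, an orientation-preserving bundle isomorphism covering $\phi$ can then be made $T^2$-equivariant, and Lemma \ref{ClosedOrbitsInB} need not enter the classification at all. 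With these two repairs your argument closes, but as written the gauge-absorption claim is wrong on positive-genus bases and the equivariant line-bundle classification over the base orbifold is an admitted gap.
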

Lemma \ref{NormalFormOfB} follows from Lemma \ref{NormalFormTheorem} and the following lemma:

\begin{lem}\label{ReebFlowsDetermineForms}
Let $\alpha_0$ and $\alpha_1$ be two $K$-contact forms of rank $1$ on a closed $3$-dimensional manifold $M$. $(M,\alpha_0)$ and $(M,\alpha_1)$ are isomorphic if and only if the Reeb flows of $\alpha_0$ and $\alpha_1$ are isomorphic as $S^1$-actions with transverse orientations.
\end{lem}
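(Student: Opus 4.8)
The plan is to establish the nontrivial ("if") direction by a Moser-type deformation along the straight-line path of contact forms, with the transverse orientation hypothesis supplying exactly the positivity needed to keep the path contact. The "only if" direction is immediate: if $g\colon M\to M$ satisfies $g^{*}\alpha_{1}=\alpha_{0}$, then $g$ carries the Reeb vector field $R_{0}$ of $\alpha_{0}$ to the Reeb vector field $R_{1}$ of $\alpha_{1}$ (the Reeb field is determined by the contact form), hence conjugates the two Reeb flows as $S^1$-actions, and since $g^{*}d\alpha_{1}=d\alpha_{0}$ it preserves the transverse orientations.

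For the "if" direction, I would start from a diffeomorphism $f\colon M\to M$ with $f_{*}R_{0}=R_{1}$ respecting transverse orientations, where $R_{i}$ is the Reeb field of $\alpha_{i}$. Replacing $\alpha_{1}$ by $f^{*}\alpha_{1}$, I may assume $R_{0}=R_{1}=:R$, so that $\alpha_{0}$ and $\alpha_{1}$ share one periodic Reeb field generating a single $S^1$-action $\sigma$, and the transverse orientations they induce agree. Set $\beta=\alpha_{1}-\alpha_{0}$, so that $\beta(R)=\alpha_{1}(R)-\alpha_{0}(R)=0$ and $L_{R}\beta=0$, and consider the path $\alpha_{t}=(1-t)\alpha_{0}+t\alpha_{1}$ for $t\in[0,1]$. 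Then $\alpha_{t}(R)=1$ and $\iota_{R}d\alpha_{t}=0$, so $R$ is the Reeb field of every $\alpha_{t}$ that is contact. The crux is to show that $\alpha_{t}$ is contact for all $t$: each $d\alpha_{i}$ is basic for $\sigma$ with kernel $\mathbb{R}R$, and since $\dim M - 1 = 2$ it induces an area form on $TM/\mathbb{R}R$ whose orientation is precisely the transverse orientation attached to $\alpha_{i}$. Because these orientations coincide, $d\alpha_{t}=(1-t)d\alpha_{0}+t\,d\alpha_{1}$ induces a nondegenerate area form on $TM/\mathbb{R}R$ for every $t$, whence $\alpha_{t}\wedge d\alpha_{t}\neq 0$.

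The deformation is then completed by a Moser argument. On the closed $3$-manifold $M$, the form $d\alpha_{t}$ has constant rank $2$ with kernel $\mathbb{R}R$, so $X\mapsto\iota_{X}d\alpha_{t}$ restricts to a linear isomorphism from $\ker\alpha_{t}$ onto the annihilator of $R$ in $T^{*}M$. Since $\beta(R)=0$, there is a unique smooth time-dependent vector field $X_{t}\in\ker\alpha_{t}$ with $\iota_{X_{t}}d\alpha_{t}=-\beta$; using $\alpha_{t}(X_{t})=0$ one gets $L_{X_{t}}\alpha_{t}=\iota_{X_{t}}d\alpha_{t}=-\beta$, and hence $\tfrac{d}{dt}(\psi_{t}^{*}\alpha_{t})=\psi_{t}^{*}(L_{X_{t}}\alpha_{t}+\beta)=0$ for the flow $\{\psi_{t}\}$ of $X_{t}$, which exists on $[0,1]$ by compactness. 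Thus $g:=\psi_{1}$ satisfies $g^{*}\alpha_{1}=\alpha_{0}$, and since $\beta$ and $d\alpha_{t}$ are $\sigma$-invariant, $X_{t}$ is $\sigma$-invariant, so $g$ commutes with $\sigma$ and is an isomorphism of $K$-contact manifolds.

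The main obstacle is the contactness check in the second paragraph: the real content is translating "the transverse orientations agree" into "$d\alpha_{0}$ and $d\alpha_{1}$ induce the same orientation on $TM/\mathbb{R}R$", after which the positivity of the convex combination is automatic and the rest is a routine Gray--Moser stability computation. A secondary point demanding care is the reduction step: "isomorphic as $S^1$-actions" must be read as conjugacy of the Reeb \emph{vector fields} $f_{*}R_{0}=R_{1}$ (matching the flow speeds), not merely conjugacy of the underlying circle actions, since otherwise rescaling $\alpha_{0}\mapsto c\,\alpha_{0}$ would produce non-isomorphic $K$-contact forms with the same underlying $S^1$-action and defeat the statement.
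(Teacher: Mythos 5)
Your proof is correct and, at its core, is the paper's argument: both directions reduce to a common Reeb vector field $R$, take the linear path $\alpha_t=(1-t)\alpha_0+t\alpha_1$, and derive contactness of $\alpha_t$ from the hypothesis that $d\alpha_0$ and $d\alpha_1$ induce area forms of the same orientation transverse to $R$ (the paper phrases this as $(1-t)d\alpha_0+t\,d\alpha_1$ being a volume form on the base space of the Seifert fibration, you phrase it on $TM/\mathbb{R}R$ -- same content). The only divergence is the endgame: the paper invokes the equivariant Gray stability theorem to obtain a $\rho$-equivariant $f$ with $f_*(\ker\alpha_0)=\ker\alpha_1$, and then needs the extra upgrading step that $f_*R=R$ together with $\alpha_i(R)=1$ forces $f^*\alpha_1=\alpha_0$; you instead run the Moser deformation explicitly, solving $\iota_{X_t}d\alpha_t=-\beta$ with $X_t\in\ker\alpha_t$ (solvable precisely because $\beta(R)=0$ puts $\beta$ in the annihilator of $R$, onto which $\iota_{(\cdot)}d\alpha_t$ maps $\ker\alpha_t$ isomorphically), which yields $\psi_1^*\alpha_1=\alpha_0$ in one stroke, with $S^1$-equivariance falling out as a corollary rather than being fed in. What your version buys is self-containedness and a strict contactomorphism of \emph{forms} directly, since the shared Reeb field kills the conformal factor that Gray's theorem would otherwise leave; what the paper's version buys is brevity. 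Your closing caveat about the reading of ``isomorphic as $S^1$-actions'' -- that it must mean conjugacy of the Reeb vector fields $f_*R_0=R_1$, not merely of the normalized circle actions, since otherwise $\alpha$ and $c\alpha$ for $c\neq 1$ would give a counterexample -- is a genuine and correct clarification, and it is exactly consistent with the paper's proof, which simply opens by assuming a common Reeb vector field.
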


\begin{proof}
The ``only if'' part is trivial. We show the ``if'' part. Assume that $\alpha_0$ and $\alpha_1$ have the common Reeb vector field $R$. Let $\rho$ be the Reeb flow. We define $\beta_{t}=(1-t)\alpha_0 + t\alpha_1$ for $t$ in $[0,1]$. Since $(1-t)d\alpha_{0}+td\alpha_{1}$ is a volume form on the base space by the assumption, $\beta_{t}$ is a contact form for every $t$. Hence $\{\ker \beta_{t}\}_{t \in [0,1]}$ defines an isotopy connecting $\ker \alpha_{0}$ and $\ker \alpha_{1}$ as $\rho$-invariant contact structures. By the equivariant version of Gray's stability theorem \cite{Gra}, we have a $\rho$-equivariant diffeomorphism $f \colon M \longrightarrow M$ such that $f_{*}(\ker \alpha_{0})=\ker \alpha_{1}$. Since $f$ is $\rho$-equivariant, we have $f_{*}R=R$. Hence $f^{*}\alpha_{1}=\alpha_{0}$.
\end{proof}

We denote the symplectic normal bundle of $B$ in $M$ by $E$. We show that 
\begin{lem}\label{Linearization}
There exists an open neighborhood $V$ of the zero section of $E$ in $E$, an open neighborhood $U$ of $B$ in $M$ and a diffeomorphism $f \colon V \longrightarrow U$ such that $f|_{B}=\id_{B}$, the restriction of $f^{*}d\alpha$ to each fiber of $E$ is linear and $f$ is equivariant with respect to $\rho$ on $U$ and the linear action on $V$ induced from $\rho$.
\end{lem}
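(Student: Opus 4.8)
The plan is to reduce the statement to the uniqueness part of Corollary \ref{NormalBundles} by exhibiting a $\rho$-invariant $K$-contact form on the total space of $E$ whose exterior derivative is fibrewise linear. Concretely, I would construct on a neighborhood of the zero section $0_{E}$ a $\rho$-invariant $K$-contact form $\beta$ satisfying the three hypotheses (i), (ii) and (iii) of Corollary \ref{NormalBundles} --- namely $(d\beta)_{x}=(d\alpha)_{x}$ for $x\in B$, $\beta|_{0_{E}}=\alpha|_{B}$, and the Reeb flow of $\beta$ equal to the linear flow on $E$ induced by the Reeb flow of $\alpha$ --- and having in addition the property that $d\beta$ restricts to a constant (translation-invariant) symplectic form on each fibre $E_{b}$. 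Granting such a $\beta$, Corollary \ref{NormalBundles} furnishes neighborhoods $V$ of $0_{E}$ and $U$ of $B$ together with a diffeomorphism $f\colon V\longrightarrow U$ with $f|_{0_{E}}=\id_{B}$ and $f^{*}\alpha=\beta$; then $f^{*}d\alpha=d\beta$ is fibrewise linear, as required. The map $f$ is $\rho$-equivariant because the diffeomorphisms produced in the proofs of Lemmas \ref{Equivariant}, \ref{EquivariantSymplectic} and \ref{NormalFormTheorem} are $\rho$-equivariant; since the $\rho$-action on $E$ is the linear isotropy action along $B$, this is exactly the equivariance asserted in the lemma.

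To build $\beta$ I would regard $E$ as a complex line bundle over the $3$-dimensional $K$-contact manifold $B$, choosing (by averaging over the compact group $G$) a $\rho$-invariant Hermitian metric and a $\rho$-invariant Hermitian connection, with covariant differential $Dz=dz-\sqrt{-1}\,\kappa\,z$ for a connection form $\kappa$ pulled back from $B$. Writing $\pi\colon E\longrightarrow B$ for the projection and $z$ for the fibre coordinate in a local unitary trivialization, I set
\begin{equation}
\beta=(1-c\,|z|^{2})\,\pi^{*}(\alpha|_{B})+\tfrac{\sqrt{-1}}{2}\bigl(z\,\overline{Dz}-\overline{z}\,Dz\bigr),
\end{equation}
in analogy with the closed-orbit normal form \eqref{NormalFormOfAlpha}, where $c$ is a real constant. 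On a fibre $E_{b}$ the pulled-back forms $\pi^{*}(\alpha|_{B})$ and $\kappa$ vanish on vertical vectors, so $Dz=dz$ there and $d\beta|_{E_{b}}=\sqrt{-1}\,dz\wedge d\overline{z}$ is the constant fibre area form; this secures the fibrewise linearity. Conditions (i) and (ii) are arranged at the zero section by matching the Hermitian metric to the fibre symplectic structure $d\alpha|_{E}$ and using the symplectic orthogonality of $E$ and $\ker(\alpha|_{B})$ inside $\ker\alpha$, so that at $z=0$ we get $(d\beta)_{x}=\pi^{*}(d(\alpha|_{B}))_{x}+\sqrt{-1}\,dz\wedge d\overline{z}=(d\alpha)_{x}$. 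The constant $c$ and the fibre-rotation rate are then pinned down by demanding that the horizontal lift of $R_{B}=R|_{B}$ together with the prescribed rotation of the fibres be the Reeb vector field of $\beta$; this is precisely the linearization of $R$ along $B$, so condition (iii) holds. For $|z|$ small $\beta\wedge d\beta\wedge d\beta$ is nonzero because its leading term is $\pi^{*}((\alpha|_{B})\wedge d(\alpha|_{B}))\wedge\sqrt{-1}\,dz\wedge d\overline{z}$, so $\beta$ is contact; invariance of the metric, connection and of $|z|^{2}$ makes $\beta$ $\rho$-invariant.

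I would then observe that $B$ being a maximal or minimal component of dimension $3$ forces the normal isotropy weight of the Reeb generator to be a single number of definite sign, which is exactly what makes the fibre-rotation term consistent and $B$ genuinely extremal for the associated function $\Phi$; this is where the dimension-$3$ hypothesis enters, in parallel with the sign analysis in the proof of Lemma \ref{MorseBottTheory}. The main obstacle I anticipate is the construction of $\beta$ itself: one must simultaneously make $\beta$ a genuine $K$-contact form with the correct linear Reeb flow, keep $d\beta$ fibrewise linear, and match $(d\alpha)_{x}$ along $B$, and these must be reconciled globally over the Seifert-fibered base $B$, where $E$ is in general a nontrivial line bundle. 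The coupling between the fibre-rotation rate and the curvature of the invariant connection is the delicate point; once a $\rho$-invariant connection is fixed and the rotation rate is read off from the linearized Reeb flow, the remaining conditions are met by the choice of $c$ together with a fibrewise-linear adjustment of the Hermitian structure, after which Corollary \ref{NormalBundles} completes the argument.
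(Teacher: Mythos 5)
Your proposal is correct in substance but follows a genuinely different route from the paper. The paper never constructs a contact form on $E$ at all: it takes the exponential map of a $\rho$-invariant metric, restricts $\exp^{*}d\alpha$ and the fibrewise linear symplectic form to the vertical subbundle $FE=\ker Dp$, and runs a purely fibrewise Moser argument, interpolating $\omega_{t}=(1-t)\beta_{0}+t\beta_{1}$ and integrating the invariant vertical vector field $Z_{t}$ defined by $\iota(Z_{t})\omega_{t}+\delta=0$, which vanishes along $B$; equivariance of $f=\exp\circ\psi_{1}$ is then immediate from the $\rho$-invariance of the metric and of the forms, and no global structure of the Seifert-fibered base ever enters. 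Your route instead manufactures a model $K$-contact form $\beta$ on $E$ in the style of the closed-orbit normal form \eqref{NormalFormOfAlpha} and quotes the uniqueness half of Corollary \ref{NormalBundles}; this is heavier but delivers strictly more, namely a full equivariant normal form $f^{*}\alpha=\beta$ near $B$ rather than only fibrewise linearity of $f^{*}d\alpha$, and it runs in parallel with Lemma \ref{NormalFormOfSingularOrbits}. Two points need attention. First, your reliance on the proofs (not the statements) of Lemmas \ref{Equivariant}, \ref{EquivariantSymplectic} and \ref{NormalFormTheorem} for the equivariance of $f$ is legitimate, since Corollary \ref{NormalBundles} as stated does not assert it. Second, and this is the one soft spot in your sketch: relative to an arbitrary $\rho$-invariant Hermitian connection, the vertical component of the linearized Reeb field is $\sqrt{-1}\,h\,(z\partial_{z}-\overline{z}\partial_{\overline{z}})$ with $h$ a $\rho$-invariant \emph{function} on $B$ (constant along Reeb orbits, i.e.\ a function on the $2$-orbifold $B/S^{1}$, not a constant), so your ansatz with a real constant $c$ does not directly verify condition (iii). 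This is repairable in either of two ways: decompose $G\cong G'\times G_{B}$, note that $G_{B}$ acts trivially on $B$ and with weight $\pm 1$ on fibres so that $R_{\mathrm{lin}}=aX'_{E}\pm b\cdot\mathrm{rot}$, and absorb the vertical part of $X'_{E}$ into the connection by adding the invariant $1$-form $-h'a\,(\alpha|_{B})$ (possible because $(\alpha|_{B})(X'_{B})=1/a\neq 0$); or simply replace $1-c|z|^{2}$ by $1-h|z|^{2}$ with $h$ the invariant function read off from $R_{\mathrm{lin}}$, after which $\beta(R_{\mathrm{lin}})=1$, $L_{R_{\mathrm{lin}}}\beta=0$, the extra term $-d(h|z|^{2})\wedge\pi^{*}(\alpha|_{B})$ vanishes on vertical pairs and at $z=0$, and conditions (i)--(iii) together with fibrewise linearity all hold. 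With either fix your argument goes through and Corollary \ref{NormalBundles} completes the proof.
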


\begin{proof}
Let $g$ be a metric on $M$ invariant under the Reeb flow. There exist an open tubular neighborhood $V'$ of the zero section of $E$ in $E$ and an open tubular neighborhood $U'$ of $B$ in $M$ such that $\exp \colon V' \longrightarrow U'$ is a diffeomorphism. Let $p \colon E \longrightarrow N$ be the projection. We denote the kernel of $Dp \colon TE \longrightarrow TB$ by $FE$. Let $\beta_{0}$ be the element of $C^{\infty}(\wedge^{2} FE^{*}|_{V'})$ obtained by the restriction of $\exp^{*}d\alpha$ to $\wedge^{2} FE|_{V'}$. Let $\beta_{1}$ be the element of $C^{\infty}(\wedge^{2} FE^{*}|_{V'})$ obtained by the linear symplectic form defined by $d\alpha$. Since $(D\exp)_{x}=\id_{T_{x}M}$ for $x$  on $B$ under the natural identification of $T_{x}E$ with $T_{x}M$, we have 
\begin{equation}\label{CoincidenceOfForms}
(\beta_{0})_{x}=(\beta_{1})_{x}
\end{equation}
 for $x$ on $B$. Let $\delta$ be the element of $C^{\infty}(\wedge^{2} FE^{*}|_{V'})$ obtained as the restriction of $\exp^{*}d\alpha - d\alpha$ to $\wedge^{2} FE|_{V'}$. Then we have that $\delta|_{B}=0$ and $d\delta=\beta_0 - \beta_1$. We define $\omega_t=(1-t)\beta_0 + t\beta_1$ for $t$ in $[0,1]$. By \eqref{CoincidenceOfForms}, there exists an open neighborhood $V''$ of $B$ in $E$ such that $\omega_t$ is nondegenerate on $V''$ for every $t$ in $[0,1]$. Define the element $Z_t$ in $C^{\infty}(FE|_{V''})$ by $\iota_{Z_t}\omega_t+\delta=0$ for $t$ in $[0,1]$. Then the isotopy $\{\psi_{t}\}_{t \in [0,1]}$ generated by $\{Z_t\}_{t \in [0,1]}$ satisfies $\psi_{t}^{*}\omega_t=\omega_0$. In fact, we have
\begin{equation}
\frac{\partial \psi_{t}^{*}\omega_t}{\partial t}=\psi_{t}^{*} \Big( \frac{d}{dt}\omega_t + d\iota(Z_t)\omega_t \Big) = \psi_{t}^{*}d(\delta + \iota(Z_t)\omega_t)=0.
\end{equation}
Note that $Z_t|_{B}=0$ since $\delta|_{B}=0$. Hence $\psi_{1}$ is well-defined on an open neighborhood $V$ of $B$ in $E$ and satisfies $\psi_{1}|_{B}=\id_{B}$. Since $Z_t$ is invariant by the linear action induced from $\rho$, $\psi_{1}$ is equivariant with respect to the linear action induced from $\rho$. Hence if we put $f=\exp \circ \psi_{1}$, then $f$ satisfies the desired conditions. In fact, we have $f^{*} d\alpha= \psi_{1}^{*}\exp^{*}d\alpha=\psi_{1}^{*}\beta_{1}=\beta_{0}$.
\end{proof}

\end{document}